\definecolor{Red}{rgb}{1.,0.,0.}  \definecolor{Blue}{rgb}{0.,0.,1.}
\newcommand{\id}{\mathop \textnormal{id}\nolimits} \newcommand{\R}{\mathbb{R}}
\newcommand{\N}{\mathbb{N}} 
\newcommand{\diam}{\mathop \textnormal{diam}\nolimits}
\newcommand{\tr}{\mathop \textnormal{tr}\nolimits}
\newcommand{\riem}{\mathcal{R}}
\newcommand{\T}{\mathcal{T}} \newcommand{\M}{\mathcal{M}}
\newcommand{\Ms}{\mathcal{M}^s} \newcommand{\Mmu}{\mathcal{M}_\mu}
\newcommand{\Mmus}{\mathcal{M}_\mu^s}
\newcommand{\Mhyp}{\mathcal{M}_{-1}}
\newcommand{\Mhypd}{\mathcal{M}_{-1} / \mathcal{D}_0}
\newcommand{\Mf}{\mathcal{M}_f}
\newcommand{\Mfhat}{\widehat{\mathcal{M}_f}}
\newcommand{\Mm}{\mathcal{M}_m}
\newcommand{\Mmhat}{\widehat{\mathcal{M}_m}}
\newcommand{\Ncal}{\mathcal{N}}
\newcommand{\U}{\mathcal{U}}
\newcommand{\s}{\mathcal{S}} \newcommand{\st}{\mathcal{S}_g^T}
\newcommand{\stt}{\mathcal{S}_g^{TT}}
\newcommand{\sconf}{\mathcal{S}_g^c} 
\newcommand{\D}{\mathcal{D}} 
 \newcommand{\DO}{\mathcal{D}_0}
 \newcommand{\V}{\mathcal{V}}
\newcommand{\pos}{\mathcal{P}} \newcommand{\poss}{\mathcal{P}^s}
 \newcommand{\btop}{\partial \M}
\newcommand{\supp}{\mathop \textnormal{supp}\nolimits}
\newcommand{\carr}{\mathop \textnormal{carr}\nolimits}
\newcommand{\Vol}{\mathop \textnormal{Vol} \nolimits}
\newcommand{\diag}{\mathop \textnormal{diag}}
\newcommand{\overarrow}[1]{\buildrel #1 \over \longrightarrow}
\newcommand{\Matx}{\M_x} \newcommand{\satx}{\s_x}
\newcommand{\integral}[4]{\int_{#1}^{#2} #3 \, #4}
\newcommand{\MV}{\mathcal{M}_V}
\newcommand{\cl}{\textnormal{cl}}
\newcommand{\midmid}{\; \middle| \;}
\newcommand{\cal}[1]{\mathcal{#1}}
\providecommand{\abs}[1]{\lvert #1 \rvert}
\providecommand{\norm}[1]{\lVert #1 \rVert}
\newtheorem{thm}{Theorem} \newtheorem*{thm*}{Theorem}
\newtheorem{prop}[thm]{Proposition} \newtheorem*{prop*}{Proposition}
\newtheorem{cor}[thm]{Corollary} \newtheorem{lem}[thm]{Lemma}
\theoremstyle{definition} \newtheorem{dfn}[thm]{Definition}
\theoremstyle{definition} \newtheorem{eg}[thm]{Example}
\theoremstyle{remark} \newtheorem{rmk}[thm]{Remark}
\theoremstyle{remark} \newtheorem{cvt}[thm]{Convention}
\numberwithin{equation}{chapter}
\numberwithin{thm}{chapter}
\begin{document}

\author{Brian Clarke \\[2ex] \normalsize{Ph.D. Thesis, University of
    Leipzig} \\ \normalsize{Corrected Version} \\ \normalsize{\today}}
\date{April 1, 2009} \title{The Completion of the Manifold of Riemannian
  Metrics with Respect to its $L^2$ Metric}
\maketitle

\tableofcontents

\chapter*{Acknowledgments}

First and foremost, thanks go to my advisor Jürgen Jost for his years
of encouragement, advice and patient nudges in the right direction.  I
am indebted to him above anyone else for introducing me to this topic
and helping me to reach this point.  I am also indebted to the
International Max Planck Research School \emph{Mathematics in the
  Sciences} and the Max Planck Institute for Mathematics in the
Sciences, as well as the University of Leipzig Mathematical Institute
and the Graduate College \emph{Geometry, Analysis, and their
  Interaction with the Natural Sciences} for financial support and for
providing an excellent work environment.

Thanks also to the Geometry and Physics group---Guy Buss, Alexei
Lebedev, Christoph Sachse and Miaomiao Zhu---for feedback in our
seminar.  My special gratitude goes to Guy, Christoph and Nadine Große
for many interesting discussions over the years and for their tireless
efforts to exterminate any potential mistakes from this thesis.  Guy
in particular provided the one thing that any author should treasure
above all else---a critical eye that is as sharp-sighted as its owner
is merciless.  Of course, any and all responsibility for the continued
presence of mistakes rests solely on my own shoulders.

I would like to thank Rafe Mazzeo for welcoming me for two pleasant
and productive months at Stanford University, where Chapter
\ref{cha:almost-everywh-conv} was written.  I am obliged to Larry
Guth, who provided valuable comments on my work in several nice
discussions.

My gratitude goes to Yurii Savchuk for discussions related to Subsection
\ref{sec:pointwise-estimates}, which was the key to unlocking many
useful results.

I am thankful to my family for their love and support, and for never
having to wonder whether it might be there for me when I needed it.  I
am also indebted to the many teachers and mentors I have had in
Leipzig and in Ann Arbor, who taught me to love mathematics and gave
me the tools to follow through.  Thanks to Alex, Dave, David, Jens,
Sandra and Will for holding the rope.  Thanks to Anne and Julia for
always being there, for always putting up with the little monster, and
for making Leipzig feel like a home away from home.

And most importantly, thanks to Marie and Chaya, whose insanity was
the only thing keeping me sane.


\chapter{Introduction}\label{cha:introduction}

\section{Summary of results}\label{sec:summary-results}

Let $M$ be a smooth, closed, finite-dimensional, oriented manifold,
and denote by $\M$ the Fréchet manifold of smooth Riemannian metrics
on $M$.  There is a natural Riemannian metric on $\M$ called the $L^2$
metric and denoted by $(\cdot, \cdot)$.  It is the primary goal of
this thesis to give a description of the completion of $(\M, (\cdot,
\cdot))$.  There are three main results we will summarize here.  The
first is the following:

\begin{thm*}
  With the Riemannian distance function $d$ induced from $(\cdot,
  \cdot)$, $(\M, d)$ is a metric space.
\end{thm*}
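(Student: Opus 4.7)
The plan is to establish the three nontrivial properties of a metric: symmetry, the triangle inequality (together with $d(g,g) = 0$), and positive-definiteness. The first two are standard for any Riemannian length-structure on a manifold modeled on a topological vector space: symmetry follows from reversing paths, $d(g,g) = 0$ from the constant path, and the triangle inequality from concatenation of paths. None of these uses the strength of the inner product defining $(\cdot,\cdot)$, so they carry over unchanged from the finite-dimensional Riemannian setting. The substantive content of the theorem is positive-definiteness: $d(g_0, g_1) > 0$ whenever $g_0 \neq g_1$. This is not automatic for weak Riemannian metrics in the Fr\'echet setting, as shown by examples of vanishing distance functions on diffeomorphism groups and shape spaces.

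To prove positive-definiteness I would reduce the question to a pointwise one. For each $x \in M$, the fiber $\Matx \subset \M$ consisting of positive-definite symmetric $(0,2)$-tensors at $x$ is a finite-dimensional smooth manifold, and the integrand of $(\cdot,\cdot)$ restricts to a genuine Riemannian metric on $\Matx$ (namely the pointwise trace pairing weighted by the factor $(\det g(x))^{1/2}$ arising from $\mu_g$). This finite-dimensional Riemannian manifold is classical (it is, up to identifications, a Riemannian symmetric space), and in particular its geodesic distance $d_x$ is a genuine metric: $d_x(a,b) > 0$ whenever $a \neq b$ in $\Matx$. Given $g_0 \neq g_1$, continuity gives an open set $U \subset M$ and a constant $\delta > 0$ on which $d_x(g_0(x), g_1(x)) \geq \delta$.

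The heart of the argument is then a chain of inequalities bounding $L(g_t)$ from below by a pointwise integral. Writing $\mu_g = f_g \omega_0$ for a fixed background volume $\omega_0$ and setting $\phi_t(x) = \lvert \dot g_t(x)\rvert_{g_t(x)} f_{g_t}(x)^{1/2}$, Jensen's inequality (concavity of the square root) applied on $U$, together with Cauchy--Schwarz on $U$, gives
\[
  \Bigl(\int_M \phi_t^2\,\omega_0\Bigr)^{1/2}
  \;\geq\;
  \Bigl(\int_U \phi_t^2\,\omega_0\Bigr)^{1/2}
  \;\geq\;
  \frac{1}{\sqrt{\Vol_{\omega_0}(U)}}\int_U \phi_t\,\omega_0.
\]
Integrating in $t$ and invoking Fubini exchanges the order of integration so that $\int_0^1 \phi_t(x)\,dt$ bounds the length of the pointwise path $g_t(x)$ in $\Matx$, which in turn is at least $d_x(g_0(x), g_1(x))$. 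The resulting estimate
\[
  L(g_t) \;\geq\; \frac{1}{\sqrt{\Vol_{\omega_0}(U)}} \int_U d_x(g_0(x), g_1(x))\,\omega_0
  \;\geq\; \delta\sqrt{\Vol_{\omega_0}(U)}
\]
is independent of the path $g_t$, so taking the infimum yields $d(g_0, g_1) \geq \delta\sqrt{\Vol_{\omega_0}(U)} > 0$.

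The main obstacle is the pointwise reduction step: verifying that the fiberwise Riemannian structure on $\Matx$ is well-defined and complete (so that $d_x$ is genuinely a metric), and executing the Jensen/Cauchy--Schwarz/Fubini chain cleanly when $\mu_{g_t}$ varies along the path. Once these are in place, the positivity of $d$ on distinct metrics follows from a continuity argument that localizes the difference $g_0 - g_1$ to an open set.
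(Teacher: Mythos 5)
Your proposal is correct, and its skeleton is the same as the paper's: you bound the length of an arbitrary path from below, via a Cauchy--Schwarz/H\"older step and Fubini, by an integral over (a subset of) $M$ of the lengths of the pointwise paths $t \mapsto g_t(x)$ in a Riemannian metric on the finite-dimensional manifold $\Matx$, and then use positive definiteness of the induced fiberwise distance. This is exactly the role of $\langle \cdot, \cdot \rangle^0$, $\theta^g_x$ and $\Theta_M$ in Definitions \ref{dfn:15} and \ref{dfn:16}, Lemma \ref{lem:44}, Proposition \ref{prop:20} and Theorem \ref{thm:6}. The one genuine difference is where the Cauchy--Schwarz is applied: you use the fixed background measure $\omega_0$ restricted to $U$, so the constant $\Vol_{\omega_0}(U)^{-1/2}$ is path-independent and you never need to control $\Vol(M, g_t)$ along near-minimizing paths; the paper applies H\"older with the varying measure $\mu_{g_t}$ and therefore first proves the Lipschitz estimate for $\sqrt{\Vol}$ (Lemma \ref{lem:13}). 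Your route is leaner if one only wants the metric-space statement; the paper's version yields the quantitative bound of Proposition \ref{prop:20} (and Lemma \ref{lem:13} itself), both of which are reused repeatedly in the later construction of the completion.

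Two corrections to what you call the main obstacle. The fiber metric your computation actually uses is $\tr_{\tilde{g}}(h^2) \, (\mu_{\tilde{g}} / \omega_0)(x)$, whose weight depends on the point $\tilde{g}$ of the fiber (not the constant weight $(\det g(x))^{1/2}$); it is therefore not a rescaling of the symmetric-space metric on $\Matx$, and it is \emph{not} complete---its Cauchy sequences can run into the degenerate boundary of the cone, which is precisely the phenomenon recorded in Proposition \ref{prop:14} and Theorem \ref{thm:35} for the closely related metric $\langle \cdot , \cdot \rangle^0$. Fortunately, completeness is also not needed: the induced distance of \emph{any} Riemannian metric on a finite-dimensional manifold is positive definite, since it generates the manifold topology, and this is exactly how the paper argues for $\theta^g_x$. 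What does deserve a word in your write-up is the uniform lower bound $d_x(g_0(x), g_1(x)) \geq \delta$ on $U$ (or, if you integrate the pointwise distance directly, its measurability in $x$); a uniform local comparison of the fiber metric with a Euclidean metric on the fiber, over a compact neighborhood of a point where $g_0 \neq g_1$, settles this, while the paper sidesteps the issue by needing only positivity of the integrand on an open set (Lemma \ref{lem:44}).
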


This is indeed a theorem that needs proving, as the $L^2$ metric on
$\M$ is an example of a so-called weak Riemannian metric.  This means
that the tangent spaces of $\M$ are not complete with respect to
$(\cdot, \cdot)$, and so many general theorems from the usual theory
of Riemannian Hilbert manifolds do not hold. In this theory, typically
only so-called strong metrics are considered, with respect to which
the tangent spaces are complete.  Of course, for a strong Riemannian
Hilbert manifold, the Riemannian metric induces a metric space
structure on the manifold. However, for weak Riemannian manifolds,
there are examples
(cf.~\cite{michor05:_vanis_geodes_distan_spaces_of},
\cite{michor06:_rieman_geomet_spaces_of_plane_curves}) where this does
not hold, as the distance between some points may be zero.  Therefore,
one must explicitly prove that a given weak Riemannian manifold is a
metric space.

Given a metric space structure on $\M$, we know that it has a
completion, and the second result---which can be seen as \emph{the}
main result of the thesis---gives a concrete description of this.  Let
$\M_f$ denote the set of semimetrics on $M$ (i.e., sections of the
bundle $S^2 T^* M$ that induce a positive semidefinite scalar product
on each tangent space of $M$) that have measurable coefficients and
finite volume.  Define an equivalence relation on $\M_f$ by saying
$g_0 \sim g_1$ if the following statement holds for almost every $x
\in M$: if $g_0(x)$ and $g_1(x)$ differ, then both $g_0(x)$ and
$g_1(x)$ fail to be positive definite.  If we let $\overline{\M}$
denote the completion of $\M$ with respect to $(\cdot, \cdot)$, then:

\begin{thm*}
  There is a natural bijection $\Omega : \overline{\M} \rightarrow
  \M_f / {\sim}$ that is the identity when restricted to $\M \subset
  \overline{\M}$.
\end{thm*}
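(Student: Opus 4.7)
The plan is to construct $\Omega$ by sending a $d$-Cauchy sequence in $\M$ to the pointwise almost-everywhere limit of a suitable subsequence. The first ingredient I would need is a comparison between the Riemannian distance $d$ and more tractable $L^2$-type quantities on the coefficients of the metrics. Specifically, I expect to prove that if $g_0,g_1\in\M$ are close in $d$, then on most of $M$ (in the measure-theoretic sense) their pointwise difference is small, in a sense that can be integrated against the background volume. The acknowledgment of the ``pointwise estimates'' subsection suggests that exactly such estimates are developed earlier in the thesis, and I would lean on them heavily here.

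Armed with such estimates, I would proceed as follows. Given a $d$-Cauchy sequence $(g_n)\subset\M$, I would extract a subsequence along which $g_n(x)$ converges for almost every $x\in M$ to some symmetric $(0,2)$-tensor $g_\infty(x)$, which is necessarily positive semidefinite; Fatou's inequality applied to $\sqrt{\det g_n}$ (with respect to a fixed smooth reference metric) then bounds the volume of $g_\infty$, placing $g_\infty\in\M_f$. I would define $\Omega$ on the class of $(g_n)$ in $\overline{\M}$ to be $[g_\infty]\in\M_f/{\sim}$. The first nontrivial check is that this is independent of both the Cauchy sequence representing a point of $\overline{\M}$ and the extracted subsequence: if two Cauchy sequences have $d$-distance tending to zero, interleaving them produces a single Cauchy sequence, and the pointwise estimates force the two a.e.\ limits to coincide wherever each is positive definite, which is exactly the relation $\sim$.

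For surjectivity, given $g\in\M_f$, I would construct an approximating sequence of smooth metrics by first adding a small multiple $\varepsilon_n g_{\mathrm{ref}}$ of a fixed smooth reference metric to ensure positive definiteness, then mollifying the measurable coefficients, and finally letting $\varepsilon_n\to 0$ on a carefully chosen schedule tied to the mollification scale. The task is to verify that this sequence is $d$-Cauchy and that its a.e.\ limit is $\sim$-equivalent to $g$. I expect this to be the main obstacle: mollification is delicate when the coefficients are only measurable and when $g$ degenerates on a set of positive measure, and one must show that the $d$-cost of the regularization near the degeneration locus can be made arbitrarily small. The finite-volume hypothesis on $\M_f$, together with dominated or monotone convergence applied to the $d$-distance integrand, should ultimately make the cost controllable. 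Injectivity of $\Omega$ is essentially dual to surjectivity: if two points of $\overline{\M}$ are sent to the same class in $\M_f/{\sim}$, I interleave their defining Cauchy sequences and use the surjectivity construction backwards to conclude that their $d$-distance in $\overline{\M}$ vanishes. Finally, to confirm $\Omega$ restricts to the identity on $\M$, note that a constant sequence $g_n\equiv g\in\M$ is trivially $d$-Cauchy with a.e.\ limit $g$ itself, and two smooth metrics cannot be $\sim$-equivalent unless they are equal.
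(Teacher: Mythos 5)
Your overall plan (limit of a subsequence, well-definedness, injectivity, surjectivity) matches the paper's architecture, but the step on which everything else rests fails as stated. You propose to send a $d$-Cauchy sequence to the pointwise a.e.\ limit of a subsequence, justified by an estimate of the form ``$d$-closeness implies the coefficients are pointwise close on most of $M$.'' No such estimate holds: the paper's Proposition on volume-based bounds shows $d(g_0,g_1) \leq C(n)\bigl(\sqrt{\Vol(E,g_0)}+\sqrt{\Vol(E,g_1)}\bigr)$ where $E$ is the set where the metrics differ, so two metrics can differ wildly everywhere and still be $d$-close, provided the volumes are small. Consequently a $d$-Cauchy sequence need not admit \emph{any} subsequence converging pointwise a.e.: the coefficients may oscillate or blow up on the set where the sequence deflates (the paper's explicit examples $g_k=\mathrm{diag}(|\cos k|,k^{-1})$ and $g_t=\mathrm{diag}(e^{rt},e^{-st})$ on $T^2$). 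This is precisely why the paper introduces $\omega$-convergence, demanding pointwise convergence only off the deflated set and declaring the limit degenerate there, and why the correct substitute for your pointwise estimate is the pointwise Riemannian distance $\theta^g_x$ on $\Matx$ (which itself degenerates as $\det\to 0$) together with the integrated pseudometric $\Theta_M$ and the bound $\Theta_M \lesssim d\,(\sqrt{n}\,d + 2\sqrt{\Vol})$; the existence of the limit then comes from the dichotomy that an a.e.\ $\theta^g_x$-Cauchy sequence either converges in $\Matx$ or has determinant tending to zero. Without this machinery your map $\Omega$ is not even defined.

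The other two pillars are also not carried by the arguments you sketch. For injectivity, ``interleave the sequences and run the surjectivity construction backwards'' is circular: if two Cauchy sequences $\omega$-converge to the same class, the interleaved sequence being $d$-Cauchy is exactly what must be proved. In the paper this is the second uniqueness theorem, and it is genuinely hard: it needs the theory of amenable subsets (where $d$ and $\|\cdot\|_g$ have the same Cauchy sequences), the extension of the small-volume estimate to their completions, and a thick-thin decomposition $M = E_l \cup (M\setminus E_l)$ in which the contribution of $E_l$ is killed by $L^2$-convergence and that of $M\setminus E_l$ by the volume estimate. For surjectivity, your mollification-plus-$\varepsilon g_{\mathrm{ref}}$ scheme leaves the main difficulty untouched: $d$ is an infimum of path lengths, not an integral of a fixed integrand, so ``dominated convergence applied to the $d$-distance integrand'' does not control the cost of regularizing near the degeneration locus, and for unbounded coefficients mollification does not converge in any norm that controls $d$. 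The paper instead proves finiteness of the length of the straight-line path $g_0+th$ with $h>0$ (the integrand has an integrable $t^{-3/2}$ singularity), treats bounded semimetrics via quasi-amenable subsets, and reduces the unbounded case to the flat geometry of a conformal orbit: writing $\tilde g=\rho g_0$ with $g_0$ bounded and $\rho\in L^{n/2}$, the conformal factor corresponds to an $L^2(\mu_{g_0})$ function, smooth functions are dense there, and an isometry (up to $\sqrt n$) of the conformal orbit converts $L^2$-approximation into $d$-approximation. These ideas are absent from your proposal, and without them the surjectivity and injectivity steps do not go through.
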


This completion fits in with the general philosophy that in order to
complete a space of objects, one must allow objects of a somewhat more
general type.  Note that we start with smooth metrics, yet in order to
complete $\M$, we must add in points corresponding to metrics with far
worse properties.  This essentially arises from the fact that the
$L^2$ metric---as its name implies---induces the $L^2$ topology on the
tangent spaces of $\M$, which themselves only consist of smooth
objects.  Thus, the extreme incompleteness of the tangent spaces is
reflected in the incompleteness of the space $\M$ itself.

The final result we describe here is an application of the completion
of $\M$ to Teichmüller theory.  If the base manifold $M$ is
additionally assumed to be a Riemann surface of genus larger than one,
then the Teichmüller space $\T$ of $M$ can be identified with the
space of conformal classes of metrics on $M$ modulo $\DO$, by which we
denote the diffeomorphisms of $M$ that are homotopic to the identity.
Let $\Ncal$ be a smooth submanifold of $\M$ which is invariant under
the action (by pull-back) of the diffeomorphism group, and which
contains exactly one representative from each conformal class.  Then
we have a diffeomorphism $\T \cong \Ncal / \DO$, and the $L^2$ metric
restricted to $\Ncal$ induces a Riemannian metric on $\T$.  As a
corollary of the last theorem, we have:

\begin{thm*}
  Each point in the completion of $\T$ with respect to the Riemannian
  metric described above can be identified with an element of $\M_f /
  {\sim}$.  This identification is not unique.
\end{thm*}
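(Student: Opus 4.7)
The plan is to exploit the previously established identification $\Omega : \overline{\M} \to \M_f/{\sim}$. Because $\Ncal$ is a submanifold of $\M$ carrying the restriction of the $L^2$ metric, any sequence in $\Ncal$ which is Cauchy for the Riemannian distance $d_\Ncal$ is also Cauchy for $d$ on $\M$ (since paths in $\Ncal$ are admissible in $\M$, so $d_\Ncal \ge d|_{\Ncal}$). This yields a continuous extension $\overline{\Ncal} \to \overline{\M}$ of the inclusion $\Ncal \hookrightarrow \M$. Moreover, the group $\DO$ acts on $\Ncal$ by pull-back, and because pull-back by a diffeomorphism is a pointwise isometry of the $L^2$ inner product, this action is isometric and extends by continuity to $\overline{\Ncal}$, and therefore gives a well-defined action on the image inside $\overline{\M} \cong \M_f/{\sim}$.

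Given a point $[\tau]$ in the completion of $\T$, represent it by a Cauchy sequence $([g_n])$ in the quotient distance $d_\T$ on $\T = \Ncal/\DO$. Lift inductively: having chosen $g_n \in \Ncal$ in the class $[g_n]$, pick $g_{n+1} \in [g_{n+1}]$ with
\[
  d_\Ncal(g_n, g_{n+1}) \le d_\T([g_n], [g_{n+1}]) + 2^{-n},
\]
which is possible because $d_\T$ is the infimum of $d_\Ncal$ over $\DO$-orbit pairs. A standard triangle-inequality argument shows that $(g_n)$ is then $d_\Ncal$-Cauchy, hence $d$-Cauchy in $\M$, and so converges under $\Omega$ to a class in $\M_f/{\sim}$. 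This class is the claimed identification of $[\tau]$.

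The non-uniqueness enters at two levels. First, the starting representative $g_1$ is arbitrary, and any other choice in the orbit $\DO \cdot g_1$ generates a parallel lift whose limit is, generically, a different element of $\M_f/{\sim}$. Second, the $2^{-n}$ slack in the inductive step permits genuinely distinct Cauchy sequences with different limits. These limits are all related by the extended $\DO$-action on the image of $\overline{\Ncal}$ in $\overline{\M}$, but since this action need not be proper on the completion, they need not coincide as elements of $\M_f/{\sim}$.

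The principal obstacle is the familiar non-commutativity of completion and quotient: $\overline{\Ncal/\DO}$ is not a priori $\overline{\Ncal}/\DO$. What rescues the argument is precisely the isometric nature of the $\DO$-action, which makes the slack-lifting step produce a \emph{genuine} Cauchy sequence in $\Ncal$ from any Cauchy sequence in the quotient. A secondary technical point is to verify that the Riemannian distance descended from $\Ncal$ to $\T$ really equals the quotient pseudo-distance used above; this follows from isometry of the action together with transversality of $\Ncal$ to the $\DO$-orbits. Together these two verifications form the technical heart of the proof, and they are also what compel the identification to be non-unique rather than canonical.
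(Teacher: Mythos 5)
Your overall strategy---lift a Cauchy sequence from $\T \cong \Ncal / \DO$ to a $d_\Ncal$-Cauchy sequence in $\Ncal$, note that $d_\Ncal \geq d|_{\Ncal}$ so the lift is $d$-Cauchy in $\M$, and then apply the bijection $\Omega : \overline{\M} \rightarrow \Mfhat$---is the same as the paper's. The genuine gap is in your inductive lifting step. To choose $g_{n+1}$ in the fiber over $[g_{n+1}]$ with $d_\Ncal(g_n, g_{n+1}) \leq d_\T([g_n],[g_{n+1}]) + 2^{-n}$, you need the inequality
\begin{equation*}
  \inf \left\{ d_\Ncal(g_n, y) \mid y \in \pi_\Ncal^{-1}([g_{n+1}]) \right\} \leq d_\T([g_n], [g_{n+1}]),
\end{equation*}
i.e., that the quotient pseudo-distance of $d_\Ncal$ is dominated by the Riemannian distance of the generalized Weil--Petersson metric. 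Isometry of the $\DO$-action only lets you fix one endpoint and vary the other within its orbit; it does not give this inequality, and neither does ``transversality of $\Ncal$ to the $\DO$-orbits.'' The only available route is to take a path $\gamma$ in $\T$ of length close to $d_\T([g_n],[g_{n+1}])$ and produce a path in $\Ncal$ starting at $g_n$, ending in the correct fiber, of no greater length---that is, an (at least approximate) horizontal lift. For weak Riemannian principal bundles such lifts do not exist in general, as the paper stresses, and proving that they do exist for $\Ncal \rightarrow \Ncal / \DO$ is precisely the technical heart of the paper's argument: it uses the slice theorem for the $\DO$-action (Theorem \ref{thm:9}), the construction of a horizontal path equivalent to a given path by integrating a time-dependent vector field (Lemma \ref{lem:2}), the absence of Killing fields on a surface of genus $p \geq 2$ for uniqueness (Lemma \ref{lem:6}), and then Theorems \ref{thm:16} and \ref{thm:42}. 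Your proposal relegates all of this to ``a secondary technical point,'' so as written the decisive step is unproven.

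Once horizontal lifts are granted, the rest of your argument is sound and essentially reproduces the paper's proof of Theorem \ref{thm:43}: the paper lifts nearly length-minimizing connecting paths horizontally, obtaining $d_\Ncal(\tilde{g}_k, \tilde{g}_{k+1}) \leq 2\,\delta([g_k],[g_{k+1}])$, which plays the role of your $2^{-n}$ slack, and then invokes Theorem \ref{thm:41}. Your account of the non-uniqueness (the residual freedom in the choice of lifted representatives, e.g.\ moving the initial lift by an element of $\DO$, producing different elements of $\Mfhat$) is consistent with what the paper intends, though the paper makes it precise through the second and third assertions of Theorem \ref{thm:43} rather than through an extended $\DO$-action on the completion.
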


The metrics on $\T$ we have just constructed generalize the
Weil-Petersson metric on Teichmüller space, and this theorem
generalizes what is already known about the completion of Teichmüller
space with respect to the Weil-Petersson metric.

\section{Motivation}\label{sec:motivation}

The original motivation for studying this problem comes from
Teichmüller theory, and that is why this application in particular is
given.  Let us describe how our considerations arose from similar ones
in Teichmüller theory.

As above, let the base manifold $M$ be a Riemann surface of genus
greater than one.  Consider the group $\pos$ of positive functions on
$M$; it acts on $\M$ by pointwise multiplication.  The quotient space
$\M / \pos$ is a smooth manifold, called the manifold of conformal
classes on $M$.  Furthermore, the pull-back action of a diffeomorphism
on $\M$ descends to an action on $\M / \pos$.

Fischer and Tromba \cite{tromba-teichmueller} have given a description
of Teichmüller space $\T$ in this context.  They show that there
exists a diffeomorphism
\begin{equation*}
  \T \cong (\M / \pos) / \DO.
\end{equation*}
Thus, Teichmüller theory can be considered, in their words, in a
``purely Riemannian'' way.

A crucial step in Fischer and Tromba's approach is using the Poincaré
uniformization theorem to show a diffeomorphism between $\M / \pos$
and the space $\Mhyp$ of hyperbolic metrics (those with constant
scalar curvature $-1$) on $M$.  By the Poincaré uniformization
theorem, there exists exactly one hyperbolic metric in each conformal
class on $M$.  Thus, Teichmüller space can just as well be described
as
\begin{equation*}
  \T \cong \Mhypd.
\end{equation*}

The advantage of using $\Mhyp$ is that the submanifold $\Mhyp \subset
\M$ is easier to work with than the quotient space $\M / \pos$.
Furthermore, the above construction allows us to define a metric on
Teichmüller space by first restricting the $L^2$ metric to $\Mhyp$ and
then looking at the metric it induces on the quotient $\Mhypd$, and of
course also on $\T$.  The metric thus defined coincides, up to a
constant scalar factor, with the well-known Weil-Petersson metric on
Teichmüller space.

The Weil-Petersson metric has been the object of much study, and its
completion has very interesting properties.  Wolpert
\cite{wolpert75:_noncom_of_weil_peter_metric_for_space} and Chu
\cite{chu76:_weil_peter_metric_in_modul_space} independently proved
that it is incomplete, as there are geodesics that cannot be
indefinitely extended---in finite time, they hit a singular limit
surface.

Masur \cite{masur-extension} computed the asymptotics of the
Weil-Petersson metric as one approaches the boundary of Teichmüller
space.  He did so in order to describe an extension of the metric to
the completion of Teichmüller space.  The completion of Teichmüller
space with respect to the Weil-Petersson metric also induces a
compactification of the moduli space of $M$, and this compactification
coincides with the Deligne-Mumford compactification, which arises in
the context of algebraic geometry
\cite{deligne69:_irred_of_space_of_curves}.  Thus, the Weil-Petersson
metric links the differential geometric and algebraic geometric
approaches to moduli space, which on the surface seem quite disparate.

Habermann and Jost \cite{hj-riemannian},
\cite{habermann98:_metric_rieman_surfac_and_geomet} later generalized
the Weil-Petersson metric in the following way.  The correspondence
between $\M / \pos$ and $\Mhyp$ is basically given by the fact that
$\Mhyp$ is a smooth global section of the principal $\pos$-bundle
\begin{equation*}
  \M \rightarrow \M / \pos.
\end{equation*}
The question that naturally arises is, what if one were to take a
different section of this bundle?  To retain the correspondence with
Teichmüller and moduli space, the section should be smooth and
invariant under the diffeomorphism group, but as long as these
requirements are satisfied, any section will give a metric on
Teichmüller space.  Note that though we take direct inspiration from
Habermann and Jost, the authors did not treat this exactly the same
way as we described in Section \ref{sec:summary-results}, but rather
retained some of the structures from the complex analytic definition
of Teichmüller space (described, e.g., in \cite{imayoshi-taniguchi}).
The construction we described in Section \ref{sec:summary-results} is,
in the spirit of Fischer and Tromba, a purely Riemannian one, and it
was chosen primarily because it allows us to directly apply our main
result.  The differences between our construction and that of
Habermann and Jost are described in Chapter
\ref{cha:appl-teichm-space}.

In \cite{hj-riemannian}, Habermann and Jost first considered the
section given by the so-called Bergman metric in each conformal class.
They gave a description of the completion of Teichmüller space with
respect to their generalization of the Weil-Petersson metric for this
special case.  In \cite{habermann98:_metric_rieman_surfac_and_geomet},
they considered all possible choices of sections, giving a sufficient
analytic criterion for incompleteness of their generalized
Weil-Petersson metric.

Thus, our application as described in Section
\ref{sec:summary-results} is in the same spirit as the papers of
Habermann and Jost, and we expect that our theorems will have other,
similar applications, especially to Teichmüller theory.  That we have
nevertheless chosen to prove the other main results listed in Section
\ref{sec:summary-results} for base manifolds of all dimensions and
topologies has various reasons.  First, the generalization to
arbitrary dimension was mostly straightforward.  Second, the manifold
of Riemannian metrics on an $n$-dimensional manifold arises in various
other contexts, and it is possible that our theorems might find
applications there.  The manifold of metrics has been considered in
general relativity by, e.g., DeWitt
\cite{dewitt67:_quant_theor_of_gravit}.  Furthermore, critical points
of functionals on the manifold of metrics have been used to determine
``best metrics'' on a given base manifold---for a nice survey of this
topic with compendious references, see
\cite[Ch.~11]{berger03:_panor_view_of_rieman_geomet}.  Finally, the
manifold of metrics is itself of great intrinsic interest, as it
exhibits interesting geometry.  We will review the work that has been
done on this last aspect in the next section.

\section{Overview of previous work}\label{sec:overv-prev-work}

Geometric structures on the manifold of metrics were perhaps first
considered by DeWitt \cite{dewitt67:_quant_theor_of_gravit}, who, as
mentioned above, was interested in applications to general relativity.
The metric on $\M$ considered by DeWitt is quantitatively similar to
the $L^2$ metric, but has different signature.

Ebin \cite{ebin70:_manif_of_rieman_metric} shortly thereafter used the
$L^2$ metric on $\M$ to obtain local slices for the action of the
diffeomorphism group on $\M$.  He used this, for one, to obtain results
about the topology of so-called \emph{superspace}, which is the
quotient $\M / \D$ of $\M$ by the group $\D$ of smooth,
orientation-preserving diffeomorphisms of $M$.  Superspace can be
viewed as the space of Riemannian geometries on $M$.  Ebin also used
his slice theorem to show that the set of metrics $\M'$ with trivial
isometry group is an open, dense subset of $\M$.

Later, Freed and Groisser \cite{freed89:_basic_geomet_of_manif_of}
studied the basic geometry of $\M$ with the $L^2$ metric.  They
computed the curvature and geodesics of $\M$ and two related
manifolds, the submanifold $\M_\mu \subset \M$ of metrics inducing a
fixed volume form $\mu$, and the manifold $\V$ of smooth volume forms
on $M$.  Additionally, Freed and Groisser used their results to study
the curvature and geodesics of the quotient manifold $\M' / \D$, where
$\M'$ is again the set of metrics with trivial isometry group.

Gil-Medrano and Michor
\cite{gil-medrano91:_rieman_manif_of_all_rieman_metric} generalized
the results of Freed and Groisser on $\M$ to the case of base
manifolds $M$ that are not necessarily compact.  Though the Ricci and
scalar curvature of $\M$ cannot be defined in the usual way, they
define and compute curvatures on $\M$ which they call ``Ricci-like''
and ``scalar-like'' curvature.  Moreover, Gil-Medrano and Michor give
a detailed analysis of the exponential mapping, which serves as an
excellent illustration of the problems that can arise when considering
weak instead of strong Riemannian manifolds.  They also prove the
existence and uniqueness of Jacobi fields on $\M$, and give an
explicit expression for these fields.

A generalized version of
\cite{gil-medrano91:_rieman_manif_of_all_rieman_metric} is the paper
\cite{gil-medrano92:_pseud_metric_spaces_of_bilin_struc} by
Gil-Medrano, Michor, and Neuwirther.  The results of this paper are
also given in \cite[\S 45]{kriegl97:_conven_settin_of_global_analy}.

We will rely heavily on the work of the above-mentioned authors and
are indebted to all of them for laying the foundations upon which this
thesis is built.

\section{Outline of the thesis}\label{sec:outline-thesis}

The thesis is arranged as follows.  In Chapter
\ref{cha:preliminaries}, we summarize the preliminary knowledge
necessary to carry out and understand the work that we will do in the
remainder of the thesis.  We begin with a discussion of the completion
of a metric space, which is meant to recall fundamental results on
this topic and collect all the facts we will need into a coherent
form.  Following that, we give the definition of Fréchet manifolds.
This is the category in which we will work, and we describe how spaces
of smooth mappings, like $\M$, can be viewed as Fréchet manifolds.  We
then go over a few somewhat nonstandard facts from Riemannian geometry
for which we could find no complete reference.

In Chapter \ref{cha:preliminaries}, we also discuss weak Riemannian
manifolds, a class of manifolds that includes $(\M, (\cdot , \cdot))$,
as we already mentioned.  In particular, we sketch an example by
Michor and Mumford
\cite{michor06:_rieman_geomet_spaces_of_plane_curves} of the
potentially pathological properties of such manifolds, as well as
giving our own proofs of some standard results from the theory of
Riemannian Hilbert manifolds that we have weakened so that they hold
for weak Riemannian manifolds as well.  With knowledge of these
structures at hand, we then go into details on the manifold of metrics
itself, more explicitly describing many of the previously known facts
mentioned in Section \ref{sec:overv-prev-work}.  Chapter
\ref{cha:preliminaries} closes with a list of conventions and notation
that we use throughout the thesis.

In Chapter \ref{cha:init-metr-prop}, we begin by proving the first of
the main results given in Section \ref{sec:summary-results}, namely
that $\M$ with its $L^2$ metric has the structure of a metric space.
One of the steps in this proof, which is also of use in later
chapters, is the fact that the function on $\M$ assigning to a metric
the square root of its total volume is Lipschitz.  The second half of
the chapter initiates the study of the completion of $(\M, (\cdot,
\cdot))$, where we first try to complete ``nice'' subspaces of $\M$.
We show that if we take a subset of metrics satisfying certain
uniformity conditions, then the completion of such a subset with
respect to $(\cdot, \cdot)$ coincides with its completion with respect
to the $L^2$ norm (not to be confused with the $L^2$ metric).  This
fact is used as a springboard for our further investigations of the
completion.

The completion of a metric space is a quotient space of the set of
Cauchy sequences in the space.  Since we wish to identify
$\overline{\M}$ with (a quotient of) the space $\M_f$ of measurable,
finite-volume, positive semidefinite sections of $S^2 T^* M$, we need
a rigorous notion for how a Cauchy sequence in $\M$ converges to an
element of $\M_f$.  This notion, which we call $\omega$-convergence,
is described in Chapter \ref{cha:almost-everywh-conv}, where we prove
that every Cauchy sequence in $\M$ subconverges to a unique element of
$\M_f / {\sim}$ (cf.~Section \ref{sec:summary-results}).  This allows
us to define the map $\Omega : \overline{\M} \rightarrow \Mfhat$
mentioned in Section \ref{sec:summary-results}, as well as to show
that $\Omega$ is an injection.  This chapter is the most technically
challenging of the thesis.

In the definition of $\omega$-convergence, we basically have pointwise
convergence of the metrics in a Cauchy sequence $\{g_k\}$ almost
everywhere, with the exception that on any set $E$ with $\Vol(E, g_k)
\rightarrow 0$, there is no convergence required and none can be asked
for.  The reason for this is the following proposition, which is in
our eyes one of the most striking and unexpected results of the
thesis:

\begin{prop*}
  Suppose that $g_0, g_1 \in \M$, and let $E := \carr (g_1 - g_0) = \{
  x \in M \mid g_0(x) \neq g_1(x) \}$.  Let $d$ be the Riemannian
  distance function of the $L^2$ metric $(\cdot, \cdot)$.  Then there
  exists a constant $C(n)$ depending only on $n := \dim M$ such that
  \begin{equation*}
    d(g_0, g_1) \leq C(n) \left( \sqrt{\Vol(E, g_0)} +
      \sqrt{\Vol(E,g_1)} \right).
  \end{equation*}

  In particular, we have
  \begin{equation*}
    \diam \left( \{ \tilde{g} \in \M \mid \Vol(M, \tilde{g}) \leq
      \delta \} \right) \leq 2 C(n) \sqrt{\delta}.
  \end{equation*}
\end{prop*}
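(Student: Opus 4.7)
The plan is to bound $d(g_0, g_1)$ by constructing, for each small $\epsilon > 0$ and compact $K \subset E$, an explicit smooth path from $g_0$ to $g_1$ in $\M$ whose length I can estimate directly; taking $\epsilon \to 0$ and $K \uparrow E$ will then yield the claimed bound, since $d$ is an infimum over path lengths. The path has three phases: first shrink $g_0$ on $E$ down to a very small metric, then interpolate on $E$ at that small scale to $g_1$, then grow $g_1$ back up. Outside $E$, where $g_0 = g_1$, the metric stays fixed throughout.

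For the shrinking step (Phase~I), pick a smooth $\phi: M \to [\epsilon^2, 1]$ with $\phi \equiv 1$ on a neighborhood of $M \setminus E$ and $\phi \equiv \epsilon^2$ on $K$, and take the conformal path $g_t = \alpha(t,x) g_0(x)$ with $\alpha$ defined by $\alpha^{n/4} = (1-t) + t \phi^{n/4}$. Using $g_t^{-1} \dot g_t = (\dot\alpha/\alpha) I$ and $d\mu_{g_t} = \alpha^{n/2} d\mu_{g_0}$, a direct computation yields
\begin{equation*}
\int_M \tr\bigl((g_t^{-1} \dot g_t)^2\bigr) \, d\mu_{g_t} = n \int_M (\dot\alpha)^2 \alpha^{n/2 - 2} \, d\mu_{g_0} = \frac{16}{n} \int_E (\phi^{n/4} - 1)^2 \, d\mu_{g_0},
\end{equation*}
the essential point being that this specific parameterization makes the right-hand side \emph{independent of $t$}. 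Since $0 < \phi \leq 1$, Phase~I has length at most $(4/\sqrt{n}) \sqrt{\Vol(E, g_0)}$, and the symmetric Phase~III (from $\phi g_1$ back to $g_1$) has length at most $(4/\sqrt{n}) \sqrt{\Vol(E, g_1)}$.

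Phase~II connects $\phi g_0$ to $\phi g_1$ along the straight line $h_t = \phi \cdot \bigl((1-t) g_0 + t g_1\bigr)$. The key cancellation is that $h_t^{-1} \dot h_t = ((1-t) g_0 + t g_1)^{-1}(g_1 - g_0)$ is independent of $\phi$, so only the volume element $d\mu_{h_t} = \phi^{n/2} d\mu_{h_t^*}$ picks up the shrinking factor. Because $\phi^{n/2} \leq \epsilon^n$ on $K$ and $\phi \leq 1$ elsewhere, the length of Phase~II is bounded by a constant depending on $g_0, g_1$ times $\sqrt{\epsilon^n + \Vol(E \setminus K, g_0) + \Vol(E \setminus K, g_1)}$, which vanishes as $\epsilon \to 0$ and then $K \uparrow E$. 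Summing the three phases gives $d(g_0, g_1) \leq (4/\sqrt{n}) \bigl(\sqrt{\Vol(E, g_0)} + \sqrt{\Vol(E, g_1)}\bigr)$; the diameter bound is then immediate from applying this to any two metrics of total volume at most $\delta$, since $\Vol(E, \tilde g_i) \leq \Vol(M, \tilde g_i) \leq \delta$.

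The main obstacle is identifying the correct parameterization for Phase~I. A naive linear interpolation $g_t = (1-t) g_0 + t \phi g_0$ produces a messy $t$-integral with no obvious bound of the required form; the particular rescaling with $\alpha^{n/4}$ linear in $t$, by contrast, makes the integrand time-independent and produces the clean constant $4/\sqrt{n}$. This reflects the fact that pointwise conformal rescalings are (up to reparameterization) geodesics in the conformal fiber of the natural bundle $\M \to \V$ studied by Freed-Groisser. Once Phase~I is handled correctly, the cutoff $\phi$ and the exhaustion by compacts $K$ are standard devices, and the $\phi^{n/2}$ factor in Phase~II automatically makes the interpolation phase negligible, which is what ultimately forces the distance bound to depend only on the volumes of $E$.
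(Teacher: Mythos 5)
Your proposal is correct and takes essentially the same route as the paper's proof: the identical three-piece family of paths (conformally shrink $g_0$ on an inner approximation of $E$, interpolate between the shrunken metrics, then grow back to $g_1$), with smooth cutoff functions and a double limit in the shrinking parameter and the approximation of $E$. The only substantive difference is your parameterization of the shrink phase with $\alpha^{n/4}$ linear in $t$, which makes the speed $t$-independent and gives the explicit constant $4/\sqrt{n}$, whereas the paper uses the straight line $\bigl((1-t)+t f_{k,s}\bigr) g_0$ and a case split between $n \geq 4$ and $n \leq 3$ to bound the resulting $t$-integral.
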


\begin{figure}[t]
  \centering
  \includegraphics[width=11cm]{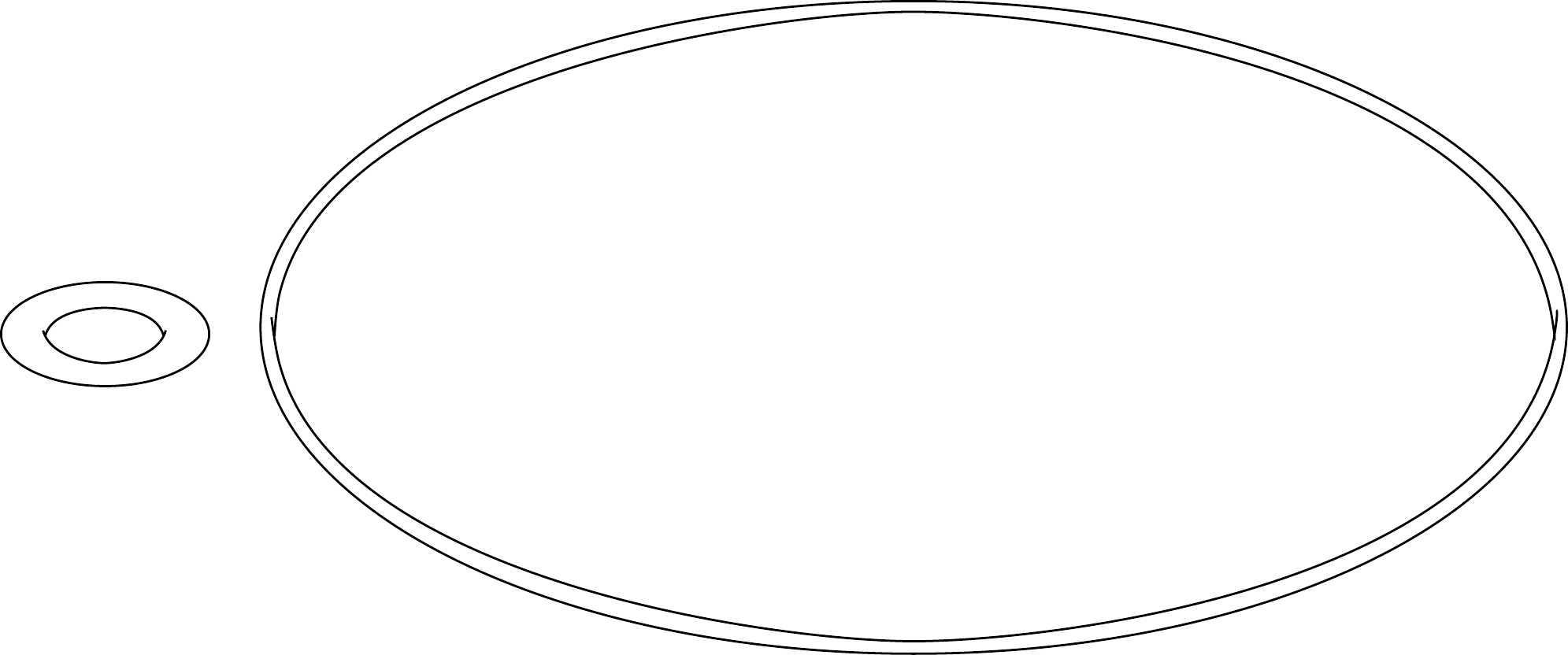}
  \caption{Two tori that are close together in $\M$ (with base
    manifold $M = T^2$) purely by virtue of having small volume.}
  \label{fig:tori}
\end{figure}

The surprising thing about this proposition is that it says that two
metrics can vary wildly, but as long as they do so on a set that has
small volume with respect to each, they are close together in the
$L^2$ metric.  For example, if $M = T^2$, the two-dimensional torus,
with its standard chart ($[0,1] \times [0,1]$ with edges identified),
we consider the metrics
\begin{equation*}
  g_0 =
  \begin{pmatrix}
    10 & 0 \\
    0 & 10^{-5}
  \end{pmatrix}
  \qquad
  g_1 =
  \begin{pmatrix}
    10^{10} & 0 \\
    0 & 10^{-14}
  \end{pmatrix}.
\end{equation*}
By the above proposition, these two very different metrics satisfy
$d(g_0, g_1) \leq C(n) / 100$, simply because they define tori with
small volume.  The difference between the geometries defined by $g_0$
and $g_1$ is depicted very qualitatively in Figure \ref{fig:tori}.

The above proposition is the reason why, in the second theorem of
Section \ref{sec:summary-results}, we identify $\overline{\M}$ with a
quotient space of the space $\M_f$ of semimetrics with measurable
coefficients and finite volume, instead of $\M_f$ itself.  The reasons
for this are discussed in more detail in Chapter
\ref{cha:almost-everywh-conv}.

In Chapter \ref{chap:sing-metrics}, we complete the proof of the
second main result of Section \ref{sec:summary-results} by showing
that the map $\Omega : \overline{\M} \rightarrow \Mfhat$ is a
surjection.  Combined with the already mentioned results of Chapter
\ref{cha:almost-everywh-conv}, we thus see that $\Omega$ is a
bijection, proving the main result on the completion of $\M$.

Finally, in Chapter \ref{cha:appl-teichm-space}, we give a more
detailed overview of the aspects of Teichmüller theory mentioned in
Section \ref{sec:motivation}.  One novelty of our presentation of this
well-tread area of mathematics is a compact and relatively elementary
proof of the existence of horizontal lifts for the principal bundle
$\Mhyp \rightarrow \Mhypd$.  We note, though, that this existence has
been long-known to experts in the field.

After presenting the known facts about Teichmüller theory and the
Weil-Petersson metric that we need, we give the generalizations of the
Weil-Petersson metric mentioned in Section \ref{sec:summary-results},
rigorously stating and proving the result on the completion of
Teichmüller space with respect to these metrics.

At this point, we would like to draw the reader's attention to two
reading aids that should help to avoid confusion.  First, on page
\pageref{cha:metr-conv-noti}, we lay out the relations between the
various Riemannian metrics, distance functions and convergence notions
used in the thesis.  The second aid is the list of symbols on page
\pageref{cha:list-frequently-used}, where we have attempted to include
all symbols used with any frequency throughout the text.  We hope that
that these two guides provides the reader with at least a trail of
bread crumbs to avoid getting lost while navigating the thesis.

\section{Outlook}\label{sec:outlook}

We have given just one application of the main result of our thesis,
the application to Teichmüller theory.  However, we envision more
applications to arise in the future, in particular applications to
determining the completion of superspace $\M / \D$, the space of
Riemannian geometries on $M$ mentioned in Section
\ref{sec:overv-prev-work}.

In particular, the $L^2$ metric is invariant under the pull-back
action of the group $\D$ of orientation-preserving diffeomorphisms of
$M$ (see Section \ref{sec:hyperb-metr-unif}), and so it induces a
well-defined distance function on the quotient.  Note that we do not
get a Riemannian metric, since the quotient is a singular space due to
non-freeness of the $\D$-action at any metric with nontrivial isometry
group.  Of course, we nevertheless hope that information about the
completion of $\M$ can lead us to information about the completion of
$\M / \D$.

These results are not immediate, however, for a number of reasons.
The simple fact that $\M$ is a metric space does not necessarily imply
that the orbit space $\M / \D$ carries a metric space structure as
well---the induced distance function may only be a pseudometric, as
\emph{a priori} two $\D$-orbits may be infinitesimally close to one
another.  The singular nature of $\M / \D$ makes it difficult to
relate distances on $\M / \D$ to those on $\M$.  Here, the existence
of Ebin's slice \cite{ebin70:_manif_of_rieman_metric} might be
helpful.  Alternatively, one could adopt the philosophy analogous to
using Teichmüller space for studying moduli space and first study an
intermediate, smooth space like Fischer's resolution of the
singularities of $\M / \D$
\cite{fischer86:_resol_singul_in_space_of_rieman_geomet}.

These considerations are, however, extremely preliminary, and are
merely given to illustrate one potential future direction this work
might lead us in.


\chapter{Preliminaries}\label{cha:preliminaries}

In this chapter, we define and explore the concepts necessary to carry
out the main body of the work.  The chapter is structured as follows:

We go over the most basic material in Section
\ref{sec:compl-metr-spac}, where we briefly recall the definitions and
fundamental facts regarding completions of metric spaces.  We also
give an alternative definition of the completion of a metric space
that is more suited to studying Riemannian manifolds.

We give a definition of Fréchet manifolds in Section
\ref{sec:frechet-manifolds}, since this will be the category in which
we work.  We go into depth on the class of Fréchet manifolds that
plays the greatest role in global analysis, that of manifolds of
mappings (actually, manifolds of sections of finite-dimensional fiber
bundles).

In Section \ref{sec:geom-prel}, we briefly review some of the geometry
that will be needed for the subsequent portions of the thesis.

We then generalize the notion of a Riemannian metric to Fréchet
manifolds in Section \ref{sec:weak-riem-manif}.  In particular, we are
interested in so-called \emph{weak Riemannian metrics} on Hilbert and
Fréchet manifolds, as the $L^2$ metric on the manifold of metrics is
such an object.  Weak Riemannian metrics are, as we will argue,
fundamental objects in global analysis, though the lack of a good
general theory for them makes their study more difficult than the
tamer strong Riemannian manifolds.  With some notable exceptions, the
research on weak Riemannian manifolds focuses on studying specific
cases, and the difficulties arising from the weak nature of the
metrics are often only implicit.  General results on weak Riemannian
manifolds are often given without proof, as the statements are
typically just a weakening of the corresponding statements for strong
Riemannian manifolds.  Nevertheless, we felt a precise treatment was
appropriate for this work.  Therefore, at the end of Section
\ref{sec:weak-riem-manif} we present some results which are relatively
straightforward generalizations of analogous results for strong
Riemannian manifolds---though necessarily weaker---and which the
author has not found explicitly proved anywhere else in the
literature.

The study of weak Riemannian metrics will allow us to define the
Riemannian manifold $\M$ of Riemannian metrics in Section
\ref{sec:manifold-metrics-m}, as well as to discuss what is already
known about this manifold, in particular what is already known about
its metric geometry.  For example, we will give a description of its
exponential mapping and discuss its curvature.  We will also discuss
two important classes of submanifolds, the orbits of the conformal
group (i.e., the group of positive functions) and the manifolds of
metrics that induce the same volume form.

Finally, we end the chapter with Section \ref{sec:conventions}, which
describes the nonstandard conventions that will be in place throughout
the text.

\section{Completions of metric spaces}\label{sec:compl-metr-spac}

In this short section, we look at completions of metric spaces.  We
will simply state the definition and explore a couple of consequences
of it, then give an alternative, equivalent viewpoint for path metric
spaces.

For the rest of the section, let $(X, \delta)$ be a metric space.

Recall that $X$ is called complete if every Cauchy sequence converges.
Even if $X$ is incomplete, there is a very natural way to construct a
complete space from $X$.  The basic idea is that if we want a space in
which every Cauchy sequence converges, then we should replace $X$ with
a space in which each point represents a Cauchy sequence in $X$.  Then
each Cauchy sequence in this new space ``converges to itself'' in a
certain sense.  This idea can be made more precise as follows.

The \emph{precompletion} \label{p:precompl} of $(X, \delta)$ is the
set $\overline{(X, \delta)}^{\textnormal{pre}}$, usually just denoted
by $\overline{X}^{\textnormal{pre}}$, consisting of all Cauchy
sequences of $X$, together with the distance function
\begin{equation*}
  \delta(\{x_k\}, \{y_k\}) := \lim_{k \rightarrow \infty}
  \delta(x_k, y_k).
\end{equation*}
(We denote the distance function of the precompletion of a space using
the same symbol as for the space itself; which distance function is
meant will always be clear from the context.)  We claim that $\delta$
is well-defined by the above definition, as $\delta(x_k, y_k)$ is a
Cauchy sequence in $\R$, so the limit exists.  To see this, choose $K$
large enough that $k, l \geq K$ implies $\delta(x_k, x_l) <
\epsilon/2$ and $\delta(y_k, y_l) < \epsilon/2$.  Then
\begin{equation*}
  \delta(x_l, y_l) \leq \delta(x_l, x_k) + \delta(x_k, y_k) +
  \delta(y_k, y_l) < \delta(x_k, y_k) + \epsilon,
\end{equation*}
and similarly with $k$ and $l$ swapped, showing $|\delta(x_k, y_k) -
\delta(x_l, y_l)| < \epsilon$.

It is immediate from the definition that $\delta$ defines a
pseudometric on $\overline{X}^{\textnormal{pre}}$ (i.e., $\delta$
satisfies all properties of a metric except that two distinct points
may have $\delta$-distance zero from one another).  Therefore, as with
any pseudometric space, we can define a metric space by declaring all
points with distance zero from one another to be equal.  In this case,
the resulting space $\overline{X}$ is called the
\emph{completion} \label{p:compl} of $X$.  In symbols, its definition
is
\begin{equation*}
  \overline{X} := \overline{X}^{\textnormal{pre}} / {\sim},
\end{equation*}
where $\sim$ is the equivalence relation defined by
\begin{equation}\label{eq:97}
  \{x_k\} \sim \{y_k\} \Longleftrightarrow \delta(\{x_k\}, \{y_k\}) =
  0.
\end{equation}

Of course, we wouldn't call it the completion if we didn't have good
reason to.  The next theorem proves this and shows two other
important properties of the completion $\overline{X}$ of $X$.

Before we state the theorem, we simply remark that if $\{x_k\}$ is a
Cauchy sequence in $X$ and $\{x_{k_l}\}$ is a subsequence, then
clearly $\{x_{k_l}\} \sim \{x_k\}$.  Thus, given an element of the
precompletion of $X$, we can always pass to a subsequence and still be
talking about the same element of the completion.

\begin{thm}\label{thm:29}
  The completion $\overline{X}$ of $X$ has the following properties:
  \begin{enumerate}
  \item $\overline{X}$ is a complete metric space.
  \item The canonical embedding of $X$ into $\overline{X}$ mapping a
    point $x$ to the constant sequence $\{x\}$ is an isometry, and the
    image of $X$ is a dense subspace of $\overline{X}$.
  \item \label{item:2} Any uniformly continuous function $f : X
    \rightarrow Y$, where $Y$ is a complete metric space, has a unique
    extension to a uniformly continuous function on $\overline{X}$.
  \end{enumerate}
\end{thm}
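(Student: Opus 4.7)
The plan is to prove the three statements in the order (2), (1), (3), since density of $X$ in $\overline{X}$ will make the completeness argument transparent, and (3) will then follow from a standard uniform-extension argument. For (2), the isometry property is immediate from the definition: if $x, y \in X$ and we form their constant sequences, then $\delta(\{x\}, \{y\}) = \lim_k \delta(x, y) = \delta(x, y)$. For density, given $\xi = [\{x_k\}] \in \overline{X}$ and $\epsilon > 0$, I choose $K$ so that $\delta(x_k, x_K) < \epsilon$ for all $k \geq K$; then the constant sequence $\{x_K\}$ represents an element of the canonical image of $X$ satisfying $\delta(\xi, [\{x_K\}]) = \lim_k \delta(x_k, x_K) \leq \epsilon$.

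For (1), with density in hand I would argue as follows. Let $\{\xi_n\}$ be a Cauchy sequence in $\overline{X}$, and use (2) to pick for each $n$ a point $z_n \in X$ whose canonical image $\iota(z_n) \in \overline{X}$ satisfies $\delta(\xi_n, \iota(z_n)) < 1/n$. The triangle inequality then gives
\begin{equation*}
  \delta(z_m, z_n) = \delta(\iota(z_m), \iota(z_n)) \leq \tfrac{1}{m} + \delta(\xi_m, \xi_n) + \tfrac{1}{n},
\end{equation*}
so $\{z_n\}$ is Cauchy in $X$. Setting $\xi := [\{z_n\}] \in \overline{X}$, a further triangle inequality yields $\delta(\xi_n, \xi) \leq \delta(\xi_n, \iota(z_n)) + \delta(\iota(z_n), \xi)$; the first summand is at most $1/n$ by construction, and the second equals $\lim_k \delta(z_n, z_k)$, which tends to zero as $n \to \infty$ because $\{z_k\}$ is Cauchy.

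For (3), let $f : X \to Y$ be uniformly continuous with $Y$ complete, and write $\delta_Y$ for the metric on $Y$. I define the extension by $\overline{f}([\{x_k\}]) := \lim_{k \to \infty} f(x_k)$. Uniform continuity of $f$ guarantees that $\{f(x_k)\}$ is Cauchy in $Y$ (so the limit exists, by completeness) and that replacing $\{x_k\}$ with an equivalent Cauchy sequence does not affect the limit, so $\overline{f}$ is well defined. A parallel argument, passing to the limit $k \to \infty$ in $\delta_Y(f(x_k), f(y_k)) < \epsilon$ once $\delta(x_k, y_k)$ is below the modulus of uniform continuity of $f$, shows that $\overline{f}$ is itself uniformly continuous, while uniqueness is immediate because any continuous extension to $\overline{X}$ must agree with $f$ on the dense subset $\iota(X)$. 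Throughout, the only point demanding care is to verify at each stage that the constructions descend from $\overline{X}^{\textnormal{pre}}$ to $\overline{X}$, i.e., respect the equivalence relation \eqref{eq:97}; I do not anticipate any deeper obstacle.
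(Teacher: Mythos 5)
Your proposal is correct, but for part (1) you take a genuinely different route from the paper. The paper proves completeness first and works directly with representatives: given a Cauchy sequence of equivalence classes $[\{x^l_k\}]$, it passes to subsequences so that each representative satisfies a uniform tail estimate (its inequality \eqref{eq:99}), and then shows by a three-term triangle inequality that the sequence converges to the class of the diagonal sequence $\{x^k_k\}$. You instead prove (2) first and exploit density: you approximate each $\xi_n$ by the canonical image of a point $z_n \in X$ with $\delta(\xi_n, \iota(z_n)) < 1/n$, check that $\{z_n\}$ is Cauchy in $X$, and show $\xi_n \to [\{z_n\}]$. Your argument is the standard textbook one and is cleaner --- it avoids the juggling of two indices and the subsequence extraction --- at the cost of needing the density statement up front, which you correctly supply by reordering (2) before (1); the paper's diagonal argument is more technical but is self-contained in that (1) does not depend on (2) and it exhibits an explicit representative of the limit. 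Your treatments of (2) and (3) coincide with the paper's (isometry and density via constant sequences; extension by $\overline{f}([\{x_k\}]) = \lim f(x_k)$, with well-definedness, uniform continuity, and uniqueness all following from uniform continuity of $f$ and density), and your closing remark about checking that all constructions respect the equivalence relation \eqref{eq:97} is exactly the right point of care; no gaps remain.
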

\begin{proof}
  To prove (1), let any Cauchy sequence $[\{x^l_k\}]$ in
  $\overline{X}$ be given.  The index $l$ is meant to be the index in
  $\overline{X}$, while $k$ is meant to be the index in $X$.  Thus,
  for each fixed $l$, $\{x^l_k\}$ is a Cauchy sequence in $X$ with
  index $k$.  The square brackets in the above represent that each
  element of $\overline{X}$ is an \emph{equivalence class} of Cauchy
  sequences.

  We claim that $[\{x^l_k\}]$ converges to the equivalence class of
  the diagonal sequence $\{x^k_k\}$; that is, for any $\epsilon > 0$,
  we can find representatives $\{x^l_k\} \in [\{x^l_k\}]$ and $M \in
  \N$ such that $l \geq M$ implies
  \begin{equation*}
    \delta(\{x^l_k\}, \{x^k_k\}) = \lim_{k \rightarrow \infty}
    \delta(x^l_k, x^k_k) < \epsilon.
  \end{equation*}
  To put it one last way, given $\epsilon > 0$, we must find an $M$
  such that for each $l \geq M$, there exists $N \in \N$ such that for
  $k \geq N$,
  \begin{equation}\label{eq:124}
    \delta(x^l_k, x^k_k) < \epsilon.
  \end{equation}
  
  Choose any representatives $\{x^l_k\} \in [\{x^l_k\}]$; by passing
  to subsequences if necessary, we can assume that for each $l \in
  \N$, $k,n \geq K$ implies that
  \begin{equation}\label{eq:99}
    \delta(x^l_k, x^l_n) < 2^{-K}
  \end{equation}
  Let's fix a particular $K$ that is large enough that $2^{-K} \leq
  \epsilon / 3$.

  Now, since $\{x^l_k\}$ is a Cauchy sequence, we can find $M \geq
  K$ such that if $l, m \geq M$, then
  \begin{equation}\label{eq:98}
    \delta(\{x^l_r\}, \{x^m_r\}) = \lim_{r \rightarrow \infty}
    \delta(x^l_r, x^m_r) < \epsilon / 6.
  \end{equation}
  Now simply set $N := M$, and let $k, l \geq M = N$ be given.  By
  \eqref{eq:98}, we can find $R \in \N$ such that $r \geq R$ implies
  \begin{equation}\label{eq:100}
    \delta(x^l_r, x^k_r) < \epsilon / 3.
  \end{equation}
  Thus, by the triangle inequality, if $k, l \geq M = N$ and $r \geq
  R$,
  \begin{equation*}
    \delta(x^l_k, x^k_k) \leq \delta(x^l_k, x^l_r) + \delta(x^l_r,
    x^k_r) + \delta(x^k_r, x^k_k) < \epsilon,
  \end{equation*}
  where we have used \eqref{eq:100} to estimate the middle term and
  \eqref{eq:99} to estimate the two other terms.  As this proves
  \eqref{eq:124}, statement (1) is shown.

  Statement (2) is not difficult, since
  \begin{equation*}
    \delta(\{x\}, \{y\}) = \lim_{k \rightarrow \infty} \delta(x, y) =
    \delta(x, y),
  \end{equation*}
  and to find a constant sequence arbitrarily close to any Cauchy
  sequence, we can simply take an appropriate element of said
  sequence.

  As for statement (3), this follows directly from (2) and the fact
  that a uniformly continuous function $f$ on a dense subset $A$ of a
  metric space $X$ always has a unique uniformly continuous extension
  to the entire space, provided the target space $Y$ is complete.
  This fact is readily verified by noting that a uniformly continuous
  function maps Cauchy sequences to Cauchy sequences.  The extension
  of the function to a point $x \in X \setminus A$ is defined as
  follows.  Take any sequence $x_k \rightarrow x$.  This is then a
  Cauchy sequence in $X$, so $\{f(x_k)\}$ is a Cauchy sequence in $Y$.
  But $Y$ is complete, so we can define $f(x) := \lim f(x_k)$.  It is
  straightforward to check that the extension thus defined is
  uniformly continuous.
\end{proof}

Recall that a \emph{path metric space} is a metric space for which the
distance between any two points coincides with the infimum of the
lengths of curves joining the two points.  Given this definition, we
expect that there be a description of the completion of a path metric
space that uses curves instead of Cauchy sequences, and indeed this is
so.  Before we give it, though, let's give the definition of a path
metric space in more detail.

Let $\alpha : [0,1] \rightarrow X$ be a continuous path, and let $0 =
t_1 < t_2 < \cdots < t_n = 1$ be any finite partition of the interval
$[0,1]$.  Then the \emph{length} of the polygonal path given by
$\{\alpha(t_1), \dots, \alpha(t_n)\}$ is defined to be
\begin{equation*}
  L_{t_1, \dots, t_n}(\alpha) := \sum_{k=1}^{n-1} \delta(\alpha(t_k), \alpha(t_{k+1})).
\end{equation*}
Finally, we define the \emph{length} of $\alpha$ to be
\begin{equation*}
  L(\alpha) := \sup \{ L_{t_1, \dots, t_n}(\alpha) \mid (t_1, \dots,
  t_n)\ \textnormal{is a partition of}\ [0,1] \}.
\end{equation*}
We take the supremum since as we add vertices to a polygonal path,
i.e., improve the approximation of $\alpha$, the triangle inequality
implies the lengths of the polygonal paths are nondecreasing.  Thus,
this definition will match up with, say the length of a differentiable
path in a Riemannian manifold.

We call a path $\alpha$ with $L(\alpha) < \infty$ \emph{rectifiable}
and say that $(X, \delta)$ is a path metric space if for any $x, y \in
X$,
\begin{equation*}
  \delta(x, y) = \inf \{ L(\alpha) \mid \alpha\ \textnormal{is a
    rectifiable curve joining}\ x\ \textnormal{and}\ y \}.
\end{equation*}
If the domain of $\alpha$ is an open interval, e.g., $(0,1)$, then we
define the length of $\alpha$ to be
\begin{equation*}
  L(\alpha) := \lim_{\epsilon \rightarrow 0}
  L(\alpha|_{[\epsilon, 1-\epsilon]}),
\end{equation*}
and similarly if the domain is a half-open interval.  We again call
such a curve rectifiable if its length is finite.

We will also call a rectifiable curve a \emph{finite-length path} or
simply a \emph{finite path}.

Given these definitions, we can formulate an alternate definition of
the completion of a path metric space.  Just as we can imagine a
Cauchy sequence to be ``open-ended'' but convergent in some larger
space containing $X$, we can imagine a path to be open on one end and
view the path as representing its endpoint, which may or may not exist
within $X$.

\begin{thm}\label{thm:30}
  Let $(X, \delta)$ be a path metric space.  Then the following
  description of the completion of $(X, \delta)$ is equivalent to the
  definition given above.

  Define the precompletion $\overline{X}^{\textnormal{pre}}$ of $X$ to
  be the set of rectifiable curves
  \begin{equation*}
    \alpha : (0,1] \rightarrow X.
  \end{equation*}
  It carries the pseudometric
  \begin{equation}\label{eq:101}
    \delta(\alpha_0, \alpha_1) := \lim_{t \to 0} \delta(\alpha_0(t), \alpha_1(t)).
  \end{equation}

  Then the completion of $(X, \delta)$ is the metric space associated
  to $\overline{X}^{\textnormal{pre}}$.  That is,
  \begin{equation*}
    \overline{X} := \overline{X}^{\textnormal{pre}} / {\sim},
  \end{equation*}
  where $\alpha_0 \sim \alpha_1 \Longleftrightarrow \delta(\alpha_0,
  \alpha_1) = 0$.
\end{thm}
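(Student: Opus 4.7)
The plan is to exhibit a natural isometry between the curve pre-completion defined here, call it $\overline{X}^{\textnormal{pre}}_c$, and the Cauchy-sequence pre-completion $\overline{X}^{\textnormal{pre}}$ from page~\pageref{p:precompl}, and then check that it descends to a bijective isometry on the quotients. The map I would use is the obvious one: send a rectifiable curve $\alpha : (0,1] \to X$ to the sequence $\Phi(\alpha) := \{\alpha(1/k)\}_{k \geq 1}$.

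Before defining $\Phi$, I would check that \eqref{eq:101} makes sense. The key point is that if $\alpha$ is rectifiable, then the tail length $L(\alpha|_{(0,\epsilon]})$ tends to $0$ as $\epsilon \to 0$, so for any $t_k \to 0$ we have $\delta(\alpha(t_j), \alpha(t_k)) \le L(\alpha|_{(0,\epsilon]}) \to 0$, making $\{\alpha(t_k)\}$ Cauchy in $X$. A triangle-inequality argument identical to the one on page~\pageref{p:precompl} verifying well-definedness of the sequence pseudometric then shows that the limit in \eqref{eq:101} exists and is independent of how the net is realized. With this in hand, $\Phi$ lands in $\overline{X}^{\textnormal{pre}}$, and since
\[
\delta(\Phi(\alpha_0), \Phi(\alpha_1)) = \lim_{k \to \infty} \delta(\alpha_0(1/k), \alpha_1(1/k)) = \lim_{t \to 0} \delta(\alpha_0(t), \alpha_1(t)) = \delta(\alpha_0, \alpha_1),
\]
it is an isometry of pseudometric spaces. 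It thus descends to an isometric injection $\overline{\Phi}$ on the metric quotients, and the constant curve at $x$ is sent to the constant sequence at $x$, so the canonical copies of $X$ match.

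The substantive step---and the only place where the path-metric hypothesis is used---is surjectivity. Given a Cauchy sequence $\{x_k\}$ in $X$, pass to a subsequence (still labelled $\{x_k\}$) with $\delta(x_k, x_{k+1}) < 2^{-k}$. Because $(X, \delta)$ is a path metric space, for each $k$ one can choose a rectifiable curve $\beta_k : [1/(k+1), 1/k] \to X$ from $x_{k+1}$ to $x_k$ of length at most $2^{-k+1}$. Concatenating the $\beta_k$ yields $\alpha : (0,1] \to X$ with $\alpha(1/k) = x_k$; the matching conditions $\beta_k(1/k) = x_k = \beta_{k-1}(1/k)$ make $\alpha$ continuous at each join, and $L(\alpha|_{[1/(N+1),1]}) \le \sum_{k=1}^{N} 2^{-k+1} < 2$ shows $\alpha$ is rectifiable. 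Then $\Phi(\alpha) = \{x_k\}$, which finishes surjectivity in the quotient and hence shows $\overline{\Phi}$ is a bijective isometry identifying the two completions, restricting to the identity on $X$.

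The main obstacle I foresee is purely bookkeeping inside the surjectivity step: arranging the parametrizations of the $\beta_k$ so that the glued $\alpha$ is continuous on all of $(0,1]$, and verifying that the total length $\lim_{\epsilon\to 0} L(\alpha|_{[\epsilon,1]})$ equals $\sum_k L(\beta_k)$ (any partition of $[\epsilon,1]$, refined to contain the relevant points $1/k$, has polygonal length bounded by the sum of the piecewise lengths, while refining beyond the join points can only increase the polygonal length toward each $L(\beta_k)$). Once these details are in place, applying Theorem~\ref{thm:29} through the isometry $\overline{\Phi}$ transfers completeness, density of $X$, and the universal extension property to the curve definition, establishing the claimed equivalence.
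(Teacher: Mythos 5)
Your proof is correct, but it is organized differently from the paper's, and the reorganization is a genuine simplification. The paper takes as its primary map the assignment \emph{sequences $\to$ curves}: it builds $\alpha_{\{x_k\}}$ by concatenating short connecting paths (exactly your surjectivity construction) and then must prove the isometry identity $\delta(\alpha_{\{x_k\}}, \alpha_{\{y_k\}}) = \delta(\{x_k\},\{y_k\})$ by a two-sided estimate: reparametrize so that $\alpha_{\{x_k\}}(1/k) = x_k$, and control $\delta(\alpha_{\{x_k\}}(t), \alpha_{\{y_k\}}(t))$ for \emph{all} small $t$ in terms of tail lengths $L(\alpha_{\{x_k\}}|_{(0,1/L]})$; the inverse sampling map and its well-definedness are relegated to sketched ``bonus'' statements. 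You instead take the sampling map $\Phi(\alpha) = \{\alpha(1/k)\}$ as primary, for which the isometry property is immediate --- it is literally the definition of the curve pseudometric evaluated along $t = 1/k$, once the existence of the limit in \eqref{eq:101} is known --- and you push all use of the path-metric hypothesis into the surjectivity step, where the concatenated curve only needs to hit the given subsequence at the points $1/k$, not satisfy any distance identity for intermediate $t$. So you avoid the paper's main estimate entirely, at the cost of the (routine) bookkeeping of continuity and length-additivity at the joins, which you correctly flag. What the paper's arrangement buys is an explicit pair of mutually inverse constructions with formulas in both directions; what yours buys is brevity and a clean isolation of where the path-metric property is actually needed. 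One small point to make explicit when writing this up: surjectivity is onto the \emph{quotient}, since $\Phi(\alpha)$ is the chosen subsequence of $\{x_k\}$ rather than $\{x_k\}$ itself, and you should cite the remark preceding Theorem \ref{thm:29} that a subsequence of a Cauchy sequence is equivalent to it; you do use this, but it deserves a sentence.
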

\begin{proof}
  First, let's show that the limit in \eqref{eq:101} exists---this
  will follow if, for every sequence $t_k \rightarrow 0$,
  $\delta(\alpha_0(t_k), \alpha_1(t_k))$ is a Cauchy sequence.  But
  given $\epsilon > 0$, by rectifiability of the two curves, we can
  find $K \in \N$ such that $k \geq K$ implies that
  \begin{equation*}
    L(\alpha_i|_{(0,t_k]}) < \epsilon / 2
  \end{equation*}
  for $i = 0,1$.  Thus, if $k, l \geq K$, we have
  \begin{align*}
    \delta(\alpha_0(t_k), \alpha_1(t_k)) &\leq \delta(\alpha_0(t_k),
    \alpha_0(t_l)) + \delta(\alpha_0(t_l), \alpha_1(t_l)) +
    \delta(\alpha_1(t_l), \alpha_1(t_k)) \\
    &< \delta(\alpha_0(t_l), \alpha_1(t_l)) + \epsilon.
  \end{align*}
  Doing the same computation with $k$ and $l$ swapped proves that
  $\delta(\alpha_0(t_k), \alpha_1(t_k))$ is a Cauchy sequence.

  Now, to show equivalence of the two definitions, we demonstrate an
  isometry from the completion as defined using sequences to the
  completion as defined using paths.  For completeness (excusing the
  pun), we also write down the inverse mapping of this isometry.

  So let a Cauchy sequence $\{x_k\}$ be given.  Choose a subsequence
  $\{x_{k_l}\}$ (which, as previously noted, is equivalent to
  $\{x_k\}$) such that
  \begin{equation}\label{eq:102}
    \sum_{l=1}^\infty \delta(x_{k_l}, x_{k_{l+1}}) < \infty.
  \end{equation}
  Since $X$ is a path metric space, we can choose paths $\alpha_l$
  joining $x_{k_l}$ and $x_{k_{l+1}}$ such that $L(\alpha_l) \leq 2
  \delta(x_{k_l}, x_{k_{l+1}})$.  Then the concatenated path
  \begin{equation*}
    \alpha_{\{x_{k_l}\}} := \alpha_1 * \alpha_2 * \alpha_3 * \cdots
  \end{equation*}
  is rectifiable.

  To get a Cauchy sequence from a curve $\alpha : (0,1] \rightarrow
  X$, simply take any monotonically decreasing sequence $t_k \searrow
  0$ in $(0,1]$ and define
  \begin{equation*}
    x^\alpha_k := \alpha(t_k).
  \end{equation*}
  Then it is easy to see that finite length of $\alpha$ implies that
  $x^\alpha_k$ is a Cauchy sequence, for given $\epsilon > 0$, we can
  find $K \in \N$ such that $k \geq K$ implies $L(\alpha|_{(0, t_k]})
  < \epsilon$.  Thus $l \geq k \geq K$ implies
  \begin{equation*}
    \delta(x^\alpha_k, x^\alpha_l) \leq L(\alpha|_{(0,t_k]}) < \epsilon.
  \end{equation*}

  To see that these two mappings are well-defined on the completion,
  as defined via sequences on the one side and paths on the other, and
  to show that they are isometries, we need to show the following:
  \begin{enumerate}
  \item If $\{x_k\}$ and $\{y_k\}$ both satisfy \eqref{eq:102} (with
    $x_k$ and $y_k$, respectively, in place of $x_{k_l}$), then
    $\delta(\alpha_{\{x_k\}}, \alpha_{\{y_k\}}) = \delta(\{x_k\},
    \{y_k\})$.
  \item If $\alpha$ and $\beta$ are equivalent finite paths and $t_k
    \searrow 0$, then $\delta(\{x^\alpha_k\}, \{x^\beta_k\}) = 0$.
    Furthermore, different choices of sequences $t_k \searrow 0$ give
    rise to equivalent Cauchy sequences.
  \item If $\alpha$ is a finite path and $t_k \searrow 0$, then
    $\alpha_{\{x^\alpha_k\}} \sim \alpha$.
  \end{enumerate}
  From (1), we see that $\{x_k\} \mapsto \alpha_{\{x_k\}}$ is
  well-defined and an isometry from one completion to the other.  From
  (2), it follows that $\alpha \mapsto \{x^\alpha_k\}$ is well-defined
  on the completions, and (3) implies that these two mappings are
  inverses of one another.
  
  To prove (1), reparametrize $\alpha_{\{x_k\}}$ and $\alpha_{\{y_k\}}$
  so that
  \begin{equation}\label{eq:103}
    \alpha_{\{x_k\}} ( 1/k ) = x_k \quad
    \textnormal{and} \quad \alpha_{\{y_k\}} ( 1/k ) = y_k.
  \end{equation}
  Let $\epsilon > 0$ be given, and choose $L \in \N$ such that
  \begin{equation*}
    \sum_{l=L}^\infty \delta(x_{k_l}, x_{k_{l+1}}) < \epsilon / 4 \quad
    \textnormal{and} \quad \sum_{l=L}^\infty \delta(y_{k_l}, y_{k_{l+1}}) < \epsilon / 4,
  \end{equation*}
  so that by the construction of $\alpha_{\{x_k\}}$ and
  $\alpha_{\{y_k\}}$,
  \begin{equation}\label{eq:105}
    L(\alpha_{\{x_k\}}|_{(0,1/L]}) < \epsilon / 2 \quad
    \textnormal{and} \quad
    L(\alpha_{\{y_k\}}|_{(0,1/L]}) < \epsilon / 2.
  \end{equation}
  Then for $t \leq 1 / L$,
  \begin{align*}
    \delta(\alpha_{\{x_k\}}(t), \alpha_{\{y_k\}}(t)) &\leq
    \delta(\alpha_{\{x_k\}}(t), \alpha_{\{x_k\}}(1/L)) +
    \delta(\alpha_{\{x_k\}}(1/L), \alpha_{\{y_k\}}(1/L)) \\
    &\quad + \delta(\alpha_{\{y_k\}}(1/L), \alpha_{\{y_k\}}(t)) \\
    &\leq \delta(\{x_L\}, \{y_L\}) + L(\alpha_{\{x_k\}}|_{[t,1/L]}) +
    L(\alpha_{\{y_k\}}|_{[t,1/L]}) \\
    &\leq \delta(\{x_L\}, \{y_L\}) + \epsilon,
  \end{align*}
  where we have used \eqref{eq:103} in the second inequality and
  \eqref{eq:105} in the third.  Similarly, one can prove that for $t
  \leq 1 / L$,
  \begin{equation*}
    \delta(\{x_L\}, \{y_L\}) \leq \delta(\alpha_{\{x_k\}}(t),
    \alpha_{\{y_k\}}(t)) + \epsilon.
  \end{equation*}
  From the two above inequalities, it is easy to see that
  \begin{equation*}
    \lim_{t \to 0} \delta(\alpha_{\{x_k\}}(t), \alpha_{\{y_k\}}(t)) =
    \lim_{k \rightarrow \infty} \delta(x_k, y_k),
  \end{equation*}
  as was to be proved.

  The proofs of (2) and (3) are very similar, yet simpler, and so we
  omit them.  Besides, we have already proved the statement of the
  theorem, so these are just ``bonus'' statements about the inverse to
  the isometry $\{x_k\} \mapsto \alpha_{\{x_k\}}$.
\end{proof}

We are now equipped with all of the metric space tools we need to
study the completion of the manifold of metrics.

\section{Fréchet manifolds}\label{sec:frechet-manifolds}

The manifold of smooth metrics is itself a Fréchet manifold, and so
these will play an extremely important role in this work.  However, we
will not need to go into depth on Fréchet manifolds.  This is
because the manifold of metrics is an extremely simple type of
Fréchet manifold, namely an open subset of a Fréchet space.

An excellent source on Fréchet manifolds and the implicit function
theorem in the category of Fréchet spaces is
\cite{hamilton82:_inver_funct_theor_of_nash_and_moser}, and this is
our main reference for the first two subsections.  For more in-depth
and recent results on this and related categories, see
\cite{omori97:_infin_dimen_lie_group}, which focuses mainly on
Fréchet Lie groups.

After introducing Fréchet spaces and Fréchet manifolds, we will
discuss a particular class of Fréchet manifolds, namely manifolds of
smooth mappings.  The main result, which will allow us to define the
manifold of metrics, is that if $N$ is a finite-dimensional manifold
and $F$ is a finite-dimensional fiber bundle over $N$, then the set of
$C^\infty$ sections of $F$ carries the structure of a smooth Fréchet
manifold.  If $F$ is a vector bundle, then the set of $C^\infty$
sections has a linear structure, so it even forms a Fréchet space.

There is another category incorporating manifolds of smooth mappings,
the so-called convenient setting
\cite{kriegl97:_conven_settin_of_global_analy}.  This setting is
highly developed and allows one to deal with more general spaces than
Fréchet spaces.  We chose to use the Fréchet category because we
need only basic facts, and Fréchet manifolds are the most familiar
and simplest to introduce.

So, without further delay, we get into the definitions.

\subsection{Fréchet spaces}\label{sec:frechet-spaces}

\begin{dfn}\label{dfn:17}
  Let $E$ be a vector space over a field $\mathbb{K}$.  A
  \emph{seminorm} on $E$ is a function $\| \cdot \| : E \rightarrow
  \mathbb{K}$ with the following properties for all $v, w \in E$ and
  $\lambda \in \mathbb{K}$:
  \begin{enumerate}
  \item $\| v \| \geq 0$,
  \item $\| v + w \| \leq \| v \| + \| w \|$ and
  \item $\| \lambda v \| = |\lambda| \| v \|$.
  \end{enumerate}
\end{dfn}

Given a collection of seminorms $\{ \| \cdot \|_i \mid i \in I \}$ on
$E$, we can define a topology on $E$ by declaring that a sequence or
net $\{v_k\}$ converges to $v$ if and only if $\| v - v_k \|_i
\rightarrow 0$ for all $i \in I$.  A \emph{locally convex topological
  vector space} (or LCTVS) is a vector space together with a topology
defined in this way.  It happens that the topology of an LCTVS is
metrizable if and only if it is defined by a countable collection of
seminorms, and it is Hausdorff if and only if $v = 0$ whenever $\| v
\|_i = 0$ for all $i \in I$.  In a metrizable LCTVS, it suffices to
use sequences instead of nets when describing the topology via
convergence.

In a metrizable LCTVS, we call a sequence $\{v_k\}$ a \emph{Cauchy
  sequence} if given any $i \in \N$ and $\epsilon > 0$, we can find
$N(i, \epsilon) \in \N$ such that $\| v_k - v_l \|_i < \epsilon$ for
all $k, l \geq N(i, \epsilon)$.  We call the space \emph{complete} if
every Cauchy sequence converges.

With these preparations, we can make the following definition.

\begin{dfn}\label{dfn:18}
  A \emph{Fréchet space} is an LCTVS that is Hausdorff, metrizable
  and complete.
\end{dfn}

For example, every Banach or Hilbert space is a Fréchet space, with
topology given by a single norm.  For a more interesting example,
consider the interval $[0,1] \subset \R$ and the space $C^\infty[0,1]$
of smooth functions on this interval.  If we give this space the
topology defined by the $C^k$ norms,
\begin{equation*}
  \| f \|_k = \sum_{l=1}^k \sup_{x \in [0,1]}
  \left|
    \frac{d^l}{d x^l} f(x)
  \right|,
\end{equation*}
then $C^\infty[0,1]$ becomes a Fréchet space.  The Hausdorff
property and metrizability are clear, and completeness follows from
the fact that $C^k[0,1]$ is a Banach space with the $\| \cdot \|_k$
norm.  Therefore, if a sequence is Cauchy in each $\| \cdot \|_k$
norm, it converges to a function that is $C^k$ for each $k \in \N$,
i.e., a smooth function.

Note that we could have also used the $H^s$ norms to define
$C^\infty[0,1]$.  The proof that this defines a Fréchet space
topology is the same, but we have to make the extra step of using the
Sobolev embedding theorem to show that a Cauchy sequence converges to
a smooth limit function.  The advantage of using the $H^s$ norms is
that they come from scalar products, which yields some extra structure
to work with.  However, the topology on $C^\infty[0,1]$ is the same as
when we use the $C^k$ norms, which we can again see using the Sobolev
embedding theorem.

As suggested by the term locally convex topological vector space above,
a Fréchet space is a topological vector space, meaning that vector
addition and scalar multiplication are continuous maps.

Fréchet spaces have some fundamental differences from Banach spaces.
For example, the dual of a Fréchet space $E$ is not always a Fréchet
space.  In fact, the dual is a Fréchet space if and only $E$ is a
Banach space!  This implies that the space $L(E, F)$ of linear maps
between two Fréchet spaces $E$ and $F$ is a Fréchet space if and only
if $F$ is a Banach space.  Additionally, naive generalizations of the
Banach space implicit function theorem to Fréchet spaces
fail---instead, one must work in the category of so-called \emph{tame
  Fréchet spaces}, which require additional estimates on maps between
them that are not present in the Banach case, to get a satisfactory
implicit function theorem.  However, these matters are not important
to our concerns.

Despite the difficulties in working with Fréchet spaces, many
results from the theory of Banach spaces and Banach manifolds carry
over.  For example, the Hahn-Banach theorem holds, as does the open
mapping theorem.

Calculus in Fréchet spaces works in almost exactly the same manner as
it does in Banach spaces, if we define the derivative in the following
way.  Let $E$ and $F$ be Fréchet spaces, let $U \subseteq E$ be open,
and let $f : U \subseteq E \rightarrow F$ be a continuous map.  We
define the differential of $f$ at the point $x \in U$ in the direction
$v \in E$ to be
\begin{equation*}
  D f(x)v := \lim_{t \to 0}\frac{f(x + t v) - f(x)}{t}.
\end{equation*}
We define $f$ to be differentiable at $x$ in the direction $v$ if the
limit exists.  We define $f$ to be $C^1$ (or continuously
differentiable) if the limit exists for all $x \in U$ and $v \in E$,
and the map
\begin{equation*}
  D f : U \times E \rightarrow F
\end{equation*}
is continuous in both its arguments.  Note that $Df$ is a map from the
product $U \times E$ to $F$.  We \emph{do not} consider it as a map $U
\rightarrow L(E,F)$, because as we mentioned above, $L(E,F)$ is not
necessarily a Fréchet space---even though $D f(x)$ is indeed a linear
map from $E$ to $F$ for each $x \in U$.

To define the second derivative, we take the \emph{partial} derivative
of the map $D f$ in the first component, i.e., we take the derivative
as $D f$ varies only over $U$.  This is because $D f$ is linear in the
second component, and hence this partial derivative just gives $D f$
again.  It is also done to match up with the usual definition of the
derivative in Banach spaces.  Thus, the second derivative is a map
\begin{equation*}
  D^2 f : U \times E \times E \rightarrow F.
\end{equation*}

We can iterate the definitions above to define $C^k$ and $C^\infty$
maps between Fréchet spaces.  The chain rule holds for the
differential as thus defined.  We can also define integrals over
curves in the usual way, and if we do so then the fundamental theorem
of calculus holds.

With all of these results at hand, it is clear that calculus in
Fréchet spaces is formally very similar to that in Banach spaces.
Thus, we will not go into any more detail at this point---we again
refer the interested reader to
\cite{hamilton82:_inver_funct_theor_of_nash_and_moser}.  All others
may assume that the intuition and computation rules from calculus in
Banach spaces work fine here as well.

\subsection{Fréchet manifolds}\label{sec:frechet-manifolds-1}

Again, our reference for this subsection is
\cite{hamilton82:_inver_funct_theor_of_nash_and_moser}.

Just as the usual rules for calculus generalize to Fréchet spaces, so
does the definition of a manifold.  Thus, a \emph{Fréchet manifold}
modeled on a Fréchet space $E$ is a Hausdorff topological space $M$
with an atlas of coordinates $\{ (U_i, \phi_i) \mid i \in I \}$, where
each $U_i \subseteq M$ is open and each $\phi_i : U_i \subseteq M
\rightarrow E$ is a homeomorphism onto its image.  Furthermore, if
$U_i \cap U_j \neq 0$, we require that the transition map
\begin{equation*}
  \phi_j |_{U_i \cap U_j} \circ \phi_j^{-1} |_{\phi_j(U_i \cap U_j)} :
  \phi_j(U_i \cap U_j) \subseteq E \rightarrow E
\end{equation*}
is a smooth mapping of Fréchet spaces.

Tangent spaces/bundles, smooth/differentiable mappings, vector
bundles, fiber bundles, and so on are defined in the category of
Fréchet manifolds exactly analogously to the case of Banach
manifolds.

Fréchet Lie groups are Fréchet manifolds that are also groups and
on which the operations of multiplication and taking the inverse are
smooth.  One example of a Fréchet Lie group is the diffeomorphism
group of a compact manifold.  For more facts on these fascinating and
difficult objects, which are so important in global analysis, see
\cite{omori70:_group_of_diffeom_of_compac_manif} and
\cite{omori97:_infin_dimen_lie_group}.

\subsection{Manifolds of mappings}\label{sec:manifolds-mappings}

The fundamental object in the field of global analysis is the set of
sections of a smooth fiber bundle $F$ with $m$-dimensional fibers over
a smooth, $n$-dimensional manifold $M$.  Typically, one is interested
in restricting to sections with a certain regularity, say $C^k$ for $0
\leq k \leq \infty$ or $H^s$ for $s \geq 0$.  $C^k$ regularity is, of
course, well understood, and it is the goal of this section to outline
the notion of $H^s$ regularity.  Additionally, as analysts and
geometers, we prefer to work with smooth manifolds, and so we will
sketch the useful fact that the by restricting to certain types of
sections, we get a Hilbert/Banach/Fréchet manifold.

Let's get down to defining manifolds of mappings.  The facts presented
here are taken from the texts \cite{saunders89:_geomet_of_jet_bundl},
\cite{palais68:_found_of_global_non_linear_analy} and
\cite[Chap.~4]{palais65:_semin_atiyah_singer_index_theor}.  For a very
concise but readable outline, see \cite[\S
3]{ebin70:_manif_of_rieman_metric}.  It will simplify the presentation
somewhat, and is in fact sufficient for our purposes, to assume that
the base manifold $M$ is closed and oriented.

Manifolds of sections are constructed using the notion of a \emph{jet
  bundle}, which is essentially a bundle that contains information
about the Taylor expansions of sections of $F$.  The precise
definition is as follows.

Suppose we are given two local, $k$-times differentiable sections
$\varphi$ and $\psi$ of $F$.  Suppose that $\varphi$ and $\psi$ are
both defined on an open neighborhood of $p \in M$.  We say that
$\varphi$ and $\psi$ are \emph{$k$-equivalent at $p$} if $\varphi(p) =
\psi(p)$ and the following holds.  Let $(x^i, u^\alpha)$ be
coordinates on $F$ around $p$ such that $(x^i)$ are coordinates on the
base manifold $M$ and $(u^\alpha)$ are coordinates in the fiber
directions---i.e., $(x^i, u^\alpha)$ are the coordinates of a local
trivialization.  We require that for all multi-indices $I$, taking
values in $\{1, \dots, n\}$, with $1 \leq |I| \leq k$ and all $1 \leq
\alpha \leq m$ (recall $m$ is the dimension of the fibers):
\begin{equation}\label{eq:122}
  \left. \frac{\partial^{|I|} \varphi^\alpha}{\partial x^I} \right|_p
  = \left. \frac{\partial^{|I|} \psi^\alpha}{\partial x^I} \right|_p.
\end{equation}

Thus, two local sections are $k$-equivalent at $p$ if and only if
their values at $p$ are equal, as are their first $k$ derivatives at
$p$ in some local coordinate system around $p$.  Note that while the
value of the derivatives depends on the local coordinates, equality of
the derivatives as in \eqref{eq:122} does not (see \cite[Lemma
6.2.1]{saunders89:_geomet_of_jet_bundl}).

The equivalence class containing the local section $\varphi$ is
denoted by $j^k_p \varphi$ and is called the \emph{$k$-jet of
  $\varphi$ at $p$}.  The equivalence class of a local section
$\varphi$ thus consists of all local sections having Taylor expansion
up to order $k$---in local coordinates at $p$---equal to that of
$\varphi$.

The set of all $k$-jets of local sections of $F$, denoted
\begin{equation*}
  J^k F := \{ j^k_p \varphi \mid p \in M,\ \varphi\ \textnormal{is a
    local section of}\ F\ \textnormal{around}\ p \},
\end{equation*}
is called the \emph{$k$-th jet bundle}, as it has a natural structure
of a smooth, finite-dimensional fiber bundle over both $M$ and $F$.
To see this, we first write down the coordinate atlas that makes it
into a manifold.  As above, let $(x^i, u^\alpha)$ be coordinates on an
open set $U \subseteq F$, with $(x^i)$ coordinates on the base and
$(u^\alpha)$ coordinates on the fibers.  Let $U^k$ be the subset of
$J^k F$ given by
\begin{equation*}
  U^k := \{ j^k_p \varphi \mid \varphi(p) \in U \}.
\end{equation*}
Then we get coordinates $(x^i, u^\alpha, u^\alpha_I)$ on $U^k$, where
$I$ runs through all unordered multi-indices taking values in $\{1,
\dots, n\}$ with $1 \leq |I| \leq k$, and
\begin{equation}\label{eq:123}
  \begin{aligned}
    x^i(j^k_p \varphi) &= x^i(p), \\
    u^\alpha(j^k_p \varphi) &= u^\alpha(\varphi(p)), \\
    u^\alpha_I(j^k_p \varphi) &= \left. \frac{\partial^{|I|}
        \varphi^\alpha}{\partial x^I} \right|_p.
  \end{aligned}
\end{equation}
(The reason we require $I$ to be unordered is the symmetry of the
derivatives in local coordinates, i.e., because differentiations in
different coordinate directions commute with one another.)  We will
not show that this does indeed define a smooth atlas on $J^k F$, but
refer the interested reader to \cite{saunders89:_geomet_of_jet_bundl}.
We do note, however, that since there are only finitely many
multi-indices of order not greater than $k$ taking values in $\{1,
\dots, n\}$, there are only finitely many coordinates $u^\alpha_I$,
and hence $J^k E$ is finite-dimensional.

The bundle structures $J^k F \rightarrow M$ and $J^k F \rightarrow F$
are given by the so-called \emph{source} and \emph{target
  projections}:
\begin{equation*}
  \pi_k : J^k F \rightarrow M, \quad j^k_p \varphi \mapsto p,
\end{equation*}
and
\begin{equation*}
\pi_{k,0} : J^k F \rightarrow F, \quad j^k_p \varphi \mapsto \varphi(p),
\end{equation*}
respectively.  We can also view $J^l F$ as a bundle over $J^k F$ for
any $1 \leq k \leq l$; the bundle structure is given by the
\emph{$k$-jet projection}:
\begin{align*}
  \pi_{l,k} : J^l F &\rightarrow J^k F \\
  j^l_p \varphi &\mapsto j^k_p \varphi.
\end{align*}

There is a natural mapping, denoted $j^k$, sending local $C^l$
sections of $F \rightarrow M$ (for $l \geq k$) to local $C^{l-k}$
sections of $J^k F \rightarrow M$.  If $\varphi$ is a local section of
$F$, then this map is defined by
\begin{equation*}
  j^k \varphi(p) := j^k_p \varphi
\end{equation*}
The section $j^k \varphi$ is sometimes called the \emph{$k$-th
  prolongation of $\varphi$}, and $j^k$ is sometimes called the
\emph{$k$-jet extension map}.  If we only consider global sections of
$F$, then $j^k$ defines a map from $C^\infty(F)$ to $C^\infty(J^k F)$,
where for a fiber bundle $E \rightarrow M$, $C^\infty(E)$ denotes the
space of smooth sections of $E$.

\begin{rmk}\label{rmk:10}
  The notation $C^\infty(E)$ for the set of smooth sections of the
  \emph{fiber bundle} $E \rightarrow M$ should not be confused with
  the oft-used identical notation for the set of smooth functions on
  the \emph{manifold} $E$.  In this thesis, whenever we consider a
  bundle structure on a space $E$, by $C^k(E)$, $C^\infty(E)$,
  $H^s(E)$ (the last one we have yet to define), and so on, we will
  always mean the appropriate space of sections of the bundle.

  This point will hardly arise outside this chapter, though, so we
  hope this admittedly suboptimal notation will cause no large
  problems.
\end{rmk}

At this point, let us restrict to the case where $F$ is a
\emph{vector} bundle over $M$, as it will simplify the exposition
somewhat and will still be sufficient for our purposes.  With this
assumption, $J^k F$ has the structure of a vector bundle over $M$, not
just a fiber bundle.  This can be seen, heuristically, from the fact
that the values of any section $\varphi$ at a point $p$ belong to the
vector space $F_p$, and the $j$-th total differential (with respect to
$(x^i)$, as in the $u^\alpha_I$-coordinates of \eqref{eq:123}) of the
section $\varphi$ at $p$ can be seen in local coordinates as a
$j$-linear map from $\R^n$ to $F_p$ (recall $n = \dim M$).  This is an
extremely sketchy ``proof'' and not at all rigorous, so we refer the
reader to \cite[pp.~5--6]{palais68:_found_of_global_non_linear_analy} for
details.

Since $F$ and $J^k F$ are both vector bundles, $C^\infty(F)$ and
$C^\infty(J^k F)$ are both vector spaces.  It is then easy to see that
the $k$-jet extension map $j^k : C^\infty(F) \rightarrow C^\infty(J^k F)$ is a linear map.

A Riemannian metric $\gamma$ on $J^k F$ is given by a smooth choice of
positive-definite scalar product $\gamma(p)$ on $J^k_p F$, the fiber
of $\pi_k : J^k F \rightarrow M$ at $p$, for each $p \in M$.  Given a
Riemannian metric $\gamma$ on $J^k F$ and a smooth volume form $\mu$
on $M$, we get a scalar product $(\cdot, \cdot)_\gamma$ on
$C^\infty(J^k F)$ via
\begin{equation*}
  (\varphi, \psi)_\gamma = \integral{M}{}{\gamma(p)(\varphi(p),
    \psi(p))}{\mu(p)}.
\end{equation*}
We can pull this scalar product back along the $k$-jet extension map
$j^k$ to get a scalar product on $C^\infty(F)$.  We denote by $H^k(F)$
the completion of $C^\infty(F)$ with respect to this scalar product.
The space $H^k(F)$ is a Hilbert space over the reals, and its norm
depends on our choices of $\gamma$ and $\mu$.  However, the topology
of $H^k(F)$ does not, as \cite[\S
IX.2]{palais65:_semin_atiyah_singer_index_theor} shows.  Therefore, we
are justified in omitting $\gamma$ and $\mu$ from our notation and
calling $H^k(F)$ the \emph{space of $H^k$ sections of $F$}.

\begin{rmk}\label{rmk:18}
  The scalar product $(\cdot, \cdot)_\gamma$ is essentially an $L^2$
  scalar product on sections $\varphi$ of the $k$-th jet bundle.
  Since these sections contain the all derivatives of $\varphi$ of
  order $k$ and lower, it can be seen that the definitions above match
  up with the definitions of Sobolev spaces of functions on open sets
  of $\R^n$.  If we allow ourselves to speak imprecisely by mixing
  global and local notions, we can say that the completion of
  $C^\infty(F)$ with respect to the above-described scalar product
  contains all sections with $L^2$-integrable partial derivatives up
  to order $k$.
\end{rmk}

In a similar but simpler way, we can define a Banach space structure
on the space $C^k(F)$ of $C^k$ sections of $F$.  To do this, we again
choose a Riemannian structure $\gamma$ on $J^k F$, but this time
define a norm on $C^0(J^k F)$ by
\begin{equation*}
  \| \varphi \|_\gamma = \sup_{p \in M} \sqrt{\gamma(p)(\varphi(p), \varphi(p))}.
\end{equation*}
Since the $k$-jet extension map $j^k$ is a linear map defined on
$C^k(F)$, we pull the above norm back to $C^k(F)$ along $j^k$.  Then
$C^k(F)$ is a Banach space with respect to the pulled-back norm.

With these definitions, the Sobolev embedding theorem holds for spaces
of sections of vector bundles, just as it does for spaces of functions
over $\R^n$.  Thus if $s > n/2 + k$, there is a continuous linear
inclusion $H^s(F) \hookrightarrow C^k(F)$.  (See \cite[\S X.4,
Thm.~4]{palais65:_semin_atiyah_singer_index_theor}.)  A consequence is
the following statement.  Define $C^\infty(F)$ to be the Fréchet space
of smooth sections of $F$ with the topology given by the family of
$C^k(F)$-norms for $k \in \M$.  Then this topology on $C^\infty(F)$
coincides with the one given by the family of $H^s(F)$-norms for $s
\in \N$.  This latter view is the one we will take in this thesis,
since it allows us to work with the chain of \emph{Hilbert} manifolds
$H^0(F), H^1(F),\dots$, which have nicer properties than the
\emph{Banach} manifolds $C^0(F), C^1(F), \dots$.

To recap, for any vector bundle $F \rightarrow M$, we can consider the
set of all sections of $F$.  By taking sections with certain
properties, we can build Hilbert spaces
$H^s(F)$ of Sobolev sections of $F$, Banach spaces $C^k(F)$ of
$k$-times differentiable sections, and the Fréchet space
$C^\infty(F)$ of smooth sections.  The latter has the topology coming
either from the family of $C^k$ norms or the family of $H^s$ norms.

We have restricted the discussion to vector bundles for simplicity,
but we end this section by briefly remarking on the situation when $F$
is a fiber bundle.  In this case, we can build a Banach
\emph{manifold} (which will in general not be a linear space) $C^k(F)$
for $k=0,1,2,\dots$.  We can also define the sets $H^s(F)$ of $H^s$
sections of $F$ for $s=0,1,2,\dots$, but if we want $H^s(F)$ to be a
(Hilbert) manifold, then for technical reasons we have to restrict to
$s > n/2$, i.e., we have to require that $H^s(F) \subseteq
C^0(F)$. (See \cite[\S
11ff]{palais68:_found_of_global_non_linear_analy}.)  Using either the
$H^s$ or $C^k$ norms, we can give $C^\infty(F)$ a Fréchet manifold
structure.  The way that all of these results are proved is by locally
reducing the analysis of $H^s$ sections of a fiber bundle to the
analysis of $H^s$ sections of a related vector bundle.  We do not need
this directly, however, so instead of proving it we refer to \cite[\S
13]{palais68:_found_of_global_non_linear_analy} for the general case
and \cite[Ex.~4.1.2]{hamilton82:_inver_funct_theor_of_nash_and_moser}
for a nice, concise description of the $C^k$ and $C^\infty$ cases.

\begin{rmk}\label{rmk:20}
  It is also worth noting that if $N$ is another finite-dimensional
  manifold, then the set of mappings from $M$ to $N$ can be treated as
  in this subsection by viewing a map $M \rightarrow N$ as a section
  of the trivial bundle $M \times N$ over $M$.  If $N = M$, then we
  can construct the manifold $C^\infty(M,M)$ of smooth self-mappings
  of $M$.  It is not hard to see that the set $\D$ of smooth
  diffeomorphisms of $M$ is open in $C^\infty(M,M)$, and we therefore
  get a Fréchet manifold structure on $\D$.  As we mentioned above,
  $\D$ is even a Fréchet Lie group
  \cite{omori70:_group_of_diffeom_of_compac_manif},
  \cite{omori97:_infin_dimen_lie_group}.
\end{rmk}

\section{Geometric preliminaries}\label{sec:geom-prel}

At this point, we will go over some geometric notions and notation
that we will be using later in the thesis.  We'll first look at the
endomorphism bundle of a finite-dimensional manifold and the
eigenvalues of its sections.  Then we'll discuss a few concepts from
measure theory, and finish with the description of two special
manifolds of mappings that will play a role in what is to come.

\begin{cvt}\label{cvt:6}
  For the remainder of this thesis, we work over a fixed,
  finite-dimensional, oriented, closed base manifold $M$, and set $n
  := \dim M$.
\end{cvt}

\subsection{The endomorphism bundle of $M$}\label{sec:endom-bundle-m}

The endomorphism bundle $\textnormal{End}(M)$ is the bundle of
$(1,1)$-tensors on $M$. \label{p:end-m} A $(1,1)$-tensor at $p \in M$
is an element of $T_p M \otimes T^*_p M$, and so it can be identified
with an endomorphism of $T_p M$.  A smooth section of
$\textnormal{End}(M)$ is therefore a smooth vector bundle map of $T M$
into itself.  Furthermore, $(1,1)$-tensors and sections of
$\textnormal{End}(M)$ have a well-defined multiplication, which is
simply the multiplication of matrices (in local coordinates) or the
composition of linear transformations (invariantly described).  As a
$(1,1)$-tensor $H$ is a linear transformation, any property of
matrices that is invariant under a change of basis will be
well-defined (i.e., coordinate-independent) for an endomorphism of
$T_p M$.  Especially important for us is that this includes the
determinant, the trace, and the eigenvalues of $H$.

This also implies that if we are given a section $H$ of
$\textnormal{End}(M)$, then the determinant, trace, and eigenvalues of
$H$ are well-defined functions over $M$.  Furthermore, if $H$ is
measurable/continuous/smooth, then the determinant and trace will be
so as well, since they are smooth functions from the space of $n
\times n$ matrices into $\R$.

The regularity properties of the eigenvalues of a section of the
endomorphism bundle are not so immediate, but there are a couple of
things that we need to understand better.  To do this, we first prove
a statement about the eigenvalues of symmetric matrices, then
``globalize'' the statement.  We do this in two lemmas, after
reviewing a fact from linear algebra in the following proposition.

\begin{prop}[{\cite[Thm.~7.2.1]{horn90:_matrix_analy}}]\label{prop:7}

  A symmetric $n \times n$ matrix $T$ is positive definite
  (resp.~positive semidefinite) if and only if all eigenvalues of $T$
  are positive (resp.~nonnegative).

  In particular, if $T$ is positive definite (resp.~positive
  semidefinite), then $\det T > 0$ (resp.~$\det T \geq 0$).  If $T$ is
  positive semidefinite but not positive definite, then $\det T = 0$.
\end{prop}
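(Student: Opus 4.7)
The plan is to prove this directly from the spectral theorem for symmetric matrices, which guarantees that any symmetric real matrix $T$ admits an orthonormal basis of eigenvectors. Equivalently, we can write $T = U^t D U$ where $U$ is orthogonal and $D = \diag(\lambda_1, \dots, \lambda_n)$ contains the eigenvalues of $T$. This diagonalization is the central tool and reduces the quadratic form $x^t T x$ to a weighted sum of squares in the transformed coordinates $y := Ux$.

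First I would show the easy direction: if all eigenvalues are positive (resp.~nonnegative), then for any $x \in \R^n$,
\begin{equation*}
  x^t T x = y^t D y = \sum_{i=1}^n \lambda_i y_i^2,
\end{equation*}
which is positive whenever $x \neq 0$ (resp.~nonnegative for all $x$), since $U$ is invertible and so $y \neq 0$ iff $x \neq 0$. This gives positive definiteness (resp.~positive semidefiniteness).

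For the converse, I would argue by contrapositive. Suppose $T$ has an eigenvalue $\lambda_i \leq 0$ (resp.~$\lambda_i < 0$), and let $v_i$ be a corresponding nonzero eigenvector. Then
\begin{equation*}
  v_i^t T v_i = \lambda_i \|v_i\|^2 \leq 0 \quad (\text{resp.}~< 0),
\end{equation*}
which contradicts positive definiteness (resp.~positive semidefiniteness).

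Finally, the determinant statements follow immediately from the multiplicative formula $\det T = \prod_{i=1}^n \lambda_i$, combined with the characterization of eigenvalues just established: positivity of all $\lambda_i$ gives $\det T > 0$; nonnegativity gives $\det T \geq 0$; and if $T$ is positive semidefinite but not positive definite, then at least one eigenvalue must be zero, forcing $\det T = 0$. No step here should present a real obstacle, as this is standard linear algebra; the only subtlety is simply to keep clear which direction of the equivalence one is proving and to remember that the argument depends crucially on symmetry (to invoke the spectral theorem and get an orthogonal diagonalization with real eigenvalues).
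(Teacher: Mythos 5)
Your proof is correct: the spectral-theorem diagonalization, the eigenvector argument for the converse, and the identity $\det T = \prod_i \lambda_i$ give exactly the standard argument for this statement. The paper itself offers no proof, simply citing Horn and Johnson, so there is nothing to contrast with; your write-up would serve as a complete self-contained justification.
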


\begin{lem}\label{lem:46}
  Let $\langle\!\langle \cdot , \cdot \rangle\!\rangle$ be any scalar
  product on $\R^n$, and let $\lambda^A_{\textnormal{min}}$ and
  $\lambda^A_{\textnormal{max}}$ denote the smallest and largest
  eigenvalues, respectively, of an $n \times n$ matrix $A$.  Then the
  map $A \mapsto \lambda^A_{\textnormal{min}}$ is a concave function
  from the space of self-adjoint $n \times n$ matrices to $\R$.  (Of
  course, we define ``self-adjoint'' with respect to $\langle\!\langle
  \cdot , \cdot \rangle\!\rangle$.)  Furthermore, $A \mapsto
  \lambda^A_{\textnormal{max}}$ is convex.

  In particular, each map is continuous.
\end{lem}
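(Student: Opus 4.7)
The plan is to use the Rayleigh quotient characterization of the extreme eigenvalues of a self-adjoint matrix and conclude concavity/convexity by expressing these as a pointwise infimum/supremum of affine functions in $A$.

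First, I would recall (or give a short reminder) that any self-adjoint matrix $A$ with respect to $\langle\!\langle \cdot, \cdot \rangle\!\rangle$ is diagonalizable in a $\langle\!\langle \cdot, \cdot \rangle\!\rangle$-orthonormal basis of eigenvectors, so that the Rayleigh quotient attains its extrema at eigenvectors and one has
\begin{equation*}
\lambda^A_{\min} = \min_{\langle\!\langle v,v\rangle\!\rangle = 1} \langle\!\langle Av, v \rangle\!\rangle, \qquad \lambda^A_{\max} = \max_{\langle\!\langle v,v\rangle\!\rangle = 1} \langle\!\langle Av, v \rangle\!\rangle.
\end{equation*}
This is standard linear algebra and can simply be cited.

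Next, for each fixed $v$ with $\langle\!\langle v,v\rangle\!\rangle = 1$, the map $A \mapsto \langle\!\langle Av, v \rangle\!\rangle$ is affine (in fact linear) in $A$, since scalar multiplication and addition of matrices are sent by this map to the corresponding operations on the reals. Therefore $\lambda^A_{\min}$ is the pointwise infimum of a family of affine functions of $A$, and so is concave; likewise $\lambda^A_{\max}$ is the pointwise supremum of affine functions of $A$, hence convex. Concretely, given self-adjoint $A_0, A_1$ and $t \in [0,1]$, one picks a unit vector $v$ achieving the minimum for $(1-t) A_0 + t A_1$ and estimates
\begin{equation*}
\lambda^{(1-t)A_0 + tA_1}_{\min} = (1-t)\langle\!\langle A_0 v, v \rangle\!\rangle + t \langle\!\langle A_1 v, v \rangle\!\rangle \geq (1-t)\lambda^{A_0}_{\min} + t \lambda^{A_1}_{\min},
\end{equation*}
and analogously with the reversed inequality for $\lambda_{\max}$.

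For the final ``in particular'' about continuity, I would appeal to the general fact that a concave (respectively convex) real-valued function defined on the whole of a finite-dimensional real vector space is automatically continuous (it is even locally Lipschitz on the interior of its domain). Since the space of self-adjoint $n\times n$ matrices is a finite-dimensional real vector space, this applies directly. I do not expect a main obstacle here: the argument is essentially a clean application of the min-max principle together with the standard regularity of convex functions, and the only thing requiring a moment of care is verifying that ``self-adjoint'' with respect to an arbitrary scalar product still yields the Rayleigh characterization, which follows because one can always choose a $\langle\!\langle \cdot,\cdot \rangle\!\rangle$-orthonormal basis in which $A$ is represented by a symmetric matrix and then invoke Proposition \ref{prop:7} style reasoning via the spectral theorem.
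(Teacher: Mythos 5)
Your proposal is correct and follows essentially the same route as the paper: the min-max (Rayleigh quotient) characterization of $\lambda^A_{\min}$ and $\lambda^A_{\max}$, concavity/convexity as an infimum/supremum of functions linear in $A$, and continuity from the standard fact that convex or concave functions on a finite-dimensional real vector space are continuous.
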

\begin{proof}
  Consider the following formula for the minimal eigenvalue of a
  self-adjoint matrix, which follows from the min-max theorem
  \cite[Thm.~XIII.1]{reed78:_method_of_moder_mathem_physic_iv}:
  \begin{equation}\label{eq:132}
    \lambda^A_{\textnormal{min}} = \min_{\substack{v \in \R^n \\
        \langle\!\langle v, v \rangle\!\rangle = 1}} \langle\!\langle v, A v \rangle\!\rangle.
  \end{equation}
  Therefore, if $A$ and $B$ are self-adjoint matrices, we have
  \begin{align*}
    \lambda^{(1-t) A + t B}_{\textnormal{min}} &= \min_{\substack{v \in \R^n \\
        \langle\!\langle v, v \rangle\!\rangle = 1}} \langle\!\langle
    v, ((1-t)
    A + t B) v \rangle\!\rangle \\
    &\geq \min_{\substack{v \in \R^n \\
        \langle\!\langle v, v \rangle\!\rangle = 1}} \langle\!\langle
    v,
    (1-t) A v \rangle\!\rangle + \min_{\substack{v \in \R^n \\
        \langle\!\langle v, v \rangle\!\rangle = 1}} \langle\!\langle
    v, t B v
    \rangle\!\rangle \\
    &= (1-t) \lambda^A_{\textnormal{min}} + t
    \lambda^B_{\textnormal{min}}.
  \end{align*}

  That the map sending a self-adjoint matrix to its maximal eigenvalue
  is convex follows in exactly the same way from the formula
  \begin{equation}\label{eq:133}
    \lambda^A_{\textnormal{max}} = \max_{\substack{v \in \R^n \\
        \langle\!\langle v, v \rangle\!\rangle = 1}} \langle\!\langle
    v, A v \rangle\!\rangle,
  \end{equation}
  which again follows from the min-max theorem.

  Continuity of the maps follows from the well-known result that a
  convex or concave function on a real, finite-dimensional vector
  space is continuous \cite[Thm.~10.1]{rockafellar70:_convex_analy}.
\end{proof}

\begin{lem}\label{lem:45}
  Let $h$ be any continuous, symmetric $(0,2)$-tensor field.  Suppose
  $g$ is a Riemannian metric on $M$, and let $H$ be the $(1,1)$-tensor
  field obtained from $h$ by raising an index using $g$.  (That is, locally
  $H^i_j = g^{ik} h_{kj}$.)  Then $H$ is a continuous section of the
  endomorphism bundle $\textnormal{End}(M)$.  Denote by
  $\lambda^H_{\textnormal{min}}(x)$ the smallest eigenvalue of $H(x)$.
  We have that
  \begin{enumerate}
  \item $\lambda^H_{\textnormal{min}}$ is a continuous function and
  \item if $h$ is positive definite, then $\min_{x \in M}
    \lambda^H_{\textnormal{min}}(x) > 0$.
  \end{enumerate}

  Furthermore, if $\lambda^H_{\textnormal{max}}(x)$ denotes the
  largest eigenvalue of $H(x)$, then $\lambda^H_{\textnormal{max}}$ is
  a continuous and hence bounded function.
 \end{lem}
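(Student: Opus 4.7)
The plan is to reduce the statement to Lemma \ref{lem:46} by constructing, locally near each point, a continuous $g$-orthonormal frame, so that $H$ is represented by a continuously varying symmetric matrix whose eigenvalues we can control using the previous lemma.

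First I would note that $H$ is continuous as a section of $\textnormal{End}(M)$: in local coordinates it is $H^i_j = g^{ik} h_{kj}$, and both $g^{-1}$ and $h$ have continuous components. The eigenvalues of $H(x)$ are well-defined (basis-invariant) because $H(x)$ is an endomorphism of $T_x M$. The crucial structural fact is that $H(x)$ is self-adjoint with respect to the scalar product $g(x)$ on $T_x M$: this is an immediate consequence of the symmetry of $h$, since $g(H u, v) = h(u,v) = h(v,u) = g(H v, u) = g(u, H v)$.

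Next, fix $p_0 \in M$ and a coordinate neighborhood $U$ of $p_0$, and let $(f_1(x),\dots,f_n(x))$ be the continuous coordinate frame on $U$. Applying the Gram--Schmidt procedure with respect to $g(x)$ produces a continuous frame $(e_1(x),\dots,e_n(x))$ which is $g(x)$-orthonormal at each $x \in U$; continuity is preserved because Gram--Schmidt uses only inner products, sums, and division by quantities $\sqrt{g(v,v)} > 0$ which depend continuously on $x$. In this frame the matrix $\tilde H(x)$ representing $H(x)$ is symmetric (since $H(x)$ is self-adjoint with respect to $g(x)$ and $(e_i(x))$ is $g(x)$-orthonormal), and $\tilde H(x)$ depends continuously on $x$. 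The eigenvalues of $\tilde H(x)$, viewed as a self-adjoint matrix with respect to the standard scalar product on $\R^n$, coincide with the eigenvalues of $H(x)$. Lemma \ref{lem:46} then applies: the maps $A \mapsto \lambda^A_{\min}$ and $A \mapsto \lambda^A_{\max}$ are continuous on the space of symmetric matrices, so $\lambda^H_{\min}$ and $\lambda^H_{\max}$ are continuous on $U$. Since $p_0$ was arbitrary, these functions are continuous on all of $M$.

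For statement (2), if $h$ is positive definite at $x$, then so is $\tilde H(x)$ (the two agree as bilinear forms in the $g$-orthonormal frame), and by Proposition \ref{prop:7} all its eigenvalues are positive, so $\lambda^H_{\min}(x) > 0$ pointwise. Compactness of $M$ together with continuity of $\lambda^H_{\min}$ then yields $\min_{x \in M} \lambda^H_{\min}(x) > 0$. The boundedness of $\lambda^H_{\max}$ follows analogously from continuity and compactness. The only mildly delicate point, which I would regard as the main obstacle, is the transfer from the fixed-scalar-product setting of Lemma \ref{lem:46} to the varying scalar product $g(x)$; the Gram--Schmidt frame above is precisely what resolves this, and once it is in place the rest is essentially a direct application of the earlier results.
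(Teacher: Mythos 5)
Your proof is correct, but it reaches the conclusion by a different mechanism than the paper. You flatten the varying scalar product: a continuous Gram--Schmidt orthonormalization of a local frame lets you represent $H$ by a continuous \emph{symmetric} matrix-valued function $\tilde H(x)$, after which you invoke the continuity conclusion of Lemma \ref{lem:46} directly (composition of continuous maps) and finish with Proposition \ref{prop:7} plus compactness of $M$. The paper never orthonormalizes; instead it works with the family of Rayleigh quotients $Q^g_\alpha(x) = g(x)(X_\alpha, H X_\alpha)/g(x)(X_\alpha, X_\alpha)$ over an arbitrary local frame, shows that the pointwise maximum and minimum over the compact parameter sphere $\alpha \in S^{n-1}$ are continuous in $x$, and identifies these extrema with $\lambda^H_{\max}$ and $\lambda^H_{\min}$ via the min--max formulas \eqref{eq:132}--\eqref{eq:133} appearing in the \emph{proof} of Lemma \ref{lem:46}; positivity in statement (2) then comes from the identity $g(p)(X_\alpha, H X_\alpha) = h(p)(X_\alpha, X_\alpha) > 0$ rather than from Proposition \ref{prop:7}. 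The trade-off is minor: your route spends effort on the (standard) continuity of Gram--Schmidt but then gets continuity of the eigenvalue functions for free from Lemma \ref{lem:46}, whereas the paper avoids orthonormal frames at the cost of a small argument that extrema over a compact family of continuous functions vary continuously. Both arguments are local-to-global in the same way and both use compactness of $M$ for the uniform statements, so either is a legitimate proof.
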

\begin{proof}

  For any fixed $p \in M$, let a neighborhood $U$ of $p$ be given with
  the property that we can find a frame field for $T M|_U$, i.e.,
  there exist $n$ smooth vector fields $X_1, \dots, X_n$ over $U$ that
  together form a basis of $T_x M$ for each $x \in U$.  For every
  nonzero $n$-tuple $\alpha = (\alpha^1, \dots, \alpha^n) \in \R^n$,
  we define a vector field $X_\alpha$ over $U$ via
  \begin{equation*}
    X_\alpha := \alpha^1 X_1 + \cdots + \alpha^n X_n.
  \end{equation*}

  For each such $\alpha \in \R^n$, consider the function
  \begin{align*}
    Q^g_\alpha : U &\rightarrow \R \\
    x &\mapsto \frac{g(x)(X_\alpha(x), H(x)
      X_\alpha(x))}{g(x)(X_\alpha(x), X_\alpha(x))}.
  \end{align*}
  Thus, $Q^g_\alpha(x)$ is the Rayleigh quotient, with respect to
  $g(x)$, of $H(x)$ on the vector $X_\alpha(x)$.  It does not depend
  on $\alpha$, but only on the line on which $\alpha$ lies.  Thus, the
  family $\{ Q^g_\alpha \mid \alpha \in \R^n \}$ can be seen as a
  family of functions for $\alpha \in S^{n-1} \subset \R^n$.

  By the continuity of $g$, $H$ and $X_\alpha$, as well as the
  relative compactness of $U$ and the compactness of $S^1$, it is not
  hard to show that
  \begin{equation*}
    Q^g_{\textnormal{max}}(x) := \max_{\alpha \in S^{n-1}} Q^g_\alpha(x)
    \quad \textnormal{and} \quad Q^g_{\textnormal{min}}(x) :=
    \min_{\alpha \in S^{n-1}} Q^g_\alpha(x)
  \end{equation*}
  are continuous functions defined on $U$.  On the other hand, since
  $H(x)$ is self-adjoint with respect to $g(x)$, we can use the
  formulas in the proof of Lemma \ref{lem:46} to see that
  \begin{equation*}
    Q^g_{\textnormal{max}}(x) = \lambda^H_{\textnormal{max}}(x) \quad
    \textnormal{and} \quad Q^g_{\textnormal{min}}(x) =
    \lambda^H_{\textnormal{min}}(x).
  \end{equation*}
  From this, and since $p$ was chosen arbitrarily, the continuity of
  $\lambda^H_{\textnormal{max}}$ and $\lambda^H_{\textnormal{min}}$ is
  immediate.

  The upper bound on $\lambda^H_{\textnormal{max}}$ follows from its
  continuity.  That $\lambda^H_{\textnormal{min}}$ is bounded away
  from zero if $h$ is positive definite follows from the fact that if
  $p$ is the (arbitrary) point chosen above, then
  \begin{equation*}
    \lambda^H_{\textnormal{min}}(p) = \min_{\alpha \in S^{n-1}}
    g(p)(X_\alpha(p), H(p) X_\alpha(p)) = \min_{\alpha \in S^{n-1}} h(p)
    (X_\alpha(p), X_\alpha(p)) > 0,
  \end{equation*}
  so $\lambda^H_{\textnormal{min}}$ is a continuous positive function
  on $M$.
\end{proof}

\subsection{Lebesgue measure on
  manifolds}\label{sec:lebesg-meas-manif}

The concept of Lebesgue measurability carries over from $\R^n$ to
smooth (or even topological) manifolds very simply.  Let a maximal
atlas of coordinate charts $\{(U_\alpha, \phi_\alpha) \mid U_\alpha
\rightarrow V_\alpha \subseteq \R^n)\}$ \label{correction:charts} for
$M$ be given.  We say a subset $E \subset M$ is \emph{Lebesgue
  measurable} if we can find a covering of $E$ by charts $\{(U_\beta,
\phi_\beta)\}$ such that $\phi_\beta(E \cap U_\beta)$ is Lebesgue
measurable for each $\beta$.  This concept is independent of the
particular choice of covering: if $\{U_\gamma, \phi_\gamma\}$ is a
second covering of $E$ by charts, then each transition function
\begin{equation*}
  \phi_{\beta \gamma} = \phi_\gamma |_{U_\gamma
    \cap U_\beta}\circ \phi_\beta^{-1} |_{\phi_\beta(U_\beta \cap
    U_\gamma)}
\end{equation*}
is a smooth diffeomorphism, and hence it maps Lebesgue measurable sets
to Lebesgue measurable sets.  Of course, the transition function will
not necessarily preserve the quantitative measure of a
Lebesgue measurable set, but it \emph{will} map nullsets to nullsets.
Therefore, we can speak about nullsets on a smooth, finite-dimensional
manifold.

\begin{cvt}\label{cvt:7}
  Whenever we refer to a measure-theoretic concept on $M$, we
  implicitly mean that we work with Lebesgue measure or Lebesgue sets,
  unless we explicitly state otherwise.  (At some points Borel
  measures will also come up.)
\end{cvt}

With Lebesgue measurable sets well-defined, the concept of a
measurable function or a measurable map between manifolds is also
well-defined---these are simply those maps for which the preimage of
any measurable set is measurable.  If we have a local $(r,s)$-tensor
field $t$ on $M$, defined over a set $E \subseteq M$, we say that it
is \emph{measurable} or has \emph{measurable coefficients} if there is
a covering of $\{(U_\beta, \phi_\beta)\}$ of $E$ by coordinate charts
such that over each $U_\beta$, the coefficients of $t$ are measurable.
This is again independent of the covering chosen, since in a different
coordinate chart $(U_\gamma, \phi_\gamma)$, the coefficients of $t$
are determined from the coefficients in the original chart and the
transition function $\phi$ via
\begin{equation*}
  t^{i_1 \cdots i_r}_{j_1 \cdots j_s} = (D \phi^{-1})^{i_1}_{k_1} \cdots (D \phi^{-1})^{i_r}_{k_r} (t^{k_1 \cdots k^r}_{l_1 \cdots l_s}\circ
  \phi_{\beta \gamma})
  (D \phi)^{l_1}_{j_1} \cdots (D \phi)^{l_s}_{j_s}.
\end{equation*}
That is, the new coefficients are obtained from the old via
composition, addition, and multiplication with smooth functions, so
they are again measurable.

We can also speak about Lebesgue measures.  By the above definition,
it is immediate that any volume form $\mu$ on $M$ with measurable
coefficients induces a Lebesgue measure on $M$.  (By volume form, we
simply mean any $n$-form with positive coefficient.  Saying the
coefficient is positive is coordinate-independent because of the
orientation of $M$.)  We can also allow $\mu$ to be a nonnegative
$n$-form---i.e., one for which the coefficient is everywhere
nonnegative---and $\mu$ again induces a Lebesgue measure on $M$.
(Nonnegativity is again a coordinate-independent notion thanks to
orientability of $M$.)

We next mention the relation of the Lebesgue measurable sets
$\mathcal{L}$ to the Borel measurable sets $\mathcal{B}$.  It is not
hard to see that the same general relationship between these sets that
holds on $\R^n$ holds on $M$ as well.  Let's recall this
relationship---namely, that Lebesgue measure on $\R^n$ coincides with
the outer measure induced by Borel measure \cite[\S
1.7]{rana02:_introd_to_measur_and_integ}.  For the reader's
convenience, we briefly review this notion, as well as that of the
completion of a measure space.  All facts are taken from \cite[\S
1.5]{bogachev07:_measur_theor} unless otherwise mentioned.

Let $(X,
\Sigma, \mu)$ be a measure space.  The measure $\mu$ is called
\emph{complete} if for every $B \in \Sigma$ with $\mu(B) = 0$ and
every subset $A \subset B$, we have $A \in \Sigma$ (and therefore of
course $\mu(A) = 0$).  That is, $\mu$ is complete if every subset of a
nullset is $\mu$-measurable.

If $\mu$ is not complete, we can extend it to a complete measure as
follows.  We define the \emph{outer measure} $\mu^*$ induced by $\mu$
to be, for any $E \subseteq X$,
\begin{equation*}
 \mu^*(E) := \inf \left\{ \sum_{k=1}^\infty E_k \midmid E_k \in \Sigma,\ E
   \subseteq \bigcup_{k=1}^\infty E_k \right\}
\end{equation*}
for any set $E \subseteq X$.  We note that if $E \in \Sigma$, then
$\mu^*(E) = \mu(E)$.  Define a set $E \subseteq X$ to be
\emph{$\mu^*$-measurable} if for every $Y \subseteq X$,
\begin{equation*}
  \mu^*(Y) = \mu^*(Y \cap E) + \mu^*(Y \cap E^c).
\end{equation*}
We denote the class of $\mu^*$-measurable sets by $\Sigma^*$, and note
that this is a $\sigma$-algebra.  Furthermore, $\Sigma \subseteq
\Sigma^*$, and $(X, \Sigma^*, \mu^*)$ is complete.  Thus $\mu^*$ is an
extension of $\mu$ to a complete measure.

Now, let's see what this means in the special case of $M$ with a
measure $\mu$ on the Borel sets $\mathcal{B}$ of $M$.  Firstly, since
the above statement that Lebesgue measure coincides with the outer
measure induced by Borel measure can be localized, the outer measure
corresponding to $\mu$ is a measure $\mu^*$ on the Lebesgue sets
$\mathcal{L}$ of $M$.  We also see that a Borel measurable set is
Lebesgue measurable.  Furthermore, by \cite[\S
3.11]{rana02:_introd_to_measur_and_integ}, the following holds.

\begin{lem}\label{lem:53}
  For every $E \in \mathcal{L}$, there exist $F \in \mathcal{B}$ and
  $G \in \mathcal{L}$ such that
  \begin{enumerate}
  \item $E = F \cup G$,
  \item there exists a $\mu$-nullset $A \in \mathcal{B}$ such that $G
    \subset A$ and
  \item $\mu^*(E) = \mu(F)$.
  \end{enumerate}
\end{lem}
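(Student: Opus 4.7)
The plan is to use the standard technique of sandwiching $E$ between inner and outer Borel approximations whose $\mu$-measure agrees with $\mu^*(E)$, and then to produce the decomposition $E = F \cup G$ from the inner approximation $F$, with $G$ contained in the difference between the outer and inner approximations. The key point making everything work is that $M$ is closed and $\mu$ comes from a volume form, so $\mu(M) < \infty$; this finiteness is what allows the outer-measure approximation to be translated into an inner-measure approximation via complementation.

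First I would prove the following auxiliary fact: for every $L \in \mathcal{L}$, there is a Borel set $B_L \in \mathcal{B}$ with $L \subseteq B_L$ and $\mu(B_L) = \mu^*(L)$. This is a direct consequence of the definition of $\mu^*$: for each $k \in \N$, use the infimum definition of $\mu^*(L)$ (combining the countable cover into a single set via union) to pick $B_k \in \mathcal{B}$ with $L \subseteq B_k$ and $\mu(B_k) \leq \mu^*(L) + 1/k$, and then set $B_L := \bigcap_k B_k$. Since $\mathcal{B}$ is a $\sigma$-algebra, $B_L \in \mathcal{B}$; the monotonicity and definition of $\mu^*$ force $\mu(B_L) = \mu^*(L)$.

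Next I would apply this auxiliary fact twice. Applied to $E$ itself, it yields a Borel set $B \supseteq E$ with $\mu(B) = \mu^*(E)$. Applied to the complementary set $M \setminus E \in \mathcal{L}$, it yields a Borel set $B' \supseteq M \setminus E$ with $\mu(B') = \mu^*(M \setminus E)$. Now define
\begin{equation*}
    F := M \setminus B' \in \mathcal{B}, \qquad G := E \setminus F, \qquad A := B \setminus F \in \mathcal{B},
\end{equation*}
noting that $F \subseteq E \subseteq B$ (so the set-theoretic differences behave well) and $F$ is Borel since $B'$ is.

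It then remains to verify the three claimed properties. Clearly $E = F \cup G$ by the definition of $G$. Since $F \subseteq E \subseteq B$ and both $F$ and $B$ are Borel, $A$ is Borel and $G \subseteq B \setminus F = A$. For the measure identity, I would use the fact that $\mu(M) < \infty$ together with the additivity of $\mu^*$ on the disjoint decomposition $M = E \sqcup (M \setminus E)$ (valid because both pieces lie in $\Sigma^* \supseteq \mathcal{L}$), giving $\mu^*(E) + \mu^*(M \setminus E) = \mu(M)$. Hence
\begin{equation*}
    \mu(F) = \mu(M) - \mu(B') = \mu(M) - \mu^*(M \setminus E) = \mu^*(E),
\end{equation*}
which also shows $\mu(A) = \mu(B) - \mu(F) = \mu^*(E) - \mu^*(E) = 0$, so that $A$ is indeed a Borel nullset. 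There is no real obstacle here beyond bookkeeping; the only point that needs care is the use of finiteness of $\mu(M)$ to pass from outer approximation to inner approximation, which is automatic since $M$ is closed.
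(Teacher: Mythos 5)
Your construction is correct in its essentials, and there is no internal argument in the thesis to compare it with: the lemma is simply quoted from the cited textbook of Rana (\S 3.11), so you have supplied a proof where the paper gives only a reference. The argument you use is the standard one: outer Borel approximation of $E$ (giving $B \supseteq E$ with $\mu(B) = \mu^*(E)$) and of its complement (giving $B' \supseteq M \setminus E$), then $F := M \setminus B' \subseteq E$, $G := E \setminus F$, $A := B \setminus F$. The auxiliary approximation fact is proved correctly from the infimum definition of $\mu^*$ together with $\mu^* = \mu$ on $\mathcal{B}$, and the verification of (1)--(3), including $\mu(A) = \mu(B) - \mu(F) = 0$, is fine once $\mu(M) < \infty$ is granted.

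The one step that deserves more care is exactly that finiteness. The lemma is stated for a Borel measure $\mu$ on $M$, and your justification ``$\mu$ comes from a volume form, so $\mu(M) < \infty$'' overstates what the hypotheses give: in the surrounding text the Borel measures of interest are induced by volume forms with merely \emph{measurable} coefficients, which need not have finite total mass, and the only place the lemma is actually invoked (the proof of Theorem \ref{thm:13}) applies the analogous statement on $\R^n$ to a measure that is bounded on bounded sets, hence $\sigma$-finite but not finite. In those settings the complementation identity $\mu(F) = \mu(M) - \mu(B')$ is not available verbatim, so your proof as written does not cover them. This is a limitation rather than a failure of the method: in every case the thesis needs, the measure is $\sigma$-finite, and your argument extends routinely by exhausting the space by disjoint Borel sets of finite measure, applying the finite-measure construction to each $E \cap M_j$, and taking countable unions of the resulting sets $F_j$, $G_j$, $A_j$ (countable additivity of $\mu^*$ on the Lebesgue $\sigma$-algebra then recovers $\mu^*(E) = \mu(F)$). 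You should either state finiteness (or $\sigma$-finiteness) of $\mu$ as an explicit hypothesis or include this exhaustion step.
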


In other words, Lebesgue measurable sets can always be built from the
union of a Borel measurable set and a subset of a Borel nullset.

To close this subsection, for convenience we recall two standard
results from measure theory: the Lebesgue dominated convergence
theorem and Fatou's lemma.

\begin{thm}[{The Lebesgue dominated convergence theorem
    \cite[Thm.~5.4.9]{rana02:_introd_to_measur_and_integ}}]\label{thm:36}
  Let $(X, \Sigma, \nu)$ be a measure space, and let $\{f_k\}$ be a
  sequence of measurable functions on $X$ converging a.e.~to a
  function $f$.  Suppose further that there exists an $L^1$ function
  $g$ with $|f_k| \leq g$ a.e.  Then $f$ is also $L^1$ and
  \begin{equation*}
    \integral{X}{}{f}{d \mu} = \lim_{k \rightarrow \infty} \integral{X}{}{f_k}{d \mu}.
  \end{equation*}
  Furthermore,
  \begin{equation*}
    \lim_{k \rightarrow \infty} \integral{X}{}{|f - f_k|}{d \mu} = 0.
  \end{equation*}
\end{thm}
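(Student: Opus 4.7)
The plan is to derive the theorem from Fatou's lemma, which asserts that for any sequence $\{h_k\}$ of nonnegative measurable functions, $\int_X \liminf_k h_k \, d\mu \leq \liminf_k \int_X h_k \, d\mu$. Since Fatou's lemma is the companion statement mentioned in the same paragraph and is typically established independently (via the monotone convergence theorem applied to $\inf_{j \geq k} h_j$), I will take it as given.

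First I would handle integrability of $f$. Since $|f_k| \leq g$ a.e.\ and $f_k \to f$ a.e., we get $|f| \leq g$ a.e., and in particular $f \in L^1$ because $g \in L^1$. Measurability of $f$ is standard since it is an a.e.\ pointwise limit of measurable functions (and, on a complete measure space, this gives measurability directly; otherwise one modifies $f$ on a nullset).

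Next, the main step: apply Fatou's lemma to the two nonnegative sequences $g + f_k$ and $g - f_k$. For the first,
\begin{equation*}
  \integral{X}{}{(g + f)}{d\mu} \leq \liminf_{k \to \infty} \integral{X}{}{(g + f_k)}{d\mu},
\end{equation*}
which after subtracting the finite quantity $\int g \, d\mu$ yields $\int f \, d\mu \leq \liminf_k \int f_k \, d\mu$. For the second,
\begin{equation*}
  \integral{X}{}{(g - f)}{d\mu} \leq \liminf_{k \to \infty} \integral{X}{}{(g - f_k)}{d\mu} = \integral{X}{}{g}{d\mu} - \limsup_{k \to \infty} \integral{X}{}{f_k}{d\mu},
\end{equation*}
which rearranges to $\limsup_k \int f_k \, d\mu \leq \int f \, d\mu$. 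Combining the two inequalities gives the first conclusion.

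For the second conclusion, I would apply exactly the same argument to the sequence $|f - f_k|$, which converges to $0$ a.e.\ and is dominated by $2g \in L^1$ (since $|f - f_k| \leq |f| + |f_k| \leq 2g$ a.e.). Fatou applied to $2g - |f - f_k| \geq 0$ gives $\int 2g \, d\mu \leq \liminf_k \int (2g - |f - f_k|)\, d\mu$, whence $\limsup_k \int |f - f_k| \, d\mu \leq 0$, forcing the limit to be $0$. The only genuine subtlety is making sure the subtraction step is legitimate, which is why integrability of $g$ is essential; there is no real obstacle beyond carefully citing Fatou's lemma.
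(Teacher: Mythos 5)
Your proof is correct: the two-sided Fatou argument applied to $g + f_k$ and $g - f_k$, followed by the same trick on $2g - |f - f_k|$, is the classical derivation of dominated convergence, and your handling of the integrability of $f$ and the legitimacy of subtracting $\int g \, d\nu$ is fine. There is nothing in the paper to compare it against, however: Theorem \ref{thm:36} is quoted purely as a standard measure-theoretic tool, with the proof delegated to the cited reference \cite[Thm.~5.4.9]{rana02:_introd_to_measur_and_integ}, so you have supplied an argument the paper deliberately omits. One small remark: the version of Fatou's lemma the paper records (Theorem \ref{thm:37}) is the form with a uniform bound on the integrals, whereas you invoke the general $\liminf$ form; that form is standard and independently provable from monotone convergence, as you say, so this is a difference of citation rather than a gap.
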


\begin{thm}[{Fatou's lemma
    \cite[Thm.~2.8.3]{bogachev07:_measur_theor}}]\label{thm:37}
  Let $(X, \Sigma, \nu)$ be a measure space, and let $\{f_k\}$ be a
  sequence of nonnegative measurable functions on $X$ converging
  a.e.~to a function $f$.  Suppose that there exists a constant $K <
  \infty$ such that
  \begin{equation*}
    \sup_k \integral{X}{}{f_k}{d \mu} \leq K.
  \end{equation*}
  Then the function $f$ is integrable and
  \begin{equation*}
    \integral{X}{}{f}{d \mu} \leq K.
  \end{equation*}
  In addition,
  \begin{equation*}
    \integral{X}{}{f}{d \mu} \leq \liminf_{k \rightarrow \infty}
    \integral{X}{}{f_k}{d \mu}.
  \end{equation*}
\end{thm}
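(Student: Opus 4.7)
The plan is to prove Fatou's lemma in the standard way by constructing an increasing sequence of auxiliary functions and invoking the monotone convergence theorem, which I will take as a known prerequisite (it is the natural companion of Theorem \ref{thm:36} that one typically establishes first in a measure theory course). The main idea is that although the $f_k$ need not converge monotonically, their pointwise infima do, and these infima control the $f_k$ from below, so that the liminf inequality drops out.

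Concretely, I would define, for each $k \in \N$,
\begin{equation*}
  g_k := \inf_{j \geq k} f_j.
\end{equation*}
First I would check that each $g_k$ is measurable: this follows because the infimum of a countable family of measurable functions is measurable (standard fact, since $\{g_k < c\} = \bigcup_{j \geq k} \{f_j < c\}$). Each $g_k$ is nonnegative because the $f_j$ are, and by construction $g_k \leq g_{k+1}$ for every $k$. The next step is to identify the pointwise limit of $\{g_k\}$: at every $x \in X$ where $f_k(x) \to f(x)$, one has $\lim_{k \to \infty} g_k(x) = \liminf_{k \to \infty} f_k(x) = f(x)$, so $g_k \to f$ almost everywhere. (In particular $f$ is measurable as the a.e.\ limit of measurable functions.)

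Now I would apply the monotone convergence theorem to $\{g_k\}$, obtaining
\begin{equation*}
  \integral{X}{}{f}{d \mu} = \lim_{k \rightarrow \infty} \integral{X}{}{g_k}{d \mu}.
\end{equation*}
Since $g_k \leq f_k$ pointwise, monotonicity of the integral gives $\int g_k\, d\mu \leq \int f_k\, d\mu$ for every $k$, and passing to the liminf yields
\begin{equation*}
  \integral{X}{}{f}{d \mu} = \liminf_{k \rightarrow \infty} \integral{X}{}{g_k}{d \mu} \leq \liminf_{k \rightarrow \infty} \integral{X}{}{f_k}{d \mu}.
\end{equation*}
This is the second claimed inequality. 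Combining it with the hypothesis $\sup_k \int f_k\, d\mu \leq K$ immediately gives $\int f\, d\mu \leq K < \infty$, showing integrability of $f$ and the first claimed inequality.

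The only step requiring any real care is the appeal to the monotone convergence theorem, which itself is usually proved either by simple function approximation together with the definition of the Lebesgue integral, or as a corollary of the countable additivity of the measure of the subgraphs. Neither is deep, but one should be careful that the standard MCT formulation allows the limit to be $+\infty$, because at this stage of the argument we do not yet know that $f$ is integrable; the hypothesis $\sup_k \int f_k\, d\mu \leq K$ is used only at the very end to rule out $\int f\, d\mu = \infty$. The a.e.\ (rather than everywhere) convergence is harmless: one discards a nullset on which the pointwise argument fails, and since integrals ignore nullsets the conclusion is unaffected.
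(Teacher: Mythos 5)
Your proof is correct and is the standard argument for Fatou's lemma via the auxiliary functions $g_k := \inf_{j \geq k} f_j$ and the monotone convergence theorem; note that the paper itself offers no proof, quoting the result directly from the literature (Bogachev, Thm.~2.8.3), where essentially this same argument is given. Your care about allowing the MCT limit to be $+\infty$ before invoking the bound $K$, and about discarding the nullset where convergence fails, is exactly right.
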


\subsection{The manifolds of positive functions and volume
  forms}\label{sec:manif-posit-funct}

We will denote the set of positive $C^\infty$ functions on the base
manifold $M$ by $\pos$.  By the considerations of Subsection
\ref{sec:manifolds-mappings}, $\pos$ is a Fréchet manifold, since it
can be identified with the space of smooth sections of the trivial
fiber bundle $M \times (0, \infty)$.  (Alternatively, one can view it
as an open set in the Fréchet space of smooth sections of the vector
bundle $M \times \R$.)  It is not hard to see that $\pos$ is even a
Fréchet Lie group with respect to the group operation of pointwise
multiplication---that is, it is a Fréchet manifold such that the
multiplication of two elements is a smooth map, as is the map sending
an element to its multiplicative inverse.

Similarly, if we denote by $\V$ the set of smooth volume forms on $M$,
then this is a Fréchet manifold.  One can see this either by viewing
it as an open set of $\Omega^n(M)$, the Fréchet space of
highest-order differential forms on $M$ (it is the sections of the
line bundle $\Lambda^n T^* M$), or by viewing $\V$ as the smooth
sections of the fiber bundle of positive $n$-forms on $M$.

Given any volume form $\mu \in \V$ and any $n$-form $\alpha \in
\Omega^n(M)$, there exists a unique $C^\infty$ function, denoted by
$(\alpha / \mu)$, such that
\begin{equation}\label{eq:118}
  \alpha = \left( \frac{\alpha}{\mu} \right) \mu.
\end{equation}
This fact is easy to deduce from the coordinate representations of
$\alpha$ and $\mu$, along with the fact that the coefficient of $\mu$
is positive in any coordinate chart because $\mu$ is a volume form.

If $\alpha$ is also a smooth volume form, then $(\alpha / \mu)$ is
additionally a positive function.  If we consider the measures induced
by $\alpha$ and $\mu$, then $(\alpha / \mu)$ coincides with the
\emph{Radon-Nikodym derivative}
\cite[Dfn.~9.1.16]{rana02:_introd_to_measur_and_integ} of $\alpha$
with respect to $\mu$.  That is, for any measurable set $E \subseteq
M$, we have
\begin{equation*}
  \integral{E}{}{}{\alpha} = \integral{E}{}{\left(
      \frac{\alpha}{\mu} \right)}{\mu}.
\end{equation*}
We just note that the Radon-Nikodym derivative is defined in general
as follows.  Say we are given a space $X$ with a $\sigma$-algebra
$\Sigma$, as well as two $\sigma$-finite measures $\nu$ and $\nu_0$ on
$\Sigma$.  Furthermore, suppose that $\nu_0$ is \emph{absolutely
  continuous} with respect to $\nu$, that is, $\nu_0(E) = 0$ for all
$E \subseteq X$ with $\nu(E) = 0$.  Then there exists a nonnegative
measurable function $f$ on $X$, called the Radon-Nikodym derivative,
such that
\begin{equation*}
  \integral{E}{}{}{d \nu_0} = \integral{E}{}{f}{d \nu}.
\end{equation*}

The considerations above suggest a natural diffeomorphism between $\V$
and $\pos$.  Namely, if we choose any volume form $\mu \in \V$, then
we can define a map
\begin{equation}\label{eq:127}
  \nu \mapsto \left( \frac{\nu}{\mu} \right),
\end{equation}
which as we have seen maps $\V$ into $\pos$.  It is not hard to see
that this map is bijective.  To see that it is smooth, we simply note
that it is the restriction to $\V$ of the linear map $\alpha \mapsto
(\alpha / \mu)$, which maps $\Omega^n(M)$ into $C^\infty(M)$.

\begin{rmk}\label{rmk:13}
  Note that the function $(\alpha / \mu)$ can be more generally
  defined for \emph{any} $n$-form $\alpha$ and \emph{any} volume form
  $\mu$, including those that are not smooth, or continuous, or even
  measurable.  The function will be smooth/continuous/measurable if
  both $\alpha$ and $\mu$ are, as is easily seen in a coordinate
  chart.  It is easy to show (or one may consult
  \cite[Prop.~5.2.6]{rana02:_introd_to_measur_and_integ}) that if
  $\alpha$ is measurable and nonnegative, then it induces a measure on
  $M$, defined by fixing any volume form $\mu \in \V$ and setting
  \begin{equation*}
    \integral{E}{}{}{\alpha} := \integral{E}{}{\left(
        \frac{\alpha}{\mu} \right)}{\mu}
  \end{equation*}
  for any measurable $E \subseteq M$.  Furthermore, this measure is
  absolutely continuous with respect to $\mu$.
\end{rmk}

If $g$ is a Riemannian metric on $M$, then it induces a volume form on
$M$ given in local coordinates $x^1, \dots, x^n$ by
\begin{equation}\label{eq:119}
  \mu_g = \sqrt{\det g}\, dx^1 \cdots dx^n.
\end{equation}
If $g_0$ and $g_1$ are two Riemannian metrics on $M$, then locally,
the Radon-Nikodym derivative of $\mu_{g_1}$ with respect to
$\mu_{g_0}$ is given by
\begin{equation}\label{eq:8}
  \left( \frac{\mu_{g_1}}{\mu_{g_0}} \right) = \sqrt{\frac{\det
      g_1}{\det g_0}} = \sqrt{\det (g_0^{-1} g_1)}.
\end{equation}
Note that this is a well-defined function on $M$ by the discussion of
Subsection \ref{sec:endom-bundle-m}.

This completes our general geometric considerations.  We now move on
to the study of Riemannian metrics on Fréchet manifolds.

\section{Weak Riemannian manifolds}\label{sec:weak-riem-manif}

The manifold of metrics with its $L^2$ metric, the object of study of
this thesis, is an example of what is called a weak Riemannian
manifold.  In this section, we will describe and explore these objects
a little bit.

It is well known that on a finite-dimensional vector space, all
positive-definite scalar products are equivalent---i.e., every
positive-definite scalar product induces the same topology on the
space.  In infinite-dimensional vector spaces, this is no longer the
case---there are many inequivalent positive-definite scalar products,
with differing topologies---a simple example might be the $C^k$ and
$H^s$ topologies on the space of smooth functions $[0,1] \rightarrow
\R$.  As one might naturally expect, this linear phenomenon has an
analog in nonlinear spaces, i.e., manifolds.

In manifold theory, the nonlinear analog of a positive-definite scalar
product on a vector space is a Riemannian metric on a manifold.  Of
course, this is just a positive-definite scalar product on the
linearization of the manifold at each point (i.e., its tangent spaces)
that varies in a smooth way as we move from point to point.  (We'll
make a formal definition of infinite-dimensional Riemannian metrics
soon; for the moment, let's just take this as our heuristic definition
for purposes of the introductory discussion.)  There is also a
nonlinear analog of the difference between finite- and
infinite-dimensional spaces as described above.

On a finite-dimensional Riemannian manifold $(N, \gamma)$ modeled on
$\R^n$, the tangent space $T_x N$ is, via a choice of coordinates,
isomorphic to $\R^n$ for each $x \in N$.  The equivalence of all
scalar products on $\R^n$ implies that the scalar product induced by
the Riemannian metric, when viewed as a scalar product on $\R^n$, is
equivalent to the Euclidean scalar product.  In particular, it induces
the standard topology on $\R^n$.

In the case of an infinite-dimensional Riemannian manifold $(N,
\gamma)$ modeled on a Hilbert space $E$, we cannot necessarily say
that the scalar product induced by $\gamma$ on a tangent space $T_x N$
is equivalent to the Hilbert space scalar product of $E$.  Therefore,
the topology that $\gamma$ induces on $T_x N$ \emph{may differ from
  the topology of $E$}.  Thus, we can distinguish two types of
Riemannian metrics on a Fréchet (or, as a special case, Hilbert)
manifold.  We call $\gamma$ a \emph{strong Riemannian metric} if it
induces the model space topology on each tangent space, and a
\emph{weak Riemannian metric} if it induces a weaker topology.

This subtle but important distinction between the two types of metrics
leads to a vast gulf in the two theories one can develop around each
structure.  For a strong Riemannian metric, one can reproduce most of
the important results in finite-dimensional Riemannian geometry.  For
example, the Levi-Civita connection, geodesics, and the exponential
mapping exist.  A strong Riemannian metric induces a distance function
that gives a metric space structure on the manifold.  In addition, the
topology induced from this metric space structure agrees with the
manifold's intrinsic topology.

\emph{None of the above-mentioned results hold in general for weak
  Riemannian manifolds.}

In this section, we will go into detail on these and other differences
between weak and strong Riemannian manifolds, as well as explore what
statements one can make about weak Riemannian metrics in the cases
where the corresponding statements for strong metrics break down.

Before we continue with formal definitions and results, though, we
make a couple of philosophical remarks.  We have found relatively few
references that systematically cover what results of standard
Riemannian geometry do and do not hold in this context, as most
authors naturally treat only those aspects that arise in the examples
they are considering.  Furthermore, to the author's knowledge, all
standard textbooks about Riemannian geometry on Hilbert manifolds,
such as \cite{klingenberg95:_rieman_geomet} and
\cite{lang95:_differ_and_rieman_manif}, work only with strong
Riemannian metrics, without explicitly mentioning the distinction
between the two types of metrics.  Therefore, even the most basic
results on weak Riemannian metrics seem not to have been formally
written down.  Later in the section, we will prove a few general
results that will come in useful to us.

Despite there being, to our knowledge, no comprehensive formal
treatment of them, weak Riemannian metrics are fundamental objects in
global analysis, which deals primarily with manifolds of sections of
fiber bundles over a finite-dimensional manifold.  Of course, one is
typically most interested in $C^\infty$ sections, and the space of
$C^\infty$ sections of a vector bundle is a proper Fréchet
space---proper meaning that the topology does not come from a (single)
norm.  Such a space carries \emph{only} weak Riemannian metrics, since
the existence of a strong Riemannian metric would give, via a
coordinate chart, a norm inducing the topology of the model
space---but this is impossible.

If one considers $H^s$ sections of a vector/fiber bundle, then strong
Riemannian metrics can be found.  However, since the choice of $s \in
\N$ is essentially arbitrary, one would have to choose a different
Riemannian metric on the manifold of sections for each $s$.  This
somewhat unsatisfactory situation leads one to generally pick a single
metric (i.e., use the same formula for each $s$), often one inducing
the $L^2$ topology.  Thus, one is again led back to working with weak
Riemannian metrics.

Hopefully we have convinced the reader of the importance of weak
Riemannian metrics.  We now move on to defining them, exploring some
of their deficiencies as compared with strong Riemannian metrics, and
then elaborating what weaker results one can prove about them in
general.

\subsection{(Weak) Riemannian Fréchet
  manifolds}\label{sec:weak-riem-frech}

A Riemannian metric on a Fré\-chet manifold is defined exactly
analogously to one on a finite-dimensional manifold, modulo the
distinction between weak and strong metrics mentioned above.

Recall that on a Banach manifold $N$ modeled on a Banach space $E$,
each tangent space $T_x N$ is naturally isomorphic to the model space
$E$, the isomorphism being given by any choice of coordinates around
$x$ (this choice is, of course, usually very non-canonical).  The same
holds true for Fréchet manifolds, and we keep this in mind as we
make the following definition.

\begin{dfn}\label{dfn:19}
  Let $N$ be a Fréchet manifold modeled on a Fréchet space $E$.  A
  \emph{Riemannian metric} $\gamma$ on $N$ is a choice of scalar
  product $\gamma(x)$ on $T_x N$ for each $x \in N$, such that for
  each $x \in N$, the following holds:
  \begin{enumerate}
  \item $\gamma$ is smooth in the sense that if $U$ is any open
    neighborhood of $x$ and $V, W$ are vector fields defined on $U$,
    then $\gamma(\cdot)(V, W) : U \rightarrow \R$ is a smooth local
    function;
  \item $\gamma(x)$ is a continuous (i.e., bounded) bilinear mapping;
    and
  \item $\gamma(x)$ is positive definite on $T_x N$.
  \end{enumerate}

  Furthermore, $\gamma$ is called
  \begin{enumerate}
  \item \emph{strong} if the topology induced by $\gamma$ coincides
    with the topology of the model space $E$; and
  \item \emph{weak} otherwise, i.e., if the topology induced by
    $\gamma$ is weaker than the model space topology.
  \end{enumerate}

  The pair $(N, \gamma)$ is called a \emph{Riemannian Fréchet
    manifold}.
\end{dfn}

To put it another way, $(N, \gamma)$ is a strong Riemannian Fréchet
manifold if its tangent spaces are complete with respect to $\gamma$,
and it is weak if the tangent spaces are incomplete with respect to
$\gamma$.

\begin{rmk}\label{rmk:21}
  There is no such thing as a Riemannian metric inducing a topology on
  the tangent space that is stronger than the manifold topology.  This
  is because in that case some vectors would have infinite norm---just
  think of the $H^{s+1}$ norm on $H^s$ functions, for example.
\end{rmk}

The first definition of weak Riemannian Hilbert manifolds that we know
of (though our knowledge is surely incomplete) is in
\cite{ebin70:_manif_of_rieman_metric}, the paper that founded the
study of the geometry of the manifold of metrics.  The generalization
to weak Riemannian Fréchet manifolds is natural and has been used in
several works.  In no particular order, here is a list of papers that
consider weak Riemannian manifolds (specifically, those that
explicitly deal with the questions posed by ``weakness'' and are not
mentioned elsewhere in this thesis):
\cite{biliotti04:_expon_map_of_weak_rieman_hilber_manif},
\cite{constantin03:_geodes_flow_diffeom_group_of_circl},
\cite{ebin70:_group_of_diffeom_and_motion},
\cite{marsden72:_darboux_theor_fails_for_weak_sympl_forms},
\cite{michor:_overv_of_rieman_metric_spaces},
\cite{michor98:_geomet_of_viras_bott_group},
\cite{misiolek93:_stabil_of_flows_of_ideal},
\cite{misiolek96:_conjug_point_in_d_mu_t} and
\cite{misiolek99:_expon_maps_of_sobol_metric_loop_group}.  We have
made no attempt to make this list complete---it is simply a smattering
of examples.

Let $(N, \gamma)$ be a Riemannian Fréchet manifold.  Just as in the
case of finite-dimensional Riemannian manifolds, we can use $\gamma$
to define a distance between points of $N$ by taking the infima of
lengths of paths.

Let $a \leq b$ be real numbers, and let $\alpha : [a,b] \rightarrow N$
be a piecewise $C^1$ path.  Define
\begin{equation*}
  L(\alpha) :=
  \integral{a}{b}{\sqrt{\gamma(\alpha(t))(\dot{\alpha}(t),\dot{\alpha}(t))}}{d
    t}.
\end{equation*}
Then, for any $x, y \in N$, we define
\begin{equation*}
  d_\gamma (x, y) := \inf_\alpha L(\alpha),
\end{equation*}
where the infimum is taken over all piecewise $C^1$ paths that start
at $x$ and end at $y$.

It is easy to see that $d_\gamma$ is a \emph{pseudometric}.  That is,
it has all the properties of a metric (in the sense of metric spaces)
other than positive-definiteness.  That $d_\gamma(x,y) =
d_\gamma(y,x)$, $d_\gamma(x,y) \geq 0$ and $d_\gamma(x,x) = 0$ for all
$x, y \in N$ is clear.  The triangle inequality for $d_\gamma$ then
follows from the fact that if we have a path from $x$ to $y$ and a
path from $y$ to $z$, the concatenation of the two is a path from $x$
to $z$ with length the sum of the two original paths.

Positive definiteness of the distance function is a trickier issue,
and in fact it only holds in general for strong Riemannian metrics!
For weak metrics, it may fail.  In fact, the example of the next
subsection shows that it may fail in the most spectacular way
possible---for some weak Riemannian manifolds, $d_\gamma(x,y) = 0$ for
all points $x,y \in N$.

After we have described the example and seen how bad things can get,
we will see what parts of the theory break down and allow such things
to happen.  After that we will try to partially rebuild.

\subsection{Pathological behavior of a weak Riemannian metric on the
  manifold of embeddings of $S^1$ into $\R^2$}\label{sec:path-behav-weak}

The following example is from
\cite{michor06:_rieman_geomet_spaces_of_plane_curves}, to which we
refer for more details.  We will give only a very sketchy and
conceptual presentation of one of their results.  There is no harm in
skipping this subsection and continuing on to the discussion of the
Levi-Civita connection in the next subsection.  On the other hand, the
reader interested in a complete description of this example should
consult \cite{michor06:_rieman_geomet_spaces_of_plane_curves}.

Let $C^\infty(S^1, \R^2)$ denote the vector space of all smooth
mappings of $S^1$ into $\R^2$.  This is a Fréchet space, as we saw
in Subsection \ref{sec:manifolds-mappings}.  We consider the open set
$\mathcal{E} \subset C^\infty(S^1, \R^2)$ of smooth embeddings of
$S^1$ into $\R^2$---in other words, this is the space of smooth,
parametrized, closed curves in $\R^2$.  As an open set of a Fréchet
space, it is trivially a Fréchet manifold.

Let $\D$ denote the group of smooth diffeomorphisms of the circle.  It
is a Fréchet Lie group, and it acts on $C^\infty(S^1, \R^2)$ from
the right by composition, i.e., pull-back: for $\varphi \in \D$ and $f
\in C^\infty(S^1, \R^2)$, the action is $\varphi^* f = f \circ
\varphi$.  If we restrict this action to $\mathcal{E}$, then it is
free, and it turns out that the quotient $\mathcal{E} / \D$ is a
smooth Fréchet manifold.

There exists a natural $\D$-invariant Riemannian metric on
$\mathcal{E}$.  It is a weak metric, as it induces the $L^2$ topology
on the tangent spaces.  To define it, let $f \in \mathcal{E}$ be any
embedding.  Since $\mathcal{E}$ is an open set of $C^\infty(S^1,
\R^2)$, the tangent space $T_f \mathcal{E}$ is canonically isomorphic
to $C^\infty(S^1, \R^2)$ itself, and we can think of $T_f\mathcal{E}$
as the space of vector fields on $f(S^1) \subset \R^2$.  That is, if
$\pi : S^1 \times \R^2 \rightarrow S^1$ is the projection, then
$T_f\mathcal{E}$ consists of maps $h : S^1 \rightarrow S^1 \times
\R^2$ with $\pi \circ h = f$.  With this in mind, we define for any
$h, k \in C^\infty(S^1, \R^2) \cong T_f \mathcal{E}$:
\begin{equation*}
  (\!( h, k )\!)_f := \integral{S^1}{}{\langle h(\theta), k(\theta)
    \rangle |\partial_\theta f(\theta)|}{d \theta},
\end{equation*}
where $\langle \cdot, \cdot \rangle$ is the Euclidean scalar product
on $\R^2$.  Describing this metric in words, we integrate the scalar
product of $h$ and $k$ with respect to the Euclidean volume form
pulled back along $f$.

Since $(\!( \cdot , \cdot )\!)$ is $\D$-invariant (as is relatively
easily computed), it descends to a weak Riemannian metric on
$\mathcal{E} / \D$.  Though it is outside the scope of this thesis to
prove this here, the Riemannian metric thus obtained induces a
distance function as described above, but the distance between any two
points vanishes!  Thus the Riemannian metric $(\!( \cdot , \cdot )\!)$
is, in some sense, a very bad metric on $\mathcal{E} / \D$.

\begin{figure}[t]
  \centering
  \includegraphics[width=11cm]{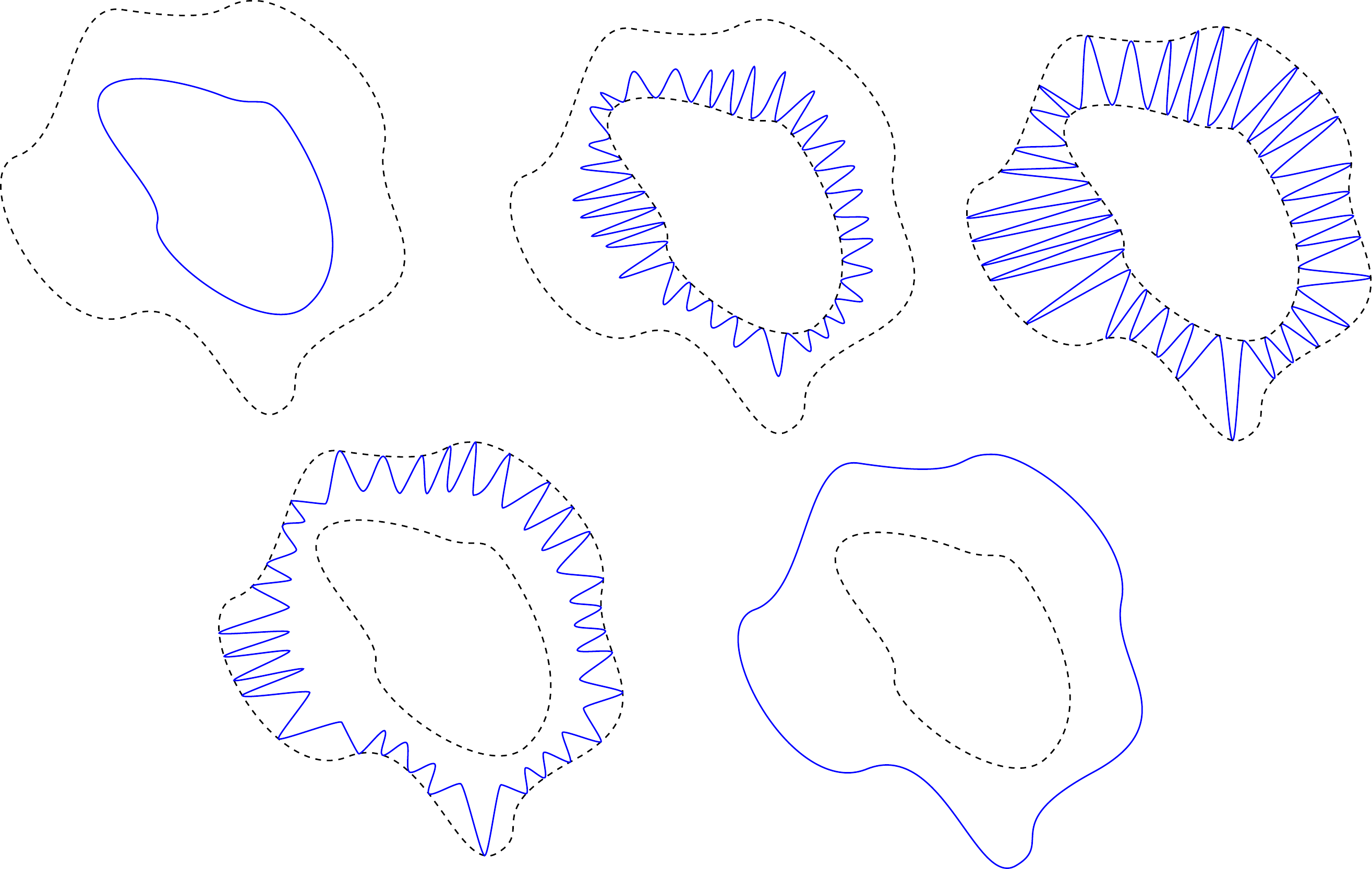}
  \caption{Wildly oscillating curves interpolating between $f_0$ (the
    inner curve) and $f_1$ (the outer curve).  First, the oscillating
    parts are extended until they reach $f_1$, then the rest
    follows.}\label{fig:circ-embed}
\end{figure}

Rather than prove this fact, we will simply give the idea of the
proof.  We can bound the distance between two points in $\mathcal{E} /
\D$ from above by the distance between any two points of their
preimages in $\mathcal{E}$. So take any path $f_t$ of curves
interpolating between $f_0$ and $f_1$---it happens that one can modify
this path to get a path whose image has arbitrarily small length when
projected to $\mathcal{E} / \D$, showing that the distance between the
endpoints in $\mathcal{E} / \D$ is zero.  To do this, we simply
construct a path $f_{n,t}$ from $f_t$ in which the curves oscillate
$n$ times as they interpolate between $f_0$ and $f_1$.  These
oscillating curves are illustrated in Figure \ref{fig:circ-embed}.  It
then happens that the length of $f_{n,t}$, when projected onto
$\mathcal{E} / \D$, goes to zero as $n \rightarrow \infty$.

So now we have an extremely pathological example of how bad the
distance function of a weak Riemannian metric can be.  Our next task
is to understand how such a phenomenon, which is impossible in the
finite-dimensional case, can occur.  To do so, we need to reexamine
some of the standard theorems of Riemannian geometry and see what can
be said about them in the infinite-dimensional case.

Before we conclude this subsection, let us just note that
\cite{michor05:_vanis_geodes_distan_spaces_of} generalizes the example
described here to embeddings of any given manifold into a given
Riemannian manifold.

\subsection{The Levi-Civita connection}\label{sec:levi-civita-conn}

On a finite-dimensional Riemannian manifold $(N, \gamma)$, and even on
a strong Riemannian Hilbert manifold, there is a unique connection
$\nabla$ that is both
\begin{enumerate}
\item \emph{metric}, i.e., $X\gamma(Y, Z) = \gamma(\nabla_X Y, Z) +
  \gamma(Y, \nabla_X Z)$ for all vector fields $X,$ $Y$ and $Z$; and
\item \emph{torsion-free}, i.e., $\nabla_X Y - \nabla_Y X = [X,Y]$ for
  all vector fields $X$ and $Y$.
\end{enumerate}
The existence and uniqueness of this connection relies on the Koszul
formula, which states that a connection is both metric and
torsion-free if and only if the following equation holds for all
vector fields $X$, $Y$, and $Z$:
\begin{equation}\label{eq:95}
  \begin{aligned}
    \gamma(\nabla_X Y, Z) &= X \gamma(Y, Z) + Y \gamma(X, Z) - Z
    \gamma(X, Y) \\
    &\quad - \gamma(X, [Y, Z]) - \gamma(Y, [X, Z]) + \gamma(Z, [X,
    Y]).
  \end{aligned}
\end{equation}
Existence and uniqueness of the element $\nabla_X Y$ at the point $x
\in N$ now follows from the Riesz representation theorem applied to
the Hilbert space $(T_x N, \gamma)$.

The Levi-Civita connection is then used to define geodesics as those
paths $\alpha$ for which $\nabla_{\dot{\alpha}}\dot{\alpha} = 0$.
Geodesics, in turn, are used to define the exponential mapping, as is
well known.

On a weak Riemannian manifold, this picture breaks down, as
\eqref{eq:95} fails to guarantee existence of the Levi-Civita
connection.  (If it exists, though, \eqref{eq:95} does guarantee its
uniqueness.)  Since the tangent spaces of $(N, \gamma)$ are incomplete
with respect to $\gamma$, \eqref{eq:95} only guarantees the existence
of $\nabla_X Y$ at $x \in N$ as an element of the completion of $T_x
N$ with respect to $\gamma$.  This is of course because the Riesz
representation theorem does not hold on incomplete spaces.

The result of this is: \emph{On a weak Riemannian manifold, the
  Levi-Civita connection does not exist in general.  As a consequence,
  geodesics and the exponential mapping do not exist in general,
  either.}

The usual strategy when dealing with weak Riemannian manifolds is the
following.  Without general theorems at one's disposal, various
properties that are automatic for strong Riemannian manifolds have to
be directly verified.  For example, in the next section, we will
sketch how, in \cite{ebin70:_manif_of_rieman_metric}, the existence of
the Levi-Civita connection for the manifold of metrics was shown.  In
essence, an explicit formula for $\nabla_X Y$ was computed using the
Koszul formula, and it was shown that the result is in fact a section
of the tangent bundle.

\subsection{The exponential mapping and distance function on a strong
  Riemannian manifold}\label{sec:expon-mapp-dist}

Subsection \ref{sec:path-behav-weak} gave an example of a weak
Riemannian manifold with an induced distance function that is not a
metric---i.e., that fails to be positive definite.  In contrast, for a
strong Riemannian manifold, the following theorem holds, as it does in
the finite-dimensional case:

\begin{thm}[{\cite[Thm.~1.9.5]{klingenberg95:_rieman_geomet}}]\label{thm:25}
  Let $(N, \gamma)$ be a strong Riemannian (Hilbert) manifold.  Then
  the induced distance function $d_\gamma$ is a metric on $N$, and the
  topology of $d_\gamma$ coincides with the topology of $N$.
\end{thm}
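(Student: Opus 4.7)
The plan is to reduce the theorem to a local statement in a coordinate chart around an arbitrary point $x_0 \in N$ and to exploit the strongness of $\gamma$ to compare, on a small neighborhood of $x_0$, the Riemannian length of any path with the Hilbert norm of the model space $E$, uniformly in the base point. The essential input is that ``strong'' means $\gamma(x_0)$ is an equivalent scalar product to the ambient Hilbert product $\langle \cdot, \cdot\rangle_E$ on $E \cong T_{x_0} N$; continuity of $\gamma$ as a smooth section of bilinear forms on $TN$ then propagates this equivalence to a whole neighborhood of $x_0$.

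More precisely, I would fix a chart $\phi : U \to E$ with $\phi(x_0) = 0$, which canonically identifies each $T_y N$ (for $y \in U$) with $E$. The first step is to produce constants $0 < c_0 \leq C_0$ with
\begin{equation*}
  c_0 \|v\|_E \leq \sqrt{\gamma(x_0)(v,v)} \leq C_0 \|v\|_E \qquad \text{for all } v \in E,
\end{equation*}
using the strongness hypothesis together with the open mapping theorem applied to the identity $(E, \langle\cdot,\cdot\rangle_E) \to (E, \gamma(x_0))$. Continuity of $\gamma$ then supplies an $r > 0$ such that, writing $V \subseteq U$ for the preimage under $\phi$ of the open ball of radius $r$ in $E$, one has the uniform two-sided estimate
\begin{equation*}
  \tfrac{c_0}{2} \|v\|_E \leq \sqrt{\gamma(y)(v,v)} \leq 2 C_0 \|v\|_E
\end{equation*}
for every $y \in V$ and every $v \in E$.

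With this uniform comparison in hand, I would derive two inequalities that together yield both conclusions. The upper bound comes from the Riemannian length of the chart-straight segment $t \mapsto \phi^{-1}(t\phi(y))$, giving $d_\gamma(x_0, y) \leq 2 C_0 \|\phi(y)\|_E$ for $y \in V$. The lower bound requires an ``escape'' argument: for a piecewise $C^1$ path $\alpha : [0,1] \to N$ from $x_0$ to $y \in V$, either $\alpha$ stays inside $V$ and
\begin{equation*}
  L(\alpha) \geq \tfrac{c_0}{2} \int_0^1 \|(\phi\circ\alpha)'(t)\|_E\, dt \geq \tfrac{c_0}{2} \|\phi(y)\|_E,
\end{equation*}
or else $\alpha$ first exits $V$ at some time $t_0 \in (0,1]$, in which case the same pointwise estimate applied to $\alpha|_{[0,t_0]}$ gives $L(\alpha) \geq \tfrac{c_0}{2} r$. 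Taking the infimum over $\alpha$ yields $d_\gamma(x_0, y) \geq \tfrac{c_0}{2} \min(\|\phi(y)\|_E, r)$.

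These two estimates sandwich $d_\gamma$ between multiples of the chart distance on a neighborhood of $x_0$, immediately giving both positive-definiteness of $d_\gamma$ and the equivalence of topologies at $x_0$; since $x_0$ was arbitrary, the theorem follows. The main obstacle I anticipate is the escape step, which requires the uniform pointwise lower bound $\gamma(y)(v,v) \geq c \|v\|_E^2$ to hold on an entire neighborhood of $x_0$ rather than just at $x_0$ itself. This is exactly where strongness enters nontrivially: for a weak metric the pointwise constant $c_0$ degenerates to zero, no such uniform lower bound is available, and the escape argument---and hence positive-definiteness---can fail, as illustrated by the Michor--Mumford example of the previous subsection.
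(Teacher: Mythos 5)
Your argument is correct, but it is not the route the text takes. The thesis quotes Theorem \ref{thm:25} from Klingenberg and sketches its proof through the exponential-map machinery: Theorem \ref{thm:26} (normal neighborhoods for $\exp_x$), Theorem \ref{thm:27} (radial paths minimize among paths in the image of $\exp_x$), and Theorem \ref{thm:28} (geodesics are locally length-minimizing), after which positive-definiteness and the equivalence of topologies are immediate. You instead bypass the Levi-Civita connection, geodesics and $\exp$ entirely: a chart, the norm-equivalence of $\gamma(x_0)$ with the model Hilbert norm (which indeed follows from strongness, either by the elementary ``same topology implies equivalent norms'' fact or, using completeness of $(T_{x_0}N,\gamma(x_0))$, by the open mapping theorem), propagation of this equivalence to a neighborhood by continuity of $\gamma$, and then the standard sandwich of $d_\gamma$ between multiples of the chart distance via the straight-segment upper bound and the first-exit (``escape'') lower bound. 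This is a genuinely more elementary proof: it needs only local operator-norm continuity of $y \mapsto \gamma(y)$ (part of the usual definition of a smooth strong metric, slightly stronger than the evaluation-on-vector-fields smoothness of Definition \ref{dfn:19}, but automatic in the Hilbert setting of Klingenberg) and it isolates exactly where weakness kills the conclusion --- the uniform lower bound $\gamma(y)(v,v) \geq c\|v\|_E^2$ on a neighborhood --- which matches the paper's later discussion of weak metrics and Lemma \ref{lem:9}. What the Klingenberg route buys in exchange is the stronger local structure (normal neighborhoods, local minimality of geodesics) that the thesis goes on to mimic in weakened form in Propositions \ref{prop:3}, \ref{prop:4} and \ref{prop:2}. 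The only points to tighten in your write-up are routine: choose $r$ small enough that the closed $r$-ball lies in $\phi(U)$, and justify that at the first exit time the path meets the sphere of radius $r$, so the lower bound $\tfrac{c_0}{2}\min(\|\phi(y)\|_E, r)$ holds for every competitor path.
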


The natural question that arises is, what goes wrong in the case of a
weak Riemannian manifold?  To answer this, we recall the main steps in
the proof of Theorem \ref{thm:25}.  The first is:

\begin{thm}[{\cite[Thm.~1.8.15]{klingenberg95:_rieman_geomet}}]\label{thm:26}
  Let $(N, \gamma)$ be a strong Riemannian (Hilbert) manifold.  Then
  there exists an open neighborhood $U \subseteq TN$ of $N$ such that
  the exponential mapping is defined and differentiable on $U$.

  Furthermore, for every $x \in N$, there exist positive numbers
  $\epsilon = \epsilon(x)$ and $\eta = \eta(x)$, with $\epsilon <
  \eta$, and a neighborhood $V$ of $x$ such that the following holds:
  \begin{enumerate}
  \item The mapping
    \begin{equation*}
      \exp_x|_{B_\epsilon (0)} : B_\epsilon (0) \rightarrow V,
    \end{equation*}
    where $B_\epsilon (0)$ is the open ball of radius $\epsilon$
    (w.r.t.~$\gamma$) around $0 \in T_x N$, is a diffeomorphism.
  \item For any $y, z \in V$, there exists a unique geodesic from $y$
    to $z$ with length less than $\eta$.
  \item For each $y \in V$, $\exp_y|B_\eta (0)$ is a diffeomorphism
    onto an open neighborhood $V_y$ of $y$, with $V \subseteq V_y$.
  \end{enumerate}
\end{thm}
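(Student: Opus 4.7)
The plan is to mirror the classical finite-dimensional argument, using that for a strong Riemannian metric the Levi-Civita connection $\nabla$ exists via the Koszul formula \eqref{eq:95} together with the Riesz representation theorem on the Hilbert space $(T_x N, \gamma(x))$. With $\nabla$ in hand, the geodesic equation $\nabla_{\dot\alpha}\dot\alpha = 0$ becomes a smooth second-order ODE on the Hilbert manifold $TN$, i.e.~the flow of a smooth spray. The standard existence, uniqueness and smooth dependence theory for ODEs on Banach manifolds then produces, for each $v \in TN$, a unique maximal geodesic $\alpha_v$ with $\dot\alpha_v(0) = v$, and yields an open neighborhood $U \subseteq TN$ of the zero section on which $\exp(v) := \alpha_v(1)$ is defined and smooth. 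This already gives the first part of the theorem.

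For assertion (1), I would fix $x \in N$ and compute $d(\exp_x)_{0}$ under the canonical identification $T_{0}(T_x N) \cong T_x N$: it is the identity. Since the tangent space is a Hilbert space (this is where \emph{strong} is crucial, so that the classical inverse function theorem applies), $\exp_x$ restricts to a smooth diffeomorphism of some open $\gamma$-ball $B_\epsilon(0) \subset T_x N$ onto an open neighborhood of $x$.

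To obtain the uniform statements (2) and (3), I would apply the inverse function theorem not to $\exp_x$ alone but to the map
\begin{equation*}
  E : U \longrightarrow N \times N, \qquad E(v) := (\pi(v), \exp(v)),
\end{equation*}
where $\pi : TN \to N$ is the bundle projection. Using $\nabla$ to split $T_{0_x}(TN) \cong T_x N \oplus T_x N$ into horizontal and vertical parts, the derivative $dE|_{0_x}$ takes the form $(\xi, w) \mapsto (\xi, \xi + w)$, which is a topological isomorphism of Hilbert spaces. Hence $E$ maps some open neighborhood $W$ of $0_x$ in $TN$ diffeomorphically onto an open neighborhood of $(x,x)$ in $N \times N$. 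Shrinking $W$ to a neighborhood of the form $\{v \in TN \mid \pi(v) \in V',\ \sqrt{\gamma(v,v)} < \eta\}$ and then choosing $V \subseteq V'$ small enough that $V \times V \subseteq E(W)$ gives (2): for each $y, z \in V$ the unique preimage $v \in W$ of $(y,z)$ is the initial velocity of the unique geodesic of length $< \eta$ from $y$ to $z$. Since $E|_W$ restricted to each fiber $T_y N \cap W$ is precisely $\exp_y|_{B_\eta(0)}$ composed with the inclusion $z \mapsto (y,z)$, one reads off (3) directly. Finally, further shrinking $V$ and the radius $\epsilon$ so that $\exp_x|_{B_\epsilon(0)}$ is onto $V$ gives (1) with the stated relation $\epsilon < \eta$.

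The main obstacle, and the reason all of this collapses for weak Riemannian manifolds, is twofold: one needs $\nabla$ to exist (which relies on Riesz and hence on completeness of the tangent spaces), and one needs $dE|_{0_x}$ to be a \emph{topological} isomorphism so that the Hilbert-space inverse function theorem applies. In the weak case the $\gamma$-topology on $T_x N$ differs from the model-space topology, so even when the spray and exponential map can be constructed, $E$ need not be a local diffeomorphism in the manifold structure, and the induced distance function can degenerate completely, as in the example of Subsection \ref{sec:path-behav-weak}.
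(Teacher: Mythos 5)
Your proposal is correct in outline, but note that the paper does not prove Theorem \ref{thm:26} at all—it is quoted directly from Klingenberg's book as a known result for strong Riemannian Hilbert manifolds. Your sketch (Levi-Civita connection via Koszul and Riesz, geodesic spray and Banach-manifold ODE theory for the exponential map, then the inverse function theorem applied to $E = (\pi, \exp)$ on a tube around $0_x$ to get the uniform statements) is essentially the standard textbook argument from that reference, and your closing remarks about where strongness enters match the paper's own discussion of why the result fails in the weak case.
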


Using this theorem, we have some control over the domain of definition
and the range of the exponential mapping.  The next step is to control
the lengths of paths contained within the image of the exponential
mapping:

\begin{thm}[{\cite[Thm.~1.9.2]{klingenberg95:_rieman_geomet}}]\label{thm:27}
  Suppose $(N, \gamma)$ is a strong Riemannian manifold.  Let $x \in
  N$, and suppose that $\exp_x$ is defined on an open neighborhood
  $U_x$ of $0 \in T_x N$.  Let $v: [0,1] \rightarrow U_x$ be any path
  with $v(0) = 0$, and let $\tilde{v} : [0,1] \rightarrow U_x$ be the
  straight-line path in $T_x N$ between $0$ and $v(1)$.  Finally,
  define paths in $N$ by $\alpha(t) := \exp_x(v(t))$ and
  $\tilde{\alpha}(t) := \exp_x(\tilde{v}(t))$.

  Then $L(\tilde{\alpha}) \leq L(\alpha)$, and equality holds if $v(t)
  = \tilde{v}(t(s))$, where $t(s)$ is a reparametrization with $t'(s)
  \geq 0$.

  Conversely, if $L(\tilde{\alpha}) = L(\alpha)$ and $D_{s
    v(t)}\exp_x$ has maximal rank for all $0 \leq s, t \leq 1$, then
  $v(t) = \tilde{v}(t(s))$, where $t(s)$ is a reparametrization with
  $t'(s) \geq 0$.
\end{thm}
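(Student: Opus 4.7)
The plan is to adapt the classical length-comparison argument via Gauss's lemma to the strong Riemannian setting; since $(N,\gamma)$ is strong, the tangent spaces are Hilbert spaces and the Levi-Civita connection exists genuinely on $TN$, so the finite-dimensional proof transfers essentially verbatim. The first step is to establish Gauss's lemma,
\begin{equation*}
\gamma_{\exp_x(v)}\bigl(D_v\exp_x(v),\, D_v\exp_x(w)\bigr) = \gamma_x(v,w),
\end{equation*}
for every $v\in U_x$ and every $w\in T_v(T_xN)\cong T_xN$. The standard proof uses the two-parameter variation $f(s,t):=\exp_x(s(v+tw))$; for each fixed $t$ the curve $s\mapsto f(s,t)$ is a geodesic, so $\nabla_{\partial_s}\partial_s=0$ and $|\partial_s f|_\gamma^2 \equiv |v+tw|^2$. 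Combining these with the metric and torsion-free properties of $\nabla$ gives
\begin{equation*}
\partial_s\gamma(\partial_s f,\partial_t f) = \gamma(\partial_s f,\nabla_{\partial_t}\partial_s f) = \tfrac{1}{2}\partial_t|\partial_s f|_\gamma^2,
\end{equation*}
which at $t=0$ equals the constant $\gamma_x(v,w)$; integrating from $s=0$, where $\partial_t f$ vanishes, to $s=1$ yields the lemma.

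With the lemma in hand, I would decompose the velocity of $v$. Where $v(t)\neq 0$, set $r(t):=\sqrt{\gamma_x(v(t),v(t))}$ and $\xi(t):=v(t)/r(t)$, so that $\dot v = \dot r\,\xi + r\,\dot\xi$ with $\gamma_x(\xi,\dot\xi)=0$. Applying $D_{v(t)}\exp_x$ and invoking Gauss's lemma twice (to see that $D_{v(t)}\exp_x(\xi)$ is a $\gamma$-unit vector orthogonal to $D_{v(t)}\exp_x(\dot\xi)$) gives
\begin{equation*}
|\dot\alpha(t)|_\gamma^2 = \dot r(t)^2 + r(t)^2\,|D_{v(t)}\exp_x(\dot\xi(t))|_\gamma^2 \;\geq\; \dot r(t)^2.
\end{equation*}
Integrating yields $L(\alpha) \geq \int_0^1 |\dot r(t)|\,dt \geq r(1)-r(0) = |v(1)|_{\gamma_x}$, while $\tilde\alpha(t)=\exp_x(tv(1))$ is a geodesic of constant speed $|v(1)|_{\gamma_x}$, so $L(\tilde\alpha)=|v(1)|_{\gamma_x}$, proving the inequality. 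For the equality case, $L(\alpha)=L(\tilde\alpha)$ forces both $\dot r\geq 0$ a.e.\ and $r(t)^2\,|D_{v(t)}\exp_x(\dot\xi(t))|_\gamma^2 = 0$ a.e.; the maximal-rank hypothesis on $D_{sv(t)}\exp_x$ yields injectivity of $D_{v(t)}\exp_x$, promoting the second condition to $\dot\xi=0$ wherever $r>0$, so $\xi$ is constant there and $v$ is a monotone radial reparametrization of $\tilde v$.

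The only step deserving real care beyond the finite-dimensional argument is Gauss's lemma itself, which must be set up so that every covariant derivative and variation field is a genuine object in $TN$ rather than a mere element of a $\gamma$-completion. In the strong setting this is automatic from the existence of the Levi-Civita connection on $TN$ discussed in the previous subsection, so I expect no serious obstacle. A minor bookkeeping nuisance is the set $\{t:v(t)=0\}$, where $\xi$ is undefined; since $v$ is piecewise $C^1$ and $r$ is continuous, this is handled by restricting the polar argument to the maximal open subintervals of $[0,1]$ on which $r>0$ and bounding the contribution from $\{r=0\}$ via the integrability of $|\dot\alpha|_\gamma$.
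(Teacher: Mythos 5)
Your proof is correct, and it is the standard Gauss-lemma/polar-coordinates argument. Note that the paper itself does not prove Theorem \ref{thm:27} (it is quoted from Klingenberg); however, your route coincides with the computation the paper does carry out for its weak-Riemannian analogue, Proposition \ref{prop:4}, where the path is likewise written in polar form $r(s)\,v(s)$, the Gauss-lemma orthogonality is obtained in-line from the family $c_s(t)=\exp_x(t\,v(s))$ (constant geodesic speed, symmetry of the mixed covariant derivatives, the inner product $\gamma(\partial_s c_s(t),\partial_t c_s(t))$ being $t$-independent and vanishing at $t=0$), and the pointwise estimate $\|\alpha'\| \geq |r'|$ is then integrated. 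Your additional points — isolating Gauss's lemma as a separate statement, the equality analysis via the maximal-rank (injectivity) hypothesis, and the treatment of the set where $v(t)=0$ (most cleanly handled by running the polar argument on $[\,\sup\{t : v(t)=0\},\,1\,]$) — are all sound and consistent with the strong-Riemannian setting, where the Levi-Civita connection and a smooth exponential map are available.
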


What this theorem essentially says is the following.  Let $\exp_x$ be
defined on $U_x \subseteq T_x N$, with range $V_x \subseteq N$, and
let $y \in V_x$.  Then among the class of paths in $V_x$ from $x$ to
$y$, the unique shortest path (up to reparametrization) is the radial
geodesic emanating from $x$ and ending at $y$.

What Theorem \ref{thm:27} does not tell us is that the radial geodesic
from $x$ to $y$ is the shortest path among the class of \emph{all
  paths} in $N$ from $x$ to $y$.  However, combining Theorems
\ref{thm:26} and \ref{thm:27} gives us what we want:

\begin{thm}[{\cite[Thm.~1.9.3]{klingenberg95:_rieman_geomet}}]\label{thm:28}
  Suppose $(N, \gamma)$ is a strong Riemannian manifold.  Let $x \in
  N$, and let $\epsilon$, $\eta$ and $V$ be as in Theorem
  \ref{thm:26}.  Suppose that $y \in V$.  Then any geodesic starting
  from $y$ of length less than $\eta$ is a path of minimal length
  between its endpoints.
\end{thm}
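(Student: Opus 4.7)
The plan is to combine Theorem \ref{thm:26}(3), which provides an exponential chart of radius $\eta$ at $y$, with Theorem \ref{thm:27}, which identifies radial geodesics as length-minimizers inside such a chart. Let $c : [0, a] \to N$ be a geodesic starting at $y$ with $L(c) < \eta$, and set $z := c(a)$. Reparametrizing to constant speed, we may write $c(t) = \exp_y(tv)$ for some $v \in T_y N$ with $a \|v\|_\gamma = L(c) < \eta$, so $z$ lies in $V_y := \exp_y(B_\eta(0))$, which is open in $N$ by Theorem \ref{thm:26}(3). Given an arbitrary piecewise $C^1$ path $\alpha : [0, 1] \to N$ from $y$ to $z$, the task is to show $L(\alpha) \geq L(c)$.

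The easy case is when $\alpha([0,1]) \subseteq V_y$. Then $\beta := \exp_y^{-1} \circ \alpha$ is a path in $B_\eta(0)$ from $0$ to $av$, and the straight-line path $\tilde\beta(t) = tav$ satisfies that $\exp_y \circ \tilde\beta$ is a reparametrization of $c$. Applying Theorem \ref{thm:27} at the base point $y$ to $\beta$ and $\tilde\beta$ gives $L(c) = L(\exp_y \circ \tilde\beta) \leq L(\exp_y \circ \beta) = L(\alpha)$. The other case is when $\alpha$ leaves $V_y$. I would set
\begin{equation*}
  t_0 := \inf \{ t \in [0,1] : \alpha(t) \notin V_y \},
\end{equation*}
so $t_0 > 0$ and $\alpha([0, t_0)) \subseteq V_y$, and claim that the continuous function $f(t) := \| \exp_y^{-1}(\alpha(t)) \|_\gamma$ on $[0, t_0)$ attains values arbitrarily close to $\eta$. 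Granting the claim, for each $s$ with $L(c) < s < \eta$ the intermediate value theorem yields $t_s \in [0, t_0)$ with $f(t_s) = s$. Applying Theorem \ref{thm:27} to $\exp_y^{-1} \circ \alpha|_{[0, t_s]}$ compared against the straight-line path in $T_y N$ from $0$ to $\exp_y^{-1}(\alpha(t_s))$, which exponentiates to a radial geodesic of length $s$, yields $L(\alpha) \geq L(\alpha|_{[0, t_s]}) \geq s$. Letting $s \to \eta$ then gives $L(\alpha) \geq \eta > L(c)$.

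The main obstacle is justifying the claim that $\sup_{t < t_0} f(t) = \eta$. Heuristically, if $f \leq r < \eta$ throughout $[0, t_0)$, then the lifted path $\exp_y^{-1} \circ \alpha|_{[0, t_0)}$ has image in the closed ball $\overline{B_r(0)} \subset B_\eta(0)$, and continuity of $\exp_y$ together with the convergence $\alpha(t) \to \alpha(t_0)$ as $t \to t_0^-$ should force $\alpha(t_0) \in V_y$, contradicting the definition of $t_0$. In the finite-dimensional case this follows at once from compactness of $\overline{B_r(0)}$; in the Hilbert setting one must work harder, exploiting completeness of $(T_y N, \gamma(y))$ (which is precisely the strong-ness hypothesis on $\gamma$) together with the fact that $\exp_y^{-1}$ is a homeomorphism onto $V_y$ to show that $\exp_y^{-1}(\alpha(t))$ is Cauchy as $t \to t_0^-$ with limit in $B_\eta(0)$. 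This is exactly the step at which the distinction between strong and weak Riemannian metrics enters: the same argument would fail without the completeness of the tangent space, which is consistent with the pathologies described in Subsection \ref{sec:path-behav-weak}.
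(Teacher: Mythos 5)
First, a point of comparison: the paper does not prove Theorem \ref{thm:28} at all --- it is quoted verbatim from \cite[Thm.~1.9.3]{klingenberg95:_rieman_geomet} as background for the strong case, so there is no in-paper argument to measure your proposal against. Your overall scheme (write the geodesic as $\exp_y(tv)$, treat paths staying in $V_y$ via Theorem \ref{thm:27}, and handle paths leaving $V_y$ via the radial function $f(t)=\|\exp_y^{-1}(\alpha(t))\|_\gamma$ and the intermediate value theorem) is the standard one, and the two cases you do carry out are fine.

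However, the step you flag as ``the main obstacle'' is a genuine gap, and the repair you sketch does not close it. You need: if $\alpha([0,t_0))\subset V_y$ and $\alpha(t_0)\notin V_y$, then $\sup_{t<t_0}f(t)=\eta$; equivalently, a path cannot exit $V_y$ while its lift stays inside a ball $\overline{B_r(0)}$ with $r<\eta$, i.e.\ $\mathrm{cl}_N\bigl(\exp_y(B_r(0))\bigr)\subset V_y$. In finite dimensions this is the compactness of $\overline{B_r(0)}$, as you say; in the Hilbert setting your proposed substitute --- ``completeness of $T_yN$ plus the fact that $\exp_y^{-1}$ is a homeomorphism onto $V_y$ implies the lifts are Cauchy'' --- is not a proof. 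A homeomorphism between open sets gives no uniform modulus of continuity, and continuity of $\exp_y^{-1}$ is only available at points of $V_y$, whereas $\alpha(t)\to\alpha(t_0)\notin V_y$; so convergence of $\alpha(t)$ yields no control on the lifts. Indeed, if you could show the lifts are Cauchy with limit $w\in B_\eta(0)$, then continuity of $\exp_y$ would give $\alpha(t_0)=\exp_y(w)\in V_y$ --- that is, the proposed ``fix'' is essentially a restatement of the conclusion you need. To make a Cauchy argument honest you would need something like a uniform bound on $\|(D\exp_y(w))^{-1}\|$ for $w\in\overline{B_r(0)}$ (or uniform bi-Lipschitz control of $\exp_y$ there), and neither is supplied by Theorem \ref{thm:26}; note that even the Lipschitz estimate $|f(t')-f(t)|\le L(\alpha|_{[t,t']})$ one can extract from the Gauss-lemma machinery (cf.\ Proposition \ref{prop:4}) only controls the norm of the lift, not the lift itself. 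This is precisely the delicate point the thesis itself sidesteps in its weak-metric analogues: Proposition \ref{prop:2} claims minimality only among paths remaining in $V_x$, Theorem \ref{thm:4} assumes $\exp_x$ is a diffeomorphism onto all of $N$, and the discussion at the end of Subsection \ref{sec:expon-mapp-dist-1} is exactly about paths that leave and re-enter the chart. So your proof is correct only modulo this unproven claim, and as written the justification offered for it would not survive scrutiny; it needs either the argument from Klingenberg's text or an explicit lemma of the above type.
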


Thus, geodesics are, locally, length-minimizing paths.  Using Theorem
\ref{thm:28} and our previous statement that the distance function of
a Riemannian manifold is always a pseudometric, it is then trivial to
prove Theorem \ref{thm:25}.

\subsection{The exponential mapping and distance function on a weak
  Riemannian manifold}\label{sec:expon-mapp-dist-1}

We now return to weak Riemannian manifolds.  The question remains:
What goes wrong when we try to extend the results of Subsection
\ref{sec:expon-mapp-dist}?

The theorem that breaks down, it turns out, is Theorem \ref{thm:26}.
This is true even if we assume that the Levi-Civita connection exists.
It even breaks down if we assume that the exponential mapping exists
and is a diffeomorphism when restricted to some open neighborhood of
the zero section in $T M$---none of which are guaranteed on a weak
Riemannian manifold!

The problem is the following: on a strong Riemannian manifold $(N,
\gamma)$, a neighborhood of $0 \in T_x N$ contains an open
$\gamma$-ball of some sufficiently small radius.  However, if $(N,
\gamma)$ is a weak Riemannian manifold, since the topology induced by
$\gamma$ is weaker than the manifold topology of $T_x N$, an open
neighborhood of $0$ (in the manifold topology) \emph{need not
  necessarily} contain any open $\gamma$-balls.

This phenomenon does indeed occur---it is not too hard to see that it
occurs for the example of Subsection \ref{sec:path-behav-weak}, and we
will see below, in Section \ref{sec:manifold-metrics-m}, that the
manifold of metrics also exhibits this phenomenon.

In the case of the manifold of metrics, we will eventually be able to
show, in Section \ref{sec:basic-metr-geom}, that the $L^2$ metric does in fact induce a
metric space structure.  However, the metric space topology does not
agree with the manifold topology, and so strange phenomena that are
absent for strong Riemannian metrics occur.  For example, we will
later show in Lemma \ref{lem:9} that there is no metric ball of
\emph{any} positive radius around \emph{any} point of the manifold of
metrics!  This is, of course, tied very closely to the analogous fact
about the tangent space.

For now, though, we put aside the nastier behavior of weak Riemannian
manifolds and show what results actually \emph{do} hold for them in
general.  They will necessarily be weaker than the results for strong
Riemannian manifolds, but they will still come in handy later on and
are of interest in their own right.

Our goal is to prove statements analogous to, but weaker than, the
theorems of Subsection \ref{sec:expon-mapp-dist}.  We will follow a
very similar course, making only minor modifications to the statements
and proofs in \cite{klingenberg95:_rieman_geomet} as necessary.

Our first theorem is familiar from finite-dimensional Riemannian
geometry and is quite simple to prove.

\begin{prop}\label{prop:3}
  Let $(N,\gamma)$ be a weak Riemannian manifold on which the
  Levi-Civita connection exists.  Let $p \in N$ and $v \in T_p N$, and
  suppose that $v$ is in the domain of $\exp_p$.  Then the geodesic
  $\alpha(t) := \exp_p (t v)$, $t \in [0,1]$, has length $\| v
  \|_\gamma$.
\end{prop}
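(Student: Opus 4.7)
The plan is to exploit the standard fact that the $\gamma$-norm of the velocity of a geodesic is constant along the curve, which follows purely formally from the metric property of the Levi-Civita connection. Once that is established, the length integral collapses trivially. The crucial observation is that the constancy argument is a purely algebraic consequence of the defining equation of a metric connection, so it transfers without modification from the strong to the weak setting; the only thing being assumed is the \emph{existence} of $\nabla$, which is given in the hypothesis.

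First I would recall that, by definition of the exponential mapping, $\alpha(t) = \exp_p(tv)$ is a geodesic with $\alpha(0) = p$ and $\dot{\alpha}(0) = v$, so it satisfies $\nabla_{\dot{\alpha}}\dot{\alpha} = 0$ on $[0,1]$. Next, I would consider the real-valued function
\begin{equation*}
  f(t) := \gamma(\alpha(t))(\dot{\alpha}(t), \dot{\alpha}(t))
\end{equation*}
and differentiate it using the metric property of $\nabla$:
\begin{equation*}
  f'(t) = 2\, \gamma(\alpha(t))\bigl(\nabla_{\dot{\alpha}}\dot{\alpha}, \dot{\alpha}\bigr) = 0.
\end{equation*}
Hence $f$ is constant, and since $f(0) = \gamma(v,v) = \|v\|_\gamma^2$, we conclude that $\gamma(\dot{\alpha}(t), \dot{\alpha}(t)) = \|v\|_\gamma^2$ for every $t \in [0,1]$.

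Plugging this into the definition of length gives
\begin{equation*}
  L(\alpha) = \integral{0}{1}{\sqrt{\gamma(\alpha(t))(\dot{\alpha}(t), \dot{\alpha}(t))}}{dt} = \integral{0}{1}{\|v\|_\gamma}{dt} = \|v\|_\gamma,
\end{equation*}
which is what was to be shown.

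I do not expect any real obstacle. The only subtle point worth a sentence of justification is that the metric property $X\gamma(Y,Z) = \gamma(\nabla_X Y, Z) + \gamma(Y, \nabla_X Z)$, restricted to a curve, reads $\frac{d}{dt}\gamma(Y,Z) = \gamma(\nabla_{\dot{\alpha}} Y, Z) + \gamma(Y, \nabla_{\dot{\alpha}} Z)$ in the usual way, and that $\gamma$, being a smooth bilinear form on each tangent space, permits this term-by-term differentiation along a $C^1$ curve. All pathologies of weak Riemannian geometry discussed above (incompleteness of tangent spaces, failure of Riesz representation, possible degeneracy of $d_\gamma$) are irrelevant here because we never need to invert $\gamma$ or appeal to completeness: the argument uses only the bilinearity and smoothness of $\gamma$ together with the Leibniz rule built into the hypothesis that $\nabla$ is metric.
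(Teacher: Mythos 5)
Your proof is correct and follows essentially the same route as the paper's own (sketched) argument: both rest on the purely algebraic fact that the metric property of the Levi-Civita connection forces $\gamma(\dot{\alpha},\dot{\alpha})$ to be constant along a geodesic, so the length integral collapses to $\|v\|_\gamma$. The paper phrases this via parallel transport being an isometry, while you carry out the differentiation of $\gamma(\dot{\alpha},\dot{\alpha})$ explicitly, which is just the computation underlying that sketch.
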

\begin{proof}
  The proof for Riemannian Hilbert manifolds is algebraic in nature
  and so carries over to weak Riemannian manifolds---here we just give
  a sketch.  Since the Levi-Civita connection is metric, its parallel
  transport along any curve is an isometry of the tangent spaces.
  That $\alpha$ is a geodesic implies that $\alpha'(t)$ is parallel
  along $\alpha$, and hence $\alpha'(t)$ has constant length.  Since
  $\alpha'(0) = v$, this length is $\| v \|_\gamma$.
\end{proof}

Unfortunately, we cannot prove much more that is useful about weak
Riemannian manifolds without first making a couple of assumptions on
the exponential mapping.  Basically, we want it to exist and to be a
diffeomorphism between some open sets---so we'll have to assume that
as well.  The next bit of terminology incorporates this, and also adds
one technical detail that we'll soon need.

\begin{dfn}\label{dfn:20}
  We call a weak Riemannian manifold $(N, \gamma)$ \emph{normalizable
    at $x$} if there are open neighborhoods $U_x \subseteq T_x N$ and
  $V_x \subseteq N$ containing $0$ and $x$, respectively, such that
  \begin{enumerate}
  \item the exponential mapping $\exp_x$ exists and is a
    $C^1$-diffeomorphism between $U_x$ and $V_x$; and
  \item the following function is continuous:
    \begin{equation*}
      \begin{aligned}
        R : T_x N &\rightarrow \R_+ \\
        v &\mapsto \sup \{ r \in \R_+ \mid r \cdot v \in U_x \}.
      \end{aligned}
    \end{equation*}
  \end{enumerate}
  Note that the neighborhoods $U_x$ and $V_x$ are required to be open
  in the manifold topology of $N$.  We do not require that $U_x$ be
  open in the topology induced by $\gamma$.

  We call $(N, \gamma)$ \emph{normalizable} if it is normalizable at
  each $x \in N$.
\end{dfn}

\begin{dfn}
  Let $(N,\gamma)$ be a weak Riemannian manifold and let $x \in N$.
  We denote by $S_x N \subset T_x N$ the unit sphere, i.e.,
  \begin{equation*}
    S_x N = \{ v \in T_x N \mid \| v \|_\gamma = 1 \}.
  \end{equation*}
\end{dfn}

For the rest of this section, let $(N, \gamma)$ be a weak Riemannian
manifold that is normalizable at a point $x \in N$, and retain the
notation of Definition \ref{dfn:20}.

The following lemma shows that the exponential mapping of a weak
Riemannian manifold that is normalizable at $x$ is defined on some
nonzero vector pointing in each direction in $T_x N$.

\begin{lem}\label{lem:16}
  For each $v \in T_x N$, $R(v) > 0$.
\end{lem}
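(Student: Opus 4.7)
The plan is to exploit the fact that $T_x N$, being the model Fréchet space of $N$ at $x$, is a topological vector space. In particular, scalar multiplication by elements of $\R$ is continuous, so for any fixed $v \in T_x N$ the map
\begin{equation*}
  \sigma_v : \R \rightarrow T_x N, \qquad r \mapsto r \cdot v,
\end{equation*}
is continuous with respect to the Fréchet topology on $T_x N$ (equivalently, the manifold topology on $T_x N$ viewed as a tangent space).

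Next I would use the first defining property of normalizability: $U_x$ is an open neighborhood of $0$ in the manifold topology of $T_x N$. Since $\sigma_v(0) = 0 \in U_x$, continuity of $\sigma_v$ shows that $\sigma_v^{-1}(U_x)$ is an open subset of $\R$ containing $0$. Hence there exists some $\epsilon > 0$ with $\sigma_v((-\epsilon,\epsilon)) \subseteq U_x$, i.e., $r \cdot v \in U_x$ for all $r \in (-\epsilon, \epsilon)$. By definition of $R$, this gives $R(v) \geq \epsilon > 0$.

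I expect no real obstacle here; the entire content of the statement is that openness of $U_x$ in the manifold (Fréchet) topology, combined with continuity of scalar multiplication in a topological vector space, forces every ray through $0$ to initially lie in $U_x$. The one point to emphasize is precisely the distinction stressed in Definition \ref{dfn:20}: we use that $U_x$ is open in the \emph{manifold} topology, not the (possibly strictly weaker) $\gamma$-topology, since otherwise $U_x$ might fail to contain any neighborhood of $0$ along the direction $v$ at all.
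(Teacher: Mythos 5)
Your proof is correct and follows essentially the same route as the paper: both arguments rest on the fact that $T_x N$ with its manifold topology is a topological vector space, so that the openness of $U_x$ around $0$ yields some $\epsilon > 0$ with $\epsilon \cdot v \in U_x$, hence $R(v) > 0$. Your version just spells out the continuity of $r \mapsto r \cdot v$ a bit more explicitly.
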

\begin{proof}
  Let $v \in T_x N$ be given.  Since $T_x N$ with its manifold
  topology is a topological vector space and $U_x$ is a neighborhood
  of the origin, there is some $\epsilon > 0$ such that $\epsilon
  \cdot v \in U_x$.
\end{proof}

\begin{rmk}
  Lemma \ref{lem:16} does not imply that $R(v)$ is uniformly bounded
  away from zero, even if we restrict the domain of $R$ to $S_x N$ at
  each $x \in N$.
\end{rmk}

This next proposition is the analog of Theorem \ref{thm:27}, and is
proved similarly.

\begin{prop}\label{prop:4}
  Let $r(s) \cdot v(s) \in U_x$, $s \in [0,1]$, be a path in $U_x$
  such that $v(s) \in S_x N $, $r(s) \in \R_{\geq 0}$.  (That is, we
  express the path in polar coordinates.)  We define a path $\alpha$
  by $\alpha(s) := \exp_x (r(s) v(s))$, $s \in [0,1]$.  Then
  \begin{equation*}
    L(\alpha) \geq | r(1) - r(0) |,
  \end{equation*}
  with equality if and only if $v(s)$ is constant and $r'(s) \geq 0$.
\end{prop}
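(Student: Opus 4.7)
The plan is to mimic the classical proof of the Gauss lemma / radial minimization (as in Klingenberg's Theorem~\ref{thm:27}), checking that each ingredient still works when we have only a weak Riemannian structure. First I would compute
\begin{equation*}
  \dot{\alpha}(s) = D_{r(s)v(s)} \exp_x \cdot \bigl( r'(s) v(s) + r(s) v'(s) \bigr)
\end{equation*}
by the chain rule. Since $v(s) \in S_x N$, differentiating $\gamma(v(s),v(s)) = 1$ gives $\gamma(v(s), v'(s)) = 0$, so the two ``polar'' components of $r'v + rv'$ are $\gamma$-orthogonal in $T_x N$.

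The crucial step is to establish the Gauss lemma in this setting: for any $p \in U_x$ and any $w \in T_x N$ such that the variation is defined,
\begin{equation*}
  \gamma\bigl( D_p \exp_x \cdot p,\; D_p \exp_x \cdot w \bigr) = \gamma(p, w).
\end{equation*}
I would prove this exactly as in the finite-dimensional case, via the two-parameter variation $c(s,t) := \exp_x\bigl(s(p + tw)\bigr)$. The key identity
\begin{equation*}
  \partial_s \gamma(\partial_t c,\partial_s c) = \gamma(\nabla_s \partial_t c,\partial_s c) = \gamma(\nabla_t \partial_s c,\partial_s c) = \tfrac{1}{2}\partial_t \| \partial_s c\|_\gamma^2 = \gamma(p,w)
\end{equation*}
uses only metric compatibility, torsion-freeness, and the geodesic equation $\nabla_s \partial_s c = 0$; no completeness of tangent spaces is required. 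Integrating in $s$ from $0$ to $1$ and using $\partial_t c(0,t) = 0$ yields the lemma. Specializing to $w = v(s)$ at $p = r(s) v(s)$ gives $\|D_{rv}\exp_x \cdot v\|_\gamma = 1$ (also visible from the fact that $t \mapsto \exp_x(tv)$ is a geodesic and the Levi-Civita connection, being metric, parallel-transports isometrically, as in Proposition~\ref{prop:3}), and specializing to $w = v'(s)$ gives $\gamma(D_{rv}\exp_x \cdot v,\, D_{rv}\exp_x \cdot v') = \gamma(v,v') = 0$.

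With these two facts the computation becomes
\begin{equation*}
  \|\dot{\alpha}(s)\|_\gamma^2 = r'(s)^2 + r(s)^2 \bigl\| D_{r(s)v(s)}\exp_x \cdot v'(s) \bigr\|_\gamma^2 \geq r'(s)^2,
\end{equation*}
so $L(\alpha) \geq \int_0^1 |r'(s)|\,ds \geq |r(1) - r(0)|$. For the equality discussion, equality in the first inequality forces $D_{r(s)v(s)} \exp_x \cdot v'(s) = 0$ wherever $r(s) \neq 0$; since $\exp_x$ is a $C^1$-diffeomorphism on $U_x$, $D\exp_x$ is injective there, so $v'(s) = 0$ on $\{r > 0\}$, while on any interval where $r \equiv 0$ the path $\alpha$ is constant and contributes nothing. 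Hence $v$ may be taken to be constant, and equality in the second inequality forces $r'(s)$ not to change sign, giving the stated equality condition (up to the usual monotonicity/sign ambiguity of polar coordinates). The main obstacle is genuinely just the Gauss lemma: we have to check that its algebraic proof does not secretly rely on the Riesz representation theorem or on completeness of the tangent spaces. It does not---normalizability at $x$ ensures $\exp_x$ (and hence, implicitly, the Levi-Civita connection along radial geodesics) exists, and that is all the variational computation above needs.
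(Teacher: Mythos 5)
Your proof is correct and is essentially the paper's own argument: the paper likewise runs the Gauss-lemma computation (using only metric compatibility, symmetry of the connection, and the geodesic equation) on the family $c_s(t) = \exp_x(t\,v(s))$ to get orthogonality of the radial and angular components, then decomposes $\|\alpha'(s)\|_\gamma^2 \geq |r'(s)|^2$ and integrates. The only cosmetic difference is that you isolate the Gauss lemma as a separate statement via the variation $\exp_x(s(p+tw))$, whereas the paper computes directly with $c_s(t)$ (and handles the domain-of-definition issue explicitly via the continuity of $R$ and compactness of $[0,1]$).
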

\begin{proof}
  By Definition \ref{dfn:20} and Lemma \ref{lem:16}, as well as the
  compactness of $[0,1]$, there exist $\epsilon, \delta > 0$ such that
  if
  \begin{equation*}
    (s,t) \in U_{\epsilon,\delta} := 
    \left\{
      (s,t) \in \R^2 \mid s \in [0,1],\ t \in [-\epsilon, r(s) + \delta]
    \right\},
  \end{equation*}
  then $t \cdot v(s) \in U_x$.

  We define a one-parameter family of paths in $N$ by
  \begin{equation*}
    c_s (t) := \exp(t \cdot v(s)), \quad (s,t) \in U_{\epsilon,\delta}
  \end{equation*}
  Note that for each fixed $s$, the path $t \mapsto c_s(t)$ is a
  geodesic with
  \begin{equation}\label{eq:23}
    \| \partial_t c_s(t) \|_\gamma \equiv \| \partial_t c_s(0)
    \|_\gamma = \| v(s) \|_\gamma = 1.
  \end{equation}
  Note also that the image of the family of paths $c_\cdot (\cdot)$ is
  a singular surface in $N$ parametrized by the coordinates $(s,t)$.

  Keeping this in mind, we compute
  \begin{equation} \label{eq:24}
    \begin{aligned}
      \partial_t \gamma \left(
        \partial_s c_s(t), \partial_t c_s(t) \right) &= \gamma \left(
        \frac{\nabla}{\partial t} \partial_s c_s(t), \partial_t c_s(t)
      \right) + \gamma \left(
        \partial_s c_s(t), \frac{\nabla}{\partial t} \partial_t c_s(t)
      \right) \\
      &= \gamma \left( \frac{\nabla}{\partial s} \partial_t
        c_s(t), \partial_t c_s(t)
      \right) \\
      &= \frac{1}{2} \partial_s \gamma \left(
        \partial_t c_s(t), \partial_t c_s(t)
      \right) \\
      &= 0.
    \end{aligned}
  \end{equation}
  Here, the second line holds because
  \begin{itemize}
  \item $s$ and $t$ are coordinate functions, and hence (covariant)
    derivatives in the two directions commute, and
  \item $t \mapsto c_s(t)$ is a geodesic, hence
    $\frac{\nabla}{\partial t} \partial_t c_s(t) =0$.
  \end{itemize}
  The last line follows directly from (\ref{eq:23}).

  From (\ref{eq:24}), we immediately see that
  \begin{equation*}
    \gamma \left(
      \partial_s c_s(t), \partial_t c_s(t) \right)
  \end{equation*}
  is independent of $t$.  However, we also have that $c_s(0) = x$ for
  all $s$, implying that $\partial_s c_s(0) = 0$, thus
  \begin{equation*}
    0 = \gamma(\partial_s c_s(0), \partial_t c_s(0)) = \gamma(\partial_s
    c_s(t), \partial_t c_s(t))
  \end{equation*}
  for all $t$.  That is, $\partial_s c_s(t)$ and $\partial_t c_s(t)$
  are orthogonal for all $s$ and $t$.

  We now estimate:
  \begin{align*}
    \| \alpha'(s) \|_\gamma^2 &= \left\| \frac{d}{ds} c_s(r(s)) \right\|_\gamma^2 \\
    &= \left\| \partial_s c_s(r(s)) + r'(s) \partial_r c_s(r(s))
    \right\|_\gamma^2 \\
    &= \left\|
      \partial_s c_s(r(s)) \right\|_\gamma^2 + |r'(s)|^2 \left\|
      \partial_r c_s(r(s))
    \right\|_\gamma^2 \\
    &\geq |r'(s)|^2.
  \end{align*}
  Here, in the third line, we have used orthogonality of $\partial_s
  c_s(t)$ and $\partial_t c_s(t)$.  In the last line, we have used
  (\ref{eq:23}).  Note that equality holds if and only if
  $\left\| \partial_s c_s(r(s)) \right\|_\gamma \equiv 0$.

  Finally, we see that
  \begin{equation*}
    L(\alpha) = \int_0^1 \| \alpha'(s) \|_\gamma \, ds \geq \int_0^1
    |r'(s)| \, ds
    \geq
    \left|
      \int_0^1 r'(s) \, ds
    \right| = |r(1) - r(0)|,
  \end{equation*}
  which proves the desired inequality.  We note that the first
  inequality is an equality if and only if $\left\| \partial_s
    c_s(r(s)) \right\|_\gamma \equiv 0$ (see the previous paragraph) and the
  second inequality is an equality if and only if $r'(s) \geq 0$ for
  all $s$.
\end{proof}

Finally, we get the analog of Theorem \ref{thm:28}.  The remark
afterwards points out in what way this is weaker than that theorem,
however.

\begin{prop}\label{prop:2}
  Suppose $y \in V_x$ with $\exp_x^{-1}(y) = v$.  Then the path
  \begin{equation*}
    \alpha : [0,1] \rightarrow V_x, \quad \alpha(t) = \exp_x (t \cdot v)
  \end{equation*}
  satisfies $L(\alpha) = \| v \|_\gamma$, and $\alpha$ is of minimal
  length among all paths in $V_x$ from $x$ to $y$.  Furthermore,
  $\alpha$ is the unique minimal path (up to reparametrization) in
  $V_x$ from $x$ to $y$.
\end{prop}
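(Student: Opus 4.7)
The length claim $L(\alpha)=\|v\|_\gamma$ is immediate from Proposition~\ref{prop:3}, so the work is in the minimality and uniqueness statements. My plan is to pull an arbitrary competitor path back to $U_x$ via $\exp_x^{-1}$ (which is a $C^1$-diffeomorphism by the normalizability hypothesis) and apply Proposition~\ref{prop:4}, after dealing with the polar-coordinate singularity at the origin. Assume $v\neq 0$, since $v=0$ is trivial. Given any piecewise $C^1$ path $\beta:[0,1]\to V_x$ from $x$ to $y$, set $\tilde\beta := \exp_x^{-1}\circ\beta:[0,1]\to U_x$, so $\tilde\beta(0)=0$ and $\tilde\beta(1)=v$.

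The technical obstacle is that $\tilde\beta$ may cross or touch the origin at intermediate times, so polar coordinates are not globally defined. To circumvent this, let
\begin{equation*}
  s_0 := \sup \{ s \in [0,1] \mid \tilde\beta(s) = 0 \}.
\end{equation*}
By continuity, $\tilde\beta(s_0)=0$ and $\tilde\beta(s)\neq 0$ for $s\in(s_0,1]$; in particular $s_0<1$. On $(s_0,1]$ define $r(s):=\|\tilde\beta(s)\|_\gamma>0$ and $w(s):=\tilde\beta(s)/r(s)\in S_xN$, which are continuous. For each small $\epsilon>0$, apply Proposition~\ref{prop:4} to $\tilde\beta|_{[s_0+\epsilon,1]}$ (reparametrized to $[0,1]$) to obtain
\begin{equation*}
  L\bigl(\beta|_{[s_0+\epsilon,1]}\bigr) \geq r(1) - r(s_0+\epsilon).
\end{equation*}
Letting $\epsilon\to 0^+$ and using continuity of $r$ together with the fact that the length functional is continuous in the endpoints of a piecewise $C^1$ curve, I obtain $L(\beta|_{[s_0,1]}) \geq \|v\|_\gamma$. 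Hence $L(\beta)\geq L(\beta|_{[s_0,1]})\geq \|v\|_\gamma = L(\alpha)$, which is the minimality statement.

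For uniqueness, suppose $L(\beta)=\|v\|_\gamma$. Then both inequalities in the chain above must be equalities. The equality $L(\beta|_{[0,s_0]})=0$ forces $\int_0^{s_0}\|\dot\beta\|_\gamma\,ds=0$; since $\gamma$ is positive definite on each tangent space and $\beta$ is piecewise $C^1$, this gives $\dot\beta\equiv 0$ on $[0,s_0]$, so $\beta$ is constant equal to $x$ there. On the other interval, the equality case of Proposition~\ref{prop:4} applied to each $[s_0+\epsilon,1]$ yields that $w(s)$ is constant and $r'(s)\geq 0$; passing to the limit $\epsilon\to 0^+$, $w(s)\equiv v/\|v\|_\gamma$ on $(s_0,1]$ (matching the value at $s=1$) and $r$ is nondecreasing from $0$ to $\|v\|_\gamma$. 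Therefore $\tilde\beta$ on $[s_0,1]$ traces the straight segment from $0$ to $v$ with a weakly monotonic parametrization, so $\beta$ is a (weakly monotonic) reparametrization of $\alpha$, as claimed. The only delicate points are the continuity of the length functional in the endpoints and the monotonicity/uniqueness bookkeeping at $s_0$; both are standard once the $s_0$ device removes the polar-coordinate singularity.
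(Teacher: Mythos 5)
Your argument is correct and is essentially the paper's proof: pull the competitor path back to $U_x$ via $\exp_x^{-1}$, write it in polar coordinates, apply Proposition~\ref{prop:4} to get $L \geq |r(1)-r(0)| = \|v\|_\gamma$, and use the equality case ($v(s)$ constant, $r'\geq 0$) for uniqueness; your $s_0$/$\epsilon$ device just makes explicit the treatment of the polar-coordinate singularity at the origin, which the paper's proof passes over silently. The only loose end is in the equality case, where you should note (via additivity of length and applying the same estimate to $[s_0+\delta,s_0+\epsilon]$) that $L(\beta|_{[s_0+\epsilon,1]}) = r(1)-r(s_0+\epsilon)$ exactly, so that Proposition~\ref{prop:4}'s equality clause really applies on each $[s_0+\epsilon,1]$ — a one-line fix consistent with the "bookkeeping" you flagged.
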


\begin{rmk}
  Note that we will only show that $\alpha$ is minimal only among
  paths (or geodesics) in $V_x$, not all paths (or geodesics) in $N$.
  In particular, we cannot conclude from Proposition \ref{prop:2} that
  $d_\gamma (x,y) = L(\alpha)$.
\end{rmk}

\begin{proof}
  The equality $L(\alpha) = \| v \|_\gamma$ holds by Proposition
  \ref{prop:3}.
  
  A path $\eta(s)$, $s \in [0,1]$, in $V_x$ from $x$ to $y$
  corresponds via $\exp_x^{-1}$ to a path $r(s) \cdot v(s)$ in $U_x$
  with $v(s) \in S_x N$, $r(0) = 0$ and $r(1) \cdot v(1) = v$,
  implying $|r(1)| = \| v \|_\gamma$. By Proposition \ref{prop:4}, we
  therefore have that
  \begin{equation}\label{eq:25}
    L(\eta) \geq \| v \|_\gamma = L(\alpha),
  \end{equation}
  immediately implying minimality of $\alpha$.

  Let equality hold in (\ref{eq:25}).  Again by Proposition
  \ref{prop:4}, this implies $v(s)$ is constant and $r'(s) \geq 0$ for
  all $s$.  However, this means that $\eta$ is just a
  reparametrization of $\alpha$, proving the second statement.
\end{proof}

As an obvious result of Proposition \ref{prop:2}, we get the following
criterion for a weak Riemannian manifold to be a metric space.  It
requires rather strong assumptions which could probably be weakened
significantly, but it will be sufficient for some purposes that we have
in mind---specifically, we will use it to show that certain
submanifolds of the manifold of metrics are metric spaces.

\begin{thm}\label{thm:4}
  Let $(N,\gamma)$ be a weak Riemannian manifold.  Suppose that for
  some $x \in N$, the exponential mapping $\exp_x$ is a diffeomorphism
  between an open (in the manifold topology) neighborhood $U_x$ of
  $0 \in T_x N$ and $N$.
  
  Then $(N,d_\gamma)$, where $d_\gamma$ is the Riemannian distance function of
  $\gamma$, is a metric space.
\end{thm}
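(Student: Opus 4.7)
The goal is to verify the one property distinguishing a metric from a pseudometric, namely positive-definiteness: $d_\gamma(y, z) > 0$ whenever $y \ne z$. The plan is to apply Proposition \ref{prop:2} with $V_x$ taken to be all of $N$, which will yield the explicit formula $d_\gamma(x, y) = \|\exp_x^{-1}(y)\|_\gamma$ for every $y \in N$, and then bootstrap this to general pairs.

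The first step is to verify that $(N, \gamma)$ is normalizable at $x$ in the sense of Definition \ref{dfn:20}. Condition (1) of that definition is provided directly by the hypothesis (with $V_x = N$); for condition (2), the continuity of the function $R$, I would argue that since $U_x$ is open in the manifold topology of $T_x N$, for each $v \in T_x N$ the set $\{r \in \R_+ : r \cdot v \in U_x\}$ is open in $\R_+$, and the openness of $U_x$ together with continuity of the scalar-multiplication map $\R_+ \times T_x N \to T_x N$ yields both lower and upper semicontinuity of $R$. With normalizability established, Proposition \ref{prop:2} immediately gives $d_\gamma(x, y) = \|\exp_x^{-1}(y)\|_\gamma$, and in particular $d_\gamma(x, y) > 0$ whenever $y \ne x$.

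For general distinct $y, z \in N$, write $v_y := \exp_x^{-1}(y)$ and $v_z := \exp_x^{-1}(z)$, which are distinct points of $U_x$. If $\|v_y\|_\gamma \ne \|v_z\|_\gamma$, the pseudometric triangle inequality gives
\begin{equation*}
d_\gamma(y, z) \geq \bigl| d_\gamma(x, y) - d_\gamma(x, z) \bigr| = \bigl|\|v_y\|_\gamma - \|v_z\|_\gamma\bigr| > 0,
\end{equation*}
which closes out the argument in this subcase. The residual case, in which $v_y$ and $v_z$ lie on the same $\gamma_x$-sphere about $0$ in $T_x N$, is where I expect the main obstacle. To handle it I would pass to the pullback weak Riemannian structure $\tilde\gamma := \exp_x^*\gamma$ on $U_x$ (so that $\exp_x$ becomes a length-preserving bijection) and extract a positive lower bound on the angular contribution to the length of any path in $U_x$ from $v_y$ to $v_z$, using the orthogonality identity $\gamma(\partial_s c_s(t), \partial_t c_s(t)) = 0$ proved inside Proposition \ref{prop:4}, applied to a suitably chosen geodesic variation joining radial rays through $v_y$ and $v_z$. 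Producing a uniform positive lower bound here is genuinely subtle, since weak Riemannian metrics on spheres can in principle collapse, and this is precisely the step whose verification should require the most care.
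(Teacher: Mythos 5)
Your first step coincides with the paper's entire proof: the paper simply invokes Proposition \ref{prop:2} (with $V_x = N$) to conclude $d_\gamma(x,y) = \|\exp_x^{-1}(y)\|_\gamma > 0$ for every $y \neq x$, and stops there. You are right that this, taken literally, only separates $x$ from the other points; the paper does not address pairs $y, z \neq x$ at all. In the paper's applications (Theorems \ref{thm:22} and \ref{thm:23}) this costs nothing, because Propositions \ref{prop:23} and \ref{prop:24} supply the hypothesis at \emph{every} basepoint, so for distinct $y,z$ one simply reruns the radial argument with $y$ in place of $x$; the equal-norm case you worry about never has to be confronted.

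The genuine gap in your proposal is in that residual case $\|v_y\|_\gamma = \|v_z\|_\gamma$: what you offer there is a plan, not an argument, and the plan runs against the grain of the whole chapter. Proposition \ref{prop:4} bounds lengths only by $|r(1)-r(0)|$, and, as the paper stresses in the discussion around Figure \ref{fig:1}, it gives no control whatsoever over paths along which $r$ is constant; for a weak metric there is in general no positive lower bound on such ``angular'' contributions -- this collapse is precisely the mechanism behind the vanishing-distance example of Subsection \ref{sec:path-behav-weak}, as you yourself suspect. So that step is not merely delicate, it is not available with these tools (and, for the theorem as it is actually used, not needed). A secondary flaw: openness of $U_x$ gives only lower semicontinuity of $R$; upper semicontinuity can fail, since a ray may leave $U_x$ while nearby rays re-enter it at large radii. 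Hence condition (2) of Definition \ref{dfn:20} is an honest additional hypothesis rather than a consequence of the theorem's hypothesis -- the paper tacitly assumes it when citing Proposition \ref{prop:2}, and your proposed derivation of it does not go through.
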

\begin{proof}
  Let $y \in N$.  It remains to show that if $y \neq x$, then $d(x,y)
  > 0$.  But if $\exp_x^{-1}(y) = v$, then Proposition \ref{prop:2}
  shows that the shortest path from $x$ to $y$ in $N$ is $\exp_x(t
  \cdot v)$, which has length $\| v \|$.  Therefore $d(x,y) = \| v \|
  > 0$.
\end{proof}

Proposition \ref{prop:2} of course cannot tell us anything about
whether a general weak Riemannian manifold $(N, \gamma)$ is a metric
space, and given the example of Subsection \ref{sec:path-behav-weak},
neither can any other theorem, since at a point $x \in
N$, the exponential mapping $\exp_x$ need not be defined on any
$\gamma(x)$-open neighborhood of $0 \in T_x N$.  This means that we
cannot use the exponential mapping directly to control the lengths of
curves between two chosen points.

\begin{figure}[t]
  \centering
  \includegraphics{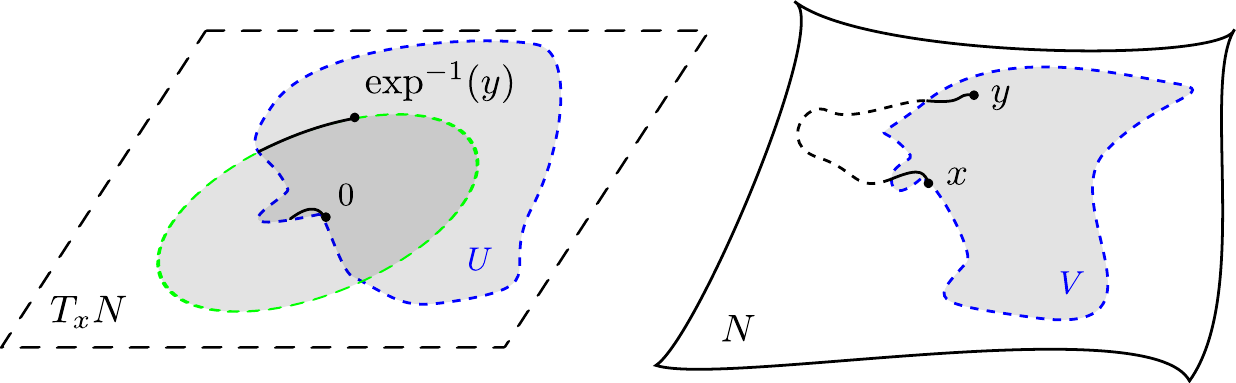}
  \caption{A path between two points in $V \subset N$ that travels
    out of and back into $V$ in possibly very short distance.  The
    dashed circle on the left represents the sphere of radius $\|
    \exp_x^{-1} (y) \|_{\gamma(x)}$ in $T_x N$.}
  \label{fig:1}
\end{figure}

Let's be more precise about this.  Assume that for some point $x \in
N$, $\exp_x$ is a diffeomorphism between open sets $U \in T_x N$ and
$V \in N$, but that $U$ contains no $\gamma(x)$-open ball.  Say we are
given a point $y \in N$, and let's even assume that $y \in V$ to
illustrate our point most dramatically.  We know that $U$ does not
contain any $\gamma(x)$-open ball around zero, and from Proposition
\ref{prop:2}, a radial path in $U$ is mapped by $\exp_x$ to a minimal
geodesic between its endpoints (minimal among the class of paths
remaining within $V$).  Thus we can imagine a radial path that starts
at $x$, leaves $V$ after an arbitrarily short distance, then reenters
$V$ such that its image under $\exp_x^{-1}$ lies on the sphere of
radius $\| \exp_x^{-1} (y) \|_{\gamma(x)}$.  This is illustrated in
Figure \ref{fig:1}.  In this case, the results stated so far do not
allow us to control the length of our path outside of $V$ or on the
second piece inside $V$, since Proposition \ref{prop:4} does not tell
us anything about paths with $r(t)$ constant (in the notation of that
proposition).  Our results therefore do not rule out paths of
arbitrarily small length.

\section{The manifold of metrics $\M$}\label{sec:manifold-metrics-m}

In this section, we define the manifold of smooth Riemannian metrics
$\M$ over a closed, finite-dimensional base manifold $M$.  We are
especially interested in the geometry of the so-called $L^2$ metric
$(\cdot, \cdot)$ on $\M$, which is a weak Riemannian metric on a
Fréchet manifold.  We will also discuss $\M^s$, the manifold of
Riemannian metrics with $H^s$ coefficients, which is a weak Riemannian
Hilbert manifold.  These objects will be defined in the first two
subsections.  In the third subsection, we will give a useful
decomposition of $\M$ into a product manifold, a decomposition that we
will refer back to later in the thesis.  Finally, we will mention some
facts about the geometry of $(\M, (\cdot, \cdot))$ that are already
known, such as formulas for its curvature and geodesics.

All of the facts in this section are culled from the three papers
\cite{ebin70:_manif_of_rieman_metric},
\cite{freed89:_basic_geomet_of_manif_of} and
\cite{gil-medrano91:_rieman_manif_of_all_rieman_metric}.  We refer the
reader to these for more details, and we will also reference specific
theorems at appropriate points.  We point out a few differences
between the papers.  The study of the geometry and topology of $(\M,
(\cdot, \cdot))$, as well as that of \emph{superspace} (the quotient
of $\M$ by the action of the diffeomorphism group) was initiated in
\cite{ebin70:_manif_of_rieman_metric} in the $H^s$ setting---i.e.,
this paper studied the manifold of metrics with $H^s$ coefficients
(see Subsection \ref{sec:defin-manif-metr}).  Much later,
\cite{freed89:_basic_geomet_of_manif_of} computed the curvature and
geodesics of $\M$ using some general theorems from the context of
strong Riemannian Hilbert manifolds.  Most of these general theorems
carry over to weak Riemannian manifolds, however, and the explicit
formulas of \cite{freed89:_basic_geomet_of_manif_of} all match up with
those of \cite{gil-medrano91:_rieman_manif_of_all_rieman_metric},
which computed the same things using tools strictly from the theory of
weak Riemannian manifolds.  Furthermore,
\cite{gil-medrano91:_rieman_manif_of_all_rieman_metric} computed the
analogs of Ricci curvature, scalar curvature and Jacobi fields in this
setting, and additionally did not require the base manifold $M$ to be
compact---simply without boundary.

\subsection{Definition of the manifold of metrics}\label{sec:defin-manif-metr}

Let $S^2 T^* M$ denote the second symmetric tensor power of the
cotangent bundle, and let $\s := \Gamma(S^2 T^* M)$ denote the vector
space of smooth, symmetric $(0,2)$-tensor fields on
$M$. \label{p:def-s} By the discussion in Subsection
\ref{sec:manifolds-mappings}, $\s$ is a Fréchet space with topology
coming from the $H^s$ norms induced by any smooth Riemannian metric
$g$ on $M$.  Furthermore, for $s \in \N \cup \{0\}$, we define $\s^s
:= H^s(S^2 T^* M)$, i.e., $\s^s$ is the vector space of $H^s$ sections
of $S^2 T^* M$. \label{p:def-ss} We equip $\s^s$ with the $H^s$ norm
induced by any smooth Riemannian metric $g$.

The first thing we note is that while the norm on $\s^s$ (and the
collection of norms on $\s$) depend on our choice of $g$, the
topologies of $\s$ and $\s^s$ do not.  This was pointed out in
Subsection \ref{sec:manifolds-mappings}.

Now, let $\M \subset \s$ and $\M^s \subset \s^s$ denote the subsets of
smooth Riemannian metrics and Riemannian metrics with $H^s$
coefficients, respectively. \label{p:manif-metr} That is, $\M$ and
$\M^s$ consist of those elements that induce positive definite scalar
products at each point.  We claim that for $s > n / 2$, $\M^s$ is an
open subset of $\s^s$, implying also that $\M$ is an open subset of
$\s$.  This follows easily from the Sobolev embedding theorem, for if
$s > n / 2$, then a bound on the $H^s$ norm of a tensor field implies
a bound on the $C^0$ norm.  Thus it is easy to see that if $g \in
\M^s$ is any $H^s$ (and hence continuous) metric and $h \in \s^s$ is
any tensor field with sufficiently small $H^s$ (and hence $C^0$) norm,
then $g + h$ will also be positive definite.  Note also that $\M$ and
$\M^s$ are positive cones, i.e., if $g_0$ and $g_1$ are metrics and
$\lambda, \mu > 0$, then $\lambda g_0 + \mu g_1$ is also a metric.

As open subsets of vector spaces, we trivially have that $\M$ is a
Fréchet manifold and $\M^s$ is a Hilbert manifold.  For the remainder
of the section, we will only discuss the manifold of smooth metrics
$\M$, as this is our main object of interest.  This is in the interest
of brevity and clarity of presentation only.  All results hold for
$\M^s$ as well if one uses $H^s$ objects instead of smooth objects and
puts a superscript ``$s$'' on all manifolds of mappings, i.e.,
considers spaces of $H^s$ instead of smooth mappings.  We will point
out a couple of examples along the way to show what we mean by this.

Since $\M$ is an open subset of $\s$, its tangent space at any point
$g \in \M$ is canonically identified with $\s$, i.e., $T_g \M \cong
\s$.  We will use this identification over and over throughout the
thesis.

\subsection{The $L^2$ metric}\label{sec:l2-metric}

Since $S^2 T^* M$ is a vector bundle associated to the tangent bundle,
a Riemannian metric $g \in \M$ induces a Riemannian metric on $S^2 T^*
M$.  Let's take a look at some fundamental linear algebra before we
write down this metric.

If $(V, \langle \cdot , \cdot \rangle_V)$ and $(W, \langle \cdot ,
\cdot \rangle_W)$ are vector spaces over the same ground field with
scalar products, we can form a scalar product on their tensor product
$V \otimes W$ in the following way.  For tensors of the form $v
\otimes w$, we define
\begin{equation}\label{eq:104}
  \langle v_1 \otimes w_1, v_2 \otimes w_2 \rangle_{V \otimes W} :=
  \langle v_1, v_2 \rangle_V \langle w_1, w_2 \rangle_W,
\end{equation}
and this definition is then extended via bilinearity to all of $V
\otimes W$.

The scalar product induced by the Riemannian metric $g$ on the
cotangent space $T^*_x M$ is given in local coordinates by
\begin{equation*}
  g(x)(\alpha, \beta) = g^{ij} \alpha_i \beta_j.
\end{equation*}
Hence, by \eqref{eq:104}, on the tensor product $T^*_x M \otimes T^*_x
M$, the scalar product on elements of the form $\alpha \otimes \beta$
is given by
\begin{equation*}
  g(x)(\alpha \otimes \beta, \gamma \otimes \delta) = g(x)(\alpha,
  \gamma) \cdot g(x)(\beta, \delta) = g^{ij} \alpha_i \gamma_j g^{kl}
  \beta_k \delta_l.
\end{equation*}
It is easy to see that the general formula, obtained by extending via
bilinearity, is the following.  For $h, k \in T^*_x M \otimes T^*_x
M$,
\begin{equation}\label{eq:106}
  g(x)(h, k) = g^{ij} h_{il} g^{lm} k_{jm}.
\end{equation}

\begin{rmk}\label{rmk:16}
  By the considerations above, a Riemannian metric $g \in \M$ gives
  rise to a Riemannian metric on any bundle associated to the tangent
  bundle (i.e., any bundle that can be built from $T M$ using tensor
  products, taking the dual, symmetrization, antisymmetrization etc.),
  since we know that $g^{-1}$ is a metric on $T^* M$ and
  \eqref{eq:104} shows us how to form a scalar product on tensor
  products of vector spaces.
\end{rmk}

Let's now restrict to symmetric tensors.  Denote by $\satx := S^2
T^*_x M$ \label{p:satx} the symmetrization of $T^*_x M \otimes T^*_x
M$.  Let $h, k \in \satx$, and let $H$ and $K$ be the tensors obtained
from $h$ and $k$, respectively, by raising an index with $g$.  Then
$H$ and $K$ are $(1,1)$-tensors, or in other words endomorphisms of
$T_x M$.  So in particular, we can multiply them.  Since they are
symmetric, we can use \eqref{eq:106} to get
\begin{equation}\label{eq:107}
  g(x)(h, k) = g^{ij} h_{il} g^{lm} k_{jm} = g^{ij} h_{il} g^{lm}
  k_{mj} = H^j_l K^l_j = \tr(H K) =: \tr_g(h k).
\end{equation}
The above expression is called the $g$-trace of $h k$.  It is
sometimes useful to write this in the notation of matrix
multiplication, so that
\begin{equation*}
  \tr_g(h k) = \tr(g^{-1} h g^{-1} k),
\end{equation*}
which is of course only valid in local coordinates.

If $h \in \satx$, we can similarly define its $g$-trace to be
\begin{equation*}
  \tr_g h = \tr H = \tr(g^{-1} h) = g^{ij} h_{ji}.
\end{equation*}

\begin{rmk}\label{rmk:8}
  Note that we could have defined a scalar product on $S^2 T^* M$ more
  generally by
  \begin{equation*}
    \tr_g(h k) + \alpha \tr_g (h) \tr_g(k)
  \end{equation*}
  for any $\alpha \in \R$.  These more general scalar products are
  studied in \cite{pekonen87:_dewit_metric}.  By setting $\alpha = 0$
  we get the $L^2$ metric back, and for $\alpha = -1$ we get the
  metric used by DeWitt \cite{dewitt67:_quant_theor_of_gravit}
  mentioned in Section \ref{sec:overv-prev-work}.  The scalar product
  is positive definite if $\alpha \geq -1 / n$; it is nondegenerate if
  $\alpha \neq - 1 / n$.

  There are two reasons we have chosen to study the $L^2$ metric in
  particular.  The first is that, as we have tried to show in this
  subsection, the $L^2$ metric arises canonically in the differential
  geometric context.  The second is the connection to Teichmüller
  theory that was mentioned in Section \ref{sec:motivation} and which
  will be elucidated in Chapter \ref{cha:appl-teichm-space}.
\end{rmk}

We now want to define the $g$-trace of a section or a product of two
sections of $S^2 T^* M$, which we can do by simply taking the
$g$-trace at each point.  We introduce the following notation for
this:

\begin{dfn}\label{dfn:21}
  Let $g$ be any Riemannian metric, and let $h$ and $k$ be elements of
  $\s$.  (We do not assume $g$, $h$ or $k$ to be smooth or even
  continuous.)  We denote the $g$-trace of $h k$ by
  \begin{equation*}
    \langle h, k \rangle_g := \tr_g(h k).
  \end{equation*}
  For each fixed choice of $g$, $h$, and $k$, it is a function mapping
  $M \rightarrow \R$.

  If it is necessary to explicitly denote at which point this
  expression is taken, we will write $\langle h(x) , k(x)
  \rangle_{g(x)}$.  Usually, though, the point $x$ will be clear from
  the context and omitted from the notation.
\end{dfn}

\begin{lem}\label{lem:48}
  For any fixed $g \in \M$ and $x \in M$, $\langle \cdot , \cdot
  \rangle_g$ is a positive definite scalar product on $\satx$.
  Furthermore, we can use it to define a smooth Riemannian metric
  $\langle \cdot , \cdot \rangle$ on the finite-dimensional manifold
  \begin{equation}\label{eq:120}
    \Matx := \{ g \in \satx \mid g > 0 \} = \{ g(x) \mid g \in \M \}
  \end{equation}
  by using the scalar product $\langle \cdot , \cdot \rangle_g$ on
  each tangent space $T_g \Matx = \satx$.  Of course, $g > 0$
  indicates that $g$ defines a positive-definite scalar product on
  $T_x M$.
\end{lem}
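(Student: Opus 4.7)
The plan is to split the proof into the pointwise linear-algebraic assertion (positive definiteness of $\langle \cdot , \cdot \rangle_g$) and the global smoothness assertion (smooth variation in $g$), both of which reduce to standard facts.

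For positive definiteness at a fixed point with $g = g(x) \in \Matx$, the natural move is to pick a $g$-orthonormal basis of $T_x M$. In such a basis, $g$ has matrix $\delta_{ij}$, so raising an index with $g$ does nothing, and any $h \in \satx$ is represented by an ordinary symmetric matrix $(h_{ij})$. The trace formula in \eqref{eq:107} then collapses to
\[
\langle h, h \rangle_g = \tr(h h) = \sum_{i,j} h_{ij}^{2},
\]
which is manifestly nonnegative and vanishes only when $h = 0$. Bilinearity and symmetry in the two arguments are immediate from the defining formula $\langle h, k \rangle_g = \tr(g^{-1} h g^{-1} k)$ together with cyclicity of the trace. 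The only small bookkeeping point is that $\tr(g^{-1} h g^{-1} k)$ really is coordinate-invariant, but this follows at once from the observation that $g^{-1} h$ and $g^{-1} k$ are genuine $(1,1)$-tensors (cf.~Subsection \ref{sec:endom-bundle-m}), hence so is their composition, and the trace of a $(1,1)$-tensor is a scalar.

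For the Riemannian-metric assertion, I would note that $\Matx$ is the open positive cone inside the finite-dimensional vector space $\satx$, hence a smooth finite-dimensional manifold whose tangent bundle is canonically trivialized as $\Matx \times \satx$. Constructing a smooth Riemannian metric on $\Matx$ thus reduces to producing a smoothly $g$-dependent family of positive-definite scalar products on $\satx$. The formula $\langle h, k \rangle_g = \tr(g^{-1} h g^{-1} k)$ does this: it is a polynomial in the components of $g^{-1}$, $h$, and $k$, and the inversion map $g \mapsto g^{-1}$ is smooth on the open set of invertible matrices (by Cramer's rule, for instance). Composing a smooth map with a polynomial yields a smooth map, so the assignment $g \mapsto \langle \cdot , \cdot \rangle_g$ is smooth in $g$.

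I do not expect any genuine obstacle; the entire lemma is a bookkeeping exercise once the $g$-orthonormal frame trick is introduced. If anything is worth emphasizing, it is that $\Matx$ is \emph{open} in $\satx$, so no boundary issues for smoothness arise, and that the finite-dimensionality of $\Matx$ means we do not need to invoke any of the weak-Riemannian-manifold machinery from Section \ref{sec:weak-riem-manif} for this particular statement---the induced structure $\langle \cdot , \cdot \rangle$ on $\Matx$ is a strong Riemannian metric in the usual finite-dimensional sense.
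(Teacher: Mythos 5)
Your proof is correct, and the second half (smoothness) is essentially the paper's own argument: $\Matx$ is open in $\satx$, so $T_g\Matx \cong \satx$, and $g \mapsto \tr(g^{-1}hg^{-1}k)$ is smooth because matrix inversion is smooth on the invertible matrices. Where you diverge is the positive-definiteness step. You diagonalize $g$ itself by passing to a $g$-orthonormal frame, in which $\langle h,h\rangle_g$ becomes the plain sum of squares $\sum h_{ij}^2$; this is more elementary and needs no spectral input about $H = g^{-1}h$ at all. The paper instead works with the eigenvalues of $H$: it writes $\langle h,h\rangle_g = \tr(H^2) = (\lambda^H_1)^2 + \cdots + (\lambda^H_n)^2$, invoking Lemma \ref{lem:45} to know these eigenvalues are real. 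The paper even concedes in a parenthetical that positive definiteness follows more cheaply from positivity of $g$ together with \eqref{eq:104}, which is close in spirit to your frame argument; its reason for the eigenvalue route is bookkeeping for later use, since the identity ``$\tr$ of the square equals the sum of squared eigenvalues'' is cited again (e.g.\ in the proof of Lemma \ref{lem:3} and in the discussion of when $\tr((H^T)^2)$ vanishes along geodesics). So your argument buys simplicity and self-containedness for this lemma, while the paper's buys a spectral identity it wants on record; neither has a gap.
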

\begin{proof}
  We start with the proof that $\langle \cdot , \cdot \rangle_g$ is a
  positive-definite scalar product on $\satx$ for any fixed $g$.
  Bilinearity is clear, so we simply have to prove positive
  definiteness.  If $h \in \satx$, then
  \begin{equation*}
    \langle h , h \rangle_g = \tr(H^2)
  \end{equation*}
  by \eqref{eq:107}.  Let's fix any arbitrary coordinates around $x$
  and look at this expression locally.  From elementary linear
  algebra, we know that the trace of any matrix is equal to the sum of
  its eigenvalues.  Additionally, the eigenvalues of $H^2$ are the
  squares of the eigenvalues of $H$.  Therefore, if $\lambda_1^H,
  \dots, \lambda^H_n$ are the eigenvalues of $H$ and $h$ is nonzero,
  \begin{equation*}
    \tr(H^2) = (\lambda^H_1)^2 + \cdots + (\lambda^H_n)^2 > 0.
  \end{equation*}
  Of course, for this inequality to hold, we have to know that the
  eigenvalues of $H$ are real---but this was proved in Lemma
  \ref{lem:45}.  (Note that positive definiteness of $\langle \cdot ,
  \cdot \rangle_g$ actually also follows easily from that of $g$
  combined with \eqref{eq:104}.  Nevertheless, we will use the facts
  stated here later, so it is worthwhile to mention them.)

  As for the second statement, note first that $\Matx$ is indeed a
  finite-dimensional manifold, as it is an open set in the vector
  space $\satx$.  (This also gives us the identification of $T_g
  \Matx$ with $\satx$.)  Also, for any fixed $h, k \in \satx$, the
  function $g \mapsto \tr_g(h k) = \tr(g^{-1} h g^{-1} k)$ is clearly
  smooth over $\Matx$.  Combined with the positive definiteness of
  $\langle \cdot , \cdot \rangle_g$ for fixed $g$, this completes the
  proof that $\langle \cdot , \cdot \rangle$ is a Riemannian metric.
\end{proof}

Since each tangent space of $\M$ is identified with $\s$, a Riemannian
metric on $\M$ will give, for each Riemannian metric on $M$, a
positive scalar product on smooth sections of $S^2 T^* M$.  We have
just described a canonical positive definite scalar product on $S^2
T^*_x M$, and to pass to sections we do the obvious thing: we
integrate it.

\begin{dfn}\label{dfn:22}
  The \emph{$L^2$ metric} on $\M$ is defined to be
  \begin{equation*}
    (h, k)_g := \integral{M}{}{\tr_g(h k)}{\mu_g} \quad
    \textnormal{for all}\ h, k \in \s \cong T_g \M,
  \end{equation*}
  where $\mu_g$ is the volume form induced by $g$.

  For any given $g \in \M$, we denote by $\| \cdot \|_g$ the norm on
  $\s$ induced by $( \cdot , \cdot )_g$, that is,
  \begin{equation*}
    \| h \|_g := \sqrt{(h,h)_g} \quad \textnormal{for all}\ h \in \s.
  \end{equation*}

  Finally, we denote the distance function (a pseudometric) induced by
  $(\cdot, \cdot)$ simply by $d$.
\end{dfn}

The $L^2$ metric is indeed a smooth Riemannian metric---this is proved
in \cite[\S 4]{ebin70:_manif_of_rieman_metric}.  (In fact, $( \cdot ,
\cdot )$ is smooth in the $H^s$ topology on $\Ms$ for any $s > n/2$.)
We will not repeat the proof of smoothness here, but it is easy to see
bilinearity and positive definiteness---the latter follows simply from
positive definiteness of $\langle \cdot , \cdot \rangle_g$ at each
point of $M$.

The name of the $L^2$ metric is not there just for fun.  It is, in
fact, a weak Riemannian metric inducing the $L^2$ topology on each
tangent space, as the following theorem due to Palais \cite[\S
IX.2]{palais65:_semin_atiyah_singer_index_theor} shows.  (We have
already mentioned this theorem in Subsection
\ref{sec:manifolds-mappings}, but we restate it here in this context
and with an extra statement, the equivalence of the scalar products,
which is implied by the proofs in the above reference.)

\begin{thm}\label{thm:31}
  Let $g_0, g_1 \in \M$.  Then $( \cdot , \cdot )_{g_0}$ and $( \cdot
  , \cdot )_{g_1}$ are equivalent scalar products.  In particular,
  they both induce the same topology on $\s$, the $L^2$ topology.
\end{thm}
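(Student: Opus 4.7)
The plan is to establish pointwise comparisons between $\langle \cdot, \cdot \rangle_{g_0}$ and $\langle \cdot, \cdot \rangle_{g_1}$ on each fiber $\satx$, and between the volume forms $\mu_{g_0}$ and $\mu_{g_1}$, and then integrate over $M$. Since $M$ is compact and everything in sight will be continuous, pointwise estimates will upgrade to uniform ones, yielding two-sided bounds between $(\cdot,\cdot)_{g_0}$ and $(\cdot,\cdot)_{g_1}$; equivalence of the scalar products trivially implies that they induce the same topology on $\s$.

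First, I would form the $(1,1)$-tensor field $H := g_0^{-1} g_1$, the section of $\textnormal{End}(M)$ obtained by raising one index of $g_1$ with $g_0$. At each $x \in M$, symmetry and positive-definiteness of $g_1(x)$ force $H(x)$ to be self-adjoint and positive-definite with respect to $g_0(x)$. Applying Lemma \ref{lem:45} (with $h = g_1$, $g = g_0$), the eigenvalue functions $\lambda^H_{\min}$ and $\lambda^H_{\max}$ are continuous on $M$, and the former is strictly positive. Compactness of $M$ then yields constants $0 < \lambda \leq \Lambda < \infty$ such that every eigenvalue of $H(x)$ lies in $[\lambda, \Lambda]$ for every $x \in M$.

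Next, I would compare volume forms. By \eqref{eq:8}, $\mu_{g_1} = \sqrt{\det H}\,\mu_{g_0}$, and since $\det H$ is the product of the eigenvalues of $H$, this gives the pointwise bound $\lambda^{n/2}\mu_{g_0} \leq \mu_{g_1} \leq \Lambda^{n/2}\mu_{g_0}$. For the pointwise comparison of scalar products, I would fix $x \in M$ and work in a $g_0(x)$-orthonormal basis of $T_x M$. In such a basis, $g_0(x)^{-1} h(x)$ is represented by a symmetric matrix $K$ with $\tr(K^2) = \langle h, h \rangle_{g_0}(x)$, while $H^{-1}(x)$ is a positive-definite symmetric matrix whose eigenvalues all lie in $[\Lambda^{-1}, \lambda^{-1}]$. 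Using $g_1^{-1} = H^{-1} g_0^{-1}$, I would compute
\[
  \langle h, h \rangle_{g_1}(x) = \tr\bigl(g_1^{-1} h\, g_1^{-1} h\bigr) = \tr\bigl((H^{-1} K)^2\bigr),
\]
diagonalize $H^{-1}$ by an orthogonal change of basis (which preserves symmetry of $K$), and expand the trace in components to obtain
\[
  \tr\bigl((H^{-1}K)^2\bigr) = \sum_{i,j} d_i d_j \tilde K_{ij}^2,
\]
where $d_i \in [\Lambda^{-1}, \lambda^{-1}]$ and $\sum_{i,j} \tilde K_{ij}^2 = \tr(\tilde K^2) = \tr(K^2)$. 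This yields the two-sided pointwise estimate $\Lambda^{-2}\langle h,h\rangle_{g_0}(x) \leq \langle h,h\rangle_{g_1}(x) \leq \lambda^{-2}\langle h,h\rangle_{g_0}(x)$.

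Combining the two pointwise comparisons and integrating over $M$ produces constants $C_1, C_2 > 0$, depending only on $g_0$ and $g_1$, such that $C_1 (h,h)_{g_0} \leq (h,h)_{g_1} \leq C_2 (h,h)_{g_0}$ for every $h \in \s$; this is precisely equivalence of the two scalar products, and in particular they induce the same topology on $\s$. The main technical obstacle is the trace identity $\tr((H^{-1}K)^2) = \sum d_i d_j \tilde K_{ij}^2$: it hinges on $K$ remaining symmetric after conjugation by the orthogonal matrix that diagonalizes $H^{-1}$, which is why it is essential to start in a $g_0$-orthonormal basis, so that $g_0$-self-adjointness of $g_0^{-1} h$ becomes ordinary Euclidean symmetry.
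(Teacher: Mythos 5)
Your proof is correct. Each step checks out: $H = g_0^{-1}g_1$ is $g_0$-self-adjoint and positive definite, Lemma \ref{lem:45} plus compactness of $M$ give uniform eigenvalue bounds $0 < \lambda \le \Lambda < \infty$, the volume comparison follows from \eqref{eq:8} since $\det H$ is the product of the eigenvalues, and the diagonalization identity $\tr\bigl((H^{-1}K)^2\bigr) = \sum_{i,j} d_i d_j \tilde K_{ij}^2$ is valid precisely because conjugation by the orthogonal matrix preserves the Euclidean symmetry of $K$, which you correctly secured by working in a $g_0(x)$-orthonormal frame. Integrating the two pointwise comparisons yields the two-sided bound and hence equivalence.

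Note, however, that the paper does not actually prove this statement: it quotes the theorem from Palais and only indicates, in the discussion following it and in the proof of Lemma \ref{lem:18}, that the equivalence is pointwise in character, there arguing via the operator norm $\tilde N(a,b)$ of the identity map between the fibers $(\satx, \langle\cdot,\cdot\rangle_{g_0(x)})$ and $(\satx, \langle\cdot,\cdot\rangle_{g_1(x)})$, continuity of $\tilde N$, and relative compactness of the set of matrix values, together with Lemma \ref{lem:49} for the volume forms. Your argument is the same in spirit --- uniform pointwise bounds upgraded by compactness, combined with a volume-form comparison --- but it is self-contained and more explicit: the eigenvalue bounds on $H$ play the role of the operator-norm bounds, and you get concrete constants $\lambda^{-2}\lambda^{n/2}$-type factors rather than an abstract boundedness statement. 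That makes your version a genuine replacement for the citation, at the modest cost of the linear-algebra computation the paper avoids by deferring to Palais.
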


Ebin even pointed out in \cite[\S 4]{ebin70:_manif_of_rieman_metric}
that the theorem still holds if $g_0$ and $g_1$ are only assumed to be
continuous rather than smooth.

Let us make a brief technical note at this point.  When we use the
term ``$L^2$ topology'', what we really mean is that we give this name
to the topology induced from $( \cdot , \cdot )_g$ for some $g \in
\M$.  Of course, when we think of $L^2$ objects, we think of functions
that are square integrable, so we might ask whether a similar
interpretation holds for the completion of $\s$ with respect to $(
\cdot , \cdot )_g$.  In fact, looking at the coefficients of a tensor
field as local functions, defined over a coordinate chart, we claim that elements of the
completion of $\s$ with respect to $( \cdot , \cdot )_g$ are precisely
those tensor fields with coefficients that are $L^2$-integrable over
any chart.

The reason for this is that the proof of Theorem \ref{thm:31} is pointwise in
character---that is, not only are $( \cdot , \cdot )_{g_0}$ and $(
\cdot , \cdot )_{g_1}$ equivalent for any $g_0, g_1 \in \M$, but there
are constants $C,C' > 0$ such that for all $x \in M$ and $h,k \in \satx$,
\begin{equation*}
  \frac{1}{C} \tr_{g_1(x)}(h(x) k(x)) \leq \tr_{g_0(x)}(h(x) k(x)) \leq C \tr_{g_1(x)}(h(x) k(x))
\end{equation*}
and
\begin{equation*}
  \frac{1}{C'} \leq \left(
    \frac{\mu_{g_1}}{\mu_{g_0}} \right) (x) \leq C'.
\end{equation*}
(For the reader who desires more details, the proof of Lemma
\ref{lem:18} below will eventually make this clear.)  Thus, $( \cdot ,
\cdot )_{g_0}$ and $( \cdot , \cdot )_{g_1}$ are equivalent not just
on sections of $S^2 T^* M$ defined over all of $M$, but also
equivalent if we restrict them to sections defined only over some
subset of $M$.

Fix a coordinate chart $U$ and choose a metric $g^U$ with the property
that $g^U_{ij} \equiv \delta_{ij}$ on $U$.  Also fix an arbitrary
metric $g \in \M$.  On sections of $S^2 T^* M$ defined over $U$, $(
\cdot , \cdot )_g$ is equivalent to $( \cdot , \cdot )_{g^U}$ by the
arguments of the previous paragraph.  But the completion with respect
to $( \cdot , \cdot )_{g^U}$ of the space of sections of $S^2 T^* M$
over $U$ consists of exactly those sections with square integrable
coefficients, since locally
\begin{equation*}
  \tr_{g^U}(h k) = \delta^{ij} h_{il} \delta^{lm} k_{jm} = \sum_{i,j}
  h_{ij} k_{ij}.
\end{equation*}

This shows that the topology (and the completion) of $\s$ with respect
to $(\cdot, \cdot)_g$ is the same as the ``naive'' $L^2$ topology
coming from the (local) square integral of the coefficients of tensor
fields.  What this means is that we can use results on the $L^2$
topology for functions and apply them to $\s$ with the topology given
by $( \cdot , \cdot )_g$---always viewing the coefficients of tensor
fields as local functions.

\subsection{A product manifold structure for
  $\M$}\label{sec:prod-manif-struct}

Let's move on to studying the structure of $\M$ with respect to the
$L^2$ metric.  The goal of this subsection is to define a splitting of
$\M$ as the product of the set of metrics inducing the same volume
form and the set of volume forms on $M$.

Select any volume form $\mu \in \V$ and define
\begin{equation}\label{eq:115}
  \M_\mu := \{ g \in \M \mid \mu_g = \mu\ \},
\end{equation}
that is, $\M_\mu$ is the set of all metrics which induce the volume
form $\mu$.  Then $\M_\mu$ is a smooth submanifold of $\M$
(cf.~\cite[Lemma 8.8]{ebin70:_manif_of_rieman_metric}).

Consider the map $g \mapsto \mu_g$, mapping $\M$ to $\V$.  We
wish to compute the differential of this map, since this will help us
to figure out what the tangent space to $\M_\mu$ at a point is.  The
result is given in the following lemma.

\begin{lem}\label{lem:50}
  Let $g \in \M$ and $h \in \s$.  We have
  \begin{equation*}
    D \mu_g [h] = \frac{1}{2} \tr_g (h) \mu_g. 
  \end{equation*}
\end{lem}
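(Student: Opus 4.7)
The plan is to reduce to a pointwise computation in local coordinates, where $\mu_g = \sqrt{\det g}\, dx^1 \wedge \cdots \wedge dx^n$. Since both sides of the claimed identity are coordinate-independent objects (by the discussion in Subsection \ref{sec:endom-bundle-m}, $\tr_g(h)$ is a well-defined function, and $\mu_g$ a well-defined $n$-form), it suffices to verify the formula in a single arbitrary chart around each point.

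The main calculation is then the derivative of $g \mapsto \sqrt{\det g}$ at $g$ in the direction $h$. First I would invoke Jacobi's formula for the derivative of the determinant on the space of invertible matrices:
\begin{equation*}
  D \det(g)[h] = \det(g) \, \tr(g^{-1} h) = \det(g) \, \tr_g(h).
\end{equation*}
This is elementary linear algebra and can, for instance, be derived from the identity $\det(g + th) = \det(g)\det(I + t g^{-1} h)$ combined with the expansion $\det(I + tA) = 1 + t \tr(A) + O(t^2)$. Applying the chain rule with the smooth map $t \mapsto \sqrt{t}$ (valid because $\det g > 0$ for a Riemannian metric, by Proposition \ref{prop:7}), I get
\begin{equation*}
  D \sqrt{\det g}\, [h] = \frac{1}{2 \sqrt{\det g}} \, \det(g) \, \tr_g(h) = \frac{1}{2} \sqrt{\det g}\, \tr_g(h).
\end{equation*}

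Finally I would multiply through by the fixed coordinate $n$-form $dx^1 \wedge \cdots \wedge dx^n$, which is independent of $g$, so that
\begin{equation*}
  D \mu_g [h] = \frac{1}{2} \tr_g(h) \sqrt{\det g}\, dx^1 \wedge \cdots \wedge dx^n = \frac{1}{2} \tr_g(h) \, \mu_g,
\end{equation*}
which is the claimed formula. Since both sides are global objects, and the identity holds in every chart, it holds globally on $M$.

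There is essentially no obstacle here; the only thing worth double-checking is that the derivative $D\mu_g$ really is computed pointwise in this naive way. This follows because $g \mapsto \mu_g$ is the restriction to $\M$ of the map $\s \ni h \mapsto \sqrt{\det(\cdot)}\, dx^1 \wedge \cdots \wedge dx^n$ (a smooth, tame map into $\Omega^n(M)$ by the Sobolev-embedding/composition arguments alluded to in Subsection \ref{sec:manifolds-mappings}), and directional derivatives of such pointwise-defined smooth maps between section spaces commute with evaluation at a point.
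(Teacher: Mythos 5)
Your proof is correct and follows essentially the same route as the paper: localize, factor $\mu_{g+th}$ as $\sqrt{\det(I + t\,g^{-1}h)}\,\sqrt{\det g}\,dx^1\cdots dx^n$, and differentiate at $t=0$ using the chain rule for the square root. The only difference is in how the derivative of the determinant at the identity is justified — you invoke Jacobi's formula via the expansion $\det(I + tA) = 1 + t\tr(A) + O(t^2)$, whereas the paper derives the same fact from $\det \exp B = \exp \tr B$ together with $D\log(I) = \id$; this is a cosmetic variation, and your version is if anything more direct.
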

\begin{proof}
  We wish to compute
  \begin{equation*}
    D \mu_g [h] = \left. \frac{d}{d t} \right|_{t=0} \mu_{g + t h}.
  \end{equation*}
  If we write this is local coordinates $x^1, \dots, x^n$ and let $I$
  denote the $n \times n$ identity matrix, we have
  \begin{equation}\label{eq:126}
    \begin{aligned}
      \left. \frac{d}{d t} \right|_{t=0} \mu_{g + t h} &=
      \left. \frac{d}{d
          t} \right|_{t=0} \sqrt{\det (g + t h)}\, dx^1 \cdots dx^n \\
      &= \left( \left. \frac{d}{d t} \right|_{t=0} \sqrt{\det (I + t
          g^{-1} h)} \right) \sqrt{\det g}\, dx^1 \cdots dx^n \\
      &= \left( \left. \frac{d}{d t} \right|_{t=0} \sqrt{\det (I + t
          g^{-1} h)} \right) \mu_g.
    \end{aligned}
  \end{equation}
  
  To compute the derivative term above, recall that for any square
  matrix $A$, $\exp \tr A = \det \exp A$, where $\exp$ is the matrix
  exponential.  Recall also that $\exp$ is a local diffeomorphism
  between a neighborhood $U$ of the zero matrix and a neighborhood $V$
  of the identity matrix in the space of $n \times n$ matrices.  So if
  $A_t$ is a one-parameter family of positive definite symmetric
  matrices with $A_0 = I$ and $A_t \in V$ then we can write $B_t =
  \log A_t$ uniquely.  This allows us to compute
  \begin{align*}
    \left. \frac{d}{d t} \right|_{t=0} \det A_t &= \left. \frac{d}{d
        t} \right|_{t=0} \det \exp B_t = \left. \frac{d}{d t}
    \right|_{t=0} \exp \tr B_t \\
    &= \left. (\tr B_t)' \exp \tr B_t \right|_{t=0} = \left. (\tr
      B_t)' \right|_{t=0},
  \end{align*}
  where the last equality follows from $B_0 = \log I = 0$.  Now, note
  that $A \mapsto \tr A$ is a linear map, so its differential is given
  by the map itself again.  Therefore
  \begin{equation}\label{eq:125}
    \left. \frac{d}{d t} \right|_{t=0} \det A_t = \left. (\tr B_t)' \right|_{t=0} = \tr B'_0.
  \end{equation}
  
  We now claim that $B'_0 = A'_0$.  This follows from the fact that
  for any matrix $X$,
  \begin{equation*}
    \left. \frac{d}{d t} \right|_{t=0} \exp(t X) = X,
  \end{equation*}
  If we define the notation $\Phi := \exp$ and $\Psi := \log$, we can
  write this another way:
  \begin{equation*}
    D \Phi (0) X = X \Longrightarrow D \Phi (0) = \id.
  \end{equation*}
  As $\Psi$ is the inverse function of $\Phi$ on the neighborhood $V$,
  $D \Psi(C) = D \Phi(\Psi(C))^{-1}$ for any $C \in V$.  Since
  $\Psi(I) = 0$, we have that $D \Psi(I) = D \Phi(0)^{-1} = \id$.
  This implies
  \begin{equation*}
    B'_0 = \left. \frac{d}{d t} \right|_{t=0} \log A_t = D \Psi(I)
    A'_0 = A'_0.
  \end{equation*}
  Substituting this into \eqref{eq:125}, we get
  \begin{equation*}
    \left. \frac{d}{d t} \right|_{t=0} \det A_t = \tr A'_0.
  \end{equation*}
  Using the above in \eqref{eq:126} and making a straightforward
  computation finally gives the result.
\end{proof}

Returning to $\M_\mu$, since the map $g \mapsto \mu_g$ is constant
over $\M_\mu$, we see from the previous lemma that
\begin{equation}\label{eq:112}
  T_g \M_\mu = \{ h \in \s \mid \tr_g h = 0 \}.
\end{equation}
That is, the tangent space to $\M_\mu$ at $g$ is given by the
$g$-traceless tensors.  Let us denote the set of $g$-traceless tensors
by $\st$.\label{p:def-stg}

Let $\mu \in \V$ be any smooth volume form on $M$.  Then, as pointed out in
Section \ref{sec:geom-prel}, for any $\nu \in \V$ there exists a
unique $C^\infty$ function, denoted $(\nu / \mu)$, such that
\begin{equation}\label{eq:110}
  \nu = \left( \frac{\nu}{\mu} \right) \mu.
\end{equation}
Furthermore, if $g \in \M$ and $f \in \pos$, i.e., $f$ is a smooth
positive function, then from the local expression for $\mu_g$
(cf.~\eqref{eq:119}) we see that
\begin{equation}\label{eq:109}
  \mu_{f g} = f^{n/2} \mu_g.
\end{equation}
From these facts, it is easy to see that if the metric $g$ induces the
volume form $\mu$ and $\nu \in \V$, then the unique metric conformal
to $g$ inducing the volume form $\nu$ is
\begin{equation*}
  \tilde{g} := \left( \frac{\nu}{\mu} \right)^{2/n} g.
\end{equation*}

This gives us the idea for a splitting of $\M$: by the considerations
of the last paragraph, there is a bijection between $\M$ and $\M_\mu
\times \pos$.  In Section \ref{sec:geom-prel}, we saw that $\pos$ is
diffeomorphic to $\V$, and so we can also say there is a bijection
between $\M$ and $\Mmu \times \V$.  This is more intuitive, as it
basically says that choosing a metric from $\M$ is the same as
choosing an element from $\Mmu$, which induces a fixed volume form,
and then picking a volume form.

In concrete terms, we define a map
\begin{equation}\label{eq:111}
  \begin{aligned}
    i_\mu : \M_\mu \times \V &\rightarrow \M \\
    (g, \nu) &\mapsto \left( \frac{\nu}{\mu} \right)^{2/n} g.
  \end{aligned}
\end{equation}
Thus, $i_\mu$ maps $(g, \nu)$ to the unique metric conformal to $g$
with volume form $\nu$.  It is straightforward to show that $i_\mu$ is
not only a bijection, but a diffeomorphism.

To compute the differential of $i_\mu$, recall from Section
\ref{sec:geom-prel} that $T_\nu \V = \Omega^n(M)$.  Also, since
$\alpha \mapsto (\alpha/\mu)$ is a linear map from $\Omega^n(M)$ to
$C^\infty(M)$, its differential is again given by the map itself,
i.e., $\beta \mapsto (\beta / \mu)$.  This gives us all we need to
compute
\begin{equation}\label{eq:117}
  D i_\mu(g,\nu) [h, \alpha] = \frac{2}{n} \left( \frac{\nu}{\mu}
  \right)^{\frac{2}{n} - 1} \left( \frac{\alpha}{\mu} \right) g + \left( \frac{\nu}{\mu} \right)^{2/n} h,
\end{equation}
where $\alpha \in \Omega^n(M) = T_\nu \V$ and $h \in \st = T_g
\M_\mu$.

As a submanifold of $\M$, $\M_\mu$ has a natural Riemannian metric
induced from the $L^2$ metric of $\M$.  We can use the map $i_\mu$ to
define a Riemannian metric on $\V$ as follows.  For every $g \in
\M_\mu$, we can embed $\V$ into $\M$ via
\begin{equation*}
  \V \overarrow{\cong} \{ g \} \times \V \overarrow{i_\mu} \M.
\end{equation*}
Let $(\!( \cdot , \cdot )\!)$ be the pullback of the $L^2$ metric $(
\cdot , \cdot )$ along this embedding.  To compute $(\!( \cdot , \cdot
)\!)$, note that $\mu_{i_\mu(g, \nu)} = \nu$, and for all $\alpha \in
\Omega^n(M)$ and $\nu, \lambda \in \V$, 
\begin{equation*}
  \left( \frac{\lambda}{\nu} \right)^{-1} = \bigg( \frac{\nu}{\lambda}
  \bigg) \quad \textnormal{and} \quad \bigg( \frac{\alpha}{\nu}
  \bigg) \bigg( \frac{\nu}{\lambda} \bigg) = \bigg(
    \frac{\alpha}{\lambda} \bigg).
\end{equation*}
Using this and \eqref{eq:117}, we can compute
\begin{align*}
  (\!( \alpha, \beta )\!)_\nu &= ( D i_\mu(g, \nu) [0, \alpha], D
  i_\mu(g,
  \nu) [0, \beta] )_{i_\mu(g, \nu)} \\
  &= \integral{M}{}{\tr_{(\nu/\mu)^{2/n} g}\left( \left( \frac{2}{n}
        \left( \frac{\nu}{\mu} \right)^{\frac{2}{n} - 1} \left(
          \frac{\alpha}{\mu} \right) g \right) \left(\frac{2}{n}
        \left( \frac{\nu}{\mu} \right)^{\frac{2}{n} - 1} \left(
          \frac{\beta}{\mu} \right) g \right) \right)}{\mu_{(\nu/\mu)^{2/n}
      g}} \\
  &= \integral{M}{}{\tr \left( \frac{4}{n^2} \left(
        \frac{\nu}{\mu} \right)^{-2} \left( \frac{\alpha}{\mu} \right)
      \left( \frac{\beta}{\mu} \right) I \right)}{\nu} \\
  &= \frac{4}{n} \integral{M}{}{\bigg( \frac{\alpha}{\nu} \bigg)
    \left( \frac{\beta}{\nu} \right)}{\nu}.
\end{align*}
Note that $(\!( \cdot , \cdot )\!)$ is actually independent of the
elements $g$ and $\mu$ we chose to define the embedding $\V
\hookrightarrow \M$, so it is a natural object.  In fact, $(\!( \cdot
, \cdot )\!)$ is just the constant factor $4/n$ times the most obvious
Riemannian metric on $\V$.

We know that $\V$ is diffeomorphic to $\pos$.  Furthermore, if $g \in
\M$ is any smooth metric, then the orbit of the conformal group $\pos$
through $g$, $\pos \cdot g$, is also diffeomorphic to $\pos$.  So
composing diffeomorphisms appropriately, we can also see that $\M
\cong \M_\mu \times \V \cong \M_\mu \times \pos \cdot g$.  Each
viewpoint may be useful, depending on the context.

The global splitting \eqref{eq:111} also, of course, gives a splitting
of the tangent space at each $g \in \M$.  Let's describe this briefly.

Let $(g, \nu) \in \M_\mu \times \V$.  From \eqref{eq:117}, it is easy
to see that
\begin{equation}\label{eq:113}
  D i_\mu(g, \nu) [0, T_\nu \V] = C^\infty(M) \cdot \left(
    \frac{\nu}{\mu} \right)^{2/n} g = C^\infty(M)
  \cdot g  
\end{equation}
In other words, the image of the tangent space of $\V$ under the
differential of $i_\mu$ is the set of \emph{pure trace tensors}.  Let
us denote the set of such tensors by $\sconf := C^\infty(M) \cdot
g$.\label{p:def-scg} (The superscript ``$c$'' stands for
``conformal.'')  Furthermore, we can also compute that if $f := (\nu /
\mu)^{n/2}$, then
\begin{equation}\label{eq:108}
  D i_\mu(g, \nu) [T_g \M_\mu, 0] = \s^T_{f g} = \s^T_{i_\mu(g,\nu)}.
\end{equation}
Note that this computation uses the fact that $\tr_{f g}(f h) = \tr_g
h$ for any $h \in \s$.

Let $g \in \M$ and $(\tilde{g}, \nu) := i_\mu^{-1}(g)$.  By
\eqref{eq:113} and \eqref{eq:108}, the splitting \eqref{eq:111} then
implies that
\begin{equation}\label{eq:114}
  T_g \M = \left( D i_\mu(\tilde{g}, \nu) [T_{\tilde{g}} \M_\mu, 0]
  \right) \oplus \left( D i_\mu(\tilde{g}, \nu) [0, T_\nu \V] \right) = \st \oplus \sconf.
\end{equation}
It is easy to see that this is, in fact, an \emph{orthogonal}
splitting of $T_g \M$ with respect to $(\cdot, \cdot)_g$.  For if $h
\in \st$ and $k = f g \in \sconf$, then we have
\begin{equation}\label{eq:131}
  \tr_g(h k) = \tr(g^{-1} h g^{-1} (f g)) = f \cdot \tr_g h = 0,
\end{equation}
since $\tr_g h = 0$ by assumption.

The splitting given in this subsection plays an important role in the
general theory of $\M$.  In particular, as we will see in the next
subsection, results on the curvature and geodesics of the $L^2$ metric
can be nicely stated and more easily visualized using this product
manifold structure.

\subsection{The curvature of $\M$}\label{sec:geod-curv-m}

The computation of the curvature (and, in the next section, of the
geodesics of $\M$) is greatly simplified by a heuristic consideration.
Namely, we can intuitively think of the $L^2$ metric on $\M$ as a
product metric with an infinite number of factors, one for each $x \in
M$.  ``Summing up'' the different terms in this product metric is then
done by integration.  Of course, this is only a formal construction,
but it is a useful practical aid.  More details about how this can be
made rigorous, in a much more general context, are given in
\cite[Appendix]{freed89:_basic_geomet_of_manif_of}.  In a case like
this, one often says that the computations are \emph{pointwise} in nature.

To illustrate what this means, we take the example of a geodesic in
$\M$.  A path in $\M$ is a one-parameter family $g_t$ of Riemannian
metrics on the base manifold $M$.  Thus, for every $x \in M$, $g_t(x)$
is a one-parameter family of positive definite elements of $\satx$
that glues together to a smooth metric over $M$ for each $t$.  The
geodesic $g_t$ is additionally completely determined by an initial
metric $g_0$ and a tangent vector $g'_0 \in T_{g_0} \M$.  When we say
that the geodesic equation is pointwise, what we mean is that we can
go one step further and say that the path the geodesic takes at a
point, $g_t(x)$, is determined completely by the values $g_0(x)$ and
$g'_0(x)$.

Now that we know what a pointwise computation is, we will keep these
considerations in mind as we continue.  However, before we can write
down formulas for the curvature of $\M$, we need to take care of an
issue that is technical in nature but central in its
implications---namely the existence of the Levi-Civita connection of
the $L^2$ metric.

Recall that in Subsection \ref{sec:levi-civita-conn}, we pointed out
that the Levi-Civita connection of a weak Riemannian manifold does not
necessarily exist.  If it does exist, however, it is unique.  The
problem was that the Koszul formula \eqref{eq:95} only guarantees the
existence of the Levi-Civita covariant derivative of a vector field at
a point as an element of the completion of the tangent space (with
respect to the Riemannian metric), not of the tangent space itself.

Thus, given two vector fields $h$ and $k$ on $\M$ ($h$ and $k$ are, at
each point of $\M$, smooth sections of $S^2 T^* M$), \eqref{eq:95} only
guarantees that the Levi-Civita covariant derivative $\nabla_h k|_g$
at a point $g \in \M$ is an element of $L^2(S^2 T^* M)$, since
$(\cdot, \cdot)$ induces the $L^2$ topology on each tangent space.

To show that the Levi-Civita connection does indeed exist, i.e., that
$\nabla_h k |_g$ is a smooth section of $S^2 T^* M$ for all vector
fields $h, k \in C^\infty (T \M)$ and all $g \in \M$, Ebin \cite[\S
4]{ebin70:_manif_of_rieman_metric} exhibited an explicit formula for
$\nabla_h k |_g$ on $\M^s$ and showed that $\nabla_h k|_g$ is $H^s$ if
$h$, $k$, and $g$ are.  Thus, it is also smooth if $h$, $k$, and $g$
are all smooth.  The precise formula is the following:
\begin{equation}\label{eq:116}
  \nabla_h k |_g = \left. \frac{d}{dt} \right|_{t=0} k(g + t h(g)) -
  \frac{1}{2} (h g^{-1} k + k g^{-1} h) +
  \frac{1}{4} ((\tr_g k) h + (\tr_g h) k - \tr_g(h k) g),
\end{equation}
where $h(g) \in T_g \M \cong \s$ is the value of the vector field $h$
at the basepoint $g$, and similarly $k(g + t h(g)) \in \s$ is the
value of $k$ at $g + t h(g)$ for small $t$.  (Bear in mind that a
smooth vector field on $\M$ is a smooth choice of an element of $\s$
for each $g \in \M$.)  It is easily seen from \eqref{eq:116} that
$\nabla_h k |_g$ is an $H^s$ section of $S^2 T^* M$ if $h$, $k$, and
$g$ are (see \cite[\S 4]{ebin70:_manif_of_rieman_metric} for an
explicit proof), and furthermore that this expression varies smoothly
with $g$.

Now that we know the Levi-Civita connection exists, we are assured
that the curvature and geodesics of $\M$ with its $L^2$ metric are
defined.  After some general discussion, the goal of this subsection
is to take a look at the curvature of $\M$ and, because it will play a
role later, also that of $\V$, $\pos$, and $\pos \cdot g$.

We now quote the theorem giving the curvature of $\M$:

\begin{thm}[{\cite[Prop.~2.6]{gil-medrano91:_rieman_manif_of_all_rieman_metric}}]\label{thm:32}
  Let $h,k,\ell \in T_g \M$, and let $H = g^{-1} h$, $K = g^{-1} k$
  and $L := g^{-1} \ell$.  We denote by $I$ the section of the
  endomorphism bundle $\textnormal{End}(M)$ that gives the identity
  map at each $x \in M$.

  The Riemannian curvature tensor of $\M$ with respect to the $L^2$
  metric is given by
  \begin{align*}
    g^{-1} R_g(h,k) \ell &= -\frac{1}{4} [[H,K], L] + \frac{n}{16} (\tr(H L) K - \tr
    (K L) H) \\
    &\quad + \frac{1}{16} (\tr(K) \tr(L) H - \tr(H) \tr(L) K) \\
    &\quad + \frac{1}{16} (\tr(H) \tr(K L) - \tr(K) \tr(H L)) I.
  \end{align*}
\end{thm}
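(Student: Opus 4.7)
The plan is to compute the curvature tensor directly from Ebin's explicit formula \eqref{eq:116} for the Levi-Civita connection. I would begin by extending the given tangent vectors $h, k, \ell$ to vector fields on $\M$ in the simplest possible way: using the canonical identification $T_g\M \cong \s$, let each be the \emph{constant} vector field with value $h$, $k$, or $\ell$ at every base point. This extension kills two sources of clutter at once: the Lie bracket $[h,k]$ vanishes, and the first summand $\tfrac{d}{dt}\big|_{t=0} k(g + th(g))$ on the right-hand side of \eqref{eq:116} also vanishes because $k$ is base-point-independent. The covariant derivative therefore reduces to the purely algebraic ``Christoffel symbol''
\[
  \Gamma_g(h,k) := -\tfrac{1}{2}\bigl(h g^{-1} k + k g^{-1} h\bigr)
  + \tfrac{1}{4}\bigl((\tr_g k)\, h + (\tr_g h)\, k - \tr_g(hk)\, g\bigr),
\]
and the curvature is
\[
  R(h,k)\ell\big|_g = D_h \Gamma_\cdot(k,\ell)\big|_g
  - D_k \Gamma_\cdot(h,\ell)\big|_g
  + \Gamma_g\bigl(h, \Gamma_g(k,\ell)\bigr)
  - \Gamma_g\bigl(k, \Gamma_g(h,\ell)\bigr),
\]
where $D_h$ denotes the directional derivative in $\s$ of the map $g \mapsto \Gamma_g(k,\ell)$.

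The calculation itself uses only three elementary identities: $D_h g = h$, $D_h g^{-1} = -g^{-1} h g^{-1}$, and $D_h(\tr_g k) = -\tr(HK)$, where $H := g^{-1} h$ and similarly for $K, L$. Plugging these in and multiplying by $g^{-1}$ on the left, every summand becomes a scalar multiple of $H$, $K$, $L$, $I$, or a matrix product of these, with the scalars built from the seven traces $\tr H$, $\tr K$, $\tr L$, $\tr(HK)$, $\tr(HL)$, $\tr(KL)$, and $\tr(HKL)$. Antisymmetrizing in $(h,k)$ and exploiting cyclicity of the trace produces massive cancellation: every contribution symmetric in $H \leftrightarrow K$ drops out, and in particular all terms involving the cubic trace $\tr(HKL) = \tr(KLH) = \tr(LHK)$ disappear. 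What remains must organize itself into exactly the four packets displayed in the statement. The factor of $n$ in the $\tfrac{n}{16}$ coefficient arises every time the $\tr_g(hk)\, g$ piece of $\Gamma$ later gets traced against itself, via the identity $\tr_g g = n$.

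The main obstacle is purely combinatorial: the expansion of $\Gamma_g(h, \Gamma_g(k,\ell))$ alone produces on the order of twenty summands, so one must be meticulous in grouping them by type and tracking their coefficients. Two cheap sanity checks substantially reduce the chance of algebraic error. First, restricting to traceless vectors $h, k, \ell \in \st = T_g \M_\mu$ (cf.~Subsection \ref{sec:prod-manif-struct}) collapses the three trace-correction packets and leaves only $-\tfrac{1}{4}[[H,K],L]$ together with the $\tfrac{n}{16}$ packet; this is recognizable as the pointwise curvature of the symmetric space of positive-definite symmetric matrices modulo $\mathrm{SO}(n)$. Second, restricting to purely conformal vectors $h, k, \ell \in \sconf$ (with $H, K, L$ scalar multiples of $I$) makes every term on the right-hand side vanish identically, consistent with the natural metric on the conformal orbit $\pos \cdot g$ being flat. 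Both reductions are worked out independently in \cite{freed89:_basic_geomet_of_manif_of} and \cite{gil-medrano91:_rieman_manif_of_all_rieman_metric}, which provides a useful cross-check while grinding through the algebra.
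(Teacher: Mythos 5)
Your outline is correct, and it would in fact deliver the stated formula: with constant extensions the Lie bracket and the base-point-derivative term in \eqref{eq:116} drop out, the antisymmetrized derivative of the Christoffel map contributes $\tfrac12(KHL+LHK-HKL-LKH)$ (its trace terms and the cubic traces $\tr(HKL)$, $\tr(HLK)$ cancel in the antisymmetrization, as you predict), the quadratic Christoffel terms contribute $\tfrac14(HKL+LKH-KHL-LHK)$ plus the trace packets, and the two matrix-product pieces combine to exactly $-\tfrac14[[H,K],L]$ while the remaining packets come out with the coefficients $\tfrac{n}{16}$, $\tfrac1{16}$, $\tfrac1{16}$ of the statement; the factor $n$ indeed enters through $\tr(\tr_g(k\ell)\,I)=n\tr(KL)$. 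Be aware, though, that the thesis itself does not prove Theorem \ref{thm:32}: it quotes it from Gil-Medrano--Michor (adjusting only the sign convention, cf.\ Remark \ref{rmk:11}) and points to Freed--Groisser for an alternative derivation, so your computation is not an alternative to an in-paper argument but rather supplies the proof the paper delegates to the literature, and it is essentially the same pointwise, direct computation carried out there from the explicit Levi-Civita formula. One caveat on your first sanity check: for traceless $h,k,\ell$ the surviving expression $-\tfrac14[[H,K],L]+\tfrac n{16}(\tr(HL)K-\tr(KL)H)$ is the pointwise curvature of $\Matx$ with the $\sqrt{\det}$-weighted metric $\langle\cdot,\cdot\rangle^0$ (the weight is precisely the source of the $\tfrac n{16}$ packet); the unweighted symmetric space of positive-definite symmetric matrices has curvature $-\tfrac14[[H,K],L]$ only, so a naive comparison with that space would appear to flag an error where there is none. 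The cleaner internal cross-checks are Corollary \ref{cor:6} (which records both the traceless restriction and the vanishing on pure-trace arguments) and Proposition \ref{prop:22} (flatness of the conformal orbits), and your proposal is consistent with both.
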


\begin{rmk}\label{rmk:11}
  As is well known, in the literature on Riemannian geometry there are
  two conventions for defining the Riemannian curvature tensor.  The
  convention we use is the following.  If $N$ is a Riemannian manifold
  with Levi-Civita connection $\nabla$, we define
  \begin{equation*}
    R(X, Y) Z = \nabla_X \nabla_Y Z - \nabla_Y \nabla_X Z -
    \nabla_{[X,Y]} Z
  \end{equation*}
  for any vector fields $X$, $Y$ and $Z$ on $N$.

  The formula differs from the one in
  \cite{gil-medrano91:_rieman_manif_of_all_rieman_metric} by a
  negative sign, and is the same as the one used in
  \cite{freed89:_basic_geomet_of_manif_of}.
\end{rmk}

Here we make the observation that $R_g(h, k) \ell = 0$ if any one of
$h$, $k$ or $\ell$ is pure trace---i.e., of the form $f g$ for $f \in
C^\infty(M)$. This is readily checked using the above formula, but it
can also be seen via more geometric arguments (as is done in
\cite{freed89:_basic_geomet_of_manif_of}).  Using this observation, it
is possible to write the curvature in a more compact form, as well as
give a clean expression for the sectional curvature of $\M$.  For the
proof of the entire theorem we refer to the original source
\cite[Thm.~1.16 and Cor.~1.17]{freed89:_basic_geomet_of_manif_of}.

\begin{cor}\label{cor:6}
  Let notation be as in Theorem \ref{thm:32}.  If any of $h$, $k$, or
  $\ell$ is pure trace, then $R_g(h, k) \ell = 0$.  If $h, k, \ell \in
  \st$, then we have
  \begin{equation*}
    g^{-1} R_g (h, k) \ell = -\frac{1}{4} [[H, K], L] + \frac{n}{16}
    (\tr(H L) K - \tr(K L) H).
  \end{equation*}
  By the splitting \eqref{eq:114}, this determines the Riemannian
  curvature tensor completely.

  Furthermore, for $h, k \in \st$, the sectional curvature of $\M$ is
  given by
  \begin{equation*}
    K_g(h, k) = ( R_g(h, k) k, h)_g = \integral{M}{}{\left(
        \frac{1}{4} \tr([H, K]^2) + \frac{n}{16} (\tr(H K)^2 -
        \tr(H^2) \tr(K^2)) \right)}{\mu_g}.
  \end{equation*}
  If either of $h$ or $k$ is pure trace, then $K_g(h, k)$ vanishes.

  Finally, the above formula implies that $K_g(h, k) \leq 0$ for all
  $h, k \in T_g \M$ and all $g \in \M$.
\end{cor}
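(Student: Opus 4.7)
The plan is to deduce everything from the explicit formula of Theorem \ref{thm:32} by direct substitution, and then read off the sectional curvature by the standard pairing.

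\textbf{Vanishing on pure traces and reduction to $\st$.} If $h = f g$ is pure trace, then $H = g^{-1}h = f\,I$ is a scalar multiple of the identity, so $[H,K] = 0$ (hence $[[H,K],L]=0$) and $\tr(H) = nf$, $\tr(HL) = f\tr(L)$. Substituting these into the three remaining terms of Theorem \ref{thm:32} and collecting, the contributions proportional to $K$, to $H$ and to $I$ each cancel. By antisymmetry $R_g(h,k) = -R_g(k,h)$, the same conclusion holds when $k$ is pure trace. For $\ell = fg$, one sets $L = fI$ and again observes term-by-term cancellation. For the reduced formula, if $h,k,\ell \in \st$ then $\tr(H) = \tr_g h = 0$, and similarly for $K$ and $L$; so the last two lines of Theorem \ref{thm:32} vanish, leaving the claimed expression. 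The splitting \eqref{eq:114} together with multilinearity then determines $R_g$ completely.

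\textbf{The sectional curvature formula.} Setting $\ell = k$ in the reduced formula and pairing with $h$ using $(\alpha,\beta)_g = \int_M \tr_g(\alpha\beta)\,\mu_g = \int_M \tr((g^{-1}\alpha)(g^{-1}\beta))\,\mu_g$, one obtains
\begin{equation*}
  K_g(h,k) = \integral{M}{}{\left( -\tfrac{1}{4}\tr([[H,K],K]H) + \tfrac{n}{16}(\tr(HK)^2 - \tr(H^2)\tr(K^2)) \right)}{\mu_g}.
\end{equation*}
The one nontrivial algebraic identity is $\tr([[H,K],K]H) = -\tr([H,K]^2)$, which follows from expanding $[[H,K],K] = HK^2 - 2KHK + K^2H$ and using the cyclic property of the trace. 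Vanishing of $K_g(h,k)$ when $h$ or $k$ is pure trace is an immediate consequence of the first part, since then $R_g(h,k)k = 0$.

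\textbf{Non-positivity.} For arbitrary $h,k \in T_g\M$, decompose $h = h^T + h^c$ and $k = k^T + k^c$ as in \eqref{eq:114}. By the first part and multilinearity, $R_g(h,k)k = R_g(h^T,k^T)k^T$. A quick check from the reduced formula shows this quantity is itself $g$-traceless (the trace of a commutator vanishes, and $\tr K = \tr H = 0$), so its $L^2$-pairing against the pure-trace part $h^c$ is zero. Hence $(R_g(h,k)k, h)_g = (R_g(h^T,k^T)k^T, h^T)_g$, reducing everything to $h, k \in \st$. The two terms in the integrand are then pointwise non-positive: since $h, k$ are symmetric, $H$ and $K$ are self-adjoint with respect to $g$, so $[H,K]$ is $g$-\emph{antisymmetric} and $\tr([H,K]^2) = -\tr([H,K]^*[H,K]) \leq 0$; and by the Cauchy--Schwarz inequality for the pointwise scalar product of Lemma \ref{lem:48}, $\tr(HK)^2 = \langle h,k\rangle_g^2 \leq \langle h,h\rangle_g\,\langle k,k\rangle_g = \tr(H^2)\tr(K^2)$. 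No step is a real obstacle; the only mild subtlety is the trace identity in the second paragraph, which is purely formal.
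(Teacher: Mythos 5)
Your proof is correct, and all the computations check out: the term-by-term cancellation when one argument is pure trace, the trace identity $\tr([[H,K],K]H) = -\tr([H,K]^2)$, the observation that $R_g(h^T,k^T)k^T$ is $g$-traceless (so it pairs to zero against the conformal part of $h$), the skew-adjointness of $[H,K]$ giving $\tr([H,K]^2)\le 0$, and the pointwise Cauchy--Schwarz inequality $\tr(HK)^2 \le \tr(H^2)\tr(K^2)$ for the scalar product of Lemma \ref{lem:48}. It is worth noting, though, that the paper does not actually prove this corollary: it remarks only that the pure-trace vanishing is "readily checked" from Theorem \ref{thm:32} (or seen geometrically, since the conformal directions span flat, totally geodesic orbits, cf.\ Proposition \ref{prop:22} and Proposition \ref{prop:23}), and for the rest it cites Freed and Groisser \cite[Thm.~1.16, Cor.~1.17]{freed89:_basic_geomet_of_manif_of}, where the argument is partly geometric. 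So your contribution is a fully self-contained, purely algebraic verification by substitution into the curvature formula; what you lose relative to the cited geometric route is the conceptual explanation of \emph{why} pure-trace directions are curvature-null, what you gain is that nothing beyond Theorem \ref{thm:32} and elementary linear algebra is needed. One cosmetic remark: in your first paragraph, when $h = fg$ the "contributions proportional to $H$" are themselves multiples of $I$, so the bookkeeping is really between the $K$-terms and the $I$-terms; the cancellation is as you say, but the phrasing could be tightened.
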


Using a result of Freed and Groisser \cite[Prop.~1.5]{freed89:_basic_geomet_of_manif_of}, we can also prove the following:

\begin{prop}\label{prop:22}
  Equip $\V$ with the weak Riemannian metric given by pullback along
  $i_\mu$ (see \eqref{eq:111}).  Equip $\pos$ with the metric given by
  pullback along the diffeomorphism \eqref{eq:127} with $\V$.
  Finally, for $g \in \M$, give the orbit $\pos \cdot g$ the metric it
  inherits as a submanifold of $\M$.

  Then $\V$, $\pos$, and $\pos \cdot g$ are all isometric.
  Furthermore, they are flat, i.e., their Riemannian curvature
  vanishes.
\end{prop}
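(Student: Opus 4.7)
The plan is to exhibit explicit diffeomorphisms between the three spaces, verify they are isometries by direct computation, and then flatten the resulting metric on $\pos$ via a square-root change of coordinates.

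First I would fix $\mu := \mu_g$ and consider the two maps $\Phi : \V \to \pos$, $\nu \mapsto (\nu/\mu)$, and $\Psi : \pos \to \pos \cdot g$, $f \mapsto f^{2/n} g$. The composition $\Psi \circ \Phi$ is exactly the restriction of $i_\mu(g, \cdot)$ from \eqref{eq:111} to the slice $\{g\} \times \V$, so these maps interlock with the product structure already developed. That $\Phi$ is an isometry is immediate from the definition of the metric on $\pos$ as the pullback of $(\!(\cdot,\cdot)\!)$ along $\Phi^{-1}$.

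Next I would verify that $\Psi$ is an isometry by a direct computation. For $u \in C^\infty(M) \cong T_f \pos$, differentiation gives $D\Psi(f)[u] = \tfrac{2}{n} f^{2/n-1} u \cdot g$, a pure-trace tangent vector at $f^{2/n} g \in \pos \cdot g$. Using $\tr_{f^{2/n} g}(g \cdot g) = n f^{-4/n}$ together with $\mu_{f^{2/n} g} = f \, \mu_g$ (both immediate from $(f^{2/n} g)^{-1} = f^{-2/n} g^{-1}$ and \eqref{eq:109}), a short calculation yields
\begin{equation*}
  (D\Psi(f)[u], D\Psi(f)[v])_{f^{2/n} g} = \frac{4}{n} \integral{M}{}{\frac{uv}{f}}{\mu_g},
\end{equation*}
which matches the metric on $\pos$ obtained by pulling back $(\!(\cdot,\cdot)\!)$ along $\Phi^{-1}$, as can be read off from the formula for $(\!(\cdot,\cdot)\!)$ given just before \eqref{eq:113}.

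For flatness, I would introduce the reparametrization $\Xi : \pos \to \pos$, $\phi \mapsto \phi^2$. Since $D\Xi(\phi)[w] = 2\phi w$, pulling the metric on $\pos$ back along $\Xi$ gives
\begin{equation*}
  \frac{4}{n} \integral{M}{}{\frac{(2\phi w)(2\phi w')}{\phi^2}}{\mu_g} = \frac{16}{n} \integral{M}{}{w w'}{\mu_g},
\end{equation*}
a \emph{constant} inner product on $T\pos \cong C^\infty(M)$. Hence $(\pos, \Xi^*(\cdot,\cdot))$ sits as an open subset of the flat Fr\'echet space $C^\infty(M)$, which immediately implies that $\pos$, and via the isometries $\Phi$ and $\Psi$ also $\V$ and $\pos \cdot g$, have vanishing Riemannian curvature. (Equivalently, one may extract the curvature vanishing from Freed--Groisser \cite[Prop.~1.5]{freed89:_basic_geomet_of_manif_of} applied to the conformal factor of the product structure \eqref{eq:111}.)

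The main obstacle is the bookkeeping in the isometry computation---carefully tracking the powers $f^{2/n}$, $f^{-4/n}$, and $f^{n/2}$ that appear in the trace and the volume form---but it is entirely routine; the real content of the proposition is the observation that the substitution $\phi = \sqrt{f}$ flattens the metric, reducing the geometry of a conformal orbit to ordinary $L^2$ geometry of positive functions.
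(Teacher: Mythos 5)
Your proof is correct, but it takes a different route from the paper on both halves. For the isometries, the paper's argument is essentially tautological: the metric on $\V$ \emph{is} the pullback of the $L^2$ metric along the embedding $\nu \mapsto i_\mu(g,\nu)$, and since $(\nu/\mu)$ realizes every positive function, the image of this embedding is exactly $\pos \cdot g$, so $\V \cong \pos \cdot g$ isometrically by definition; your explicit computation of $D\Psi$ and the identities $\tr_{f^{2/n}g}(g\cdot g) = n f^{-4/n}$, $\mu_{f^{2/n}g} = f\,\mu_g$ re-derives the same formula $\tfrac{4}{n}\int_M \tfrac{uv}{f}\,\mu_g$ by hand, which is fine but strictly speaking redundant given how the metrics were defined (the only content is surjectivity onto the orbit, which you get from $\Psi\circ\Phi = i_\mu(g,\cdot)$). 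For flatness, the paper simply cites Freed--Groisser for the flatness of $\V$ and transports it along the isometries, whereas you prove it from scratch: the substitution $f = \phi^2$ turns the metric on $\pos$ into the constant inner product $\tfrac{16}{n}\int_M ww'\,\mu_g$ on an open subset of $C^\infty(M)$, for which the flat connection is metric and torsion-free and hence is the Levi-Civita connection, so the curvature vanishes; this transfers to $\V$ and $\pos\cdot g$ since the Levi-Civita connections exist on all three spaces. Your version is longer but self-contained and produces an explicit flat chart (in effect re-proving the Freed--Groisser result the paper quotes), while the paper's is shorter at the cost of an external citation; both are sound.
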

\begin{proof}
  Freed and Groisser prove that $\V$ with the given metric is flat, so
  if we can show that $\V$, $\pos$, and $\pos \cdot g$ are all
  isometric, then the statement is immediate.

  By construction, it is clear that $\pos$ is isometric to $\V$ with
  the given metrics.
  
  As for $\pos \cdot g$, since $(\nu/\mu)$ can be any positive
  function given an appropriate choice of a volume form $\nu$, the
  image of $i_\mu$ is exactly $\pos \cdot g$.  Since $\V$ with the
  pullback metric is isometric to its image under $i_\mu$ as a
  submanifold of $\M$, this shows that $\pos \cdot g$ is isometric to
  $\V$.
\end{proof}

\subsection{Geodesics on $\M$}\label{sec:geodesics-m}

Now that we have given the curvature equation for $\M$, we'll take a
look at its geodesics.  It turns out that the geodesic equation can be
solved explicitly, and the result is the following (see
\cite[Thm.~2.3]{freed89:_basic_geomet_of_manif_of},
\cite[Thm.~3.2]{gil-medrano91:_rieman_manif_of_all_rieman_metric}):

\begin{thm}[The geodesic equation of $\M$]\label{thm:33}
  Let $g_0 \in \M$ and $h \in T_{g_0} \M = \s$.  Let $H := g_0^{-1} h$ and
  let $H^T$ be the traceless part of $H$.  Define two one-parameter
  families $q_t$ and $r_t$ of functions on $M$ as follows:
  \begin{equation*}
    q_t(x) := 1 + \frac{t}{4} \tr H, \quad r_t(x) :=
    \frac{t}{4} \sqrt{n \tr((H^T)^2)}.
  \end{equation*}
  Then the geodesic starting at $g_0$ with initial tangent $g'_0 = h$
  is given at each point $x \in M$ by
  \begin{equation*}
    g_t(x) =
    \begin{cases}
      \left( q_t^2(x) + r_t^2(x) \right)^{\frac{2}{n}} g_0(x) \exp \left(
        \frac{4}{\sqrt{n \tr(H^T(x)^2)}} \arctan
        \left( \frac{r_t(x)}{q_t(x)} \right) H^T(x) \right), & H^T(x)
      \neq 0, \\
      q_t(x)^{4/n} g_0(x), & H^T(x) = 0.
    \end{cases}
  \end{equation*}
  
  For precision, we specify the range of $\arctan$ in the above.  At a
  point where $\tr H \geq 0$, it assumes values in $(-\frac{\pi}{2},
  \frac{\pi}{2})$.  At a point where $\tr H < 0$, $\arctan(r_t / q_t)$
  assumes values as follows:
  \begin{enumerate}
  \item in $[0, \frac{\pi}{2})$ if $0 \leq t < - \frac{4}{\tr H}$,
  \item in $(\frac{\pi}{2}, \pi)$ if $- \frac{4}{\tr H} < t < \infty$,
  \end{enumerate}
  and we set $\arctan(r_t / q_t) = \frac{\pi}{2}$ if $t = -
  \frac{4}{\tr H}$.

  Finally, the geodesic is defined on the following domain.  If there
  are points where $H^T = 0$ and $\tr H < 0$, then let $t_0$ be the
  minimum of $\tr H$ over the set of such points.  In symbols,
  \begin{equation*}
    t_0 := \inf \{ \tr H(x) \mid H^T(x) = 0\ \textnormal{and}\ \tr
    H(x) < 0 \}.
  \end{equation*}
  Then the geodesic $g_t$ is defined for $t \in [0, -
  \frac{4}{t_0})$.

  If there are no points where both $H^T = 0$ and $\tr H < 0$, then
  $g_t$ is defined on $[0, \infty)$.
\end{thm}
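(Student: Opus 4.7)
The plan is to exploit the pointwise nature of the $L^2$ metric: both the scalar product $\langle \cdot, \cdot \rangle_g$ at a point and the explicit Levi-Civita formula \eqref{eq:116} involve no derivatives of the tangent vectors, so the geodesic equation $\nabla_{g'_t} g'_t = 0$ at each $x \in M$ is an ODE on the finite-dimensional space $\satx$ depending only on $g_0(x)$ and $h(x)$. Concretely, I would first verify from \eqref{eq:116} that a path $g_t$ in $\M$ solves the geodesic equation if and only if $g_t(x)$ solves, for every $x$ independently, the geodesic ODE of the finite-dimensional Riemannian manifold $\Matx$ equipped with the metric $\langle\cdot,\cdot\rangle_{g}$ from Lemma \ref{lem:48}. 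This reduces the theorem to a purely algebraic statement about geodesics on $\Matx$.

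Next I would exploit the orthogonal splitting \eqref{eq:114}, $T_g\M = \st \oplus \sconf$, together with Proposition \ref{prop:22}, which says that the conformal orbit is flat and isometric to $\V$. Writing $H = H^T + \frac{\tr H}{n} I$ and separating the tangent data into its traceless and pure-trace components at each $x$, the conformal component gives a geodesic in the flat factor, which is a standard computation yielding a conformal change $f(t)^{4/n} g_0$ for an explicit quadratic-in-$t$ function $f$; this explains the factor $(q_t^2 + r_t^2)^{2/n}$ (with $r_t \equiv 0$ in the purely conformal case). The traceless component, meanwhile, should produce the matrix exponential factor $\exp(\cdots H^T)$: since at a fixed point $x$ the space $\Matx$ with this metric is (up to the conformal direction) a symmetric space of noncompact type, geodesics through $g_0(x)$ are given by $g_0(x) \exp(s A)$ for appropriate $A \in \mathrm{End}(T_x M)$ symmetric with respect to $g_0(x)$.

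At this point I would simply \emph{verify} the proposed formula: set $g_t(x)$ equal to the claimed expression, compute $g'_t(x)$ and $g''_t(x)$ at the point, and substitute into \eqref{eq:116} with $h = k = g'_t$, using the fact that $H^T$ and $I$ commute with themselves and satisfy $\tr(H^T) = 0$ to simplify. The trigonometric identities between $q_t$, $r_t$, and $\arctan(r_t/q_t)$ should then cause all the terms to cancel. This amounts to checking a pointwise ODE with two unknown scalar functions (corresponding to the trace direction and to the magnitude in the traceless direction), which is tractable though tedious; this verification is, I expect, the main computational obstacle.

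Finally, for the domain of definition, the geodesic exists as long as $g_t(x)$ remains positive definite for every $x \in M$, and moreover depends continuously on $x$. Inspection of the formula shows: when $H^T(x) \neq 0$, the prefactor $q_t^2 + r_t^2$ is strictly positive for all $t$, and the matrix exponential is always positive definite, so no failure can occur; only the $\arctan$ must be followed continuously across $t = -4/\tr H(x)$, which is exactly what the branch specification in the statement arranges. When $H^T(x) = 0$, however, the metric becomes $q_t(x)^{4/n} g_0(x)$, which degenerates precisely when $\tr H(x) < 0$ and $t = -4/\tr H(x)$. Taking the infimum of $\tr H$ over the set where $H^T$ vanishes (and $\tr H < 0$) yields the maximal interval $[0, -4/t_0)$ on which positive definiteness is preserved globally, and if this set is empty no such constraint arises, giving $[0,\infty)$.
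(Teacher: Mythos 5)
You should first be aware that the paper does not prove this theorem at all: it is quoted verbatim from Freed--Groisser and Gil-Medrano--Michor, so there is no internal proof to compare against. Your general strategy (pointwise reduction, trace/traceless splitting, explicit verification) is in the spirit of Freed--Groisser's derivation, but your reduction step contains a genuine error. The pointwise ODE induced by the $L^2$ metric is \emph{not} the geodesic equation of $(\Matx,\langle\cdot,\cdot\rangle)$ from Lemma \ref{lem:48}. That metric carries no volume factor, and its geodesics are $g_t(x)=g_0(x)\exp\bigl(tH(x)\bigr)$ by Proposition \ref{prop:12} --- which is the geodesic equation of $\Mmu$ (cf.\ Remark \ref{rmk:7} and Proposition \ref{prop:24}), not of $\M$. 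In the pure-trace case it would give $e^{t\alpha}g_0$ instead of $\bigl(1+\tfrac{n}{4}t\alpha\bigr)^{4/n}g_0$ (Proposition \ref{prop:23}), so solving the ODE you named would produce the wrong formula. The correct pointwise Lagrangian per coordinate volume is $\tr_{\tilde g}(ab)\sqrt{\det\tilde g}$; the $\sqrt{\det}$ coming from $\mu_{g_t}$ is exactly what generates the terms $\tfrac14\bigl((\tr_g k)h+(\tr_g h)k-\tr_g(hk)g\bigr)$ in \eqref{eq:116} and hence the whole $q_t$, $r_t$, $\arctan$ structure. (It is also not $\langle\cdot,\cdot\rangle^0$, whose conformal factor is $\det$ rather than $\sqrt{\det}$.) Relatedly, the splitting $\M\cong\Mmu\times\V$ is not a geodesic product --- $\Mmu$ is not totally geodesic --- so the ``flat factor times symmetric space'' picture can only serve as motivation; the coupling is visible in the $\arctan(r_t/q_t)$ term.

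The second gap is that the actual content of the theorem --- the closed-form expression, the branch bookkeeping for $\arctan$ past $q_t=0$, and the maximal interval --- is precisely what you defer with ``the trigonometric identities should then cause all the terms to cancel.'' For a statement whose entire substance is an explicit formula, this substitution \emph{is} the proof, and it must be carried out against the equation along a curve induced by \eqref{eq:116}, namely
\begin{equation*}
  g_t'' - g_t'\,g_t^{-1}g_t' + \tfrac12\,(\tr_{g_t}g_t')\,g_t' - \tfrac14\,\tr_{g_t}\!\bigl((g_t')^2\bigr)\,g_t = 0,
\end{equation*}
not against the raw vector-field formula. One must also check that the two cases of the formula agree smoothly in $x$ across the locus $\{H^T=0\}$, and that with the specified continuation of $\arctan$ one has $q_t^2+r_t^2>0$ so positive definiteness persists where $H^T\neq0$; you assert these but do not verify them. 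Your domain analysis is correct as far as it goes (degeneration only at points with $H^T=0$, $\tr H<0$, at time $-4/\tr H$, whence the interval $[0,-\tfrac{4}{t_0})$). So the plan is repairable, but as written it reduces to the wrong finite-dimensional geometry and omits the central computation.
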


\begin{rmk}\label{rmk:12}
  The geodesic given in Theorem \ref{thm:33} is parametrized
  proportionally to arc length.  That is, for each $\tau > 0$ such
  that $g_t$ is defined on $[0, \tau]$, we have
  \begin{equation*}
    L(g_t|_{[0,\tau]}) = \tau \| h \|_{g_0}.
  \end{equation*}
\end{rmk}

As for the distinguished submanifolds of $\M$ that we have studied,
their geodesics are given in the following two propositions.

\begin{prop}[{\cite[Prop.~2.1]{freed89:_basic_geomet_of_manif_of}}]\label{prop:23}
  If $g \in \M$, then $\pos \cdot g$ is a totally geodesic
  submanifold.  Therefore, the geodesic in $\pos \cdot g$ starting at
  $g_0$ with initial tangent $\alpha g_0$ is given by
  \begin{equation*}
    g_t = \left( 1 + n \frac{t}{4} \alpha \right)^{4/n} g_0.
  \end{equation*}
  As a result, the exponential mapping $\exp_{g_0}$ is a
  diffeomorphism from an open set $U \subset T_{g_0} (\pos \cdot
  g)$ onto $\pos \cdot g$.
\end{prop}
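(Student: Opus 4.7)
The proposition falls out quickly from the explicit geodesic formula in Theorem \ref{thm:33}, so my plan is to specialize that formula to pure-trace initial vectors and then invert the resulting map.

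First, I would fix $g_0 \in \M$ and an arbitrary tangent vector to the orbit $\pos \cdot g$ at $g_0$. Since $T_{g_0}(\pos \cdot g) = \sconf = C^\infty(M) \cdot g_0$, any such tangent vector has the form $h = \alpha g_0$ for some $\alpha \in C^\infty(M)$. Then $H := g_0^{-1} h = \alpha I$, so $\tr H = n\alpha$ and the traceless part is $H^T = H - \tfrac{\tr H}{n} I = 0$ pointwise on $M$. In particular, Theorem \ref{thm:33} applies via its second case at every $x \in M$. Substituting $\tr H = n\alpha$ into $q_t = 1 + \tfrac{t}{4}\tr H$ and raising to the $4/n$ power yields
\begin{equation*}
  g_t = \Bigl(1 + \tfrac{n t}{4}\alpha\Bigr)^{4/n} g_0,
\end{equation*}
which is exactly the stated formula. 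The domain specified in Theorem \ref{thm:33} translates to $t \in [0, -4/(n\min\alpha))$ when $\alpha$ attains negative values, and $[0,\infty)$ otherwise.

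For any $t$ in this domain the factor $\bigl(1 + \tfrac{n t}{4}\alpha\bigr)^{4/n}$ is a positive smooth function, so $g_t \in \pos \cdot g$. Because every geodesic of $\M$ emanating from a point of $\pos \cdot g$ with initial velocity tangent to $\pos \cdot g$ remains in $\pos \cdot g$ for its full maximal interval of existence, the submanifold is totally geodesic by definition. (I would also invoke Proposition \ref{prop:22}, noting that the resulting intrinsic geodesics on $\pos \cdot g$ equipped with the submanifold metric agree with these curves, since on a totally geodesic submanifold the induced connection coincides with the restriction of the ambient Levi-Civita connection.)

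Finally, to handle the exponential map, I would set $t = 1$ to obtain
\begin{equation*}
  \exp_{g_0}(\alpha g_0) = \Bigl(1 + \tfrac{n}{4}\alpha\Bigr)^{4/n} g_0,
\end{equation*}
defined precisely on the open set
\begin{equation*}
  U := \bigl\{\, \alpha g_0 \in T_{g_0}(\pos \cdot g) \midmid \alpha(x) > -\tfrac{4}{n}\ \text{for all}\ x \in M \,\bigr\},
\end{equation*}
which is open in $T_{g_0}(\pos \cdot g)$ (since $M$ is compact and $C^\infty(M) \hookrightarrow C^0(M)$ continuously). The explicit inverse is
\begin{equation*}
  (f g_0) \longmapsto \tfrac{4}{n}\bigl(f^{n/4} - 1\bigr) g_0,
\end{equation*}
which is well-defined for every $f \in \pos$ and takes values in $U$, since $f^{n/4} > 0$ forces $\tfrac{4}{n}(f^{n/4}-1) > -\tfrac{4}{n}$. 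Both $\exp_{g_0}$ and its inverse are smooth maps between open sets of Fr\'echet spaces, since $x \mapsto x^{4/n}$ and $x \mapsto x^{n/4}$ are smooth on $(0,\infty)$ and composition with smooth positive functions is a smooth operation on $C^\infty(M)$. I do not anticipate any serious obstacle: the only subtlety is bookkeeping the domain of $\exp_{g_0}$, which is dictated directly by the positivity condition inherited from Theorem \ref{thm:33}.
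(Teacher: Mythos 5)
Your proof is correct, but there is nothing in the thesis to compare it against line by line: Proposition \ref{prop:23} is quoted from Freed--Groisser with a citation and no argument is given in the text. Your reconstruction is a legitimate one within the paper's logical structure, since it only uses Theorem \ref{thm:33}, which is likewise taken as given: for $h=\alpha g_0$ one has $H=\alpha I$, $H^T\equiv 0$, $\tr H=n\alpha$, so the second case of the geodesic formula gives $g_t=\bigl(1+\tfrac{nt}{4}\alpha\bigr)^{4/n}g_0$, geodesic invariance of the orbit follows, and setting $t=1$ recovers the exponential map. In fact your formulas coincide exactly with what the author uses later without proof in Section \ref{sec:compl-orbit-space}: your $\exp_{g_0}$ and your domain $U$ are the map $\psi$ of \eqref{eq:53} and the set \eqref{eq:56}, and your inverse $(fg_0)\mapsto\tfrac{4}{n}\bigl(f^{n/4}-1\bigr)g_0$ is the change of variables that reappears in \eqref{eq:140}. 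Two small points worth tightening. First, to go from ``ambient geodesics starting tangent to $\pos\cdot g$ stay in $\pos\cdot g$'' to ``these curves are the intrinsic geodesics of the orbit with the induced metric,'' the cleanest statement in this weak-Riemannian setting is that $\nabla_{\dot g_t}\dot g_t=0$ in $\M$ and $\dot g_t$ is tangent to the orbit, so the tangential component of the acceleration vanishes; this sidesteps any separate discussion of whether the induced Levi-Civita connection exists, which is not automatic for weak metrics (your parenthetical appeal to Proposition \ref{prop:22} is fine but is stated in a slightly circular way, since it presupposes total geodesy). Second, surjectivity of $\exp_{g_0}$ onto all of $\pos\cdot g$, rather than just $\pos\cdot g_0$, uses the group property $\pos\cdot g_0=\pos\cdot g$ (because $g_0$ is itself a conformal multiple of $g$); you use this implicitly and it deserves one sentence. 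With those remarks made explicit, the argument is complete.
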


\begin{prop}[{\cite[Thm.~8.9]{ebin70:_manif_of_rieman_metric} and
    \cite[Prop.~1.27,
    Prop.~2.2]{freed89:_basic_geomet_of_manif_of}}]\label{prop:24}
  The submanifold $\M_\mu$ is not totally geodesic.  However, it is a
  globally symmetric space, and the geodesic starting at $g_0$ with
  initial tangent $g'_0 = h$ is given by
  \begin{equation*}
    g_t = g_0 \exp (t H),
  \end{equation*}
  where $H := g_0^{-1} h$.

  In particular, $\M_\mu$ is geodesically complete, and $\exp_g$ is a
  diffeomorphism from $T_g \M_\mu$ to $\M_\mu$ for any $g \in \M_\mu$.
\end{prop}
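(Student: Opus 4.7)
The plan is to verify the explicit geodesic formula directly, and then to read off geodesic completeness, the exponential-map claim, the failure of total geodesy, and the symmetric-space structure as consequences. The key observation is that every assertion reduces to pointwise linear algebra on the finite-dimensional manifolds $\Matx$ from Lemma \ref{lem:48}.

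For the geodesic formula, take $h \in T_{g_0}\M_\mu = \s^T_{g_0}$, set $H := g_0^{-1} h$, and define $g_t := g_0 \exp(tH)$ pointwise. First I would verify $g_t \in \M_\mu$: since $h$ is symmetric, $H(x)$ is $g_0(x)$-self-adjoint and hence diagonalises in a $g_0(x)$-orthonormal basis with real eigenvalues, so $g_t(x)$ is symmetric positive definite for every $t$, and $\det\exp(tH) = e^{t\tr H} = 1$ forces $\mu_{g_t} = \mu_{g_0}$. Since the induced Levi-Civita connection on the submanifold $\M_\mu$ is the $(\cdot,\cdot)_{g_t}$-orthogonal projection of the ambient connection, it suffices to check that $\nabla_{g_t'} g_t'$ lies in the normal bundle, i.e., in the pure-trace space $\s^c_{g_t}$, by \eqref{eq:114} and \eqref{eq:131}. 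Using $g_t' = g_0 H \exp(tH)$ (so $g_t^{-1}g_t' \equiv H$) and the formula \eqref{eq:116}, the terms $g_t''$ and $g_t'g_t^{-1}g_t'$ both equal $g_0 H^2\exp(tH)$ and cancel, the term $(\tr_{g_t} g_t')\,g_t'$ vanishes because $\tr H = 0$, and one is left with $\nabla_{g_t'} g_t' = -\tfrac{1}{4}\tr(H^2)\,g_t$, which is manifestly pure trace.

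The formula $g_t = g_0\exp(tH)$ is defined for all $t \in \R$, giving geodesic completeness. For the exponential map I would construct its inverse explicitly as $\tilde g \mapsto g\log(g^{-1}\tilde g)$: at each point, $g^{-1}\tilde g$ is $g$-self-adjoint with positive spectrum (in a $g$-orthonormal basis it becomes the positive definite matrix of $\tilde g$), so $\log(g^{-1}\tilde g)$ is well defined via the spectral calculus, is $g$-self-adjoint, and is traceless precisely because $\det(g^{-1}\tilde g) = 1$ on $\M_\mu$; smoothness of $\exp_g$ and its inverse follows from smoothness of the matrix exponential and logarithm on the relevant open sets. Failure of total geodesy is immediate from Theorem \ref{thm:33}: when $\tr H = 0$ one has $q_t \equiv 1$ while $r_t \not\equiv 0$ for any $h \neq 0$, so the ambient geodesic satisfies $\mu_{g_t} = (1 + r_t^2)\mu_{g_0} \neq \mu_{g_0}$.

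For the symmetric-space structure I would take the pointwise geodesic reflection $\sigma_g(\tilde g) := g\tilde g^{-1} g$. It is involutive, fixes $g$, preserves $\M_\mu$ because $\det(g\tilde g^{-1} g) = (\det g)^2/\det\tilde g = \det g$, and satisfies $d\sigma_g|_g = -\id$ by expanding $(g + th)^{-1} = g^{-1} - t g^{-1}hg^{-1} + O(t^2)$. The main technical step, though still a direct cyclic-trace computation, is checking that $\sigma_g$ is an isometry: for $k,\ell \in T_{\tilde g}\M_\mu$ one finds $d\sigma_g|_{\tilde g}(k) = -g\tilde g^{-1} k\tilde g^{-1} g$, and writing $\hat g := \sigma_g(\tilde g)$ with $\hat g^{-1} = g^{-1}\tilde g g^{-1}$, a short manipulation gives $\hat g^{-1}\,d\sigma_g(k) = -g^{-1}k\tilde g^{-1}g$, so that $\tr_{\hat g}(d\sigma_g(k)\,d\sigma_g(\ell)) = \tr(g^{-1} k\tilde g^{-1}\ell\tilde g^{-1}g) = \tr_{\tilde g}(k\ell)$ by cyclicity; combined with $\mu_{\hat g} = \mu_{\tilde g}$ from the determinant calculation, integration yields $(d\sigma_g(k),d\sigma_g(\ell))_{\hat g} = (k,\ell)_{\tilde g}$, completing the proof.
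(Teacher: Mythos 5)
Your computations are correct, but note that the paper does not prove this proposition at all: it is quoted from Ebin and Freed--Groisser, so your argument is a genuinely self-contained alternative rather than a variant of the paper's proof. What you do is reduce everything to pointwise linear algebra via the ambient connection formula \eqref{eq:116} and the orthogonal splitting \eqref{eq:114}: the cancellation $g_t'' = g_t' g_t^{-1} g_t'$, the identity $\tr_{g_t} g_t' = \tr H = 0$, and the resulting purely conformal acceleration $-\tfrac14 \tr(H^2)\, g_t$ are all right, as are the inverse map $\tilde g \mapsto g \log(g^{-1}\tilde g)$ (tracelessness from $\det(g^{-1}\tilde g)=1$), the non-total-geodesy via $\mu_{g_t} = (1+r_t^2)\mu_{g_0}$ from Theorem \ref{thm:33}, and the isometric involution $\sigma_g(\tilde g) = g\tilde g^{-1} g$. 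Two points deserve an explicit sentence each, since the paper repeatedly stresses that weak Riemannian manifolds lack the usual general theory. First, your assertion that the induced Levi-Civita connection on $\M_\mu$ exists and equals the tangential projection of the ambient connection is not automatic for weak metrics; it holds here because the splitting $T_g\M = \st \oplus \sconf$ is a pointwise, smooth, orthogonal decomposition, so the projection is well defined and the projected connection satisfies the Koszul formula for the induced metric. Second, you implicitly use uniqueness of geodesics (both in asserting ``the'' geodesic and in treating $\exp_g$ as a well-defined map), which again does not follow from general Fr\'echet-space ODE theory; it does follow here because the geodesic equation is pointwise, i.e.\ decouples into finite-dimensional ODEs on each $\Matx$, where Picard--Lindel\"of applies. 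With those two remarks added, your proof is complete, and it has the advantage over the paper's citation-only treatment of making the symmetric-space structure and the global exponential diffeomorphism entirely explicit.
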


Note a consequence of Theorem \ref{thm:33} that is very important to
us.  Our goal being the description of the completion of $\M$, the
following corollary to Theorem \ref{thm:33} assures us that we
actually have something to study.

\begin{cor}\label{cor:18}
  The manifold of metrics is incomplete (geodesically and as a metric
  space) with respect to its $L^2$ metric.
\end{cor}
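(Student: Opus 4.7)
The plan is to deduce both incompleteness statements from a single explicit geodesic produced by Theorem \ref{thm:33}. I would take any $g_0 \in \M$ and choose the initial tangent $h := -g_0 \in T_{g_0}\M$. Then $H := g_0^{-1}h = -I$ identically, so $H^T \equiv 0$ and $\tr H \equiv -n < 0$ at every point of $M$. The final clause of Theorem \ref{thm:33} then gives $t_0 = -n$, so the geodesic $g_t$ has maximal domain $[0, 4/n)$, which is strictly shorter than $[0, \infty)$. By Proposition \ref{prop:23}, or by direct substitution into the formula of Theorem \ref{thm:33}, the geodesic is in fact $g_t = (1 - nt/4)^{4/n} g_0$. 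This single example already establishes geodesic incompleteness.

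For metric incompleteness, I would appeal to Remark \ref{rmk:12}: the geodesic is parametrized proportionally to arc length, so $L(g|_{[s,t]}) = (t - s)\|h\|_{g_0}$ for $0 \leq s \leq t < 4/n$. Fixing any sequence $t_k \nearrow 4/n$, the definition of $d$ as an infimum of lengths gives
\begin{equation*}
d(g_{t_k}, g_{t_l}) \leq |t_l - t_k|\,\|h\|_{g_0},
\end{equation*}
so $\{g_{t_k}\}$ is a Cauchy sequence in $(\M, d)$. It remains to show that it has no limit in $\M$.

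The only nontrivial step is this non-convergence claim. Pointwise, $g_{t_k}(x) = (1 - n t_k/4)^{4/n} g_0(x) \to 0$ at every $x \in M$, so the sequence degenerates to the zero section, which is not positive definite and hence not an element of $\M$. A hypothetical limit $g^{*} \in \M$ would have to be reconciled with this pointwise degeneration: since $(\cdot, \cdot)$ induces the $L^2$ topology on each tangent space (Theorem \ref{thm:31}), and the geodesic stays in the totally geodesic submanifold $\pos \cdot g_0$ (Proposition \ref{prop:23}), the local $L^2$-sizes of the differences $g_{t_k} - g^*$ can be controlled by $d(g_{t_k}, g^*)$. An $L^2$-convergent subsequence would then converge a.e.~pointwise and would have to agree with the pointwise limit $0$, contradicting $g^{*} > 0$. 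The main obstacle is precisely this last comparison between the $d$-topology and the ambient $L^2$-topology on $\M$ itself (as opposed to on its tangent spaces); the facts needed for a fully rigorous argument are developed carefully in Chapter \ref{cha:init-metr-prop}, but the explicitness of the geodesic $g_t = (1 - nt/4)^{4/n} g_0$ keeps the estimate transparent and, if preferred, reduces the matter to the easier one-dimensional Cauchy sequence $(1 - nt_k/4)^{4/n}$ in the conformal factor.
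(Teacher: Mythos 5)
Your core construction is exactly the paper's: Theorem \ref{thm:33} produces a geodesic with finite maximal domain whenever $H^T=0$ and $\tr H<0$ somewhere, and the paper's proof simply takes any unit $h$ with one such point, whereas you take the global conformal ray $h=-g_0$, giving $g_t=(1-\tfrac{n}{4}t)^{4/n}g_0$ on $[0,4/n)$. That part, and the observation via Remark \ref{rmk:12} that $\{g_{t_k}\}$ with $t_k\nearrow 4/n$ is $d$-Cauchy, is fine and is essentially what the paper does (the paper is in fact terser: it stops at ``finite length over the maximal domain'' and, per Remark \ref{rmk:15}, borrows the metric-space structure from Chapter \ref{cha:init-metr-prop}).

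The gap is in your non-convergence step. The claim that $d(g_{t_k},g^*)$ controls the ($L^2$ or pointwise) size of $g_{t_k}-g^*$ is precisely what fails for this sequence: Theorem \ref{thm:5}(2), the only result of that kind in the thesis, requires both points to lie in a single \emph{amenable} subset, and your sequence has $\lambda^{G_{t_k}}_{\textnormal{min}}\to 0$ everywhere, so it lies in no amenable subset; indeed Proposition \ref{prop:18} shows that on deflating sets $d$-closeness says nothing about coefficients, which is the whole reason $\omega$-convergence exempts the deflated set. Restricting to $\pos\cdot g_0$ does not rescue this, since a hypothetical $d$-limit in $\M$ need not lie in that orbit, and distances in the submanifold only bound $d$ from above. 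The correct (and simpler) way to close the argument is Lemma \ref{lem:13}: $\tilde g\mapsto\sqrt{\Vol(M,\tilde g)}$ is $d$-Lipschitz, and $\Vol(M,g_{t_k})=(1-\tfrac{n}{4}t_k)^2\Vol(M,g_0)\to 0$, so any $d$-limit $g^*\in\M$ would satisfy $\Vol(M,g^*)=0$, which is impossible for a smooth metric. With that substitution your proof is complete and matches the paper's intent.
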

\begin{proof}
  Choose any $g_0 \in \M$, and choose any $h \in \s$ such that $\| h
  \|_{g_0} = 1$ and there is at least one point where $H^T = 0$ and
  $\tr H < 0$.  Then the geodesic $g_t$ starting at $g_0$ in the
  direction of $h$ has maximal domain of definition $[0, -
  \frac{4}{t_0})$, and its length over this domain is finite (in fact,
  equal to $- \frac{4}{t_0}$).
\end{proof}

We are interested in studying the completion of $\M$, and we have just
shown in Corollary \ref{cor:18} that it is incomplete.  Furthermore,
this is very simply expressed through the non-extensibility of a
geodesic, for which we have an explicit formula.  So it is worthwhile
to take a closer look at why geodesics can fail to be extensible, and
see what this does and does not tell us about the completion of $\M$.

First, by Theorem \ref{thm:33} a geodesic can fail to be forever
extensible only if it has a point where $h(x) = g'_0(x)$ is pure
trace, i.e., where $H^T(x) = 0$.  Why is this?  A quick look at the
geodesic equation provides the answer: if $H^T(x) \neq 0$ over all of
$M$, then $r_t(x) \neq 0$ as well.  This is because $\tr((H^T)^2) = 0$
if and only if $H^T = 0$, by Lemma \ref{lem:48}.  But then the scalar
coefficient in front, $(q_t^2 + r_t^2)^{2/n}$, is always positive.
Furthermore, since the matrix exponential maps symmetric matrices into
positive definite matrices and $H$ is $g_0$-symmetric (i.e., $g_0 H$
is symmetric), the exponential term does not destroy the
positive-definiteness of $g_0$.  Thus $g_t$ is positive-definite at
all points of $M$ for all $t \in [0,\infty)$, and hence is a metric
for all $t$.

Now, what can go wrong if $H^T(x) = 0$ for some $x \in M$?  In this
case, $r_t(x) = 0$ for all $t$, and the exponential term is absent in
the geodesic equation.  If we have $\tr H(x) \geq 0$, then $q_t(x) >
0$ for all $t$, and so again $g_t(x)$ is positive definite for all $t
\in [0, \infty)$.  But if $\tr H(x) < 0$, there is some $t'$ for which
$r_{t'}(x) = 0$.  Therefore, $g_{t'}(x) = 0$, and the geodesic has
left the manifold of metrics.  The geodesic can, however, be easily
identified with its limit point in the $C^\infty$ topology of $\s$,
which is a \emph{semimetric}, or a tensor field inducing a positive
semidefinite scalar product at each point.  However, only special
kinds of semimetrics can be realized as limit points of geodesics,
namely those that are either positive definite or zero at each point.
But it is also easy to convince oneself that all such semimetrics can
be realized as limit points of geodesics.  Thus we have arrived at our
first substantial piece of knowledge about the completion of $\M$.
Instead of writing it down as a proposition, we'll instead wait for
more general and rigorous statements to be made later.

A semimetric that is nonzero but not positive definite cannot be
realized in this way.  An extremely simple example is the semimetric
on the torus which is given in the standard chart by
\begin{equation*}
  g =
  \begin{pmatrix}
    1 & 0 \\
    0 & 0
  \end{pmatrix}.
\end{equation*}
Nevertheless, the question still remains as to whether such
semimetrics might also be representatives, in some sense that has to
be made precise, of points in the completion of $\M$.  Answering this
in the positive, in Section \ref{sec:meas-techn-result}, will be one
of our tasks in studying the completion and proving the main theorem.

Another question that presents itself at this point is whether a
finite path (or a Cauchy sequence, according to the correspondence
given in Section \ref{sec:compl-metr-spac}) in $\M$ can ``develop
infinities'', in the sense that one or more of its coefficients
becomes unbounded (in a fixed coordinate chart) as we run through its
domain of definition.  (See Definition \ref{dfn:25} below.)  Certainly
the coefficients of a geodesic always remain bounded on bounded
$t$-intervals.  Nevertheless, it will turn out that the metrics of a
finite-length path can develop infinities, but that the
infinities can also be neglected in a certain sense.  We will
explore this in Chapter \ref{cha:almost-everywh-conv}.

At this point, however, it will be profitable to give the exponential
mapping a somewhat closer inspection.

\begin{rmk}\label{rmk:15}
  We have not yet shown that $\M$ is a metric space and have already
  remarked that a weak Riemannian manifold does not always have a
  metric space structure, so in a sense Corollary \ref{cor:18} could
  be seen as a bit of a \emph{non sequitur}.  However, in Section
  \ref{sec:basic-metr-geom}, we
  will prove that $\M$ is a metric space with the distance function
  coming from $(\cdot, \cdot)$.  If we take this for granted, then the
  corollary makes sense.
\end{rmk}

\begin{rmk}\label{rmk:14}
  It should be noted that even if all geodesics on $\M$ could be
  extended indefinitely, it would not imply that $\M$ is complete.
  This is because the Hopf-Rinow theorem does not hold for all
  infinite-dimensional manifolds.  It does, in fact, hold for strong
  Riemannian Hilbert manifolds with nonpositive curvature (see
  \cite[\S 1.H]{mcalpin65:_infin_dimen_manif_and_morse_theor} and
  \cite[Cor.~IX.3.9]{lang95:_differ_and_rieman_manif}).  However, even
  though $\M$ has nonpositive curvature, it is a weak Riemannian
  manifold and so this theorem does not apply.
\end{rmk}

\subsection{A closer look at the exponential
  mapping}\label{sec:analys-expon-mapp}

In this subsection, we discuss the domain and range of the exponential
mapping.  The goal is to give an idea of why the exponential mapping
is an insufficient tool for studying the completion of $\M$.

First, though, let us use a childishly simple example to demonstrate
how the exponential mapping can sometimes be sufficient to describe
the completion of a Riemannian manifold.  Though this example is too
simple to be interesting, it illustrates an important philosophical
point and parallels the method we'll use in Section
\ref{sec:compl-orbit-space}.

If we take an open cylinder, say $N := S^1 \times (0,1)$, with its
standard flat metric $\gamma$, then the exponential mapping $\exp_x$
at any point $x \in N$ is an isometry from some open set $U \subseteq
T_x N$ onto $N$.  Therefore, we can identify the completion of $N$
with $\overline{U}$, the completion of $U$ with respect to
$\gamma(x)$.  Of course, this means that we can view the completion of
$N$ as equivalence classes of geodesics emanating from $x$.  If we
consider $N$ as being embedded in the closed cylinder $N' := S^1
\times [0,1]$, then two geodesics are equivalent if and only if they
have the same limit points as curves on $N'$.  This situation is
depicted in Figure \ref{fig:cyl}.

\begin{figure}[t]
  \centering
  \includegraphics[width=11cm]{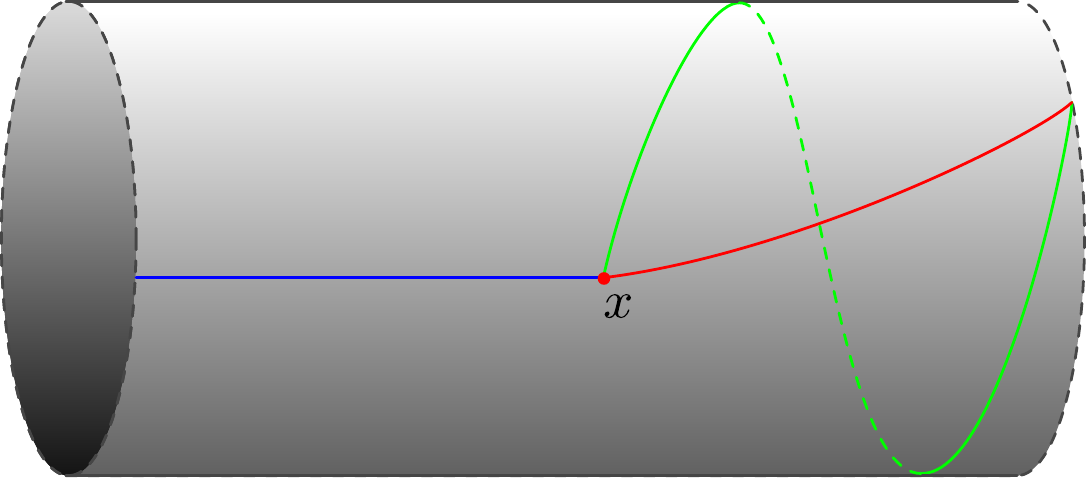}
  \caption{The open cylinder $S^1 \times (0,1)$.  The colored lines
    represent geodesics emanating from the point $x$, and via an
    appropriate geodesic we can reach any point on $S^1 \times
    \{0,1\}$.  The red and blue lines have different limit points and
    so are mutually inequivalent.  The green line, however, has the
    same limit point as the red one and so is equivalent to
    it.}\label{fig:cyl}
\end{figure}

There are two essential aspects of the above example that made it so simple to
treat.  First, $S^1 \times (0,1)$ is naturally embedded into a larger
space, $S^1 \times [0,1]$ (or even, if you like, $\R^3$) that contains
its completion.  Secondly, the exponential mapping is an isometry.

In our situation, studying the completion of $\M$, we luck out on the
first point, as $\M$ can be viewed as sitting inside the vector space
of all sections of $S^2 T^* M$---though of course we expect that this
space is much larger than necessary to accommodate the completion of
$\M$.  The second point certainly does not hold in our case---the
exponential mapping cannot be an isometry due to the fact that $\M$
has nonvanishing curvature.  But things are even worse, as it turns
out that the exponential mapping of $\M$ is \emph{highly
  nonsurjective}.  In a sense that we will see below, it is not even
locally surjective---so even the fact that the completion of a metric space
is very much a local concept does not help us here.

To see this, we can write down the domain and range of the exponential
mapping explicitly.  (This analysis is taken from \cite[\S 3.3 and
Thm.~3.4]{gil-medrano91:_rieman_manif_of_all_rieman_metric}.)  For $g
\in \M$, define the open set
\begin{equation*}
  U^g_x := \satx \setminus \left\{  \lambda g(x) \mid -\infty < \lambda \leq
  -\frac{4}{n} \right\} \subset T_{g(x)} \Matx.
\end{equation*}
Furthermore, we define an open subbundle $U^g \subset S^2 T^* M$ by
\begin{equation*}
  U^g = \bigcup_{x \in M} U^g_x.
\end{equation*}
(Note that $U^g$ is a \emph{fiber} bundle, not a vector bundle, so it
is a subbundle of $S^2 T^* M$ when viewed as a fiber bundle.)  Then it
is not hard to see from Theorem \ref{thm:33} that the maximal domain
of definition of $\exp_g$ consists of precisely the $C^\infty$
sections of $U^g$, i.e., those elements of $\s$ with image lying in
$U^g$.  Let us denote this by $\U^g := C^\infty(U^g)$.

Again from Theorem \ref{thm:33}, one can compute what the range of the
exponential mapping is, i.e., what $\exp_g(\U^g)$ is.  It turns out that
this is given by
\begin{equation*}
  \mathcal{A}^g := \left\{ g \exp H \mid H \in C^\infty(\textnormal{End}(M)),\
  \tr((H^T)^2) < \frac{1}{n} (4 \pi)^2 \right\} \subset \M,
\end{equation*}
where ``$\exp$'' in the above definition denotes the matrix
exponential and $H^T$ denotes the traceless part of $H$.

Of course, the set $\mathcal{A}^g$ omits many points of $\M$.  A
graphical illustration of this, which gives a very good impression of
just how remarkably nonsurjective $\exp_g$ is, can be found in
\cite[Fig.~1]{gil-medrano91:_rieman_manif_of_all_rieman_metric}.

One thing that goes right in this setting is the following theorem:

\begin{thm}\label{thm:34}
  For each $g \in \M$, $\exp_g$ is a real analytic diffeomorphism from
  $\U^g$ to $\mathcal{A}^g$.  Furthermore, if $\pi_\M : T \M
  \rightarrow \M$ is the projection from the tangent bundle of $\M$
  onto $\M$, then $(\pi_\M, \exp) : T \M \to \M \times \M$ is a real
  analytic diffeomorphism from an open neighborhood $\U$ of the zero
  section to an open neighborhood $\mathcal{A}$ of the diagonal.
  Explicitly,
  \begin{equation*}
    \U = \bigcup_{g \in \M} \U^g \quad \textnormal{and} \quad \mathcal{A} =
    \bigcup_{g \in \M} \mathcal{A}^g. 
  \end{equation*}

  These sets are maximal domains of definition for the exponential
  mapping and its inverse.
\end{thm}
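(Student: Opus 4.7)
The proof is pointwise in character, mirroring the structure of the geodesic equation in Theorem \ref{thm:33}. My plan is to first analyze the fiber exponential at each $x \in M$, then globalize to smooth sections, and finally assemble the statement about $(\pi_\M, \exp)$.

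\textbf{Step 1: Pointwise analysis.} Fix $x \in M$. The restriction of the $L^2$ exponential to the fiber is exactly the exponential map of the finite-dimensional Riemannian manifold $(\Matx, \langle \cdot, \cdot \rangle)$ from Lemma \ref{lem:48}. Setting $t = 1$ in the geodesic formula of Theorem \ref{thm:33} shows that for $h(x) \in \satx$ with $H(x) := g(x)^{-1} h(x)$, the geodesic $g_t(x)$ fails to reach $t = 1$ precisely when $H^T(x) = 0$ and $\tr H(x) \leq -4$, i.e., when $h(x) = \lambda g(x)$ for some $\lambda \leq -4/n$. This identifies the maximal fiber domain of $\exp_{g(x)}$ as $U^g_x$. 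Plugging $t = 1$ into the formula and simplifying shows that the image is exactly $\mathcal{A}^g_x := \{g(x) \exp H(x) \mid \tr((H^T(x))^2) < (4\pi)^2/n\}$: the arctan term is bounded in modulus by $\pi$, which when multiplied by $4/\sqrt{n \tr((H^T)^2)}$ yields precisely the stated constraint on the traceless part, while the scalar prefactor $(q_1^2 + r_1^2)^{2/n}$ absorbs the trace part of $H$.

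\textbf{Step 2: Pointwise inverse.} To see that $\exp_{g(x)} : U^g_x \to \mathcal{A}^g_x$ is a real analytic diffeomorphism, I exhibit an explicit inverse. Given $\bar g(x) = g(x) \exp H(x) \in \mathcal{A}^g_x$, decompose $H(x) = \frac{1}{n}(\tr H(x)) I + H^T(x)$. The traceless direction of the preimage $h(x)$ must be a positive scalar multiple of $H^T(x)$, and the exponent $\frac{4}{\sqrt{n \tr((H^T)^2)}} \arctan(r_1/q_1)$ determines $\arctan(r_1/q_1) \in (-\pi/2, \pi)$ uniquely under the constraint $\tr((H^T)^2) < (4\pi)^2/n$; this together with $q_1^2 + r_1^2 = \det(\exp H)^{1/n}$ determines $q_1, r_1$, hence $\tr h$ and $\|H^T\|$. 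All operations used (matrix $\exp$ and $\log$ on their diffeomorphic domains, arctan, $n$th roots of positive quantities) are real analytic, and the only apparent singularity at $H^T(x) = 0$ is removable since the formula reduces to the pure trace case $q_t^{4/n} g_0$.

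\textbf{Step 3: Globalization.} That $\exp_g$ restricts to a bijection $\U^g \to \mathcal{A}^g$ follows because $\U^g = C^\infty(U^g)$ and the fiberwise formula applied to a smooth section produces a smooth section (the composition of real analytic fiber maps with a smooth section is smooth). Smoothness of $\exp_g$ and its inverse as maps of Fréchet manifolds follows from the general fact that real analytic fiberwise operations induce smooth maps on the corresponding section spaces, in the sense of Subsection \ref{sec:manifolds-mappings}. For the combined map $(\pi_\M, \exp)$, openness of $\U := \bigcup_g \U^g$ in $T\M$ and of $\mathcal{A} := \bigcup_g \mathcal{A}^g$ in $\M \times \M$ is checked pointwise: $U^g_x$ and $\mathcal{A}^g_x$ depend continuously on $g(x)$, and the defining strict inequalities (respectively $\lambda > -4/n$ and $\tr((H^T)^2) < (4\pi)^2/n$) are open conditions stable under $C^\infty$-small perturbations. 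The diffeomorphism property then follows from Step 2 together with the fact that the inverse construction is jointly real analytic in $(g, \bar g)$.

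\textbf{Main obstacle.} The genuine subtlety is Step 2: the inverse involves $\arctan(r_1/q_1)$ in the principal branch $(-\pi/2, \pi/2)$ when $\tr H \geq 0$ but in $(\pi/2, \pi)$ when $\tr H < 0$, and this branch choice must be made consistently and analytically in terms of the data $\bar g, g$. Verifying that the implicit pointwise inversion is well-defined and real analytic across the codimension-one locus $\tr H = 0$, and that the apparent singularity at $H^T = 0$ is genuinely removable (merging smoothly with the pure trace case in Proposition \ref{prop:23}), is where care must be taken; everything else is essentially bookkeeping built on the explicit formula of Theorem \ref{thm:33}.
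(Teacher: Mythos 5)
The thesis itself gives no proof of Theorem \ref{thm:34}: it is quoted directly from Gil-Medrano and Michor \cite{gil-medrano91:_rieman_manif_of_all_rieman_metric}, so there is no in-paper argument to compare with; your sketch in fact follows the same route as that source (explicit geodesic formula, pointwise inversion, passage to section spaces). One claim in Step 1 is simply wrong: the fiberwise restriction of the $L^2$ exponential is \emph{not} the exponential map of $(\Matx, \langle \cdot , \cdot \rangle)$ from Lemma \ref{lem:48}. The geodesics of that metric are $g_t = g_0 \exp(t\, g_0^{-1} g_0')$ (Proposition \ref{prop:12}), i.e.\ the $\Mmu$-type geodesics; the pointwise ODE governing $L^2$-geodesics of $\M$ carries the volume-form factor $\sqrt{\det}$, and it is exactly this term that produces the $q_t$, $r_t$, $\arctan$ structure of Theorem \ref{thm:33}. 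Since your subsequent computations use the formula of Theorem \ref{thm:33} anyway, this misidentification is a wrong justification rather than a fatal error, but it should be removed: the correct statement is only that the geodesic equation is pointwise, with the pointwise solution given by Theorem \ref{thm:33}. The rest of Steps 1--2 (identification of $U^g_x$, of the image $\mathcal{A}^g_x$ via $\tr((K^T)^2) = \tfrac{16}{n}\arctan(r_1/q_1)^2 < \tfrac{16\pi^2}{n}$, and the explicit analytic two-sided inverse with removable singularities at $H^T = 0$ and across the $\arctan$ branch) is essentially right and matches the cited computation.

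The genuine gap is Step 3. Real analyticity (indeed, even smoothness) of $\exp_g$, of $(\pi_\M, \exp)$, and of their inverses as maps of Fr\'echet manifolds does not follow from anything established in the thesis: Subsection \ref{sec:manifolds-mappings} sets up manifolds of sections but contains no $\Omega$-lemma asserting that a fiber-respecting, fiberwise real-analytic map induces a real-analytic map on spaces of smooth sections, and the thesis never even defines real analyticity for maps between Fr\'echet spaces (this requires the convenient calculus of Kriegl--Michor, in which Gil-Medrano and Michor work). That functional-analytic step---not the pointwise inversion---is the actual content of the cited theorem, so your proposal delegates the hard part to an unproved ``general fact.'' To make the argument self-contained you would either have to import the convenient-calculus $\Omega$-lemma with a precise citation, or verify directly (by estimating all derivatives of the explicit fiber maps, uniformly over the compact base and locally uniformly in $g$ and $h$) that the induced maps on sections and their inverses are smooth, and separately give a meaning to, and a proof of, real analyticity.
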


This is a powerful theorem, and it certainly does not hold in general
for weak Riemannian manifolds (even if real analyticity is dropped).
However, its usefulness to us is limited.  The reason is that at any
point $g$, the neighborhood $\U^g$ does not contain any $L^2$-open
(i.e., $(\cdot, \cdot)_g$-open) set.  Therefore, we run into the
problem described at the end of Subsection
\ref{sec:expon-mapp-dist-1}---we get no information from the
exponential mapping about the distance between nearby points.
Therefore, we will have to revert to more direct methods of proof in
the coming chapters.

\section{Conventions}\label{sec:conventions}

Before we begin with the main body of the thesis, we will describe any
nonstandard conventions that will be used throughout the text.

The first thing we do is fix a reference metric, with respect to which
all standard concepts will be defined.

\begin{cvt}\label{cvt:3}
  For the remainder of the thesis, we fix an element $g \in \M$.
  Whenever we refer to the $L^p$ norm, $L^p$ topology, $L^p$
  convergence etc., we mean that induced by $g$ unless we explicitly
  state otherwise.  The designation nullset refers to
  Lebesgue measurable subsets of $M$ that have zero measure with
  respect to $\mu_g$.  If we say that something holds almost
  everywhere, we mean that it holds off of a $\mu_g$-nullset.

  If we have a tensor $h \in \s$, we denote by the capital letter $H$
  the tensor obtained by raising an index with $g$, i.e., locally
  $H^i_j := g^{ik} h_{kj}$.  Given a point $x \in M$ and an element $a
  \in \Matx$, the capital letter $A$ means the same---i.e., we assume
  some coordinates and write $A = g(x)^{-1} a$, though for readability
  we will generally omit $x$ from the notation.
\end{cvt}

Next, we'll fix an atlas of coordinates on $M$ that is convenient to
work with.

\begin{dfn}\label{dfn:1}
  We call a finite atlas of coordinates $\{(U_\alpha, \phi_\alpha)\}$
  for $M$ \emph{amenable} if for each $U_\alpha$, there exist a
  compact set $K_\alpha$ and a different coordinate chart $(V_\alpha,
  \psi_\alpha)$ (which does not necessarily belong to $\{ (U_\alpha,
  \phi_\alpha) \}$) such that
  \begin{equation*}
    U_\alpha \subset K_\alpha \subset V_\alpha \quad \textnormal{and}
    \quad \phi_\alpha = \psi_\alpha | U_\alpha.
  \end{equation*}\label{p:amenable-atlas}
\end{dfn}

\begin{cvt}\label{cvt:5}
  For the remainder of this thesis, we work over a fixed amenable
  coordinate atlas $\{(U_\alpha, \phi_\alpha)\}$ for all computations
  and concepts that require local coordinates.
\end{cvt}

The next lemma we'll prove shows one benefit of amenable coordinates:
smooth (or even continuous) metrics satisfy some kind of upper and
lower bounds in these coordinates.  Intuitively, the lemma says the
following: in amenable coordinates, the coordinate representations of
a smooth metric are somehow ``uniformly positive definite''.
Additionally, the coefficients satisfy a uniform upper bound.

\begin{lem}\label{lem:47}
  For any metric $\tilde{g} \in \M$, there exist constants
  $\delta(\tilde{g}) > 0$ and $C(\tilde{g}) < \infty$, depending only
  on $\tilde{g}$, with the property that for any $\alpha$, any $x \in
  U_\alpha$, and $1 \leq i,j \leq n$,
  \begin{equation}\label{eq:33}
    |\tilde{g}_{ij}(x)| \leq C(\tilde{g})\ \textnormal{and}\ \lambda^{\tilde{G}}_{\textnormal{min}}(x) \geq \delta(\tilde{g}),
  \end{equation}
  where we of course mean the value of $\tilde{g}_{ij}(x)$ in the
  chart $(U_\alpha, \phi_\alpha)$.
\end{lem}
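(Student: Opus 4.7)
The plan is to prove the two bounds separately, using the two distinguishing features of the setup: the invariance of eigenvalues of a $(1,1)$-tensor for the lower bound, and the amenable atlas structure for the coordinate-dependent upper bound.

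For the lower bound on $\lambda^{\tilde{G}}_{\textnormal{min}}(x)$, I would simply invoke Lemma \ref{lem:45}. Since $\tilde{g} \in \M$ is (smooth and hence) continuous and positive definite, and since $\tilde{G} = g^{-1}\tilde{g}$ is the $(1,1)$-tensor obtained by raising an index with $g$, Lemma \ref{lem:45}(2) gives $\min_{x \in M} \lambda^{\tilde{G}}_{\textnormal{min}}(x) > 0$. Setting $\delta(\tilde{g})$ equal to this minimum yields the stated bound, with the crucial point being that $\lambda^{\tilde{G}}_{\textnormal{min}}$ is a well-defined function on $M$ (independent of coordinates), so the bound is automatically uniform in $\alpha$.

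For the upper bound on $|\tilde{g}_{ij}(x)|$, which is genuinely coordinate-dependent, I would exploit the amenable atlas. By Definition \ref{dfn:1}, for each chart $(U_\alpha, \phi_\alpha)$ in our fixed finite atlas there is a compact set $K_\alpha$ and a larger chart $(V_\alpha, \psi_\alpha)$ with $U_\alpha \subset K_\alpha \subset V_\alpha$ and $\phi_\alpha = \psi_\alpha|_{U_\alpha}$. Viewing $\tilde{g}_{ij}$ as continuous functions on $V_\alpha$ in the $\psi_\alpha$-coordinates, their restrictions to the compact set $K_\alpha$ are bounded, say by $C_\alpha(\tilde{g}) < \infty$. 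Since $\phi_\alpha$ agrees with $\psi_\alpha$ on $U_\alpha \subset K_\alpha$, the same bound governs $\tilde{g}_{ij}$ expressed in the $\phi_\alpha$-coordinates over $U_\alpha$. Finally, because the atlas is finite, $C(\tilde{g}) := \max_\alpha \max_{i,j} C_\alpha(\tilde{g})$ is finite and works uniformly in $\alpha$, $i$, $j$, and $x$.

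Neither step presents a real obstacle; the whole content of the lemma is that the amenable-atlas convention rules out pathologies that could occur if one tried to bound coefficients in an arbitrary chart whose domain is not relatively compact in a larger chart, and that the invariance of eigenvalues lets the lower bound piggyback on the already-established continuity result of Lemma \ref{lem:45}. If anything is delicate, it is keeping straight that $\tilde{g}_{ij}$ in the chart $(U_\alpha, \phi_\alpha)$ is the restriction of $\tilde{g}_{ij}$ in $(V_\alpha, \psi_\alpha)$, so that continuity on the larger chart transfers to a uniform bound on the smaller one via compactness of $K_\alpha$.
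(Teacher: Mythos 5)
Your proof is correct and follows essentially the same route as the paper: the lower bound is obtained directly from Lemma \ref{lem:45}, and the upper bound comes from continuity of $\tilde{g}_{ij}$ on the larger chart $(V_\alpha,\psi_\alpha)$, compactness of $K_\alpha$, the agreement $\phi_\alpha = \psi_\alpha|_{U_\alpha}$, and finiteness of the atlas. No gaps.
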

\begin{proof}
  The lower bound on the minimal eigenvalue follows directly from
  Lemma \ref{lem:45}.

  The upper bound on the coefficients of $\tilde{g}$ follows from the
  fact that $| \tilde{g}_{ij} |$ is a continuous function in any given
  coordinate chart $(U_\alpha, \phi_\alpha)$, and we have assumed that
  $U_\alpha$ is contained in a compact set $K_\alpha$, which in turn
  is contained in another chart $(V_\alpha, \psi_\alpha)$ with
  $\phi_\alpha = \psi_\alpha | U_\alpha$.  Therefore, $|
  \tilde{g}_{ij} |$ is defined on $K_\alpha$ and assumes some maximum
  there---hence, it assumes some maximum $C_\alpha$ on $U_\alpha$.
  Since there are only finitely many charts $U_\alpha$, we can take
  $C(\tilde{g}) := \max_\alpha C_\alpha$.
\end{proof}

\begin{rmk}\label{rmk:2}
  The estimate $|\tilde{g}_{ij}(x)| \leq C(\tilde{g})$ also implies an
  upper bound in terms of $C(\tilde{g})$ on $\det \tilde{g}(x)$.  This
  is clear from the fact that the determinant is a homogeneous
  polynomial in $\tilde{g}_{ij}(x)$ with $n!$ terms and coefficients
  $\pm 1$.
\end{rmk}

The main point of using an amenable coordinate atlas is the following:
it gives us an easily understood and uniform---but nevertheless
coordinate-dependent---notion of how ``large'' or ``small'' a metric
is.  Namely, we look at how large the absolute values of its entries
are and how small its smallest eigenvalue is.  The dependence of this
notion on coordinates is perhaps somewhat dissatisfying at first
glance, but it should be seen as merely an aid in our quest to prove
statements that are, indeed, invariant in nature.

It is necessary to introduce somewhat more general objects than
Riemannian metrics in this thesis:

\begin{dfn}\label{dfn:24}
  Let $\tilde{g}$ be a section of $S^2 T^* M$.  Then $\tilde{g}$ is
  called a \emph{(Riemannian) semimetric} if it induces a positive
  semidefinite scalar product on $T_x M$ for each $x \in M$.
\end{dfn}

To make the above idea of uniformly largeness or positive definiteness
more precise for the case of a nonsmooth (semi)metric, we define two
notions.  The first is again some kind of ``uniform positive
definiteness'', and the second is a kind of uniform upper bound.

\begin{dfn}\label{dfn:23}
  Let $\tilde{g}$ be a semimetric on $M$ (which we do not assume to be
  even measurable).  Then $\tilde{g}$ is called \emph{inflated} if
  there exists a constant $\delta > 0$ such that
  \begin{equation*}
    \det \tilde{G}(x) \geq \delta
  \end{equation*}
  for a.e.~$x \in M$.  Otherwise $\tilde{g}$ is called
  \emph{deflated}.

  We define the set
  \begin{equation*}
    X_{\tilde{g}} := \{ x \in M \mid \tilde{g}(x)\ \textnormal{is not
      positive definite} \} \subset M,
  \end{equation*}
  which we call the \emph{deflated set} of $\tilde{g}$.

  We call $\tilde{g}$ \emph{bounded} if there exists a constant
  $C$ such that
  \begin{equation*}
    | \tilde{g}_{ij}(x) | \leq C
  \end{equation*}
  for a.e.~$x \in M$ and all $1 \leq i, j \leq n$.  Otherwise
  $\tilde{g}$ is called \emph{unbounded}.
\end{dfn}

Since the study of the completion of $\M$ boils down to the study of
Cauchy sequences in $\M$, it will turn out to be useful to define
notions related to the above for a sequence of elements of $\M$.

\begin{dfn}\label{dfn:25}
  Let $\{ g_k \} \subset \M$ be any sequence.  We define the sets
  \begin{align*}
    X_{\{ g_k \}} &:= \{ x \in M \mid \forall \delta > 0,\ \exists k
    \in \N\ \textnormal{s.t.}\ \det G_k(x) <
    \delta \}, \\
    S_{\{ g_k \}} &:= \{ x \in M \mid \forall C > 0,\ \exists k \in
    \N\ \textnormal{and}\ i,j \in \{1, \dots, n\}\ \textnormal{s.t.}\
    |(g_k)_{ij}(x)| > C \}.
  \end{align*}
  We call $X_{\{ g_k \}}$ the \emph{deflated set} and $S_{\{ g_k
    \}}$ the \emph{unbounded set} of $\{ g_k \}$.

  We say the sequence $\{g_k\}$ \emph{deflates at $x$} if $x \in
  X_{\{g_k\}}$.  We say it \emph{becomes unbounded at $x$} if $x
  \in S_{\{g_k\}}$.

  The sequence $\{ g_k \}$ is called \emph{inflated} if $X_{\{
    g_k \}}$ is a nullset, and \emph{deflated} otherwise.  It is
  called \emph{$C^0$-bounded} if $S_{\{ g_k \}}$ is a nullset, and
  \emph{$C^0$-unbounded} otherwise.
\end{dfn}

The last definition we need in this vein distinguishes elements of
smooth metrics from (possibly nonsmooth) semimetrics.

\begin{dfn}\label{dfn:8}
  A semimetric $\tilde{g}$ is called \emph{degenerate} if $\tilde{g}
  \not\in \M$, and \emph{nondegenerate} if $\tilde{g} \in \M$.
\end{dfn}

Note that by Remark \ref{rmk:13}, any measurable semimetric
$\tilde{g}$ on $M$ induces a nonnegative measure on $M$ that is
absolutely continuous with respect to the fixed volume form $\mu_g$.

A measurable Riemannian metric $\tilde{g}$ on $M$ gives rise to an
``$L^2$ scalar product'' on measurable functions in the following way.
For any two functions $\rho$ and $\sigma$ on $M$, we define
\begin{equation}\label{eq:121}
  (\rho, \sigma)_{\tilde{g}} = \integral{M}{}{\rho \sigma}{\mu_{\tilde{g}}}.
\end{equation}
(We denote this by the same symbol as the $L^2$ scalar product on
$\s$; which is meant will always be clear from the context.)  We put
``$L^2$ scalar product'' in quotation marks because unless we put
specific conditions on $\rho$, $\sigma$, and $\tilde{g}$,
\eqref{eq:121} is not guaranteed to be finite.  It suffices, for
example, to demand that $\rho$ and $\sigma$ are continuous and that
the total volume $\Vol(M, \tilde{g}) =
\integral{M}{}{}{\mu_{\tilde{g}}}$ of $\tilde{g}$ is finite.  As in
the case of the $L^2$ scalar product on $\s$, if $g_0$ and $g_1$ are
both continuous metrics, then $(\cdot, \cdot)_{g_0}$ and $(\cdot ,
\cdot)_{g_1}$ are equivalent scalar products on $C^\infty(M)$.
Therefore they induce the same topology, which we call the \emph{$L^2$
  topology}.

Now, let $\tilde{g}$ be a measurable semimetric---we want to introduce
a scalar product on functions induced from $\tilde{g}$ as well.  As a
semimetric, $\tilde{g}$ induces a nonnegative $n$-form in the same way
that a metric induces a volume form.  Locally, this is given by
\begin{equation*}
  \mu_{\tilde{g}} := \sqrt{\det \tilde{g}}\, dx^1 \cdots dx^n.
\end{equation*}
At points $x$ where $\tilde{g}(x)$ is not positive definite, we have
$\det \tilde{g}(x) = 0$ by Proposition \ref{prop:7}.  Therefore
$\mu_{\tilde{g}}(x) = 0$ as well, so $\mu_{\tilde{g}}$ is a volume
form if and only if $\tilde{g}$ is a metric.  Nevertheless, since it
is measurable and nonnegative, $\mu_{\tilde{g}}$ induces a Lebesgue
measure on $M$, and so we can define a positive semidefinite ``$L^2$
scalar product'' on functions via \eqref{eq:121}.  (It is only
positive semidefinite since if a function $\rho$ has the property that
$\supp \rho \cap M \setminus X_{\tilde{g}}$ has measure zero, then
$(\rho, \rho)_{\tilde{g}} = 0$.)  Again, if we want this to be a true
(finite) scalar product, we should, e.g., restrict to continuous
functions and finite-volume $\tilde{g}$ (those for which
$\integral{M}{}{}{\mu_{\tilde{g}}} < \infty$).

We define one more piece of notation before we close this section.

\begin{dfn}\label{dfn:27}
  By $\M_f$, we denote the space of measurable semimetrics on $M$ with
  finite volume, that is, semimetrics $\tilde{g}$ for which
  \begin{equation*}
    \integral{M}{}{}{\mu_{\tilde{g}}} < \infty.
  \end{equation*}
\end{dfn}


\chapter{First metric properties of $\M$}\label{cha:init-metr-prop}

In this chapter, we study the most easily accessible properties of
$\M$ as a metric space, which will form the basis for our continuing
investigations in later chapters.  Our first task, to be completed in
Section \ref{sec:basic-metr-geom}, is to show that $(\M, d)$ has the
structure of a metric space.  As we demonstrated in Subsection
\ref{sec:path-behav-weak}, this is not automatic for weak Riemannian
manifolds like $(\M, (\cdot, \cdot))$---the induced distance function
is only guaranteed to be a pseudometric.  Proving that $d$ is a metric
will be done by finding a manifestly positive-definite metric (in the
sense of metric spaces) on $\M$ that in some way bounds the
$d$-distance between two points from below, implying that it is
positive.

With this fact proved, we can move on to studying the completion of
$\M$, with the reassurance that the answer will be interesting.  (It's
of course of little interest to study the completion of a space in
which all points have zero distance from one another, as in Subsection
\ref{sec:path-behav-weak}.)  The strategy for obtaining the completion
will be to study the completions first of simple subspaces and then of
successively more complex subspaces of $\M$, until we have enough
information to describe the completion of the full space.

To begin this program, in Section \ref{sec:compl-an-amen}, we obtain
the completion of any so-called \emph{amenable subset}.  Recall that
in Definition \ref{dfn:23} we have defined two separate ``good''
properties of nonsmooth metrics.  The first is being bounded,
heuristically not becoming too large at any points.  The second is
inflation, heuristically not becoming too small.  Lemma \ref{lem:47}
shows that smooth metrics are both inflated and bounded, but the
constants of Lemma \ref{lem:47} depend on the metric in question.  An
amenable subset is one for which these constants can be chosen
\emph{uniformly} across the entire subset.  These subsets have the
nice property that the metric $d$ is equivalent to the metric induced
from the $L^2$ norm $\| \cdot \|_g$, in the sense that their Cauchy
sequences are the same.  This allows us to identify the completion of
an amenable subset with the $L^2$ completion of that subset.  This is
the first step in the strategy of bootstrapping our way to a
description of the completion.

\section{$\M$ is a metric space}\label{sec:basic-metr-geom}

As we have already remarked in Propositions \ref{prop:23} and
\ref{prop:24}, the exponential mappings of $\poss$ and $\Mmus$ are at
each point diffeomorphims between an open neighborhood in the tangent
space and the manifold itself.  Therefore, they both satisfy the
hypotheses of Theorem \ref{thm:4}, and we immediately get the
following two results.

\begin{thm}\label{thm:22}
  Let $\tilde{g} \in \M$.  Then $(\pos \cdot \tilde{g}, (\cdot,
  \cdot))$ is a metric space, where $(\cdot, \cdot)$ denotes the
  restriction of the $L^2$ metric on $\M$ to $\pos \cdot \tilde{g}$.
\end{thm}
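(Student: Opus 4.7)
The plan is to apply Theorem \ref{thm:4} directly to the submanifold $\pos \cdot \tilde{g} \subset \M$ equipped with the restriction of $(\cdot, \cdot)$. Recall that Theorem \ref{thm:4} says: if $(N, \gamma)$ is a weak Riemannian manifold and at some point $x \in N$ the exponential mapping $\exp_x$ is a diffeomorphism from an open neighborhood $U_x \subseteq T_x N$ of $0$ onto all of $N$, then $d_\gamma$ is in fact a metric (not just a pseudometric) on $N$.

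First I would verify that the restricted object $(\pos \cdot \tilde{g}, (\cdot, \cdot))$ is indeed a weak Riemannian manifold in the sense of Definition \ref{dfn:19}: positive-definiteness, bilinearity, and smoothness are all inherited from the $L^2$ metric on $\M$, since $\pos \cdot \tilde{g}$ is a smooth submanifold of $\M$. Next I would note that the intrinsic exponential mapping on $(\pos \cdot \tilde{g}, (\cdot, \cdot))$ agrees with the restriction of the ambient exponential mapping of $\M$ to tangent vectors of $\pos \cdot \tilde{g}$. This is precisely because Proposition \ref{prop:23} asserts that $\pos \cdot \tilde{g}$ is a \emph{totally geodesic} submanifold of $\M$, so ambient geodesics with initial velocity tangent to the orbit remain in the orbit.

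The crucial input is then the second sentence of Proposition \ref{prop:23}: at every point $g_0 \in \pos \cdot \tilde{g}$, the exponential mapping $\exp_{g_0}$ is a diffeomorphism from an open set $U \subset T_{g_0}(\pos \cdot \tilde{g})$ onto \emph{all of} $\pos \cdot \tilde{g}$. This is exactly the hypothesis of Theorem \ref{thm:4}, so an immediate invocation yields that the induced distance function is positive-definite, hence a genuine metric. The main (and only) obstacle worth flagging is the conceptual check in the previous paragraph — namely, being careful that ``exponential mapping'' in Proposition \ref{prop:23} and in Theorem \ref{thm:4} refer to the same object on the submanifold; once total geodesicity is invoked this identification is automatic, and the proof collapses to a one-line citation.
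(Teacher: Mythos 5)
Your proposal is correct and is essentially the paper's own argument: the theorem follows immediately by combining Proposition \ref{prop:23} (the exponential mapping at each point of $\pos \cdot \tilde{g}$ is a diffeomorphism from an open neighborhood in the tangent space onto the whole orbit) with Theorem \ref{thm:4}. Your extra remark identifying the intrinsic and ambient exponential mappings via total geodesicity is a harmless elaboration of the same one-line citation.
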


\begin{thm}\label{thm:23}
  Let $\mu$ be any smooth volume form on $M$.  Then $(\Mmu, (\cdot,
  \cdot))$ is a metric space, where $(\cdot, \cdot)$ denotes the
  restriction of the $L^2$ metric on $\M$ to $\Mmu$.
\end{thm}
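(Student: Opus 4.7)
The plan is to apply Theorem \ref{thm:4} directly, with $\Mmu$ playing the role of $N$ and $(\cdot, \cdot)|_{\Mmu}$ playing the role of the weak Riemannian metric $\gamma$. For this I need to produce a single point $g \in \Mmu$ at which $\exp_g$ is a diffeomorphism from an open (in the manifold topology) neighborhood of $0 \in T_g \Mmu$ onto all of $\Mmu$.

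First I would note that $\Mmu$, as a smooth submanifold of $\M$ (cf.~\cite[Lemma 8.8]{ebin70:_manif_of_rieman_metric}), inherits the $L^2$ metric from $\M$, and it is clear that this restriction is still a weak Riemannian metric on $\Mmu$ in the sense of Definition \ref{dfn:19}: it is smooth and pointwise positive definite, and its tangent spaces $\st$ (see \eqref{eq:112}) carry the induced $L^2$ topology, which is strictly weaker than the Fr\'echet topology on $\st$. Thus $(\Mmu, (\cdot, \cdot)|_{\Mmu})$ is a legitimate weak Riemannian Fr\'echet manifold to which the machinery of Section \ref{sec:weak-riem-manif} applies.

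Next I would invoke Proposition \ref{prop:24}, which gives exactly what is needed: for any $g \in \Mmu$, the exponential map $\exp_g$ is a diffeomorphism from $T_g \Mmu$ onto $\Mmu$. Taking $U_g := T_g \Mmu$, this is trivially open in the manifold topology (it is the entire model space), and the image is all of $\Mmu$. Hence the hypothesis of Theorem \ref{thm:4} is satisfied at any chosen $g \in \Mmu$, and the conclusion of Theorem \ref{thm:4} gives that $(\Mmu, d_{(\cdot, \cdot)})$ is a metric space, which is the statement of Theorem \ref{thm:23}.

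There is no real obstacle here; the work has been done upstream in establishing that $\Mmu$ is a globally symmetric space whose exponential map has the simple closed form $g_t = g_0 \exp(t H)$, and in proving the general Theorem \ref{thm:4}. The only thing to be mildly careful about is not to confuse the openness of $U_g$ in the manifold topology (which holds vacuously, since $U_g$ equals the whole tangent space) with openness in the $(\cdot,\cdot)_g$-topology, which as emphasized in Subsection \ref{sec:expon-mapp-dist-1} is a much stronger and generally false requirement for weak Riemannian manifolds; Theorem \ref{thm:4} is formulated precisely so that only the former is required.
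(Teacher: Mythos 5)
Your proposal is correct and is exactly the paper's argument: the theorem is deduced by feeding Proposition \ref{prop:24} (that $\exp_g$ is a diffeomorphism from $T_g \M_\mu$ onto $\M_\mu$) into the general criterion of Theorem \ref{thm:4}. Your added remark distinguishing openness in the manifold topology from openness in the $(\cdot,\cdot)_g$-topology is a correct reading of why Theorem \ref{thm:4} applies.
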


As we remarked at the end of Subsection \ref{sec:analys-expon-mapp},
we cannot infer any lower bounds on the distance between two points of
$\M$ from the exponential mapping, so we will have to directly find
these bounds.  To do this, we will first show Lipschitz continuity of
the function mapping a metric to the square root of its volume.  This
simple lemma will have far-reaching implications for our study.  The
first use of this lemma on the volume function is to aid us in
obtaining the lower bound on the $d$-distance between two points that
was described in the introduction.  This is, of course, after we
introduce an appropriate metric to bound $d$.

\subsection{Lipschitz continuity of the square root of the volume}\label{sec:lipsch-cont-square}

As just mentioned, we wish to show Lipschitz continuity of the square
root of the volume on $\M$.  In fact, the following lemma shows that
for any measurable $Y \subseteq M$, the function defined by
\begin{equation*}
  \tilde{g} \mapsto \sqrt{\Vol(Y, \tilde{g})}
\end{equation*}
is Lipschitz with respect to $d$.  Using this as a first step to
proving that $d$ is a metric takes its inspiration from \cite[\S
3.3]{michor05:_vanis_geodes_distan_spaces_of}.

\begin{lem}\label{lem:13}
  Let $g_0, g_1 \in \M$.  Then for any measurable subset $Y \subseteq
  M$,
  \begin{equation*}
    \left| \sqrt{\Vol(Y,g_1)} - \sqrt{\Vol(Y,g_0)} \right| \leq \frac{\sqrt{n}}{4} d(g_0,g_1).
  \end{equation*}
\end{lem}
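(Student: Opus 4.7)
The plan is to differentiate $\sqrt{\Vol(Y, g_t)}$ along an arbitrary piecewise $C^1$ path $g_t$ from $g_0$ to $g_1$, estimate the derivative by the $L^2$-norm $\|g'_t\|_{g_t}$, and then integrate and take the infimum over paths to recover $d(g_0, g_1)$ on the right-hand side. If $\Vol(Y, g_0) = 0$ the statement is trivial (since $\mu_{g_0}$ and $\mu_{g_1}$ are mutually absolutely continuous, so $\Vol(Y, g_1) = 0$ too), so we may assume $\Vol(Y, g_t) > 0$ along the path.

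First I would differentiate the volume. By Lemma \ref{lem:50},
\begin{equation*}
  \frac{d}{dt} \Vol(Y, g_t) = \integral{Y}{}{\tfrac{1}{2}\tr_{g_t}(g'_t)}{\mu_{g_t}},
\end{equation*}
so the chain rule gives
\begin{equation*}
  \frac{d}{dt} \sqrt{\Vol(Y, g_t)} = \frac{1}{4\sqrt{\Vol(Y, g_t)}} \integral{Y}{}{\tr_{g_t}(g'_t)}{\mu_{g_t}}.
\end{equation*}
The main estimate is a two-step Cauchy--Schwarz. Pointwise, $\tr_{g_t}(g'_t) = \tr(G'_t)$ with $G'_t$ the endomorphism associated to $g'_t$; since $G'_t$ has $n$ real eigenvalues (Lemma \ref{lem:45}), one has $(\tr G'_t)^2 \leq n \tr((G'_t)^2) = n\tr_{g_t}(g'_t g'_t)$. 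Applying Cauchy--Schwarz on $L^2(Y, \mu_{g_t})$ to the integrand $\tr_{g_t}(g'_t) \cdot 1$ and then using this pointwise bound yields
\begin{equation*}
  \left|\integral{Y}{}{\tr_{g_t}(g'_t)}{\mu_{g_t}}\right| \leq \sqrt{\Vol(Y, g_t)} \cdot \sqrt{n} \left( \integral{Y}{}{\tr_{g_t}(g'_t g'_t)}{\mu_{g_t}} \right)^{1/2} \leq \sqrt{n} \sqrt{\Vol(Y, g_t)}\, \|g'_t\|_{g_t},
\end{equation*}
where in the last step the domain of integration is extended from $Y$ to $M$. Substituting, the $\sqrt{\Vol(Y, g_t)}$ in the numerator and denominator cancel:
\begin{equation*}
  \left| \frac{d}{dt} \sqrt{\Vol(Y, g_t)} \right| \leq \frac{\sqrt{n}}{4} \|g'_t\|_{g_t}.
\end{equation*}

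To conclude, integrate over $[0,1]$ to get $\bigl|\sqrt{\Vol(Y,g_1)} - \sqrt{\Vol(Y,g_0)}\bigr| \leq \frac{\sqrt{n}}{4} L(g_t)$, then take the infimum over all piecewise $C^1$ paths from $g_0$ to $g_1$. The only obstacle I foresee is the bookkeeping to justify differentiating under the integral sign and handling the (harmless) case $\Vol(Y, g_t) = 0$; everything else is essentially an application of Lemma \ref{lem:50} together with the elementary trace inequality $(\tr H)^2 \leq n \tr(H^2)$, which already packages the factor $\sqrt{n}/4$ appearing in the statement.
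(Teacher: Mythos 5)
Your proposal is correct and follows essentially the same route as the paper: differentiate the volume via Lemma \ref{lem:50}, apply Cauchy--Schwarz (H\"older) over $Y$ together with the pointwise inequality $\tr_{g_t}(h_t)^2 \leq n\,\tr_{g_t}(h_t^2)$ (which the paper derives from the traceless/pure-trace decomposition rather than from eigenvalue Cauchy--Schwarz, but it is the same elementary fact), and then integrate in $t$ and take the infimum over paths. The only cosmetic difference is that you bound $\bigl|\partial_t\sqrt{\Vol(Y,g_t)}\bigr|$ directly, whereas the paper runs the one-sided estimate and then swaps $g_0$ and $g_1$.
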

\begin{proof}
  Let $g_t$, $t \in [0,1]$, be any path from $g_0$ to $g_1$, and
  define $h_t := g'_t$.  We compute
  \begin{equation}\label{eq:26}
    \begin{aligned}
      \partial_t \Vol(Y, g_t) &= \partial_t \int_Y \, \mu_{g_t} =
      \int_Y \partial_t \, \mu_{g_t} = \int_Y \frac{1}{2} \tr_{g_t}
      (h_t)
      \, \mu_{g_t} \\
      &\leq \left( \int_Y \, \mu_{g_t} \right)^{1/2} \left(
        \frac{1}{4}
        \int_Y \tr_{g_t}(h_t)^2 \, \mu_{g_t} \right)^{1/2} \\
      &\leq \frac{1}{2} \sqrt{\Vol(Y,g_t)} \left( \int_M
        \tr_{g_t}(h_t)^2 \, \mu_{g_t} \right)^{1/2},
    \end{aligned}
  \end{equation}
  where the first line follows from Lemma \ref{lem:50}, the second
  line follows from Hölder's inequality, and the last line from the
  nonnegativity of $\tr_{g_t}(h_t)^2$.  Now, let $A$ and $B$ be any $n
  \times n$ matrices, and denote their traceless parts by $A^T$ and
  $B^T$, respectively.  We then have the formula
  \begin{equation}\label{eq:64}
    \begin{aligned}
      \tr(AB) &= \tr\left(\left(A^T + \frac{1}{n} \tr(A)
          I\right)\left(B^T + \frac{1}{n} \tr(B) I\right)\right) \\
      &= \tr\left(A^T B^T\right) + \frac{1}{n} \tr(A) \tr(B).
    \end{aligned}
  \end{equation}
  The second line follows from the fact that traceless and pure trace
  matrices are orthogonal in the scalar product defined by $\tr(AB)$
  (cf.~\eqref{eq:131}---the computation is still valid if the matrices
  in question are not symmetric).  We have also used $\tr I = n$.

  Using \eqref{eq:64} with the $g_t$-trace and $A = B = h_t$, we see
  that
  \begin{equation*}
    \tr_{g_t}(h_t^2) = \tr_{g_t}\left((h^T_t)^2\right) + \frac{1}{n} \tr_{g_t}(h_t)^2,
  \end{equation*}
  implying
  \begin{equation*}
    \tr_{g_t}(h_t)^2 = n \left( \tr_{g_t}(h_t^2) -
      \tr_{g_t}\left((h^T_t)^2\right) \right) \leq n \tr_{g_t}(h_t^2),
  \end{equation*}
  since $\tr_{g_t}\left((h^T_t)^2\right) \geq 0$.  Applying this to
  (\ref{eq:26}) gives
  \begin{equation}\label{eq:27}
    \begin{aligned}
      \partial_t \Vol(Y, g_t) &\leq \frac{1}{2} \sqrt{\Vol(Y,g_t)}
      \left( n \int_M \tr_{g_t}(h_t^2) \, \mu_{g_t} \right)^{1/2} \\
      &\leq \frac{\sqrt{n}}{2} \sqrt{\Vol(Y,g_t)} \| h_t \|_{g_t}.
    \end{aligned}
  \end{equation}

  We next compute
  \begin{equation}\label{eq:49}
    \begin{aligned}
      \sqrt{\Vol(Y,g_1)} - \sqrt{\Vol(Y,g_0)} &= \int_0^1 \partial_t
      \sqrt{\Vol(Y,g_t)} \, dt = \int_0^1 \frac{1}{2} \frac{\partial_t
        \Vol(Y,g_t)}{\sqrt{\Vol(Y,g_t)}} \, dt \\
      &\leq \int_0^1 \frac{\sqrt{n}}{4} \| h_t \|_{g_t} \, dt 
      = \frac{\sqrt{n}}{4} L(g_t),
    \end{aligned}
  \end{equation}
  where the inequality follows from (\ref{eq:27}).  Since this holds
  for all paths from $g_0$ to $g_1$, and we can repeat the computation
  with $g_0$ and $g_1$ interchanged, it implies the result
  immediately.
\end{proof}

We note that Lemma \ref{lem:13} gives a positive lower bound on the
distance between two metrics in $\M$ that have different total
volumes---so we must now deal with the case where the two metrics have
the same total volume.

\subsection{A (positive definite) metric on
  $\M$}\label{sec:another-metric-m}

Our strategy for proving that $\M$ is a metric space is to find a
different metric (in the sense of metric spaces) on $\M$, the positive
definiteness of which is apparent and which bounds $d$ from below in
some way.  We do this in several steps.  The first is to define a
function on $\M \times \M$ and show that it is indeed a metric.

\begin{dfn}\label{dfn:15}
  Consider $\M_x = \{ \tilde{g} \in \satx \mid \tilde{g} > 0 \}$
  (cf.~\eqref{eq:120}).  Define a Riemannian metric $\langle \cdot ,
  \cdot \rangle^0$ on $\M_x$ given by
  \begin{equation*}
    \langle h , k \rangle^0_{\tilde{g}} = \tr_{\tilde{g}} (h k) \det
    g(x)^{-1} \tilde{g} \quad \forall h, k \in T_{\tilde{g}} \M_x \cong
    \satx.
  \end{equation*}
  (Recall that $g \in \M$ is our fixed reference element.)  We denote
  by $\theta^g_x$ the Riemannian distance function of $\langle \cdot ,
  \cdot \rangle^0$.
\end{dfn}

Note that $\theta^g_x$ is automatically positive definite, since it is
the distance function of a Riemannian metric on a finite-dimensional
manifold.  By integrating it in $x$, we can pass from a metric on
$\M_x$ to a function on $\M \times \M$ as follows:

\begin{dfn}\label{dfn:16}
  For any measurable $Y \subseteq M$, define a function $\Theta_Y : \M
  \times \M \rightarrow \R$ by
  \begin{equation*}
    \Theta_Y(g_0, g_1) = \integral{Y}{}{\theta^g_x(g_0(x), g_1(x))}{\mu_g(x)}.
  \end{equation*}
\end{dfn}

We have omitted the metric $g$ from the notation for $\Theta_Y$.  The
next lemma justifies this choice.

\begin{lem}\label{lem:28}
  $\Theta_Y$ does not depend on the choice of $g \in \M$ in the above
  definition.  That is, if we choose any other $\tilde{g} \in \M$ and
  define $\langle \cdot , \cdot \rangle^0$ and $\theta^{\tilde{g}}_x$
  with respect to this new reference metric, then
  \begin{equation*}
    \integral{Y}{}{\theta^g_x(g_0(x), g_1(x))}{\mu_g(x)} =
    \integral{Y}{}{\theta^{\tilde{g}}_x(g_0(x), g_1(x))}{\mu_{\tilde{g}}(x)}
  \end{equation*}
\end{lem}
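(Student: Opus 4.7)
The plan is to show that when we change the reference metric from $g$ to $\tilde g$, the two ingredients of $\Theta_Y$ --- the distance function $\theta^g_x$ on the finite-dimensional manifold $\M_x$ and the integrating volume form $\mu_g$ --- transform by exactly reciprocal factors, so that their product is unchanged.

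First I would compare the Riemannian metrics $\langle\cdot,\cdot\rangle^{0,g}$ and $\langle\cdot,\cdot\rangle^{0,\tilde g}$ on $\M_x$ pointwise. For any $a\in\M_x$ and $h,k\in\satx$, the multiplicativity of the determinant in local coordinates gives
\begin{equation*}
\det(g(x)^{-1}a) = \det(g(x)^{-1}\tilde g(x))\cdot\det(\tilde g(x)^{-1}a),
\end{equation*}
so that
\begin{equation*}
\langle h,k\rangle^{0,g}_a = c(x)\,\langle h,k\rangle^{0,\tilde g}_a, \qquad c(x) := \det(g(x)^{-1}\tilde g(x)).
\end{equation*}
Thus, viewed as Riemannian metrics on the finite-dimensional manifold $\M_x$, they differ by the constant (in $a$) positive factor $c(x)$.

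Next I would invoke the elementary fact that if two Riemannian metrics on a manifold are related by a positive constant, $\gamma_1 = \lambda\gamma_2$, then lengths of curves, and hence the induced Riemannian distance functions, satisfy $d_{\gamma_1} = \sqrt\lambda\, d_{\gamma_2}$. Applied to $\M_x$ with $\lambda=c(x)$, this yields
\begin{equation*}
\theta^g_x(g_0(x),g_1(x)) = \sqrt{c(x)}\;\theta^{\tilde g}_x(g_0(x),g_1(x)).
\end{equation*}
On the volume-form side, equation \eqref{eq:8} gives
\begin{equation*}
\mu_{\tilde g} = \sqrt{\det(g^{-1}\tilde g)}\,\mu_g = \sqrt{c(x)}\,\mu_g.
\end{equation*}
Substituting both relations into the definition of $\Theta_Y$ using the reference metric $\tilde g$ and rearranging, the factors $\sqrt{c(x)}$ and $1/\sqrt{c(x)}$ cancel, and the integrand coincides with $\theta^g_x(g_0(x),g_1(x))\,\mu_g(x)$, giving the claimed equality.

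There is no real obstacle here, since everything reduces to a pointwise computation in $\satx$ together with one standard scaling property of Riemannian distance. The only small thing to double-check is that the pointwise identity $\theta^g_x = \sqrt{c(x)}\,\theta^{\tilde g}_x$ is measurable in $x$ so that the substitution in the integral is legitimate; this follows because $c$ is continuous (even smooth) on $M$ and $(x,a,b)\mapsto \theta^{\tilde g}_x(a,b)$ is continuous in its arguments, so $x\mapsto \theta^g_x(g_0(x),g_1(x))$ is a continuous function of $x$.
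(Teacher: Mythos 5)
Your proof is correct and is essentially the paper's argument: the paper also reduces everything to the pointwise cancellation of the $\sqrt{\det g}$ factors, by unwinding the length functional of $\langle\cdot,\cdot\rangle^0$ in local coordinates inside the integral and observing that $\theta^g_x\,\mu_g$ equals the manifestly reference-free expression obtained there. Your packaging of this via the constant conformal factor $c(x)=\det(g(x)^{-1}\tilde g(x))$ on $\M_x$ and the standard $\sqrt{\lambda}$-scaling of Riemannian distances is just a cleaner statement of the same cancellation, and your measurability remark is harmless but not needed since the identity is pointwise exact.
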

\begin{proof}
  Let $\tilde{g} \in \M$ be any other metric.  Recall that
  $\theta^g_x$ was the distance function associated to the Riemannian
  metric $\langle \cdot, \cdot \rangle^0$ on $\Matx$, and the metric
  $g$ enters in the definition of this Riemannian metric.  Take a path
  $g_t(x)$ in $\Matx$.  For now, let's put $g$ and $\tilde{g}$ back in
  the notation, so that we can write formulas unambiguously.  For
  example, if we use $g$ to define $\langle \cdot, \cdot \rangle^0$,
  we write $L_g(g_t(x))$ for the length of $g_t(x)$ w.r.t.~$\langle
  \cdot , \cdot \rangle^0$; if we use $\tilde{g}$ in the definition,
  we write $L_{\tilde{g}}(g_t(x))$ for the length; and similarly for
  other notation.

  Using the definitions of $\Theta^g_Y$ and $\theta^g_x$, where infima
  are always taken over paths $g_t(x)$ from $g_0(x)$ to $g_1(x)$, and
  where $h_t(x) := g_t(x)'$, we
  can compute:
  \begin{align*}
    \Theta^g_Y(g_0,g_1) &= \int_Y \theta^g_x(g_0(x), g_1(x)) \, \mu_g(x) \\
    &= \int_Y \left( \inf L_g(g_t(x)) \right) \, \mu_g(x) \\
    &= \int_Y \left( \inf \int_0^1 \sqrt{\langle g_t(x)', g_t(x)'
        \rangle^0_{g_t(x)}} dt \right) \, \mu_g(x) \\
    &= \int_Y \left( \inf \int_0^1 \sqrt{\tr_{g_t(x)}(h_t(x)^2)
        \frac{\det g_t(x)}{\det g(x)}} \, dt \right) \sqrt{\det g(x)} \, dx^1 \cdots
    dx^n \\
    &= \int_Y \left( \inf \int_0^1 \sqrt{\tr_{g_t(x)}(h_t(x)^2)
        \frac{\det g_t(x)}{\det g(x)}} \sqrt{\det g(x)} \, dt \right) \, dx^1 \cdots
    dx^n \\
    &= \int_Y \left( \inf \int_0^1 \sqrt{\tr_{g_t(x)}(h_t(x)^2)
        \frac{\det g_t(x)}{\det \tilde{g}(x)}} \sqrt{\det \tilde{g}(x)} \, dt \right) \, dx^1 \cdots
    dx^n \\
    &= \Theta^{\tilde{g}}_Y(g_0,g_1),
  \end{align*}
  where the last line follows from running the first lines of the
  computation through in reverse.
\end{proof}

\begin{lem}\label{lem:44}
  Let any $Y \subseteq M$ be given.  Then $\Theta_Y$ is a pseudometric
  on $\M$, and $\Theta_M$ is a metric (in the sense of metric spaces).
  
  Furthermore, if $Y_1 \subset Y_2$, then $\Theta_{Y_1}(g_0, g_1)
  \leq \Theta_{Y_2}(g_0, g_1)$ for all $g_0, g_1 \in \M$.
\end{lem}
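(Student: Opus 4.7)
The plan is to reduce everything to pointwise statements about the finite-dimensional Riemannian manifold $(\Matx, \langle \cdot, \cdot \rangle^0)$ and then use integration to pass back to $\M$.

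First I would dispense with the pseudometric axioms for $\Theta_Y$. Symmetry, nonnegativity, and $\Theta_Y(g,g) = 0$ are immediate from the corresponding properties of the Riemannian distance function $\theta^g_x$ on the finite-dimensional manifold $\Matx$. The triangle inequality $\Theta_Y(g_0, g_2) \leq \Theta_Y(g_0, g_1) + \Theta_Y(g_1, g_2)$ follows by applying the pointwise triangle inequality for $\theta^g_x$ inside the integrand and then using linearity and monotonicity of the integral. The monotonicity statement ``$Y_1 \subset Y_2$ implies $\Theta_{Y_1} \leq \Theta_{Y_2}$'' is immediate since the integrand $\theta^g_x(g_0(x), g_1(x)) \geq 0$ everywhere. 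Before doing any of this I would verify that $x \mapsto \theta^g_x(g_0(x), g_1(x))$ is at least measurable (indeed, I expect it to be continuous) so that the integral is well-defined.

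The substantive content is the positive definiteness of $\Theta_M$. Suppose $g_0, g_1 \in \M$ are distinct. Then, by continuity of both tensor fields, there exists an open set $U \subseteq M$ on which $g_0(x) \neq g_1(x)$. On this set, $\theta^g_x(g_0(x), g_1(x)) > 0$ pointwise, since $\theta^g_x$ is the Riemannian distance function of a (smooth, honest) finite-dimensional Riemannian metric on $\Matx$, and such a distance function is automatically positive definite. To conclude that $\int_U \theta^g_x(g_0(x), g_1(x))\, \mu_g(x) > 0$, I need the integrand to be bounded below by some positive constant on a subset of $U$ of positive $\mu_g$-measure. Since $\theta^g_x(g_0(x), g_1(x))$ depends continuously on $x$ (through the continuous variation of $g(x)$, $g_0(x)$, and $g_1(x)$, together with continuity of the distance function of a smoothly varying family of Riemannian metrics on a fixed finite-dimensional manifold), and is strictly positive on the open set $U$, we may find a smaller open $V \subset U$ with $\inf_{x \in V} \theta^g_x(g_0(x), g_1(x)) =: c > 0$, giving $\Theta_M(g_0, g_1) \geq c \cdot \Vol(V, g) > 0$.

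The main obstacle I anticipate is establishing the continuity (or at least sufficient regularity) of $x \mapsto \theta^g_x(g_0(x), g_1(x))$ needed to justify the last step. This amounts to a general fact about smoothly varying families of Riemannian metrics on a fixed finite-dimensional base, and though intuitively clear, a fully rigorous justification takes some care: one should either quote a general continuity-of-distance result or give a direct argument by comparing minimizing curves. Once that is in hand, everything else is bookkeeping from pointwise statements.
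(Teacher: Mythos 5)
Your argument is correct and follows essentially the same route as the paper: the pseudometric axioms and the monotonicity in $Y$ are integrated pointwise from $\theta^g_x$, and positive definiteness of $\Theta_M$ comes from the fact that two distinct smooth metrics differ on an open set, on which $\theta^g_x(g_0(x),g_1(x))>0$. The continuity issue you flag is not actually needed beyond measurability of $x \mapsto \theta^g_x(g_0(x),g_1(x))$ (required anyway for $\Theta_Y$ to be defined): a measurable function that is strictly positive on a set of positive $\mu_g$-measure has positive integral, since some set $\{\theta^g_x(g_0(x),g_1(x))>1/n\}$ must have positive measure.
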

\begin{proof}
  Nonnegativity, vanishing distance for equal elements, symmetry and
  the triangle inequality are clear from the corresponding properties
  for $\theta^g_x$.

  That $\Theta_M$ is positive definite is also not hard to prove.
  Since $\theta^g_x$ is a metric on $\M_x$, $\theta^g_x(g_0(x),
  g_1(x)) > 0$ whenever $g_0(x) \neq g_1(x)$.  But since $g_0$ and
  $g_1$ are smooth metrics, if they differ at a point, they differ
  over an open neighborhood of that point.  Hence the integral of
  $\theta^g_x(g_0(x), g_1(x))$ must be positive.

  The second statement follows immediately from nonnegativity of
  $\theta^g_x$.
\end{proof}

\subsection{Proof of the main result}\label{sec:proof-main-result}

We have set up everything we need to prove the main result of this
section---that $d$ is a metric.  To do this, we use Lemma \ref{lem:13}
in order to control the volume of the metrics making up a path in
terms of the length of that path, combined with a Hölder's inequality
argument, and show that the pseudometrics $\Theta_Y$ provide a lower
bound for the distance between elements of $\M$ as measured by $d$.

\begin{prop}\label{prop:20}
  For any $Y \subseteq M$ and $g_0, g_1 \in \M$, we have the following
  inequality:
  \begin{equation*}
    \Theta_Y(g_0, g_1) \leq d(g_0, g_1) \left( \sqrt{n}\, d(g_0, g_1) +
      2 \sqrt{\Vol(M, g_0)} \right).
  \end{equation*}
  In particular, $\Theta_Y$ is a continuous pseudometric (w.r.t.~$d$).
\end{prop}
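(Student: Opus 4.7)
The plan is to bound $\Theta_Y(g_0,g_1)$ by lengths of paths in $\M$, using the key observation that the scale factor $\sqrt{\det(g(x)^{-1}\tilde{g})}$ appearing in $\langle\cdot,\cdot\rangle^0$ was chosen precisely so that, when integrated against $\mu_g$, it reproduces $\mu_{\tilde{g}}$.

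First I would fix a smooth path $g_t:[0,1]\to\M$ joining $g_0$ to $g_1$ and set $h_t:=g'_t$. For each $x\in M$, the curve $t\mapsto g_t(x)$ is a path in $\Matx$ from $g_0(x)$ to $g_1(x)$, so by definition of $\theta^g_x$ as a Riemannian distance,
\begin{equation*}
  \theta^g_x(g_0(x),g_1(x)) \leq \integral{0}{1}{\sqrt{\tr_{g_t(x)}(h_t(x)^2)\,\det(g(x)^{-1}g_t(x))}}{dt}.
\end{equation*}
Integrating this inequality against $\mu_g$ over $Y$ and applying Fubini, I would use the identity $\sqrt{\det(g(x)^{-1}g_t(x))}\,\mu_g(x)=\mu_{g_t}(x)$ to rewrite the right-hand side as
\begin{equation*}
  \Theta_Y(g_0,g_1) \leq \integral{0}{1}{\left(\integral{Y}{}{\sqrt{\tr_{g_t}(h_t^2)}}{\mu_{g_t}}\right)}{dt}.
\end{equation*}

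Next I would apply the Cauchy--Schwarz inequality pointwise in $t$ to the inner integral, using $Y\subseteq M$ to enlarge the domain where convenient:
\begin{equation*}
  \integral{Y}{}{\sqrt{\tr_{g_t}(h_t^2)}}{\mu_{g_t}} \leq \|h_t\|_{g_t}\sqrt{\Vol(Y,g_t)} \leq \|h_t\|_{g_t}\sqrt{\Vol(M,g_t)}.
\end{equation*}
Then Lemma~\ref{lem:13}, applied to the restricted path $g_s|_{[0,t]}$, controls the running volume:
\begin{equation*}
  \sqrt{\Vol(M,g_t)} \leq \sqrt{\Vol(M,g_0)} + \frac{\sqrt{n}}{4}\,L(g_s|_{[0,t]}).
\end{equation*}

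Substituting and writing $\phi(t):=\|h_t\|_{g_t}$ and $L(t):=\integral{0}{t}{\phi(s)}{ds}$, I would obtain
\begin{equation*}
  \Theta_Y(g_0,g_1) \leq \sqrt{\Vol(M,g_0)}\,L(1) + \frac{\sqrt{n}}{4}\integral{0}{1}{\phi(t)L(t)}{dt}.
\end{equation*}
The last integral is $\frac{1}{2}L(1)^2$ since $\phi=L'$, and $L(1)=L(g_t)$. Taking the infimum over all paths replaces $L(g_t)$ by $d(g_0,g_1)$, yielding a bound at least as strong as the stated one (with constants $1$ and $\sqrt{n}/8$ rather than $2$ and $\sqrt{n}$). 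Continuity of $\Theta_Y$ with respect to $d$ follows by combining the displayed inequality with the triangle inequality for $\Theta_Y$. The only subtle step is the measure identity converting $\langle\cdot,\cdot\rangle^0$-length integrated against $\mu_g$ into the $L^2$ expression against $\mu_{g_t}$; everything else is Fubini, Cauchy--Schwarz, and bookkeeping.
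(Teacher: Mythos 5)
Your argument is correct and is essentially the paper's proof read in the opposite direction: the same pointwise bound of $\theta^g_x$ by the $\langle\cdot,\cdot\rangle^0$-length, the same identity $\sqrt{\det(g^{-1}g_t)}\,\mu_g=\mu_{g_t}$, H\"older/Cauchy--Schwarz, exchange of the $t$- and $x$-integrations, and Lemma \ref{lem:13} to control the volume along the path. The only difference is bookkeeping: you keep the running bound $\sqrt{\Vol(M,g_t)}\le\sqrt{\Vol(M,g_0)}+\tfrac{\sqrt{n}}{4}L(t)$ and integrate $\phi L$ exactly, rather than fixing a path with $L\le 2\,d(g_0,g_1)$ and using a uniform volume bound $V$ as the paper does, which is why you obtain slightly sharper constants that still imply the stated inequality.
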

\begin{proof}
  By Lemma \ref{lem:44}, we need only prove the inequality for $Y =
  M$, and then it follows for any subset.

  We can clearly find a path $g_t$ from $g_0$ to $g_1$ with $L(g_t)
  \leq 2 d(g_0, g_1)$.  Then for any $\tau \in [0,1]$, we get
  \begin{equation*}
    2 d(g_0, g_1) \geq L(g_t) \geq L \left( g_t|_{[0,\tau]} \right) \geq d(g_0,
    g_\tau) \geq \frac{4}{\sqrt{n}} \left| \sqrt{\Vol(M, g_\tau)} -
      \sqrt{\Vol(M, g_0)} \right|,
  \end{equation*}
  where the last inequality is Lemma \ref{lem:13}.  In particular, we
  get
  \begin{equation}\label{eq:48}
    \sqrt{\Vol(M, g_\tau)} \leq \sqrt{\Vol(M, g_0)} +
    \frac{\sqrt{n}}{2} d(g_0, g_1) =: V
  \end{equation}
  for all $\tau \in [0,1]$.

  To find the length of $g_t$, we first integrate $\langle g'_t, g'_t
  \rangle$ over $x \in M$, then take the square root, and finally
  integrate over $t$.  Ideally, we would wish to change the order of
  integration, so that we first integrate over $t$, then over $x$.  We
  cannot do this exactly, but we can bound the computation of the
  length from below by an expression where we integrate in the
  opposite order, and this expression will involve $\theta^g_x$ and
  $\Theta_M$.  So let's see how this works.

  Let $h_t := g'_t$.  From Hölder's inequality,
  \begin{equation*}
    \int_M \sqrt{\tr_{g_t} (h_t^2)} \, d \mu_{g_t} \leq \left( \int_M
      \, d \mu_{g_t} \right)^{1/2} \left( \int_M \tr_{g_t} (h_t^2)
      \, d \mu_{g_t} \right)^{1/2},
  \end{equation*}
  which gives
  \begin{equation}\label{eq:30}
    \begin{aligned}
      \| h_t \|_{g_t} &= \left( \int_M \tr_{g_t} (h_t^2) \, d
        \mu_{g_t} \right)^{1/2} \geq \frac{1}{\sqrt{\Vol(M,g_t)}}
      \int_M
      \sqrt{\tr_{g_t} (h_t^2)} \, d \mu_{g_t} \\
      &\geq \frac{1}{V} \int_M \sqrt{\tr_{g_t} (h_t^2)} \, d
      \mu_{g_t},
    \end{aligned}
  \end{equation}
  where we have also used \eqref{eq:48}.  To remove the
  $t$-dependence from the volume element, we use
  \begin{equation*}
    \mu_{g_t} = \frac{\sqrt{\det g_t}}{\sqrt{\det g}} \mu_{g} =
    \sqrt{\det G_t} \mu_{g}.
  \end{equation*}
  We then rewrite (\ref{eq:30}) as
  \begin{equation}\label{eq:31}
    \| h_t \|_{g_t} \geq \frac{1}{V} \int_M
    \sqrt{\tr_{g_t}(h_t^2) \det G_t} \, \mu_{g} = \frac{1}{V}
    \integral{M}{}{\sqrt{\langle h_t(x) , h_t(x) \rangle^0_{g_t(x)}}}{\mu_g(x)},
  \end{equation}
  where we have used the Riemannian metric $\langle \cdot , \cdot
  \rangle^0$ on $\M_x$ (cf.~Definition \ref{dfn:15}).

  Since we have removed the $t$-dependence from the measure above, we
  can change the order of integration in the calculation of the length
  of $g_t$:
  \begin{equation}\label{eq:93}
    \begin{aligned}
      L(g_t) &= \integral{0}{1}{\| h_t \|_{g_t}}{d t} \geq \frac{1}{V}
      \integral{0}{1}{\integral{M}{}{\sqrt{\langle h_t(x),
            h_t(x) \rangle^0_{g_t(x)}}}{\mu_g(x)}}{d t} \\
      &= \frac{1}{V} \integral{M}{}{\integral{0}{1}{\sqrt{\langle
            h_t(x), h_t(x) \rangle^0_{g_t(x)}}}{d t}}{\mu_g(x)}.
    \end{aligned}
  \end{equation}
  Now we concentrate on the $t$-integral in the expression above.
  Since $g_t(x)$ is a path in $\M_x$ from $g_0(x)$ to $g_1(x)$ with
  tangents $h_t(x)$, the $t$-integral is actually the length of
  $g_t(x)$ with respect to $\langle \cdot , \cdot \rangle^0$.  But by
  definition, this length is bounded from below by $\theta^g_x(g_0(x),
  g_1(x))$.  Therefore, we can rewrite \eqref{eq:93} as
  \begin{equation*}
    L(g_t) \geq \frac{1}{V} \integral{M}{}{\theta^g_x(g_0(x),
      g_1(x))}{\mu_g(x)} = \frac{1}{V} \Theta_M(g_0, g_1).
  \end{equation*}
  But now the result is immediate given \eqref{eq:48} and the fact
  that we have assumed $L(g_t) \leq 2 d(g_0, g_1)$.
\end{proof}

The previous proposition allows us to achieve our goal for this
section.  Since $\Theta_M$ is a (positive-definite) metric by Lemma
\ref{lem:44}, $\Theta_M(g_0, g_1) > 0$ for any $g_0 \neq g_1$.  From
this, Proposition \ref{prop:20} immediately implies that $d(g_0, g_1)
> 0$ as well.  Since we have already mentioned that the distance
function induced by a weak Riemannian manifold is automatically a
pseudometric, we have proved:

\begin{thm}\label{thm:6}
  $(\M, d)$, where $d$ is the distance function induced from the $L^2$
  metric $(\cdot, \cdot)$, is a metric space.
\end{thm}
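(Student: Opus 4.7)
The plan is to combine Proposition \ref{prop:20} with the positive definiteness of $\Theta_M$ from Lemma \ref{lem:44}. Since $(\cdot,\cdot)$ is a weak Riemannian metric, the function $d$ is automatically a pseudometric: nonnegativity, symmetry, vanishing on the diagonal, and the triangle inequality all follow from the definition of $d$ as an infimum of lengths of piecewise $C^1$ paths, as discussed in Subsection \ref{sec:weak-riem-frech}. The only remaining task is to prove positive definiteness, i.e.~that $g_0 \neq g_1$ implies $d(g_0, g_1) > 0$.

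To do this, let $g_0, g_1 \in \M$ with $g_0 \neq g_1$. Because $g_0$ and $g_1$ are smooth, if they disagree at one point then they disagree on an open set, and on that open set the finite-dimensional Riemannian distance $\theta^g_x(g_0(x), g_1(x))$ is strictly positive. Integrating against $\mu_g$ as in Definition \ref{dfn:16} gives $\Theta_M(g_0, g_1) > 0$, consistent with Lemma \ref{lem:44}.

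Next I would invoke Proposition \ref{prop:20} with $Y = M$, which yields
\begin{equation*}
  \Theta_M(g_0, g_1) \leq d(g_0, g_1) \bigl( \sqrt{n}\, d(g_0, g_1) + 2 \sqrt{\Vol(M, g_0)} \bigr).
\end{equation*}
Since the left-hand side is strictly positive, the right-hand side cannot vanish, and hence $d(g_0, g_1) > 0$. This completes the proof.

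The main obstacle is not in this concluding deduction but in the construction of the auxiliary metric $\Theta_M$ and the derivation of Proposition \ref{prop:20}; the delicate ingredient there is the Lipschitz bound of Lemma \ref{lem:13}, which controls $\sqrt{\Vol(M, g_t)}$ uniformly along an almost-minimizing path in terms of $\sqrt{\Vol(M, g_0)}$ and $d(g_0, g_1)$. Combined with a H\"older inequality (to pass from an $L^2$ to an $L^1$ bound on the tangent) and a Fubini-type interchange of the $t$- and $x$-integrals, this lets the length of any path between $g_0$ and $g_1$ dominate $\Theta_M(g_0, g_1)$ up to the factor above, which is exactly the estimate feeding the argument here.
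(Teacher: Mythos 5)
Your proposal is correct and follows essentially the same route as the paper: the paper likewise notes that $d$ is automatically a pseudometric, invokes the positive definiteness of $\Theta_M$ from Lemma \ref{lem:44} (via the same open-set argument for smooth metrics), and concludes $d(g_0,g_1)>0$ from the estimate of Proposition \ref{prop:20} with $Y=M$. Your closing remarks about Lemma \ref{lem:13}, H\"older's inequality, and the interchange of integrals also match how the paper actually derives that estimate.
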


\section{The completion of an amenable subset}\label{sec:compl-an-amen}

Now that we know that $\M$ is a metric space, we begin the study of
its completion in this section.  According to the plan of attack laid
out at the beginning of the chapter, we will work on completing more
and more general subsets of $\M$.  This section is concerned with
so-called amenable subsets, defined below, consisting of metrics that
are somehow uniformly bounded and inflated.  The main result of the
section is that the completion of such a subset with respect to $d$
coincides with the completion with respect to the $L^2$ norm on $\s$,
the vector space in which $\M$ resides.

Note the difference to the case of a strong Riemannian manifold, where
Theorem \ref{thm:25} guarantees that the topology induced by the
Riemannian metric agrees with the manifold topology.  Here, the weaker
topology of the tangent spaces $T_{\tilde{g}} \M$ with the weak
Riemannian metric $(\cdot, \cdot)$ is reflected in the weaker topology
induced by the Riemannian distance function $d$ on an amenable subset.

\subsection{Amenable subsets and their
  properties}\label{sec:amen-subs-their}

Let's make the above-mentioned notion of being uniformly bounded and
inflated precise.  Recall that we work over an amenable atlas
(cf.~Definition \ref{dfn:1}).

\begin{dfn}\label{dfn:2}
  We call a subset $\U \subset \M$ \emph{amenable} if $\U$ is convex
  and we can find constants $C, \delta > 0$ such that for all
  $\tilde{g} \in \U$, $x \in M$ and $1 \leq i,j \leq n$,
  \begin{equation*}
    \lambda^{\tilde{G}}_{\textnormal{min}}(x) \geq \delta
  \end{equation*}
  (where we recall that $\tilde{G} = g^{-1} \tilde{g}$, with $g$ our
  fixed metric) and
  \begin{equation*}
    |\tilde{g}_{ij}(x)| \leq C.
  \end{equation*}
\end{dfn}

\begin{rmk}\label{rmk:1}
  We make a few remarks about the definition:
  \begin{enumerate}
  \item Recall from Definition \ref{dfn:23} that a semimetric
    $\tilde{g}$ is inflated if $\det \tilde{G}$ is bounded away from
    zero.  Above, we have instead used the condition
    $\lambda^{\tilde{G}}_{\textnormal{min}} \geq \delta$, but this
    does indeed imply that the metrics of an amenable subset are
    uniformly inflated.  This is because $\det \tilde{G} \geq \left(
      \lambda^{\tilde{G}}_{\textnormal{min}} \right)^n$, the
    determinant being the product of the eigenvalues.
  \item We could also have defined an amenable subset using the $C^0$
    topology on $\M \subset \s$.  Namely, let $\mathrm{cl}(\U) \subset
    \s$ be the closure of $\U$ in the $C^0$ topology of $\s$, and let
    $\partial \M$ be the boundary of $\M$ in this topology.
    ($\partial \M$ consists of semimetrics that fail to be positive
    definite and so have determinant $0$ at at least one point.)  Then
    $\U$ is amenable if and only if $\U$ is bounded in the $C^0$ norm
    on $\s$ and $\mathrm{cl}(\U) \cap \partial \M = \emptyset$.
  \item The requirement that $\U$ is convex is technical, and is there
    to insure that we can consider simple, straight-line paths between
    points of $\U$ to estimate the distance between them.
  \item \label{item:1} Recall that the function sending a matrix to
    its minimal eigenvalue is concave by Lemma \ref{lem:46}.  Also,
    the absolute value function on $\R$ is convex by the triangle
    inequality.  Therefore, the two bounds given in Definition
    \ref{dfn:2} are compatible with the requirement of convexity.
  \end{enumerate}
\end{rmk}

One useful property the metrics $\tilde{g}$ of an amenable subset have
is that the Radon-Nikodym derivatives $( \mu_{\tilde{g}} / \mu_g )$,
with respect to the reference volume form $\mu_g$, are bounded away
from zero and infinity independently of $\tilde{g}$.

\begin{lem}\label{lem:49}
  Let $\U$ be an amenable subset.  Then there exists a constant $K >
  0$ such that for all $\tilde{g} \in \U$,
  \begin{equation}\label{eq:130}
    \frac{1}{K} \leq \left( \frac{\mu_{\tilde{g}}}{\mu_g} \right) \leq K
  \end{equation}
\end{lem}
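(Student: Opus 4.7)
The plan is to use the pointwise formula $(\mu_{\tilde{g}}/\mu_g)(x) = \sqrt{\det \tilde{G}(x)}$ from \eqref{eq:8} and then to bound $\det \tilde{G}$ from above and below uniformly over $\U$ and $M$ by exploiting the two uniform conditions in the definition of amenability.

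For the lower bound, I would observe that since the determinant of a matrix is the product of its eigenvalues, the uniform eigenvalue bound $\lambda^{\tilde{G}}_{\textnormal{min}}(x) \geq \delta$ (which holds for every $\tilde{g} \in \U$ and every $x \in M$) gives
\begin{equation*}
  \det \tilde{G}(x) \geq \bigl( \lambda^{\tilde{G}}_{\textnormal{min}}(x) \bigr)^n \geq \delta^n,
\end{equation*}
so $(\mu_{\tilde{g}}/\mu_g)(x) \geq \delta^{n/2}$ uniformly in $\tilde{g}$ and $x$. This is the easy direction and requires no further choice of coordinates beyond what is implicit in the statement that the eigenvalues of $\tilde{G}$ are a well-defined function on $M$ (cf.~Subsection \ref{sec:endom-bundle-m}).

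For the upper bound I would work in the fixed amenable atlas $\{(U_\alpha, \phi_\alpha)\}$. Writing $\det \tilde{G} = \det \tilde{g} / \det g$ locally, the task splits into bounding $\det \tilde{g}$ from above and $\det g$ from below. The uniform bound $|\tilde{g}_{ij}(x)| \leq C$ on each $U_\alpha$ together with the polynomial expansion of the determinant (as noted in Remark \ref{rmk:2}) yields $\det \tilde{g}(x) \leq n! \, C^n$ for all $\tilde{g} \in \U$ and all $x$. For $\det g$, I would apply Lemma \ref{lem:47} (or just continuity of $\det g$ together with compactness of $M$ and finiteness of the amenable atlas) to obtain a positive constant $c_g > 0$ such that $\det g(x) \geq c_g$ for all $x \in M$; here it is crucial that amenability of the atlas allows these local bounds to be assembled into a global one. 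Combining gives $\det \tilde{G} \leq n! \, C^n / c_g$, hence an upper bound on $(\mu_{\tilde{g}}/\mu_g)$ that is independent of $\tilde{g} \in \U$.

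There is no real obstacle: the computation is essentially a bookkeeping exercise gluing Lemma \ref{lem:47} and the amenability constants together. The only mild subtlety is remembering to do the upper bound in coordinates (since the condition $|\tilde{g}_{ij}| \leq C$ is coordinate-dependent), while the lower bound is coordinate-free. Taking $K := \max\{ \delta^{-n/2},\ \sqrt{n! \, C^n / c_g} \}$ then yields \eqref{eq:130}.
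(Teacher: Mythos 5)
Your proof is correct and follows essentially the same route as the paper: the lower bound via $\det \tilde{G} \geq (\lambda^{\tilde{G}}_{\textnormal{min}})^n \geq \delta^n$ is identical, and the upper bound differs only cosmetically (you bound $\det \tilde{g}$ above and $\det g$ below, while the paper bounds the entries of $\tilde{g}$ and of the fixed cometric $g^{-1}$ — the same estimate in a slightly different packaging). Incidentally, your use of $(\mu_{\tilde{g}}/\mu_g) = \sqrt{\det \tilde{G}}$ is the correct form of \eqref{eq:8}; in either normalization the uniform two-sided bound on $\det \tilde{G}$ gives \eqref{eq:130}.
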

\begin{proof}
  First, we note that
  \begin{equation*}
    \left( \frac{\mu_{\tilde{g}}}{\mu_g} \right) = \det \tilde{G} \quad
    \textnormal{and} \quad \left( \frac{\mu_{\tilde{g}}}{\mu_g}
    \right)^{-1} = \left( \frac{\mu_g}{\mu_{\tilde{g}}}
    \right) = (\det \tilde{G})^{-1}.
  \end{equation*}
  So the bounds \eqref{eq:130} are equivalent to upper bounds on both
  $\det \tilde{G}$ and $(\det \tilde{G})^{-1}$.
  
  Now, if the eigenvalues of $\tilde{G}$ are $\lambda^{\tilde{G}}_1,
  \dots, \lambda^{\tilde{G}}_n$, then
  \begin{equation*}
    \det \tilde{G} = \lambda^{\tilde{G}}_1
    \cdots \lambda^{\tilde{G}}_n \geq \left(
      \lambda^{\tilde{G}}_{\textnormal{min}} \right)^n \geq \delta^n,
  \end{equation*}
  where $\delta$ is the constant guaranteed by the fact that
  $\tilde{g} \in \U$.  This allows us to bound $(\det \tilde{G})^{-1}$
  from above.

  To bound $\det \tilde{G}$ from above, it is sufficient to bound the
  absolute value of the coefficients of $\tilde{G} = g^{-1} \tilde{g}$
  from above.  But bounds on the coefficients of $\tilde{g}$ are
  already assured by the fact that $\tilde{g} \in \U$, and bounds on
  the coefficients of $g^{-1}$ are guaranteed by the fact that
  $g^{-1}$ is a fixed, smooth cometric on $M$.  So we are finished.
\end{proof}

Amenable subsets guarantee good behavior of the norms on $\s$ that are
defined by their members---namely, the norms are in some sense
``uniformly equivalent''.  More precisely, we have:

\begin{lem}\label{lem:18}
  Let $\U \subset \M$ be an amenable subset.  Then there exists a
  constant $K$ such that for all pairs $g_0, g_1 \in \U$ and all $h
  \in \s$,
  \begin{equation*}
    \frac{1}{K} \| h \|_{g_1} \leq \| h \|_{g_0} \leq
    K \| h \|_{g_1}.
  \end{equation*}
\end{lem}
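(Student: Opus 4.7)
The plan is to establish uniform equivalence of the norms $\|\cdot\|_{g_0}$ and $\|\cdot\|_{g_1}$ by separately controlling the pointwise integrand $\tr_{\tilde g}(h^2)$ and the volume form $\mu_{\tilde g}$, then combining via the integral definition of $\|\cdot\|_{\tilde g}$. The most natural route is to bound everything first against the fixed reference metric $g$, then use transitivity of uniform equivalence.

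First I would extract, from the amenability conditions, uniform pointwise control on the eigenvalues of $\tilde G = g^{-1}\tilde g$ for all $\tilde g \in \U$. The lower bound $\lambda^{\tilde G}_{\textnormal{min}}(x) \geq \delta$ is built into the definition. For the upper bound, $|\tilde g_{ij}(x)| \leq C$ together with boundedness of the coefficients of $g^{-1}$ in amenable coordinates gives a uniform bound on the entries of $\tilde G$, and hence (e.g.\ by Frobenius norm) a uniform upper bound $D$ on $\lambda^{\tilde G}_{\textnormal{max}}(x)$. Thus $\tilde G$ takes values in a fixed compact subset of $g$-positive-definite self-adjoint endomorphisms of $T_x M$, uniformly in $x$.

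Next, to compare pointwise scalar products, I would use the identity $\tr_{\tilde g}(h^2) = \tr\bigl(\tilde G^{-1} H \tilde G^{-1} H\bigr)$ where $H = g^{-1}h$. Working in a $g$-orthonormal frame at $x$ that diagonalizes $\tilde G(x)$ with eigenvalues $\mu_1,\dots,\mu_n \in [\delta, D]$, this expression becomes $\sum_{i,j} (h_{ij})^2/(\mu_i\mu_j)$, while $\tr_g(h^2) = \tr(H^2) = \sum_{i,j}(h_{ij})^2$. Therefore
\begin{equation*}
  \frac{1}{D^2}\,\tr_g(h^2)(x) \;\leq\; \tr_{\tilde g}(h^2)(x) \;\leq\; \frac{1}{\delta^2}\,\tr_g(h^2)(x)
\end{equation*}
for every $\tilde g \in \U$, every $x \in M$, and every $h \in \s$. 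By transitivity, $\tr_{g_0}(h^2)$ and $\tr_{g_1}(h^2)$ are pointwise uniformly equivalent, with a constant $K_1$ depending only on $\U$.

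Finally, Lemma \ref{lem:49} supplies a constant $K_2$ with $K_2^{-1} \leq (\mu_{\tilde g}/\mu_g) \leq K_2$ for all $\tilde g \in \U$, hence $(\mu_{g_0}/\mu_{g_1})$ is bounded by $K_2^2$. Integrating the pointwise bound against $\mu_{g_0}$ and then switching volume forms gives
\begin{equation*}
  \|h\|_{g_0}^2 \;=\; \int_M \tr_{g_0}(h^2)\,\mu_{g_0} \;\leq\; K_1^2 K_2^2 \int_M \tr_{g_1}(h^2)\,\mu_{g_1} \;=\; K_1^2 K_2^2\,\|h\|_{g_1}^2,
\end{equation*}
and setting $K := K_1 K_2$ yields the claim (the reverse inequality follows by symmetry). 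The argument is essentially bookkeeping; the only place one must be careful is ensuring the pointwise bounds in Step 2 are genuinely uniform in both $x$ and $\tilde g$, which is exactly what amenability is designed to guarantee.
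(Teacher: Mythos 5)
Your proof is correct, and every step is adequately justified: the identity $\tr_{\tilde g}(h^2)=\tr(\tilde G^{-1}H\tilde G^{-1}H)$, the diagonalization of $\tilde G$ in a $g$-orthonormal frame, and the resulting pointwise bounds $D^{-2}\tr_g(h^2)\leq\tr_{\tilde g}(h^2)\leq\delta^{-2}\tr_g(h^2)$ all check out, as does the final bookkeeping with Lemma \ref{lem:49}. Your overall skeleton matches the paper's: both reduce to comparing each $\tilde g\in\U$ with the fixed reference metric $g$, and both split the problem into a pointwise comparison of the fiber scalar products plus a comparison of volume forms handled by Lemma \ref{lem:49}. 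Where you diverge is in the pointwise step. The paper phrases it as a uniform bound on the operator norms $N(T_{\tilde g})(x)$, $N(T_{\tilde g}^{-1})(x)$ of the identity map between $(\satx,\langle\cdot,\cdot\rangle_{\tilde g})$ and $(\satx,\langle\cdot,\cdot\rangle_g)$, and obtains it by a soft continuity--compactness argument: the function $\tilde N$ assigning the operator norm to a pair of positive definite matrices is continuous, and amenability makes the set of matrix values $\{g(x)\}\cup\{\tilde g(x)\}$ relatively compact in the positive definite cone, so $\tilde N$ is bounded there. You instead compute the comparison constant directly by eigenvalue estimates on $\tilde G$, using $\lambda^{\tilde G}_{\textnormal{min}}\geq\delta$ from the definition of amenability and an upper bound $D$ on $\lambda^{\tilde G}_{\textnormal{max}}$ extracted from the coefficient bound and the fixed cometric $g^{-1}$ over the amenable atlas. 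The trade-off: the paper's argument is shorter to state and avoids any frame computation, but yields no explicit constant; yours is more elementary and quantitative, giving $K=(D/\delta)K_2$ explicitly, and it makes transparent exactly how the two amenability bounds (inflation and boundedness) enter --- which is precisely the dependence the paper's compactness argument hides.
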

\begin{proof}

  Instead of showing that the norms of any two metrics $g_0, g_1 \in
  \U$ are equivalent, we will show that the norm of any $\tilde{g} \in
  \U$ is equivalent to that of our reference metric $g$, i.e., there
  exists a constant $K$ independent of $\tilde{g}$ such that
  \begin{equation}\label{eq:128}
    \frac{1}{K} \| h \|_g \leq \| h \|_{\tilde{g}} \leq K \| h \|_g
  \end{equation}
  for all $h \in \s$.

  This is equivalent to the following statement.  Let
  \begin{equation*}
    T_{\tilde{g}} : (S^2 T^* M, \langle \cdot , \cdot
    \rangle_{\tilde{g}}) \rightarrow (S^2 T^* M, \langle \cdot, \cdot \rangle_g)
  \end{equation*}
  be the identity mapping on the level of sets, sending the bundle
  $S^2 T^* M$ with the Riemannian structure $\langle \cdot, \cdot
  \rangle_{\tilde{g}}$ to itself with the Riemannian structure
  $\langle \cdot , \cdot \rangle_g$.  Let $N(T_{\tilde{g}})(x)$ be the
  operator norm of $T_{\tilde{g}}(x) : \satx \rightarrow \satx$, and
  let $N(T_{\tilde{g}}^{-1})(x)$ be defined similarly.  Then
  \begin{align*}
    \| h \|_g^2 &= \integral{M}{}{\langle T_{\tilde{g}}(x) h(x),
      T_{\tilde{g}}(x) h(x) \rangle_{g(x)}}{\mu_g(x)} \\
    &\leq \integral{M}{}{(N(T_{\tilde{g}})(x))^2 \langle h(x), h(x)
      \rangle_{\tilde{g}(x)} \left( \frac{\mu_g}{\mu_{\tilde{g}}}
      \right)(x)}{\mu_{\tilde{g}}(x)}
  \end{align*}
  and similarly,
  \begin{equation*}
    \| h \|_{\tilde{g}}^2 \leq \integral{M}{}{(N(T_{\tilde{g}}^{-1})(x))^2
      \langle h(x), h(x) \rangle_{\tilde{g}(x)} \left(
        \frac{\mu_{\tilde{g}}}{\mu_g} \right)}{\mu_g(x)}.
  \end{equation*}
  So \eqref{eq:128} holds if and only if there are constants $K_0$ and
  $K_1$ such that
  \begin{equation*}
    N(T_{\tilde{g}})(x)^2, N(T_{\tilde{g}}^{-1})(x)^2
    \leq K_0 \quad \textnormal{and} \quad  (\mu_g / \mu_{\tilde{g}}), (\mu_{\tilde{g}} / \mu_g)
    \leq K_1.
  \end{equation*}

  This last statement is the one we'll prove.  The existence of the
  constant $K_1$ is guaranteed by Lemma \ref{lem:49}.  So we need to
  show the existence of the constant $K_0$.

  To do this, first note that $N(T_{\tilde{g}})$ and
  $N(T_{\tilde{g}}^{-1})$ are continuous functions on $M$ for fixed
  $\tilde{g}$.  This follows immediately from the fact that $g$ and
  $\tilde{g}$ are smooth.  (Of course, it would even suffice for them
  to be continuous.)  Secondly, we notice that $N(T_{\tilde{g}})(x)$
  and $N(T_{\tilde{g}}^{-1})(x)$ depend only on the coordinate
  representations of $\tilde{g}(x)$ and $g(x)$.
  
  Let $SP_n$ denote the set of all positive definite scalar products
  on $\R^n$, which we can identify with the set of all positive
  definite $n \times n$ symmetric matrices.  Let's define a function
    \begin{equation*}
    \tilde{N} : SP_n \times SP_n \rightarrow \R
  \end{equation*}
  by setting $\tilde{N}(a, b)$ to be equal to the operator norm of
  \begin{equation*}
    \id : (\R^n, a) \rightarrow (\R^n, b).
  \end{equation*}
  That is, $\tilde{N}(a,b)$ is the smallest number such that
  \begin{equation*}
    b(v,v) \leq \tilde{N}(a,b) \cdot a(v,v)
  \end{equation*}
  for all $v \in \R^n$.

  It is not hard to see that $\tilde{N}$ is continuous in both of its
  arguments, with the topology on $SP_n$ coming from its
  identification with the space of positive definite symmetric
  matrices.  Furthermore, by the arguments above, we have
  \begin{equation}\label{eq:129}
    N(T_{\tilde{g}})(x) = \tilde{N}(\tilde{g}(x), g(x)) \quad
    \textnormal{and} \quad N(T_{\tilde{g}}^{-1})(x) = \tilde{N}(g(x), \tilde{g}(x)),
  \end{equation}
  where we of course define $\tilde{N}$ in these cases using the
  coordinate representations of $g(x)$ and $\tilde{g}(x)$ in some
  chart around $x$.  (The value of $\tilde{N}$ won't depend on the
  chart.)  Furthermore, by the bounds satisfied by metrics in an
  amenable subset and the continuity of $g$, the set
  \begin{equation*}
    A := \{ g(x) \mid x \in M \} \cup \{ \tilde{g}(x) \mid x \in M,\ \tilde{g} \in \U \}
  \end{equation*}
  is relatively compact when viewed as a subset of the space of
  positive definite symmetric matrices.  Therefore $\tilde{N}|_{A
    \times A}$ is bounded.  But then \eqref{eq:129} immediately
  implies the existence of the constant $K_0$.
\end{proof}

Lemma \ref{lem:49} immediately implies that the function $\tilde{g}
\mapsto \Vol(M,\tilde{g})$ is bounded when restricted to any amenable
subset.  Recalling the form of the estimate in Proposition
\ref{prop:20} then shows the following lemma.

\begin{lem}\label{lem:26}
  Let $\U$ be an amenable subset and $g \in \M$.  Then there exists a
  constant $V$ such that for any $g_0, g_1 \in \U$ and $Y \subset M$,
  \begin{equation*}
    \Theta_Y(g_0, g_1) \leq 2 d(g_0, g_1) \left( \frac{2 \sqrt{n}}{4}d(g_0, g_1) +
      \sqrt{V} \right).
  \end{equation*}
  More precisely, $V = \sup_{\tilde{g} \in \U}\Vol(M,\tilde{g})$,
  which is finite by the discussion preceding the lemma.
\end{lem}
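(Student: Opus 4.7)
The plan is to recognize that Lemma \ref{lem:26} is essentially a direct corollary of Proposition \ref{prop:20}, upgraded by replacing the metric-dependent volume $\Vol(M, g_0)$ with the uniform bound $V$ that is available on an amenable subset. The main work has already been done; all that remains is to verify that $V$ is finite and to do a bit of algebraic rearrangement.

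First, I would verify finiteness of $V = \sup_{\tilde{g} \in \U} \Vol(M, \tilde{g})$. By Lemma \ref{lem:49}, there is a constant $K$ such that $(\mu_{\tilde{g}} / \mu_g) \leq K$ for every $\tilde{g} \in \U$. Integrating this pointwise bound against the fixed reference volume form $\mu_g$ yields
\begin{equation*}
  \Vol(M, \tilde{g}) = \integral{M}{}{\left( \frac{\mu_{\tilde{g}}}{\mu_g} \right)}{\mu_g} \leq K \cdot \Vol(M, g) < \infty,
\end{equation*}
so $V \leq K \Vol(M, g) < \infty$, as claimed in the discussion preceding the lemma.

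Next, I would simply apply Proposition \ref{prop:20} to $g_0, g_1 \in \U$ and the given $Y \subseteq M$. This gives
\begin{equation*}
  \Theta_Y(g_0, g_1) \leq d(g_0, g_1)\left( \sqrt{n}\, d(g_0, g_1) + 2 \sqrt{\Vol(M, g_0)} \right).
\end{equation*}
Since $g_0 \in \U$, we have $\sqrt{\Vol(M, g_0)} \leq \sqrt{V}$, so the right-hand side is bounded above by $d(g_0, g_1)(\sqrt{n}\, d(g_0, g_1) + 2 \sqrt{V})$. Rewriting $\sqrt{n} = 2 \cdot \frac{2\sqrt{n}}{4}$ and factoring out a $2$ produces exactly the stated form
\begin{equation*}
  2 d(g_0, g_1) \left( \frac{2 \sqrt{n}}{4} d(g_0, g_1) + \sqrt{V} \right),
\end{equation*}
completing the proof.

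There is no real obstacle here; the only subtle point is confirming that the uniform volume bound follows cleanly from Lemma \ref{lem:49}, which it does after a single integration. The rest is just substitution and rearrangement of the inequality already established in Proposition \ref{prop:20}.
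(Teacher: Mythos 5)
Your proposal is correct and follows exactly the paper's (implicit) argument: Lemma \ref{lem:49} gives the uniform bound on $\Vol(M,\tilde{g})$ over $\U$, and then the estimate of Proposition \ref{prop:20} is applied with $\Vol(M,g_0)$ replaced by $V$, the stated form being just an algebraic rewriting. Nothing is missing.
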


\subsection{The completion of $\U$ with respect to $d$ and $\| \cdot \|_g$}\label{sec:completion-u-with}

We are now ready to prove a result that, in particular, implies
equivalence of the topologies defined by $d$ and $\| \cdot \|_g$ on an
amenable subset $\U$.

\begin{thm}\label{thm:5}
  Consider the $L^2$ topology on $\M$ induced from the scalar product
  $(\cdot,\cdot)_g$ (where $g$ is fixed).  Let $\U \subset \M$ be any
  amenable subset.

  Then the $L^2$ topology on $\U$ coincides with the topology induced
  from the restriction of the Riemannian distance function $d$ of $\M$
  to $\U$.

  Additionally, the following holds:
  \begin{enumerate}
  \item There exists a constant $K$ such that
    \begin{equation*}
      d(g_0,g_1) \leq K \| g_1 - g_0 \|_g,
    \end{equation*}
    for all $g_0,g_1 \in \U$.
  \item \label{item:9} For any $\epsilon > 0$, there exists $\delta >
    0$ such that if $d(g_0,g_1) < \delta$, then $\| g_0 - g_1 \|_g <
    \epsilon$.
  \end{enumerate}
\end{thm}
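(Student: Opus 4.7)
\emph{Approach.} The two numbered estimates are the heart of the theorem, and the topological equivalence follows at once from them: (1) shows that $L^2$-convergent sequences in $\U$ are $d$-convergent, and (2) shows the converse. The plan is to prove the two inequalities in turn, leaning on the uniform bounds built into the definition of an amenable subset together with Lemma \ref{lem:18} and Lemma \ref{lem:26}.

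\emph{Proof of (1).} Since $\U$ is convex (Definition \ref{dfn:2}), for any $g_0, g_1 \in \U$ the straight-line path $g_t := (1 - t) g_0 + t g_1$ lies entirely in $\U$ and has constant tangent $h := g_1 - g_0 \in \s$. Its $(\cdot, \cdot)$-length is
\begin{equation*}
  L(g_t) = \integral{0}{1}{\| h \|_{g_t}}{d t} \leq \integral{0}{1}{K \| h \|_g}{d t} = K \| g_1 - g_0 \|_g,
\end{equation*}
the middle inequality being Lemma \ref{lem:18}, which furnishes a constant $K$ depending only on $\U$ for which $\| \cdot \|_{g_t} \leq K \| \cdot \|_g$ whenever $g_t \in \U$. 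Hence $d(g_0, g_1) \leq L(g_t) \leq K \| g_1 - g_0 \|_g$.

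\emph{Proof of (2).} The strategy is a three-step chain: $d$ controls $\Theta_M$ (Lemma \ref{lem:26}); $\Theta_M$ controls the $L^1$-norm of $g_1 - g_0$ via a pointwise comparison between $\theta^g_x$ and the Euclidean distance on $\satx$; and the uniform $C^0$-bound afforded by amenability upgrades this $L^1$ control to $L^2$ control. Concretely, I will establish constants $c, N > 0$ depending only on $\U$ such that for all $g_0, g_1 \in \U$ and all $x \in M$,
\begin{equation*}
  | g_0(x) - g_1(x) |_{g(x)} \leq N
  \quad \textnormal{and} \quad
  | g_0(x) - g_1(x) |_{g(x)} \leq c \cdot \theta^g_x(g_0(x), g_1(x)),
\end{equation*}
where $|a|_{g(x)} := \sqrt{\langle a, a \rangle_{g(x)}}$ is the pointwise $g$-norm on $\satx$. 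The first bound follows immediately from the uniform upper bound on $|\tilde{g}_{ij}|$ in Definition \ref{dfn:2} (together with the smoothness, hence boundedness, of $g$). Multiplying the two bounds gives $| g_0(x) - g_1(x) |_{g(x)}^2 \leq N c\, \theta^g_x(g_0(x), g_1(x))$; integrating over $M$ against $\mu_g$ and applying Lemma \ref{lem:26} yields
\begin{equation*}
  \| g_0 - g_1 \|_g^2 \leq N c\, \Theta_M(g_0, g_1) \leq 2 N c\, d(g_0, g_1) \left( \frac{\sqrt{n}}{2} d(g_0, g_1) + \sqrt{V} \right),
\end{equation*}
which vanishes as $d(g_0, g_1) \to 0$, giving the desired statement.

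\emph{Main obstacle.} The nontrivial ingredient is the second pointwise estimate, namely a bi-Lipschitz comparison of the Riemannian distance $\theta^g_x$ on $\Matx$ with the Euclidean distance inherited from the vector space $\satx$, with constants uniform in $x \in M$. The amenability bounds on $|\tilde{g}_{ij}|$ and on $\lambda^{\tilde G}_{\textnormal{min}}$ confine the values $\tilde{g}(x)$ for $\tilde{g} \in \U$ to a set of matrices that is relatively compact in the open cone $\Matx$ of positive-definite symmetric matrices, uniformly in $x$; on such a compact region, where the Riemannian metric $\langle \cdot, \cdot \rangle^0$ defining $\theta^g_x$ is smooth and nondegenerate, its distance function is bi-Lipschitz comparable to the Euclidean one. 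The delicate part is ensuring the minimizing $\theta^g_x$-curves between two points of $\U(x)$ do not leave the region of good control; this can be arranged by passing to a slightly larger geodesically convex compact neighborhood and using the smoothness of $g$ together with the compactness of $M$ to obtain constants $c$ independent of $x$.
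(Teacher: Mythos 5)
Part (1) of your proposal is exactly the paper's argument (the straight-line path in the convex set $\U$ plus Lemma \ref{lem:18}), so nothing to say there. Part (2) is where you diverge from the paper, and the gap sits precisely at what you call the main obstacle: the uniform pointwise bound $|g_1(x)-g_0(x)|_{g(x)} \leq c\, \theta^g_x\bigl(g_0(x), g_1(x)\bigr)$. Your justification --- bi-Lipschitz comparison of $\theta^g_x$ with the Euclidean distance on a compact region, with near-minimizing curves confined by ``passing to a slightly larger geodesically convex compact neighborhood'' --- does not close it. The distance $\theta^g_x$ is an infimum over \emph{all} paths in $\Matx$; the manifold $(\Matx, \langle \cdot,\cdot\rangle^0)$ is incomplete, minimizers between two given points need not exist, and convexity of some larger compact set neither produces them nor prevents competitor paths from leaving it. The genuine danger is that paths can shortcut through the region near $\partial \Matx$, where the $\langle\cdot,\cdot\rangle^0$-geometry collapses: by Propositions \ref{prop:18} and \ref{prop:25}, any two points are within $\theta^g_x$-distance controlled by their determinants alone, no matter how far apart they are as tensors. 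So ``bi-Lipschitz on a compact region'' is not available off the shelf. The inequality you need is nevertheless true, but the correct mechanism is a quantitative dichotomy rather than convexity: any competitor path that dips below half the uniform lower determinant bound furnished by amenability already has $\langle\cdot,\cdot\rangle^0$-length bounded below by a fixed positive constant (Lemma \ref{lem:34}), which suffices because $|g_1(x)-g_0(x)|_{g(x)}$ is uniformly bounded on $\U$; while a path staying above that determinant threshold has $\langle\cdot,\cdot\rangle^0$-length at least a fixed multiple of its $\langle\cdot,\cdot\rangle$-length, hence of $d_x\bigl(g_0(x),g_1(x)\bigr)$, and $d_x$ --- the complete distance with the explicit geodesics of Proposition \ref{prop:12} --- does admit a uniform linear lower bound against $|g_1(x)-g_0(x)|_{g(x)}$ on the set of matrices allowed by the amenability bounds. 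This is exactly the estimate \eqref{eq:65} in the proof of Proposition \ref{prop:14}; note that those tools appear only in Chapter \ref{cha:almost-everywh-conv}, so to keep the theorem self-contained you would have to reproduce them here.

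If the pointwise lemma is repaired in this way, the rest of your argument (multiply by the uniform $C^0$ bound, integrate, apply Lemma \ref{lem:26}) goes through and in fact yields the quantitative estimate $\| g_1 - g_0 \|_g^2 \leq C\, d(g_0,g_1)\bigl(d(g_0,g_1) + \sqrt{V}\bigr)$, which is strictly more than statement (2). The paper takes a softer route that avoids any lower bound on $\theta^g_x$: using relative compactness of $\{\tilde g(x) : \tilde g \in \U\}$ and compactness of $M$, it produces a modulus $\eta(\zeta) \to 0$ such that $\theta^g_x$-balls of radius $\zeta$ lie in $\langle\cdot,\cdot\rangle_{g(x)}$-balls of radius $\eta(\zeta)$, uniformly in $x$ and in the center, together with a uniform bound $C_0$ on $\theta^g_x(g_0(x),g_1(x))$; it then splits $M$ into the set where $\theta^g_x(g_0(x),g_1(x)) \leq \zeta$ (small integrand) and its complement, whose $g$-volume is controlled by $\Theta_M(g_0,g_1)/\zeta$ and hence by $d(g_0,g_1)$ via Lemma \ref{lem:26}. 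That argument buys uniform continuity with no hard pointwise comparison, at the price of a non-quantitative $\epsilon$--$\delta$ conclusion; your approach, once the dichotomy lemma is actually proved, buys an explicit modulus.
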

\begin{proof}
  First, we show there is a constant $K$ such that
  \begin{equation*}
    d(g_0,g_1) \leq K \| g_1 - g_0 \|_g
  \end{equation*}
  for all $g_0, g_1 \in \U$.  Consider the path
  \begin{equation*}
    g_t := g_0 + t h, \quad h := g_1 - g_0,\ t \in [0,1],
  \end{equation*}
  which runs from $g_0$ to $g_1$.  Note that we can clearly find an
  amenable subset $\U'$ containing $\U$ and $g$.  We then have
  \begin{equation}\label{eq:38}
    L(g_t) = \int_0^1 \| (g_t)' \|_{g_t} \, dt = \int_0^1 \| h
    \|_{g_t} \, dt \leq \int_0^1 K \| h \|_g \, dt = K \| g_1 - g_0 \|_g
  \end{equation}
  where $K$ is the constant associated to $\U'$ guaranteed by Lemma
  \ref{lem:18}.  Since $d(g_0,g_1) \leq L(g_t)$ and the constant $K$
  depends only on the set $\U$, this inequality is shown.

  We now turn to proving statement (2).  Let $\epsilon > 0$ therefore
  be given.  Our plan is to use the Riemannian metric $\langle \cdot ,
  \cdot \rangle^0$ and its distance function $\theta^g_x$ to get
  pointwise bounds on $\| g_1 - g_0 \|_g$ based on $d(g_0,g_1)$.  We
  then use the bounds guaranteed by the fact that we work over an
  amenable subset in order to show that our pointwise estimates are
  uniform.  Finally, we use a variant of a thick-thin decomposition of
  $M$, where $\tr_g((g_1 - g_0)^2)$ is small on the ``thin'' part and
  the ``thick'' part has volume bounded in terms of $d(g_0,g_1)$.

  Since $\Matx$ is a finite-dimensional Riemannian manifold, the
  topology induced from $\theta^g_x$ is the same as the manifold
  topology, which in turn is given by any norm on $\satx$.  For
  instance this norm is given by the scalar product $\langle \cdot ,
  \cdot \rangle_{g(x)}$ on $\satx$, which we recall is given by
  \begin{equation}\label{eq:41}
    \langle h , k \rangle_{g(x)} = \tr_{g(x)}(h k)
  \end{equation}
  for $h, k \in \satx$.  That these two topologies are the same
  implies, in particular, that for all $\zeta > 0$ and $\tilde{g} \in
  \Matx$, we can find $\kappa > 0$ such that
  \begin{equation*}
    B^{\theta^g_x}_{\tilde{g}}(\zeta) \subset B^{\langle \cdot, \cdot \rangle_{g(x)}}_{\tilde{g}}(\kappa),
  \end{equation*}
  where
  \begin{align*}
    B^{\langle \cdot , \cdot \rangle_{g(x)}}_{\tilde{g}}(\kappa) &:= \left\{ \hat{g} \in \Matx \mid \sqrt{
      \langle \hat{g} - \tilde{g}, \hat{g} - \tilde{g} \rangle_{g(x)}} < \kappa
    \right\}, \\
    B^{\theta^g_x}_{\tilde{g}}(\zeta) &:= \left\{ \hat{g} \in \Matx
      \mid \theta^g_x(\hat{g}, \tilde{g}) < \zeta \right\}.
  \end{align*}

  Now, for $x \in M$ and $\tilde{g} \in \M$, we define a function
  $\eta_{x, \tilde{g}}(\zeta)$ by
  \begin{align*}
    \eta_{x,\tilde{g}}(\zeta) &:= \inf \left\{ \kappa \in \R \mid
      B^{\theta^g_x}_{\tilde{g}(x)}(\zeta) \subset B^{\langle \cdot , \cdot
        \rangle_{g(x)}}_{\tilde{g}(x)}(\kappa)
    \right\} \\
    &:= \inf \left\{ \kappa \in \R \mid \sqrt{ \langle \hat{g} -
        \tilde{g}(x), \hat{g} - \tilde{g}(x) \rangle_{g(x)}} < \kappa\
      \forall\ \hat{g}\ \mathrm{with}\ \theta^g_x(\hat{g},\tilde{g}(x)) <
      \zeta \right\}.
  \end{align*}
  Then, because of the smooth dependence of $\langle \cdot, \cdot
  \rangle^0$ and $\langle \cdot , \cdot \rangle_{g(x)}$ on $x$,
  $\eta_{x,\tilde{g}}(\zeta)$ is continuous separately in $x$ and
  $\tilde{g}$.  If we define
  \begin{equation*}
    \U_x :=
    \left\{
      \hat{g}(x) \mid \hat{g} \in \U
    \right\},
  \end{equation*}
  then $\U_x$ is a relatively compact subset of $\Matx$, since $\U$ is
  amenable.  Since $M$ is also compact, for any fixed $\zeta > 0$, we
  can define a function
  \begin{equation*}
    \eta(\zeta) := \sup_{\substack{x \in M \\ \tilde{g} \in \U}}
    \eta_{x,\tilde{g}}(\zeta)
    < \infty.
  \end{equation*}
  It follows from the definition that $\eta(\zeta) \rightarrow 0$
  for $\zeta \rightarrow 0$.

  Because of the relative compactness of $\U_x$ for each $x \in M$,
  together with compactness of $M$, there exists a constant $C_0$ such
  that $\theta^g_x(g_0(x), g_1(x)) \leq C_0$ for all $g_0,g_1 \in \U$
  and $x \in M$.  This implies immediately that
  \begin{equation*}
    \Theta_M(g_0, g_1) = \integral{M}{}{\theta^g_x (g_0(x), g_1(x))}{\mu_g(x)} \leq C_0 \Vol(M,g).
  \end{equation*}

  Now, choose $\zeta > 0$ small enough that
  \begin{equation*}
    \eta(\zeta) < \frac{\epsilon}{\sqrt{2 \Vol(M,g)}}.
  \end{equation*}

  By Lemma \ref{lem:26}, there exists a constant $V$ such that
  \begin{equation}\label{eq:46}
    \Theta_M(g_0, g_1) \leq 2 d(g_0, g_1) \left( \frac{2 \sqrt{n}}{4}d(g_0, g_1) +
      \sqrt{V} \right)
  \end{equation}
  for all $g_0, g_1 \in \U$.
  
  Choose $\delta$ small enough that
  \begin{equation*}
    2 \delta \left( \frac{2 \sqrt{n}}{4} \delta + \sqrt{V} \right) <
    \frac{\epsilon^2 \zeta}{2 \eta(C_0)^2}.
  \end{equation*}
  We claim that $d(g_0, g_1) < \delta$ implies that $\| g_1 - g_0 \|_g
  < \epsilon$.  Note that the choices of $\zeta$ and $C_0$ were
  made independently of $g_0$ and $g_1$, hence $\delta$ is independent
  of $g_0$ and $g_1$, as required.
  
  We define two closed subsets of $M$ by
  \begin{align*}
    M_+ &:= \left\{ x \in M \mid \theta^g_x (g_0(x), g_1(x)) \geq
      \zeta
    \right\}, \\
    M_- &:= \left\{ x \in M \mid \theta^g_x (g_0(x), g_1(x)) \leq
      \zeta \right\}.
  \end{align*}
  From \eqref{eq:46} and our choice of $\delta$, we have that
  \begin{equation}\label{eq:37}
    \int_M \theta^g_x (g_0(x), g_1(x)) \, \mu_g(x) = \Theta_M(g_0, g_1) <
    \frac{\epsilon^2 \zeta}{2 \eta(C_0)^2}.
  \end{equation}
  This inequality also holds if we integrate over $M_+$ instead of all
  of $M$, so
  \begin{equation*}
    \zeta \Vol(M_+, g) = \zeta \integral{M_+}{}{}{\mu_g} \leq
    \integral{M_+}{}{\theta^g_x (g_0(x), g_1(x))}{\mu_g(x)} < \frac{\epsilon^2
      \zeta}{2 \eta(C_0)^2},
  \end{equation*}
  implying
  \begin{equation*}
    \Vol(M_+, g_0) < \frac{\epsilon^2}{2 \eta(C_0)^2}.
  \end{equation*}

  From the definitions of $M_-$ and $\eta$, we have that
  \begin{equation*}
    \sqrt{\langle g_1(x) - g_0(x), g_1(x) - g_0(x) \rangle_{g(x)}} \leq
    \eta(\zeta)
  \end{equation*}
  on $M_-$.  From $\theta^g_x(g_0(x), g_1(x)) \leq C_0$,
  we have that
  \begin{equation*}
    \sqrt{\langle g_1(x) - g_0(x), g_1(x) - g_0(x) \rangle_{g(x)}}
    \leq \eta(C_0)
  \end{equation*}
  on all of $M$, and in particular on $M_+$.  Using this, we compute
  \begin{align*}
    \| g_1 - g_0 \|_g^2 &= \integral{M}{}{\langle g_1(x) - g_0(x),
      g_1(x) - g_0(x) \rangle_{g(x)}}{\mu_g(x)} \\
    &= \integral{M_-}{}{\langle g_1(x) - g_0(x), g_1(x) - g_0(x)
      \rangle_{g(x)}}{\mu_g(x)} \\
    &\quad + \integral{M_+}{}{\langle g_1(x) -
      g_0(x),
      g_1(x) - g_0(x) \rangle_{g(x)}}{\mu_g(x)} \\
    &\leq \eta(\zeta)^2 \integral{M_-}{}{}{\mu_g} + \eta(C_0)^2
    \integral{M_+}{}{}{\mu_g} \\
    &< \eta(\zeta)^2 \Vol(M,g) + \eta(C_0)^2 \frac{\epsilon^2}{2
      \eta(C_0)^2} \\
    &< \frac{\epsilon^2}{2} + \frac{\epsilon^2}{2} = \epsilon^2.
  \end{align*}
  This proves the second statement.
\end{proof}

Let's now equip $\M$ with the $H^s$ topology for some fixed $s > n/2$.
From Remark \ref{rmk:1}(2), we immediately get continuity (but not
Lipschitz continuity) of the Riemannian distance function on \emph{all
  of} $\M$, not just amenable subsets.

\begin{cor}\label{cor:1}
  The Riemannian distance function $d$ of $(\cdot,\cdot)$ is
  continuous in the $H^s$ topology on $\M$ for all fixed $s > n/2$.
\end{cor}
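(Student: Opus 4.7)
The plan is to reduce continuity of $d$ on $\M$ with the $H^s$ topology to the Lipschitz estimate of Theorem~\ref{thm:5}(1) on an amenable subset, by exploiting the Sobolev embedding $H^s \hookrightarrow C^0$ that holds for $s > n/2$. By the triangle inequality $|d(g_k, h_k) - d(g, h)| \leq d(g_k, g) + d(h_k, h)$, it suffices to show that whenever $g_k \to g$ in the $H^s$ topology, we have $d(g_k, g) \to 0$.

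Fix a reference point $g \in \M$. By Lemma~\ref{lem:47}, there exist constants $C(g), \delta(g) > 0$ bounding $|g_{ij}|$ from above by $C(g)$ and $\lambda^G_{\mathrm{min}}$ from below by $\delta(g)$ in every amenable coordinate chart. I would then define
\begin{equation*}
  \U := \left\{ \tilde{g} \in \M \midmid |\tilde{g}_{ij}(x)| \leq 2 C(g),\ \lambda^{\tilde{G}}_{\mathrm{min}}(x) \geq \tfrac{1}{2} \delta(g)\ \text{for all } x, i, j \right\}.
\end{equation*}
This set is convex: the upper bound on $|\tilde{g}_{ij}|$ is preserved by convex combinations via the triangle inequality, while the lower bound on $\lambda^{\tilde{G}}_{\mathrm{min}}$ is preserved by Lemma~\ref{lem:46}, which asserts concavity of the minimal eigenvalue. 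Hence $\U$ is an amenable subset in the sense of Definition~\ref{dfn:2}, and $g \in \U$ by construction (with strict inequalities in both bounds).

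Now suppose $g_k \to g$ in the $H^s$ topology with $s > n/2$. By the Sobolev embedding theorem (see the discussion in Subsection~\ref{sec:manifolds-mappings}), the $H^s$ norm dominates the $C^0$ norm, so $g_k \to g$ uniformly on $M$ in any fixed amenable chart. Combined with continuity of the minimal eigenvalue (Lemma~\ref{lem:45}) under uniform convergence of the coefficients, this implies that for $k$ sufficiently large, $g_k$ lies in $\U$. Applying Theorem~\ref{thm:5}(1) to the amenable subset $\U$, there is a constant $K$ such that
\begin{equation*}
  d(g_k, g) \leq K \| g_k - g \|_g
\end{equation*}
for all large $k$. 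Finally, $H^s$-convergence trivially implies $L^2$-convergence (since the $L^2$ norm is weaker), so $\| g_k - g \|_g \to 0$, giving $d(g_k, g) \to 0$ as desired.

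The argument is essentially a bookkeeping exercise rather than presenting any real obstacle; the only point requiring care is the verification that $\U$ is both convex and contains a neighborhood of $g$ in the $C^0$ topology, and both of these follow cleanly from the concavity/convexity observations already collected in Remark~\ref{rmk:1}(4). All the deep work has already been done in Theorem~\ref{thm:5}; the Sobolev embedding simply provides the bridge from $H^s$-topology convergence to the $C^0$-based hypotheses of amenability and the $L^2$-based estimate.
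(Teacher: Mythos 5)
Your argument is correct and is essentially the paper's own proof: Sobolev embedding for $s > n/2$ gives $C^0$ convergence, which places the tail of the sequence together with the limit in an amenable subset, and Theorem~\ref{thm:5} then yields $d(g_k, g) \to 0$. The only difference is cosmetic — you build the amenable set explicitly from the bounds of Lemma~\ref{lem:47} and verify convexity via Lemma~\ref{lem:46}, whereas the paper invokes the $C^0$-closure characterization of amenability in Remark~\ref{rmk:1}(2).
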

\begin{proof}
  Suppose we have $\tilde{g} \in \M$ and a sequence $g_n
  \rightarrow_{H^s} \tilde{g}$.  Since $s > n/2$, the Sobolev
  Embedding Theorem implies that $g_n \rightarrow_{C^0} \tilde{g}$.
  In particular, from Remark \ref{rmk:1}(2) we see that the set $\{g_n
  \mid n \in \N\} \cup \{ \tilde{g} \}$ is contained in some amenable
  subset $\U \in \M$.  Therefore, Theorem \ref{thm:5} gives
  $d(g_n,\tilde{g}) \rightarrow 0$, showing continuity.
\end{proof}

By Definition \ref{dfn:18} of a Fréchet space, Corollary \ref{cor:1}
then immediately implies:

\begin{cor}\label{cor:2}
  The Riemannian distance function $d$ of $(\cdot,\cdot)$ is
  continuous in the $C^\infty$ (manifold) topology on $\M$.
\end{cor}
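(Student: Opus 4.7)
The plan is to observe that convergence in the Fr\'echet ($C^\infty$) topology is strictly stronger than convergence in any single $H^s$ topology, and then to invoke Corollary \ref{cor:1} directly. Since $\M$ is an open subset of the Fr\'echet space $\s$, its manifold topology is the subspace topology, which by the discussion in Subsection \ref{sec:manifolds-mappings} is generated by the countable family of $H^s$-norms $\{\|\cdot\|_{H^s} \mid s \in \N\}$. By Definition \ref{dfn:18}, this topology is metrizable, so continuity of $d$ reduces to sequential continuity.

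Concretely, suppose $g_k \to \tilde{g}$ in the $C^\infty$ topology on $\M$. By the definition of the Fr\'echet topology given by a countable family of seminorms, this means $\|g_k - \tilde{g}\|_{H^s} \to 0$ for \emph{every} $s \in \N$. In particular, fixing any single $s > n/2$, we have $g_k \to \tilde{g}$ in $H^s$. Corollary \ref{cor:1} then applies and yields $d(g_k, \tilde{g}) \to 0$.

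Since $d$ is symmetric and defined on $\M \times \M$, to conclude continuity of $d$ as a map $(\M, C^\infty) \times (\M, C^\infty) \to \R$ we apply the same argument to pairs: if $(g_k, h_k) \to (\tilde{g}, \tilde{h})$ in the product topology, then both components converge in some $H^s$ with $s > n/2$, and the triangle inequality together with Corollary \ref{cor:1} gives $|d(g_k, h_k) - d(\tilde{g}, \tilde{h})| \leq d(g_k, \tilde{g}) + d(h_k, \tilde{h}) \to 0$. There is no real obstacle here: the entire content lies in Corollary \ref{cor:1}, and the present corollary is simply a restatement in the weaker (Fr\'echet) topology, using that $C^\infty$-convergence implies $H^s$-convergence for each fixed $s$.
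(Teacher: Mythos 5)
Your argument is correct and is essentially the paper's own: the $C^\infty$ topology is generated by the family of $H^s$ norms, so $C^\infty$-convergence gives $H^s$-convergence for any fixed $s > n/2$, and Corollary \ref{cor:1} then yields the result. The extra remark on joint continuity via the triangle inequality is fine but adds nothing beyond what Corollary \ref{cor:1} already provides.
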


Theorem \ref{thm:5} will give us our first result regarding the
completion of $\M$.  First, though, we need to make some definitions
and prove a statement about metric spaces.

\begin{dfn}\label{dfn:4}
  We define
  \begin{align*}
    \s^0 &:= H^0(S^2 T^* M) \\
    \M^0 &:=
    \left\{
      g^0 \in \s^0 \mid g^0(x) > 0\ \text{for almost all}\ x \in M
    \right\}.
  \end{align*}
  That is, $\s^0$ consists of all $H^0$ (i.e., $L^2$) symmetric
  $(0,2)$-tensor fields.  $\M^0$ consists of the elements of $\s^0$
  that induce a positive-definite scalar product on almost every
  tangent space of $M$.  Thus, $\s^0$ and $\M^0$ are the completions
  of $\s$ and $\M$, respectively, with respect to the \emph{fixed}
  norm $\| \cdot \|_g$.  (At the moment, this has nothing to do with
  the completion of $\M$ with respect to $d$.)

  If $\U \subset \M$ is any subset, we define
  \begin{equation*}
    \U^0 := \left\{
      g^0 \in \M^0 \midmid \exists g_n^0 \in \U,\ n \in \N : g^0_n
      \xrightarrow{L^2} g^0
    \right\},
  \end{equation*}
  that is, $\U^0$ is the $L^2$-completion of $\U$.
\end{dfn}

\begin{rmk}\label{rmk:6}
  A couple of remarks on the definition.
  \begin{enumerate}
  \item Note that $\M^0$ is \emph{not} open in the $L^2$ topology on
    $\s$.  In fact, even more is true: the interior of $\M^0$ is empty
    with respect to the $L^2$ topology.  (We will prove this
    explicitly in Lemma \ref{lem:9}.)  This fundamental point implies
    that we cannot place a manifold structure on $\M^0$ (or $\U^0$),
    at least not one with the natural model space $\s^0$.  Therefore,
    in light of Theorem \ref{thm:7} below, we will not be able to give
    a manifold structure to the completion of an amenable subset.
    That $\M^0$ is not open is also related to the fact that the
    exponential mapping of $\M$ is not defined on any $L^2$-open
    subset in any tangent space.
  \item Elements of $\U^0$ satisfy the same bounds as elements of $\U$
    at almost all $x \in M$.
  \end{enumerate}
\end{rmk}

Let's look back at Theorem \ref{thm:5} again.  The first statement
says that for any amenable subset $\U$ and any $g \in \M$, $d$ is
Lipschitz continuous with respect to $\| \cdot \|_g$ when viewed as a
function on $\U \times \U$.  The second statement says that $\| \cdot
\|_g$ is uniformly continuous on $\U \times \U$ with respect to $d$.
To put this knowledge to good use, we will need the following lemma:

\begin{lem}\label{lem:20}
  Let $X$ be a set, and let two metrics, $d_1$ and $d_2$, be defined
  on $X$.  Denote by $\phi : (X,d_1) \rightarrow (X, d_2)$ the map
  which is the identity on the level of sets, i.e., $\phi$ simply maps
  $x \mapsto x$.  Finally, denote by $\overline{X}^1$ and
  $\overline{X}^2$ the completions of $X$ with respect to $d_1$ and
  $d_2$, respectively.

  If both $\phi$ and $\phi^{-1}$ are uniformly continuous, then there
  is a natural homeomorphism between $\overline{X}^1$ and
  $\overline{X}^2$.
\end{lem}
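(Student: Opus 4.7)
The plan is to invoke Theorem \ref{thm:29}(3) twice and then check that the two resulting extensions are mutually inverse.

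First, I view $\phi : (X, d_1) \to (X, d_2)$ as a uniformly continuous map from $(X, d_1)$ into the complete metric space $\overline{X}^2$ (using the canonical isometric embedding $(X, d_2) \hookrightarrow \overline{X}^2$ from Theorem \ref{thm:29}(2)). By Theorem \ref{thm:29}(3), this extends uniquely to a uniformly continuous map $\bar\phi : \overline{X}^1 \to \overline{X}^2$. Applying the same reasoning to $\phi^{-1}$, which is uniformly continuous by hypothesis, yields a uniformly continuous extension $\overline{\phi^{-1}} : \overline{X}^2 \to \overline{X}^1$.

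Next, I would show $\overline{\phi^{-1}} \circ \bar\phi = \id_{\overline{X}^1}$ and $\bar\phi \circ \overline{\phi^{-1}} = \id_{\overline{X}^2}$. Both $\overline{\phi^{-1}} \circ \bar\phi$ and $\id_{\overline{X}^1}$ are continuous maps $\overline{X}^1 \to \overline{X}^1$, and they agree on the dense subset $X \subset \overline{X}^1$ (since on $X$ the composition is $\phi^{-1} \circ \phi = \id_X$). Two continuous maps into a Hausdorff space that agree on a dense subset must be equal, so the identity holds on all of $\overline{X}^1$. The other composition is handled symmetrically.

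Since $\bar\phi$ is uniformly continuous with uniformly continuous inverse $\overline{\phi^{-1}}$, it is in particular a homeomorphism $\overline{X}^1 \to \overline{X}^2$, and by construction it restricts to $\phi = \id_X$ on the dense subset $X$, justifying the word ``natural.''

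I do not expect any real obstacle here: the uniform continuity hypothesis is exactly what is needed to apply Theorem \ref{thm:29}(3), and the rest is a standard density argument. The only point worth being careful about is that in Theorem \ref{thm:29}(3) one needs the target to be \emph{complete}, which is why one must extend $\phi$ as a map into $\overline{X}^2$ rather than into $(X, d_2)$ itself, and similarly for $\phi^{-1}$.
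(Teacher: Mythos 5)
Your argument is correct. It differs from the paper's in how the homeomorphism is verified: the paper works directly with the Cauchy-sequence model of the completion, observing that uniform continuity of $\phi$ and $\phi^{-1}$ forces $d_1$ and $d_2$ to have the same Cauchy sequences and the same equivalence relation \eqref{eq:97} on them, so that $\overline{X}^1$ and $\overline{X}^2$ are literally the same set of equivalence classes, with the map then identified as the extension of $\phi$ from Theorem \ref{thm:29}(\ref{item:2}). You instead invoke the extension property twice --- extending $\phi$ and $\phi^{-1}$ separately into the respective completions --- and recover bijectivity by the density argument that $\overline{\phi^{-1}} \circ \bar\phi$ and $\bar\phi \circ \overline{\phi^{-1}}$ agree with the identities on the dense copies of $X$ and hence everywhere (metric spaces being Hausdorff). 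Both routes rest on the same underlying fact that uniformly continuous maps send Cauchy sequences to Cauchy sequences; yours is the more abstract ``universal property'' argument and never unpacks what points of the completion are, while the paper's version yields the slightly more concrete payoff that the two completions consist of exactly the same equivalence classes of sequences, which is the form in which the identification is used later (e.g., in Theorem \ref{thm:7}). Your care in extending $\phi$ as a map into $\overline{X}^2$ rather than into $(X,d_2)$, so that the target is complete, is exactly the right point to flag.
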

\begin{proof}
  Recall the definition of the completion $\overline{X}^i$ of the
  metric space $(X,d_i)$, $i=1,2$, from Section
  \ref{sec:compl-metr-spac}.  It is formed of the equivalence classes
  of Cauchy sequences $\{x_n\}$, with metric (again denoted by $d_i$)
  given by
  \begin{equation*}
    d_i (\{x_n\}, \{y_n\}) = \lim d_i (x_n, y_n).
  \end{equation*}

  Since a uniformly continuous function maps Cauchy sequences to
  Cauchy sequences, our assumptions on $\phi$ and $\phi^{-1}$ imply
  that $d_1$ and $d_2$ have the same Cauchy sequences.  Thus, we only
  need to prove that the equivalence classes of these Cauchy sequences
  are the same in $\overline{X}^1$ and $\overline{X}^2$, that is,
  \begin{equation}\label{eq:137}
    \lim d_1 (\{x_n\}, \{y_n\}) = 0 \  \Longleftrightarrow \  \lim d_2 (\{x_n\}, \{y_n\}) = 0.
  \end{equation}
  But this is immediate from the uniform continuity of $\phi$ and
  $\phi^{-1}$.

  The natural homeomorphism is of course given by the unique uniformly
  continuous extension of $\phi$ to $\overline{X}^1$ (cf.~statement
  (\ref{item:2}) of Theorem \ref{thm:29}).
\end{proof}

We are now ready to state 

\begin{thm}\label{thm:7}
  Let $\U$ be an amenable subset.  Then we can identify
  $\overline{\U}$, the completion of $\U$ with respect to $d$, with
  $\U^0$, in the sense of Lemma \ref{lem:20}.  We can make the natural
  homeomorphism $\overline{\U} \rightarrow \U^0$ into an isometry by
  placing a metric on $\U^0$ defined by
  \begin{equation*}
    d(g_0, g_1) = \lim_{k \rightarrow \infty} d(g^0_k, g^1_k),
  \end{equation*}
  where $\{g^0_k\}$ and $\{g^1_k\}$ are any sequences in $\U$ that
  $L^2$-converge to $g_0$ and $g_1$, respectively.
\end{thm}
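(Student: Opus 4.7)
The plan is to invoke Lemma~\ref{lem:20} directly. Theorem~\ref{thm:5} provides exactly the two ingredients needed: statement~(1) says that the identity map $(\U, \|\cdot\|_g) \to (\U, d)$ is (Lipschitz and hence) uniformly continuous, while statement~(\ref{item:9}) says that the identity map $(\U, d) \to (\U, \|\cdot\|_g)$ is uniformly continuous. Therefore Lemma~\ref{lem:20} yields a natural homeomorphism between the completion $\overline{\U}$ of $\U$ with respect to $d$ and the completion of $\U$ with respect to $\|\cdot\|_g$. The latter is by definition $\U^0$ (as a set of equivalence classes of Cauchy sequences, canonically identified with its image in $\M^0 \subset \s^0$ via $L^2$-limits, using completeness of $\s^0$).

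Next, I would show that the formula
\begin{equation*}
d(g_0, g_1) := \lim_{k \rightarrow \infty} d(g^0_k, g^1_k),
\end{equation*}
gives a well-defined metric on $\U^0$. To see that the limit exists, note that if $\{g^0_k\}, \{g^1_k\} \subset \U$ are $L^2$-Cauchy, then by Theorem~\ref{thm:5}(1),
\begin{equation*}
|d(g^0_k, g^1_k) - d(g^0_l, g^1_l)| \leq d(g^0_k, g^0_l) + d(g^1_k, g^1_l) \leq K\left( \|g^0_k - g^0_l\|_g + \|g^1_k - g^1_l\|_g \right),
\end{equation*}
so $\{d(g^0_k, g^1_k)\}$ is a Cauchy sequence in $\R$. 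The same estimate shows that the limit does not depend on the choice of approximating sequences: if $\tilde{g}^0_k \xrightarrow{L^2} g_0$, then $\|g^0_k - \tilde{g}^0_k\|_g \to 0$, hence $d(g^0_k, \tilde{g}^0_k) \to 0$.

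The resulting function on $\U^0 \times \U^0$ is manifestly symmetric, nonnegative, and satisfies the triangle inequality (all properties pass to the limit). Positive definiteness is the subtle point: one must verify that if $\lim d(g^0_k, g^1_k) = 0$, then $g_0 = g_1$ as elements of $\U^0$, i.e., $\|g_0 - g_1\|_g = 0$. This is precisely Theorem~\ref{thm:5}(\ref{item:9}) applied in the limit: given $\epsilon > 0$, choose $\delta$ from that statement; for $k$ large enough $d(g^0_k, g^1_k) < \delta$, hence $\|g^0_k - g^1_k\|_g < \epsilon$, and taking $k \to \infty$ gives $\|g_0 - g_1\|_g \leq \epsilon$.

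Finally, the homeomorphism from Lemma~\ref{lem:20} becomes an isometry essentially by construction: on the dense subset $\U \subset \overline{\U}$ the two distance functions $d$ agree (using constant sequences), and both metrics are continuous with respect to the common topology, so they coincide on all of $\overline{\U} \cong \U^0$. The main obstacle I anticipate is bookkeeping: making sure that the identification of the abstract completion (equivalence classes of Cauchy sequences) with the concrete set $\U^0 \subset \M^0$ is consistent, and that the four uniformly continuous extensions involved (two identity maps and their inverses) are all mutually compatible. Once this setup is clean, the actual estimates reduce to the two bounds in Theorem~\ref{thm:5}.
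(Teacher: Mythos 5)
Your proposal is correct and follows essentially the same route as the paper: the paper's proof also consists of applying Lemma \ref{lem:20} to the identity map between $(\U, d)$ and $(\U, \hat{d})$ with $\hat{d}(g_0,g_1) = \|g_1 - g_0\|_g$, with both directions of uniform continuity supplied by the two statements of Theorem \ref{thm:5}. The additional verifications you carry out (existence of the limit, independence of the approximating sequences, positive definiteness) are exactly the content already packaged inside Lemma \ref{lem:20} and Theorem \ref{thm:29}, so they are sound but not a different argument.
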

\begin{proof}
  Denote by $\hat{d}$ the metric induced from $\| \cdot \|_g$ on $\U$
  in the usual way for Hilbert spaces, i.e., $\hat{d}(g_1,g_2) = \|
  g_1 - g_2 \|_g$.  As in Lemma \ref{lem:20}, let $\phi : (\U, d)
  \rightarrow (\U, \hat{d})$ be the identity on the level of sets.
  Then Theorem \ref{thm:5} clearly implies that both $\phi$ and
  $\phi^{-1}$ are uniformly continuous ($\phi^{-1}$ is even a
  Lipschitz map).  Thus Lemma \ref{lem:20} gives the result.
\end{proof}

We have thus found a nice description of the completion of very
special subsets of $\M$.  As already discussed, our plan now is to
start removing the nice properties that allowed us to understand
amenable subsets so clearly, advancing through the completions of ever
larger and more generally defined subsets of $\M$.

To do this, however, we need to clear up our viewpoint and some
technicalities.  The issue is the following: it happens that one can
find examples of Cauchy sequences in $\M$ (i.e., points of the
precompletion) that do not $L^2$-converge to any point of $\s^0$.
(The skeptical reader can jump ahead to Section
\ref{sec:compl-orbit-space} for a proof of this fact, at least for the
case when $\dim M = 1$, $2$ or $3$.)  Nevertheless, we would like to
somehow be able to unambiguously identify points of $\overline{\M}$
with sections of $S^2 T^* M$.  Here, ``unambiguous'' means that each
Cauchy sequence is identified with a unique section, and all
equivalent Cauchy sequences are identified with the same section.  If
we could do this, we would have a bijection between $\overline{\M}$
and some subset of the sections of $S^2 T^* M$.  Without a uniform,
unambiguous notion of the ``limit point'' of a Cauchy sequence in
$\M$, such an identification is not well-defined.

Thus, we will delay further study of the completion of $\M$ and its
subsets until we see in exactly what way we can identify Cauchy
sequences with sections of $S^2 T^* M$.  The goal of the next chapter
is to resolve this with an appropriate convergence notion for
sequences in $\M$.  Then, in Chapter \ref{chap:sing-metrics}, we
determine precisely what sections of $S^2 T^* M$ actually do represent
Cauchy sequences in $\M$, thus describing the bijection mentioned
above.


\chapter{Cauchy sequences and $\omega$-convergence}\label{cha:almost-everywh-conv}

In this chapter, we introduce and study a fundamental notion of
convergence of our own invention for $d$-Cauchy sequences in $\M$.  We
call this \emph{$\omega$-convergence}, and its importance is made
clear through two theorems we will prove, an existence and a
uniqueness result.  The existence result, proved in Section
\ref{sec:existence-ad-limit}, says that every $d$-Cauchy sequence has
a subsequence that $\omega$-converges to a measurable semimetric,
which we will then show has finite total volume.  The uniqueness
result, proved in Section \ref{sec:uniqueness-ad-limit}, is that two
$\omega$-convergent Cauchy sequences in $\M$ are equivalent (in the
sense of \eqref{eq:97}) if and only if they have the same
$\omega$-limit.  These results allow us to identify an equivalence
class of $d$-Cauchy sequences with the unique $\omega$-limit that its
representatives subconverge to, and thus give a meaning to points of
$\overline{\M}$.

We might hope that our convergence notion for Cauchy sequences could
at least imply pointwise convergence of the metrics of the sequence to
some limit tensor field.  However, we will have to back off of this
hope somewhat, as it will turn out that one cannot demand that a
$d$-Cauchy sequence converge in any pointwise sense at points where
the metrics in the sequence deflate (cf.~Definition \ref{dfn:25}).
This is a consequence of the somewhat surprising result that one can
bound $d(g_0, g_1)$, for any $g_0, g_1 \in \M$, based only on the
``intrinsic volumes'' of the set on which $g_0$ and $g_1$ differ.
Intrinsic means here that this volume is measured with respect to
$g_0$ and $g_1$.  In particular, the bound \emph{does not depend} on
how much $g_0$ and $g_1$ differ as tensors, say in a fixed coordinate
system.  Hence two sequences of metrics can be $d$-close and yet have
very different pointwise limits (or no pointwise limits at all),
provided the only differ on small-volume subsets.  This will be made
more precise in Section \ref{sec:existence-ad-limit}, where we define
$\omega$-convergence.

We \emph{can} demand that $\omega$-convergence imply pointwise
convergence off of the deflated set.  We can then use this to show
that the volume forms $\mu_{g_k}$ of a Cauchy sequence $\{g_k\}$
converge pointwise almost everywhere.  This will allow us to prove, in
Section \ref{sec:ad-conv-conc}, that the volume of a subset of $M$ is
continuous with respect to the topology of $\omega$-convergence.

\section{Existence of the $\omega$-limit}\label{sec:existence-ad-limit}

We begin this section with an important estimate and some examples,
followed by the definition of $\omega$-convergence and some of its
basic properties.  After that, we start on the existence proof by
showing a pointwise version, i.e., an analogous result on $\Matx$.
Finally, we globalize this pointwise result to show the existence of
an $\omega$-convergent subsequence for any Cauchy sequence in $\M$.

\subsection{Volume-based estimates on $d$ and examples}\label{sec:volume-based-estim}

We have mentioned that $\omega$-convergence implies pointwise convergence
only off the deflated set of a sequence of metrics.  We also stated
that this is forced upon us by a bound on the distance between two
metrics that is based on the volume of the set on which they differ.
So before we give the definition of $\omega$-convergence, let's show this
result.  The proof is a bit technical, but the idea is very simple and
is described at the beginning of the proof.

\begin{prop}\label{prop:18}
  Suppose that $g_0, g_1 \in \M$, and let $E := \carr (g_1 - g_0) = \{
  x \in M \mid g_0(x) \neq g_1(x) \}$.  Then there exists a constant
  $C(n)$ depending only on $n = \dim M$ such that
  \begin{equation*}
    d(g_0, g_1) \leq C(n) \left( \sqrt{\Vol(E, g_0)} +
      \sqrt{\Vol(E,g_1)} \right).
  \end{equation*}

  In particular, we have
  \begin{equation*}
    \diam \left( \{ \tilde{g} \in \M \mid \Vol(M, \tilde{g}) \leq
      \delta \} \right) \leq 2 C(n) \sqrt{\delta}.
  \end{equation*}
\end{prop}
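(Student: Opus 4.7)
The plan is to construct, for each small parameter $\epsilon > 0$, an explicit piecewise smooth path from $g_0$ to $g_1$ in $\M$ whose length can be computed directly, and then let $\epsilon \to 0$. The guiding idea is that since $g_0$ and $g_1$ agree on $M \setminus E$, any reasonable path should leave the metric equal to $g_0 = g_1$ there and only move it on $E$; moreover, on $E$ we exploit the fact that, by the form of the $L^2$ metric, shrinking a metric conformally down to tiny volume costs a bounded amount of length, so a detour through a uniformly scaled-down metric is cheap to make, very cheap to cross, and cheap to reverse.

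Concretely, the path is the concatenation of three pieces. Piece one shrinks $g_0$ on $E$ down to $\epsilon g_0$ while keeping $g_0$ fixed on $M \setminus E$. Using Proposition~\ref{prop:23} and the pointwise nature of geodesics in conformal orbits, this is realized by the explicit formula $g_t = (1 + n t \alpha / 4)^{4/n} g_0$ with $\alpha = \tfrac{4}{n}(\epsilon^{n/4} - 1) \chi_E$; since $g_0^{-1}(\alpha g_0) = \alpha I$, one has $\tr_{g_0}((\alpha g_0)^2) = n \alpha^2$, so Remark~\ref{rmk:12} gives this segment length $\|\alpha g_0\|_{g_0} = \tfrac{4}{\sqrt{n}}(1 - \epsilon^{n/4}) \sqrt{\Vol(E, g_0)} \leq \tfrac{4}{\sqrt{n}}\sqrt{\Vol(E, g_0)}$. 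Piece two is a straight-line interpolation $g_t = g_0 \chi_{M \setminus E} + \epsilon((1-t) g_0 + t g_1) \chi_E$ between the two shrunken metrics. Writing $g_t = \epsilon \hat g_t$ on $E$ with $\hat g_t := (1-t) g_0 + t g_1$, a direct computation using $\tr_{\epsilon \hat g_t}(\cdot) = \epsilon^{-2} \tr_{\hat g_t}(\cdot)$ and $\mu_{\epsilon \hat g_t} = \epsilon^{n/2} \mu_{\hat g_t}$ yields
\begin{equation*}
\|\dot g_t\|_{g_t}^2 = \epsilon^{n/2} \integral{E}{}{\tr_{\hat g_t}((g_1 - g_0)^2)}{\mu_{\hat g_t}}.
\end{equation*}
Since $\hat g_t$ is a convex combination of positive-definite smooth metrics on compact $M$, the integral is uniformly bounded in $t$, and so this piece has length $O(\epsilon^{n/4})$, which vanishes as $\epsilon \to 0$. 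Piece three is the time-reversal of piece one applied to $g_1$ in place of $g_0$, contributing at most $\tfrac{4}{\sqrt{n}} \sqrt{\Vol(E, g_1)}$. Summing and taking $\epsilon \to 0$ gives $d(g_0, g_1) \leq \tfrac{4}{\sqrt{n}}(\sqrt{\Vol(E, g_0)} + \sqrt{\Vol(E, g_1)})$, so the constant $C(n) = 4/\sqrt{n}$ works.

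The main technical obstacle is that $\chi_E$ is not smooth, so the paths written above are not literally in $\M$. I would handle this by replacing $\chi_E$ with a smooth cutoff $\rho$ that equals $1$ on a slightly shrunken open $E' \subset E$ and is supported in $E$, adjusting the formulas of pieces one through three accordingly, and tolerating short transitional arcs near $\partial E'$. Because $E$ is open (the tensor $g_1 - g_0$ being continuous) and $\mu_{g_i}$ is a regular Borel measure, $E'$ can be chosen with $\Vol(E \setminus E', g_i)$ arbitrarily small; by Lemma~\ref{lem:13} or by a direct estimate, the contribution from the transitional region is controlled by these small volumes. Taking $\epsilon \to 0$ first and then $E' \nearrow E$ yields the stated bound. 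This two-scale approximation is the only delicate point; the geometric content of the proof is entirely contained in the three-piece construction above.

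Finally, the ``In particular'' clause is immediate: for any $\tilde g_0, \tilde g_1$ with $\Vol(M, \tilde g_i) \leq \delta$, set $E := \carr(\tilde g_1 - \tilde g_0) \subseteq M$, so that $\Vol(E, \tilde g_i) \leq \delta$, and the main inequality yields $d(\tilde g_0, \tilde g_1) \leq 2 C(n) \sqrt{\delta}$.
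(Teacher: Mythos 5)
Your proposal is correct and follows essentially the same route as the paper: the same three-piece path (conformally shrink $g_0$ on $E$ to a tiny scale, cross over to the shrunken $g_1$ at negligible cost, then un-shrink), made rigorous by smooth cutoffs together with inner/outer approximation of $E$ and the same double limit; your only variation is to run the first and third pieces along the conformal geodesics of Proposition \ref{prop:23}, which avoids the paper's case split on $n$ and gives the explicit constant $4/\sqrt{n}$. One small caution: Lemma \ref{lem:13} bounds volume differences by distance, not the reverse, so the transitional-region contribution must be handled by the direct estimate you also mention (uniformly bounded integrand times small measure of $E \setminus E'$), which is exactly what the paper does.
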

\begin{proof}
  The second statement follows immediately from the first, so we only
  prove the first.
  
  \begin{figure}[t]
    \centering
    \includegraphics[width=11cm]{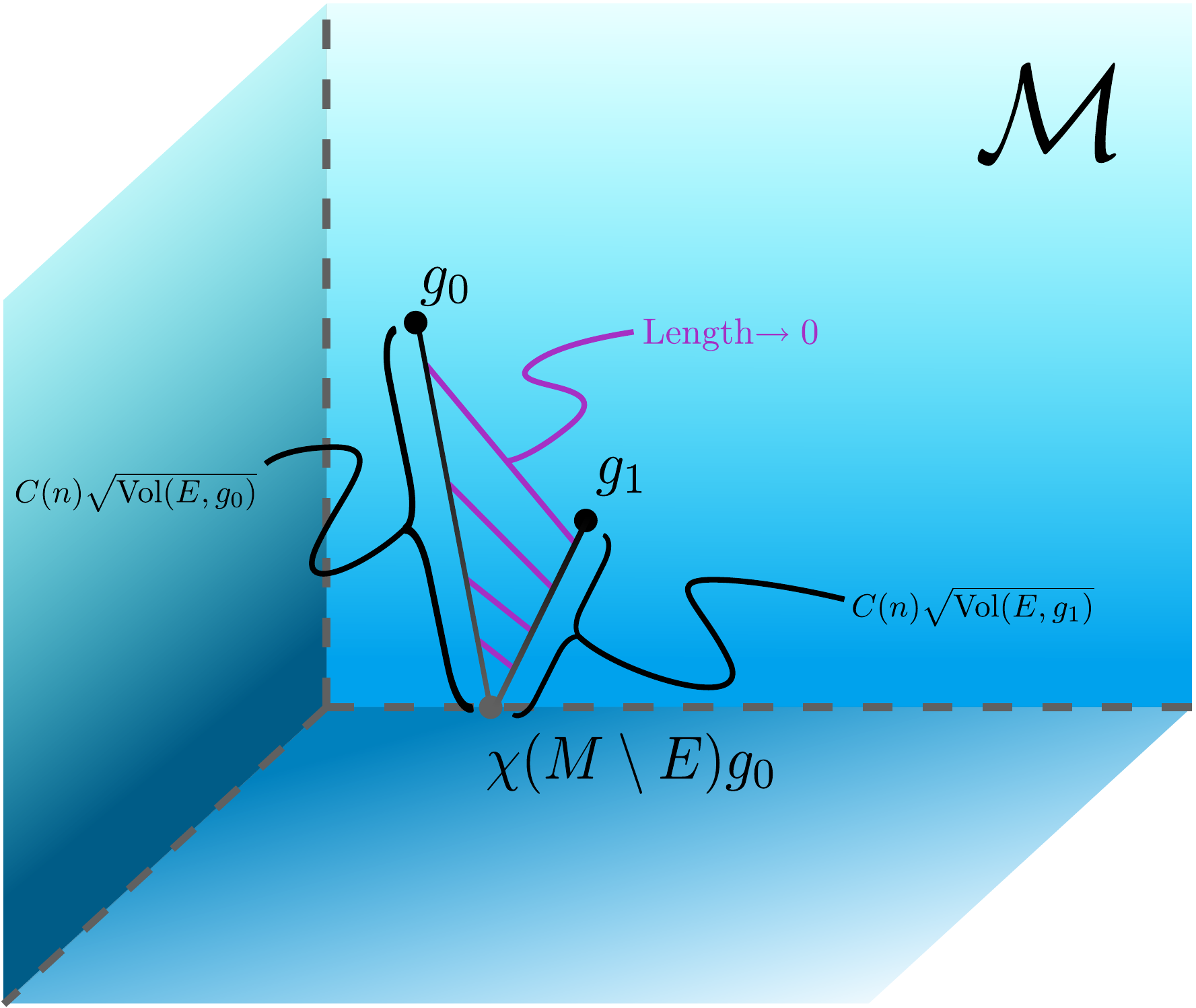}
    \caption{The heuristic idea for the proof of Proposition
      \ref{prop:18}.  A path in the family is given by traveling
      partway down the black line from $g_0$, then skipping over on
      one of the purple lines, and finally traveling up the black line
      to $g_1$.}\label{fig:svp}
  \end{figure}
  
  The heuristic idea is the following.  We want to construct a family
  of paths with three pieces, depending on a real parameter $s$, such
  that the metrics do not change on $M \setminus E$ as we travel along
  the paths.  Therefore, we pretend that we can restrict all
  calculations to $E$.  On $E$, the first piece of the path is the
  straight line from $g_0$ to $s g_0$ for some small positive number
  $s$.  It is easy to compute a bound for the length of this path
  based on $\Vol(E, g_0)$.  The second piece is the straight line from
  $s g_0$ to $s g_1$, which, as we will see, has length approaching
  zero for $s \rightarrow 0$.  The last piece is the straight line
  from $s g_1$ to $g_1$, which again has length bounded from above by
  an expression involving $\Vol(E, g_1)$.  This idea is illustrated in
  Figure \ref{fig:svp}.
  
  Our job is to now take this heuristic picture, which uses paths of
  $L^2$ metrics, and construct a family of paths of \emph{smooth}
  metrics that captures the essential properties.
  
  For each $k \in \N$ and $s \in (0,1]$, we define three families of
  metrics as follows.  Choose closed sets $F_k \subseteq E$ and open
  sets $U_k$ containing $E$ such that $\Vol(U_k, g_i) - \Vol(F_k, g_i)
  \leq 1/k$ for $i = 0,1$.  (This is possible because the Lebesgue
  measure is regular.)  Let $f_{k,s} \in C^\infty(M)$ be functions
  with the following properties:
  \begin{enumerate}
  \item $f_{k,s}(x) = s$ if $x \in F_k$,
  \item $f_{k,s}(x) = 1$ if $x \not\in U_k$ and
  \item $s \leq f_{k,s}(x) \leq 1$ for all $x \in M$.
  \end{enumerate}
  Now, for $t \in [0,1]$, define
  \begin{align*}
    \hat{g}^{k,s}_t &:= ((1-t) + t f_{k,s}) g_0 \\
    \bar{g}^{k,s}_t &:= f_{k,s} ((1-t) g_0 + t g_1) \\
    \tilde{g}^{k,s}_t &:= ((1-t) + t f_{k,s}) g_1.
  \end{align*}
  We view these as paths in $t$ depending on the family parameter
  $s$.  Furthermore, we define a concatenated path
  \begin{equation*}
    g^{k,s}_t := \hat{g}^{k,s}_t * \bar{g}^{k,s}_t * (\tilde{g}^{k,s}_t)^{-1},
  \end{equation*}
  where of course the inverse means we run through the path backwards.
  It is easy to see that $g^{k,s}_0 = g_0$ and $g^{k,s}_1 = g_1$ for
  all $s$.  Also note that each path making up $g^{k,s}_t$ is just a
  straight-line path.  The first is from $g_0$ to $f_{k,s} g_0$, the
  second is from $f_{k,s} g_0$ to $f_{k,s} g_1$, and the third is from
  $f_{k,s} g_1$ to $g_1$.

  We now investigate the lengths of each piece of $g^{k,s}_t$
  separately, starting with that of $\hat{g}^{k,s}_t$.  Recalling that
  by Convention \ref{cvt:3}, $G_0 = g^{-1} g_0$, we compute
  \begin{align*}
    L(\hat{g}^{k,s}_t) &= \integral{0}{1}{\| (\hat{g}^{k,s}_t)' \|_{\hat{g}^{k,s}_t}}{d t} \\
    &= \integral{0}{1}{\left( \integral{M}{}{\tr_{((1 - t) + t
            f_{k,s}) g_0} \left( ((f_{k,s} - 1)g_0)^2 \right)
          \sqrt{\det \left( ((1 - t) + t f_{k,s}) G_0 \right)}}{\mu_{g}} \right)^{1/2}}{d t} \\
    &= \integral{0}{1}{\left( \integral{U_k}{}{((1 - t) + t
          f_{k,s})^{\frac{n}{2} - 2} \tr_{g_0} \left( ((1 -
            f_{k,s})g_0)^2 \right) \sqrt{\det G_0}}{\mu_{g}} \right)^{1/2}}{d t}.
  \end{align*}
  since $\det (\lambda A) = \lambda^{n/2} \det A$ for any $n \times
  n$-matrix $A$ and $\lambda \in \R$.  Note that in the last line, we
  only integrate over $U_k$, which is justified by the fact that $1 -
  f_{k,s} = 0$ on $M \setminus U_k$.  Since $s > 0$, it is easy to see
  that
  \begin{equation*}
    (1 - f_{k,s})^2 \leq (1 - s)^2 < 1,
  \end{equation*}
  so that
  \begin{equation*}
    \tr_{g_0}\left( ((1 - f_{k,s})g_0)^2 \right) = n (1 - f_{k,s})^2 <
    n.
  \end{equation*}
  This gives us the estimate
  \begin{equation*}
    L(\hat{g}^{k,s}_t) < \integral{0}{1}{\left( n \integral{U_k}{}{((1
          - t) + t f_{k,s})^{\frac{n}{2} - 2}}{\mu_{g_0}}
      \right)^{1/2}}{d t}.
  \end{equation*}
  Now, to estimate this, we note that for $n \geq 4$, $\frac{n}{2} - 2
  \geq 0$ and therefore $f_{k,s} \leq 1$ implies that
  \begin{equation*}
    ((1 - t) + t f_{k,s})^{\frac{n}{2} - 2} \leq 1.
  \end{equation*}
  So in this case,
  \begin{equation}\label{eq:72}
    L(\hat{g}_t^{k,s}) < \sqrt{n \Vol(U_k, g_0)}.
  \end{equation}
  For $1 \leq n \leq 3$, $\frac{n}{2} - 2 < 0$ and therefore one can
  compute that $f_{k,s} \geq s > 0$ implies
  \begin{equation*}
    ((1 - t) + t f_{k,s})^{\frac{n}{2} - 2} \leq (1 - t)^{\frac{n}{2}
      - 2}.
  \end{equation*}
  In this case, then,
  \begin{equation}\label{eq:71}
    L(\hat{g}_t^{k,s}) < \sqrt{n \Vol(U_k, g_0)} \integral{0}{1}{(1 -
      t)^{\frac{n}{4} - 1}}{dt},
  \end{equation}
  and the integral term is finite since $\frac{n}{4} - 1 > -1$.
  Furthermore, the value of this integral depends only on $n$.
  Putting together \eqref{eq:72} and \eqref{eq:71} therefore gives
  \begin{equation}\label{eq:75}
    L(\hat{g}^{k,s}_t) \leq C(n) \sqrt{\Vol(U_k, g_0)},
  \end{equation}
  where $C(n)$ is a constant depending only on $n$.

  In exact analogy, we can show that
  \begin{equation}\label{eq:76}
    L(\tilde{g}^{k,s}_t) \leq C(n) \sqrt{\Vol(U_k, g_1)},
  \end{equation}
  where we can even use the same constant $C(n)$.

  Next, we look at the second piece of $g^{k,s}_t$.  Here we have,
  using that $g_1 - g_0 = 0$ on $M \setminus E$,
  \begin{align*}
    \| (\bar{g}^{k,s}_t)' \|_{\bar{g}^{k,s}_t}^2 &= \integral{M}{}{
      \tr_{f_{k,s} ((1 - t) g_0 + t g_1)} \left( (f_{k,s} (g_1 -
        g_0))^2\right) 
      \sqrt{\det \left( f_{k,s} ((1 - t) G_0 + t G_1)
        \right)}}{\mu_g} \\
    &= \integral{E}{}{f_{k,s}^{n/2} \tr_{(1 -
        t) g_0 + t g_1} \left( (g_1 - g_0)^2 \right)
      \sqrt{\det ((1 - t) G_0 + t G_1)}}{\mu_g}.
  \end{align*}
  Note that in the last line above, we are only integrating over $E$,
  and the factors of $f_{k,s}$ in the trace term have canceled each
  other out.  Also note that $f_{k,s}(x) = s$ if $x \in F_k$, and
  $f_{k,s}(x) \leq 1$ for all $x \in M$.  So it follows from the above
  that
  \begin{align*}
    \| (\bar{g}^{k,s}_t)' \|_{\bar{g}^{k,s}_t}^2 &\leq s^{n/2}
    \integral{F_k}{}{\tr_{(1 - t) g_0 + t g_1} \left( (g_1 - g_0)^2
      \right) \sqrt{\det ((1 - t) G_0 + t G_1)}}{\mu_g} \\
    &\quad + \integral{E \setminus F_k}{}{\tr_{(1 - t) g_0 + t g_1}
      \left( (g_1 - g_0)^2 \right) \sqrt{\det ((1 - t) G_0 + t
        G_1)}}{\mu_g}
  \end{align*}
  For each fixed $t$ and $k$, the first term in the above clearly goes
  to zero as $s \rightarrow 0$.  By our assumption on the sets $F_k$,
  the second term goes to zero as $k \rightarrow \infty$ for each
  fixed $t$ (it does not depend on $s$ at all).  But since $t$ only
  ranges over the compact interval $[0,1]$ and all terms in the
  integrals depend smoothly on $t$, both of these convergences are
  uniform in $t$.  From this, it is easy to see that
  \begin{equation}\label{eq:135}
    \lim_{k \rightarrow \infty} \lim_{s \rightarrow 0}
    L(\bar{g}^{k,s}_t) = 0.
  \end{equation}

  With all of this preparation, we can finally use \eqref{eq:75},
  \eqref{eq:76} and \eqref{eq:135} to estimate
  \begin{equation*}
    d(g_0, g_1) \leq \inf_{k,s} L(g^{k,s}_t) \leq \lim_{k \to \infty}
    \lim_{s \to 0} L(g^{k,s}_t) \leq C(n) \left( \sqrt{\Vol(E, g_0)} +
      \sqrt{\Vol(E, g_1)} \right)
  \end{equation*}
  by our assumptions on the sets $U_k$.
\end{proof}

Before we move on with general considerations, we give two simple
examples that illustrate some important principles here.  The first
principle is, as we mentioned, that metrics that differ on a
small-volume subset of $M$ are close together, no matter how their
coefficients differ individually.  Thus, as the first example shows, a
Cauchy sequence need not converge on a set with volume zero in the
limit.  The second example demonstrates that very different paths or
sequences can be equivalent in $\overline{\M}^{\textnormal{pre}}$,
even if they become unbounded.  It also hints at a principle that
we'll elaborate on in Subsection \ref{sec:limit-volume-sing}, namely
that we can essentially \emph{ignore} that a Cauchy sequence in $\M$
becomes unbounded, taking a sequence or path of metrics that become
unbounded at some points and replacing it with a sequence or path that
remains bounded.

\begin{eg}[A $d$-Cauchy sequence that does not converge pointwise]\label{eg:3}
  Let our base manifold $M$ now be the torus $T^2$.  In the standard
  chart on the torus ($[0,1] \times [0,1]$ with edges identified), we
  define a sequence of metrics by
  \begin{equation*}
    g_k :=
    \begin{pmatrix}
      \abs{\cos k} & 0 \\
      0 & k^{-1}
    \end{pmatrix}.
  \end{equation*}
  (These are, indeed, positive definite matrices, since $\cos k \neq
  0$ for all $k \in \N$.)  On the one hand, this sequence does not
  converge pointwise, thanks to the oscillating $\abs{\cos k}$
  coefficient.  On the other hand, since clearly
  \begin{equation*}
    \lim_{k \rightarrow \infty} \Vol(T^2, g_k) = \lim_{k \rightarrow
      \infty} \sqrt{\frac{|cos k|}{k}}= 0,
  \end{equation*}
  Proposition \ref{prop:18} allows us to see that $\{g_k\}$ is indeed
  a Cauchy sequence.

  Note that since $\abs{\cos k}$ is bounded, we can select a subsequence
  $\{g_{k_m}\}$, equivalent to the original sequence, that does
  converge.  This works for the example here, but as the next example
  shows, there are Cauchy sequences and finite paths with no
  convergent subsequence.
\end{eg}

\begin{eg}[Very different, but equivalent, finite paths, and an
  example of unboundedness]\label{eg:2}
  We again let $M = T^2$, and we define a family of metrics by
  \begin{equation*}
    g^{r,s}_t :=
    \begin{pmatrix}
      e^{r t} & 0 \\
      0 & e^{-s t}
    \end{pmatrix}
  \end{equation*}
  for $t \in [1,\infty)$ and $r, s > 0$.  We consider this to be a
  path depending on $t$ for each fixed choice of $r$ and $s$.  Each
  $g^{r,s}_t$ has pointwise limit, as $t \rightarrow \infty$, the
  ``tensor''
  \begin{equation*}
    g_\infty =
    \begin{pmatrix}
      \infty & 0 \\
      0 & 0
    \end{pmatrix}.
  \end{equation*}
  Thus $g^{r,s}_t$ becomes unbounded over the entire base
  manifold.

  If we let $h^{r,s}_t = (g^{r,s}_t)'$, then it is not hard to
  directly compute that
  \begin{equation*}
    \| h^{r,s}_t \|_{g^{r,s}_t} = \sqrt{r^2 + s^2} \, e^{(s-r) t/4},
  \end{equation*}
  which is integrable on $[1, \infty)$ if and only if $s > r$, and
  therefore $g^{r,s}_t$ is finite if $s > r$.  On the other hand, we
  also have that
  \begin{equation*}
    \lim_{t \rightarrow \infty}\Vol(T^2, g^{r,s}_t) = 0
  \end{equation*}
  if and only if $s > r$.  (If $s = r$, the volume is constant, and if
  $s < r$, the volume diverges.)   Thus, by Proposition \ref{prop:18}
  (or a direct computation, if one is so inclined), we have
  \begin{equation*}
    \lim_{t \rightarrow \infty} d(g^{r,s}_t, g^{a,b}_t) = 0
  \end{equation*}
  whenever $r > s > 0$ and $a > b > 0$.  In other words, though the
  coefficients of $g^{r,s}_t$ and $g^{a,b}_t$ can differ greatly,
  these finite paths are equivalent because the volume of the set on
  which they differ vanishes in the limit.  In fact, any two paths
  $g^1_t$ and $g^2_t$ with
  \begin{equation*}
    \Vol(M, g^1_t) \rightarrow 0 \quad \textnormal{and} \quad \Vol(M,
    g^2_t) \rightarrow 0
  \end{equation*}
  are equivalent.  Therefore, we can pick a representative from the
  equivalence class $[g^{r,s}_t] \in \overline{\M}$ that does not
  become unbounded, but rather converges to a true tensor (with
  coefficients assuming values in $\R$).  A canonical choice might be
  a finite path with pointwise limit the zero section of $S^2 T^* M$.
\end{eg}

\subsection{$\omega$-convergence and its basic properties}\label{sec:ad-convergence-its}

So we now clearly see that we have to back off from the demand that
Cauchy sequences converge pointwise on their deflated sets.
Nevertheless, we can expect other nice behavior of Cauchy sequences,
and what we do expect is given in the next definition.  The definition
itself looks a bit technical, but is actually rather simple.
Therefore, after stating it in full, we will explain each of its parts
in more detail.

First, though, recall that we define general measure-theoretic notions
(e.g., the notion of something holding almost everywhere, or a.e.)
using the fixed reference metric $g$ (cf.~Convention \ref{cvt:3}).
Furthermore, we need one definition before that of
$\omega$-convergence.

\begin{dfn}\label{dfn:7}
  We denote by $\Mm$ the set of all measurable semimetrics on $M$.
  That is, $\Mm$ is the set of all sections of $S^2 T^* M$ that have
  measurable coefficients and that induce a positive semidefinite
  scalar product on $T_x M$ for each $x \in M$.
  
  Define an equivalence relation ``$\sim$'' on $\Mm$ by $g_0 \sim
  g_1$ if and only if
  \begin{enumerate}
  \item their deflated sets $X_{g_0}$ and $X_{g_1}$ differ at most
    by a nullset, and
  \item $g_0(x) = g_1(x)$ for a.e.~$x \in M \setminus (X_{g_0} \cup X_{g_1})$.
  \end{enumerate}
  We denote the quotient space of $\Mm$ by
  \begin{equation*}
    \Mmhat := \Mm / {\sim}.
  \end{equation*}
\end{dfn}

\begin{dfn}\label{dfn:13}
  Let $\{g_k\}$ be a sequence in $\M$, and let $[g_\infty] \in
  \Mmhat$.  Recall that we denote the deflated set of the sequence
  $\{g_k\}$ by $X_{\{g_k\}}$ and the deflated set of an individual
  semimetric $\tilde{g}$ by $X_{\tilde{g}}$ (cf.~Definitions
  \ref{dfn:23} and \ref{dfn:25}).  We say that $g_k$
  \emph{$\omega$-converges} to $[g_\infty]$ if for every
  representative $g_\infty \in [g_\infty]$, the following holds:
  \begin{enumerate}
  \item \label{item:4} $\{g_k\}$ is $d$-Cauchy,
  \item \label{item:5} $X_{g_\infty}$ and $X_{\{g_k\}}$ differ at most
    by a nullset,
  \item \label{item:6} $g_k(x) \rightarrow g_\infty(x)$ for a.e.~$x
    \in M \setminus X_{\{g_k\}}$, and
  \item \label{item:7} $\sum_{k=1}^\infty d(g_k, g_{k+1}) < \infty$.
  \end{enumerate}
  We call $[g_\infty]$ the \emph{$\omega$-limit} of the sequence
  $\{g_k\}$ and write $g_k \overarrow{\omega} [g_\infty]$.

  More generally, if $\{g_k\}$ is a $d$-Cauchy sequence containing a
  subsequence that $\omega$-converges to $[g_\infty]$, then we say
  that $\{g_k\}$ \emph{$\omega$-subconverges} to $[g_\infty]$.
\end{dfn}

So let's go through the definition one part at a time.

Condition (\ref{item:4}) is simply there for convenience, so we don't have to
repeatedly assume that a sequence is $\omega$-convergent \emph{and} Cauchy.

Condition (\ref{item:5}) says that the limit metric is deflated at a
point $x \in M$ if and only if $x$ is a point where $\{g_k\}$
deflates (up to a nullset where this fails to hold).

Condition (\ref{item:6}) says that $\{g_k\}$ has a pointwise limit at
almost every point off the deflated set.  Note that this limit will
necessarily be positive definite, since if $x \in M \setminus
X_{\{g_k\}}$, then there exists some $\delta(x) > 0$ such that
\begin{equation*}
  \det g_k(x) \geq \delta(x)
\end{equation*}
for all $k \in \N$ and in every chart from the amenable atlas that
contains $x$.

Finally, condition (\ref{item:7}) is technical and will aid us in
proofs.  Conceptually, it means that we can find paths $\alpha_k$
connecting $g_k$ to $g_{k+1}$ such that the concatenated path
$\alpha_1 * \alpha_2 * \cdots$ has finite length (cf.~the proof of
Theorem \ref{thm:30}).  \label{correction:conv-conditions} Given
condition (\ref{item:4}), we can always achieve this by passing to a
subsequence.  (We remark here, however, that these two conditions are
not independent.  In fact, (\ref{item:7}) implies (\ref{item:4}).)

Now that we have this definition out of the way, let's move on to
proving some properties of it.  We first state an entirely trivial
consequence of Definitions \ref{dfn:7} and \ref{dfn:13}.

\begin{lem}\label{lem:29}
  Let $[g_\infty] \in \M$, and let $\{g_k\}$ be a sequence in $\M$.
  Suppose that for one given representative $g_\infty \in [g_\infty]$,
  $\{g_k\}$ together with $g_\infty$ satisfies conditions
  \eqref{item:4}--\eqref{item:7} of Definition \ref{dfn:13}.  Then
  these conditions are also satisfied for $\{g_k\}$ together with
  every other representative of $[g_\infty]$.

  Therefore, if can we verify these conditions for one representative
  of an equivalence class, this already implies $\{g_k\}
  \overarrow{\omega} [g_\infty]$.
\end{lem}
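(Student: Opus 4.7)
The plan is to verify each of the four conditions of Definition \ref{dfn:13} in turn, observing that conditions \eqref{item:4} and \eqref{item:7} do not involve $g_\infty$ at all and thus transfer to every representative for free. The work therefore reduces to conditions \eqref{item:5} and \eqref{item:6}, which must be checked using the definition of the equivalence relation $\sim$ on $\Mm$.

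For condition \eqref{item:5}, suppose $g_\infty' \in [g_\infty]$ is another representative. By Definition \ref{dfn:7}, $X_{g_\infty}$ and $X_{g_\infty'}$ differ at most by a nullset, and by assumption $X_{g_\infty}$ and $X_{\{g_k\}}$ differ at most by a nullset. The symmetric difference being subadditive on nullsets, $X_{g_\infty'}$ and $X_{\{g_k\}}$ also differ at most by a nullset. This is essentially a one-line application of transitivity modulo nullsets.

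The main verification is condition \eqref{item:6}. I would argue that off a single nullset $N \subset M$ the following three things all hold simultaneously: (a) $g_k(x) \to g_\infty(x)$ on $M \setminus (X_{\{g_k\}} \cup N)$ by the hypothesis on $g_\infty$; (b) $g_\infty(x) = g_\infty'(x)$ on $M \setminus (X_{g_\infty} \cup X_{g_\infty'} \cup N)$ by the definition of $\sim$; and (c) the three sets $X_{\{g_k\}}$, $X_{g_\infty}$, $X_{g_\infty'}$ all coincide off $N$, by condition \eqref{item:5} applied to both $g_\infty$ and $g_\infty'$. Taking $N$ to be the (countable) union of the relevant exceptional nullsets, one sees that for a.e.\ $x \in M \setminus X_{\{g_k\}}$ both $g_k(x) \to g_\infty(x)$ and $g_\infty(x) = g_\infty'(x)$ hold, hence $g_k(x) \to g_\infty'(x)$, as required.

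There is no real obstacle here: the lemma is a pure bookkeeping statement, and its content is exactly that $\sim$ was defined to be compatible with $\omega$-convergence. The only mild subtlety is being careful that ``a.e.'' statements about different sets can be combined by absorbing all the exceptional nullsets into a single one, which is immediate since a finite union of nullsets is a nullset.
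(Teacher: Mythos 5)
Your proof is correct and is exactly the bookkeeping argument the paper has in mind; the paper in fact states Lemma \ref{lem:29} as an ``entirely trivial consequence'' of Definitions \ref{dfn:7} and \ref{dfn:13} and omits the proof altogether. Your handling of conditions \eqref{item:5} and \eqref{item:6} by absorbing the finitely many exceptional nullsets into one is precisely the intended verification.
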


We can thus consistently say that $\{g_k\}$ $\omega$-converges to an
individual semimetric $g_\infty \in \Mm$ if the two together satisfy
conditions \eqref{item:4}--\eqref{item:7} of Definition \ref{dfn:13}.
By the lemma, this is completely synonymous with saying that $\{g_k\}$
$\omega$-converges to the equivalence class $[g_\infty] \in \Mmhat$.
It is of course easier to show that $\{g_k\}$ $\omega$-converges to
one semimetric, rather than a whole equivalence class.  In the
following we will generally simply prove $\omega$-convergence to the
canonical choice of representative, the semimetric $g_\infty \in
[g_\infty]$ for which $g_\infty(x) = 0$ for all $x \in X_{g_\infty}$.

The next property of $\omega$-convergence is also obvious.

\begin{lem}\label{lem:41}
  If $\{g^0_k\}$ and $\{g^1_k\}$ both $\omega$-converge to the same
  element $[g_\infty] \in \Mmhat$, then $\{g^0_k\}$ and $\{g^1_k\}$
  have the same deflated set, up to a nullset.
\end{lem}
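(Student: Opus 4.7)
My plan is to observe that this lemma is an immediate consequence of the definition of $\omega$-convergence together with the definition of the equivalence relation on $\Mm$. The key point is that condition (\ref{item:5}) of Definition \ref{dfn:13} explicitly ties the deflated set of an $\omega$-convergent sequence to the deflated set of any representative of its $\omega$-limit, up to a nullset. Once both sequences are pinned to a common representative, the conclusion will follow from the elementary fact that the symmetric difference satisfies $A \triangle C \subseteq (A \triangle B) \cup (B \triangle C)$, so a ``nullset difference'' relation is transitive.

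Concretely, first I would fix an arbitrary representative $g_\infty \in [g_\infty]$. By hypothesis, $\{g^0_k\} \overarrow{\omega} [g_\infty]$ and $\{g^1_k\} \overarrow{\omega} [g_\infty]$, so applying Lemma \ref{lem:29} (or equivalently reading off condition (\ref{item:5}) of Definition \ref{dfn:13} directly) to each sequence yields that $X_{g_\infty} \triangle X_{\{g^0_k\}}$ and $X_{g_\infty} \triangle X_{\{g^1_k\}}$ are both nullsets. Then the symmetric-difference inclusion above gives that $X_{\{g^0_k\}} \triangle X_{\{g^1_k\}}$ is contained in a union of two nullsets, hence is itself a nullset, which is the desired conclusion.

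I do not anticipate any real obstacle here; the content of the lemma is essentially bookkeeping. The only small subtlety is making sure that condition (\ref{item:5}) is being invoked for a \emph{single common} representative of $[g_\infty]$, rather than for two a priori unrelated representatives. But Definition \ref{dfn:13} requires the conditions to hold for \emph{every} representative, so fixing one is harmless, and indeed Lemma \ref{lem:29} has already made it clear that verifying the conditions for one representative is tantamount to verifying them for all. No quantitative estimate, no use of Proposition \ref{prop:18}, and no use of the Cauchy condition (\ref{item:4}) or the summability condition (\ref{item:7}) is needed.
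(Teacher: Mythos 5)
Your proposal is correct and is essentially the paper's own argument: the proof there also reads off condition (\ref{item:5}) of Definition \ref{dfn:13} for a common representative $g_\infty$ and notes that both deflated sets therefore coincide with $X_{g_\infty}$ up to a nullset. You merely make the transitivity-of-nullset-difference step explicit, which the paper leaves implicit.
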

\begin{proof}
  This is immediate from property (\ref{item:5}) of Definition
  \ref{dfn:13}, as $g_\infty(x) = 0$ if and only if $x$ is in the
  deflated set of both $\{g^0_k\}$ and $\{g^1_k\}$.
\end{proof}

Recall that the main goal of this section is to show that each Cauchy
sequence in $\M$ has an $\omega$-convergent subsequence.  To do this,
we will first prove a pointwise result in the following subsection.

\subsection{(Riemannian) metrics on $\Matx$ revisited}\label{sec:metr-matx-revis}

In this subsection, we take a closer look at the Riemannian metrics
$\langle \cdot , \cdot \rangle$ and $\langle \cdot , \cdot \rangle^0$
(see Lemma \ref{lem:48} and Definition \ref{dfn:15}, respectively)
that we have defined on the finite-dimensional manifold $\Matx$.
Since the distance function $\theta^g_x$ is induced from $\langle
\cdot , \cdot \rangle^0$, and the metric $\Theta_M$ on $\M$ is defined
via $\theta^g_x$, we can get information on $\Theta_M$ by studying
$\langle \cdot , \cdot \rangle^0$.  And information on $\Theta_M$
yields information on $d$ via the estimate of Lemma \ref{prop:20}.
Furthermore, by recalling the definitions of the two Riemannian
metrics, we can see that $\langle \cdot , \cdot \rangle^0$ is
intimately related to $\langle \cdot , \cdot \rangle$:
\begin{equation}\label{eq:50}
  \langle h, k \rangle_{\tilde{g}}^0 = \langle h, k
  \rangle_{\tilde{g}} \det \tilde{G} \quad \textnormal{for all}\
  \tilde{g} \in \Matx\ \textnormal{and}\ h,k \in T_{\tilde{g}} \Matx \cong \satx.
\end{equation}
Thus, we will first study the simpler Riemannian metric $\langle \cdot
, \cdot \rangle$ and find out what properties of $\langle \cdot ,
\cdot \rangle^0$ we can deduce in this way.

We first make the observation that $\langle \cdot , \cdot \rangle$
differs very much in character from its integrated version $( \cdot ,
\cdot )$.  In particular, $\Matx$ is complete with respect to $\langle
\cdot , \cdot \rangle$!  This is is not hard to see, as we can solve
the geodesic equation of $\langle \cdot , \cdot \rangle$ directly.
Following the analogous computation for $( \cdot , \cdot )$ on $\M$
carried out in \cite[Thm.~2.3]{freed89:_basic_geomet_of_manif_of}, we
first calculate the Christoffel symbols of $\langle \cdot , \cdot
\rangle$ and then use them to solve the geodesic equation.  We note
that our computation is basically just a simplified version of that in
\cite{freed89:_basic_geomet_of_manif_of}.

Before we start, let's clear up some notation.

\begin{dfn}\label{dfn:11}
  By $d_x$, we denote the distance function induced on $\Matx$ by
  $\langle \cdot , \cdot \rangle$.  We denote
  the $\langle \cdot , \cdot \rangle$-length of a path $a_t$ in
  $\Matx$ by $L^{\langle \cdot , \cdot \rangle}(a_t)$ and the $\langle
  \cdot , \cdot \rangle^0$-length by $L^{\langle \cdot, \cdot
    \rangle^0}(a_t)$.
\end{dfn}

Now we compute the Christoffel symbols.

\begin{prop}\label{prop:13}
  Let $h$ and $k$ be constant vector fields on $\Matx$, and denote the
  Levi-Civita connection of $\langle \cdot , \cdot \rangle$ by
  $\nabla$.  Then the Christoffel symbols of $\langle \cdot , \cdot
  \rangle$ are given by
  \begin{equation*}
    \Gamma(h,k) = \nabla_h k |_{\tilde{g}} = - \frac{1}{2} \left( h \tilde{g}^{-1} k +
      k \tilde{g}^{-1} h \right).
  \end{equation*}
\end{prop}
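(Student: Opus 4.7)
The plan is to derive the formula directly from the Koszul formula, exploiting the linear structure of the ambient vector space $\satx$ to simplify the computation drastically.

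Since $\Matx$ is an open subset of the vector space $\satx$, constant vector fields on $\Matx$ make sense, and the Lie bracket of any two constant vector fields vanishes. Hence, for constant $h$, $k$, $\ell$ the Koszul formula \eqref{eq:95} collapses to
\[
2 \langle \nabla_h k, \ell \rangle_{\tilde g} = D_h \langle k, \ell \rangle_{\tilde g} + D_k \langle h, \ell \rangle_{\tilde g} - D_\ell \langle h, k \rangle_{\tilde g},
\]
where $D_h$ denotes the directional derivative of a function on $\Matx$ in the constant direction $h$. Since $\langle \cdot, \cdot \rangle_{\tilde g}$ is a (pointwise) positive definite scalar product on the finite-dimensional space $\satx$ by Lemma \ref{lem:48}, the right-hand side as a linear functional of $\ell$ uniquely determines $\nabla_h k \in \satx$.

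Next I would compute each directional derivative. Writing $\langle h, k \rangle_{\tilde g} = \tr(\tilde g^{-1} h \tilde g^{-1} k)$ and using the standard identity $D_h[\tilde g^{-1}] = - \tilde g^{-1} h \tilde g^{-1}$ (which follows at once from differentiating $\tilde g \, \tilde g^{-1} = I$), the product rule gives
\[
D_h \langle k, \ell \rangle_{\tilde g} = - \tr(\tilde g^{-1} h \tilde g^{-1} k \tilde g^{-1} \ell) - \tr(\tilde g^{-1} k \tilde g^{-1} h \tilde g^{-1} \ell).
\]
Cycling the trace lets one rewrite each such term as $\tr(\tilde g^{-1} (\cdots) \tilde g^{-1} \ell)$ with $(\cdots)$ one of $h \tilde g^{-1} k$, $k \tilde g^{-1} h$, $h \tilde g^{-1} \ell$, etc. Assembling the six terms from Koszul and using cyclicity, four of them cancel pairwise and the two that survive are exactly
\[
2 \langle \nabla_h k, \ell \rangle_{\tilde g} = - \tr\bigl(\tilde g^{-1} (h \tilde g^{-1} k + k \tilde g^{-1} h) \tilde g^{-1} \ell \bigr) = \Bigl\langle -(h \tilde g^{-1} k + k \tilde g^{-1} h), \ell \Bigr\rangle_{\tilde g}.
\]
Since this holds for all $\ell \in \satx$ and $\langle \cdot, \cdot \rangle_{\tilde g}$ is nondegenerate, the claimed expression for $\nabla_h k|_{\tilde g}$ follows. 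The only mild bookkeeping point is checking that the symmetric combination $h \tilde g^{-1} k + k \tilde g^{-1} h$ indeed lies in $\satx$, which is immediate because $\tilde g^{-1}$ is symmetric and the expression is manifestly invariant under the swap $h \leftrightarrow k$ composed with transposition.

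There is no genuine obstacle here, only a careful accounting of signs and cyclic permutations of the trace; the main thing to get right is making sure the two surviving terms really are the symmetrization of a single term, not two independent ones, so that the factor of $\tfrac{1}{2}$ in the final formula is correct.
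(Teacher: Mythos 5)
Your proposal is correct and follows essentially the same route as the paper: the Koszul formula with constant vector fields (so all bracket terms vanish), differentiation of $\tilde g \mapsto \tilde g^{-1}$, cyclic permutation of the trace to effect the cancellations, and nondegeneracy of $\langle \cdot , \cdot \rangle_{\tilde g}$ on the finite-dimensional space $\satx$ to identify $\nabla_h k$. No gaps; the bookkeeping you describe matches the paper's computation exactly.
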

\begin{proof}
  All computations are done at the base point $\tilde{g}$, which we
  will omit from the notation for convenience.  Let $\ell$ be any
  other constant vector field on $\Matx$.  By the Koszul formula,
  \begin{equation*}
    2 \langle \nabla_h k, \ell \rangle = h \langle k , \ell \rangle +
    k \langle \ell , h \rangle - \ell \langle h , k \rangle - \langle
    h , [k, \ell] \rangle - \langle
    k , [\ell, h] \rangle + \langle
    \ell , [h, k] \rangle.
  \end{equation*}
  Notice, however, that the last three terms drop out, since $h$, $k$,
  and $\ell$ are all constant, so their brackets with each other are
  zero.  Therefore we have
  \begin{equation}\label{eq:66}
    2 \langle \nabla_h k, \ell \rangle = h \langle k , \ell \rangle +
    k \langle \ell , h \rangle - \ell \langle h , k \rangle.
  \end{equation}

  Now, it is well-known (and easy to verify by differentiating
  $\hat{g} \hat{g}^{-1} = I$) that the derivative of the map
  $\hat{g} \mapsto \hat{g}^{-1}$ at the point $\tilde{g}$ is given by
  $a \mapsto -\tilde{g}^{-1} a \tilde{g}^{-1}$.  Using this, along
  with the definition of $\langle \cdot , \cdot \rangle$ and the fact
  that $a,b \mapsto \tr(ab)$ is bilinear, we get (denoting the
  derivative of a function $f$ in the direction $h$ by $h[f]$)
  \begin{align*}
    h \langle k , \ell \rangle &= h[\tr_{\tilde{g}}(k \ell)] =
    h \left[ \tr\left( (\tilde{g}^{-1} k) (\tilde{g}^{-1} \ell) \right) \right] \\
    &= -\tr\left( (\tilde{g}^{-1} h \tilde{g}^{-1} k) (\tilde{g}^{-1}
      \ell) \right) - \tr\left( (\tilde{g}^{-1} k) (\tilde{g}^{-1} h
      \tilde{g}^{-1} \ell) \right).
  \end{align*}
  Repeating the same computation for the other permutations and
  substituting the results into \eqref{eq:66} yields
  \begin{align*}
    2 \langle \nabla_h k, \ell \rangle &= -\tr\left( (\tilde{g}^{-1} h
      \tilde{g}^{-1} k) (\tilde{g}^{-1} \ell) \right) - \tr\left(
      (\tilde{g}^{-1} k) (\tilde{g}^{-1} h
      \tilde{g}^{-1} \ell) \right) \\
    &\quad\, - \tr\left( (\tilde{g}^{-1} k \tilde{g}^{-1} \ell)
      (\tilde{g}^{-1} h) \right) - \tr\left( (\tilde{g}^{-1} \ell)
      (\tilde{g}^{-1} k
      \tilde{g}^{-1} h) \right) \\
    &\quad\, + \tr\left( (\tilde{g}^{-1} \ell \tilde{g}^{-1} h)
      (\tilde{g}^{-1} k) \right) + \tr\left( (\tilde{g}^{-1} h)
      (\tilde{g}^{-1} \ell \tilde{g}^{-1} k) \right) \\
    &= -\tr \left( \tilde{g}^{-1} h \tilde{g}^{-1} k \tilde{g}^{-1}
      \ell \right) - \tr \left( \tilde{g}^{-1} k \tilde{g}^{-1} h
      \tilde{g}^{-1} \ell \right) \\
    &= -\langle h \tilde{g}^{-1} k, \ell \rangle - \langle k
    \tilde{g}^{-1} h, \ell \rangle,
  \end{align*}
  where in the second-to-last line we have used the invariance of the
  trace under cyclic permutations.  The result now follows directly.
\end{proof}

Using this, it is a relatively simple matter to solve the geodesic
equation of $\langle \cdot , \cdot \rangle$.

\begin{prop}\label{prop:12}
  The geodesic $g_t$ in $(\Matx, \langle \cdot , \cdot \rangle)$ with
  initial data $g_0$, $g'_0$ is given by
  \begin{equation*}
    g_t = g_0 e^{t g_0^{-1} g'_0}.
  \end{equation*}
  In particular, $(\Matx, d_x)$ is a complete metric space.
\end{prop}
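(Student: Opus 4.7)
The plan is to deduce Proposition \ref{prop:12} directly from the Christoffel symbol formula in Proposition \ref{prop:13}, followed by a single invocation of Hopf--Rinow for completeness.

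First I would set $H := g_0^{-1} g_0'$ and propose $g_t := g_0 e^{tH}$ as the candidate geodesic. A preliminary algebraic observation drives everything: since $g_0 H = g_0'$ is symmetric, $H$ is $g_0$-self-adjoint, i.e.\ $H^T g_0 = g_0 H$. Iterating gives $(H^T)^k g_0 = g_0 H^k$ for all $k$, and hence $e^{tH^T} g_0 = g_0 e^{tH}$. From this I get in one stroke both that $g_t$ is symmetric, because $(g_0 e^{tH})^T = e^{tH^T} g_0 = g_0 e^{tH}$, and that $g_t$ is positive definite for every $t \in \R$, because
\begin{equation*}
  g_0 e^{tH} = e^{tH^T/2} g_0 e^{tH/2} = (e^{tH/2})^T g_0 (e^{tH/2})
\end{equation*}
and $e^{tH/2}$ is invertible. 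Thus $g_t$ is a smooth curve in $\Matx$ defined on all of $\R$.

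Next I would verify the geodesic equation. Since $H$ commutes with $e^{tH}$, differentiation gives $g_t' = g_0 e^{tH} H = g_t H$, so $g_t^{-1} g_t' = H$, and therefore $g_t'' = g_t H^2 = g_t' g_t^{-1} g_t'$. Proposition \ref{prop:13} yields $\Gamma(g_t', g_t') = -g_t' g_t^{-1} g_t'$, so $g_t'' + \Gamma(g_t', g_t') = 0$, which is exactly the geodesic equation. The initial conditions $g_t|_{t=0} = g_0$ and $g_t'|_{t=0} = g_0 H = g_0'$ are immediate, and uniqueness of solutions of the geodesic ODE identifies this as \emph{the} geodesic.

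For the final clause, the calculation above shows that every geodesic of $(\Matx, \langle \cdot , \cdot \rangle)$ starting from any base point extends to all of $\R$; in particular, the exponential map at $g_0$ is defined on all of $T_{g_0} \Matx = \satx$. Since $\Matx$ is a finite-dimensional Riemannian manifold, Hopf--Rinow then gives that $(\Matx, d_x)$ is a complete metric space. I don't anticipate any real obstacle here; the only subtlety is the algebraic bookkeeping needed to pull $g_0$ past $e^{tH}$, and that is entirely controlled by the single identity $H^T g_0 = g_0 H$.
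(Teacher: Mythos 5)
Your proof is correct and takes essentially the same route as the paper: both reduce the problem to the geodesic ODE via the Christoffel formula of Proposition \ref{prop:13}, obtain the explicit exponential solution, note that $g_0 e^{tH}$ stays positive definite for all $t \in \R$, and conclude metric completeness from geodesic completeness via Hopf--Rinow in finite dimensions. The only (minor) difference is direction: you verify the ansatz $g_t = g_0 e^{tH}$ and appeal to ODE uniqueness, whereas the paper integrates the equation by showing $(g_t^{-1} g_t')' = 0$; your congruence identity $g_0 e^{tH} = (e^{tH/2})^T g_0\, e^{tH/2}$ is a slightly more explicit justification of positive definiteness than the paper's remark about the matrix exponential.
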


\begin{rmk}\label{rmk:7}
  Note that this formula is exactly the same as the geodesic equation
  for $(\M_\mu, ( \cdot , \cdot ))$ (cf.~Proposition \ref{prop:24}).
  This is no accident, and occurs because the volume form is constant
  over $\M_\mu$.  Therefore there is no contribution to the
  Christoffel symbols coming from the volume form.  As a result, all
  of the dynamics come from the integrand of $( \cdot , \cdot )$, and
  the integrand is exactly $\langle \cdot , \cdot \rangle$.

  Nevertheless, since we haven't derived the geodesic equation for
  $\M_\mu$, we prefer to prove Proposition \ref{prop:12} directly,
  without resorting to indirect arguments.  The proof is not hard, anyway.
\end{rmk}

\begin{proof}[Proof of the proposition]
  Let $a_t := g'_t$.  Since $g_t$ is a geodesic, we have $\nabla_{a_t}
  a_t = 0$.  Therefore
  \begin{equation}\label{eq:67}
    0 = a'_t + \Gamma(a_t, a_t) = a'_t - a_t g_t^{-1} a_t
  \end{equation}
  by Proposition \ref{prop:13}.  Now, since $g'_t = a_t$, the
  $t$-derivative of $t \mapsto g_t^{-1}$ is the same as the derivative
  of $g_t \mapsto g_t^{-1}$ in the direction of $a_t$.  Hence,
  $(g_t^{-1} a_t)' = g_t^{-1} a'_t - g_t^{-1} a_t g_t^{-1} a_t$.  It
  is then easy to see that multiplying \eqref{eq:67} on the left by
  $g_t^{-1}$ gives
  \begin{equation*}
    (g_t^{-1} a_t)' = 0.
  \end{equation*}
  Thus $g_t^{-1} g'_t$ is constant, or $\log(g_t)' = g_t^{-1} g'_t
  \equiv g_0^{-1} g'_0$.  The geodesic equation now follows, and it
  remains to show that $(\M_x, d_x)$ is complete.

  Since $A \mapsto e^A$ maps symmetric matrices into positive definite
  matrices and $t g_0^{-1} g'_0$ is $g_0$-symmetric, $g_0 e^{t
    g_0^{-1} g'_0}$ is a positive definite matrix for all $t \in
  (-\infty, \infty)$.  Thus $g_t$ is positive definite for all $t$,
  and so $(\Matx, \langle \cdot , \cdot \rangle)$ is geodesically
  complete.  Since $\Matx$ is finite-dimensional, the Hopf-Rinow
  theorem applies to show that $(\Matx, d_x)$ is complete.
\end{proof}

From Proposition \ref{prop:12} and \eqref{eq:50}, we would suspect
that a finite-length path $g_t$ in $(\Matx, \langle \cdot , \cdot
\rangle^0)$ can deflate or become unbounded only if $\det g_t$
converges to zero---otherwise, the $\langle \cdot , \cdot
\rangle^0$-length of $g_t$ is related to the $\langle \cdot , \cdot
\rangle$-length by a constant, and a path with finite $\langle \cdot ,
\cdot \rangle$-length lies completely within $\Matx$.  The following
lemma and proposition confirm this suspicion.  The lemma is the
pointwise version of Lemma \ref{lem:13}, and the proposition is our
first concrete step towards proving existence of the
$\omega$-limit---it is the necessary pointwise result.

\begin{lem}\label{lem:34}
  Let $a_0, a_1 \in \Matx$.  Then
  \begin{equation*}
    \left|
      \sqrt{\det A_1} - \sqrt{\det A_0}
    \right| \leq \frac{\sqrt{n}}{2} \theta^g_x (a_0, a_1).
  \end{equation*}
  (Recall Convention \ref{cvt:3} for the definitions of $A_i$.)
\end{lem}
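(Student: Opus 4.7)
The plan is to mimic the global computation of Lemma \ref{lem:13} at the pointwise level on the finite-dimensional manifold $\Matx$, using the relation $\langle h, k \rangle^0_{a} = \langle h, k \rangle_a \det A$ between the two Riemannian structures on $\Matx$.

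First, I will fix an arbitrary smooth path $a_t$, $t \in [0,1]$, in $\Matx$ joining $a_0$ to $a_1$, and set $h_t := a_t'$. The main computation is to differentiate $t \mapsto \sqrt{\det A_t}$, where $A_t = g(x)^{-1} a_t$. Using Jacobi's formula together with $A_t^{-1} A_t' = a_t^{-1} h_t$, one gets
\begin{equation*}
\partial_t \det A_t = \det A_t \cdot \tr(A_t^{-1} A_t') = \det A_t \cdot \tr_{a_t}(h_t),
\end{equation*}
and hence
\begin{equation*}
\partial_t \sqrt{\det A_t} = \tfrac{1}{2} \sqrt{\det A_t}\, \tr_{a_t}(h_t).
\end{equation*}

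Next, I invoke the algebraic inequality $(\tr_{a_t} h_t)^2 \leq n \tr_{a_t}(h_t^2)$, which follows exactly as in the proof of Lemma \ref{lem:13} from the orthogonal splitting of $h_t$ into its traceless and pure-trace parts (equation \eqref{eq:64}). Combined with the previous display and the definition of $\langle \cdot, \cdot \rangle^0$, this yields the pointwise estimate
\begin{equation*}
\left| \partial_t \sqrt{\det A_t} \right| \leq \tfrac{\sqrt{n}}{2} \sqrt{\det A_t \cdot \tr_{a_t}(h_t^2)} = \tfrac{\sqrt{n}}{2} \sqrt{\langle h_t, h_t \rangle^0_{a_t}}.
\end{equation*}

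Integrating in $t$ from $0$ to $1$ and using $|\sqrt{\det A_1} - \sqrt{\det A_0}| \leq \int_0^1 |\partial_t \sqrt{\det A_t}|\, dt$ gives
\begin{equation*}
\left| \sqrt{\det A_1} - \sqrt{\det A_0} \right| \leq \tfrac{\sqrt{n}}{2} L^{\langle \cdot, \cdot \rangle^0}(a_t).
\end{equation*}
Since $a_t$ was an arbitrary path from $a_0$ to $a_1$, taking the infimum over all such paths and recalling that $\theta^g_x$ is by definition the $\langle \cdot, \cdot \rangle^0$-Riemannian distance function on $\Matx$ yields the claim. There is no real obstacle here; the only point requiring a little care is writing down $\partial_t \det A_t$ correctly via Jacobi's formula, but once that is done the argument is the direct pointwise analog of Lemma \ref{lem:13}, in fact simpler because no Hölder inequality in $x$ is needed.
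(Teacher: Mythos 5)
Your proof is correct and follows essentially the same route as the paper: differentiate $\sqrt{\det A_t}$ along an arbitrary path, apply the inequality $(\tr_{a_t} h_t)^2 \leq n\,\tr_{a_t}(h_t^2)$ from \eqref{eq:64} to bound it by $\tfrac{\sqrt{n}}{2}\sqrt{\langle h_t, h_t \rangle^0_{a_t}}$, integrate, and take the infimum over paths. The only cosmetic difference is that you keep absolute values throughout, whereas the paper runs the estimate once and then swaps $a_0$ and $a_1$; these are equivalent.
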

\begin{proof}
  Let $a_t$, $t \in [0,1]$, be any path from $a_0$ to $a_1$, and
  recall that $A_t = g^{-1} a_t$ (cf.~Convention \ref{cvt:3}).
  Following the proof of Lemma \ref{lem:13}, we have
  \begin{align*}
    \partial_t \sqrt{\det A_t} &= \frac{1}{2} \left( \tr_{a_t} a'_t
    \right) \sqrt{\det A_t} = \frac{1}{2} \left( \left( \tr_{a_t} a'_t
      \right)^2 \det A_t \right)^{1/2} \\
    &\leq \frac{1}{2} \left( n \tr_{a_t} ((a'_t)^2)  \det
      A_t \right)^{1/2} = \frac{\sqrt{n}}{2} \sqrt{\langle a'_t , a'_t
      \rangle^0_{a_t}},
  \end{align*}
  where the inequality follows, as in the proof of Lemma \ref{lem:13},
  from \eqref{eq:64}.  This now implies that
  \begin{equation*}
    \sqrt{\det A_1} - \sqrt{\det A_0} = \integral{0}{1}{\partial_t
      \sqrt{\det A_t}}{d t} \leq \frac{\sqrt{n}}{2}
    \integral{0}{1}{\sqrt{\langle a'_t , a'_t \rangle^0_{a_t}}}{d t} =
    \frac{\sqrt{n}}{2} L(a_t).
  \end{equation*}
  Since this holds for all paths, we can replace the far right-hand
  side with $\frac{\sqrt{n}}{2} \theta^g_x (a_0, a_1)$.  Now repeating
  the computation with $a_0$ and $a_1$ swapped completes the proof.
\end{proof}

\begin{prop}\label{prop:14}
  Let $a_k$ be a $\theta^g_x$-Cauchy sequence.  Then either
  \begin{enumerate}
  \item $\det A_k \rightarrow 0$ for $k \rightarrow \infty$, or
  \item there exist constants $C,\eta > 0$ such that $|(a_k)_{ij}|
    \leq C$ and $\det A_k \geq \eta$ for all $1 \leq i,j \leq n$ and
    $k \in \N$.
  \end{enumerate}
\end{prop}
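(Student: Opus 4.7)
The key mechanism behind Proposition \ref{prop:14} should be the dichotomy governed by the determinant: Lemma \ref{lem:34} already tells us that the scalar sequence $\sqrt{\det A_k}$ is Cauchy in $\R$, so it converges to some $\ell \geq 0$. If $\ell = 0$, then $\det A_k \to 0$ and we land in alternative (1). The nontrivial content of the proposition is to rule out any intermediate bad behavior in the case $\ell > 0$: namely, that a $\theta^g_x$-Cauchy sequence might stay uniformly off the degenerate locus and yet have coefficients running off to infinity. The plan is to show that as soon as $\ell > 0$, the $\theta^g_x$-Cauchy condition forces the sequence to be Cauchy with respect to the more manageable complete metric $d_x$ from Proposition \ref{prop:12}, whence it converges in the manifold topology of $\Matx$ and the desired $C$ and $\eta$ come for free from continuity.

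The first step is therefore to assume $\ell > 0$ and fix $K_0$ with $\sqrt{\det A_k} \geq \ell/2$ for all $k \geq K_0$. Here I would then use the Cauchy condition to fix $K_1 \geq K_0$ large enough that $\theta^g_x(a_k, a_m) < \ell/(4\sqrt{n})$ whenever $k,m \geq K_1$. The point of this particular threshold is that along \emph{any} path $a_t$ from $a_k$ to $a_m$ of $\langle \cdot,\cdot \rangle^0$-length at most, say, $2 \theta^g_x(a_k, a_m)$, Lemma \ref{lem:34} applied to the sub-path $a_t|_{[0,t]}$ gives
\begin{equation*}
\sqrt{\det A_t} \;\geq\; \sqrt{\det A_k} - \tfrac{\sqrt{n}}{2} L^{\langle \cdot,\cdot\rangle^0}(a_t|_{[0,t]}) \;\geq\; \tfrac{\ell}{2} - \sqrt{n}\,\theta^g_x(a_k,a_m) \;\geq\; \tfrac{\ell}{4},
\end{equation*}
so $\det A_t \geq \ell^2/16$ uniformly along every near-minimal path.

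The second step is to exploit the identity \eqref{eq:50}, which in the integrated form reads
\begin{equation*}
L^{\langle \cdot,\cdot\rangle^0}(a_t) \;=\; \integral{0}{1}{\sqrt{\det A_t}\, \| a_t' \|_{\langle \cdot,\cdot\rangle_{a_t}}}{d t} \;\geq\; \tfrac{\ell}{4}\, L^{\langle \cdot,\cdot\rangle}(a_t) \;\geq\; \tfrac{\ell}{4}\, d_x(a_k, a_m),
\end{equation*}
valid for every path along which $\det A_t \geq \ell^2/16$. Taking the infimum over near-minimal paths yields $d_x(a_k, a_m) \leq (4/\ell)\,\theta^g_x(a_k, a_m)$ for all $k, m \geq K_1$, so the sequence $\{a_k\}$ is $d_x$-Cauchy. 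By Proposition \ref{prop:12} the metric $d_x$ is complete on $\Matx$, hence $a_k$ converges to some $a_\infty \in \Matx$ in the manifold topology. Since $a_\infty$ is positive definite and the convergence is in the standard finite-dimensional sense, both $|(a_k)_{ij}|$ and $1/\det A_k$ are bounded, which gives the constants $C$ and $\eta$ of alternative (2).

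The only subtle step is the passage from the $\langle \cdot,\cdot\rangle^0$-bound back to the $\langle \cdot,\cdot\rangle$-bound via the uniform lower bound on $\det A_t$ along paths; this is the main obstacle and it is what dictated the quantitative choice $\theta^g_x(a_k,a_m) < \ell/(4\sqrt{n})$ in step one. Everything else is a direct combination of Lemma \ref{lem:34}, the pointwise identity \eqref{eq:50}, and the completeness result of Proposition \ref{prop:12}.
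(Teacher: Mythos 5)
Your argument is correct and rests on the same ingredients as the paper's proof: the Lipschitz estimate of Lemma \ref{lem:34}, the pointwise relation \eqref{eq:50} between $\langle \cdot , \cdot \rangle^0$ and $\langle \cdot , \cdot \rangle$, and the completeness of $(\Matx, d_x)$ from Proposition \ref{prop:12}. The paper packages these as a contradiction via the two-case path estimate \eqref{eq:65} (paths whose determinant dips low versus paths that stay high), whereas you rearrange the same estimates into a direct local comparison $d_x \leq C(\ell)\,\theta^g_x$ on the tail of the sequence and conclude convergence in $\Matx$ --- a reorganization of the same approach, not a genuinely different route.
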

\begin{proof}
  Keeping Lemma \ref{lem:34} in mind, it is more convenient to work
  with the square root of the determinant.  This is, of course,
  completely equivalent for our purposes.

  Now, by Lemma \ref{lem:34}, the map $a \mapsto \sqrt{\det A}$ is
  $\theta^g_x$-Lipschitz.  Since $a_k$ is $\theta^g_x$-Cauchy, it is
  easy to see that $\lim_{k \to \infty} \sqrt{\det A_k}$ exists, so
  let's call this limit $L$.

  If for every $\eta > 0$, there exists $k$ such that $\sqrt{\det A_k}
  \leq \eta$, then clearly $L = 0$.

  It remains to show that if there exist $i$ and $j$ such that for all
  $C > 0$, there is a $k$ such that $|(a_k)_{ij}| > C$, then
  $\sqrt{\det A_k} \rightarrow 0$.  We will assume that $L > 0$ and
  show a contradiction.

  Let's say that we are given $b_0, b_1 \in \Matx$ with $\det B_0,
  \det B_1 \geq \delta$.  Let
  \begin{align*}
    L_{-\delta} &:=
    \inf \left\{
      L^{\langle \cdot, \cdot \rangle^0}(b_t) \mid b_t\ \textnormal{is a path from $b_0$ to $b_1$ with}\ \det
      B_t \leq \delta/2\ \textnormal{for some}\ t \in (0,1)
    \right\}, \\
    L_{+\delta} &:=
    \inf \left\{
      L^{\langle \cdot, \cdot \rangle^0}(b_t) \mid b_t\ \textnormal{is a path from $b_0$ to $b_1$ with}\ \det
      B_t \geq \delta/2\ \textnormal{for all}\ t \in [0,1]
    \right\}.
  \end{align*}
  It is easy to see that $\theta^g_x(b_0, b_1) = \min (L_{-\delta},
  L_{+\delta})$.  Now let $b_t$ be a path as in the definition of
  $L_{-\delta}$, and assume $\tau \in (0,1)$ is such that $\det B_\tau
  \leq \delta/2$.  Then using Lemma \ref{lem:34}, we have
  \begin{align*}
    L^{\langle \cdot, \cdot \rangle^0}(b_t) &= L^{\langle \cdot, \cdot \rangle^0}(b_t|_{[0,\tau]}) + L^{\langle \cdot, \cdot \rangle^0}(b_t|_{[\tau,1]}) \\
    &\geq \frac{\sqrt{n}}{2} \left| \sqrt{\det B_0} - \sqrt{\det
        B_\tau} \right| + \frac{\sqrt{n}}{2} \left| \sqrt{\det B_1} -
      \sqrt{\det B_\tau}
    \right| \\
    &\geq \sqrt{n} \left( \sqrt{\delta} - \sqrt{\frac{\delta}{2}}
    \right) = \sqrt{n} \left( 1 - \frac{1}{\sqrt{2}} \right)
    \sqrt{\delta}.
  \end{align*}
  Therefore $L_{-\delta} \geq \sqrt{n} (1 - 1/\sqrt{2})
  \sqrt{\delta}$.  Then, if $b_t$ is a path as in the definition of
  $L_{+\delta}$, we have
  \begin{equation*}
    L(b_t) = \integral{0}{1}{\sqrt{\langle b'_t , b'_t \rangle^0}}{d
      t} = \integral{0}{1}{\sqrt{\langle b'_t , b'_t \rangle} \det B_t}{d t}
    \geq \sqrt{\frac{\delta}{2}} \integral{0}{1}{\sqrt{\langle b'_t ,
        b'_t \rangle}}{d t} \geq \sqrt{\frac{\delta}{2}} \, d_x(b_0, b_1).
  \end{equation*}
  This gives $L_{+\delta} \geq \sqrt{\delta / 2} \, d_x(b_0, b_1)$.
  Putting all of this together, we get that
  \begin{equation}\label{eq:65}
    \theta^g_x(b_0, b_1) \geq \min \{ \sqrt{n} (1 - 1/\sqrt{2}) \sqrt{\delta} ,
    \sqrt{\delta / 2} \, d_x(b_0, b_1) \}
  \end{equation}
  whenever $\det B_0, \det B_1 \geq \delta$.

  Now, let's apply the considerations of the last paragraph to the
  problem at hand.  Let $i$ and $j$ be, as above, the indices for which
  $|(a_k)_{ij}|$ is unbounded, and choose a subsequence, which we
  again denote by $a_k$, such that $|(a_k)_{ij}| \geq k$ for all $k
  \in \N$.  Passing to this subsequence does not change the limit
  $\lim_{k \to \infty} \sqrt{\det A_k}$.

  Next, choose $K \in \N$ such that $k \geq K$ implies $\sqrt{\det
    A_k} \geq L/2$ and $k,l \geq K$ implies $\theta^g_x(a_k, a_l) \leq
  \frac{1}{2} \sqrt{n} (1 - 1/\sqrt{2}) \sqrt{L/2}$.  The latter
  assumption is possible since $a_k$ is Cauchy.  By \eqref{eq:65}, if
  $k \geq K$, we also have
  \begin{equation*}
    \theta^g_x(a_K, a_k) \geq \min \{ \sqrt{n} (1 - 1 / \sqrt{2}) \sqrt{L/2} ,
    \sqrt{L / 4} \, d_x(a_K, a_k) \}.
  \end{equation*}
  But $\theta^g_x(a_K, a_k) \geq \sqrt{n} (1 - 1 / \sqrt{2})
  \sqrt{L/2}$ violates our assumptions on $K$. Furthermore, $d_x(a_K,
  a_k) \rightarrow \infty$ since $|(a_k)_{ij}| \rightarrow \infty$ and
  $(\Matx, \langle \cdot , \cdot \rangle)$ is complete.  Therefore, if
  $\theta^g_x(a_K, a_k) \geq \sqrt{L / 4} \, d_x(a_K, a_k)$ for all
  $k$, then our assumptions on $K$ are violated as well.  Thus we have
  achieved the desired contradiction.
\end{proof}

Since for every pair of constants $C, \eta > 0$, the set of elements
$\tilde{g}$ of $\Matx$ with $|\tilde{g}_{ij}| \leq C$ and $\det
\tilde{G} \geq \eta$ for all $1 \leq i,j \leq n$ is compact, we
immediately get the following corollary of Proposition \ref{prop:14}:

\begin{cor}\label{cor:10}
  Let $\{g_k\}$ be a $\theta^g_x$-Cauchy sequence.  Then either
  \begin{enumerate}
  \item $\det G_k \rightarrow 0$ for $k \rightarrow \infty$, or
  \item there exists an element $g_\infty \in \Matx$ such that $g_k
    \rightarrow g_\infty$, with convergence in the manifold topology
    of $\M_x$.
  \end{enumerate}
\end{cor}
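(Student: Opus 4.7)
The plan is to reduce the corollary to Proposition \ref{prop:14} together with a standard compactness argument in the finite-dimensional manifold $\Matx$. The proposition already delivers the dichotomy between the two alternatives in the corollary (applied to $a_k := g_k$), so the only content that remains is to upgrade the conclusion of case (2) from ``bounded coefficients and determinant bounded away from zero'' to ``convergent in the manifold topology of $\Matx$''.

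First, I would simply invoke Proposition \ref{prop:14} to obtain the dichotomy: either $\det G_k \to 0$, in which case we are already in case (1) of the corollary and there is nothing more to prove, or there exist constants $C,\eta > 0$ with $|(g_k)_{ij}| \leq C$ and $\det G_k \geq \eta$ for all $k$ and all $i,j$. So the work is entirely in the second case.

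In this second case I would consider the subset
\begin{equation*}
  K_{C,\eta} := \{ a \in \Matx \mid |a_{ij}| \leq C\ \forall i,j,\ \det A \geq \eta \} \subset \satx.
\end{equation*}
This set is closed and bounded as a subset of the finite-dimensional vector space $\satx$, hence compact. Since $\det A \geq \eta > 0$ on $K_{C,\eta}$, it is also contained in $\Matx$ (i.e., does not touch the boundary where positive definiteness fails), so it is a compact subset of the finite-dimensional manifold $\Matx$. The sequence $\{g_k\}$ lies entirely in $K_{C,\eta}$, so by compactness it has a subsequence $\{g_{k_l}\}$ converging in the manifold topology to some $g_\infty \in K_{C,\eta} \subset \Matx$.

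Finally, I would promote the subsequential convergence to convergence of the full sequence. The key point is that $\theta^g_x$ is the Riemannian distance function of a smooth Riemannian metric on the finite-dimensional manifold $\Matx$, so on the compact set $K_{C,\eta}$ the topology induced by $\theta^g_x$ coincides with the manifold topology. Thus the subsequence $\{g_{k_l}\}$ converges to $g_\infty$ in $\theta^g_x$; combined with the fact that $\{g_k\}$ is $\theta^g_x$-Cauchy, the standard argument (a Cauchy sequence with a convergent subsequence itself converges to the same limit) shows that the full sequence $g_k \to g_\infty$ in $\theta^g_x$, and hence in the manifold topology of $\Matx$. There is no real obstacle here; the only subtle point is making sure one uses the equivalence of the two topologies on a compact subset of $\Matx$ to transfer subsequential manifold-convergence into full-sequence convergence via the Cauchy hypothesis.
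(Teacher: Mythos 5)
Your proposal is correct and follows essentially the same route as the paper: invoke Proposition \ref{prop:14}, note that the bounds in case (2) confine the sequence to a compact subset of $\Matx$, and use the fact that on the finite-dimensional manifold $\Matx$ the $\theta^g_x$-topology agrees with the manifold topology. The only cosmetic difference is that the paper concludes via completeness of a compact metric subspace, while you pass through a convergent subsequence and the standard Cauchy-plus-subsequence argument, which amounts to the same thing.
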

\begin{proof}
  By the discussion preceding the corollary, if $\det G_k$ is bounded
  away from zero, then $\{g_k\}$ is contained within a compact subset
  of $\Matx$.  Since a compact subset of a metric space is complete
  and $\{g_k\}$ is Cauchy, it $\theta^g_x$-converges to some limit
  $g_\infty$.  Finally, since $\Matx$ is finite dimensional, the
  topology induced by $\theta^g_x$ coincides with the topology of
  $\Matx$ as a manifold (or open subset of $\satx$), so in fact $g_k
  \rightarrow g_\infty$.
\end{proof}

This is essentially the pointwise equivalent of $\omega$-convergence.  In
the next subsection, we will globalize this result.  Before we do
that, though, we use this opportune moment to prove two last pointwise
results, which will be useful in Section
\ref{sec:uniqueness-ad-limit}.  The first is the pointwise analog of
Proposition \ref{prop:18}.

\begin{prop}\label{prop:25}
  Let $\tilde{g}, \hat{g} \in \M_x$.  Then there exists a constant
  $C'(n)$, depending only on $n$, such that
  \begin{equation*}
    \theta^g_x(\tilde{g}, \hat{g}) \leq C'(n) \left( \sqrt{\det \tilde{G}} +
      \sqrt{\det \hat{G}} \right).
  \end{equation*}
\end{prop}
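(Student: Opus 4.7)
The plan is to mimic the three-stage path construction from the proof of Proposition \ref{prop:18}, but at the pointwise level on the finite-dimensional manifold $\Matx$. Since we are no longer dealing with a carrier set or with smoothing cutoffs, the construction simplifies considerably: there is no need for the closed sets $F_k$, open sets $U_k$, or bump functions $f_{k,s}$, only the scaling parameter $s \in (0,1]$.

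Concretely, for each $s \in (0,1]$ I would define the concatenated path from $\tilde g$ to $\hat g$ in $\Matx$ consisting of three straight-line pieces:
\begin{align*}
a^s_t &:= (1 - t + ts)\tilde g, & t \in [0,1], \\
b^s_t &:= s\bigl((1-t)\tilde g + t \hat g\bigr), & t \in [0,1], \\
c^s_t &:= (1 - t + ts)\hat g, & t \in [0,1],
\end{align*}
and estimate the $\langle \cdot, \cdot\rangle^0$-length of each piece separately, following the model of the estimates leading to \eqref{eq:75}, \eqref{eq:76}, and \eqref{eq:135}. For $a^s_t$, a direct computation using $\tr_{\tilde g}(\tilde g^2) = n$ and $\det A^s_t = (1-t+ts)^n \det\tilde G$ gives
\[
\sqrt{\langle (a^s_t)', (a^s_t)'\rangle^0_{a^s_t}} = \sqrt{n}\,(1-s)(1-t+ts)^{n/2 - 1}\sqrt{\det \tilde G}.
\]
When $n \geq 2$ the exponent is nonnegative and the integrand is bounded by $\sqrt{n}\sqrt{\det \tilde G}$; when $n = 1$ one uses $1 - t + ts \geq 1 - t$ to dominate the integrand by $(1-t)^{-1/2}\sqrt{\det\tilde G}$, which is still integrable on $[0,1]$. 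Either way, $L^{\langle\cdot,\cdot\rangle^0}(a^s_t) \leq C(n)\sqrt{\det\tilde G}$ for a constant $C(n)$ independent of $s$. The same argument applied to $c^s_t$ (run backwards) gives $L^{\langle\cdot,\cdot\rangle^0}(c^s_t) \leq C(n)\sqrt{\det\hat G}$.

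The middle piece is where the simplification is most pronounced. A short calculation yields
\[
\sqrt{\langle (b^s_t)', (b^s_t)'\rangle^0_{b^s_t}} = s^{n/2}\,\sqrt{\tr_{(1-t)\tilde g + t\hat g}\bigl((\hat g - \tilde g)^2\bigr)\,\det\bigl((1-t)G_0 + t G_1\bigr)},
\]
where $G_0 = g^{-1}\tilde g$ and $G_1 = g^{-1}\hat g$. For fixed $\tilde g, \hat g \in \Matx$ the square root factor is a continuous function of $t \in [0,1]$, hence uniformly bounded, so $L^{\langle\cdot,\cdot\rangle^0}(b^s_t) \to 0$ as $s \to 0$.

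Taking the infimum over $s \in (0,1]$ of the sum of the three lengths produces the desired bound $\theta^g_x(\tilde g, \hat g) \leq C'(n)(\sqrt{\det\tilde G} + \sqrt{\det\hat G})$ with $C'(n) = C(n)$. There is no real obstacle here: the proof is essentially the pointwise skeleton of Proposition \ref{prop:18} with all the measure-theoretic approximation stripped away. The only thing to keep an eye on is treating the low-dimensional case $n = 1$ (and to a lesser extent $n = 2, 3$) where the exponent $n/2 - 1$ is nonpositive, exactly as in the original argument, though the presence of the extra factor of $\sqrt{\det G_0}$ absent from the pointwise setting actually makes our integrand \emph{easier} to control than the one appearing in \eqref{eq:75}.
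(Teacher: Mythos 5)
Your proposal is correct and is essentially the paper's own proof: the paper likewise concatenates the radial contraction of $\tilde{g}$ down to $s\tilde{g}$, the straight segment $s\bigl((1-t)\tilde{g}+t\hat{g}\bigr)$, and the radial piece ending at $\hat{g}$, bounds each radial piece by $C'(n)\sqrt{\det\tilde{G}}$ (resp.\ $C'(n)\sqrt{\det\hat{G}}$) using integrability of the relevant power of $t$, and observes that the middle piece's length scales like a positive power of $s$ and hence vanishes as $s \to 0$. The only differences are cosmetic: your affine reparametrization of the radial pieces on $[0,1]$ instead of the paper's $t\tilde{g}$ on $[s,1]$, and your explicit domination argument for the low-dimensional exponent, which the paper handles simply by noting the exponent exceeds $-1$.
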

\begin{proof}
  For this proof, we will denote the $\langle \cdot , \cdot
  \rangle^0$-length of a path simply by $L$.  The metric $\langle
  \cdot , \cdot \rangle$ does not play a role here, and so there is no
  need to distinguish between the two lengths.
  
  The proof goes very similarly to Proposition \ref{prop:18}, but is
  simpler because we do not need to do any argument approximating
  paths of $L^2$ metrics by $C^\infty$ metrics.  Since the ideas are
  the same, the proof can be safely skipped, but we include it here
  for completeness.

  First, define paths $\tilde{g}^s_t$ and $\hat{g}^s_t$, for $0 < s
  \leq 1$ and $t \in [s,1]$, by
  \begin{equation*}
    \tilde{g}^s_t := t \tilde{g} \quad \textnormal{and} \quad
    \hat{g}^s_t := t \hat{g}.
  \end{equation*}
  We consider these as a family of paths in the time variable $t$ with
  domain depending on the family parameter $s$.

  Second, define a family $\bar{g}^s_t$ of paths in $t$ depending on
  the family parameter $s$ by
  \begin{equation*}
    \bar{g}^s_t := s \left( (1 - t) \tilde{g} + t \hat{g} \right),
  \end{equation*}
  where again $0 < s \leq 1$ but this time $t \in [0,1]$.

  Then the concatenation $g^s_t := (\tilde{g}^s_t)^{-1} * \bar{g}^s_t
  * \hat{g}^s_t$ (here, $(\tilde{g}^s_t)^{-1}$ means we run through
  that path backwards) is, for each $s$, a path from $\tilde{g}$ to
  $\hat{g}$.  We will prove that
  \begin{equation*}
    \lim_{s \to 0} L(g^s_t) \leq
    C'(n) \left( \sqrt{\det \tilde{G}} + \sqrt{\det \hat{G}} \right),
  \end{equation*}
  which will imply the result immediately.

  First, note that $L(\tilde{g}^s_t) \leq \lim_{s \to 0}
  L(\tilde{g}^s_t)$ for all $s$.  To compute the right-hand side, note
  that
  \begin{equation*}
    \langle (\tilde{g}^s_t)', (\tilde{g}^s_t)'
    \rangle^0_{\tilde{g}^s_t} = \tr_{t \tilde{g}}(\tilde{g}^2) \det (t
    \tilde{G}) = (n \det \tilde{G}) t^{\frac{n}{2} - 2}.
  \end{equation*}
  Therefore,
  \begin{equation*}
    L(\tilde{g}^s_t) \leq \lim_{s \to 0} L(\tilde{g}^s_t) = \sqrt{n \det \tilde{G}}
    \integral{0}{1}{t^{\frac{n}{4} - 1}}{d t}.
  \end{equation*}
  Since $\frac{n}{4} - 1 > -1$, the above integral is finite, with a
  value depending only on $n$.  Hence we have
  \begin{equation*}
    L(\tilde{g}^s_t) \leq C'(n) \sqrt{\det \tilde{G}}.
  \end{equation*}

  In exactly the same way, we can show
  \begin{equation*}
    L(\hat{g}^s_t) \leq C'(n) \sqrt{\det \hat{G}},
  \end{equation*}
  even using the same constant.

  Now, if we can show that $\lim_{s \to 0} L(\bar{g}^s_t) = 0$, we will be
  finished.  So we compute
  \begin{align*}
    \langle (\bar{g}^s_t)', (\bar{g}^s_t)' \rangle^0_{\bar{g}^s_t} &= \tr_{s((1 - t)
      \hat{g} + t \tilde{g})} \left( s^2 (\tilde{g} - \hat{g})^2
    \right)
    \det(s((1 - t) \tilde{G} + t \hat{G})) \\
    &= s^{n/2} \tr_{((1 - t) \tilde{g} + t \hat{g}} \left( (\hat{g} -
      \tilde{g})^2 \right) \det ((1-t) \tilde{G} + t \hat{G}) \\
    &= s^{n/2} \langle (g^1_t)', (g^1_t)' \rangle^0_{g^1_t}.
  \end{align*}
  This implies that
  \begin{equation*}
    L(\bar{g}^s_t) = s^{n/2} L(\bar{g}^1_t),
  \end{equation*}
  from which $\lim_{s \to 0} L(\bar{g}^s_t) = 0$ is immediate.  This
  completes the proof.
\end{proof}

The last pointwise result we need combines Corollary \ref{cor:10} and
Proposition \ref{prop:25} to give a description of the completion of
the metric space $(\Matx, \theta^g_x)$.

\begin{thm}\label{thm:35}
  For any given $x \in M$, let $\textnormal{cl}(\Matx)$ denote the
  closure of $\Matx \subset \satx$ with regard to the natural
  topology.  Then $\cl(\Matx)$ consists of all positive semidefinite
  $(0,2)$-tensors at $x$.  Let us denote the boundary of $\Matx$, as a
  subspace of $\satx$, by $\partial \Matx$.

  Define an equivalence relation on $\textnormal{cl}(\Matx)$ by $g_0
  \sim g_1$ if and only if $g_0, g_1 \in \partial \Matx$.  Thus, we
  simply identify the boundary of $\Matx$ together to a point.
  
  Then the completion of $(\Matx, \theta^g_x)$ can be identified with
  the space $\textnormal{cl}(\Matx) / {\sim}$.  The distance
  function is given by
  \begin{equation*}
    \theta^g_x(g_0, g_1) = \lim_{k \rightarrow
      \infty}\theta^g_x(g^0_k, g^1_k),
  \end{equation*}
  where $\{g^0_k\}$ and $\{g^1_k\}$ are any sequences in $\Matx$
  converging (in the topology of $\satx$) to $g_0$ and $g_1$,
  respectively.
\end{thm}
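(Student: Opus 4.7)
The plan is to produce a bijection $\Phi$ from the completion $\overline{\Matx}$ (defined abstractly via Cauchy sequences, as in Section \ref{sec:compl-metr-spac}) to the quotient space $\cl(\Matx)/{\sim}$, and then to verify that it is an isometry with the distance described in the statement. The main tools are the pointwise results already proved in this subsection, namely Corollary \ref{cor:10} (the dichotomy for $\theta^g_x$-Cauchy sequences), Lemma \ref{lem:34} (Lipschitz behavior of $\sqrt{\det \tilde{G}}$), and Proposition \ref{prop:25} (small-determinant metrics are $\theta^g_x$-close).

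First, I would define $\Phi$ on representatives: given a $\theta^g_x$-Cauchy sequence $\{g_k\}$, Corollary \ref{cor:10} offers exactly two mutually exclusive cases, so I set $\Phi(\{g_k\})$ to be the manifold-topology limit $g_\infty \in \Matx$ in case (2), and the collapsed boundary point $[\partial \Matx]$ in case (1) where $\det G_k \to 0$. To see that $\Phi$ descends to equivalence classes, I would treat the three combinations:
\begin{itemize}
\item If both sequences converge in $\Matx$ to $g_\infty, g'_\infty$, continuity of $\theta^g_x$ on $\Matx$ forces $\theta^g_x(g_\infty, g'_\infty) = \lim \theta^g_x(g_k, g'_k)$, which is $0$ iff $g_\infty = g'_\infty$.
\item If both sequences deflate ($\det G_k, \det G'_k \to 0$), Proposition \ref{prop:25} gives $\theta^g_x(g_k, g'_k) \le C'(n)(\sqrt{\det G_k} + \sqrt{\det G'_k}) \to 0$, so they are equivalent and both map to $[\partial \Matx]$.
\item In the mixed case ($g_k \to g_\infty \in \Matx$ and $\det G'_k \to 0$), if $\theta^g_x(g_k, g'_k) \to 0$ then by the triangle inequality $\theta^g_x(g'_k, g_\infty) \to 0$, and Lemma \ref{lem:34} would force $\sqrt{\det G'_k} \to \sqrt{\det G_\infty} > 0$, a contradiction; thus mixed pairs are never equivalent.
\end{itemize}
These three observations simultaneously give that $\Phi$ is well-defined on $\overline{\Matx}$ and injective.

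Next I would establish surjectivity. For any $g_\infty \in \Matx$ the constant sequence is Cauchy and maps to $g_\infty$. For the collapsed boundary point I would exhibit an explicit deflating Cauchy sequence, e.g.\ $g_k := k^{-1} g(x)$: Proposition \ref{prop:25} applied to $g_k$ and $g_l$ shows it is $\theta^g_x$-Cauchy (in fact $\theta^g_x(g_k, g_l) \to 0$), and by construction $\det G_k = k^{-n} \to 0$, so $\Phi$ sends it to $[\partial \Matx]$. Finally, the isometry property: the distance function on the completion is by definition $\lim_k \theta^g_x(g_k^0, g_k^1)$ for any representatives, so the stated formula is just a restatement of the pseudometric on $\overline{\Matx}^{\mathrm{pre}}$, with the caveat that one must check the limit is independent of the chosen representatives $\{g_k^i\}$ converging in $\satx$ to $g_i \in \cl(\Matx)$; in the deflating case this independence is again supplied by Proposition \ref{prop:25}, and in the interior case by continuity of $\theta^g_x$ on $\Matx$.

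The subtlest point, and the one I would expect to require the most care, is the mixed-case argument showing that an interior-convergent Cauchy sequence cannot be equivalent to a deflating one; this is the bridge that guarantees $\Phi$ is genuinely injective and not merely well-defined. Everything else is a fairly direct packaging of Corollary \ref{cor:10}, Lemma \ref{lem:34}, and Proposition \ref{prop:25} into the formal framework of metric-space completions from Section \ref{sec:compl-metr-spac}.
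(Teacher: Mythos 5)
Your proposal is correct and follows essentially the same route as the paper's proof: both package Corollary \ref{cor:10}, Lemma \ref{lem:34}, and Proposition \ref{prop:25} into the Cauchy-sequence description of the completion, identifying deflating sequences with the collapsed boundary point and interior-convergent sequences with their limits. If anything, your explicit mixed-case argument (triangle inequality plus Lemma \ref{lem:34} to rule out equivalence between an interior-convergent and a deflating sequence) spells out a step the paper leaves implicit.
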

\begin{proof}
  Note that $\tilde{g} \mapsto \det \tilde{G}$ is a continuous map
  from $\satx$ to the reals, that the map is positive when restricted
  to $\Matx$, and that it is constantly zero when restricted to
  $\partial \Matx$.  The latter facts are implied by Proposition
  \ref{prop:7}.

  Let $\{g_k\}$ be any sequence in $\Matx$.  By Corollary
  \ref{cor:10}, if $\{g_k\}$ is Cauchy then either $g_k \rightarrow
  g_\infty \in \Matx$ (with convergence in the topology of $\satx$),
  or $\det G_k \rightarrow 0$.  By Proposition \ref{prop:25}, all
  sequences with $\det G_k \rightarrow 0$ are equivalent Cauchy
  sequences, and so they are identified in $\overline{(\Matx,
    \theta^g_x)}$.  Since the determinant is a continuous map, as
  noted above, we can thus identify such sequences with any given
  sequence converging to $\partial \Matx$.

  Finally, if $g_k \rightarrow g_\infty \in \Matx$ in the topology of
  $\satx$, then we also have that $\theta^g_x(g_k, g_\infty)
  \rightarrow 0$, because $\Matx$ is finite dimensional and so the
  topology of $\theta^g_x$ coincides with the manifold topology.  This
  implies that $\{g_k\}$ is Cauchy.  By the same reasoning, we can
  show that if $\{\tilde{g}_k\}$ is a second sequence converging to
  $g_\infty$ in the topology of $\satx$, then $\{g_k\}$ and
  $\{\tilde{g}_k\}$ are equivalent.

  We have thus shown that $\{g_k\}$ is Cauchy if and only if either
  $g_k \rightarrow g_\infty \in \Matx$ or $\det G_k \rightarrow 0$
  holds.  We have also shown that all sequences with $\det G_k
  \rightarrow 0$ are equivalent, and that all sequences converging to
  the same element of $\Matx$ are equivalent.  The statement of the
  theorem now follows.
\end{proof}

\subsection{The existence proof}\label{sec:limit-volume-sing}

Corollary \ref{cor:10} gives us strong hints as to what to expect from
Cauchy sequences in $\M$.  Thinking heuristically, a $d$-Cauchy
sequence $\{g_k\}$ in $\M$ should be a Cauchy sequence in $\theta^g_x$
for ``most'' points $x$ by the estimate of Proposition \ref{prop:20}.
Then we know that at ``most'' points $x$, either $\{g_k(x)\}$
converges or $\det G_k(x) \rightarrow 0$.  That is, $\{g_k\}$
converges at ``most'' points outside the deflated set.  The goal of
this subsection is to make this heuristic idea precise and use it to
prove existence of the $\omega$-limit.

\begin{lem}\label{lem:35}
  Let $\{g_k\}$ be a Cauchy sequence in $\M$.  By passing to a
  subsequence if necessary, we can assume that
  \begin{equation*}
    \sum_{k=1}^\infty d(g_k, g_{k+1}) < \infty.
  \end{equation*}

  Then the following holds:
  \begin{equation*}
    \sum_{k=1}^\infty \Theta_M (g_k, g_{k+1}) < \infty.
  \end{equation*}
  Furthermore, define functions $\Omega$ and $\Omega_N$ for each $N
  \in \N$ by
  \begin{equation*}
    \Omega_N := \sum_{k=1}^N \theta^g_x(g_k(x), g_{k+1}(x)),
    \quad \Omega := \sum_{k=1}^\infty \theta^g_x(g_k(x), g_{k+1}(x)).
  \end{equation*}
  Then $\Omega$ is a.e.~finite, $\Omega \in L^1(M, g)$ and $\Omega_N
  \overarrow{L^1} \Omega$.  Furthermore, by definition, $\Omega_N$
  converges to $\Omega$ pointwise.
\end{lem}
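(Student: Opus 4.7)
The plan is to split the lemma into its two assertions and handle them in order, using Proposition~\ref{prop:20} to pass from $d$ to $\Theta_M$ and then the monotone convergence theorem to pass from $\Theta_M$ to the pointwise/$L^1$ statements about $\Omega$.

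First I would establish the summability $\sum \Theta_M(g_k, g_{k+1}) < \infty$. The key input is Proposition~\ref{prop:20}, which gives
\begin{equation*}
\Theta_M(g_k, g_{k+1}) \leq d(g_k, g_{k+1}) \left( \sqrt{n}\, d(g_k, g_{k+1}) + 2 \sqrt{\Vol(M, g_k)} \right).
\end{equation*}
To use this I need a uniform bound on $\sqrt{\Vol(M, g_k)}$. This is immediate from Lemma~\ref{lem:13}: that lemma tells us $\tilde{g} \mapsto \sqrt{\Vol(M,\tilde{g})}$ is Lipschitz with respect to $d$, and $\{g_k\}$ is $d$-Cauchy, so $\{\sqrt{\Vol(M, g_k)}\}$ is a Cauchy (hence bounded) sequence in $\R$. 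Call the bound $C$. The assumption $\sum d(g_k, g_{k+1}) < \infty$ also forces $d(g_k, g_{k+1})$ to be bounded. Thus there is a constant $K$ such that
\begin{equation*}
\Theta_M(g_k, g_{k+1}) \leq K \cdot d(g_k, g_{k+1}),
\end{equation*}
and summing gives the first claim.

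Next I would unpack the definition of $\Theta_M$: by Definition~\ref{dfn:16},
\begin{equation*}
\int_M \Omega_N(x) \, d\mu_g(x) = \sum_{k=1}^N \Theta_M(g_k, g_{k+1}).
\end{equation*}
The sequence $\Omega_N$ is pointwise nondecreasing and nonnegative (since each $\theta^g_x(g_k(x), g_{k+1}(x))$ is nonnegative), and converges pointwise to $\Omega$ by definition of the latter as the value of an increasing series. By the monotone convergence theorem,
\begin{equation*}
\int_M \Omega \, d\mu_g = \lim_{N \to \infty} \int_M \Omega_N \, d\mu_g = \sum_{k=1}^\infty \Theta_M(g_k, g_{k+1}) < \infty
\end{equation*}
by the first part of the lemma. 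Hence $\Omega \in L^1(M, g)$, and in particular $\Omega$ is finite almost everywhere.

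Finally, for $L^1$ convergence of $\Omega_N$ to $\Omega$, I would observe that $0 \leq \Omega - \Omega_N$ and
\begin{equation*}
\int_M |\Omega - \Omega_N| \, d\mu_g = \int_M \Omega \, d\mu_g - \int_M \Omega_N \, d\mu_g = \sum_{k=N+1}^\infty \Theta_M(g_k, g_{k+1}) \longrightarrow 0
\end{equation*}
as $N \to \infty$, since this is the tail of a convergent series. There is no real obstacle in this proof; the only subtle ingredient is the uniform volume bound, but it is already supplied by Lemma~\ref{lem:13}. Everything else is Proposition~\ref{prop:20} applied termwise and a standard monotone convergence argument.
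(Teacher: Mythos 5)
Your proof is correct and follows essentially the same route as the paper: Lemma~\ref{lem:13} gives the uniform volume bound, Proposition~\ref{prop:20} then yields $\Theta_M(g_k,g_{k+1}) \leq K\, d(g_k,g_{k+1})$, and the monotone convergence theorem handles $\Omega$. The only (inessential) difference is the final step: where the paper invokes the criterion that a.e.\ convergence plus convergence of $L^1$ norms implies $L^1$ convergence, you exploit $\Omega_N \leq \Omega$ to compute $\int_M |\Omega - \Omega_N|\,\mu_g$ directly as a tail of the convergent series, which is an equally valid and slightly more self-contained way to conclude.
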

\begin{proof}
  The first statement is clear, as is the statement that $\Omega_N
  \rightarrow \Omega$ pointwise.  So we move on to the other
  statements.

  Lemma \ref{lem:13} implies that $\sqrt{\Vol(M, g_k)}$ is a Cauchy
  sequence in $\R$.  Therefore it is bounded, and we can find a
  constant $V$ such that $\sqrt{\Vol(M, g_k)} \leq V$ for all $k$.
  Thus, by Proposition \ref{prop:20},
  \begin{equation*}
    \Theta_M(g_k, g_{k+1}) \leq 2 d(g_k, g_{k+1})
    \left( \frac{\sqrt{n}}{2} d(g_k, g_{k+1}) + V \right).
  \end{equation*}
  But for large $k$, since $\{g_k\}$ is Cauchy, we must have $d(g_k,
  g_{k+1}) \leq 1$, so
  \begin{equation*}
    \Theta_M(g_k, g_{k+1}) \leq \sqrt{n} \, d(g_k,
    g_{k+1})^2 + V d(g_k, g_{k+1}) \leq (\sqrt{n} + V) d(g_k, g_{k+1}).
  \end{equation*}
  The first statement is now immediate.

  To prove the second statement, we recall the monotone convergence
  theorem of Lebesgue and Levi
  \cite[Thm.~2.8.2]{bogachev07:_measur_theor}.  Let $(X, \Sigma, \nu)$
  be a measure space, and let $h_i$, for $i \in \N$, be measurable
  functions $X \rightarrow [0,+\infty]$.  Suppose that $h_i \leq
  h_{i+1}$ for all $i \in \N$ and a.e.~$x \in X$, and furthermore that
  $\sup_i \integral{}{}{h_i}{d \nu} < \infty$.  Then the function
  defined by $h(x) := \lim h_i(x)$ is a.e.~finite, and
  \begin{equation*}
    \lim_{i \to \infty} \integral{X}{}{h_i}{d\nu} = \integral{X}{}{h}{d\nu}.
  \end{equation*}
  This theorem implies a criterion for exchanging infinite sums and
  integrals.  In particular, let $f_i$ be a sequence of nonnegative
  measurable functions on $X$.  Let $F_N$ be the partial sum of the
  first $N$ elements and define $F := \lim_{N \to \infty} F_N$.
  Suppose that $\sup_N \integral{}{}{F_N}{d \nu} < \infty$.  Then $F$
  and $F_N$ clearly satisfy the requirements of the monotone
  convergence theorem, so we have
  \begin{equation*}
    \sum_{i=1}^\infty \integral{X}{}{f_i}{d \nu} = \lim_{N \to \infty}
    \sum_{i=1}^N \integral{X}{}{f_i}{d \nu} = \lim_{N \to \infty}
    \integral{X}{}{F_N}{d \nu} = \integral{X}{}{F}{d \nu} =
    \integral{X}{}{\left( \sum_{i=1}^\infty f_i \right)}{d \nu}.
  \end{equation*}

  We can apply this to $\Omega$ and $\Omega_N$ to obtain
  \begin{equation*}
    \integral{M}{}{\Omega}{\mu_g} = \lim_{N \to \infty}
    \integral{M}{}{\Omega_N}{\mu_g} = \lim_{N \to \infty}
    \sum_{k=1}^N \integral{M}{}{\theta^g_x(g_k, g_{k+1})}{\mu_g} = \sum_{k=1}^\infty \Theta_M(g_k, g_{k+1}) < \infty,
  \end{equation*}
  where finiteness follows from the first part of the lemma.  This
  proves that $\Omega$ is a.e.~finite and $\Omega \in L^1(M,g)$.  It
  remains to show that $\Omega_N \overarrow{L^1} \Omega$.  But this is
  now immediate from
  \cite[Thm.~8.5.1]{rana02:_introd_to_measur_and_integ}, which states
  that if $1 \leq p < \infty$, $f_i \rightarrow f$ a.e.~and $\| f_i
  \|_p \rightarrow \| f \|_p$, then $f_i \overarrow{L^p} f$.
\end{proof}

Using this lemma, we can prove what we heuristically described
before---that given a Cauchy sequence $\{g_k\}$, we can find a
subsequence $\{g_{k_m}\}$ such that $\{g_{k_m}(x)\}$ is
$\theta^g_x$-Cauchy for ``most'' $x$, allowing us to apply Proposition
\ref{prop:14} at these points.

\begin{prop}\label{prop:15}
  Let $\{g_k\}$ be a Cauchy sequence in $\M$ such that
  \begin{equation*}
    \sum_{k=1}^\infty d(g_k, g_{k+1}) < \infty.
  \end{equation*}
  Then $\{g_k(x)\}$ is a $\theta^g_x$-Cauchy sequence for a.e.~$x \in M$.
\end{prop}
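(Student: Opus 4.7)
The plan is to derive this almost immediately from Lemma \ref{lem:35}, which is exactly the technical input designed for this purpose. Recall that under the summability hypothesis on $\{d(g_k,g_{k+1})\}$, Lemma \ref{lem:35} produces the function
\begin{equation*}
  \Omega(x) = \sum_{k=1}^\infty \theta^g_x(g_k(x), g_{k+1}(x))
\end{equation*}
and guarantees that $\Omega \in L^1(M, \mu_g)$. In particular $\Omega(x) < \infty$ for $\mu_g$-almost every $x \in M$, i.e.\ off some nullset $N \subset M$.

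I would then fix any $x \in M \setminus N$ and show directly that $\{g_k(x)\}$ is $\theta^g_x$-Cauchy. Since $\theta^g_x$ is a genuine metric on $\Matx$ (as it is the Riemannian distance of the smooth finite-dimensional Riemannian metric $\langle \cdot,\cdot \rangle^0$), the triangle inequality gives, for any $l > k$,
\begin{equation*}
  \theta^g_x(g_k(x), g_l(x)) \leq \sum_{m=k}^{l-1} \theta^g_x(g_m(x), g_{m+1}(x)) \leq \sum_{m=k}^{\infty} \theta^g_x(g_m(x), g_{m+1}(x)).
\end{equation*}
The right-hand side is the tail of the convergent nonnegative series defining $\Omega(x)$, so it tends to $0$ as $k \to \infty$. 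This is exactly the Cauchy condition for $\{g_k(x)\}$ with respect to $\theta^g_x$.

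The argument is therefore very short, and there is no serious obstacle to overcome here; the real work has already been done in Lemma \ref{lem:35}, where Proposition \ref{prop:20} and the monotone convergence theorem were combined to pass from $d$-summability to $\Omega \in L^1$. The only thing to be careful about is to phrase the conclusion in the form needed later: there is a single exceptional nullset $N$ (namely $\{\Omega = \infty\}$), off of which $\{g_k(x)\}$ is $\theta^g_x$-Cauchy for \emph{every} $x \in M \setminus N$, which is exactly the statement required to set up the application of Corollary \ref{cor:10} in the next step of constructing the $\omega$-limit.
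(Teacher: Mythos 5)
Your proposal is correct and follows essentially the same route as the paper: invoke Lemma \ref{lem:35} to get $\Omega$ a.e.~finite, then use the triangle inequality for $\theta^g_x$ to bound $\theta^g_x(g_k(x), g_l(x))$ by a tail of the convergent series $\Omega(x)$. Nothing is missing.
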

\begin{proof}
  By our assumption, all the conclusions of Lemma \ref{lem:35} hold.
  In particular, $\Omega_N \rightarrow \Omega$ pointwise and $\Omega$
  is a.e.~finite.  Therefore, for a.e.~$x \in M$,
  \begin{equation}\label{eq:69}
    \sum_{k=1}^\infty \theta^g_x(g_k, g_{k+1}) = \Omega(x) < \infty.
  \end{equation}
  
  It is then simple to show that $\{g_k(x)\}$ is $\theta^g_x$-Cauchy at a
  point where \eqref{eq:69} holds, for if $l \leq m$,
  \begin{equation*}
    \theta^g_x(g_l, g_m) \leq \sum_{k=l}^m \theta^g_x(g_k, g_{k+1})
  \end{equation*}
  by the triangle inequality.  But \eqref{eq:69} shows that the
  right-hand side of the above is small for $l$ and $m$ large, proving
  that $\{g_k(x)\}$ is $\theta^g_x$-Cauchy.
\end{proof}

The previous proposition allows us to globalize Corollary
\ref{cor:10}.  The precise statement is the following:

\begin{cor}\label{cor:13}
  Let $\{g_k\}$ be a Cauchy sequence in $\M$ such that
  \begin{equation*}
    \sum_{k=1}^\infty d(g_k, g_{k+1}) < \infty.
  \end{equation*}
  Then for a.e.~$x \in M$, $\{g_k(x)\}$ is $\theta^g_x$-Cauchy and
  either:
  \begin{enumerate}
  \item \label{item:12} $\det G_{t_k}(x) \rightarrow 0$ for $k \rightarrow \infty$, or
  \item \label{item:13} $g_k(x)$ is a convergent sequence in $\Matx$.
  \end{enumerate}
  Furthermore, \eqref{item:12} holds for a.e.~$x \in X_{\{g_k\}}$, and
  \eqref{item:13} holds for a.e.~$x \in M \setminus X_{\{g_k\}}$.
\end{cor}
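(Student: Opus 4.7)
The plan is to combine the pointwise Cauchy property of Proposition \ref{prop:15} with the pointwise dichotomy of Corollary \ref{cor:10}, and then to identify which alternative holds by means of the deflated set.

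First I would invoke Proposition \ref{prop:15} to extract a full-measure subset $M' \subseteq M$ on which $\{g_k(x)\}$ is $\theta^g_x$-Cauchy. At each $x \in M'$, Corollary \ref{cor:10} tells us that either $\det G_k(x) \to 0$, or else $\{g_k(x)\}$ converges in the manifold topology of $\Matx$ to some $g_\infty(x) \in \Matx$. So the first assertion of the corollary is immediate once we restrict to $M'$, and all that remains is to match these two alternatives with membership in $X_{\{g_k\}}$.

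For $x \in M' \setminus X_{\{g_k\}}$, the very definition of the deflated set produces a $\delta(x) > 0$ with $\det G_k(x) \geq \delta(x)$ for all $k$; this rules out case \eqref{item:12} of Corollary \ref{cor:10}, so case \eqref{item:13} must hold and $\{g_k(x)\}$ converges in $\Matx$. The trickier direction is $x \in M' \cap X_{\{g_k\}}$: the definition of $X_{\{g_k\}}$ only gives us that $\det G_k(x) < \delta$ for some (possibly varying) index $k$, i.e.~that $\liminf_k \det G_k(x) = 0$, and we need the full sequence to converge to $0$. For this I would apply the pointwise square-root-volume Lipschitz estimate, Lemma \ref{lem:34}, which gives
\begin{equation*}
\left| \sqrt{\det G_k(x)} - \sqrt{\det G_l(x)} \right| \leq \frac{\sqrt{n}}{2}\, \theta^g_x(g_k(x), g_l(x)).
\end{equation*}
Since $\{g_k(x)\}$ is $\theta^g_x$-Cauchy on $M'$, this shows $\{\sqrt{\det G_k(x)}\}$ is a Cauchy sequence in $\R$ and hence converges; combined with $\liminf = 0$, the limit must be $0$, giving $\det G_k(x) \to 0$ as desired.

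The main obstacle I foresee is precisely this last step, namely passing from the subsequential smallness encoded in the definition of $X_{\{g_k\}}$ to genuine convergence of $\det G_k(x)$ to zero along the entire sequence; Lemma \ref{lem:34} is what makes this work, since without the Cauchy property of $\sqrt{\det G_k(x)}$ one could not rule out oscillation of the determinant between $0$ and a positive value. Everything else is essentially bookkeeping: intersecting the full-measure set $M'$ with the (measurable) sets $X_{\{g_k\}}$ and $M \setminus X_{\{g_k\}}$ produces the two assertions of the corollary.
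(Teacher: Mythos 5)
Your proposal is correct and follows the paper's own route: Proposition \ref{prop:15} gives a.e.\ $\theta^g_x$-Cauchyness, and Corollary \ref{cor:10} gives the pointwise dichotomy, which is all the paper's (two-line) proof says. Your extra argument matching the alternatives with $X_{\{g_k\}}$ — in particular using Lemma \ref{lem:34} to upgrade $\liminf_k \det G_k(x) = 0$ to full convergence on the deflated set — is exactly the bookkeeping the paper treats as immediate (and it reappears verbatim in the paper's proof of Lemma \ref{lem:52}), so there is nothing to correct.
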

\begin{proof}
  By Proposition \ref{prop:15}, $\{g_k(x)\}$ is $\theta^g_x$-Cauchy
  for a.e.~$x$.  Then Corollary \ref{cor:10} implies the result
  immediately.
\end{proof}

This corollary essentially delivers us the proof of the existence
result.

\begin{thm}\label{thm:39}
  For every Cauchy sequence $\{g_k\}$, there exists an element
  $[g_\infty] \in \Mmhat$ and a subsequence $\{g_{k_l}\}$ such that
  $\{g_{k_l}\}$ $\omega$-converges to $[g_\infty]$.

  Explicitly, $[g_\infty]$ is the unique equivalence class containing
  the element $g_\infty \in \Mm$ defined as follows.  At points $x \in
  M$ where $\{g_{k_l}(x)\}$ is $\theta^g_x$-Cauchy,
  \begin{enumerate}
  \item $g_\infty(x) := 0$ for $x \in X_{\{g_{k_l}\}}$ and
  \item $g_\infty(x) := \lim g_{k_l}(x)$ for $x \in M \setminus
    X_{\{g_{k_l}\}}$.
  \end{enumerate}
  At points $x \in M$ where $\{g_{k_l}(x)\}$ is not
  $\theta^g_x$-Cauchy, we set $g_\infty(x) := 0$.
\end{thm}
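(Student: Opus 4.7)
The strategy is to apply the pointwise analysis of Corollary \ref{cor:13} and then check, one at a time, each of the four conditions of Definition \ref{dfn:13} for the subsequence to be constructed. First, since $\{g_k\}$ is $d$-Cauchy, a standard extraction gives a subsequence $\{g_{k_l}\}$ with $\sum_{l=1}^\infty d(g_{k_l}, g_{k_{l+1}}) < \infty$; this single choice secures condition \eqref{item:7} and, since any subsequence of a Cauchy sequence is Cauchy, also condition \eqref{item:4}. From this point on the proof works with $\{g_{k_l}\}$ and its own deflated set $X_{\{g_{k_l}\}}$ (not the possibly larger set $X_{\{g_k\}}$).

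Next, I would apply Corollary \ref{cor:13} to $\{g_{k_l}\}$. Let $E \subseteq M$ denote the full-measure subset of $x$ where $\{g_{k_l}(x)\}$ is $\theta^g_x$-Cauchy and the dichotomy of that corollary holds, so that for a.e.~$x \in E \cap X_{\{g_{k_l}\}}$ we have $\det G_{k_l}(x) \to 0$, while for a.e.~$x \in E \setminus X_{\{g_{k_l}\}}$ the sequence $g_{k_l}(x)$ converges in $\Matx$ to some positive-definite element which I call $g_\infty(x)$; outside of $E \setminus X_{\{g_{k_l}\}}$, I set $g_\infty(x) := 0$, as prescribed in the statement. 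This yields condition \eqref{item:6} immediately from the pointwise convergence on $E \setminus X_{\{g_{k_l}\}}$.

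The main technical task is to verify that $g_\infty$, as defined, lies in $\Mm$ and that its deflated set $X_{g_\infty}$ agrees with $X_{\{g_{k_l}\}}$ modulo a nullset (condition \eqref{item:5}). For measurability, note that the set $E \setminus X_{\{g_{k_l}\}}$ is Lebesgue measurable: the sets $\{x : \det G_{k_l}(x) \geq 1/m\ \text{for infinitely many}\ l\}$, for $m \in \N$, are measurable intersections of countable unions of measurable sets, and $M \setminus X_{\{g_{k_l}\}}$ is their countable union; intersecting with the measurable set $E$ preserves measurability. On this set $g_\infty$ is a pointwise a.e.~limit of measurable (in fact smooth) tensor fields and hence measurable; on its complement $g_\infty \equiv 0$, so $g_\infty$ is a measurable section of $S^2 T^*M$. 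Positive semidefiniteness at each $x$ is automatic, since the zero tensor is trivially such and pointwise limits of positive-definite forms are positive semidefinite. For the coincidence of the deflated sets, on $E \setminus X_{\{g_{k_l}\}}$ the limit $g_\infty(x) \in \Matx$ is positive definite, so $x \notin X_{g_\infty}$; conversely, on $E \cap X_{\{g_{k_l}\}}$ either $g_\infty(x) = 0$ or $\det G_\infty(x) = \lim_l \det G_{k_l}(x) = 0$, and by Proposition \ref{prop:7} this forces $g_\infty(x)$ to fail to be positive definite, so $x \in X_{g_\infty}$. Thus $X_{g_\infty} \triangle X_{\{g_{k_l}\}}$ is contained in $M \setminus E$, a nullset, which is condition \eqref{item:5}.

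The anticipated obstacle is the bookkeeping of nullsets and the precise identification of the deflated sets with the help of Proposition \ref{prop:7}. By Lemma \ref{lem:29}, once the four conditions have been checked for the particular representative $g_\infty$ just constructed, they automatically hold for every other element of $[g_\infty] \in \Mmhat$, completing the proof.
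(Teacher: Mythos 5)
Your proof is correct and takes essentially the same route as the paper: pass to a subsequence with summable distances (securing conditions (\ref{item:4}) and (\ref{item:7}) of Definition \ref{dfn:13}), apply Corollary \ref{cor:13} to obtain the a.e.\ pointwise dichotomy, define $g_\infty$ as in the statement, and reduce $g_\infty \in \Mm$ to measurability of the deflated set together with a.e.\ limits of measurable fields. One small correction: $M \setminus X_{\{g_{k_l}\}}$ equals $\bigcup_{m \in \N} \bigcap_{l \in \N} \{ x \mid \det G_{k_l}(x) \geq 1/m \}$, not the union over $m$ of the sets where $\det G_{k_l}(x) \geq 1/m$ for \emph{infinitely many} $l$ (the latter union is $\{ x \mid \limsup_l \det G_{k_l}(x) > 0 \}$, which can be strictly larger); since the two agree on your set $E$, where the determinants converge, your measurability conclusion stands after this one-word fix, matching the paper's formula $X_{\{g_{k_l}\}} = \bigcap_{N} \bigcup_{l} \{ x \mid \det G_{k_l}(x) < 1/N \}$.
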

\begin{proof}
  Let $\{g_{k_l}\}$ be a subsequence of $\{g_k\}$ such that
  \begin{equation*}
    \sum_{l=1}^\infty d(g_{k_l}, g_{k_l+1}) < \infty.
  \end{equation*}
  Then $\{g_{k_l}\}$ satisfies properties (\ref{item:4}) and
  (\ref{item:7}) of Definition \ref{dfn:13}, as well as the hypotheses
  of Corollary \ref{cor:13}.  Thus $\{g_{k_l}\}$ is
  a.e.~$\theta^g_x$-Cauchy, and so $g_\infty$ is defined a.e.~by the
  two conditions given above.  From this, it is immediate that
  $\{g_{k_l}\}$ together with $g_\infty$ also satisfies properties
  (\ref{item:5}) and (\ref{item:6}) of Definition \ref{dfn:13}.  Thus,
  $\{g_{k_l}\}$ $\omega$-converges to $g_\infty$, and by Lemma
  \ref{lem:29} it therefore converges to $[g_\infty]$---provided we
  can show that $g_\infty \in \Mm$.
  
  Let's prove this last fact.  Clearly $g_\infty$ is a semimetric, so
  we must show that $g_\infty$ is measurable.  Now, on $M \setminus
  X_{\{g_{k_l}\}}$, $g_\infty$ is the a.e.-limit of measurable metrics, so
  it is measurable restricted to this set.  Furthermore, $g_\infty(x)
  = 0$ for every $x \in X_{\{g_{k_l}\}}$, so if we can show that
  $X_{\{g_{k_l}\}}$ is measurable, then we are done.  But the following
  formula shows that $X_{\{g_{k_l}\}}$ can be built from countable unions
  and intersections of open sets:\label{p:deflated-sets}
  \begin{align*}
    X_{\{g_{k_l}\}} &= \left\{ x \in M \mid \forall \delta > 0,\ \exists
      l\ \mathrm{s.t.}\ \det g_{k_l}(x) < \delta
    \right\} \\
    &= \bigcap_{N \in \N} \bigcup_{l \in \N} \left\{ x \in M \mid \det
      g_{k_l}(x) < \frac{1}{N} \right\}.
  \end{align*}
\end{proof}

Knowing now that the $\omega$-limit of a Cauchy sequence of $\M$
exists (after passing to a subsequence), we go further into the
properties of $\omega$-convergence.

\section{$\omega$-convergence and the concept of
  volume}\label{sec:ad-conv-conc}

In this brief section, we wish to prove that the volumes of measurable
subsets behave well under $\omega$-convergence.  Specifically, we want
to show that if $\{g_k\}$ $\omega$-converges to $[g_\infty]$ and $Y
\subseteq M$ is measurable, then for any representative $g_\infty \in
[g_\infty]$,
\begin{equation}\label{eq:139}
  \Vol(Y, g_k) \rightarrow \Vol(Y, g_\infty).
\end{equation}

To see that the above expression is well-defined, recall that a
measurable semimetric $\tilde{g}$ on $M$ induces a nonnegative volume
form and measure $\mu_{\tilde{g}}$ on $M$ (cf.~Subsection
\ref{sec:conventions}) that is absolutely continuous with respect to
the fixed volume form $\mu_g$.  Furthermore, given any two
representatives $g^0_\infty, g^1_\infty \in [g_\infty]$, we have that
$\mu_{g^0_\infty} = \mu_{g^1_\infty}$ as measures---it is clear from
Definition \ref{dfn:7} that $\mu_{g^0_\infty}$ and $\mu_{g^1_\infty}$
can differ at most on a nullset.  Thus $\Vol(Y, g^0_\infty) = \Vol(Y,
g^1_\infty)$.

The proof of \eqref{eq:139} is achieved via the Lebesgue dominated
convergence theorem (cf.~Theorem \ref{thm:36}).  So let $\{g_k\}$
$\omega$-converge to $g_\infty$, and let's see what we need to do to
apply this theorem.
First, we need to show that $(\mu_{g_k} / \mu_g)
\overarrow{\textnormal{a.e.}}  (\mu_{g_\infty} / \mu_g)$.  If we can
also find a function $f \in L^1(M, g)$ such that $( \mu_{g_k} / \mu_g
) \leq f$ a.e., then the Lebesgue dominated convergence theorem would
imply that
\begin{equation*}
  \Vol(Y, g_\infty) = \integral{Y}{}{\left(
      \frac{\mu_{g_\infty}}{\mu_g} \right)}{\mu_g} = \lim_{k
    \rightarrow \infty} \integral{Y}{}{\left( \frac{\mu_{g_k}}{\mu_g}
    \right)}{\mu_g} = \lim_{k \rightarrow \infty} \Vol(Y, g_k).
\end{equation*}
We begin by showing a.e.-convergence.

\begin{lem}\label{lem:52}
  Let $\{g_k\}$ $\omega$-converge to $g_\infty \in \Mm$.  Then
  \begin{equation*}
    \left( \frac{\mu_{g_k}}{\mu_g} \right) \overarrow{\textnormal{a.e.}} \left(
      \frac{\mu_{g_\infty}}{\mu_g} \right).
  \end{equation*}
\end{lem}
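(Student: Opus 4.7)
The plan is to reduce the claim to the pointwise statement $\sqrt{\det G_k(x)} \to \sqrt{\det G_\infty(x)}$ a.e., which we will verify by separately identifying the limit off and on the deflated set.

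First, I would invoke the local expression $(\mu_{\tilde g}/\mu_g) = \sqrt{\det \tilde G}$ from equation \eqref{eq:8} (or read off from \eqref{eq:119}), valid for any measurable semimetric $\tilde g$. This reduces the problem to showing that $\sqrt{\det G_k(x)}$ converges to $\sqrt{\det G_\infty(x)}$ for a.e.\ $x \in M$. Since $\omega$-convergence includes condition \eqref{item:7}, namely $\sum d(g_k, g_{k+1}) < \infty$, Proposition \ref{prop:15} applies: $\{g_k(x)\}$ is $\theta^g_x$-Cauchy for a.e.\ $x$. Combining this with the pointwise Lipschitz estimate of Lemma \ref{lem:34}, the sequence $\{\sqrt{\det G_k(x)}\}$ is Cauchy in $\R$ for a.e.\ $x$, and hence converges to some nonnegative limit $L(x)$.

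Next I would identify $L(x)$ in two cases using conditions \eqref{item:5} and \eqref{item:6} of Definition \ref{dfn:13}. On $M \setminus X_{\{g_k\}}$ (modulo a nullset) we have $g_k(x) \to g_\infty(x)$ in $\satx$, so continuity of the determinant immediately gives $L(x) = \sqrt{\det G_\infty(x)}$. On $X_{\{g_k\}}$, the definition of the deflated set says that for every $\delta > 0$ there exists $k$ with $\det G_k(x) < \delta$; since $\sqrt{\det G_k(x)}$ is known to converge, this forces $L(x) = 0$. On the other hand, by \eqref{item:5}, $X_{\{g_k\}}$ and $X_{g_\infty}$ agree up to a nullset, so for a.e.\ $x \in X_{\{g_k\}}$ the semimetric $g_\infty(x)$ fails to be positive definite, and Proposition \ref{prop:7} yields $\det G_\infty(x) = 0 = L(x)$. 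Putting the two cases together gives $\sqrt{\det G_k(x)} \to \sqrt{\det G_\infty(x)}$ a.e., as desired.

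The only delicate point is the one on $X_{\{g_k\}}$: the definition of the deflated set is purely subsequential (zero is a cluster value of $\det G_k(x)$), so without an independent reason for $\sqrt{\det G_k(x)}$ to converge, one could not conclude that the whole sequence tends to $0$. It is precisely the combination of Lemma \ref{lem:34} with Proposition \ref{prop:15} that upgrades this cluster value to a genuine limit, and this is what makes $\omega$-convergence strong enough to control volume densities pointwise almost everywhere, despite allowing no pointwise control of the metrics themselves on the deflated set.
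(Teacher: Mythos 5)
Your proposal is correct and follows essentially the same route as the paper's proof: reduce to $\sqrt{\det G_k}$, use property (\ref{item:7}) with Proposition \ref{prop:15} and Lemma \ref{lem:34} to get convergence of $\sqrt{\det G_k(x)}$ a.e., identify the limit off the deflated set by continuity of the determinant, and on the deflated set conclude the limit is $0 = \sqrt{\det G_\infty(x)}$. Your explicit appeal to property (\ref{item:5}) and Proposition \ref{prop:7} on the deflated set only spells out what the paper leaves implicit, so there is no substantive difference.
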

\begin{proof}
  Recall that
  \begin{equation*}
    \left( \frac{\mu_{g_k}}{\mu_g} \right) = \sqrt{\det G_k} \quad \textnormal{and}
    \quad \left(
      \frac{\mu_{g_\infty}}{\mu_g} \right) = \sqrt{\det G_\infty}.
  \end{equation*}
  So we can prove the statement by working with the determinants above
  instead of the Radon-Nikodym derivatives.

  We first prove that for a.e.~$x \in X_{\{g_k\}}$, $\det G_k(x)
  \rightarrow 0 = \det G_\infty$ as $k \rightarrow \infty$.  By the
  definition of the deflated set, for every $x \in X_{\{g_k\}}$ and
  $\epsilon > 0$, there exists $k \in \N$ such that
  \begin{equation}\label{eq:88}
    \det G_k(x) < \epsilon.
  \end{equation}
  But we also know from Proposition \ref{prop:15} and property
  (\ref{item:7}) of Definition \ref{dfn:13} that $\{g_k(x)\}$ is
  $\theta^g_x$-Cauchy for a.e.~$x \in M$.  Hence, by Lemma
  \ref{lem:34}, $\left\{\sqrt{\det G_k(x)}\right\}$ is a Cauchy
  sequence in $\R$ at such points.  Therefore it has a limit, and by
  \eqref{eq:88} we know that this limit must be $0$.

  Now, for a.e.~$x \in M \setminus X_{\{g_k\}}$, $g_k(x) \rightarrow
  g_\infty(x)$.  Since the determinant is a continuous map from the
  space of $n \times n$ matrices into $\R$, this immediately implies
  that $\det G_k(x) \rightarrow \det G_\infty(x)$ for a.e.~$x \in M
  \setminus X_{\{g_k\}}$.  Combined with the last paragraph, this
  proves the desired result.
\end{proof}

Our next task is to find an $L^1$ function that dominates
$(\mu_{g_k} / \mu_g)$.

\begin{lem}\label{lem:36}
  Let $\{g_k\}$ be a Cauchy sequence such that
  \begin{equation*}
    \sum_{k=1}^\infty d(g_k, g_{k+1}) < \infty,
  \end{equation*}
  and let $\Omega$ be the function of Lemma \ref{lem:35}.  Then
  \begin{equation*}
    \left( \frac{\mu_{g_k}}{\mu_g} \right)(x) \leq \frac{\sqrt{n}}{2}
    \Omega(x) + \left( \frac{\mu_{g_1}}{\mu_g} \right)(x)
  \end{equation*}
  for a.e.~$x \in M$ and all $k \in \N$.
\end{lem}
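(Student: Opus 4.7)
The plan is to view $\sqrt{\det G_k(x)} = (\mu_{g_k}/\mu_g)(x)$ as a pointwise real-valued quantity, and estimate it by a telescoping sum starting at $k=1$. Concretely, for any $x \in M$ and $k \geq 2$, write
\begin{equation*}
  \sqrt{\det G_k(x)} = \sqrt{\det G_1(x)} + \sum_{l=1}^{k-1} \left( \sqrt{\det G_{l+1}(x)} - \sqrt{\det G_l(x)} \right),
\end{equation*}
and then bound each summand in absolute value.

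The key tool for bounding the summands is the pointwise Lipschitz estimate of Lemma \ref{lem:34}, which gives
\begin{equation*}
  \left| \sqrt{\det G_{l+1}(x)} - \sqrt{\det G_l(x)} \right| \leq \frac{\sqrt{n}}{2} \theta^g_x(g_l(x), g_{l+1}(x))
\end{equation*}
for every $x$ (this is valid pointwise because $g_l(x), g_{l+1}(x) \in \Matx$). Plugging this into the telescoping sum and using the triangle inequality, I would get
\begin{equation*}
  \sqrt{\det G_k(x)} \leq \sqrt{\det G_1(x)} + \frac{\sqrt{n}}{2} \sum_{l=1}^{k-1} \theta^g_x(g_l(x), g_{l+1}(x)) \leq \sqrt{\det G_1(x)} + \frac{\sqrt{n}}{2} \Omega(x),
\end{equation*}
where the last inequality uses the definition $\Omega(x) = \sum_{l=1}^\infty \theta^g_x(g_l(x), g_{l+1}(x))$ from Lemma \ref{lem:35} and the nonnegativity of each term.

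Translating back via $(\mu_{g_k}/\mu_g) = \sqrt{\det G_k}$ yields exactly the desired bound. The case $k=1$ is trivial since the sum is empty and $\Omega \geq 0$. The ``a.e.'' qualifier enters only through the fact that $\Omega(x)$ is a.e.~finite (by Lemma \ref{lem:35}), though the pointwise inequality itself holds everywhere in $M$; I would just note this to be precise. There is essentially no obstacle here -- this is a direct assembly of Lemma \ref{lem:34} with the definition of $\Omega$ via a telescoping argument, and no delicate analytic step is required.
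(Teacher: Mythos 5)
Your proof is correct and is essentially the paper's argument: both rest on the pointwise Lipschitz estimate of Lemma \ref{lem:34} combined with a triangle inequality and the definition of $\Omega$; the paper applies Lemma \ref{lem:34} once to the pair $(g_k, g_1)$ and then the triangle inequality for $\theta^g_x$, while you telescope the determinants and apply Lemma \ref{lem:34} to each consecutive pair, which is only a cosmetic rearrangement.
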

\begin{proof}
  Fix some $k$ for the moment.  By Proposition \ref{prop:15},
  $\{g_k(x)\}$ is $\theta_x^g$-Cauchy for a.e.~$x \in M$.  Let $x \in
  M$ be a point where this holds.  Then by Lemma \ref{lem:34}, the
  triangle inequality, and the definitions of $\Omega_N$ and $\Omega$,
  we have
  \begin{align*}
    \left| \sqrt{\det G_k} - \sqrt{\det G_1} \right| &\leq
    \frac{\sqrt{n}}{2} \theta^g_x(g_k, g_1) \leq
    \frac{\sqrt{n}}{2} \sum_{m=1}^{k-1} \theta^g_x(g_m,
    g_{m+1}) \\
    &= \frac{\sqrt{n}}{2} \Omega_{k-1}(x) \leq \frac{\sqrt{n}}{2}
    \Omega(x).
  \end{align*}
  In particular,
  \begin{equation*}
    \sqrt{\det G_k(x)} \leq \frac{\sqrt{n}}{2} \Omega(x) + \sqrt{\det G_1(x)}.
  \end{equation*}
  The result is now immediate.
\end{proof}

Now, since $\mu_{g_1}$ is smooth, it has finite volume, implying that
$(\mu_{g_1} / \mu_g) \in L^1(M, g)$.  We have already seen in Lemma
\ref{lem:35} that $\Omega \in L^1(M, g)$.  Therefore Lemma
\ref{lem:36} gives the necessary function dominating $(\mu_{g_k} /
\mu_g)$, and we can apply the Lebesgue dominated convergence theorem
as discussed before the lemmas to obtain:

\begin{thm}\label{thm:19}
  Let $\{g_k\}$ $\omega$-converge to $g_\infty \in \Mm$, and let $Y
  \subseteq M$ be any measurable subset.  Then $\Vol(Y, g_k)
  \rightarrow \Vol(Y, g_\infty)$.
\end{thm}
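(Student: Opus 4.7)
The plan is a direct application of the Lebesgue dominated convergence theorem (Theorem \ref{thm:36}) to the sequence of Radon-Nikodym derivatives, with the fixed reference volume form $\mu_g$ serving as the underlying measure. First, I would rewrite
\begin{equation*}
  \Vol(Y, g_k) = \integral{Y}{}{\left(\frac{\mu_{g_k}}{\mu_g}\right)}{\mu_g}, \qquad \Vol(Y, g_\infty) = \integral{Y}{}{\left(\frac{\mu_{g_\infty}}{\mu_g}\right)}{\mu_g},
\end{equation*}
using Remark \ref{rmk:13} to give meaning to the second expression (recall that $\mu_{g_\infty}$ is the measure induced by the semimetric $g_\infty$, which is absolutely continuous with respect to $\mu_g$). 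Reduced to this form, the convergence $\Vol(Y, g_k) \to \Vol(Y, g_\infty)$ is exactly the conclusion of the dominated convergence theorem applied to the integrands $\chi_Y \cdot (\mu_{g_k}/\mu_g)$.

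The two hypotheses needed are a.e.-convergence of the integrands and an $L^1$-dominating function, and both have already been arranged. For a.e.-convergence, I would invoke Lemma \ref{lem:52} verbatim. For the dominating function, I would invoke Lemma \ref{lem:36}, which provides
\begin{equation*}
  \left(\frac{\mu_{g_k}}{\mu_g}\right)(x) \leq \frac{\sqrt{n}}{2} \Omega(x) + \left(\frac{\mu_{g_1}}{\mu_g}\right)(x) \quad \text{for a.e.~$x \in M$ and all $k$}.
\end{equation*}
The right-hand side lies in $L^1(M, g)$: $\Omega \in L^1(M, g)$ by Lemma \ref{lem:35}, while $(\mu_{g_1}/\mu_g) \in L^1(M, g)$ because $g_1 \in \M$ is smooth on the closed manifold $M$, so $\mu_{g_1}$ has finite total mass. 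A small bookkeeping point is that Lemmas \ref{lem:35}, \ref{lem:36} and \ref{lem:52} all require the summability $\sum_{k=1}^\infty d(g_k, g_{k+1}) < \infty$; this is built into $\omega$-convergence as condition \eqref{item:7} of Definition \ref{dfn:13}, so no passage to a subsequence is needed.

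I do not anticipate any real obstacle—everything is essentially assembled in the preceding two lemmas, and the theorem is a one-line application of Theorem \ref{thm:36} once the identifications above are made. The only conceptual care is to notice that the statement and the integrals are independent of the choice of representative $g_\infty \in [g_\infty]$: two representatives differ at most on a nullset (Definition \ref{dfn:7}), so they induce the same measure $\mu_{g_\infty}$ and hence the same volumes.
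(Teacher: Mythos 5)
Your proposal is correct and is essentially the paper's own argument: the paper proves Theorem \ref{thm:19} exactly by applying the dominated convergence theorem to the Radon-Nikodym derivatives $(\mu_{g_k}/\mu_g)$, with a.e.-convergence supplied by Lemma \ref{lem:52} and the $L^1$ dominating function $\frac{\sqrt{n}}{2}\Omega + (\mu_{g_1}/\mu_g)$ supplied by Lemmas \ref{lem:35} and \ref{lem:36}. Your bookkeeping remark that the summability hypothesis is built into property (\ref{item:7}) of Definition \ref{dfn:13}, and the observation about representative-independence, are also exactly the points the paper addresses.
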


An immediate corollary of this theorem is that the total volume of the
$\omega$-limit is finite:

\begin{cor}\label{cor:15}
  If $g_\infty$ is the $\omega$-limit of a sequence $\{g_k\}$ in $\M$, then
  $\Vol(M, g_\infty) < \infty$.  That is, $g_\infty \in \M_f$.
\end{cor}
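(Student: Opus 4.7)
The plan is to apply Theorem \ref{thm:19} with $Y = M$, which yields $\Vol(M, g_k) \to \Vol(M, g_\infty)$, and then to verify that the left-hand side is bounded in $k$, forcing the limit to be finite. Since $\omega$-convergence includes the Cauchy property (condition \eqref{item:4} of Definition \ref{dfn:13}), the sequence $\{g_k\}$ is $d$-bounded.

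The key observation is Lemma \ref{lem:13} with $Y = M$, which states that
\begin{equation*}
  \left| \sqrt{\Vol(M, g_k)} - \sqrt{\Vol(M, g_1)} \right| \leq \frac{\sqrt{n}}{4} d(g_1, g_k).
\end{equation*}
Because $\{g_k\}$ is Cauchy, $d(g_1, g_k)$ is bounded by some constant $D < \infty$, and therefore
\begin{equation*}
  \sqrt{\Vol(M, g_k)} \leq \sqrt{\Vol(M, g_1)} + \frac{\sqrt{n}}{4} D =: V < \infty
\end{equation*}
for every $k$. In particular, $\Vol(M, g_k) \leq V^2$ uniformly in $k$.

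Combining this uniform bound with Theorem \ref{thm:19} applied to $Y = M$, we conclude
\begin{equation*}
  \Vol(M, g_\infty) = \lim_{k \to \infty} \Vol(M, g_k) \leq V^2 < \infty,
\end{equation*}
which is precisely the statement that $g_\infty \in \M_f$. There is no serious obstacle here, since the heavy lifting has been done in Lemma \ref{lem:13} and Theorem \ref{thm:19}; the proof is essentially a two-line assembly of these ingredients. The only minor point worth noting is that the estimate is independent of which representative $g_\infty \in [g_\infty]$ one chooses, because any two representatives differ only on a nullset and hence induce the same measure $\mu_{g_\infty}$.
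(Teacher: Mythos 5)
Your proof is correct and uses exactly the same two ingredients as the paper: Lemma \ref{lem:13} to control $\sqrt{\Vol(M, g_k)}$ via the Cauchy property of $\{g_k\}$, and Theorem \ref{thm:19} to identify the limit with $\Vol(M, g_\infty)$. The paper phrases the first step as ``$\{\Vol(M, g_k)\}$ is a Cauchy sequence in $\R$, hence converges to a finite number'' rather than extracting a uniform bound, but this is the same argument.
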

\begin{proof}
  By Lemma \ref{lem:13}, if $\{g_k\}$ is a $d$-Cauchy sequence, then
  $\{\Vol(M, g_k)\}$ is a Cauchy sequence of positive real numbers.
  Therefore it converges to some finite nonnegative real number, and
  by Theorem \ref{thm:19} this number must be $\Vol(M, g_\infty)$.
\end{proof}

Furthermore, as we might have suspected from the beginning, the volume
of the deflated set of an $\omega$-convergent sequence vanishes in the
limit.

\begin{cor}\label{cor:17}
  Let $\{g_k\}$ $\omega$-converge to $g_\infty \in \M_f$.  Then the
  deflated set $X_{\{g_k\}}$ satisfies $\Vol(X_{\{g_k\}}, g_k)
  \rightarrow 0$.
\end{cor}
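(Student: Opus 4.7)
The plan is to combine the volume continuity of Theorem \ref{thm:19} with the fact that the $\omega$-limit $g_\infty$ induces a nonnegative $n$-form $\mu_{g_\infty}$ which vanishes pointwise on the deflated set $X_{g_\infty}$. First I would apply Theorem \ref{thm:19} to the measurable set $Y := X_{\{g_k\}}$ (which is indeed measurable, as was shown on page \pageref{p:deflated-sets} in the proof of Theorem \ref{thm:39}), obtaining
\begin{equation*}
  \Vol(X_{\{g_k\}}, g_k) \longrightarrow \Vol(X_{\{g_k\}}, g_\infty).
\end{equation*}
It then remains to verify that the right-hand side is zero.

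For this second step, I would choose the canonical representative $g_\infty$ of the $\omega$-limit which vanishes identically on $X_{g_\infty}$; by Lemma \ref{lem:29} this is harmless, and by property \eqref{item:5} of Definition \ref{dfn:13} the sets $X_{\{g_k\}}$ and $X_{g_\infty}$ agree up to a $\mu_g$-nullset. Now at every point of $X_{g_\infty}$, the tensor $g_\infty$ is positive semidefinite but fails to be positive definite, so Proposition \ref{prop:7} forces $\det g_\infty = 0$ pointwise on $X_{g_\infty}$, whence the local expression $\mu_{g_\infty} = \sqrt{\det g_\infty}\, dx^1 \cdots dx^n$ vanishes pointwise there. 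Therefore $\Vol(X_{g_\infty}, g_\infty) = 0$, and since $\mu_{g_\infty}$ is absolutely continuous with respect to $\mu_g$ (so that a $\mu_g$-nullset is automatically a $\mu_{g_\infty}$-nullset), we may replace $X_{g_\infty}$ by $X_{\{g_k\}}$ without changing this volume, giving $\Vol(X_{\{g_k\}}, g_\infty) = 0$ as desired.

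There is no real obstacle here, since all the analytic work has already been done in Theorem \ref{thm:19}; the only subtlety is the bookkeeping about which nullsets are being used, which is handled by the absolute continuity of $\mu_{g_\infty}$ with respect to $\mu_g$ noted in Remark \ref{rmk:13}.
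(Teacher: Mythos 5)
Your proposal is correct and follows the same route as the paper: the paper likewise notes that $X_{\{g_k\}}$ is measurable (from the proof of Theorem \ref{thm:39}), observes that $\Vol(X_{\{g_k\}}, g_\infty) = 0$ because $g_\infty$ vanishes (up to a nullset) on the deflated set, and then applies Theorem \ref{thm:19}. Your extra bookkeeping via Proposition \ref{prop:7} and absolute continuity of $\mu_{g_\infty}$ with respect to $\mu_g$ just makes explicit what the paper leaves implicit; it is not a different argument.
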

\begin{proof}
  As noted in the proof of Theorem \ref{thm:39}, $X_{\{g_k\}}$ is
  measurable.  Now, the definition of $\omega$-convergence implies
  that $\Vol(X_{\{g_k\}}, g_\infty) = 0$, since $g_\infty(x) = 0$ for
  all $x \in X_{\{g_k\}}$.  So Theorem \ref{thm:19} gives the result.
\end{proof}

Given Corollary \ref{cor:15}, it behooves us to make the following
definition, following which we refine the result of Theorem
\ref{thm:39} using Corollary \ref{cor:15}.

\begin{dfn}\label{dfn:9}
  Let $\Mfhat \subset \Mmhat$ denote the subset of those equivalence
  classes of semimetrics whose representatives are all elements of
  $\Mf$, i.e., finite-volume measurable semimetrics.
\end{dfn}

By the discussion at the beginning of the section, any two
representatives of an equivalence class in $\Mmhat$ have the same
total volume.  Therefore, if one representative of an equivalence
class has finite volume, then all do.  Moreover, for every $\tilde{g}
\in \Mf$, $[\tilde{g}] \in \Mfhat$.

The refinement of Theorem \ref{thm:39} is:
  
\begin{thm}\label{thm:40}
  For every Cauchy sequence $\{g_k\}$, there exists an element
  $[g_\infty] \in \Mfhat$ such that $\{g_k\}$ $\omega$-subconverges to
  $[g_\infty]$.
\end{thm}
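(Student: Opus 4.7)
The plan is essentially to combine the two main results of this section, namely Theorem \ref{thm:39} and Corollary \ref{cor:15}, so the proof is almost immediate.

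First I would invoke Theorem \ref{thm:39} applied to the given Cauchy sequence $\{g_k\}$. This produces a subsequence $\{g_{k_l}\}$ and a representative $g_\infty \in \Mm$ such that $g_{k_l} \overarrow{\omega} g_\infty$, and hence $\{g_k\}$ $\omega$-subconverges (in the sense of Definition \ref{dfn:13}) to $[g_\infty] \in \Mmhat$.

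Next, I need to promote the membership $[g_\infty] \in \Mmhat$ to $[g_\infty] \in \Mfhat$. This is exactly the content of Corollary \ref{cor:15}: since $\{g_{k_l}\}$ is $d$-Cauchy with $\omega$-limit $g_\infty$, Lemma \ref{lem:13} tells us that $\{\sqrt{\Vol(M,g_{k_l})}\}$ is a Cauchy (hence bounded) sequence in $\R$, and Theorem \ref{thm:19} identifies its limit with $\Vol(M,g_\infty)$. Thus $\Vol(M, g_\infty) < \infty$, i.e., $g_\infty \in \M_f$, so $[g_\infty] \in \Mfhat$ by Definition \ref{dfn:9} (and the observation, noted just after that definition, that finiteness of the volume is a property of the equivalence class, not of a chosen representative).

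There is really no obstacle here---the nontrivial content has already been discharged in the proofs of Theorem \ref{thm:39} (existence of an $\omega$-subconvergent subsequence in $\Mmhat$) and Theorem \ref{thm:19} (continuity of volume under $\omega$-convergence, via a dominated convergence argument using Lemma \ref{lem:36}). Theorem \ref{thm:40} is just the packaging of these two statements.
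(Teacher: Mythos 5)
Your proposal is correct and is exactly the paper's route: the paper states Theorem \ref{thm:40} as the refinement of Theorem \ref{thm:39} obtained by applying Corollary \ref{cor:15} (itself resting on Lemma \ref{lem:13} and Theorem \ref{thm:19}) to conclude $g_\infty \in \M_f$, hence $[g_\infty] \in \Mfhat$. Nothing is missing.
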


\section{Uniqueness of the $\omega$-limit}\label{sec:uniqueness-ad-limit}

The goal of this section is to prove the uniqueness of the $\omega$-limit in
the sense mentioned in the introduction to the chapter: we will show
that two $\omega$-convergent Cauchy sequences in $\M$ are equivalent if and
only if they have the same $\omega$-limit.

We prove each direction in a separate subsection.  After proving this
uniqueness result, combining it with the existence result and the
properties of $\omega$-convergence given above will show that for
every equivalence class of Cauchy sequences in $\M$, there is a unique
equivalence class of finite-volume, measurable semimetrics that each
of its representatives subconverges to.  Thus, $\omega$-convergence is
a suitable convergence notion for choosing a limit point for Cauchy
sequences in $\M$.

\subsection{First uniqueness result}\label{sec:first-uniq-result}

We first prove the statement that if two $\omega$-con\-ver\-gent
Cauchy sequences are equivalent, then their $\omega$-limits agree.  To
do so, we will extend the pseudometric $\Theta_Y$ (cf.~Definition
\ref{dfn:16}) to the precompletion of $\M$.  For this, we need an easy
lemma.

\begin{lem}\label{lem:10}
  Let $Y \subseteq M$ be measurable.  If $\{g_k\}$ is a $d$-Cauchy
  sequence, then it is also $\Theta_Y$-Cauchy.
\end{lem}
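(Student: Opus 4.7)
The plan is to reduce the claim to the case $Y = M$ and then invoke the already-established comparison between $d$ and $\Theta_M$. By the monotonicity in $Y$ from Lemma \ref{lem:44}, for any measurable $Y \subseteq M$ we have $\Theta_Y(g_k, g_l) \leq \Theta_M(g_k, g_l)$, so it suffices to show $\{g_k\}$ is $\Theta_M$-Cauchy.

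First I would use Lemma \ref{lem:13} to observe that since $\{g_k\}$ is $d$-Cauchy, $\{\sqrt{\Vol(M, g_k)}\}$ is a Cauchy sequence in $\R$ and hence bounded: there exists $V < \infty$ with $\sqrt{\Vol(M, g_k)} \leq V$ for all $k$. Next I would apply Proposition \ref{prop:20} with $g_0 = g_k$ and $g_1 = g_l$ to obtain
\begin{equation*}
  \Theta_M(g_k, g_l) \leq d(g_k, g_l)\left(\sqrt{n}\, d(g_k, g_l) + 2V\right).
\end{equation*}
Given $\epsilon > 0$, choose $N$ large enough that $k, l \geq N$ forces $d(g_k, g_l) \leq \min(1, \epsilon/(\sqrt{n} + 2V))$; then the right-hand side is bounded above by $(\sqrt{n} + 2V)\, d(g_k, g_l) \leq \epsilon$, so $\{g_k\}$ is $\Theta_M$-Cauchy, and therefore $\Theta_Y$-Cauchy.

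There is no real obstacle here: the lemma is a clean corollary of Lemma \ref{lem:13} (to control the volumes uniformly along the sequence) and Proposition \ref{prop:20} (to bound $\Theta_M$ by a function of $d$ that vanishes as $d \to 0$). The only thing to be mildly careful about is that the constant in Proposition \ref{prop:20} involves $\sqrt{\Vol(M, g_0)}$, which depends on the point we choose as the basepoint of the estimate; the uniform bound $V$ from Lemma \ref{lem:13} is precisely what eliminates this dependence across the sequence.
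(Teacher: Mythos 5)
Your proof is correct and is essentially the paper's argument: bound $\sqrt{\Vol(M,g_k)}$ uniformly via Lemma \ref{lem:13} and feed that into the estimate of Proposition \ref{prop:20}. The only cosmetic difference is that you first reduce to $Y=M$ by monotonicity (Lemma \ref{lem:44}), whereas the paper applies Proposition \ref{prop:20} to $\Theta_Y$ directly, since that estimate already holds for arbitrary measurable $Y$.
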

\begin{proof}
  As noted in the proof of Lemma \ref{lem:35}, since $\{g_k\}$ is
  $d$-Cauchy, the sequence $\sqrt{\Vol(M, g_k)}$ in $\R$ is bounded,
  say by a constant $V$.  Thus, by the estimate of Proposition
  \ref{prop:20}, for any $k, l \in \N$, we have
  \begin{equation*}
    \Theta_Y(g_k, g_l) \leq d(g_k, g_l) \left( \sqrt{n}\, d(g_k, g_l) +
      2 V \right).
  \end{equation*}
  From this the statement of the lemma is clear.
\end{proof}

Now we give the extension of $\Theta_Y$ mentioned above.

\begin{prop}\label{prop:21}
  Let $Y \subseteq M$ be measurable.  Then the pseudometric $\Theta_Y$
  on $\M$ can be extended to a pseudometric on
  $\overline{\M}^{\textnormal{pre}}$, the precompletion of $\M$, via
  \begin{equation}\label{eq:14}
    \Theta_Y(\{g^0_k\}, \{g^1_k\}) := \lim_{k \rightarrow \infty}
    \Theta_Y(g^0_k, g^1_k)
  \end{equation}
  This pseudometric is weaker than $d$ in the sense that $d(\{g^0_k\},
  \{g^1_k\}) = 0$ implies that $\Theta_Y(\{g^0_k\}, \{g^1_k\}) = 0$ for any
  Cauchy sequences $\{g^0_k\}$ and $\{g^1_k\}$.  More precisely, we
  have
  \begin{equation}\label{eq:91}
    \Theta_Y (\{g^0_k\}, \{g^1_k\}) \leq d(\{g^0_k\}, \{g^1_k\}) \left( \sqrt{n}\, d(\{g^0_k\},
      \{g^1_k\}) + 2 \sqrt{\Vol(M, g_0)} \right),
  \end{equation}
  where $g_0$ is any element of $\Mf$ with $g^0_k \overarrow{\omega} [g_0]$.

  Furthermore, if $\{g^0_k\}$ and $\{g^1_k\}$ are sequences in $\MV$
  that $\omega$-converge to $g_0$ and $g_1$, respectively, then the formula
  \begin{equation}\label{eq:92}
    \Theta_Y(\{g^0_k\}, \{g^1_k\}) = \integral{Y}{}{\theta_x^g(g_0(x), g_1(x))}{\mu_g(x)}
  \end{equation}
  holds for all $g_0, g_1 \in \MV$.
\end{prop}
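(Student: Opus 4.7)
My plan is to handle the three assertions in order, following closely the pattern used for $d$ itself on the precompletion.

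For the first assertion, I will show that the limit in \eqref{eq:14} exists and defines a pseudometric on $\overline{\M}^{\textnormal{pre}}$. By Lemma \ref{lem:10}, any $d$-Cauchy sequence is $\Theta_Y$-Cauchy, so from the triangle inequality
\begin{equation*}
  |\Theta_Y(g^0_k, g^1_k) - \Theta_Y(g^0_l, g^1_l)| \leq \Theta_Y(g^0_k, g^0_l) + \Theta_Y(g^1_k, g^1_l),
\end{equation*}
the sequence $\{\Theta_Y(g^0_k, g^1_k)\}$ is Cauchy in $\R$ and hence convergent. The pseudometric axioms (nonnegativity, symmetry, triangle inequality, vanishing on the diagonal) pass to the limit directly from the corresponding properties of $\Theta_Y$ on $\M$.

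For the second assertion, I would take limits in the estimate of Proposition \ref{prop:20}. Applying it to $g^0_k$ and $g^1_k$ gives
\begin{equation*}
  \Theta_Y(g^0_k, g^1_k) \leq d(g^0_k, g^1_k) \left( \sqrt{n}\, d(g^0_k, g^1_k) + 2\sqrt{\Vol(M, g^0_k)} \right).
\end{equation*}
By Theorem \ref{thm:19} (continuity of volume under $\omega$-convergence), $\Vol(M, g^0_k) \to \Vol(M, g_0)$; letting $k \to \infty$ produces \eqref{eq:91}. A mild technical point is that Theorem \ref{thm:19} applies to $\omega$-convergent sequences, not arbitrary Cauchy sequences, but since $d(\{g^0_k\}, \{g^1_k\})$ depends only on the equivalence class, we may pass to the $\omega$-convergent subsequence supplied by Theorem \ref{thm:40} without changing either side.

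The third assertion is where the real work lies, and the main obstacle is handling the deflated set. On $M \setminus (X_{\{g^0_k\}} \cup X_{\{g^1_k\}})$, property (\ref{item:6}) of Definition \ref{dfn:13} gives $g^i_k(x) \to g_i(x)$ in $\Matx$, and since $\theta^g_x$ is continuous on $\Matx \times \Matx$ we get pointwise a.e.~convergence of the integrand $\theta^g_x(g^0_k(x), g^1_k(x)) \to \theta^g_x(g_0(x), g_1(x))$. On the deflated sets, Proposition \ref{prop:25} applied to both $(g^0_k(x), g^1_k(x))$ and to $(g_0(x), g_1(x))$ bounds both integrands by $C'(n)(\sqrt{\det G^0_k(x)} + \sqrt{\det G^1_k(x)})$ and $C'(n)(\sqrt{\det G_0(x)} + \sqrt{\det G_1(x)})$ respectively; Lemma \ref{lem:52} then gives pointwise a.e.~convergence on the deflated set as well (both sides tending to the integral of the limiting expression, which vanishes where $g_0, g_1$ are degenerate). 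To invoke the Lebesgue dominated convergence theorem, I will produce a uniform $L^1$ majorant via the triangle inequality
\begin{equation*}
  \theta^g_x(g^0_k(x), g^1_k(x)) \leq \theta^g_x(g^0_k(x), g^0_1(x)) + \theta^g_x(g^0_1(x), g^1_1(x)) + \theta^g_x(g^1_1(x), g^1_k(x)),
\end{equation*}
where the outer terms are bounded by the telescoping sums $\Omega^0(x) := \sum_k \theta^g_x(g^0_k, g^0_{k+1})$ and $\Omega^1(x)$ respectively. By Lemma \ref{lem:35} (applied to each sequence, after passing to subsequences satisfying condition (\ref{item:7}) of Definition \ref{dfn:13}), these lie in $L^1(M,g)$, and $\theta^g_x(g^0_1(x), g^1_1(x))$ is a fixed continuous function on the compact $M$, hence also integrable. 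Dominated convergence then yields \eqref{eq:92}. A final remark: the equality for the limit is independent of the particular representatives chosen for $[g_0], [g_1]$ because the integrand is determined off a nullset by Definition \ref{dfn:7}, and on the common deflated set both sides contribute zero.
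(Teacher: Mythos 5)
Your handling of the first two assertions is essentially the paper's: the limit in \eqref{eq:14} exists because a $d$-Cauchy sequence is $\Theta_Y$-Cauchy (Lemma \ref{lem:10}), and \eqref{eq:91} follows by passing to the limit in the estimate of Proposition \ref{prop:20} together with Theorem \ref{thm:19}. For \eqref{eq:92}, your overall framework (pointwise a.e.\ convergence of the integrand plus dominated convergence, with the telescoping sums from Lemma \ref{lem:35} supplying the bulk of the majorant) is also the paper's; your choice of the bounded middle term $\theta^g_x(g^0_1(x), g^1_1(x))$ in place of the paper's $f$ (whose integrability is obtained via Fatou) is a harmless variation.

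The gap is in your verification that $\theta^g_x(g^0_k(x), g^1_k(x)) \rightarrow \theta^g_x(g_0(x), g_1(x))$ a.e. Your two cases cover $M \setminus (X_{\{g^0_k\}} \cup X_{\{g^1_k\}})$ (continuity of $\theta^g_x$ on $\Matx$) and, via Proposition \ref{prop:25} and Lemma \ref{lem:52}, the points where \emph{both} determinants tend to zero, where both $f_k$ and $f$ are squeezed to $0$. They do not cover the set where exactly one of $g_0(x)$, $g_1(x)$ is degenerate --- up to a nullset the symmetric difference of $X_{g_0}$ and $X_{g_1}$ --- and this set can have positive measure: the statement allows, e.g., $g^0_k \equiv g$ while $\{g^1_k\}$ $\omega$-converges to a semimetric degenerate on a set of positive measure. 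At such a point the bound $C'(n)\left(\sqrt{\det G^0_k(x)} + \sqrt{\det G^1_k(x)}\right)$ converges to a strictly positive number, so no squeeze is available, and your parenthetical claim that the limit ``vanishes where $g_0, g_1$ are degenerate'' is false there. The paper avoids the case split altogether: by Proposition \ref{prop:15} both $\{g^0_k(x)\}$ and $\{g^1_k(x)\}$ are $\theta^g_x$-Cauchy a.e., and at every such point the identity $\lim_k \theta^g_x(g^0_k(x), g^1_k(x)) = \theta^g_x(g_0(x), g_1(x))$ holds by the very definition of the extended distance on the completion of $(\Matx, \theta^g_x)$ from Theorem \ref{thm:35}, whether or not either limit is degenerate. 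Your argument is repaired either by invoking that definition, or in the mixed case by estimating $\left| \theta^g_x(g^0_k, g^1_k) - \theta^g_x(g^0_k, g_1(x)) \right| \leq \theta^g_x(g^1_k, g_1(x)) \rightarrow 0$ and then identifying $\lim_k \theta^g_x(g^0_k, g_1(x))$ with the completion distance from the boundary point to the fixed nondegenerate $g_1(x)$.
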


\begin{rmk}\label{rmk:26}
  In \eqref{eq:91}, we choose any $\omega$-limit of $\{g^0_k\}$.  The
  existence of such a limit has already been proved, but not its
  uniqueness.  On the other hand, if $\tilde{g}_0$ is a different
  $\omega$-limit of $\{g^0_k\}$, Theorem \ref{thm:19} guarantees that
  $\Vol(M, \tilde{g}_0) = \Vol(M, g_0)$.  Therefore, the estimate
  \eqref{eq:91} is independent of the choice of $\omega$-limit.
\end{rmk}

\begin{proof}[Proof of Proposition \ref{prop:21}]
  The construction of a pseudometric on the precompletion of a metric
  space can be carried over to the case where we begin with a
  pseudometric space.  Therefore, the limit in \eqref{eq:14} is
  well-defined due to the fact that $\{g^0_k\}$ and $\{g^1_k\}$ are
  Cauchy sequences with respect to $\Theta_Y$, and \eqref{eq:14}
  indeed defines a pseudometric.
  
  The inequality \eqref{eq:91} is proved via the following simple
  computation, which uses \eqref{eq:14}, Proposition \ref{prop:20},
  and Theorem \ref{thm:19}:
  \begin{align*}
    \Theta_Y(\{g^0_k\}, \{g^1_k\}) &= \lim_{k \rightarrow \infty}
    \Theta_Y(g^0_k, g^1_k) \\
    &\leq \lim_{k \rightarrow \infty} d(g^0_k, g^1_k) \left(
      \sqrt{n}\, d(g^0_k,
      g^1_k) + 2 \sqrt{\Vol(M, g^0_k)} \right) \\
    &= d(\{g^0_k\}, \{g^1_k\}) \left( \sqrt{n}\, d(\{g^0_k\},
      \{g^1_k\}) + 2 \sqrt{\Vol(M, g_0)} \right).
  \end{align*}

  As for the last statement, note first that $\theta^g_x(g_0(x),
  g_1(x))$ is well-defined by Theorem \ref{thm:35}, since $g_0$ and
  $g_1$ are positive semidefinite tensors at each point $x \in M$.  To
  prove \eqref{eq:92}, we will first use Fatou's Lemma (cf.~Theorem
  \ref{thm:37}) to show that $\theta^g_x(g_0(x), g_1(x))$ is
  integrable.  We will then use this to apply the Lebesgue dominated
  convergence theorem.

  So we start by letting $\{g^0_k\}$ and $\{g^1_k\}$ be sequences in
  $\M$ $\omega$-converging to $g_0$ and $g_1$, respectively.
  
  By Proposition \ref{prop:15}, for a.e.~$x \in M$, $\{g^0_k(x)\}$ and
  $\{g^1_k(x)\}$ are $\theta^g_x$-Cauchy.  At such points, by
  definition,
  \begin{equation}\label{eq:89}
    \theta^g_x(g_0(x), g_1(x)) = \lim_{k \rightarrow \infty}
    \theta^g_x(g^0_k(x), g^1_k(x)).
  \end{equation}
  So defining 
  \begin{equation*}
    f_k(x) := \theta^g_x(g^0_k(x), g^1_k(x)), \quad f(x) := \theta^g_x(g_0(x),
    g_1(x)),
  \end{equation*}
  we have $f_k \rightarrow f$ a.e.

  Now, note that
  \begin{equation*}
    \Theta_Y(g^0_k, g^1_k) = \integral{Y}{}{f_k(x)}{\mu_g(x)}.
  \end{equation*}
  We have already seen that $\lim_{k \rightarrow \infty}
  \Theta_Y(g^0_k, g^1_k)$ exists, so $\{\Theta_Y(g^0_k, g^1_k)\}$ is
  in particular a bounded sequence of real numbers.  Thus
  \begin{equation*}
    \sup_k \integral{Y}{}{f_k(x)}{\mu_g(x)} = \sup_k \Theta_Y(g^0_k,
    g^1_k) < \infty,
  \end{equation*}
  where we have used Fatou's lemma.

  Now we wish to verify the assumptions of the Lebesgue dominated
  convergence theorem for $f_k$ and $f$.  We note that for each $l >
  k$, the triangle inequality gives
  \begin{align*}
    f_k(x) &= \theta^g_x(g^0_k(x), g^1_k(x)) \\
    &\leq \sum_{m=k}^{l-1}
    \theta^g_x(g^0_m(x), g^0_{m+1}(x)) + \theta^g_x(g^0_l(x),
    g^1_l(x)) + \sum_{m=k}^{l-1} \theta^g_x(g^1_m(x), g^1_{m+1}(x)) \\
    &\leq \sum_{m=1}^{l-1}
    \theta^g_x(g^0_m(x), g^0_{m+1}(x)) + \theta^g_x(g^0_l(x),
    g^1_l(x)) + \sum_{m=1}^{l-1} \theta^g_x(g^1_m(x), g^1_{m+1}(x)).
  \end{align*}
  Note that the only difference between the second and last lines is
  that the sums start at $m = 1$ instead of $m = k$.  Taking the limit
  $l \rightarrow \infty$ of the above gives, for a.e.~$x \in M$,
  \begin{equation*}
    f_k(x) \leq \sum_{m=1}^\infty
    \theta^g_x(g^0_m(x), g^0_{m+1}(x)) + f(x) + \sum_{m=1}^\infty \theta^g_x(g^1_m(x), g^1_{m+1}(x)),
  \end{equation*}
  where we have used \eqref{eq:89}.  Now we claim that the right-hand
  side of the above inequality is $L^1$-integrable.  We already showed
  $f$ is integrable using Fatou's Lemma.  As for the two infinite
  sums, they are each also integrable by Lemma \ref{lem:35} and
  $\omega$-convergence of $g^i_k$,\label{p:gik} $i=0,1$ (specifically,
  property (\ref{item:7}) of Definition \ref{dfn:13} and Lemma
  \ref{lem:35}).  Thus each $f_k$ is bounded a.e.~by an $L^1$ function
  not depending on $k$.
  
  Knowing all of this, we can apply the Lebesgue dominated convergence
  theorem to show
  \begin{equation*}
    \Theta_Y(\{g^0_k\}, \{g^1_k\}) = \lim_{k \rightarrow \infty} \Theta_Y(g^0_k, g^1_k) = \lim_{k
      \rightarrow \infty} \integral{Y}{}{f_k}{\mu_g} =
    \integral{Y}{}{f}{\mu_g} = \integral{Y}{}{\theta^g_x(g_0(x),
      g_1(x))}{\mu_g(x)},
  \end{equation*}
  which completes the proof.
\end{proof}

With this proposition, proving the first uniqueness result becomes a
relatively simple matter.

\begin{thm}\label{thm:20}
  Let two $\omega$-convergent sequences $\{g^0_k\}$ and $\{g^1_k\}$,
  with $\omega$-limits $[g_0]$ and $[g_1]$, respectively, be given.
  If $g^0_k$ and $g^1_k$ are equivalent, i.e., if
  \begin{equation*}
    \lim_{k \rightarrow \infty} d(g^0_k, g^1_k) = 0,
  \end{equation*}
  then $[g_0] = [g_1]$.
\end{thm}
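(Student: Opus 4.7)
The plan is to deduce $[g_0]=[g_1]$ by combining the two halves of Proposition \ref{prop:21} with the pointwise description of the completion of $(\Matx,\theta^g_x)$ from Theorem \ref{thm:35}. In essence, the hypothesis $d(\{g^0_k\},\{g^1_k\})=0$ will be transferred, via $\Theta_Y$, into an a.e.~pointwise statement that exactly matches the equivalence relation on $\Mm$ in Definition \ref{dfn:7}.

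Concretely, I would proceed as follows. First, fix an arbitrary measurable $Y \subseteq M$. Since $\lim_k d(g^0_k,g^1_k)=0$ means $d(\{g^0_k\},\{g^1_k\})=0$ in the precompletion, the comparison estimate \eqref{eq:91} of Proposition \ref{prop:21} forces
\begin{equation*}
  \Theta_Y(\{g^0_k\},\{g^1_k\})=0.
\end{equation*}
Second, choose any representatives $g_0,g_1$ of the $\omega$-limits $[g_0],[g_1]$, which lie in $\Mf$ by Corollary \ref{cor:15}. Formula \eqref{eq:92} of Proposition \ref{prop:21} then identifies this vanishing $\Theta_Y$ with an integral, giving
\begin{equation*}
  \integral{Y}{}{\theta^g_x(g_0(x),g_1(x))}{\mu_g(x)} = 0
\end{equation*}
for every measurable $Y$. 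Since the integrand is nonnegative and $Y$ is arbitrary, a standard measure-theoretic argument yields $\theta^g_x(g_0(x),g_1(x))=0$ for a.e.~$x \in M$.

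Third, I would translate this pointwise conclusion using Theorem \ref{thm:35}, which describes $\overline{(\Matx,\theta^g_x)}$ as $\cl(\Matx)$ with the entire boundary $\partial \Matx$ collapsed to a single point. Thus $\theta^g_x(g_0(x),g_1(x))=0$ holds at a point $x$ precisely when \emph{either} both $g_0(x),g_1(x)\in \partial \Matx$, i.e.\ both fail to be positive definite, \emph{or} $g_0(x)=g_1(x)$ as elements of $\Matx$. Consequently, for a.e.~$x$, either $x\in X_{g_0}\cap X_{g_1}$ or $g_0(x)=g_1(x)$; in particular, the symmetric difference $X_{g_0}\triangle X_{g_1}$ is a nullset (at any $x$ in this symmetric difference one of $g_i(x)$ would be positive definite and the other not, forcing $\theta^g_x(g_0(x),g_1(x))>0$), and off $X_{g_0}\cup X_{g_1}$ we have $g_0=g_1$ a.e. This is exactly the equivalence relation of Definition \ref{dfn:7}, so $[g_0]=[g_1]$.

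I do not anticipate a serious obstacle: the technical heavy lifting has already been done in Proposition \ref{prop:21} (the integral representation of $\Theta_Y$ on the precompletion) and Theorem \ref{thm:35} (the completion of $(\Matx,\theta^g_x)$). The only point requiring attention is the second bullet of the equivalence relation—ensuring that both the a.e.~equality of the deflated sets \emph{and} the a.e.~pointwise equality off these sets are obtained in a single pass from the a.e.~vanishing of $\theta^g_x$. This dual conclusion is precisely what the identification of $\partial \Matx$ to a point in Theorem \ref{thm:35} delivers, provided one carefully distinguishes the three cases (both positive definite, both not, or one of each) at a typical point.
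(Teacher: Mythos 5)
Your proof is correct, and it takes a somewhat different route than the paper, although both hinge on Proposition \ref{prop:21}. The paper argues by contradiction with a two-case split on representatives $g_0, g_1$: if the deflated sets $X_{g_0}, X_{g_1}$ differ on a set of positive measure, a contradiction comes from the Lipschitz estimate for $\sqrt{\Vol(\cdot)}$ (Lemma \ref{lem:13}) together with $\Vol(Y, g^i_k) \rightarrow \Vol(Y, g_i)$ (Theorem \ref{thm:19}); only in the remaining case, where the limits differ on a positive-measure set $E$ off the deflated sets, does it invoke \eqref{eq:92} (restricted to $E$) together with \eqref{eq:91}. You instead run a single direct argument: \eqref{eq:91} gives $\Theta_M(\{g^0_k\}, \{g^1_k\}) = 0$, \eqref{eq:92} converts this into $\theta^g_x(g_0(x), g_1(x)) = 0$ a.e., and Theorem \ref{thm:35} --- the fiberwise completion with $\partial \Matx$ collapsed to a single point, so that the boundary point lies at positive distance from every point of $\Matx$ --- delivers both clauses of Definition \ref{dfn:7} simultaneously; in particular the a.e.~agreement of the deflated sets comes for free, with no top-level appeal to Lemma \ref{lem:13} or Theorem \ref{thm:19}. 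The only point to keep explicit is the one you yourself flagged: the quantity $\theta^g_x(g_0(x), g_1(x))$ in \eqref{eq:92} is the extended completion distance of Theorem \ref{thm:35}, and at points of the deflated set the fiberwise sequences $\{g^i_k(x)\}$ represent the collapsed boundary point (via Corollary \ref{cor:10} and Proposition \ref{prop:25}) even though they need not converge to $g_i(x)$ in $\satx$; since Proposition \ref{prop:21} asserts \eqref{eq:92} in exactly this sense, your argument goes through. Your version is more streamlined and unified; the paper's version makes the role of volume collapse on the deflated sets more visible, at the cost of a case analysis.
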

\begin{proof}
  Suppose the contrary; then for any representatives $g_0 \in [g_0]$
  and $g_1 \in [g_1]$, one of two possibilities holds:
  \begin{enumerate}
  \item \label{item:14} $X_{g_0}$ and $X_{g_1}$ differ by a set of positive measure,
    or
  \item \label{item:15} $X_{g_0} = X_{g_1}$, up to a nullset, but $g_0$ and $g_1$
    differ on a set $E$ with $E \cap (X_{g_0} \cup X_{g_1}) =
    \emptyset$ and $\Vol(E, g) > 0$, where $g$ is our fixed metric.
  \end{enumerate}
  We will show that neither of these possibilities can actually occur.

  To rule out \eqref{item:14}, let $X_i := X_{\{g^i_k\}}$ denote the
  deflated set of the sequence $\{g_k^i\}$ for $i=0,1$.  Then we claim
  $X_0 = X_1$, up to a nullset.  If this is not true, then by swapping
  the two sequences if necessary, we see that $Y := (X_0 \setminus
  X_1)$ has positive volume with respect to $g_1$ and zero volume with
  respect to $g_0$.  ($Y$ is simply the set on which $\{g^0_k\}$
  deflates and $\{g^1_k\}$ doesn't.)  But then by Lemma \ref{lem:13},
  \begin{equation*}
    \lim_{k \rightarrow \infty} d(g^0_k, g^1_k) \geq \lim_{k
      \rightarrow \infty} \sqrt{\Vol(Y, g^1_k)} = \sqrt{\Vol(Y, g_1)}
    > 0,
  \end{equation*}
  where we have used Theorem \ref{thm:19}.  This contradicts the
  assumptions of the theorem, so in fact $X_0 = X_1$ up to a nullset.
  Since by property (\ref{item:5}) of Definition \ref{dfn:13} $X_{g_i}
  = X_{\{g^i_k\}}$ up to a nullset as well, \eqref{item:14} cannot
  hold.

  So suppose that \eqref{item:15} holds.  Note that on $E$, $g_0$ and
  $g_1$ are both positive definite.  Since $E$ has positive
  $g$-volume, we can conclude from Proposition \ref{prop:21}
  (specifically \eqref{eq:92}) that $\Theta_E(\{g^0_k\}, \{g^1_k\}) >
  0$.  But then this and \eqref{eq:91} also imply that
  \begin{equation*}
    \lim_{k \rightarrow \infty} d(g^0_k, g^1_k) = d(\{g^0_k\},
    \{g^1_k\}) > 0.
  \end{equation*}
  This contradicts the assumptions of the theorem, and so
  \eqref{item:15} cannot hold either.
\end{proof}

\subsection{Second uniqueness result}\label{sec:second-uniq-result}

Our goal in this subsection is to prove the following statement: up to
equivalence, there is only one $d$-Cauchy sequence $\omega$-converging
to a given element of $\Mfhat$.  That is, if we have two sequences
$\{g^0_k\}, \{g^1_k\}$ that both $\omega$-converge to the same
$[g_\infty] \in \Mfhat$, then
\begin{equation*}
  d(\{g^0_k\}, \{g^1_k\}) = \lim_{k \to \infty} d(g^0_k, g^1_k) = 0.
\end{equation*}

After we've proved this statement, we combine it with the existence
result from Section \ref{sec:existence-ad-limit} and the results on
volumes from Section \ref{sec:ad-conv-conc}, as mentioned in the
introduction to this section.

We will first prove the above statement for sequences that remain
within a given amenable subset $\U$, and will then use this to extend
the proof to arbitrary sequences.  Before any of this, though, we
state a definition and a result from measure theory that we'll need.

\begin{dfn}[{\cite[Dfn.~8.5.2]{rana02:_introd_to_measur_and_integ}}]\label{dfn:28}
  Let $(X, \Sigma, \nu)$ be a measure space, and let $\mathcal{F}$ be
  a collection of measurable functions.  We say that $\mathcal{F}$ is
  \emph{equicontinuous at $\emptyset$} if for any $\epsilon > 0$ and
  any sequence $\{E_k\}$ of measurable sets with
  \begin{equation*}
    \bigcap_{k=1}^\infty E_k = \emptyset,
  \end{equation*}
  there exists $K_0$ such that
  \begin{equation*}
    \integral{E_k}{}{|f|}{d \nu} < \epsilon
  \end{equation*}
  for all $f \in \mathcal{F}$ and $k > K_0$.
\end{dfn}

We note that in particular, if $\nu(X) < \infty$ and we are given a
collection of functions $\mathcal{F}$ for which we can find some
constant $C$ with
\begin{equation*}
  | f(x) | \leq C \quad \textnormal{for a.e.}\ x \in X\
  \textnormal{and every}\ f \in \mathcal{F},
\end{equation*}
then $\mathcal{F}$ is equicontinuous at $\emptyset$.

\begin{thm}[{\cite[Thm.~8.5.14]{rana02:_introd_to_measur_and_integ}}]\label{thm:1}
  Let $(X, \Sigma, \nu)$ be a measure space with $\nu(X) < \infty$,
  and let $f$ be a measurable function on $X$.  Furthermore, let $f_k$
  be a sequence of functions in $L^p(X, \nu)$.  Then the following
  statements are equivalent.
  \begin{enumerate}
  \item $f_k \rightarrow f$ in $L^p(X, \nu)$.
  \item $\{ |f_k|^p \mid k \in \N \}$ is equicontinuous at $\emptyset$
    and $f_k \rightarrow f$ in measure.
  \end{enumerate}
\end{thm}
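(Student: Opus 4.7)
The plan is to recognize this as a version of Vitali's convergence theorem on a finite measure space, which asserts the equivalence of $L^p$-convergence with the conjunction of convergence in measure and a uniform integrability condition. The direction (1)$\Rightarrow$(2) is the relatively routine half; the substantive content is (2)$\Rightarrow$(1), which is where the measure-theoretic work lies.

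For (1)$\Rightarrow$(2), convergence in measure follows directly from Chebyshev's inequality applied to $|f_k - f|$: for any $\eta > 0$,
\begin{equation*}
  \nu(\{|f_k - f| > \eta\}) \leq \eta^{-p} \|f_k - f\|_p^p \to 0.
\end{equation*}
For equicontinuity at $\emptyset$, note that $|f|^p \in L^1$ (as $L^p$ is closed under norm convergence), so $|f|^p$ is equicontinuous at $\emptyset$ as a single function: this uses absolute continuity of the integral together with the reduction to a decreasing sequence $\tilde E_m := \bigcup_{j \geq m} E_j$ and the observation $\nu(\tilde E_m) \to 0$. Then for $k$ large enough that $\|f_k - f\|_p < \epsilon$, the triangle inequality
\begin{equation*}
  \left(\int_{E_m} |f_k|^p\,d\nu\right)^{1/p} \leq \left(\int_{E_m} |f|^p\,d\nu\right)^{1/p} + \|f_k - f\|_p
\end{equation*}
controls the tail of the collection, and the finitely many remaining $f_k$'s are each individually in $L^1$, hence individually equicontinuous at $\emptyset$.

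For (2)$\Rightarrow$(1), first extract a subsequence $f_{k_l} \to f$ $\nu$-almost everywhere (using convergence in measure), and use Fatou's lemma together with the uniform bound on $\int_X |f_{k_l}|^p\,d\nu$ (obtained from equicontinuity applied to $E_m = X$ for all $m$, or via a direct equicontinuity estimate) to conclude $f \in L^p$. Given $\epsilon > 0$, split $X = A_k \cup (X \setminus A_k)$ with $A_k := \{|f_k - f| > \eta\}$ for a small parameter $\eta$. On $X \setminus A_k$, estimate
\begin{equation*}
  \int_{X \setminus A_k} |f_k - f|^p\,d\nu \leq \eta^p \nu(X),
\end{equation*}
which is small by choice of $\eta$. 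On $A_k$, use $|f_k - f|^p \leq 2^{p-1}(|f_k|^p + |f|^p)$ and apply equicontinuity at $\emptyset$ to the family $\{|f_k|^p\}$ and to the singleton $\{|f|^p\}$: since $\nu(A_k) \to 0$ by convergence in measure, the sets $A_k$ play the role of the shrinking sequence, forcing $\int_{A_k} |f_k|^p\,d\nu$ and $\int_{A_k} |f|^p\,d\nu$ to be small for $k$ large.

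The main technical obstacle is the passage between the definition of equicontinuity at $\emptyset$ used here (arbitrary sequences with empty intersection) and the more classical ``uniform absolute continuity'' formulation ($\forall \epsilon > 0$ $\exists \delta > 0$ such that $\nu(E) < \delta$ implies $\int_E |f_k|^p < \epsilon$ uniformly in $k$), because the latter is what makes the Vitali-style splitting above run smoothly. One would prove these two conditions equivalent on a finite-measure space, most cleanly by contradiction: a failure of the uniform-$\delta$ version yields a sequence of sets $F_m$ with $\nu(F_m) < 2^{-m}$ on which $\int_{F_m} |f_{k_m}|^p \geq \epsilon_0$, and Borel--Cantelli on $\{F_m\}$ (or the tail sets $\bigcup_{j \geq m} F_j$) produces a sequence witnessing failure of the hypothesis. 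With that bridge in hand, the previous paragraph's argument goes through verbatim.
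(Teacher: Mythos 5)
First, note that the paper does not prove this statement at all: Theorem \ref{thm:1} is quoted from Rana's textbook (Thm.~8.5.14) and used as a black box, so there is no in-paper argument to compare against. Your proposal is the standard Vitali-convergence proof, and its architecture is right: Chebyshev for convergence in measure, a Minkowski splitting for equicontinuity in (1)$\Rightarrow$(2), and in (2)$\Rightarrow$(1) the splitting over $A_k=\{|f_k-f|>\eta\}$ together with the bridge from ``equicontinuity at $\emptyset$'' to uniform absolute continuity, whose contradiction proof via the tail sets $\bigcup_{j\geq m}F_j$ is essentially correct (with the caveat that their intersection is only a $\nu$-nullset by Borel--Cantelli, not empty; one must discard that nullset, which does not change any of the integrals).

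Two steps, as written, are not right and need repair. First, your reduction in (1)$\Rightarrow$(2) rests on the claim that $\nu\bigl(\bigcup_{j\geq m}E_j\bigr)\to 0$ whenever $\bigcap_j E_j=\emptyset$; this is false (let $E_j$ alternate between two disjoint sets of positive measure). In fact, with ``arbitrary sequences of empty intersection'' the theorem itself would fail -- even a nonzero constant function is not equicontinuous at $\emptyset$ in that reading -- so the definition must be taken, as in Rana and as the paper's own remark about uniformly bounded families requires, for sequences $E_k\downarrow\emptyset$; then $\nu(E_k)\to 0$ by continuity from above on the finite measure space and no union trick is needed. Second, in (2)$\Rightarrow$(1) you obtain the uniform bound on $\int_X|f_k|^p\,d\nu$ by ``equicontinuity applied to $E_m=X$ for all $m$'', which is not a legitimate instance of the definition (the constant sequence $X$ neither decreases to $\emptyset$ nor has empty intersection). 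The bound is true, but needs its own short argument: e.g.\ pick $\delta$ from the uniform absolute continuity for $\epsilon=1$, use that $\{f_k\}$ is Cauchy in measure to fix $K$ with $\nu(\{|f_k-f_K|>1\})<\delta$ for $k\geq K$, and estimate $\int|f_k|^p$ against $\int|f_K|^p$ plus controlled terms. Alternatively, and more cleanly, you can bypass both this bound and the a priori claim $f\in L^p$ by running your $A_{k,j}$-splitting on $|f_k-f_j|^p$ to show $\{f_k\}$ is Cauchy in $L^p$, and then identifying its $L^p$-limit with $f$ through uniqueness of limits in measure; after that fix, the rest of your argument goes through.
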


\begin{rmk}\label{rmk:9}
  We make a couple of remarks about this theorem that we will need later:
  \begin{enumerate}
  \item By \cite[Thm.~8.3.3]{rana02:_introd_to_measur_and_integ},
    a.e.~convergence implies convergence in measure.  Therefore,
    Theorem \ref{thm:1} implies that if $\{ |f_k|^p \mid k \in \N \}$ is
    equicontinuous at $\emptyset$ and $f_k \rightarrow f$ a.e., then
    $f_k \rightarrow f$ in $L^p(X, \nu)$.
  \item By \cite[Thm.~8.3.6]{rana02:_introd_to_measur_and_integ}, if
    $f_k \rightarrow f$ in measure, then there exists a subsequence
    $\{ f_{k_l} \}$ such that $f_{k_l} \rightarrow f$ a.e.  Combining
    this with Theorem \ref{thm:1} implies that if $f_k \rightarrow f$
    in $L^p$, then there exists a subsequence $f_{k_l}$ such that
    $f_{k_l} \rightarrow f$ a.e.
  \end{enumerate}
\end{rmk}

We now state the second uniqueness result as confined to the context
of amenable subsets.

\begin{prop}\label{prop:1}
  Let $\U$ be an amenable subset, and let $\U^0$ be the
  $L^2$-completion of $\U$.  If two sequences $\{g^0_k\}$ and
  $\{g^1_k\}$ in $\U$ both $\omega$-converge to $[g_\infty] \in \Mfhat$,
  then $\{g^0_k\}$ and $\{g^1_k\}$ are equivalent.  That is,
  \begin{equation*}
    \lim_{k \to \infty} d(g^0_k, g^1_k) = 0.
  \end{equation*}
  Furthermore, up to differences on a nullset, $[g_\infty]$ only
  contains one representative, $g_\infty$, and $\{g^0_k\}$ and
  $\{g^1_k\}$ both $L^2$-converge to $g_\infty$.  In particular,
  $g_\infty \in \U^0$.
\end{prop}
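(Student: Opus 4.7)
The plan is to exploit the uniform bounds available on an amenable subset to upgrade the a.e.-convergence built into the definition of $\omega$-convergence to $L^2$-convergence, and then to use the Lipschitz estimate from Theorem \ref{thm:5}(1) to transfer this to $d$-convergence.

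First I would observe that amenability rules out deflation altogether: for every $\tilde g \in \U$, $\lambda^{\tilde G}_{\min}(x) \geq \delta$ everywhere, so $\det \tilde G(x) \geq \delta^n$, and therefore $X_{\{g^i_k\}} = \emptyset$ for $i = 0,1$. Property \eqref{item:5} of Definition \ref{dfn:13} then forces any representative $g_\infty$ of $[g_\infty]$ to have $X_{g_\infty}$ a nullset, and property \eqref{item:6} gives $g^i_k(x) \to g_\infty(x)$ for a.e.~$x \in M$. From this the uniqueness of the representative up to a nullset is immediate: if $g'_\infty \in [g_\infty]$ is another representative, Definition \ref{dfn:7} says $X_{g'_\infty}$ differs from $X_{g_\infty}$ by a nullset, so is itself a nullset, and $g'_\infty = g_\infty$ a.e.~off this nullset.

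Next I would promote a.e.-convergence to $L^2$-convergence. The amenable bounds give $|(g^i_k)_{jl}(x)| \leq C$ uniformly in $k$, $i$, $x$, and by taking pointwise limits the same bound holds for $g_\infty$. Hence
\begin{equation*}
\langle g^i_k(x) - g_\infty(x), g^i_k(x) - g_\infty(x) \rangle_{g(x)}
\end{equation*}
is bounded by a constant (depending only on $\U$ and $g$) for a.e.~$x$ and all $k$. Since $M$ has finite $\mu_g$-volume, this constant serves as an $L^1$-dominating function, and the Lebesgue dominated convergence theorem (Theorem \ref{thm:36}) applied to $\|g^i_k - g_\infty\|_g^2$ gives $\|g^i_k - g_\infty\|_g \to 0$. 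In particular $g_\infty \in \U^0$ by Definition \ref{dfn:4}.

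Finally, to conclude $d(g^0_k, g^1_k) \to 0$, I would apply Theorem \ref{thm:5}(1) on the amenable subset $\U$, which produces a constant $K$ with
\begin{equation*}
d(g^0_k, g^1_k) \leq K \, \| g^0_k - g^1_k \|_g \leq K \, \| g^0_k - g_\infty \|_g + K \, \| g_\infty - g^1_k \|_g,
\end{equation*}
and both terms on the right tend to zero by the previous paragraph. The only real content is the $L^2$-convergence step; the chief thing one must take care of is that the dominating function exists, but that is handed to us gratis by the uniform $C^0$-bounds built into the definition of amenability together with compactness of $M$. No step should present a genuine obstacle once the absence of deflation is noted.
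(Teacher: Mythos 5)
Your proposal is correct and follows essentially the same route as the paper: empty deflated sets from amenability, a.e.~convergence from the definition of $\omega$-convergence, an upgrade to $L^2$-convergence via the uniform $C^0$-bounds, and then Theorem \ref{thm:5}(1) to pass to $d$. The only cosmetic difference is that you invoke dominated convergence with a constant majorant where the paper uses equicontinuity at $\emptyset$ together with Theorem \ref{thm:1} (Remark \ref{rmk:9}); on a finite-measure space with uniformly bounded coefficients these are interchangeable.
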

\begin{proof}
  Note that Definition \ref{dfn:2} of an amenable subset implies that
  the deflated sets of $\{g^0_k\}$ and $\{g^1_k\}$ are empty.
  Therefore, all representatives of $[g_\infty]$ differ at most by a
  nullset, and property (\ref{item:6}) of Definition \ref{dfn:13}
  implies that $g^0_k, g^1_k \overarrow{\textnormal{a.e.}} g_\infty$.
  
  Since all $g^0_k$ and $g^1_k$ satisfy the same bounds a.e.~in each
  coordinate chart, it is easy to see that the set
  \begin{equation*}
    \{ | (g_l^k)_{ij} |^2 \mid 1 \leq i,j \leq n,\ k \in \N \}
  \end{equation*}
  is equicontinuous at $\emptyset$ in each coordinate chart for both
  $l = 0$ and $l = 1$.  Therefore, Remark \ref{rmk:9} gives that
  $\{g^0_k\}$ and $\{g^1_k\}$ converge in $L^2$ to $g_\infty$, proving
  the second statement.  This also implies that
  \begin{equation*}
    \lim_{k \to \infty} \| g^1_k - g^0_k \|_{g} = 0.
  \end{equation*}
  But now, invoking Theorem \ref{thm:5} gives
  \begin{equation*}
    \lim_{k \to \infty} d(g^0_k, g^1_k) = 0.
  \end{equation*}
\end{proof}

The next lemma establishes the strong correspondence between $L^2$-
and $\omega$-convergence within amenable subsets.

\begin{lem}\label{lem:43}
  Let $\U \subset \M$ be amenable, and let $\tilde{g} \in \U^0$.  Then
  for any sequence $\{ g_k \}$ in $\U$ that $L^2$-converges to
  $\tilde{g}$, there exists a subsequence $\{ g_{k_l} \}$ that
  $\omega$-converges to $\tilde{g}$.

  In particular, for any element $\tilde{g} \in \U^0$, we can always
  find a sequence in $\U$ that both $L^2$- and $\omega$-converges to
  $\tilde{g}$.
\end{lem}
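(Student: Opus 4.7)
The plan is to extract the desired subsequence in two successive steps: first to ensure the summability condition on $d$-distances, and second to arrange almost-everywhere pointwise convergence. The amenability hypothesis will automatically take care of the deflation condition.

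First I would observe that by Theorem \ref{thm:5}, applied to the amenable subset $\U$ (or any amenable enlargement of $\U$ that still contains $\tilde{g}$ as an $L^2$-limit point), there is a constant $K$ such that $d(g_k, g_l) \leq K \| g_k - g_l \|_g$ for all $k, l$. Since $\{g_k\}$ is $L^2$-Cauchy, this makes it $d$-Cauchy. Therefore I can extract a first subsequence, which I still denote $\{g_k\}$, such that $\sum_{k=1}^\infty d(g_k, g_{k+1}) < \infty$; this takes care of conditions \eqref{item:4} and \eqref{item:7} of Definition \ref{dfn:13}.

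Next I would use Remark \ref{rmk:9}(2): since $g_k \to \tilde{g}$ in $L^2$ (the coefficients, viewed locally in each chart of the amenable atlas, converge in $L^2$), I can pass to a further subsequence $\{g_{k_l}\}$ along which the coefficients converge pointwise a.e.\ to those of $\tilde{g}$. The summability of $d$-distances is preserved under taking subsequences, so conditions \eqref{item:4} and \eqref{item:7} continue to hold for $\{g_{k_l}\}$.

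It remains to verify conditions \eqref{item:5} and \eqref{item:6}. Here the amenability hypothesis is essential: by Definition \ref{dfn:2}, there exist uniform constants $\delta, C > 0$ such that every $g_k \in \U$ satisfies $\lambda^{G_k}_{\min}(x) \geq \delta$ and $|(g_k)_{ij}(x)| \leq C$ in every chart of the amenable atlas. Consequently the deflated set $X_{\{g_{k_l}\}}$ is empty, and, since these bounds pass to the a.e.-pointwise limit, so is $X_{\tilde{g}}$ (up to a nullset); this gives condition \eqref{item:5}. Condition \eqref{item:6} is then exactly the a.e.\ pointwise convergence arranged in the previous paragraph. Thus $\{g_{k_l}\}$ $\omega$-converges to $\tilde{g}$.

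The only real obstacle is the bookkeeping of the two successive subsequence extractions—ensuring that the summability condition is not destroyed when one passes to a further subsequence to get a.e.\ convergence—but this is automatic since $\sum d(g_{k_l}, g_{k_{l+1}})$ is bounded by partial sums of $\sum d(g_k, g_{k+1})$, which is already finite. The final assertion of the lemma follows by applying what has just been proved to any sequence in $\U$ that $L^2$-converges to a given $\tilde{g} \in \U^0$, which exists by the definition of $\U^0$.
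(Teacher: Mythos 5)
Your proof is correct and follows essentially the same route as the paper: Theorem \ref{thm:5} gives $d$-Cauchyness, one subsequence extraction gives the summability condition, Remark \ref{rmk:9} gives a further a.e.-convergent subsequence, and amenability makes the deflation condition trivial. Your explicit remarks on why summability survives the second extraction and why $X_{\tilde{g}}$ is a nullset are details the paper leaves implicit, but the argument is the same.
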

\begin{proof}
  Let $\{ g_k \}$ be any sequence $L^2$-converging to $\tilde{g} \in
  \U^0$.  Then $\tilde{g}$ together with any subsequence of $\{ g_k
  \}$ already satisfies properties (\ref{item:4}) and (\ref{item:5})
  of Definition \ref{dfn:13}.  This is clear from Theorem \ref{thm:5}
  and Definition \ref{dfn:2} of an amenable subset.  (Property
  (\ref{item:5}) is empty here, as $\{g_k\}$ has empty deflated set
  by the definition of an amenable subset.)  Since $\{ g_k \}$ is
  $d$-Cauchy by Theorem \ref{thm:5}, it is also easy to see that there
  is a subsequence $\{ g_{k_m} \}$ of $\{ g_k \}$ satisfying property
  (\ref{item:7}) of $\omega$-convergence.

  To verify property (\ref{item:6}), note that $L^2$-convergence of
  $\{ g_{k_m} \}$ implies that there exists a subsequence
  $\{g_{k_l}\}$ of $\{g_{k_m}\}$ that converges to $\tilde{g}$ a.e.
  (Cf.~Remark \ref{rmk:9}.)
\end{proof}

Given the results that we have so far, we can give an alternative
description of the completion of an amenable set using $\omega$-convergence
instead of $L^2$-convergence.

\begin{prop}\label{prop:26}
  Let $\U \subset \M$ be an amenable subset.  Then the completion
  $\overline{\U}$ of $\U$ as a metric subspace of $\M$ can be
  identified with $\U^0$, the $L^2$ completion of $\U$, using
  $\omega$-convergence.  That is, there is a natural bijection between
  $\overline{\U}$ and $\U^0$ given by identifying each equivalence
  class of Cauchy sequences $[\{g_k\}]$ with the unique element of
  $\U^0$ that they $\omega$-subconverge to.
\end{prop}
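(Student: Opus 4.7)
The strategy is to use Theorem 7, which already identifies $\overline{\U}$ with $\U^0$ via $L^2$-convergence, and then to show that the same identification can be read off via $\omega$-convergence. In effect, I need to define a map $\Psi : \overline{\U} \to \U^0$ by sending each equivalence class $[\{g_k\}]$ to the unique $\omega$-(sub)limit of its representatives, and then check that $\Psi$ is well-defined, injective, and surjective.

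First I would define $\Psi$ as follows. Given $[\{g_k\}] \in \overline{\U}$, choose any representative $\{g_k\}$; by Theorem \ref{thm:39} there is a subsequence $\{g_{k_l}\}$ and an element $[g_\infty] \in \Mmhat$ with $g_{k_l} \overarrow{\omega} [g_\infty]$. Because $\U$ is amenable, the sequence $\{g_{k_l}\}$ has empty deflated set, so any two representatives of $[g_\infty]$ differ at most on a nullset and share a canonical representative $g_\infty$; moreover, by Proposition \ref{prop:1} this $g_\infty$ lies in $\U^0$. Set $\Psi([\{g_k\}]) := g_\infty$.

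The key step is showing $\Psi$ is well-defined. Suppose $\{g^0_k\}$ and $\{g^1_k\}$ are two equivalent Cauchy representatives of the same class in $\overline{\U}$, with $\omega$-convergent subsequences having limits $g^0_\infty, g^1_\infty \in \U^0$. By Lemma \ref{lem:43} (applied in the reverse direction via Proposition \ref{prop:1}), each subsequence also $L^2$-converges to its respective limit; interleaving the two subsequences then produces a single sequence in $\U$ that is both $d$-Cauchy (since the original sequences are equivalent) and $L^2$-Cauchy (by Theorem \ref{thm:5}), and whose $\omega$-subconvergent pieces produce both $g^0_\infty$ and $g^1_\infty$ as $\omega$-limits. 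Proposition \ref{prop:1} applied to the interleaved sequence then forces $g^0_\infty = g^1_\infty$ almost everywhere, so $\Psi$ is well-defined.

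For injectivity, suppose $\Psi([\{g^0_k\}]) = \Psi([\{g^1_k\}]) = g_\infty$. Passing to $\omega$-convergent subsequences of each (which does not change the equivalence class in $\overline{\U}$), Proposition \ref{prop:1} gives $\lim_{k\to\infty} d(g^0_k, g^1_k) = 0$, so $[\{g^0_k\}] = [\{g^1_k\}]$ in $\overline{\U}$. For surjectivity, given $\tilde{g} \in \U^0$, Lemma \ref{lem:43} produces a sequence $\{g_k\} \subset \U$ with $g_k \overarrow{\omega} \tilde{g}$; in particular $\{g_k\}$ is $d$-Cauchy, so it represents an element of $\overline{\U}$ whose image under $\Psi$ is $\tilde{g}$.

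The main obstacle, as already indicated, is the well-definedness step: one must rule out the possibility that a single $d$-equivalence class admits $\omega$-convergent subsequences with different pointwise limits. All the machinery for this has been set up in Proposition \ref{prop:1} and Lemma \ref{lem:43}, which together give a tight correspondence between $d$-, $L^2$-, and $\omega$-convergence inside an amenable subset, so the argument reduces to a clean interleaving trick. The naturality of the bijection is then inherited from the naturality of the $L^2$-identification in Theorem \ref{thm:7}, since the two agree on any sequence that converges in both senses simultaneously.
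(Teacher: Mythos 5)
Your argument is correct, and it reaches the same bijection as the paper, but the well-definedness step goes by a different route. The paper disposes of well-definedness in one stroke by citing the first uniqueness result (Theorem \ref{thm:20}): equivalent $\omega$-convergent Cauchy sequences have the same $\omega$-limit, full stop; injectivity then comes from Proposition \ref{prop:1} and surjectivity from Lemma \ref{lem:43}, exactly as you do. You instead avoid Theorem \ref{thm:20} altogether and re-derive the needed uniqueness inside the amenable subset: interleave the two equivalent subsequences, observe the interleaved sequence is $d$-Cauchy and hence, by Theorem \ref{thm:5}(2), $L^2$-Cauchy, and conclude that its two subsequential $L^2$-limits coincide a.e. This is a legitimately more elementary argument in the amenable setting, since Theorem \ref{thm:20} rests on the heavier machinery of extending $\Theta_Y$ to the precompletion (Proposition \ref{prop:21}), whereas your interleaving trick only needs the $d$--$L^2$ equivalence already established for amenable subsets; the price is that it is special to this setting, while the paper's citation reuses a result that holds for arbitrary Cauchy sequences in $\M$.

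One small repair: at the end of the well-definedness step you write that ``Proposition \ref{prop:1} applied to the interleaved sequence forces $g^0_\infty = g^1_\infty$,'' but Proposition \ref{prop:1} has as hypothesis that two sequences $\omega$-converge to the \emph{same} class, which is precisely what is to be shown, so it cannot be the engine there. What actually closes the argument is the uniqueness of $L^2$-limits: the interleaved sequence is $L^2$-Cauchy, hence $L^2$-convergent in $\s^0$, and its two subsequences $L^2$-converge to $g^0_\infty$ and $g^1_\infty$ respectively (this direction, $\omega$-convergence in $\U$ implies $L^2$-convergence, is the correct use of Proposition \ref{prop:1}, applied with both sequences equal --- not Lemma \ref{lem:43}, which goes the other way). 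With that citation straightened out, the proof is sound.
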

\begin{proof}
  The existence result---Theorem \ref{thm:39}---the first uniqueness
  result---Theorem \ref{thm:20}---and Proposition \ref{prop:1}
  together imply that for every equivalence class $[\{g_k\}]$ of
  $d$-Cauchy sequences in $\U$, there is a unique $L^2$ metric
  $g_\infty \in \U^0$ such that every representative of $[\{g_k\}]$
  $\omega$-subconverges to $g_\infty$, and that the representatives of a
  different equivalence class cannot also $\omega$-subconverge to
  $g_\infty$.  This gives us the map from $\overline{\U}$ to $\U^0$
  and shows that it is injective.  Furthermore, by Lemma \ref{lem:43},
  there is a sequence in $\U$ $\omega$-subconverging to every element of
  $\U^0$.  Thus, this map is also surjective.
\end{proof}

With this identification, we can define a metric on $\U^0$ by
declaring the bijection of the previous proposition to be an
isometry.  The result is the following:

\begin{dfn}\label{dfn:14}
  Let $\U$ be an amenable subset.  By $d_\U$, we denote the metric on
  the completion of $\U$, which we identify with the $L^2$-completion
  $\U^0$ via Proposition \ref{prop:26}.  Thus, for $g_0, g_1 \in \U^0$
  and any sequences $g^0_k \overarrow{\omega} g_0$, $g^1_k \overarrow{\omega}
  g_1$, we have
  \begin{equation*}
    d_\U(g_0, g_1) = \lim_{k \rightarrow \infty} d(g^0_k, g^1_k).
  \end{equation*}
  Note that by the preceding results, we can equivalently define
  $d_\U$ by assuming that $\{g^0_k\}$ and $\{g^1_k\}$ $L^2$-converge
  to $g_0$ and $g_1$, respectively.
\end{dfn}

The next lemma shows that the metric $d_\U$ is nicely compatible with
the metric $d$.

\begin{lem}\label{lem:42}
  Let $\U \subset \M$ be amenable, and suppose $g_0, g_1 \in \U$ and
  $g_2 \in \U^0$.  Then
  \begin{enumerate}
  \item \label{item:16} $d(g_0, g_1) = d_\U(g_0, g_1)$, and 
  \item \label{item:3} $d(g_0, g_1) \leq d_\U(g_0, g_2) + d_\U(g_2, g_1)$.
  \end{enumerate}
\end{lem}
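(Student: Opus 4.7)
The plan is to derive both statements directly from the definition of $d_\U$ (Definition \ref{dfn:14}) together with the existence of $\omega$-convergent sequences in $\U$ guaranteed by Lemma \ref{lem:43}. The key observation is that any element of $\U$ trivially $\omega$-converges to itself when viewed as a constant sequence: properties (\ref{item:4}) and (\ref{item:7}) of Definition \ref{dfn:13} are obvious, and since a smooth Riemannian metric has empty deflated set, properties (\ref{item:5}) and (\ref{item:6}) are automatic.

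For statement (\ref{item:16}), I would simply apply Definition \ref{dfn:14} using the constant sequences $g^0_k \equiv g_0$ and $g^1_k \equiv g_1$, both of which $\omega$-converge to their respective limits by the observation above. This yields
\begin{equation*}
  d_\U(g_0, g_1) = \lim_{k \to \infty} d(g_0, g_1) = d(g_0, g_1),
\end{equation*}
which is (\ref{item:16}).

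For statement (\ref{item:3}), I would invoke Lemma \ref{lem:43} to obtain a sequence $\{g^2_k\} \subset \U$ with $g^2_k \overarrow{\omega} g_2$. Combining this with the constant sequences at $g_0$ and $g_1$, Definition \ref{dfn:14} gives
\begin{equation*}
  d_\U(g_0, g_2) = \lim_{k \to \infty} d(g_0, g^2_k), \qquad
  d_\U(g_2, g_1) = \lim_{k \to \infty} d(g^2_k, g_1).
\end{equation*}
Applying the triangle inequality for the pseudometric $d$ on $\M$ pointwise in $k$, namely $d(g_0, g_1) \leq d(g_0, g^2_k) + d(g^2_k, g_1)$, and passing to the limit yields (\ref{item:3}).

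There is no genuine obstacle here; the whole content of the lemma is that the metric $d_\U$ on the completion $\U^0$ is the canonical extension of $d$, and that it is compatible with $d$ across the boundary between $\U$ and $\U^0 \setminus \U$. The only point requiring minor care is verifying that constant sequences do fulfill all four conditions of $\omega$-convergence, which I dispose of in the first paragraph.
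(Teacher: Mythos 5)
Your proposal is correct and follows essentially the same route as the paper: statement (1) is immediate from the definition of $d_\U$ via (constant) $\omega$-convergent sequences, and statement (2) is the triangle inequality — the paper simply invokes the triangle inequality for $d_\U$ together with (1), while you unpack that same inequality through approximating sequences from Lemma \ref{lem:43}, which is just the construction underlying it. No gap; the verification that constant sequences at smooth metrics $\omega$-converge is the right small point to check, and you handle it correctly.
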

\begin{proof}
  
  Statement (1) is true simply by the definition of $d_\U$.  Statement
  (2) is proved by applying statement (1) and the triangle inequality
  for $d_\U$.
\end{proof}

With a little bit of effort, we can use previous results to extend
Proposition \ref{prop:18}, a statement about $\M$, to the completion
of an amenable subset.  We first prove a very special case in a lemma,
followed by the full result.

\begin{lem}\label{lem:51}
  Let $\U$ be any amenable subset and $g^0, g^1 \in \U$.
  Let $C(n)$ be the constant of Proposition \ref{prop:18}, and let $E
  \subseteq M$ be measurable.  Then
  \begin{equation*}
    d_\U(g^0, \chi(M \setminus E) g^0 + \chi(E) g^1)
    \leq C(n) \left( \sqrt{\Vol(E, g^0)} + \sqrt{\Vol(E,
        g^1)} \right)
  \end{equation*}
\end{lem}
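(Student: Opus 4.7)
The plan is to approximate the discontinuous semimetric $g_\infty := \chi(M \setminus E) g^0 + \chi(E) g^1 \in \U^0$ by a sequence of smooth metrics $\tilde{g}_k \in \U$ that agree with $g^0$ outside a neighborhood of $E$ and with $g^1$ on a compact subset exhausting $E$, then apply Proposition \ref{prop:18} to each $\tilde{g}_k$ and pass to the limit using the description of $d_\U$ via $\omega$-convergence.

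\textbf{Construction.} By regularity of the Lebesgue measure on $M$, we can find a compact set $F_k \subseteq E$ and an open set $U_k \supseteq E$ such that $\Vol(U_k \setminus F_k, g) < 1/k$. Choose smooth cutoff functions $\phi_k \colon M \to [0,1]$ with $\phi_k \equiv 1$ on $F_k$ and $\phi_k \equiv 0$ on $M \setminus U_k$, and define
\begin{equation*}
  \tilde{g}_k := (1 - \phi_k) g^0 + \phi_k g^1.
\end{equation*}
Since $\U$ is convex (part of Definition \ref{dfn:2}), each $\tilde{g}_k$ lies in $\U$. By construction $\tilde{g}_k = g^0$ on $M \setminus U_k$, so the set $E_k := \carr(\tilde{g}_k - g^0)$ is contained in $U_k$.

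\textbf{Applying Proposition \ref{prop:18} and controlling the volumes.} From Proposition \ref{prop:18} we obtain
\begin{equation*}
  d(g^0, \tilde{g}_k) \leq C(n) \left( \sqrt{\Vol(U_k, g^0)} + \sqrt{\Vol(U_k, \tilde{g}_k)} \right).
\end{equation*}
The first volume tends to $\Vol(E, g^0)$ as $k \to \infty$ by $\sigma$-additivity (since $U_k \setminus E \subseteq U_k \setminus F_k$ has $g$-volume, hence $g^0$-volume, tending to zero). For the second volume, split $U_k = F_k \sqcup (U_k \setminus F_k)$: on $F_k$ we have $\tilde{g}_k = g^1$, so $\Vol(F_k, \tilde{g}_k) = \Vol(F_k, g^1) \to \Vol(E, g^1)$, and on $U_k \setminus F_k$ the coefficients of $\tilde{g}_k$ are uniformly bounded (by amenability of $\U$, via Lemma \ref{lem:47} or the analogous bound built into $\U$), so $\Vol(U_k \setminus F_k, \tilde{g}_k) \to 0$.

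\textbf{Identifying the limit and passing to $d_\U$.} The key step is to verify that $\tilde{g}_k$ converges to $g_\infty$ in a way that lets us invoke Definition \ref{dfn:14}. On $F_k \subseteq E$ we have $\tilde{g}_k - g_\infty = g^1 - g^1 = 0$; on $M \setminus U_k \subseteq M \setminus E$ we have $\tilde{g}_k - g_\infty = g^0 - g^0 = 0$; and on $U_k \setminus F_k$ the difference $\tilde{g}_k - g_\infty$ is pointwise bounded independently of $k$ (thanks to amenability), while the $g$-volume of this region vanishes. Hence $\tilde{g}_k \to g_\infty$ in $L^2$. By Lemma \ref{lem:43}, a subsequence $\{\tilde{g}_{k_l}\}$ $\omega$-converges to $g_\infty$, so $g_\infty \in \U^0$ and
\begin{equation*}
  d_\U(g^0, g_\infty) = \lim_{l \to \infty} d(g^0, \tilde{g}_{k_l}) \leq C(n)\left( \sqrt{\Vol(E, g^0)} + \sqrt{\Vol(E, g^1)} \right),
\end{equation*}
which is the claim.

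\textbf{Main obstacle.} The subtle point is the estimate $\Vol(U_k, \tilde{g}_k) \to \Vol(E, g^1)$: both the integration domain and the integrand change with $k$, so one cannot cite Theorem \ref{thm:19} or dominated convergence naively. The amenability of $\U$ is essential here, since it provides a uniform $C^0$-bound on the $\tilde{g}_k$, ensuring that $\Vol(U_k \setminus F_k, \tilde{g}_k) \leq K \cdot \Vol(U_k \setminus F_k, g) \to 0$ and also underwriting the $L^2$-convergence needed to invoke Lemma \ref{lem:43}.
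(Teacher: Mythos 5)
Your argument is essentially the paper's own proof: the same choice of $F_k \subseteq E \subseteq U_k$ with $\Vol(U_k \setminus F_k, g) \to 0$, the same smooth interpolations $(1-\phi_k)g^0 + \phi_k g^1$, the same application of Proposition \ref{prop:18} with $\carr(\tilde{g}_k - g^0) \subseteq U_k$, the same splitting $\Vol(U_k, \tilde{g}_k) = \Vol(F_k, g^1) + \Vol(U_k \setminus F_k, \tilde{g}_k)$ with the uniform volume-form bound killing the second term, and the same passage to $d_\U$ via the $L^2$-limit.

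One small repair is needed: convexity of $\U$ does \emph{not} imply $\tilde{g}_k \in \U$, because the weight $\phi_k(x)$ varies with $x$ --- convexity only covers combinations with constant coefficients (for instance, $\U$ could be the segment joining $g^0$ and $g^1$, which is amenable but contains none of the $\tilde{g}_k$). What is true, and what the paper's proof explicitly invokes, is that $\tilde{g}_k(x)$ is at each point a convex combination of $g^0(x)$ and $g^1(x)$, so by the pointwise convexity of the bounds in Definition \ref{dfn:2} (cf.~Remark \ref{rmk:1}) the $\tilde{g}_k$ satisfy the same constants $C, \delta$, and one may enlarge $\U$ to an amenable subset containing them all; this is harmless for the estimate and is exactly what licenses the Lemma \ref{lem:49}-type bound $(\mu_{\tilde{g}_k}/\mu_g) \leq K$ and the use of Definition \ref{dfn:14} (or Lemma \ref{lem:43}) to identify $d_\U(g^0, g_\infty)$ with $\lim_k d(g^0, \tilde{g}_k)$. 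With that substitution your proof matches the paper's.
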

\begin{proof}
  For each $k \in \N$, choose closed subsets $F_k$ and open subsets
  $U_k$ such that $F_k \subseteq E \subseteq U_k$ and $\Vol(U_k, g) -
  \Vol(F_k, g) \leq 1/k$.  Furthermore, choose functions $f_k \in
  C^\infty(M)$ satisfying
  \begin{enumerate}
  \item $0 \leq f_k(x) \leq 1$ for all $x \in M$,
  \item $f_k(x) = 1$ for $x \in F_k$ and
  \item $f_k(x) = 0$ for $x \not\in U_k$.
  \end{enumerate}
  Then it is not hard to see that the sequence defined by
  \begin{equation*}
    g_k := (1 - f_k) g^0 + f_k g^1
  \end{equation*}
  $L^2$-converges to $\chi(M \setminus E) g^0 + \chi(E)
  g^1$, so in particular
  \begin{equation}\label{eq:87}
    d_\U(g^0, \chi(M \setminus E) g^0 + \chi(E) g^1) =
    \lim_{k \rightarrow \infty} d(g^0, g_k).
  \end{equation}
  Furthermore, since $g^0$ and all $g_k$ are smooth, Proposition
  \ref{prop:18} gives
  \begin{equation}\label{eq:74}
    d(g^0, g_k) \leq C(n) \left( \sqrt{\Vol(U_k, g^0)} +
      \sqrt{\Vol(U_k, g_k)} \right).
  \end{equation}
  By our assumptions on the sets $U_k$, it is clear that $\Vol(U_k,
  g^0) \rightarrow \Vol(E, g^0)$.  So if we can show that
  $\Vol(U_k, g_k) \rightarrow \Vol(E, g^1)$, then \eqref{eq:87}
  and \eqref{eq:74} combine to give the desired result.

  Now, because $g_k = g^1$ on $F_k$, we have
  \begin{equation*}
    \Vol(U_k, g_k) = \integral{F_k}{}{}{\mu_{g^1}} +
    \integral{U_k \setminus F_k}{}{}{\mu_{g_k}}.
  \end{equation*}
  The first term converges to $\Vol(E, g^1)$ for $k \rightarrow
  \infty$ by the definition of $F_k$.  We claim that the second term
  converges to zero.  Note that since the bounds of Definition
  \ref{dfn:2} are pointwise convex, we can enlarge $\U$ to an amenable
  subset containing $g_k$ for each $k \in \N$.  (By the definition,
  each $g_k$ is, at each point $x \in M$, a sum $(1-s) g^0(x) + s
  g^1(x)$ with $0 \leq s \leq 1$.)  Therefore, by Lemma \ref{lem:49},
  there exists a constant $K$ such that
  \begin{equation*}
    \left( \frac{\mu_{g_k}}{\mu_g} \right) \leq K.
  \end{equation*}
  But using this, our claim is clear from the assumptions on $U_k$ and
  $F_k$.
\end{proof}

\begin{thm}\label{thm:15}
  Let $\U$ be any amenable subset with $L^2$-completion $\U^0$.
  Suppose that $g_0, g_1 \in \U^0$, and let $E := \carr (g_1 - g_0) =
  \{ x \in M \mid g_0(x) \neq g_1(x) \}$.  Then there exists a
  constant $C(n)$ depending only on $n = \dim M$ such that
  \begin{equation*}
    d_\U (g_0, g_1) \leq C(n) \left( \sqrt{\Vol(E, g_0)} +
      \sqrt{\Vol(E,g_1)} \right).
  \end{equation*}
  In particular, we have
  \begin{equation*}
    \diam_\U \left( \{ \tilde{g} \in \U^0 \mid \Vol(M, \tilde{g}) \leq
      \delta \} \right) \leq 2 C(n) \sqrt{\delta}.
  \end{equation*}
\end{thm}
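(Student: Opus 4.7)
The plan is to reduce to Lemma 51 (which already handles the case of a smooth reference metric) by approximation, and then pass to the limit inside an amenable subset large enough to contain all approximating sequences.

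First I would choose smooth sequences $\{g^0_k\}, \{g^1_k\} \subset \U$ that $L^2$-converge to $g_0, g_1$ respectively; by Lemma \ref{lem:43}, after passing to subsequences I may additionally arrange that $g^0_k \overarrow{\omega} g_0$ and $g^1_k \overarrow{\omega} g_1$. Since both sequences live in $\U$, I can find an amenable subset $\U' \supseteq \U$ that contains both sequences and all their ``straight-line'' combinations (pointwise convex combinations stay amenable by Remark \ref{rmk:1}\eqref{item:1}); the associated distance $d_{\U'}$ on $(\U')^0$ agrees with $d_\U$ on $\U^0 \times \U^0$, so the bound I eventually prove in $(\U')^0$ is the desired one.

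Next I would introduce the patched tensors
\begin{equation*}
  \tilde{g}_k := \chi(M \setminus E)\, g^0_k + \chi(E)\, g^1_k \in (\U')^0.
\end{equation*}
The crucial observation is that because $g_0 = g_1$ on $M \setminus E$, we have $\tilde{g}_k \overarrow{L^2} \chi(M \setminus E) g_0 + \chi(E) g_1 = g_1$. Lemma \ref{lem:51} applied inside $\U'$ then yields
\begin{equation*}
  d_{\U'}(g^0_k, \tilde{g}_k) \leq C(n)\left( \sqrt{\Vol(E, g^0_k)} + \sqrt{\Vol(E, g^1_k)} \right).
\end{equation*}
Combining with the triangle inequality gives
\begin{equation*}
  d_{\U'}(g_0, g_1) \leq d_{\U'}(g_0, g^0_k) + d_{\U'}(g^0_k, \tilde{g}_k) + d_{\U'}(\tilde{g}_k, g_1).
\end{equation*}
The outer two terms tend to zero because $g^0_k \overarrow{L^2} g_0$ and $\tilde{g}_k \overarrow{L^2} g_1$ inside the amenable subset $\U'$ (Proposition \ref{prop:26}).

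For the middle term I would use Theorem \ref{thm:19} (or, equivalently, dominated convergence using the uniform bound on $(\mu_{g^i_k} / \mu_g)$ provided by Lemma \ref{lem:49}, together with a.e.~convergence along a further subsequence) to conclude $\Vol(E, g^i_k) \to \Vol(E, g_i)$ as $k \to \infty$. Letting $k \to \infty$ in the inequality thus produces the claimed estimate, and the ``in particular'' diameter statement follows at once from $E \subseteq M$ and monotonicity of volume. The main obstacle I anticipate is the slight bookkeeping needed to ensure that the patched tensors $\tilde{g}_k$ indeed lie in an amenable set's completion and that $d_\U$ can be replaced by $d_{\U'}$ without loss—but Remark \ref{rmk:1}\eqref{item:1}, Lemma \ref{lem:43}, and the definition of $d_\U$ via $\omega$-limits handle this cleanly.
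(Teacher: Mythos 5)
Your proposal is correct and follows essentially the same route as the paper: approximate $g_0, g_1$ by sequences in $\U$ that $L^2$- and $\omega$-converge (Lemma \ref{lem:43}), patch them via $\chi(M \setminus E)\, g^0_k + \chi(E)\, g^1_k$, bound that term by Lemma \ref{lem:51}, pass to the limit with Theorem \ref{thm:19}, and kill the remaining terms by $L^2$-convergence and the triangle inequality for $d_\U$ (Lemma \ref{lem:42}). The only cosmetic difference is your enlargement to $\U'$, which the paper avoids since convexity of $\U$ already places the patched tensors in $\U^0$ (as used in the proof of Lemma \ref{lem:51}).
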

\begin{proof}
  Using Lemma \ref{lem:43}, choose any two sequences $\{g^0_k\}$ and
  $\{g^1_k\}$ in $\U$ that both $L^2$- and $\omega$-converge to $g_0$
  and $g_1$, respectively.  Then by the triangle inequality and Lemma
  \ref{lem:42}\eqref{item:16}, for each $k \in \N$,
  \begin{equation}\label{eq:68}
    d_\U(g_0, g_1) \leq d_\U(g_0, g^0_k) + d(g^0_k, g^1_k) + d_\U(g^1_k, g_1).
  \end{equation}
  By Theorem \ref{thm:5}, the first and last terms above approach zero
  as $k \rightarrow \infty$.  Furthermore, we claim that the middle
  term satisfies
  \begin{equation*}
    \lim_{k \rightarrow \infty} d(g^0_k, g^1_k) \leq C(n) \left(
      \sqrt{\Vol(E, g_0)} + \sqrt{\Vol(E, g_1)} \right),
  \end{equation*}
  which would complete the proof.

  By the triangle inequality (\ref{item:3}) of Lemma \ref{lem:42}, we
  have
  \begin{equation}\label{eq:136}
    d(g^0_k, g^1_k) \leq d_\U(g^0_k, \chi(M \setminus E) g^0_k +
    \chi(E) g^1_k) + d_\U(\chi(M \setminus E) g^0_k +
    \chi(E) g^1_k, g^1_k).
  \end{equation}
  By Lemma \ref{lem:51}, the first term of the above satisfies
  \begin{equation*}
    d_\U(g^0_k, \chi(M \setminus E) g^0_k + \chi(E) g^1_k) \leq C(n)
    \left( \sqrt{\Vol(E, g^0_k)} + \sqrt{\Vol(E, g^1_k)} \right).
  \end{equation*}
  Applying Theorem \ref{thm:19} allows us to conclude
  \begin{equation*}
    \lim_{k \rightarrow \infty} d_\U(g^0_k, \chi(M \setminus E) g^0_k
    + \chi(E) g^1_k) \leq C(n) \left( \sqrt{\Vol(E, g_0)} +
      \sqrt{\Vol(E, g_1)} \right).
  \end{equation*}
  Therefore, if we can show that the second term of \eqref{eq:136}
  converges to zero as $k \rightarrow \infty$, then we will have the
  desired result.  But $\{g^0_k\}$ $L^2$-converges to $g_0$ and
  $\{g^1_k\}$ $L^2$-converges to $g_1$.  Additionally, $\chi(M
  \setminus E) g_0 = \chi(M \setminus E) g_1$.  Therefore,
  \begin{equation*}
    \lim_{k \rightarrow \infty} \chi(M \setminus E) g^0_k = \chi(M
    \setminus E) g_0 = \chi(M \setminus E) g_1 = \lim_{k
      \rightarrow \infty} \chi(M \setminus E) g^1_k,
  \end{equation*}
  where the limits are taken in the $L^2$ topology.  This implies
  that, again in the $L^2$ topology,
  \begin{equation*}
    \lim_{k \rightarrow \infty} \left( \chi(M \setminus E) g^0_k +
      \chi(E) g^1_k \right) = \lim_{k \rightarrow \infty} g^1_k.
  \end{equation*}
  By Definition \ref{dfn:14}, then,
  \begin{equation*}
    \lim_{k\ \rightarrow \infty} d_\U(\chi(M \setminus E) g^0_k +
    \chi(E) g^1_k, g^1_k) = 0, 
  \end{equation*}
  which is what was to be shown.
\end{proof}

Next, we need another technical result that will help us in extending
the second uniqueness result from amenable subsets to all of $\M$.

\begin{prop}\label{prop:19}
  Say $g_0 \in \M$ and $h \in \s$, and let $E \subseteq M$ be any open
  set.  Define an $L^2$ tensor $g_1 \in \s^0$ by $g_1 := g_0 + h^0$,
  where $h^0 := \chi(E) h$.  Assume that we can find an amenable
  subset $\U$ such that $g_1 \in \U^0$.  Finally, define a path $g_t$
  of $L^2$ metrics by $g_t := g_0 + t h^0$, $t \in [0,1]$.

  Then without loss of generality (by enlarging $\U$ if necessary),
  $g_t \in \U^0$ for all $t$, so in particular $d_\U(g_0, g_1)$ is
  well-defined.  Furthermore,
  \begin{equation}\label{eq:47}
    d_\U(g_0, g_1) \leq L(g_t) := \integral{0}{1}{\| h^0 \|_{g_t}}{d t},
  \end{equation}
  i.e., the length of $g_t$, when measured in the naive way, bounds
  $d_\U(g_0, g_1)$ from above.

  Lastly, suppose that on $E$, the metrics $g_t$, $t \in [0, 1]$, all
  satisfy the bounds
  \begin{equation*}
    | (g_t)_{ij}(x) | \leq C \quad \textnormal{and} \quad
    \lambda^{G_t}_{\textnormal{min}}(x) \geq \delta
  \end{equation*}
  for some $C, \delta > 0$, all $1 \leq i,j \leq n$ and a.e.~$x \in
  E$.  (That this is satisfied for some $C$ and $\delta$ is guaranteed
  by $g_t \in \U^0$.)  Then there is a constant $K = K(C, \delta)$
  such that
  \begin{equation*}
    d_\U(g_0, g_1) \leq K \| h^0 \|_g.
  \end{equation*}
\end{prop}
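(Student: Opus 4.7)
The plan is to reduce the proposition to the already-proved length inequality for smooth paths by approximating $g_1 \in \U^0$ in $L^2$ by smooth metrics $g_1^k \in \U$, transporting the straight-line path $g_t = g_0 + t h^0$ to the smooth straight-line paths $g_t^k := g_0 + t(g_1^k - g_0)$, and then passing to the limit using equivalence of norms on amenable subsets (Lemma \ref{lem:18}) together with dominated convergence.

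First I would handle the assertion that $g_t \in \U^0$ for all $t$. Since the pointwise defining bounds of Definition \ref{dfn:2} are convex in $\tilde{g}$ (Remark \ref{rmk:1}(\ref{item:1})), I may enlarge $\U$ so that it contains $g_0$. Then, picking any sequence $g_1^k \in \U$ with $g_1^k \xrightarrow{L^2} g_1$, the smooth metrics $g_t^k := (1-t) g_0 + t g_1^k$ lie in $\U$ by convexity of $\U$, and $g_t^k \xrightarrow{L^2} g_t$ uniformly in $t$. Hence $g_t \in \U^0$, and $d_\U(g_0, g_t)$ is well-defined.

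Next I would establish \eqref{eq:47}. For each smooth path $g_t^k$ we have the standard estimate
\begin{equation*}
  d(g_0, g_1^k) \leq L(g_t^k) = \integral{0}{1}{\| g_1^k - g_0 \|_{g_t^k}}{d t}.
\end{equation*}
By Definition \ref{dfn:14}, the left-hand side converges to $d_\U(g_0, g_1)$. For the right-hand side, Lemma \ref{lem:18} gives a constant $K'$, depending only on the amenable set $\U$, such that $\| g_1^k - g_0 \|_{g_t^k} \leq K' \| g_1^k - g_0 \|_g$, so the integrands are uniformly bounded by the $L^2$-convergent sequence $K' \| g_1^k - g_0 \|_g \to K' \| h^0 \|_g$. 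Passing to a subsequence (Remark \ref{rmk:9}(2)) we may assume $g_t^k \to g_t$ and $g_1^k - g_0 \to h^0$ pointwise a.e., which gives $\| g_1^k - g_0 \|_{g_t^k} \to \| h^0 \|_{g_t}$ by the pointwise definition of the norm together with dominated convergence inside the spatial integral. The Lebesgue dominated convergence theorem, applied in the $t$-variable with dominating function $K' \| g_1^k - g_0 \|_g$, then gives $L(g_t^k) \to L(g_t)$, proving \eqref{eq:47}. The main obstacle here is justifying this double use of dominated convergence; the uniform $L^2$-bounds and the amenable-set equivalence of norms are exactly what make it go through.

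Finally, for the last claim, I would use the pointwise bounds on $E$ to bound $L(g_t)$ directly. Since $h^0 = 0$ off $E$, we have $\| h^0 \|_{g_t}^2 = \integral{E}{}{\tr_{g_t}(h^2)}{\mu_{g_t}}$. On $E$, the bounds $|(g_t)_{ij}| \leq C$ and $\lambda_{\min}^{G_t} \geq \delta$ imply, by the same pointwise argument used in the proof of Lemma \ref{lem:18}, a pointwise inequality $\tr_{g_t}(h^2) \cdot ( \mu_{g_t}/\mu_g ) \leq K(C,\delta)^2 \, \tr_g(h^2)$. Integrating gives $\| h^0 \|_{g_t} \leq K(C,\delta) \| h^0 \|_g$ uniformly in $t$, and then \eqref{eq:47} yields $d_\U(g_0, g_1) \leq K(C,\delta) \| h^0 \|_g$, as desired.
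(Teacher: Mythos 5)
Your argument is correct, but it runs along a different track than the paper's. The paper never approximates the endpoint $g_1$ by an arbitrary sequence from $\U$: instead it approximates the cutoff $\chi(E)$ itself, choosing closed $F_k \subseteq E$ and open $U_k \supseteq E$ with small volume difference and smooth Urysohn functions $f_k$, and works with the explicit smooth paths $g_0 + t f_k h$. The endpoint error $d_\U(g_0 + f_k h, g_1)$ is then killed by the uniform continuity statement of Theorem \ref{thm:5}, and the length error is controlled by splitting the spatial integral into its $F_k$ and $U_k \setminus F_k$ pieces, with Lemma \ref{lem:18} handling the thin annulus; an $\epsilon$-argument closes the proof without any interchange of limits and integrals. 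You instead take a generic $L^2$-approximating sequence $g_1^k \in \U$ (whose existence is exactly the meaning of $g_1 \in \U^0$), compare with the straight lines $g_0 \to g_1^k$, and pass to the limit in their lengths via Definition \ref{dfn:14}, norm equivalence on an enlarged amenable subset, a.e.\ convergence and dominated convergence. What your route buys is that the cutoff construction and the $F_k/U_k$ bookkeeping disappear (and openness of $E$ plays no role); what the paper's route buys is that all limit interchanges are replaced by a single $\epsilon$--$\delta$ appeal to Theorem \ref{thm:5}. Your treatment of the third statement is the same as the paper's.

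One small point to tighten: in the spatial integral your dominating function $K'^2 \tr_g((g_1^k - g_0)^2)$ depends on $k$, so the plain dominated convergence theorem as quoted in the paper does not literally apply. Either invoke the generalized dominated convergence theorem (dominating sequence converging in $L^1(M,g)$, which holds here since $g_1^k - g_0 \rightarrow h^0$ in $\| \cdot \|_g$), or, more simply, first note that $\bigl| \, \| g_1^k - g_0 \|_{g_t^k} - \| h^0 \|_{g_t^k} \bigr| \leq \| g_1^k - g_1 \|_{g_t^k} \leq K' \| g_1^k - g_1 \|_g \rightarrow 0$ uniformly in $t$, and then apply ordinary dominated convergence to $\| h^0 \|_{g_t^k} \rightarrow \| h^0 \|_{g_t}$ with the fixed dominating function $K'^2 \tr_g((h^0)^2)$. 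With that adjustment the proof is complete.
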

\begin{proof}
  The existence of the enlarged amenable subset $\U$ is clear from the
  construction of $g_t$.  So we turn to the proof of \eqref{eq:47}.
  
  Let any $\epsilon > 0$ be given.  By Theorem \ref{thm:5}, we can
  choose $\delta > 0$ such that for any $\tilde{g}_0, \tilde{g}_1 \in
  \U$, $\| \tilde{g}_1 - \tilde{g}_0 \|_g < \delta$ implies
  $d(\tilde{g}_0, \tilde{g}_1) < \epsilon$.

  Next, for each $k \in \N$, we choose closed sets $F_k \subseteq E$
  and open sets $U_k \supseteq E$ with the property that $\Vol(U_k, g)
  - \Vol(F_k, g) < 1/k$.  Given this, let's even restrict ourselves to
  $k$ large enough that
  \begin{equation}\label{eq:138}
    \| \chi(U_k \setminus F_k) h \|_g < \min \{\delta, \epsilon\}.
  \end{equation}

  We then choose $f_k \in C^\infty(M)$ satisfying
  \begin{enumerate}
  \item $f_k(x) = 1$ if $x \in F_k$,
  \item $f_k(x) = 0$ if $x \not\in U_k$ and
  \item $0 \leq f_k(x) \leq 1$ for all $x \in M$,
  \end{enumerate}

  The first consequence of our assumptions above is
  \begin{equation}\label{eq:77}
    \| g_1 - (g_0 + f_k h) \|_g \leq \| \chi(U_k \setminus F_k) h) \|_g < \delta.
  \end{equation}
  The second inequality is \eqref{eq:138}, and the first inequality
  holds for two reasons.  First, on both $F_k$ and $M \setminus U_k$,
  $g_0 + f_k h = g_0 + \chi(F_k) h = g_1$.  Second, on $U_k \setminus
  F_k$, $g_1 - (g_0 + f_k h) = (1 - f_k) h$, and by our third
  assumption on $f_k$, $0 \leq 1 - f_k \leq 1$.  Now, inequality
  \eqref{eq:77} allows us to conclude, by our assumption on $\delta$,
  that
  \begin{equation}\label{eq:78}
    d_\U(g_0 + f_k h, g_1) < \epsilon.
  \end{equation}
  Since by the triangle inequality
  \begin{equation*}
    d_\U(g_0, g_1) \leq d_\U(g_0, g_0 + f_k h) + d_\U(g_0 + f_k h, g_1)
    < d_\U(g_0, g_0 + f_k h) + \epsilon,
  \end{equation*}
  we must now get some estimates on $d_\U(g_0, g_0 + f_k h)$ to prove
  \eqref{eq:47}.
  
  To do this, define a path $g_t^k$ in $\M$, for $t \in [0,1]$, by
  $g_t^k := g_0 + t f_k h$.  Then we have, as is easy to see,
  \begin{equation}\label{eq:79}
    d(g_0, g_0 + f_k h) \leq L(g_t^k) = \integral{0}{1}{\|
      f_k h \|_{g_t^k}}{d t}
  \end{equation}
  This is almost what we want, but we first have to replace $f_k h$
  with $h^0 = \chi(E) h$.  Also note that the $L^2$ norm in
  \eqref{eq:79} is that of $g_t^k$.  To put this in a form useful for
  proving \eqref{eq:47}, we therefore also have to to replace $g^k_t$
  with $g_t$.

  Using the facts that on $F_k$, $f_k h = h^0$ and
  $g^k_t = g_t$, as well as that $f_k = 0$ on $M \setminus
  U_k$, we can write
  \begin{equation}\label{eq:80}
    \begin{aligned}
      \| f_k h \|_{g_t^k}^2 &= \integral{M}{}{\tr_{g_t^k} \left( (f_k
          h)^2 \right)}{\mu_{g_t^k}} \\
      &= \integral{F_k}{}{\tr_{g_t}
        \left( ( h^0 )^2 \right)}{\mu_{g_t}} + \integral{U_k
        \setminus F_k}{}{\tr_{g_t^k} \left( (f_k h)^2 \right)}{\mu_{g_t^k}}.
    \end{aligned}
  \end{equation}

  For the first term above, we clearly have
  \begin{equation}\label{eq:81}
    \integral{F_k}{}{\tr_{g_t} \left( ( h^0 )^2 \right)}{\mu_{g_t}} \leq \| h^0 \|_{g_t}^2.
  \end{equation}

  As for the second term, it can be rewritten and estimated by
  \begin{equation*}
    \integral{U_k \setminus F_k}{}{\tr_{g_t^k} \left(
        (f_k h)^2 \right)}{\mu_{g_t^k}} = \| \chi(U_k
    \setminus F_k) f_k h \|_{g_t^k} \leq \| \chi(U_k
    \setminus F_k) h \|_{g_t^k},
  \end{equation*}
  where the inequality follows from our third assumption on $f_k$
  above.  Now, recall that $g_t$ is contained within an amenable
  subset $\U$.  It is possible to enlarge $\U$, without changing the
  property of being amenable, so that $\U$ contains $g_t^k$ for all $t
  \in [0,1]$ and all $k \in \N$.  (That the enlarged subset satisfies
  bounds as in Definition \ref{dfn:2} is clear from the corresponding
  bounds on $g_0$ and $g_t$, and the fact that they are convex,
  pointwise conditions---cf.~part (\ref{item:1}) of Remark
  \ref{rmk:1}.)  Therefore, by Lemma \ref{lem:18}, there exists a
  constant $K' = K'(g_0, g_1)$---i.e., $K'$ does not depend on
  $k$---such that
  \begin{equation*}
    \| \chi(U_k \setminus F_k) h \|_{g_t^k} \leq K' \| \chi(U_k
    \setminus F_k) h \|_g.
  \end{equation*}
  But by \eqref{eq:138}, we have that $\| \chi(U_k \setminus F_k) h
  \|_g < \epsilon$.  Combining this with \eqref{eq:80} and
  \eqref{eq:81}, we therefore get
  \begin{equation*}
    \| f_k h \|_{g_t^k}^2 \leq \| h^0 \|_{g_t} + K' \epsilon.
  \end{equation*}

  The above inequality, substituted into \eqref{eq:79}, gives
  \begin{equation*}
    d(g_0, g_1^k) \leq \integral{0}{1}{\left( \| h^0 \|_{g_t} +
        K' \epsilon \right)}{d t} =
    L(g_t) + K' \epsilon.
  \end{equation*}
  The final step in the proof is then to estimate, using the above
  inequality and \eqref{eq:78}, that
  \begin{equation*}
    d_\U(g_0, g_1) \leq d(g_0, g_1^k) + d_\U(g_1^k, g_1) < L(g_t)
    + (1 + K') \epsilon.
  \end{equation*}
  Since $\epsilon$ was arbitrary and $K'$ is independent of $k$, we
  are finished with the proof of \eqref{eq:47}.

  Finally, the third statement follows from the following estimate,
  which is proved in exactly the same way as Lemma \ref{lem:18}:
  \begin{equation*}
    \| h^0 \|_{g_t} = \left( \integral{E}{}{\tr_{g_t}\left( h^2
        \right)}{\mu_{g_t}} \right)^{1/2} \leq K(C, \delta) \| h^0 \|_g.
  \end{equation*}
\end{proof}

With Theorem \ref{thm:15} and Proposition \ref{prop:19} as part of our
toolbox, we are now ready to take on the proof of the second
uniqueness result in its full generality.

So let two $d$-Cauchy sequences $\{g^0_k\}$ and $\{g^1_k\}$, as well
as some $g_\infty \in \M_f$, be given.  Suppose further that
$\{g^0_k\}$ and $\{g^1_k\}$ both $\omega$-converge to $g_\infty$ for $k
\rightarrow \infty$.  We will prove that
\begin{equation}\label{eq:70}
  \lim_{k \rightarrow \infty} d(g^0_k, g^1_k) = 0.
\end{equation}
The heuristic idea of our proof is very simple, which is belied by the
rather technical nature of the rigorous proof.  The point, though, is
essentially that for all $l \in \N$, we break $M$ up into two sets,
$E_l$ and $M \setminus E_l$.  The set $E_l$ has positive volume with
respect to $g_\infty$, but $\{g^0_k\}$ and $\{g^1_k\}$ $L^2$-converge
to $g_\infty$ on $E_l$, so the contribution of $E_l$ to $d(g^0_k,
g^1_k)$ vanishes in the limit $k \rightarrow \infty$.  The set $M
\setminus E_l$ contains the deflated sets of $\{g^0_k\}$ and
$\{g^1_k\}$, so the sequences need not converge on $M \setminus E_l$.
However, we choose things such that $\Vol(M \setminus E_l, g_\infty)$
vanishes in the limit $l \rightarrow \infty$, so that Proposition
\ref{prop:18} implies that the contribution of $M \setminus E_l$ to
$d(g^0_k, g^1_k)$ vanishes after taking the limits $k \rightarrow
\infty$ and $l \rightarrow \infty$ in succession.

The rigorous proof is achieved in three basic steps, which we will
describe after some brief preparation.

For each $l \in \N$, let
\begin{equation}\label{eq:85}
  E_l :=
  \left\{
    x \in M \midmid \det g^i_k(x) > \frac{1}{l},\ | (g^i_k)_{rs}(x) |
    < l\ \forall i=0,1;\ k \in \N;\ 1 \leq r,s \leq n
  \right\},
\end{equation}
where these local notions are of course defined with respect to our
fixed amenable atlas (cf.~Convention \ref{cvt:5}), and the
inequalities in the definition should hold in each chart containing
the point $x$ in question.  Thus, $E_l$ is a set over which the
sequences $g_i^k$ neither deflate nor become unbounded.  We first note
that for each $k \in \N$, there exists an amenable subset $\U_k$ such
that the metrics
\begin{equation*}
  g^0_k,\ g^1_k\ \textnormal{and}\  g^0_k + \chi(E_l)(g^1_k - g^0_k)
\end{equation*}
are contained in $\U_k^0$.  This is possible due to smoothness of
$g^0_k$ and $g^1_k$, as well as pointwise convexity of the bounds of
Definition \ref{dfn:2}.

The steps in our proof are
the following.  We will show first that
\begin{equation}\label{eq:82}
 \lim_{k \rightarrow \infty} d_{\U_k}(g^0_k, g^0_k + \chi(E_l) (g^1_k -
  g^0_k)) = 0
\end{equation}
for all fixed $l \in \N$.  Second,
\begin{equation}\label{eq:83}
\lim_{k \rightarrow \infty} d_{\U_k}(g^0_k + \chi(E_l) (g^1_k - g^0_k),
g^1_k) \leq 2 C(n)
  \sqrt{\Vol(M \setminus E_l, g_\infty)}  
\end{equation}
for all fixed $k \in \N$ (where $C(n)$ is the constant from Theorem
\ref{thm:15}).  And third,
\begin{equation}\label{eq:84}
\lim_{l \rightarrow \infty} \Vol(E_l, g_\infty) = \Vol(M, g_\infty).
\end{equation}
Since the triangle inequality of Lemma \ref{lem:42}(2) implies that
\begin{equation*}
  d(g^0_k, g^1_k) \leq d_{\U_k}(g^0_k, g^0_k + \chi(E_l) (g^1_k -
  g^0_k)) + d_{\U_k}(g^0_k + \chi(E_l) (g^1_k -
  g^0_k), g^1_k)
\end{equation*}
for all $l \in \N$, taking the limits $k \rightarrow \infty$ followed
by $l \rightarrow \infty$ of both sides then gives \eqref{eq:70}.

We now prove each of \eqref{eq:82}, \eqref{eq:83} and \eqref{eq:84} in
its own lemma.

\begin{lem}\label{lem:38}
  \begin{equation*}
 \lim_{k \rightarrow \infty} d_{\U_k}(g^0_k, g^0_k + \chi(E_l) (g^1_k -
  g^0_k)) = 0
\end{equation*}
\end{lem}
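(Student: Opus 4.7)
The plan is to bound $d_{\U_k}(g^0_k, g^0_k + \chi(E_l)(g^1_k - g^0_k))$ by a multiple of the $L^2$-norm $\|\chi(E_l)(g^1_k - g^0_k)\|_g$, with a constant uniform in $k$, and then show this $L^2$-norm vanishes in the limit. The two key ingredients, which conveniently match the hypotheses available, are the third statement of Proposition \ref{prop:19} and the equicontinuity criterion in Remark \ref{rmk:9}.

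First, I would apply Proposition \ref{prop:19} with $g_0 = g^0_k$, $h = g^1_k - g^0_k$, and $E = E_l$ (strictly, after replacing $E_l$ with a slightly enlarged open set carrying the same uniform bounds; alternatively one may regularize $\chi(E_l)$ by smooth cutoffs as in the proof of Lemma \ref{lem:51} and pass to the limit). The crucial point is that on $E_l$, the metrics along the straight-line path $g_t = g^0_k + t\chi(E_l)(g^1_k - g^0_k)$ satisfy the pointwise bounds
\begin{equation*}
  |(g_t)_{ij}(x)| \leq C, \qquad \lambda^{G_t}_{\textnormal{min}}(x) \geq \delta
\end{equation*}
for constants $C, \delta > 0$ depending \emph{only on $l$}, not on $k$ or $t$. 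This holds by the very definition \eqref{eq:85} of $E_l$ together with pointwise convexity of the bounds of Definition \ref{dfn:2} (see Remark \ref{rmk:1}\eqref{item:1}). Consequently, the third statement of Proposition \ref{prop:19} yields a constant $K = K(C, \delta)$, independent of $k$, such that
\begin{equation*}
  d_{\U_k}\bigl(g^0_k,\, g^0_k + \chi(E_l)(g^1_k - g^0_k)\bigr) \leq K \bigl\|\chi(E_l)(g^1_k - g^0_k)\bigr\|_g.
\end{equation*}

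Next, I would show $\|\chi(E_l)(g^1_k - g^0_k)\|_g \to 0$. Since $E_l$ is disjoint (by construction) from the deflated sets $X_{\{g^0_k\}}$ and $X_{\{g^1_k\}}$, property \eqref{item:6} of Definition \ref{dfn:13} gives $g^0_k(x), g^1_k(x) \to g_\infty(x)$ for a.e.~$x \in E_l$, so $\chi(E_l)(g^1_k - g^0_k) \to 0$ almost everywhere. Moreover, the uniform coordinate bound $|(g^i_k)_{rs}(x)| < l$ on $E_l$ implies that the family $\{|\chi(E_l)(g^1_k - g^0_k)_{rs}|^2 : k \in \N\}$ is dominated by the constant $4l^2$ in every chart of the amenable atlas. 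As $M$ has finite $g$-volume, this collection is equicontinuous at $\emptyset$ in the sense of Definition \ref{dfn:28}. By Remark \ref{rmk:9}(1) (equivalently, Theorem \ref{thm:1}), a.e.-convergence together with this equicontinuity yields $L^2$-convergence on each chart, and hence globally, of $\chi(E_l)(g^1_k - g^0_k)$ to $0$. Combined with the estimate from the first step, this proves the lemma.

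The one genuine subtlety I anticipate is the passage from the sharp cutoff $\chi(E_l)$, which is not smooth, to the hypotheses of Proposition \ref{prop:19}; but this is a routine regularization of the sort carried out in the proof of Lemma \ref{lem:51}, using an approximating sequence of smooth bump functions supported in a slight open enlargement of $E_l$ on which the same uniform bounds still hold up to a controllable error. Everything else is bookkeeping.
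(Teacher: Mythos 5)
Your proposal is correct and follows essentially the same route as the paper: apply the third statement of Proposition \ref{prop:19} to get $d_{\U_k}(g^0_k, g^0_k + \chi(E_l)(g^1_k - g^0_k)) \leq K_l \|\chi(E_l)(g^1_k - g^0_k)\|_g$ with $K_l$ depending only on $l$ via the bounds in \eqref{eq:85}, then combine a.e.-convergence from property \eqref{item:6} of Definition \ref{dfn:13} with equicontinuity at $\emptyset$ (dominated by $4l^2$) and Remark \ref{rmk:9} to conclude the $L^2$-norm tends to zero. Your extra caution about regularizing the sharp cutoff $\chi(E_l)$ to meet the openness hypothesis of Proposition \ref{prop:19} is a refinement the paper's proof does not spell out, but it does not change the argument.
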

\begin{proof}
  We know that
  \begin{equation*}
    g^0_k, g^0_k + \chi(E_l) (g^1_k - g^0_k) \in \U_k^0,
  \end{equation*}
  where $\U_k$ is an amenable subset.  Therefore, for each fixed $k
  \in \N$, Proposition \ref{prop:19} applies to give
  \begin{equation}\label{eq:86}
    d_{\U_k}(g^0_k, g^0_k + \chi(E_l) (g^1_k - g^0_k)) \leq K_l \| \chi(E_l)
    (g^1_k - g^0_k) \|_g,
  \end{equation}
  where $K_l$ is some constant depending only on $l$.  (That the
  constant only depends on $l$ is the result of the fact that $g^0_k$
  and $g^1_k$ satisfy the bounds given in \eqref{eq:85} on $E_l$,
  which only depend on $l$.)  

  Now, recalling the definition \eqref{eq:85} of $E_l$, we note that
  for all $1 \leq i,j \leq n$ and all $k \in \N$, we have $|
  (g^1_k)_{ij}(x) - (g^0_k)_{ij}(x) |^2 \leq 4 l^2$ for $x \in E_l$,
  and hence the family of (local) functions
  \begin{equation*}
    \{ \chi(E_l) ((g^1_k)_{ij} - (g^0_k)_{ij}) \mid 1 \leq i,j
    \leq n,\ k \in \N \}
  \end{equation*}
  is equicontinuous at $\emptyset$.  Furthermore, since property
  (\ref{item:6}) of Definition \ref{dfn:13} implies that $\chi(E_l)
  g_a^k \rightarrow \chi(E_l) g_\infty$ a.e.~for $a=0,1$, we have that
  $\chi(E_l) (g^1_k - g^0_k) \rightarrow 0$ a.e.  Therefore, Remark
  \ref{rmk:9} implies that
  \begin{equation*}
    \| \chi(E_l) ( g^1_k - g^0_k ) \|_g
    \rightarrow 0
  \end{equation*}
  for $k \rightarrow \infty$.  Together with \eqref{eq:86}, this
  implies the result immediately.
\end{proof}

\begin{lem}\label{lem:40}
  \begin{equation*}
    \lim_{k \rightarrow \infty} d_{\U_k}(g^0_k + \chi(E_l) (g^1_k - g^0_k),
    g^1_k) \leq 2 C(n) \sqrt{\Vol(M \setminus E_l, g_\infty)}
  \end{equation*}
\end{lem}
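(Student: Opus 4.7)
The plan is to apply the volume-based distance estimate, Theorem \ref{thm:15}, to the two $L^2$ metrics under consideration, noting that they differ only on $M \setminus E_l$, and then pass to the limit in $k$ using the volume-continuity result of Theorem \ref{thm:19}.

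First I would observe the key algebraic fact about the tensor $g^0_k + \chi(E_l)(g^1_k - g^0_k)$: on $E_l$ the indicator is $1$, so this tensor equals $g^1_k$, while on $M \setminus E_l$ the indicator vanishes, so it equals $g^0_k$. Consequently, the tensor $g^0_k + \chi(E_l)(g^1_k - g^0_k)$ and $g^1_k$ coincide on $E_l$, so the carrier
\begin{equation*}
  \carr\bigl( g^1_k - (g^0_k + \chi(E_l)(g^1_k - g^0_k)) \bigr) \subseteq M \setminus E_l.
\end{equation*}
Both tensors lie in $\U_k^0$ by construction of the amenable subset $\U_k$, so Theorem \ref{thm:15} applies and yields
\begin{equation*}
  d_{\U_k}\bigl( g^0_k + \chi(E_l)(g^1_k - g^0_k),\, g^1_k \bigr) \leq C(n) \left( \sqrt{\Vol(M \setminus E_l, g^0_k)} + \sqrt{\Vol(M \setminus E_l, g^1_k)} \right),
\end{equation*}
where I have used that the value of the first tensor on $M \setminus E_l$ is exactly $g^0_k$, so the volume of $M \setminus E_l$ with respect to it is $\Vol(M \setminus E_l, g^0_k)$.

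Next I would take the limit $k \to \infty$. The set $M \setminus E_l$ is measurable, because $E_l$ is defined in \eqref{eq:85} via countably many continuous pointwise inequalities on the smooth coefficients of $g^0_k$ and $g^1_k$ (in the fixed amenable atlas), so in the same manner as the argument showing measurability of the deflated set in Theorem \ref{thm:39}, $E_l$ is a Borel set. Since by hypothesis both $\{g^0_k\}$ and $\{g^1_k\}$ $\omega$-converge to (representatives of the same class) $g_\infty$, Theorem \ref{thm:19} applied to the measurable set $M \setminus E_l$ gives
\begin{equation*}
  \Vol(M \setminus E_l, g^i_k) \xrightarrow{k \to \infty} \Vol(M \setminus E_l, g_\infty) \quad \text{for } i = 0,1.
\end{equation*}
Passing to the limit in the estimate from the previous paragraph therefore produces exactly the desired bound $2 C(n) \sqrt{\Vol(M \setminus E_l, g_\infty)}$.

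There is no real obstacle here: the argument reduces entirely to recognizing that the two metrics differ only off $E_l$, and then invoking two already-proved tools (Theorem \ref{thm:15} for the intrinsic-volume bound on $d_{\U_k}$, and Theorem \ref{thm:19} for volume continuity under $\omega$-convergence). The only step that requires any care is the verification that $M \setminus E_l$ is measurable, so that Theorem \ref{thm:19} is applicable, but this is immediate from the defining formula for $E_l$.
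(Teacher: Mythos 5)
Your proposal is correct and follows essentially the same route as the paper: note that the modified tensor agrees with $g^1_k$ on $E_l$ (and with $g^0_k$ off it), apply Theorem \ref{thm:15} to get the bound $C(n)\left( \sqrt{\Vol(M \setminus E_l, g^0_k)} + \sqrt{\Vol(M \setminus E_l, g^1_k)} \right)$, and pass to the limit via Theorem \ref{thm:19}. Your extra remark on the measurability of $M \setminus E_l$ is harmless but not needed beyond what the construction of $E_l$ already gives.
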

\begin{proof}
  First note that $g^1_k = g^0_k + \chi(E_l) (g^1_k - g^0_k)$ on
  $E_l$.  Therefore, by Theorem \ref{thm:15},
  \begin{equation*}
    d_{\U_k}(g^0_k + \chi(E_l) (g^1_k - g^0_k),
    g^1_k) \leq C(n) \left( \sqrt{\Vol(M \setminus E_l, g^0_k)} +
      \sqrt{\Vol(M \setminus E_l, g^1_k)} \right).
  \end{equation*}
  But now the result follows immediately from Theorem \ref{thm:19},
  since $\Vol(M \setminus E_l, g_i^k) \rightarrow \Vol(M \setminus
  E_l, g_\infty)$ for $i = 0,1$.
\end{proof}

\begin{lem}\label{lem:39}
  \begin{equation*}
    \lim_{l \rightarrow \infty} \Vol(E_l, g_\infty) = \Vol(M, g_\infty).
  \end{equation*}
\end{lem}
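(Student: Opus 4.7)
The plan is to exploit the monotonicity of the sets $E_l$ and apply continuity of the measure induced by $g_\infty$ from below. Note first that from the definition \eqref{eq:85}, the sets $E_l$ are measurable (the conditions on $\det g^i_k$ and the coefficients $(g^i_k)_{rs}$ are countable conditions involving continuous functions) and clearly $E_l \subseteq E_{l+1}$. So $\{E_l\}$ is an increasing sequence of measurable sets, and by standard continuity of measure from below,
\begin{equation*}
  \lim_{l \to \infty} \Vol(E_l, g_\infty) = \Vol\left( \bigcup_{l=1}^\infty E_l,\, g_\infty \right).
\end{equation*}
So it suffices to show that $\bigcup_{l=1}^\infty E_l$ equals $M$ up to a $\mu_{g_\infty}$-nullset.

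The key identification I would carry out is
\begin{equation*}
  \bigcup_{l=1}^\infty E_l \;=\; M \setminus \bigl( X_{\{g^0_k\}} \cup X_{\{g^1_k\}} \cup S_{\{g^0_k\}} \cup S_{\{g^1_k\}} \bigr),
\end{equation*}
which is immediate by comparing \eqref{eq:85} with Definition \ref{dfn:25}: $x$ fails to lie in any $E_l$ if and only if either the determinants $\det g^i_k(x)$ can be made arbitrarily small or the coefficients $(g^i_k)_{rs}(x)$ can be made arbitrarily large for $i = 0$ or $i = 1$.

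Thus I must argue that the four sets on the right have $\mu_{g_\infty}$-measure zero. For the deflated sets, property \eqref{item:5} of Definition \ref{dfn:13} gives $X_{\{g^i_k\}} = X_{g_\infty}$ up to a nullset, and by our canonical choice of representative $g_\infty(x) = 0$ on $X_{g_\infty}$, so $\mu_{g_\infty}(X_{\{g^i_k\}}) = 0$ for $i = 0, 1$. For the unbounded sets $S_{\{g^i_k\}}$, here is the one slightly subtle point: I would observe that at any $x \in M \setminus X_{\{g^i_k\}}$ where property \eqref{item:6} of Definition \ref{dfn:13} holds, the sequence $g^i_k(x)$ converges to the finite tensor $g_\infty(x)$, hence is bounded in each coordinate chart, so $x \notin S_{\{g^i_k\}}$. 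Therefore $S_{\{g^i_k\}} \subseteq X_{\{g^i_k\}}$ up to a nullset, and so $\mu_{g_\infty}(S_{\{g^i_k\}}) = 0$ as well.

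Combining these facts gives $\Vol(\bigcup_l E_l, g_\infty) = \Vol(M, g_\infty)$, and the result follows. There is no serious obstacle here; the whole argument is essentially bookkeeping with the definitions of $\omega$-convergence, the deflated/unbounded sets, and monotone convergence of measure, once one has correctly identified $\bigcup_l E_l$ as the complement of the four ``bad'' sets.
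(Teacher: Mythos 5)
Your proof is correct and takes essentially the same route as the paper: the paper likewise reduces the lemma to the fact that $E_l$ increases, up to a $\mu_{g_\infty}$-nullset, to $M \setminus X_{g_\infty}$, and then integrates — using the dominated convergence theorem with the constant function $1$ (integrable since $g_\infty \in \M_f$) where you use continuity of measure from below, a cosmetic difference. Your explicit argument that the unbounded sets $S_{\{g^i_k\}}$ are contained in the deflated sets up to a nullset (via pointwise convergence off $X_{\{g^i_k\}}$) is exactly the observation the paper leaves implicit in its claim that $\chi(E_l) \rightarrow \chi(M \setminus X_{g_\infty})$ a.e.
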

\begin{proof}
  Recall that $X_{g_\infty} \subseteq M$ denotes the deflated set of
  $g_\infty$, i.e., the set where $g_\infty$ is not positive definite.
  This set has volume zero w.r.t.~$g_\infty$, since $\mu_{g_\infty} =
  0$ a.e.~on $X_{g_\infty}$.  Therefore $\Vol(M, g_\infty) = \Vol(M
  \setminus X_{g_\infty}, g_\infty)$.

  We note that $\chi(E_l)$ converges a.e.~to $\chi(M \setminus
  X_{g_\infty})$ and that $\chi(E_l)(x) \leq 1$ for all $x \in M$.
  Since $g_\infty$ has finite volume, the constant function 1 is
  integrable w.r.t.~$\mu_{g_\infty}$, and therefore the Lebesgue
  dominated convergence theorem (Theorem \ref{thm:36}) implies that
  \begin{equation*}
    \lim_{l \rightarrow \infty} \Vol(E_l, g_\infty) = \lim_{l
      \rightarrow \infty} \integral{M}{}{\chi(E_l)}{\mu_{g_\infty}}
    = \integral{M}{}{\chi(M \setminus X_{g_\infty})}{\mu_{g_\infty}}
    = \Vol(M \setminus X_{g_\infty}, g_\infty).
  \end{equation*}
\end{proof}

As already noted, Lemmas \ref{lem:38}, \ref{lem:40} and \ref{lem:39}
combine to give the desired result.  We summarize what we have just
proved in a theorem.

\begin{thm}\label{thm:17}
  Let $[g_\infty] \in \Mfhat$.  Suppose we have two sequences
  $\{g^0_k\}$ and $\{g^1_k\}$ with $g^0_k, g^1_k \overarrow{\omega}
  [g_\infty]$ for $k \rightarrow \infty$.  Then
  \begin{equation*}
    \lim_{k \rightarrow \infty} d(g^0_k, g^1_k) = 0,
  \end{equation*}
  that is, $\{g^0_k\}$ and $\{g^1_k\}$ are equivalent in the precompletion
  $\overline{\M}^{\mathrm{pre}}$ of $\M$.
\end{thm}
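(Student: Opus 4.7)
The plan is to combine the volume-based distance bound of Proposition \ref{prop:18} with the amenable-subset theory of Chapter 3, via a splitting of $M$ into a ``good'' set where the two sequences are uniformly bounded and uniformly inflated, and its ``bad'' complement whose $g_\infty$-volume can be made arbitrarily small. Concretely, following the outline already sketched in the paragraphs preceding the statement, I would introduce for each $l \in \N$ the set
\begin{equation*}
  E_l := \left\{ x \in M \midmid \det g^i_k(x) > 1/l,\ |(g^i_k)_{rs}(x)| < l\ \forall i=0,1,\ k \in \N,\ 1 \leq r,s \leq n \right\},
\end{equation*}
and for each fixed $k$ an amenable subset $\U_k$ containing $g^0_k$, $g^1_k$ and the ``hybrid'' $L^2$-metric $g^0_k + \chi(E_l)(g^1_k - g^0_k)$; such a $\U_k$ exists by pointwise convexity of the defining bounds of an amenable subset. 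The triangle inequality from Lemma \ref{lem:42}(\ref{item:3}) then reduces everything to estimating the two pieces $d_{\U_k}(g^0_k,\, g^0_k + \chi(E_l)(g^1_k - g^0_k))$ and $d_{\U_k}(g^0_k + \chi(E_l)(g^1_k - g^0_k),\, g^1_k)$, after which I will send $k \to \infty$ and then $l \to \infty$.

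For the first piece---where the two hybrid metrics coincide outside $E_l$---the defining bounds of $E_l$ make the family of coefficient differences $\{\chi(E_l)((g^1_k)_{ij} - (g^0_k)_{ij})\}$ uniformly bounded, hence equicontinuous at $\emptyset$ in each chart. Since property (\ref{item:6}) of Definition \ref{dfn:13} gives $\chi(E_l) g^a_k \to \chi(E_l) g_\infty$ almost everywhere for $a=0,1$, Remark \ref{rmk:9} upgrades this to $L^2$-convergence, so $\|\chi(E_l)(g^1_k - g^0_k)\|_g \to 0$. Plugging this into the third conclusion of Proposition \ref{prop:19}---applied to the straight-line path $t \mapsto g^0_k + t\chi(E_l)(g^1_k - g^0_k)$, which remains inside a single amenable subset determined by $l$---yields that this piece tends to $0$ as $k \to \infty$ for each fixed $l$.

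For the second piece---where the two hybrid metrics differ only on $M \setminus E_l$---I would apply the small-volume diameter bound from Theorem \ref{thm:15}, the extension of Proposition \ref{prop:18} to the completion $\U_k^0$, to get a bound of the form $C(n)\bigl(\sqrt{\Vol(M \setminus E_l, g^0_k)} + \sqrt{\Vol(M \setminus E_l, g^1_k)}\bigr)$. Theorem \ref{thm:19} on continuity of volume under $\omega$-convergence then allows passage to the limit $k \to \infty$, replacing both volumes by $\Vol(M \setminus E_l, g_\infty)$. Finally, since $\chi(E_l) \to \chi(M \setminus X_{g_\infty})$ pointwise almost everywhere and the constant $1$ is integrable against the finite measure $\mu_{g_\infty}$, the Lebesgue dominated convergence theorem gives $\Vol(E_l, g_\infty) \to \Vol(M \setminus X_{g_\infty}, g_\infty) = \Vol(M, g_\infty)$, so $\Vol(M \setminus E_l, g_\infty) \to 0$. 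Combining the three estimates in the order $k \to \infty$ then $l \to \infty$ produces $\lim_k d(g^0_k, g^1_k) = 0$.

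The genuinely delicate step is the first piece: one must avoid any appeal to pointwise control on the deflated set, and the device making this possible is the cut-off construction of Proposition \ref{prop:19}, which confines the straight-line variation to the amenable region $E_l$ while absorbing the ``error'' outside into an arbitrarily small $L^2$-ball. Once that cut-off and the completion-level metric $d_{\U_k}$ are in hand, the remainder is a bookkeeping exercise combining Theorem \ref{thm:15}, Theorem \ref{thm:19}, and dominated convergence, exactly along the three-step scheme $\eqref{eq:82}$--$\eqref{eq:84}$ indicated in the paper.
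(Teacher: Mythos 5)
Your proposal is correct and follows essentially the same route as the paper's own proof: the same sets $E_l$, the same hybrid metrics $g^0_k + \chi(E_l)(g^1_k - g^0_k)$ inside amenable subsets $\U_k$, the same three estimates (Proposition \ref{prop:19} plus equicontinuity and Remark \ref{rmk:9} for the first piece, Theorems \ref{thm:15} and \ref{thm:19} for the second, dominated convergence for the volume of $E_l$), combined via Lemma \ref{lem:42} in the order $k \to \infty$ then $l \to \infty$. No substantive differences to report.
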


As we have already discussed, combining this theorem with the
existence result (Theorem \ref{thm:40}) and the first uniqueness
result (Theorem \ref{thm:20}) gives us an identification of
$\overline{\M}$ with a subset of $\Mfhat$.  We summarize this in a theorem:

\begin{thm}\label{thm:38}
  There is a natural identification of $\overline{\M}$, the completion
  of $\M$, with a subset of $\Mfhat$, the measurable semimetrics with
  finite volume on $M$ modulo the equivalence given in Definition
  \ref{dfn:7}.

  This identification is given by an injection $\Omega : \overline{\M}
  \hookrightarrow \Mfhat$, where we map an equivalence class
  $[\{g_k\}]$ of $d$-Cauchy sequences to the unique element of
  $\Mfhat$ that all of its members $\omega$-subconverge to.  This map
  is an isometry onto its image if we give $\Omega(\overline{\M})$ the
  metric $\bar{d}$ defined by
  \begin{equation*}
    \bar{d}([g_0], [g_1]) := \lim_{k \rightarrow \infty} d(g^0_k, g^1_k)
  \end{equation*}
  where $\{g^0_k\}$ and $\{g^1_k\}$ are any sequences in $\M$
  $\omega$-converging to $[g_0]$ and $[g_1]$, respectively.
\end{thm}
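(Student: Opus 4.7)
The plan is simply to assemble the three principal theorems of the chapter—the existence result (Theorem \ref{thm:40}), the first uniqueness result (Theorem \ref{thm:20}), and the second uniqueness result (Theorem \ref{thm:17})—into the stated packaging. The actual hard analytic work has already been done in Sections \ref{sec:existence-ad-limit} and \ref{sec:uniqueness-ad-limit}; what remains is careful bookkeeping.

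First, I would define $\Omega$. Given $[\{g_k\}] \in \overline{\M}$, choose any representative $\{g_k\}$ and invoke Theorem \ref{thm:40} to extract a subsequence $\{g_{k_l}\}$ that $\omega$-converges to some $[g_\infty] \in \Mfhat$, and set $\Omega([\{g_k\}]) := [g_\infty]$. To check well-definedness, suppose $\{g_k\}$ and $\{g'_k\}$ are two equivalent representatives of the same class in $\overline{\M}$, and $\omega$-convergent subsequences of each have limits $[g_\infty]$ and $[g'_\infty]$ respectively. Since subsequences of equivalent Cauchy sequences are still equivalent, Theorem \ref{thm:20} gives $[g_\infty] = [g'_\infty]$, so $\Omega$ depends only on the equivalence class, not on the chosen representative or subsequence.

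Next, I would verify injectivity of $\Omega$. If $\Omega([\{g^0_k\}]) = \Omega([\{g^1_k\}]) = [g_\infty]$, pass to $\omega$-convergent subsequences $\{g^0_{k_l}\}$ and $\{g^1_{k_m}\}$, both $\omega$-converging to the same $[g_\infty]$. By Theorem \ref{thm:17}, these subsequences are equivalent, and since each subsequence is equivalent to its parent sequence in $\overline{\M}^{\textnormal{pre}}$, we conclude $[\{g^0_k\}] = [\{g^1_k\}]$.

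Finally, for the isometry claim: the formula $\bar{d}([g_0], [g_1]) := \lim_{k \to \infty} d(g^0_k, g^1_k)$ on $\Omega(\overline{\M})$ is well-defined independently of the choice of $\omega$-convergent sequences representing $[g_0]$ and $[g_1]$. Indeed, if $\{g^0_k\}$ and $\{\tilde{g}^0_k\}$ both $\omega$-converge to $[g_0]$, then Theorem \ref{thm:17} gives $\lim_k d(g^0_k, \tilde{g}^0_k) = 0$, and then the triangle inequality implies $\lim_k d(g^0_k, g^1_k) = \lim_k d(\tilde{g}^0_k, g^1_k)$ for any fixed representative of $[g_1]$; the argument is symmetric in the second slot. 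That $\Omega$ is then an isometry is built directly into the definition of the distance function on the precompletion (cf.~Section \ref{sec:compl-metr-spac}): the distance between two points of $\overline{\M}$ is already, by construction, the limit of $d(g^0_k, g^1_k)$ along representatives, and we may always choose these representatives to be $\omega$-convergent by Theorem \ref{thm:40}. Since the hard obstacles—showing that $\omega$-limits exist and are essentially unique in both directions—have been dispatched in the preceding sections, no further difficulty arises here; the only thing to be careful about is to always pass to $\omega$-convergent subsequences before invoking the uniqueness results.
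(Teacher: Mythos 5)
Your proposal is correct and follows essentially the same route as the paper, which obtains Theorem \ref{thm:38} precisely by assembling the existence result (Theorem \ref{thm:40}), the first uniqueness result (Theorem \ref{thm:20}) for well-definedness, and the second uniqueness result (Theorem \ref{thm:17}) for injectivity and the isometry property. Your extra care about passing to $\omega$-convergent subsequences and using the fact that a subsequence of a Cauchy sequence is equivalent to it matches the paper's implicit bookkeeping.
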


This is an extremely useful theorem, as it allows us to drop the
distinction between an $\omega$-convergent sequence and the element of
$\Mfhat$ that it converges to.  By Lemma \ref{lem:29}, we can even
identify an $\omega$-convergent sequence with any representative of
the equivalence class in $\Mfhat$ that it converges to.  From now on,
we will employ this trick to simplify formulas and proofs.

Our job in the next chapter will be to show that the identification
described in Theorem \ref{thm:38} is actually a surjection.  This will
allow us to identify $\overline{\M}$ with the space $\Mfhat$ itself,
instead of just a subset thereof.  In doing so, we will prove the main
result of this thesis.


\chapter{The completion of $\M$}
\label{chap:sing-metrics}

In this chapter, our previous efforts come to fruition and we are able
to complete our description of $\overline{\M}$ by proving, in Section
\ref{sec:finite-volume-metr}, that the map $\Omega : \overline{\M}
\rightarrow \Mfhat$ defined in the previous chapter is a bijection.

To prepare ourselves for this proof, Section
\ref{sec:compl-orbit-space} first looks at a simpler example of a
completion, namely that of the orbit space of the conformal group---a
submanifold of $\M$ that we first encountered in Section
\ref{sec:manifold-metrics-m}.  This example is not just illustrative
of our situation---formally it is extremely similarly to our proof of
the surjectivity of $\Omega$, though the latter is, of course,
significantly more challenging technically.  Nevertheless, the
computations of this example will be directly employed in the
surjectivity proof.

Section \ref{sec:meas-induc-degen} provides some necessary preparation
for the surjectivity proof by going into more depth on the behavior of
volume forms under $\omega$-convergence.  After this, Section
\ref{sec:meas-techn-result} presents a partial result on the image of
$\Omega$.  Namely, we show that all equivalence classes of measurable,
bounded semimetrics (cf.~Definition \ref{dfn:23}) are contained in
$\Omega(\overline{\M})$.  This marks the final preparation we need to
prove the main result.

\section{Completion of the orbit space of $\pos$}\label{sec:compl-orbit-space}

For our fixed but arbitrary metric $g \in \M$, consider the orbit
space $\pos \cdot g$.  (Later we will consider this space for other
metrics $\tilde{g} \in \M$ rather than just our fixed $g$.  But since
$g$ was chosen arbitrarily, anything we prove about $\pos \cdot g$
will hold for $\pos \cdot \tilde{g}$ as well.)  Recalling that $\pos$
is the Fréchet Lie group of smooth, positive functions on $M$, we see
that the orbit consists of metrics of the form $\rho g$, where $\rho$
is a positive $C^\infty$ function.  As we have already seen in
Subsection \ref{sec:manif-posit-funct}, since $\pos$ is an open subset
of $C^\infty(M)$, each tangent space to $\pos \cdot g$ is canonically
identified with $C^\infty(M) \cdot g$, the set of what we called pure
trace tensors.

By Proposition \ref{prop:23}, there is an open set $U \subset C^\infty(M)$ with the
property that the exponential mapping $\exp_g$ is a diffeomorphism
between the set $U \cdot g$ and $\pos \cdot g$.  For convenience, we
define a mapping
\begin{equation}\label{eq:53}
  \begin{aligned}
    \psi : U &\overarrow{\cong} \pos \cdot g \\
    \lambda &\mapsto \exp_g(\lambda g) = \left( 1 + \frac{n}{4}
      \lambda \right)^{\frac{4}{n}} g.
  \end{aligned}
\end{equation}
Note that $\psi$ is not an isometry on radial geodesics, since the
$L^2$ norm induced by $g$ on functions is a non-unit scalar multiple
of the $L^2$ norm induced by $g$ on pure trace tensors:
\begin{equation*}
  \begin{aligned}
    (\kappa g, \lambda g)_g = \integral{M}{}{\tr_g((\kappa g)(\lambda
      g))}{\mu_g} = \integral{M}{}{\kappa \lambda \tr (I)}{\mu_g} = n
    \integral{M}{}{\kappa \lambda}{\mu_g} = n (\kappa, \lambda)_g,
  \end{aligned}
\end{equation*}
where we have denoted the $n \times n$ identity matrix by $I$.  By the
above, if we define a radial geodesic by $g_t = \psi(t \lambda)$ for
$t \in [0,1]$, then we get $L(g_t) = \| \lambda g \|_g = \sqrt{n} \|
\lambda \|_g$.

We can even determine the set $U$ explicitly.  Let $\lambda \in
C^\infty(M)$.  Algebraically, we could define $\psi$ for any such
$\lambda$, but if we want $\psi(\lambda)$ to be a metric, we must at
least require that $\lambda(x) \neq -\frac{4}{n}$ for all $x \in M$.
Furthermore, since $\psi$ is defined using $\exp_g$, we should have
that if $\psi(\lambda)$ is defined, then $\psi(t \lambda)$ is defined
(and is a metric) for $t \in [0,1]$.  This rules out the possibility
that $\lambda(x) < -\frac{4}{n}$ at some point $x \in M$, so we see
that
\begin{equation}\label{eq:56}
  U =
  \left\{
    \lambda \in C^\infty(M) \midmid \lambda(x) > -\frac{4}{n}\ \text{for
      all}\ x \in M
  \right\}.
\end{equation}

By Proposition \ref{prop:22}, $\pos \cdot g$ is flat.  In the case of
a strong Riemannian Hilbert manifold, as in the case of a
finite-dimensional manifold, this would imply that $\exp_g$ is an
isometry, and hence that $\psi$ is an isometry up to a scalar factor.
But since $\pos \cdot g$ is a weak Riemannian manifold, to make this
conclusion we would first have to prove such a general result.  This
is not necessary, however, as we can show directly that the desired
conclusion holds in our case.

\begin{prop}\label{prop:9}
  Up to a scalar factor of $\sqrt{n}$, $\psi$ is an isometry.  More
  precisely, if $d_{\pos \cdot g}$ is the distance function induced on
  $\pos \cdot g$ as a submanifold of $(\M, (\cdot, \cdot))$, we have
  \begin{equation*}
    d_{\pos \cdot g}(\psi(\kappa), \psi(\lambda)) = \sqrt{n} \| \lambda - \kappa \|_g
  \end{equation*}
  for all $\kappa, \lambda \in U$.
\end{prop}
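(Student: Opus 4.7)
The plan is to first compute the differential of $\psi$ and verify that it is a pointwise isometry from $(C^\infty(M), \|\cdot\|_g)$ to the tangent spaces of $\pos \cdot g$ with the restricted $L^2$ metric, up to the constant factor $\sqrt{n}$. Differentiating \eqref{eq:53} gives
\[
D\psi(\lambda)[\eta] \;=\; \Bigl(1+\tfrac{n}{4}\lambda\Bigr)^{\frac{4}{n}-1}\,\eta\, g,
\]
a pure-trace tensor at the point $\tilde{g}:=\psi(\lambda) = (1+\frac{n}{4}\lambda)^{4/n}g$. Using \eqref{eq:109} we have $\mu_{\tilde g} = (1+\frac{n}{4}\lambda)^{2}\mu_g$, while the identity $\tr_{\tilde g}((\eta g)^2) = \eta^2 \tr(\tilde g^{-1} g \tilde g^{-1} g) = n(1+\frac{n}{4}\lambda)^{-4/n}$ yields
\[
\|D\psi(\lambda)[\eta]\|_{\tilde g}^{2} \;=\; \integral{M}{}{n\,(1+\tfrac{n}{4}\lambda)^{-2}\eta^{2}\,(1+\tfrac{n}{4}\lambda)^{2}}{\mu_g} \;=\; n\,\|\eta\|_g^{2}.
\]

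For the upper bound, note that $U$ is convex by \eqref{eq:56}, so the straight-line path $\lambda_t := (1-t)\kappa + t\lambda$ lies in $U$ for all $t\in[0,1]$, and $g_t := \psi(\lambda_t)$ is a smooth path in $\pos\cdot g$ joining $\psi(\kappa)$ and $\psi(\lambda)$. Its tangent is $\dot g_t = D\psi(\lambda_t)[\lambda-\kappa]$, and by the pointwise isometry established above,
\[
L(g_t) \;=\; \integral{0}{1}{\|D\psi(\lambda_t)[\lambda-\kappa]\|_{g_t}}{dt} \;=\; \sqrt{n}\,\|\lambda-\kappa\|_g,
\]
so $d_{\pos\cdot g}(\psi(\kappa),\psi(\lambda)) \leq \sqrt{n}\,\|\lambda-\kappa\|_g$.

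For the matching lower bound, the plan is to invoke Proposition \ref{prop:23} at the basepoint $\psi(\kappa)$: since $\pos\cdot g = \pos\cdot\psi(\kappa)$ and $\exp_{\psi(\kappa)}$ is a diffeomorphism from an open subset of $T_{\psi(\kappa)}(\pos\cdot g)$ onto all of $\pos\cdot g$, the hypotheses of Proposition \ref{prop:2} are satisfied with $V_x = \pos\cdot g$. Writing $\psi(\lambda) = \exp_{\psi(\kappa)}(\mu\,\psi(\kappa))$ and using the explicit geodesic formula from Proposition \ref{prop:23} gives the relation $(1+\tfrac{n}{4}\mu)(1+\tfrac{n}{4}\kappa) = 1+\tfrac{n}{4}\lambda$, hence
\[
\mu \;=\; \frac{\lambda-\kappa}{1+\tfrac{n}{4}\kappa},
\]
which lies in the domain of $\exp_{\psi(\kappa)}$ precisely because $\lambda\in U$. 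By Proposition \ref{prop:2}, the radial geodesic $t\mapsto \exp_{\psi(\kappa)}(t\mu\,\psi(\kappa))$ is minimizing in $\pos\cdot g$, and by Proposition \ref{prop:3} its length equals $\|\mu\,\psi(\kappa)\|_{\psi(\kappa)}$. Using $\mu_{\psi(\kappa)} = (1+\frac{n}{4}\kappa)^{2}\mu_g$ together with the pointwise identity $\tr_{\psi(\kappa)}((\mu\,\psi(\kappa))^2) = n\mu^2$, we obtain
\[
\|\mu\,\psi(\kappa)\|_{\psi(\kappa)}^{2} \;=\; n\integral{M}{}{\mu^{2}(1+\tfrac{n}{4}\kappa)^{2}}{\mu_g} \;=\; n\integral{M}{}{(\lambda-\kappa)^{2}}{\mu_g} \;=\; n\,\|\lambda-\kappa\|_g^{2},
\]
which matches the upper bound. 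The only subtlety in the argument is the cancellation between the conformal factor appearing in $D\psi$ (or in the parameter $\mu$) and the Jacobian appearing in $\mu_{\psi(\kappa)}$; both are direct calculations, and no genuine obstacle arises beyond keeping the exponents straight.
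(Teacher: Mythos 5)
Your proof is correct and, at its decisive step, coincides with the paper's: you write $\psi(\lambda)=\exp_{\psi(\kappa)}(\sigma\,\psi(\kappa))$ with $\sigma=(\lambda-\kappa)/(1+\tfrac{n}{4}\kappa)$ and invoke Propositions \ref{prop:23}, \ref{prop:2} and \ref{prop:3} so that $d_{\pos\cdot g}(\psi(\kappa),\psi(\lambda))$ equals the length of the radial geodesic, which the same conformal-factor/volume-Jacobian cancellation evaluates to $\sqrt{n}\,\|\lambda-\kappa\|_g$; the separate upper bound via $D\psi$ and the straight-line path in $U$ is redundant, since Proposition \ref{prop:2} already gives equality and your straight line is in fact exactly that radial geodesic (though your computation does record the slightly stronger fact that $\psi$ is a Riemannian isometry up to the factor $\sqrt{n}$, not just a distance isometry). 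One small slip: the intermediate identity should read $\tr\bigl(\tilde g^{-1}g\,\tilde g^{-1}g\bigr)=n\bigl(1+\tfrac{n}{4}\lambda\bigr)^{-8/n}$ rather than the stated $n\bigl(1+\tfrac{n}{4}\lambda\bigr)^{-4/n}$ (and the factor $\eta^2$ was dropped there), but your final displayed integral uses the correct exponent, so nothing breaks.
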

\begin{proof}
  We first note that $\pos \cdot g = \pos \cdot \psi(\kappa)$.  As
  above, we can find a neighborhood $V \in C^\infty(M)$ such that the
  map
  \begin{equation}\label{eq:52}
    \begin{aligned}
      \varphi : V &\rightarrow \pos \cdot g \\
      \sigma &\mapsto \exp_{\psi(\kappa)} (\sigma \psi(\kappa)) =
      \left( 1 + \frac{n}{4} \sigma \right)^{\frac{4}{n}} \psi(\kappa)
    \end{aligned}
  \end{equation}
  is a diffeomorphism.

  Now Proposition \ref{prop:2} implies that $d_{\pos \cdot
    g}(\psi(\kappa), \psi(\lambda)) = \sqrt{n} \| \varphi^{-1}
  \psi(\lambda) \|_g$, since the shortest path between $\psi(\kappa)$
  and $\psi(\lambda)$ is the unique radial geodesic emanating from
  $\psi(\kappa)$ and ending at $\psi(\lambda)$.  Therefore, we must
  prove that $\| \varphi^{-1} \psi(\lambda) \|_g = \| \lambda - \kappa
  \|_g$.

  We define $\sigma := \varphi^{-1} \psi(\lambda)$.  Then
  $\varphi(\sigma) = \psi(\lambda)$ implies, by (\ref{eq:53}) and
  (\ref{eq:52}), that
  \begin{equation*}
    \left( 1 + \frac{n}{4} \sigma \right)^{\frac{4}{n}} \left( 1 +
      \frac{n}{4} \kappa \right)^{\frac{4}{n}} g = \left( 1 +
      \frac{n}{4} \lambda \right)^{\frac{4}{n}} g.
  \end{equation*}
  Solving for $\sigma$ gives
  \begin{equation*}
    \varphi^{-1} \psi(\lambda) = \sigma = \frac{4}{n} \left( \left( 1
        + \frac{n}{4} \lambda \right) \left(1 + \frac{n}{4} \kappa
      \right)^{-1} - 1 \right).
  \end{equation*}
  Now, since $\mu_{\rho g} = \rho^{n/2} \mu_g$ for any $\rho \in
  \pos$, we have
  \begin{equation*}
    \mu_{\psi(\kappa)} = \left( 1 + \frac{n}{4} \kappa \right)^2 \mu_g.
  \end{equation*}
  Using these two equations, we finish the proof with a computation:
  \begin{equation}\label{eq:54}
    \begin{aligned}
      \| \varphi^{-1} \psi(\lambda) \|_{\psi(\kappa)}^2 &= \frac{16}{n^2}
      \integral{M}{}{\Big( \left( 1 + \frac{n}{4} \lambda \right)
          \left(1 + \frac{n}{4} \kappa
          \right)^{-1} - 1 \Big)^2}{\mu_{\psi(\kappa)}} \\
      &= \frac{16}{n^2} \integral{M}{}{\Big( \left( 1 + \frac{n}{4}
            \lambda \right) \left(1 + \frac{n}{4} \kappa \right)^{-1}
          -
          1 \Big)^2 \left(1 + \frac{n}{4} \kappa \right)^2}{\mu_g} \\
      &= \frac{16}{n^2} \integral{M}{}{\left( \left( 1 + \frac{n}{4}
            \lambda \right) - \left(1 + \frac{n}{4} \kappa \right)
        \right)^2}{\mu_g} \\
      &= \integral{M}{}{(\lambda - \kappa)^2}{\mu_g} = \| \lambda -
      \kappa \|_g^2.
    \end{aligned}
  \end{equation}
\end{proof}

Using this proposition, we can immediately determine $\overline{\pos
  \cdot g}$, the completion of an orbit of the conformal group.

\begin{thm}\label{thm:11}
  $\overline{\pos \cdot g}$ is isometric to the set of tensors of the
  form $\rho g$ with $\rho$ measurable, $\rho(x) \geq 0$ a.e., and
  $\Vol(M, \rho g) < \infty$.  Equivalently, this set is those metrics
  $\rho g$ where $\rho \in L^{n/2}(M)$, i.e.,
  $\integral{}{}{\rho^{n/2}}{\mu_g} < \infty$, and $\rho(x) \geq 0$
  a.e.

  The distance function on $\overline{\pos \cdot g}$ is given by
  $d_{\overline{\pos \cdot g}}(\rho_1 g, \rho_2 g) = \sqrt{n} \| \psi^{-1}(\rho_2 g) -
  \psi^{-1}(\rho_2 g) \|_g$.
\end{thm}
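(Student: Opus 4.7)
The plan is to use Proposition \ref{prop:9} to convert the question of completing $\pos \cdot g$ into the simpler question of taking the $L^2$-closure of the convex open set $U \subset C^\infty(M)$ from \eqref{eq:56}, and then to identify the resulting set of tensors explicitly.

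First I would observe that Proposition \ref{prop:9} says precisely that
\begin{equation*}
  \psi : (U, \sqrt{n}\,\|\cdot\|_g) \longrightarrow (\pos \cdot g, d_{\pos \cdot g})
\end{equation*}
is an isometry onto, where $U$ inherits the restriction of the $L^2$ norm on $C^\infty(M)$ induced by $g$. By Theorem \ref{thm:29}\eqref{item:2}, any isometry between dense subspaces of metric spaces extends uniquely to an isometry of their completions, so it suffices to determine the completion of $U$ inside $L^2(M) = \overline{C^\infty(M)}$ and to identify the extension of $\psi$ to this completion.

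Next, since $U$ is convex and $C^\infty(M)$ is $L^2$-dense in $L^2(M)$, a straightforward mollification/truncation argument shows that the $L^2$-closure of $U$ is
\begin{equation*}
  \overline{U} = \{\lambda \in L^2(M) \mid \lambda(x) \geq -\tfrac{4}{n}\ \text{for a.e.}\ x \in M\}.
\end{equation*}
I would then extend $\psi$ by the same formula, $\overline{\psi}(\lambda) := (1 + \tfrac{n}{4}\lambda)^{4/n} g$, and verify that $\overline{\psi}$ maps $\overline{U}$ bijectively onto the set
\begin{equation*}
  R := \{\rho g \mid \rho\ \text{measurable},\ \rho \geq 0\ \text{a.e.},\ \Vol(M, \rho g) < \infty\},
\end{equation*}
with inverse $\rho g \mapsto \tfrac{4}{n}(\rho^{n/4} - 1)$. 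Here the key computation is
\begin{equation*}
  \Vol(M, \rho g) = \integral{M}{}{\rho^{n/2}}{\mu_g},
\end{equation*}
so finiteness of the volume is equivalent to $\rho \in L^{n/2}(M)$, equivalent in turn to $\rho^{n/4} \in L^2(M)$, which (since $M$ is compact so $1 \in L^2(M)$) is equivalent to $\tfrac{4}{n}(\rho^{n/4} - 1) \in L^2(M)$ with $\rho^{n/4} \geq 0$, i.e., $\lambda \in \overline{U}$.

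Finally I would check that $\overline{\psi}$ is continuous in both directions under $\|\cdot\|_g$ on $\overline{U}$ and $d_{\overline{\pos \cdot g}}$ on its image — this is automatic from the isometric extension and from approximating any $\lambda \in \overline{U}$ by smooth functions in $U$ (e.g.\ $\max(\lambda, -\tfrac{4}{n} + \tfrac{1}{k})$ mollified) and then reading off the formula $d_{\overline{\pos \cdot g}}(\overline{\psi}(\kappa), \overline{\psi}(\lambda)) = \sqrt{n}\,\|\lambda - \kappa\|_g$ from the isometry. The main technical obstacle I anticipate is the first step: rigorously checking that $\overline{U}$ really is $\{\lambda \in L^2 \mid \lambda \geq -\tfrac{4}{n}\ \text{a.e.}\}$ and that $\overline{\psi}$ is well-defined as a bijection onto $R$ (in particular that $\overline{\psi}(\lambda) \neq \overline{\psi}(\lambda')$ whenever $\lambda, \lambda'$ differ on a set of positive measure and both lie in $\overline{U}$), since $\overline{\psi}$ involves the nonlinear exponent $4/n$ and one must verify measurability and the volume computation directly from the definitions of Subsection \ref{sec:manif-posit-funct}.
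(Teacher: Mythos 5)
Your proposal is correct and follows essentially the same route as the paper: it uses the (scaled) isometry $\psi$ of Proposition \ref{prop:9}, extends it to completions via Theorem \ref{thm:29}, identifies $\overline{U}$ as $\{\lambda \in L^2(M) \mid \lambda \geq -\tfrac{4}{n}\ \text{a.e.}\}$, and converts between $\lambda \in L^2$ and $\rho \in L^{n/2}$ through $\rho^{n/4} = 1 + \tfrac{n}{4}\lambda$ together with compactness of $M$ (the paper does this via the expansion of $(1+\tfrac{n}{4}\lambda)^2$ and H\"older). The only cosmetic difference is that you state both directions of the correspondence explicitly, with the inverse $\rho g \mapsto \tfrac{4}{n}(\rho^{n/4}-1)$, which the paper leaves largely implicit.
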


\begin{rmk}\label{rmk:5}
  Although $L^{n/2}(M)$ is not a normed space for $n = 1$, we simply
  define it as the set of measurable functions with integrable square
  root.
\end{rmk}

\begin{proof}[Proof of Theorem \ref{thm:11}]
  Let's look at the first statement.  The equivalence of the two
  formulations in the theorem is clear from the fact that $\mu_{\rho
    g} = \rho^{n/2} \mu_g$, so $\Vol(M, \rho g) =
  \integral{}{}{}{\mu_{\rho g}} = \integral{}{}{\rho^{n/2}}{\mu_g}$.
  We will therefore show only the second statement.
  
  Since $\psi$ is an isometry, up to a scalar factor, it is clear that
  $\overline{\pos \cdot g} \cong \psi(\overline{U})$, where $U$ is the
  open neighborhood on which $\psi$ is a diffeomorphism.  But from
  (\ref{eq:56}) and the fact that $\| \cdot \|_g$ is the $L^2$ norm on
  functions, we immediately see that
  \begin{equation*}
    \overline{U} =
    \left\{
      \lambda \in L^2(M) \midmid \lambda(x) \geq -\frac{4}{n}\ \mathrm{a.e.}
    \right\},
  \end{equation*}
  and therefore
  \begin{equation*}
    \overline{\pos \cdot g} = \psi(\overline{U}) =
    \left\{
      \left( 1 + \frac{n}{4} \lambda \right)^{\frac{4}{n}} g \midmid
      \lambda \in L^2(M),\ \lambda(x) \geq -\frac{4}{n}\ \mathrm{a.e.}
    \right\}.
  \end{equation*}
  If we define $\rho = \rho(\lambda) := \left( 1 + \frac{n}{4} \lambda
  \right)^{4/n}$, then it remains to prove that $\rho \in L^{n/2}(M)$
  for any $\lambda \in L^2(M)$.  But
  \begin{align*}
    \integral{M}{}{\rho^{n/2}}{\mu_g} = \integral{M}{}{\left( 1 +
        \frac{n}{4} \lambda \right)^2}{\mu_g} =
    \integral{M}{}{}{\mu_g} + \frac{n}{2}
    \integral{M}{}{\lambda}{\mu_g} + \frac{n^2}{16} \integral{M}{}{\lambda^2}{\mu_g}.
  \end{align*}
  The first term in the above expression is finite by compactness of
  $M$, and the third is finite since $\lambda \in L^2(M)$.  Using
  this, one can then see the second term is finite by Hölder's
  inequality.

  As for the statement about the distance function, this follows from
  the fact that $\psi$ extends uniquely to an isometry (up to the
  scalar factor $\sqrt{n}$) from $\overline{U}$ to $\overline{\pos
    \cdot g}$.  This is thanks to statement (\ref{item:2}) of Theorem
  \ref{thm:29}.
\end{proof}

This theorem immediately tells us what the completion of $\M$ is when
$M$ is one-dimensional---of course, there is only one diffeomorphism
class of compact one-dimensional manifolds, so in this case $M = S^1$.
The theorem gives us complete information here because any smooth
metric on $S^1$ can be obtained from the standard metric $g$ by
multiplication with a smooth function.  Therefore $\M = \pos \cdot g$,
and Theorem \ref{thm:11} immediately implies:

\begin{cor}
  We work over a one-dimensional base manifold $M$, so that $M =
  S^1$.  Let $g$ be the standard metric on $S^1$.  Then
  \begin{equation*}
    \overline{\M} \cong
    \left\{
    \rho g  \mid \sqrt{\rho} \in L^1(M, g),\ \rho(x) \geq 0\
    \mathrm{a.e.}
    \right\}.
  \end{equation*}
\end{cor}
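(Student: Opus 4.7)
The plan is to deduce the corollary directly from Theorem \ref{thm:11} by observing that, in dimension one, every smooth Riemannian metric on $S^1$ is conformally related to the reference metric $g$. The proof will therefore consist of two short steps, followed by a bookkeeping check matching the exponent $n/2$ with the claim in the statement.

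First, I would verify that $\M = \pos \cdot g$ when $M = S^1$. Since $\dim M = 1$, the fiber $\satx = S^2 T_x^* M$ is one-dimensional at each $x \in S^1$, so any positive-definite element of $\satx$ is a strictly positive scalar multiple of $g(x)$. Hence for any $\tilde{g} \in \M$, the pointwise ratio $\rho(x) := \tilde{g}(x)/g(x)$ is a well-defined positive function, and the smoothness of $\tilde{g}$ together with the nonvanishing of $g$ ensures $\rho \in \pos$. Conversely, every $\rho \in \pos$ yields a smooth metric $\rho g \in \M$, establishing $\M = \pos \cdot g$.

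Next, I would invoke Theorem \ref{thm:11}, which identifies the completion of $\pos \cdot g$ (valid for any $g \in \M$ over a base manifold of arbitrary dimension $n$) with
\begin{equation*}
\overline{\pos \cdot g} \cong \bigl\{ \rho g \,\big|\, \rho \in L^{n/2}(M),\ \rho(x) \geq 0\ \text{a.e.} \bigr\}.
\end{equation*}
Specializing to $n = 1$ gives the condition $\rho \in L^{1/2}(M)$, which by Remark \ref{rmk:5} is precisely the requirement $\sqrt{\rho} \in L^1(M, g)$. Combined with $\overline{\M} = \overline{\pos \cdot g}$ from the first step, this yields exactly the description asserted in the corollary.

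There is essentially no serious obstacle here: the content of the corollary is entirely packaged into Theorem \ref{thm:11}, and the role of the one-dimensionality of $M$ is only to collapse the conformal orbit onto all of $\M$. The only thing worth being careful about is the interpretation of $L^{1/2}$, which is not a normed space and must be read via Remark \ref{rmk:5}; writing the condition as $\sqrt{\rho} \in L^1(M, g)$ in the statement is just a reformulation of $\int_M \rho^{1/2}\, \mu_g = \Vol(M, \rho g) < \infty$, which is the finite-volume condition that characterized $\Mf$ in the general theory.
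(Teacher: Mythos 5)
Your proposal is correct and follows exactly the route the paper takes: observe that in dimension one every smooth metric on $S^1$ is a smooth positive multiple of $g$, so $\M = \pos \cdot g$, and then specialize Theorem \ref{thm:11} to $n = 1$, reading $L^{1/2}$ via Remark \ref{rmk:5} as $\sqrt{\rho} \in L^1(M,g)$. Nothing is missing; the paper's own justification is the same two-line argument.
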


Of course, we still have an infinite number of cases left to deal with
if we want to find the completion of $\M$ in arbitrary dimension.  We
need a few preliminary results in order to proceed.

\section{Measures induced by measurable
  semimetrics}\label{sec:meas-induc-degen}

For use in Section \ref{sec:finite-volume-metr}, we need to record a
couple of properties of the measure $\mu_{\tilde{g}}$ induced by an
element $\tilde{g} \in \M_f$.

\subsection{Weak convergence of measures}\label{sec:weak-conv-meas}

The first property we wish to prove is the following. Suppose
$\tilde{g} \in \M_f$ and $\{ g_k \}$ is a sequence $\omega$-converging
to $\tilde{g}$.  Furthermore, let $\rho \in C^0(M)$ be any continuous
function.  Then we claim that
\begin{equation*}
  \lim_{k \rightarrow \infty} \| \rho \|_{g_k} = \| \rho \|_{\tilde{g}},
\end{equation*}
where we recall that for any measurable semimetric $\bar{g}$ and any
function $\sigma$ on $M$,
\begin{equation*}
  \| \sigma \|_{\bar{g}} = \left(
    \integral{M}{}{\sigma^2}{\mu_{\bar{g}}} \right)^{1/2}.
\end{equation*}

To prove this, we need to introduce the notion of \emph{weak
  convergence} (sometimes also called \emph{weak-* convergence}) of
Borel measures.  We do this in the general setting before we apply it
to our situation.  So let $X$ be a topological space, and denote by
$\mathfrak{M}(X)$ the set of nonnegative, totally finite measures on
the Borel algebra of $X$.  (Recall that a totally finite measure is
one for which every measurable set has finite measure.)  Suppose that
our space $X$ is completely regular.  By this we mean that points and
closed sets are separated by continuous functions, i.e., given any
closed set $F \subset X$ and any point $x \not\in F$, there exists a
continuous function $f: X \rightarrow \R$ with $f(x) = 0$ and $f(y) =
1$ for all $y \in F$.  Most common spaces satisfy this condition; in
particular, every topological manifold (and hence our base manifold
$M$) is completely regular.  In this setting, we can make the
following definition.

\begin{dfn}\label{dfn:6}
  The sequence $\{\nu_k\} \subset \mathfrak{M}(X)$ is said to
  \emph{converge weakly} to $\nu \in \mathfrak{M}(X)$ if for every
  bounded continuous function $f$ on $X$,
  \begin{equation*}
    \integral{X}{}{f}{d \nu_k} \rightarrow \integral{X}{}{f}{d \nu}.
  \end{equation*}
\end{dfn}

To prove that $\omega$-convergence of metrics implies weak convergence
of the induced measures, we need the Portmanteau theorem
\cite[Thm.~8.1]{e70:_topol_and_measur}, a portion of which we quote
here:

\begin{thm}[Portmanteau theorem]\label{thm:10}
  Let $\nu$ be a measure in $\mathfrak{M}(X)$, and let $\{\nu_k\}$ be
  a sequence in $\mathfrak{M}(X)$.  Then the following conditions are
  equivalent:
  \begin{enumerate}
  \item \label{item:17} $\nu_k$ converges weakly to $\nu$,
  \item \label{item:10} $\limsup \nu_k (F) = \nu (F)$ for all closed sets $F
    \subset X$,
  \end{enumerate}
\end{thm}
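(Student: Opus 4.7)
The plan is to prove the two implications separately, using complete regularity of $X$ as the bridge between closed sets and bounded continuous functions.

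For (\ref{item:17})$\Rightarrow$(\ref{item:10}), I would fix a closed set $F \subset X$ and use complete regularity to construct a decreasing sequence of bounded continuous functions $f_m : X \to [0,1]$ with $f_m \equiv 1$ on $F$ and $f_m(x) \searrow \chi_F(x)$ for every $x \in X$. Since $\chi_F \leq f_m$ pointwise, we have $\nu_k(F) \leq \int f_m \, d\nu_k$ for every $k$ and $m$, and weak convergence then gives $\limsup_k \nu_k(F) \leq \int f_m \, d\nu$. Letting $m \to \infty$ and applying the Lebesgue dominated convergence theorem (Theorem \ref{thm:36}, using the constant $1$ as $L^1$-dominant) yields $\limsup_k \nu_k(F) \leq \nu(F)$. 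The matching lower bound, needed to promote inequality to equality, comes from applying the same argument to the open complement $X \setminus F$ together with the total-mass convergence $\nu_k(X) \to \nu(X)$, which is itself a consequence of weak convergence applied to the constant function $1$.

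For (\ref{item:10})$\Rightarrow$(\ref{item:17}), I would fix a bounded continuous $f : X \to \R$, split $f = f^+ - f^-$, and reduce by linearity to the case $0 \leq f \leq M$. The key tool is the layer-cake identity
\begin{equation*}
  \int_X f \, d\nu = \int_0^M \nu(\{f \geq t\}) \, dt,
\end{equation*}
together with the observation that the superlevel sets $F_t := \{f \geq t\}$ are closed since $f$ is continuous. By hypothesis (\ref{item:10}), $\nu_k(F_t) \to \nu(F_t)$ as $k \to \infty$ for every $t$ (in the appropriate $\limsup$ sense), and the monotone function $t \mapsto \nu(F_t)$ has at most countably many discontinuities, so convergence holds almost everywhere in $t$. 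Since the integrands are uniformly bounded by $\sup_k \nu_k(X) < \infty$, the Lebesgue dominated convergence theorem permits interchanging limit and $t$-integration, yielding $\int f \, d\nu_k \to \int f \, d\nu$.

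The main obstacle is the construction of the approximating sequence $\{f_m\}$ for a closed set in a general completely regular space: without a metric or a countable local base, a single sequence of continuous functions decreasing pointwise to $\chi_F$ need not exist, and one must either work with nets or restrict attention to the case where $X$ is also normal (or use a more delicate reduction to the set of points where $f_m$ fails to decrease to $\chi_F$). A secondary technical point is justifying the total-mass convergence $\nu_k(X) \to \nu(X)$, which is automatic from weak convergence applied to $f \equiv 1$ but must be tracked carefully to close the gap between the inequality $\limsup \nu_k(F) \leq \nu(F)$ and the equality stated in (\ref{item:10}).
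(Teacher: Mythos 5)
You are proving a statement that the paper itself does not prove: Theorem \ref{thm:10} is quoted (as ``a portion'' of a result) from the cited reference \cite{e70:_topol_and_measur}, so there is no proof in the paper to compare against, and your argument has to stand on its own. It does not, because of the direction (\ref{item:17})$\Rightarrow$(\ref{item:10}): with the equality sign that appears in the statement, this implication is simply false, and the step you propose to close it cannot work. Weak convergence only yields the upper bound $\limsup_k \nu_k(F) \leq \nu(F)$ for closed $F$. Your ``matching lower bound'' from the open complement, combined with $\nu_k(X) \to \nu(X)$, gives $\liminf_k \nu_k(G) \geq \nu(G)$ for \emph{open} $G$, and feeding $G = X \setminus F$ back in just reproduces $\limsup_k \nu_k(F) \leq \nu(X) - \nu(X\setminus F) = \nu(F)$ — it never produces $\liminf_k \nu_k(F) \geq \nu(F)$. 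Indeed no argument can: take $X = \R$, $\nu_k = \delta_{1/k}$, $\nu = \delta_0$, $F = \{0\}$; then $\nu_k \to \nu$ weakly but $\limsup_k \nu_k(F) = 0 < 1 = \nu(F)$. The classical Portmanteau statement has the inequality $\limsup_k \nu_k(F) \leq \nu(F)$ for all closed $F$ together with $\nu_k(X) \to \nu(X)$, and the ``$=$'' in the quoted theorem has to be read in that corrected sense (as literally printed the equivalence even fails in the other direction: the alternating sequence $\delta_0, 0, \delta_0, 0, \dots$ satisfies the limsup equality against $\nu = \delta_0$ for every closed set without converging weakly).

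For the direction the paper actually uses, (\ref{item:10})$\Rightarrow$(\ref{item:17}), your layer-cake plan is the right idea but the key step is glossed over: from an equality of $\limsup$'s alone you cannot conclude ``$\nu_k(F_t) \to \nu(F_t)$ \dots in the appropriate $\limsup$ sense'' and then invoke dominated convergence, since $\liminf_k \nu_k(F_t)$ may be strictly smaller. The standard repair is to use the pair of inequalities $\limsup_k \nu_k(\{f \geq t\}) \leq \nu(\{f \geq t\})$ and $\liminf_k \nu_k(\{f > t\}) \geq \nu(\{f > t\})$, observe that $\nu(\{f \geq t\}) = \nu(\{f > t\})$ for all but countably many $t$ (monotonicity of $t \mapsto \nu(\{f \geq t\})$), so genuine convergence holds for a.e.\ $t$, and only then apply dominated convergence with the bound $\sup_k \nu_k(X) < \infty$. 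In the paper's application (Lemma \ref{lem:54}) this subtlety is invisible because Theorem \ref{thm:19} supplies actual convergence $\nu_k(Y) \to \nu(Y)$ for every measurable $Y$. Finally, the obstacle you flag yourself is real: in a merely completely regular space, complete regularity gives a \emph{directed family}, not a sequence, of continuous functions decreasing to $\chi_F$, and the identification $\inf_m \int f_m \, d\nu = \nu(F)$ needs either metrizability/normality or the smoothness framework of the cited reference; so even the correct half of (\ref{item:17})$\Rightarrow$(\ref{item:10}) is not complete as written.
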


With this theorem at hand, it is a simple matter to prove the claim
from above.

\begin{lem}\label{lem:54}
  Let $\tilde{g} \in \M_f$, and let $\rho \in C^0(M)$ be any
  continuous function.  If the sequence $\{ g_k \}$ $\omega$-converges
  to $\tilde{g}$, then $\mu_{g_k}$ converges weakly to
  $\mu_{\tilde{g}}$, so in particular
  \begin{equation*}
    \lim_{k \rightarrow \infty} \| \rho \|_{g_k} = \| \rho \|_{\tilde{g}}.
  \end{equation*}
\end{lem}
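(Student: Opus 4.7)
The plan is to deduce both assertions almost directly from Theorem \ref{thm:19}, which already tells us that $\Vol(Y, g_k) \to \Vol(Y, \tilde{g})$ for every measurable $Y \subseteq M$. In other words, the nonnegative Borel measures $\mu_{g_k}$ converge setwise (and hence, in particular, on every closed set) to $\mu_{\tilde{g}}$. Corollary \ref{cor:15} ensures that $\mu_{\tilde{g}}$ is totally finite, and each $\mu_{g_k}$ is even a smooth measure and hence totally finite as well, so we are genuinely in the setting of $\mathfrak{M}(M)$.

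Given this, weak convergence is essentially immediate: for any closed $F \subseteq M$,
\begin{equation*}
  \lim_{k \to \infty} \mu_{g_k}(F) = \mu_{\tilde{g}}(F),
\end{equation*}
so in particular $\limsup_{k} \mu_{g_k}(F) = \mu_{\tilde{g}}(F)$. Applying the Portmanteau theorem (Theorem \ref{thm:10}), specifically the implication (\ref{item:10}) $\Rightarrow$ (\ref{item:17}), yields that $\mu_{g_k} \to \mu_{\tilde{g}}$ weakly in $\mathfrak{M}(M)$.

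For the second assertion, I would simply apply the definition of weak convergence to the test function $\rho^2$. Since $M$ is compact and $\rho$ is continuous, $\rho^2$ is a bounded continuous function on $M$, so
\begin{equation*}
  \| \rho \|_{g_k}^2 = \integral{M}{}{\rho^2}{\mu_{g_k}} \longrightarrow \integral{M}{}{\rho^2}{\mu_{\tilde{g}}} = \| \rho \|_{\tilde{g}}^2,
\end{equation*}
and taking square roots gives the claimed convergence $\| \rho \|_{g_k} \to \| \rho \|_{\tilde{g}}$.

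There is really no serious obstacle here; the bulk of the work has already been done in proving Theorem \ref{thm:19} via the Lebesgue dominated convergence theorem in Section \ref{sec:ad-conv-conc}. The only conceptual step is the translation from setwise convergence on Borel sets to weak convergence against continuous test functions, which is exactly what the Portmanteau theorem is designed to provide. The compactness of $M$, which guarantees that the continuous function $\rho$ is automatically bounded, is what allows us to feed $\rho^2$ directly into the definition of weak convergence without any additional truncation or cut-off argument.
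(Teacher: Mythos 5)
Your proof is correct and follows essentially the same route as the paper: Theorem \ref{thm:19} supplies condition (\ref{item:10}) of the Portmanteau theorem (Theorem \ref{thm:10}), giving weak convergence of $\mu_{g_k}$ to $\mu_{\tilde{g}}$, and the norm convergence follows by testing against the bounded continuous function $\rho^2$. You merely spell out a bit more explicitly what the paper leaves as "implying the lemma immediately."
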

\begin{proof}
  We wish to apply Theorem \ref{thm:10}, which refers to Borel
  measures.  According to our conventions, the measures $\mu_{g_k}$
  and $\mu_{\tilde{g}}$ are considered as measures on the Lebesgue
  algebra of $M$, but since the Borel algebra is a subalgebra of the
  Lebesgue algebra, we can use Theorem \ref{thm:10} by simply
  restricting these measures to the Borel algebra.
  
  By Theorem \ref{thm:19}, condition (\ref{item:10}) of Theorem
  \ref{thm:10} holds.  Therefore, $\mu_{g_k}$ converges weakly to
  $\mu_{\tilde{g}}$, implying the lemma immediately.
\end{proof}

\subsection{$L^p$ spaces}\label{sec:lp-spaces}

We now move on to the next fact we need.  In this subsection, we prove
that if $\tilde{g} \in \M_f$, i.e., $\tilde{g}$ is a measurable,
finite-volume semimetric, then the set of $C^\infty$ functions is
dense in $L^p(M, \tilde{g})$ for $1 \leq p < \infty$, just as in the
case of a smooth volume form.  (Of course, by $L^p(M, \tilde{g})$ we
mean those functions on $M$ whose absolute value to the $p$-th power
is integrable with respect to $\mu_{\tilde{g}}$.)

To prove this claim, we first prove a statement about measures on
$\R^n$ that is proved almost identically to
\cite[Cor.~4.2.2]{bogachev07:_measur_theor}, where the statement is
made for Borel measures.  To prove it for Lebesgue measures, only one
tiny modification is necessary.

\begin{thm}\label{thm:13}
  Let a nonnegative measure $\nu$ on the algebra of Lebesgue sets in
  $\R^n$ be bounded on bounded sets. Then the class $C_0^\infty(\R^n)$
  of smooth functions with bounded support is dense in $L^p(\R^n,
  \nu)$, $1 \leq p < \infty$.
\end{thm}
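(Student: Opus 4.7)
The plan is to follow the familiar three-step route for density of $C_0^\infty$: reduce to characteristic functions of finite-measure sets, approximate indicators by continuous compactly supported functions via regularity of $\nu$, and smooth by convolution with a mollifier. The ``tiny modification'' relative to the Borel case in Bogachev's argument should enter only at the first of these steps, where Lebesgue measurability needs to be swapped out for Borel measurability.

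First I would reduce the statement to the following claim: for every Lebesgue measurable $A \subset \R^n$ with $\nu(A) < \infty$ and every $\epsilon > 0$, there exists $f \in C_0^\infty(\R^n)$ with $\| \chi_A - f \|_{L^p(\nu)} < \epsilon$. This reduction is the standard one: a general $f \in L^p(\R^n, \nu)$ can be truncated in height, cut off outside a large ball, and uniformly approximated by simple functions $\sum c_i \chi_{A_i}$ with each $\nu(A_i) < \infty$; the hypothesis that $\nu$ is bounded on bounded sets is used here to make the cutoff step legitimate in $L^p$.

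The actual modification is now this. Given $A$ Lebesgue measurable with $\nu(A) < \infty$, apply Lemma \ref{lem:53} to the restriction of $\nu$ to the Borel $\sigma$-algebra to write $A = B \cup G$, where $B$ is Borel and $G$ lies in a Borel $\nu$-nullset. Then $\chi_A = \chi_B$ off a $\nu$-nullset, so $\chi_A = \chi_B$ in $L^p(\R^n, \nu)$, and it suffices to approximate $\chi_B$ for Borel $B$ with $\nu(B) < \infty$. From this point one runs Bogachev's argument verbatim: exploit outer-and-inner regularity of the Borel measure $\nu|_{\mathcal{B}}$ on $\R^n$ (which is finite on compacts, because $\nu$ is bounded on bounded sets) to pick a compact $K$ and an open $U$ with $K \subset B \subset U$ and $\nu(U \setminus K) < \epsilon^p / 2$, then use Urysohn's lemma to produce a continuous $f$ with $\chi_K \leq f \leq \chi_U$ and $\supp f$ a compact subset of $U$, so that
\begin{equation*}
  \| \chi_B - f \|_{L^p(\nu)}^p \leq \nu(U \setminus K) < \epsilon^p / 2.
\end{equation*}

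Finally, mollify: set $f_\delta := f * \phi_\delta$ for a standard smooth approximation $\phi_\delta$ of the identity. Then $f_\delta \in C_0^\infty(\R^n)$, $\supp f_\delta$ sits in a fixed compact neighborhood of $\supp f$ for $\delta$ small, $\| f_\delta \|_\infty \leq \| f \|_\infty$, and $f_\delta \to f$ uniformly. Since $\nu$ is finite on that compact neighborhood, dominated convergence gives $\| f - f_\delta \|_{L^p(\nu)} < \epsilon / 2^{1/p}$ for $\delta$ small enough, completing the approximation of $\chi_A$ by $f_\delta$ in $L^p(\R^n, \nu)$. The main conceptual obstacle is essentially absent: the only real point is recognizing that the jump from Borel to Lebesgue sets costs nothing more than a single application of Lemma \ref{lem:53}, and that the hypothesis ``bounded on bounded sets'' provides exactly the local finiteness needed to run the regularity and mollification arguments unchanged.
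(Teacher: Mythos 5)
Your proof is correct, and the one step that actually matters here---passing from Lebesgue to Borel sets---is handled exactly as in the paper: decompose a finite-$\nu$-measure Lebesgue set as a Borel set union a subset of a Borel nullset via Lemma \ref{lem:53}, so that the indicator is unchanged in $L^p(\nu)$. Where you diverge is in how the Borel case is then finished. The paper does not reprove it at all: it simply invokes the proof of Bogachev's Cor.~4.2.2, which approximates any finite-measure Borel set by sets from the algebra generated by coordinate cubes, and then notes that indicators of cubes are easily approximated by smooth compactly supported functions. You instead rebuild the Borel case from scratch: reduce to indicators via simple functions, sandwich the Borel set between a compact and an open set using regularity of the locally finite Borel restriction of $\nu$, apply Urysohn, and mollify. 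Both routes are standard and both use the hypothesis that $\nu$ is bounded on bounded sets in the same role (local finiteness); yours is self-contained but needs the regularity theorem for Radon measures plus a convolution argument, while the paper's is shorter precisely because it piggybacks on the cited cube-algebra argument and only adds the Lemma \ref{lem:53} modification. One cosmetic remark: in your reduction step, the cutoff of $f$ outside a large ball follows from dominated convergence alone and does not actually need the boundedness-on-bounded-sets hypothesis; that hypothesis is only needed later, for regularity and for the mollification estimate---this does not affect correctness.
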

\begin{proof}
  By the proof of \cite[Cor.~4.2.2]{bogachev07:_measur_theor}, if $F$
  is any Borel measurable set with $\nu(F) < \infty$, then $F$ can be
  approximated to arbitrary accuracy by sets from the algebra
  $\mathcal{C}$ generated by cubes with edges parallel to the
  coordinate axes.  (By this we mean that for any given $\epsilon >
  0$, we can find a set $A \in \mathcal{C}$ such that $\nu(F \setminus
  A) + \nu(A \setminus F) < \epsilon$.)

  Now, say that $E$ is a Lebesgue measurable set with $\nu(E) <
  \infty$.  Then by Lemma \ref{lem:53}, $E = F \cup G$, where $F$ is
  Borel measurable and $\nu(G) = 0$.  By approximating $F$ with sets
  from $\mathcal{C}$, we can therefore approximate $E$ with sets from
  $\mathcal{C}$.

  This means that linear combinations of the characteristic functions
  of sets in $\mathcal{C}$ are dense in $L^p(\R^n, \nu)$.  But we can
  easily approximate such functions by smooth functions with
  compact support---it suffices to be able to approximate any open
  cube, which is easily done.
\end{proof}

Now, since any $\tilde{g} \in \M_f$ has finite volume, its induced
measure $\mu_{\tilde{g}}$ clearly satisfies the hypotheses of the
theorem in any coordinate chart.  Therefore, we have:

\begin{cor}\label{cor:12}
  If $\tilde{g} \in \M_f$, then $C^\infty(M)$ is dense in
  $L^p(M,\tilde{g})$.
\end{cor}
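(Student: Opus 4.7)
The plan is to reduce the global statement to the local Euclidean statement (Theorem \ref{thm:13}) via a partition of unity subordinate to the finite amenable atlas $\{(U_\alpha, \phi_\alpha)\}$ of Convention \ref{cvt:5}. First I would choose a smooth partition of unity $\{\chi_\alpha\}$ with $\supp \chi_\alpha \subset U_\alpha$. Given any $f \in L^p(M, \tilde{g})$ and $\epsilon > 0$, it suffices to approximate each of the finitely many pieces $\chi_\alpha f$ to within $\epsilon / (\text{number of charts})$ by a smooth function supported in a compact subset of $U_\alpha$, and then to sum the approximations.

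Next I would push each piece forward via $\phi_\alpha$ to a function $(\chi_\alpha f) \circ \phi_\alpha^{-1}$ defined on $\phi_\alpha(U_\alpha) \subseteq \R^n$, extended by zero to all of $\R^n$. The key observation is that the push-forward measure $(\phi_\alpha)_* \mu_{\tilde{g}}$, likewise extended by zero, is a nonnegative Lebesgue measure on $\R^n$ whose total mass is bounded by $\Vol(M, \tilde{g}) < \infty$ (this is where the hypothesis $\tilde{g} \in \M_f$ enters), hence it is bounded on every bounded set. Thus Theorem \ref{thm:13} applies and produces a function $\varphi_\alpha \in C_0^\infty(\R^n)$ approximating $(\chi_\alpha f) \circ \phi_\alpha^{-1}$ in $L^p(\R^n, (\phi_\alpha)_* \mu_{\tilde{g}})$.

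The one technical subtlety — and I expect this to be the main obstacle — is that pulling back $\varphi_\alpha$ via $\phi_\alpha^{-1}$ only gives a smooth function on $U_\alpha$, not on $M$; I need the pullback to extend by zero to a globally smooth function on $M$. This is exactly what the amenability of the atlas is designed to handle: by Definition \ref{dfn:1}, $U_\alpha \subset K_\alpha \subset V_\alpha$ with $K_\alpha$ compact. Before invoking Theorem \ref{thm:13}, I would multiply the target function $(\chi_\alpha f) \circ \phi_\alpha^{-1}$ by a smooth cutoff that equals $1$ on $\phi_\alpha(\supp \chi_\alpha)$ and vanishes outside a compact subset of $\phi_\alpha(U_\alpha)$; alternatively, after applying Theorem \ref{thm:13} one further multiplies $\varphi_\alpha$ by such a cutoff. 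Either way, the resulting approximant has support compactly contained in $\phi_\alpha(U_\alpha)$, so its pullback via $\phi_\alpha^{-1}$, extended by zero off $U_\alpha$, lies in $C^\infty(M)$. A change-of-variables in the integral defining the $L^p$ norm shows that the approximation error is preserved under this transfer, and summing over $\alpha$ completes the proof.
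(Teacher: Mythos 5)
Your proposal is correct and follows essentially the same route as the paper: localize to coordinate charts, use finite volume of $\mu_{\tilde{g}}$ to verify the hypotheses of Theorem \ref{thm:13}, and approximate there. The paper states this in one line and leaves the gluing implicit, whereas you supply the partition-of-unity, cutoff, and change-of-variables details that make the local-to-global step precise — a worthwhile elaboration, but not a different argument.
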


\section{Bounded semimetrics}\label{sec:meas-techn-result}

In this section, we go one step further in our understanding of the
injection $\Omega : \overline{\M} \rightarrow \Mfhat$ that was
introduced in Theorem \ref{thm:38}.  Specifically, we want to see that
the image $\Omega(\overline{\M})$ contains all equivalence classes of
bounded, measurable semimetrics (cf.~Definition \ref{dfn:23}).

Our strategy for proving this is to first prove the fact for smooth
semimetrics by showing that for any smooth semimetric $g_0$, there is
a finite path $g_t$, $t \in (0,1]$, in $\M$ with $\lim_{t \to 0} g_t =
g_0$ (where we take the limit in the $C^\infty$ topology of $\s$).
Similarly to the constructions in Section \ref{sec:compl-metr-spac},
it is then simple to construct a sequence $\{g_{t_k}\}$ from $g_t$
such that $g_{t_k} \overarrow{\omega} g_0$ for $k \rightarrow \infty$.
If we simply let $t_k$ be any monotonically decreasing sequence
converging to zero, then it is trivial to show $\omega$-convergence of
this sequence.

\subsection{Paths to the boundary}\label{sec:paths-boundary}

Before we get into the proofs, we put ourselves in the proper setting,
for which we first need to introduce the notion of a quasi-amenable subset.
These are defined by weakening the requirements for an amenable subset
(cf.~Definition \ref{dfn:2}), giving up the condition of being
``uniformly inflated'':

\begin{dfn}\label{dfn:26}
  We call a subset $\U \subset \M$ \emph{quasi-amenable} if $\U$ is
  convex and we can find a constant $C$ such that for all $\tilde{g}
  \in \U$, $x \in M$ and $1 \leq i,j \leq n$,
  \begin{equation}\label{eq:134}
    |\tilde{g}_{ij}(x)| \leq C.
  \end{equation}
\end{dfn}

Quasi-amenable subsets are bounded subsets of $\s$, but they can run
right up to the boundary of $\M$ as a topological subset of $\s$.  We
denote this boundary by $\partial \M$.\label{p:btop}  Since $\M$ consists of all
smooth $(0,2)$-tensor fields on $M$ which induce positive definite
scalar products on $T_x M$ at all $x \in M$, we have that each tensor
field in $\partial \M$ induces a smooth, positive \emph{semi}definite
scalar product at each point of $M$.  That is,
\begin{equation*}
  \partial \M = \{ h \in \mathcal{S} \mid h \not\in \M\
  \textnormal{and}\ h(x)(X,X) \geq 0 \ \textnormal{for all}\ 
  X\in T_x M \}.
\end{equation*}
So $\partial \M$ consists of all smooth semimetrics that somewhere
fail to be positive definite.

Let $\U$ be any quasi-amenable subset, and denote by
$\cl(\U)$ \label{p:cl-U} the closure of $\U$ in the $C^\infty$
topology of $\s$.  Thus, $\cl(\U)$ may contain some smooth
semimetrics.

Now, suppose some $g_0 \in \cl(\U) \cap \btop$ is given, and let $g_1
\in \U$ have the property that $h := g_1 - g_0 \in \M$, i.e., that $h$
is positive definite.  Exploiting the linear structure of $\M$, we
define the simplest path imaginable from $g_0$ to $g_1$:
\begin{equation}\label{eq:17}
  g_t := g_0 + t h.
\end{equation}
Then by the convexity of $\U$, $g_t$ is a path $(0,1] \rightarrow \U$
with limit (in the topology of $\s$) as $t \rightarrow 0$ equal to
$g_0$.

\begin{rmk}\label{rmk:19}
  We make two remarks about this setup:
  \begin{enumerate}
  \item Requiring that $h > 0$ is a technical assumption that we will
    use later; we do not believe it to be essential to the end result.
  \item It is not hard to see that any $g_0 \in \partial \M$ is
    contained in $\cl(\U)$ for an appropriate quasi-amenable subset
    $\U$.
  \end{enumerate}
\end{rmk}

Recall that the length of $g_t$ is given by
\begin{equation}\label{eq:29}
  \begin{aligned}
    L(g_t) &= \integral{0}{1}{\| g'_t \|_{g_t}}{d t} =
    \integral{0}{1}{\left(
        \integral{M}{}{\tr_{g_t}((g'_t)^2)}{\mu_{g_t}}
      \right)^{1/2}}{d t} \\
    &= \integral{0}{1}{\left( \integral{M}{}{\tr_{g_t}(h^2) \sqrt{\det
            (g^{-1} g_t)}}{\mu_g} \right)^{1/2}}{d t}
  \end{aligned}
\end{equation}
To prove that $g_t$ is a finite path, we must therefore estimate the
integrand,
\begin{equation*}
  \tr_{g_t} (h^2) \sqrt{\det (g^{-1} g_t)}.  
\end{equation*}
This will follow from pointwise estimates combined with a
compactness/continuity argument.

\subsection{Pointwise estimates}\label{sec:pointwise-estimates}

Let $A = (a_{ij})$ and $B = (b_{ij})$ be real, symmetric $n \times n$
matrices, with $A_t := A+tB$ for $t \in (0,1]$.  We will assume that
$B > 0$ and that $A \geq 0$.  (In this scheme, $A$ and $B$ play the
role of $g_0(x)$ and $h(x)$, respectively, at some point $x \in M$.)
Furthermore, we fix an arbitrary matrix $C$ that is invertible and
symmetric (this plays the role of $g(x)$).

Therefore, to get a pointwise estimate on $\tr_{g_t} (h^2) \sqrt{\det
  g^{-1} g_t}$, we need to estimate $\tr_{A_t}(B^2) \sqrt{\det (C^{-1}
  A_t)}$.  We prove the desired estimate in three lemmas.

For any symmetric matrix $D$, let $\lambda^D_{\mathrm{min}} =
\lambda^D_1 \leq \cdots \leq \lambda^D_n = \lambda^D_{\mathrm{max}}$
be its eigenvalues numbered in increasing order.

\begin{lem}\label{lem:1}
  \begin{align*}
    \lambda^{A_t}_{\mathrm{min}} &\geq \lambda^A_{\mathrm{min}} + t
    \lambda^B_{\mathrm{min}} \\
    \lambda^{A_t}_{\mathrm{max}} &\leq \lambda^A_{\mathrm{max}} + t
    \lambda^B_{\mathrm{max}} \leq \lambda^A_{\mathrm{max}} +
    \lambda^B_{\mathrm{max}}
  \end{align*}
\end{lem}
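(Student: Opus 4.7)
The plan is to prove both inequalities directly from the min-max (Rayleigh quotient) characterization of the extremal eigenvalues of a symmetric matrix, which the paper has already recorded as equations \eqref{eq:132} and \eqref{eq:133} in the proof of Lemma \ref{lem:46}. Fix the standard scalar product on $\mathbb{R}^n$ and let $S^{n-1}$ denote the unit sphere. Since $A$ and $B$ are symmetric, so is $A_t = A + tB$, and for any unit vector $v$,
\begin{equation*}
  \langle v, A_t v \rangle = \langle v, A v \rangle + t \langle v, B v \rangle.
\end{equation*}

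For the first inequality, I would take the minimum over $v \in S^{n-1}$ and use the elementary fact that the minimum of a sum is bounded below by the sum of the minima:
\begin{equation*}
  \lambda^{A_t}_{\min} = \min_{v \in S^{n-1}} \langle v, A_t v \rangle
  \geq \min_{v \in S^{n-1}} \langle v, A v \rangle
  + t \min_{v \in S^{n-1}} \langle v, B v \rangle
  = \lambda^A_{\min} + t \lambda^B_{\min},
\end{equation*}
using $t \geq 0$ to pull the factor $t$ out of the minimum without changing its sign. Symmetrically, replacing each $\min$ by $\max$ and reversing the inequality (since the maximum of a sum is bounded above by the sum of the maxima) gives
\begin{equation*}
  \lambda^{A_t}_{\max} \leq \lambda^A_{\max} + t \lambda^B_{\max}.
\end{equation*}

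For the final bound $\lambda^A_{\max} + t \lambda^B_{\max} \leq \lambda^A_{\max} + \lambda^B_{\max}$, I would note that $B > 0$ implies $\lambda^B_{\max} > 0$ by Proposition \ref{prop:7}, and combine this with $t \leq 1$. There is really no main obstacle here; the only subtlety worth flagging is that the argument requires $t \geq 0$ in order to factor $t$ out of both the minimum and the maximum (for $t < 0$ the roles of $\min$ and $\max$ would swap on the $B$-term), which is consistent with the hypothesis $t \in (0,1]$ of the setup in \eqref{eq:17}.
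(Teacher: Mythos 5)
Your proof is correct and is essentially the paper's argument: the paper simply cites the concavity/convexity of $\lambda_{\mathrm{min}}$ and $\lambda_{\mathrm{max}}$ from Lemma \ref{lem:46} (together with $\lambda^B_{\mathrm{max}} > 0$ and $t \leq 1$), and the proof of that lemma is exactly the Rayleigh-quotient computation via \eqref{eq:132} and \eqref{eq:133} that you carry out directly. Your explicit remark that $t \geq 0$ is needed to pull $t$ out of the min and max is a fair point of care, but it is the same mechanism the paper relies on.
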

\begin{proof}
  By Lemma \ref{lem:46}, the function mapping a self-adjoint matrix to
  its minimal (resp.~maximal) eigenvalue is concave (resp.~convex).
  This, combined with the facts that $\lambda^B_{\textnormal{max}} >
  0$ (since $B > 0$) and $t \leq 1$, gives the result immediately.
\end{proof}

\begin{lem}\label{lem:3}
  \begin{equation*}
    \tr_{A_t} \left( B^2 \right) \sqrt{\det C^{-1} A_t} \leq
    \frac{n \left(
        \lambda^B_{\mathrm{max}} \right)^2 \left( \lambda^{A_t}_{\mathrm{max}}
      \right)^{\frac{n-1}{2}}}{\sqrt{\det C} \left(
        \lambda^A_{\mathrm{min}} + t \lambda^B_{\mathrm{min}}
      \right)^{3/2}}
  \end{equation*}
\end{lem}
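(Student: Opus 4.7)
The plan is to bound the two factors separately and then combine them, using the eigenvalue inequality of Lemma~\ref{lem:1} at the very end to replace $\lambda^{A_t}_{\mathrm{min}}$ by $\lambda^A_{\mathrm{min}} + t\lambda^B_{\mathrm{min}}$. The key observation governing the exponents is that the factor $(\lambda^A_{\mathrm{min}} + t\lambda^B_{\mathrm{min}})^{3/2}$ in the denominator of the claim must come from combining a factor $(\lambda^{A_t}_{\mathrm{min}})^2$ in the denominator (from the trace term) with a factor $\sqrt{\lambda^{A_t}_{\mathrm{min}}}$ in the numerator (peeled off the determinant).

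First I would handle $\tr_{A_t}(B^2)$. Since $A_t > 0$ for $t > 0$ (as $A \geq 0$ and $B > 0$), the positive square root $A_t^{1/2}$ is defined, and a cyclic rearrangement gives
\begin{equation*}
\tr_{A_t}(B^2) = \tr(A_t^{-1} B A_t^{-1} B) = \tr\bigl((A_t^{-1/2} B A_t^{-1/2})^2\bigr) = \|A_t^{-1/2} B A_t^{-1/2}\|_{\mathrm{HS}}^2.
\end{equation*}
Since $A_t^{-1/2} B A_t^{-1/2}$ is symmetric, its Hilbert--Schmidt norm squared is at most $n$ times its squared operator norm, and submultiplicativity of the operator norm bounds the latter by $(\lambda^B_{\mathrm{max}})^2 / (\lambda^{A_t}_{\mathrm{min}})^2$. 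This yields
\begin{equation*}
\tr_{A_t}(B^2) \leq \frac{n(\lambda^B_{\mathrm{max}})^2}{(\lambda^{A_t}_{\mathrm{min}})^2}.
\end{equation*}

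Next I would bound $\sqrt{\det(C^{-1}A_t)} = \sqrt{\det A_t}/\sqrt{\det C}$. Writing $\det A_t$ as the product of its eigenvalues, I pull out one factor of $\lambda^{A_t}_{\mathrm{min}}$ and bound the remaining $n-1$ eigenvalues by $\lambda^{A_t}_{\mathrm{max}}$, obtaining
\begin{equation*}
\sqrt{\det A_t} \leq \sqrt{\lambda^{A_t}_{\mathrm{min}}}\,(\lambda^{A_t}_{\mathrm{max}})^{(n-1)/2}.
\end{equation*}

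Multiplying the two estimates produces
\begin{equation*}
\tr_{A_t}(B^2)\sqrt{\det(C^{-1}A_t)} \leq \frac{n(\lambda^B_{\mathrm{max}})^2 (\lambda^{A_t}_{\mathrm{max}})^{(n-1)/2}}{\sqrt{\det C}\,(\lambda^{A_t}_{\mathrm{min}})^{3/2}},
\end{equation*}
since one power of $\sqrt{\lambda^{A_t}_{\mathrm{min}}}$ cancels against the $(\lambda^{A_t}_{\mathrm{min}})^2$ from the trace bound. Finally I apply the lower bound $\lambda^{A_t}_{\mathrm{min}} \geq \lambda^A_{\mathrm{min}} + t\lambda^B_{\mathrm{min}}$ from Lemma~\ref{lem:1} in the denominator to arrive at the stated inequality. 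There is no real obstacle here; the only point to be careful about is the bookkeeping of eigenvalue exponents, which is precisely why it is essential to split off $\sqrt{\lambda^{A_t}_{\mathrm{min}}}$ (rather than $\sqrt{\lambda^{A_t}_{\mathrm{max}}}$) from the determinant in the second step.
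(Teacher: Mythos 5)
Your proof is correct and follows essentially the same route as the paper: bound $\tr_{A_t}(B^2)$ by $n(\lambda^B_{\mathrm{max}})^2/(\lambda^{A_t}_{\mathrm{min}})^2$, bound $\det A_t$ by $\lambda^{A_t}_{\mathrm{min}}(\lambda^{A_t}_{\mathrm{max}})^{n-1}$, multiply, and invoke Lemma~\ref{lem:1}. The only cosmetic difference is that you obtain the trace estimate via the operator/Hilbert--Schmidt norms of the symmetric conjugate $A_t^{-1/2}BA_t^{-1/2}$, whereas the paper computes directly in a basis diagonalizing $B$ and uses $\tr(A_t^{-2})\leq n(\lambda^{A_t}_{\mathrm{min}})^{-2}$; both yield the identical intermediate bound.
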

\begin{proof}
  We focus on the trace term first.  Note
  \begin{equation*}
    \tr_{A_t}( B^2 ) = \tr \left( \left( A_t^{-1} B \right)^2 \right).
  \end{equation*}
  Since $B$ is a symmetric matrix, there exists a basis for which $B$
  is diagonal, so that $B = \diag(\lambda_1^B,\ldots,\lambda_n^B)$.
  In this basis, if we denote $A_t^{-1} = (a_t^{ij})$, then we have
  \begin{equation}\label{eq:9}
    \begin{aligned}
      \tr \left( \left( A_t^{-1} B \right)^2 \right) &= \sum_{ij} a_t^{ij} \lambda_j^B
      a_t^{ji} \lambda_i^B \\
      &= \sum_{ij} \left(a^{ij}_t\right)^2 \lambda_i^B \lambda_j^B \\
      &\leq \left( \lambda^B_{\mathrm{max}} \right)^2 \sum_{ij}
      \left(a^{ij}_t\right)^2 \\
      &= \left( \lambda^B_{\mathrm{max}} \right)^2 \tr \left(
        A_t^{-2}\right),
    \end{aligned}
  \end{equation}
  where the second line follows from symmetry of $A_t^{-1}$ and the
  last line follows from
  \begin{equation*}
    \tr \left( A_t^{-2} \right) = \sum_{ij} a_t^{ij} a_t ^{ji} =
    \sum_{ij} \left( a_t^{ij} \right)^2.
  \end{equation*}

  Now, recall from the discussion in the proof of Lemma \ref{lem:48}
  that the trace of the square of a matrix is given by the sum of the
  squares of its eigenvalues.  Therefore, 
  \begin{equation}\label{eq:10}
    \tr \left( A_t^{-2}\right) = \sum_i \left( \lambda^{A_t}_i \right)^{-2} \leq
    n \left( \lambda^{A_t}_{\mathrm{min}} \right)^{-2}.
  \end{equation}
  This takes care of the trace term.

  For the determinant term, we clearly have
  \begin{equation}\label{eq:11}
    \det A_t = \lambda^{A_t}_1 \cdots \lambda^{A_t}_n \leq
    \lambda^{A_t}_{\mathrm{min}} \left( \lambda^{A_t}_{\mathrm{max}} \right)^{n-1}.
  \end{equation}
  Combining equations \eqref{eq:9}, \eqref{eq:10} and \eqref{eq:11}
  with the estimate of Lemma \ref{lem:1} now immediately yields the
  result.
\end{proof}

Since $A \geq 0$, we know that $\lambda^A_{\mathrm{min}} \geq 0$.
Therefore we can also immediately write the estimate of Lemma
\ref{lem:3} in a weaker, ``worst-case'' form:

\begin{lem}\label{lem:4}
  \begin{equation*}
    \tr_{A_t} \left( B^2 \right) \sqrt{\det C^{-1} A_t} \leq
    \frac{n \left(
        \lambda^B_{\mathrm{max}} \right)^2 \left( \lambda^{A_t}_{\mathrm{max}}
      \right)^{\frac{n-1}{2}}}{\sqrt{\det C} \left(
        \lambda^B_{\mathrm{min}}
      \right)^{3/2}}  \frac{1}{t^{3/2}}
  \end{equation*}
\end{lem}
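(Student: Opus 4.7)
The proof plan is essentially a one-line observation, as the text preceding the statement already signals. The strategy is to deduce Lemma \ref{lem:4} directly from Lemma \ref{lem:3} by discarding the $\lambda^A_{\min}$ contribution in the denominator.

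Concretely, I would argue as follows. Since $A \geq 0$ by assumption, Proposition \ref{prop:7} gives $\lambda^A_{\min} \geq 0$. Combined with $t > 0$ and $\lambda^B_{\min} > 0$ (the latter from $B > 0$), we obtain the pointwise inequality
\begin{equation*}
  \lambda^A_{\min} + t\lambda^B_{\min} \;\geq\; t\lambda^B_{\min} \;>\; 0,
\end{equation*}
and hence
\begin{equation*}
  \bigl(\lambda^A_{\min} + t\lambda^B_{\min}\bigr)^{3/2} \;\geq\; t^{3/2}\bigl(\lambda^B_{\min}\bigr)^{3/2}.
\end{equation*}
Taking reciprocals reverses the inequality, so
\begin{equation*}
  \frac{1}{\bigl(\lambda^A_{\min} + t\lambda^B_{\min}\bigr)^{3/2}} \;\leq\; \frac{1}{t^{3/2}\bigl(\lambda^B_{\min}\bigr)^{3/2}}.
\end{equation*}
Plugging this into the right-hand side of Lemma \ref{lem:3}, and noting that the remaining factor
\begin{equation*}
  \frac{n\bigl(\lambda^B_{\max}\bigr)^2 \bigl(\lambda^{A_t}_{\max}\bigr)^{(n-1)/2}}{\sqrt{\det C}}
\end{equation*}
is nonnegative (the eigenvalues of $B$ are real since $B$ is symmetric, and $\lambda^{A_t}_{\max} \geq 0$ since $A_t = A + tB > 0$), multiplying through preserves the inequality and yields the claimed bound.

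There is no genuine obstacle here: the entire content is the substitution $\lambda^A_{\min} \mapsto 0$, which is permissible precisely because we have imposed $A \geq 0$. The value of Lemma \ref{lem:4} over Lemma \ref{lem:3} is that it removes all dependence on $A$ from the denominator, at the cost of a non-integrable $t^{-3/2}$ blowup as $t \to 0$. Presumably this weaker but $A$-free form will be used in the subsequent estimate of $L(g_t)$ from \eqref{eq:29}, where uniform control across the base manifold $M$ matters more than sharpness at each point, and where the $t^{3/2}$ factor will be compensated by taking a higher power of $t$ in the definition of the path or by working only on the complement of a small-volume subset.
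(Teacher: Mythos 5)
Your proof is correct and is exactly the paper's argument: since $A \geq 0$ gives $\lambda^A_{\mathrm{min}} \geq 0$, one bounds the denominator of Lemma \ref{lem:3} below by $t^{3/2}\bigl(\lambda^B_{\mathrm{min}}\bigr)^{3/2}$ and the estimate follows immediately. (Only your closing speculation about how the $t^{-3/2}$ is later absorbed is slightly off: in Theorem \ref{thm:2} the square root inside the length integral turns it into an integrable $t^{-3/4}$, so no modification of the path is needed.)
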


\subsection{Finiteness of $L(g_t)$}\label{sec:finiteness-lg_t}

We want to use the pointwise estimate of Lemma \ref{lem:4} to prove
the main result of the section.

It is clear that to pass from the pointwise result of Lemma
\ref{lem:4} to a global result, we will have to estimate the maximum
and minimum eigenvalues of $h$, as well as the maximum eigenvalue of
$g_t$.  We begin by noting that since we work over an amenable
coordinate atlas (cf.~Definition \ref{dfn:1}), all coefficients of
$h$, $g$ and $g_0$ are bounded in absolute value.  Therefore, so are
their determinants.  In particular, since $g > 0$ and $h > 0$, we can
assume that $\det g \geq C_0$ and $C_1 \geq \det h \geq C_2$ over each
chart of the amenable atlas for some constants $C_0,C_1,C_2 > 0$.

\begin{lem}\label{lem:21}
  The quantities $\lambda^h_{\mathrm{max}}$ and
  $\lambda^{g_t}_{\mathrm{max}}$, as local functions on each
  coordinate chart, are uniformly bounded, say
  $\lambda^h_{\mathrm{max}}(x) \leq C_3$ and $\lambda^{g_t}_{
    \mathrm{max}}(x) \leq C_4$ for all $x$ and $t$.
\end{lem}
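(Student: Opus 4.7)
The plan is to reduce the assertion to a standard linear-algebra fact: for a symmetric $n\times n$ matrix $M=(M_{ij})$, the maximal eigenvalue is bounded by a simple expression in the matrix entries, e.g.\ $\lambda^M_{\mathrm{max}}\le \bigl(\sum_{ij}M_{ij}^2\bigr)^{1/2}\le n\max_{ij}|M_{ij}|$ (this follows from the variational characterization $\lambda^M_{\mathrm{max}}=\max_{|v|=1}v^{T}Mv$ used in Lemma~\ref{lem:46}, combined with Cauchy--Schwarz). Alternatively, one may invoke continuity of the maximal-eigenvalue map (Lemma~\ref{lem:46}) together with compactness of the coefficient region.

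The first step is to get uniform coordinate bounds on the entries of $h$ and $g_t$. Since $\U$ is quasi-amenable (Definition~\ref{dfn:26}), in each chart $(U_\alpha,\phi_\alpha)$ of the amenable atlas we have $|\tilde g_{ij}(x)|\le C$ for every $\tilde g\in\U$, $x\in U_\alpha$, and $1\le i,j\le n$. Because $g_0\in\mathrm{cl}(\U)$ (closure in the $C^\infty$ topology of $\s$), taking limits preserves this bound, so $|g_{0,ij}(x)|\le C$ as well. Since $g_1\in\U$ also satisfies $|g_{1,ij}(x)|\le C$, the triangle inequality yields
\begin{equation*}
  |h_{ij}(x)|=|g_{1,ij}(x)-g_{0,ij}(x)|\le 2C,
\end{equation*}
and for $g_t=g_0+th$ with $t\in(0,1]$,
\begin{equation*}
  |g_{t,ij}(x)|\le |g_{0,ij}(x)|+t|h_{ij}(x)|\le 3C.
\end{equation*}

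The second step is to turn these coefficient bounds into eigenvalue bounds. Applying the linear-algebra estimate above at each $x$ to the symmetric matrices obtained as the coordinate representations of $h(x)$ and $g_t(x)$, we conclude
\begin{equation*}
  \lambda^{h}_{\mathrm{max}}(x)\le 2Cn=:C_3,\qquad \lambda^{g_t}_{\mathrm{max}}(x)\le 3Cn=:C_4,
\end{equation*}
uniformly in $x\in U_\alpha$ and $t\in(0,1]$. Since the amenable atlas is finite (Definition~\ref{dfn:1}), one may take the maximum of the constants over the finitely many charts, which gives a single pair of bounds valid chartwise.

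There is no real obstacle here; the only thing to be careful about is that the statement concerns the coordinate matrices of $h$ and $g_t$ in the chart (not the raised-index versions $H,G_t$), so the argument is purely algebraic and does not require knowing anything about $g$ beyond what enters through $\U$ being quasi-amenable. If one preferred, the same conclusion follows without invoking the explicit Cauchy--Schwarz estimate: the set of symmetric matrices with entries bounded by $3C$ is compact, $\lambda_{\mathrm{max}}$ is continuous on it by Lemma~\ref{lem:46}, and hence attains a (uniform) maximum.
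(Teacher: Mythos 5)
Your proof is correct and follows essentially the same route as the paper: uniform chartwise bounds on the coefficients of $h$ and $g_t$ (the paper gets them from continuity of $h$ over the amenable atlas and from $g_t\in\U$ by convexity, you from the quasi-amenable bound plus the triangle inequality), then the variational characterization \eqref{eq:133} of the maximal eigenvalue to convert coefficient bounds into eigenvalue bounds. The only cosmetic difference is that you make the constants explicit via Cauchy--Schwarz, whereas the paper invokes compactness of the Euclidean unit sphere and continuity.
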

\begin{proof}
  Recall the formula \eqref{eq:133} for the maximal eigenvalue of a
  symmetric matrix.  If $\langle\!\langle \cdot , \cdot
  \rangle\!\rangle$ is the Euclidean scalar product in a chart around
  the point $x \in M$, then
  \begin{equation*}
    \lambda^h_{\textnormal{max}}(x) = \max_{\substack{v \in T_x M \\
        \langle\!\langle v, v \rangle\!\rangle = 1}}
    \langle\!\langle v, h(x) v
    \rangle\!\rangle \quad
    \textnormal{and} \quad
    \lambda^{g_t}_{\textnormal{max}}(x) = \max_{\substack{v \in T_x M \\
        \langle\!\langle v, v \rangle\!\rangle = 1}}
    \langle\!\langle v, g_t(x) v
    \rangle\!\rangle
  \end{equation*}

  Keep in mind that we work over an amenable atlas and that the unit
  sphere in each $T_x M$ (with respect to the Euclidean scalar product
  $\langle\!\langle \cdot , \cdot \rangle\!\rangle$) is compact.
  Since $h$ is continuous and $g_t \in \U$ for all $t \in (0,1]$ we
  can find some constant that bounds $|h_{ij}(x)|$ and
  $|(g_t)_{ij}(x)|$ uniformly for all $x \in M$, all $1 \leq i, j \leq
  n$, and all $t \in (0,1]$.

  From this uniform bound, it is easy to see that there are constants
  $C_3$ and $C_4$ such that
  \begin{equation*}
    \langle\!\langle v, h(x) v \rangle\!\rangle \leq C_3, \quad
    \langle\!\langle v, g_t(x) v \rangle\!\rangle \leq C_4
  \end{equation*}
  for all $x \in M$, $t \in (0,1]$ and $v \in T_x M$ with
  $\langle\!\langle v, v \rangle\!\rangle = 1$.  Since passing to the
  maximum preserves these inequalities, we get the desired bounds on
  the eigenvalues.
\end{proof}

\begin{lem}\label{lem:7}
  The quantity $\lambda^h_{\min}$, as a function over each coordinate
  chart, is uniformly bounded away from 0, say $\lambda^h_{\min} \geq
  C_5 > 0$.
\end{lem}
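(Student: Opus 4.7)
The plan is to follow exactly the strategy of the proof of Lemma \ref{lem:21}, only using \eqref{eq:132} in place of \eqref{eq:133} and leveraging the strict positive-definiteness of $h$. The key point is that, by assumption, $h = g_1 - g_0$ is smooth and pointwise positive definite, so $\lambda^h_{\min}$ is positive on $M$; combined with compactness, this will give the uniform lower bound.

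More precisely, I would fix a chart $(U_\alpha, \phi_\alpha)$ of the amenable atlas and let $\langle\!\langle \cdot, \cdot \rangle\!\rangle$ denote the Euclidean scalar product in this chart (so that ``symmetric'' matches ``self-adjoint''). By the min-max formula \eqref{eq:132}, for each $x \in U_\alpha$,
\begin{equation*}
  \lambda^h_{\min}(x) \;=\; \min_{\substack{v \in T_x M \\ \langle\!\langle v, v \rangle\!\rangle = 1}} \langle\!\langle v, h(x) v \rangle\!\rangle.
\end{equation*}
By Lemma \ref{lem:46} the map $A \mapsto \lambda^A_{\min}$ is continuous in $A$, and since the coefficients of $h$ depend continuously on $x$, the function $x \mapsto \lambda^h_{\min}(x)$ is continuous on $U_\alpha$. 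Moreover, because $h(x) > 0$ for every $x \in M$, this function is strictly positive.

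The main (and really only) issue is to upgrade this pointwise positivity to a uniform lower bound, and this is immediate from the amenability of the atlas: by Definition \ref{dfn:1}, $U_\alpha \subset K_\alpha \subset V_\alpha$ where $K_\alpha$ is compact and $(V_\alpha, \psi_\alpha)$ is a chart in which the same coordinate expressions for $h$ make sense. Hence $\lambda^h_{\min}$ extends continuously to $K_\alpha$, where it must attain a positive minimum $C_5^\alpha > 0$. Finally, since the amenable atlas is finite, setting $C_5 := \min_\alpha C_5^\alpha > 0$ yields the claimed uniform bound on every chart, completing the proof. No real obstacle is anticipated here — the lemma is essentially the ``positive-definite'' mirror of Lemma \ref{lem:21}, and all the required inputs (continuity of $\lambda_{\min}$, compactness of $K_\alpha$, finiteness of the atlas) are already in place.
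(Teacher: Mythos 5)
Your proof is correct: since $h = g_1 - g_0$ is smooth and pointwise positive definite, the Euclidean minimal eigenvalue of its coordinate matrix is a continuous, strictly positive function on each chart, extends to the compact set $K_\alpha \supset U_\alpha$ provided by the amenable atlas (where it attains a positive minimum), and the finiteness of the atlas gives the uniform constant. This is, however, a different route from the paper's. The paper does not argue directly on $\lambda^h_{\min}$; instead it uses the elementary inequality $\det h(x) = \lambda^h_1(x)\cdots\lambda^h_n(x) \leq \lambda^h_{\min}(x)\,\lambda^h_{\max}(x)^{n-1}$, so that $\lambda^h_{\min}(x) \geq \lambda^h_{\max}(x)^{1-n}\det h(x) \geq C_3^{1-n}C_2 =: C_5$, where $C_3$ is the uniform bound on $\lambda^h_{\max}$ from Lemma \ref{lem:21} and $C_2$ is the standing uniform lower bound on $\det h$ recorded in the discussion preceding that lemma. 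The two arguments are close in substance — the paper's bound $\det h \geq C_2$ is itself obtained from the same continuity-plus-compactness mechanism over the amenable atlas that you invoke, so you are essentially running that mechanism directly on the eigenvalue function rather than on the determinant. What the paper's version buys is an explicit constant $C_5 = C_3^{1-n}C_2$ expressed through constants already fixed and reused in the length estimate of Theorem \ref{thm:2} (and re-examined in Lemma \ref{lem:8}); what your version buys is brevity and independence from the determinant–eigenvalue inequality, at the cost of producing an unnamed compactness constant. Either is acceptable here.
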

\begin{proof}
  Letting as usual $\lambda^h_1(x) \leq \cdots \leq \lambda^h_n(x)$ be
  the eigenvalues of $h(x)$ listed in increasing order, we have
  \begin{equation*}
    \det h(x) = \lambda^h_1(x) \cdots \lambda^h_n(x) \leq
    \lambda^h_{\mathrm{min}}(x) \lambda^h_{\mathrm{max}}(x)^{n-1}.
  \end{equation*}
  Therefore, by Lemma \ref{lem:21} and the discussion before it,
  \begin{equation*}
    \lambda^h_{\mathrm{min}}(x) \geq \lambda^h_{\mathrm{max}}(x)^{1-n}
    \det h(x) \geq C_3^{1-n} C_2 =: C_5.
  \end{equation*}
\end{proof}

\begin{thm}\label{thm:2}
  Define a path $g_t$ as in \eqref{eq:17}.  Then
  \begin{equation*}
    L(g_t) < \infty.
  \end{equation*}
\end{thm}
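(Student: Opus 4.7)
My plan is to assemble the pointwise bound of Lemma \ref{lem:4} with the uniform eigenvalue estimates of Lemmas \ref{lem:21} and \ref{lem:7}, together with the uniformity built into the amenable atlas, in order to obtain a chart-by-chart bound on the integrand of $L(g_t)$ that is independent of $x$ and involves only an integrable singularity in $t$ at $t=0$.

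Concretely, I would first work inside a single chart $(U_\alpha,\phi_\alpha)$ of the fixed amenable atlas and apply Lemma \ref{lem:4} at each $x\in U_\alpha$ with $A=g_0(x)$, $B=h(x)$, $C=g(x)$. By Lemma \ref{lem:7} we have $\lambda^h_{\min}(x)\geq C_5>0$ uniformly in $x$; by Lemma \ref{lem:21} we have $\lambda^h_{\max}(x)\leq C_3$ and $\lambda^{g_t}_{\max}(x)\leq C_4$ uniformly in $x$ and $t$; and by the discussion preceding Lemma \ref{lem:21} we have $\det g(x)\geq C_0>0$ uniformly in $x$. Plugging these into Lemma \ref{lem:4} yields an inequality of the form
\begin{equation*}
\tr_{g_t(x)}(h(x)^2)\sqrt{\det\bigl(g(x)^{-1}g_t(x)\bigr)}\ \leq\ \frac{K_\alpha}{t^{3/2}}
\end{equation*}
for every $x\in U_\alpha$ and every $t\in(0,1]$, where $K_\alpha$ is a constant depending only on the uniform bounds in that chart. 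Since the amenable atlas is finite, taking $K:=\max_\alpha K_\alpha$ gives the same bound globally on $M$.

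Next, I would integrate this pointwise estimate over $M$ against $\mu_g$: compactness of $M$ gives
\begin{equation*}
\int_M \tr_{g_t}(h^2)\sqrt{\det(g^{-1}g_t)}\,\mu_g\ \leq\ \frac{K\cdot\Vol(M,g)}{t^{3/2}},
\end{equation*}
and then taking the square root and inserting into \eqref{eq:29} bounds the length by
\begin{equation*}
L(g_t)\ \leq\ \sqrt{K\cdot\Vol(M,g)}\int_0^1 t^{-3/4}\,dt.
\end{equation*}
Since $-3/4>-1$, the $t$-integral is finite (equal to $4$), completing the proof.

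The only step requiring care is verifying that all the eigenvalue estimates really are uniform in both $x\in M$ and $t\in(0,1]$; this is already the content of Lemmas \ref{lem:21} and \ref{lem:7} together with convexity of $\U$ (which places every $g_t$ inside a common quasi-amenable subset), so no new work is needed. The key observation that makes the argument go through is that the ``bad'' singular factor from Lemma \ref{lem:4} is only $t^{-3/2}$ inside the inner integral, which becomes $t^{-3/4}$ after the square root in \eqref{eq:29}, and this exponent is on the right side of the integrability threshold at $t=0$.
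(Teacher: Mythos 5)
Your proposal is correct and follows essentially the same route as the paper's proof: the pointwise estimate of Lemma \ref{lem:4} combined with the uniform bounds $\det g \geq C_0$, $\lambda^h_{\mathrm{max}} \leq C_3$, $\lambda^{g_t}_{\mathrm{max}} \leq C_4$ and $\lambda^h_{\mathrm{min}} \geq C_5$ from Lemmas \ref{lem:21} and \ref{lem:7} gives a bound of the form $C_6/t^{3/2}$ on the integrand of \eqref{eq:29}, and integrability of $t^{-3/4}$ on $(0,1]$ then yields $L(g_t) \leq \sqrt{C_6 \Vol(M,g)} \int_0^1 t^{-3/4}\,dt < \infty$. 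The only cosmetic difference is that you organize the constants chart by chart before taking a maximum over the finite amenable atlas, whereas the paper states the uniform bounds directly; the content is the same.
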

\begin{proof}
  At each point $x \in M$ we have
  \begin{equation}\label{eq:15}
    \begin{aligned}
      \tr_{g_t(x)}(h(x)^2) \sqrt{\det(g(x)^{-1} g_t(x))} &\leq \frac{n
        \left( \lambda^{h}_{\mathrm{max}}(x) \right)^2 \left(
          \lambda^{g_t}_{\mathrm{max}}(x)
        \right)^{\frac{n-1}{2}}}{\sqrt{\det g(x)} \left(
          \lambda^{h}_{\mathrm{min}}(x) \right)^{3/2}}
      \frac{1}{t^{3/2}} \\
      &\leq \frac{1}{\sqrt{C_0}} \frac{C_3^2}{C_5^{3/2}}
      C_4^{\frac{n-1}{2}} \frac{1}{t^{3/2}} =: \frac{C_6}{t^{3/2}},
    \end{aligned}
  \end{equation}
  where the first inequality follows from Lemma \ref{lem:4}, and the
  second line follows from the discussion before Lemma \ref{lem:21},
  as well as Lemma \ref{lem:21} itself and Lemma \ref{lem:7}.
  
  By the integrability of $t^{-3/4}$, then,
  \begin{align*}
    L(g_t) &= \int_0^1 \left( \int_M \tr_{g_t(x)}(h(x)^2)
      \sqrt{\det(g(x)^{-1} g_t(x))} \, \mu_g \right)^{1/2} \, dt \\
    &\leq \int_0^1 \left( \int_M \frac{C_6}{t^{3/2}} \, \mu_g
    \right)^{1/2} \, dt = \sqrt{C_6 \Vol(M,g)} \int_0^1 \frac{1}{t^{3/4}}
    \, dt < \infty.
  \end{align*}
\end{proof}

\subsection{Bounded, nonsmooth
  semimetrics}\label{sec:nonsm-linfty-semim}

We now move on to showing that the equivalence class of any
bounded semimetric, not just smooth ones, is contained in
$\Omega(\overline{\M})$.  The results we've just proved will come in
handy.

Let's review what we already know about the image of $\Omega$.  From
Proposition \ref{prop:26}, we know that the completion of an amenable
subset $\U$ can be identified with its $L^2$-completion $\U^0$.  So
the equivalence class of any measurable metric that can be obtained as
the $L^2$ limit of a sequence of metrics from an amenable subset
belongs to $\Omega(\overline{\M})$.  Furthermore, as we noted in the
introduction to this section, it is easy to see that Theorem
\ref{thm:2} implies that for any smooth semimetric $\tilde{g}$, there
exists a sequence in $\M$ that $\omega$-converges to $\tilde{g}$.
Thus $[\tilde{g}]$ also belongs to $\Omega(\overline{\M})$.

Recall that by the discussion following Theorem \ref{thm:38}, it is
not necessary to distinguish between equivalence classes in
$\Omega(\overline{\M})$ (or individual semimetrics that represent
them) and sequences in $\M$ that $\omega$-converge to them---i.e.,
points of $\overline{\M}$.  Thus, for the types of (semi)metrics
listed in the last paragraph, we will continue to drop this
distinction in what follows---expressions like $d(g_0, g_1)$ are
well-defined even when $g_0$ and $g_1$ are not smooth metrics, as long
as we have $[g_0], [g_1] \in \Omega(\overline{\M})$.

We will achieve our goal in this section essentially through studying
the completion of a quasi-amenable subset (cf.~Definition
\ref{dfn:26}) analogously to the methods we used for amenable subsets
in Section \ref{sec:compl-an-amen}.

To begin with, we want to prove a result about quasi-amenable subsets
that is a generalization of Theorem \ref{thm:5}.  That result was for
amenable subsets, and so we expect the result for quasi-amenable
subsets to be weaker.  This is indeed the case, but before we can
prove the larger result, we first need to prove a couple of lemmas.

\begin{lem}\label{lem:8}
  Let $\U \subset \M$ be quasi-amenable.  Recall that we denote the
  closure of $\U$ in the $C^\infty$ topology of $\s$ by $\cl(\U)$, and
  we denote the boundary of $\M$ in the $C^\infty$ topology of $\s$ by
  $\btop$. Then for each $\epsilon > 0$, there exists $\delta > 0$
  such that $d(g_0, g_0 + \delta g) < \epsilon$ for all $g_0 \in
  \cl(\U) \cap \btop$.
\end{lem}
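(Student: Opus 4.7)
My plan is to bound $d(g_0, g_0 + \delta g)$ by the length of the straight-line path
$$g_t := g_0 + t\delta g, \qquad t \in [0,1],$$
and show that this length tends to zero as $\delta \to 0$ uniformly in $g_0 \in \cl(\U) \cap \btop$. Since $g_0$ is smooth and positive semidefinite (as an element of $\btop$) and $\delta g$ is smooth and positive definite, $g_t \in \M$ for all $t \in (0,1]$, so $L(g_t)$ is well defined (and finite for each fixed $g_0$ by the same argument as in Theorem \ref{thm:2}), and $d(g_0, g_0 + \delta g) \leq L(g_t)$.

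The key computation is to feed $A = g_0(x)$, $B = \delta g(x)$, $C = g(x)$ into the pointwise estimate of Lemma \ref{lem:4}. Because $B = \delta g$ scales linearly in $\delta$, its extremal eigenvalues are $\delta\lambda^g_{\mathrm{max}}(x)$ and $\delta\lambda^g_{\mathrm{min}}(x)$, so the $\delta$-dependence of the Lemma \ref{lem:4} bound is precisely $\delta^2/\delta^{3/2} = \delta^{1/2}$:
$$\tr_{g_t}\!\bigl((\delta g)^2\bigr)\sqrt{\det(g^{-1} g_t)} \;\leq\; \frac{n(\lambda^g_{\mathrm{max}}(x))^2 \bigl(\lambda^{g_t}_{\mathrm{max}}(x)\bigr)^{(n-1)/2}}{\sqrt{\det g(x)}\,(\lambda^g_{\mathrm{min}}(x))^{3/2}}\cdot\frac{\delta^{1/2}}{t^{3/2}}.$$

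Uniformity in $g_0$ comes from quasi-amenability: the coefficient bound \eqref{eq:134} passes to $\cl(\U)$ and, by the Rayleigh-quotient argument used in the proof of Lemma \ref{lem:21}, forces $\lambda^{g_0}_{\mathrm{max}}$ to be bounded by a constant depending only on $\U$. For $\delta \leq 1$ this yields a uniform bound on $\lambda^{g_t}_{\mathrm{max}} \leq \lambda^{g_0}_{\mathrm{max}} + \delta\lambda^g_{\mathrm{max}}$, and since $g$ is a fixed smooth metric on the compact $M$, the quantities $\lambda^g_{\mathrm{max}}$, $\lambda^g_{\mathrm{min}}$, and $\det g$ are also uniformly controlled. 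Combining, there is a constant $K$ depending only on $\U$ and the fixed $g$ (and not on $g_0$) such that
$$\tr_{g_t}\!\bigl((\delta g)^2\bigr)\sqrt{\det(g^{-1} g_t)} \;\leq\; K\,\frac{\delta^{1/2}}{t^{3/2}}$$
pointwise on $M$ for every $g_0 \in \cl(\U) \cap \btop$ and every $\delta \in (0,1]$.

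Integrating first over $M$ and then in $t$,
$$d(g_0, g_0 + \delta g) \;\leq\; L(g_t) \;\leq\; \sqrt{K\Vol(M,g)}\,\delta^{1/4}\int_0^1 t^{-3/4}\,dt \;=\; 4\sqrt{K\Vol(M,g)}\,\delta^{1/4},$$
so choosing $\delta$ small enough that the right-hand side is less than $\epsilon$ completes the proof. I do not foresee a serious obstacle; the real content is the bookkeeping observation that of the two constants ($C$ and $\delta$) used to define an amenable subset, only the $C^0$-type bound $C$ is required for this argument, which is precisely what quasi-amenability provides.
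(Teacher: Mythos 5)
Your proposal is correct and is essentially the paper's own proof: the same straight-line path $g_0 + t\delta g$, the same pointwise estimate from Lemma \ref{lem:4} with the observation that the $\delta$-dependence scales as $\delta^2/\delta^{3/2} = \sqrt{\delta}$, the same appeal to quasi-amenability and the Lemma \ref{lem:21} argument for uniformity in $g_0$, and the same final integration in $t$. Nothing to add.
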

\begin{proof}
  For any $g_0 \in \cl(\U) \cap \btop$, we consider the path $g_t :=
  g_0 + t h$, where $h := \delta g$ and $t \in (0,1]$.  The proof
  consists of reexamining the estimates of Theorem \ref{thm:2} and
  showing that they only depend on upper bounds on the entries of
  $g_0$ (and $g$, but we get these automatically when we work over an
  amenable atlas), and that the bound on the length of $g_t$ goes to
  zero as $\delta \rightarrow 0$.

  So, recall the main estimate \eqref{eq:15} of Theorem \ref{thm:2}:
  \begin{equation*}
    \tr_{g_t(x)}(h(x)^2) \sqrt{\det(g(x)^{-1} g_t(x))} \leq
    \frac{n \left( \lambda^{h}_{\mathrm{max}}(x)
      \right)^2  \left( \lambda^{g_t}_{\mathrm{max}}(x)
      \right)^{\frac{n-1}{2}}}{\sqrt{\det g(x)} \left(
        \lambda^{h}_{\mathrm{min}}(x) \right)^{3/2}}
    \frac{1}{t^{3/2}}.
  \end{equation*}
  
  Since $\det g(x)$ is constant w.r.t.~$\delta$, we ignore this term.
  By Lemma \ref{lem:1},
  \begin{equation*}
    \lambda^{g_t}_{\mathrm{max}}(x) \leq
    \lambda^{g_0}_{\mathrm{max}}(x) + \lambda^{h}_{\mathrm{max}}(x) =
    \lambda^{g_0}_{\mathrm{max}}(x) + 
    \delta \lambda^{g}_{\mathrm{max}}(x),
  \end{equation*}
  where the final inequality follows since the eigenvalues of $\delta
  g(x)$ are clearly just $\delta$ times the eigenvalues of $g(x)$.
  Therefore, using the same arguments as in Lemma \ref{lem:21},
  $\lambda^{g_t}_{\mathrm{max}}(x)$ is bounded from above, uniformly
  in $x$ and $t$, by a constant that decreases as $\delta$ decreases.
  Furthermore, this constant does not depend on our choice of $g_0 \in
  \cl(\U) \cap \btop$, since the proof of Lemma \ref{lem:21} depended
  only on uniform upper bounds on the entries of $g_0$, and we are
  guaranteed the same upper bounds on all elements of $\cl(\U) \cap
  \btop$ since $\U$ is quasi-amenable.

  We now focus our attention on the term
  \begin{equation*}
    \frac{\left( \lambda^{h}_{\mathrm{max}}(x)
      \right)^2}{\left( \lambda^{h}_{\mathrm{min}}(x) \right)^{3/2}} =
    \frac{\left( \delta \lambda^{g}_{\mathrm{max}}(x)
      \right)^2}{\left( \delta \lambda^{g}_{\mathrm{min}}(x)
      \right)^{3/2}} = \frac{\left( \lambda^{g}_{\mathrm{max}}(x)
      \right)^2}{\left( \lambda^{g}_{\mathrm{min}}(x)
      \right)^{3/2}} \sqrt{\delta}.
  \end{equation*}
  This expression clearly goes to zero as $\delta \rightarrow 0$.
  Therefore, we have shown that the constant $C_6$ in the estimate
  \eqref{eq:15} depends only on the choice of $\U$ and $\delta$, and
  that $C_6 \rightarrow 0$ as $\delta \rightarrow 0$.  The result now
  follows.
\end{proof}

The next lemma implies, in particular, that $\btop$ is \emph{not
  closed} in the $L^2$ topology of $\s$, nor is it in the topology of
$d$ on $\Omega(\overline{\M})$.  It also implies that around any point
in $\M$, there exists no $L^2$- or $d$-open neighborhood.

\begin{lem}\label{lem:9}
  Let $\U \in \M$ be any quasi-amenable subset.  Then for all
  $\epsilon > 0$, there exists a function $\rho_\epsilon \in
  C^\infty(M)$ with the properties that for all $g_1 \in \U$,
  \begin{enumerate}
  \item $\rho_\epsilon g_1 \in \btop$,
  \item $\rho_\epsilon(x) \leq 1$ for all $x \in M$,
  \item $\| g_1 - \rho_\epsilon g_1 \|_g < \delta$ and
  \item $d(g_1, \rho_\epsilon g_1) < \epsilon$.
  \end{enumerate}
\end{lem}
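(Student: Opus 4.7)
The strategy is to build $\rho_\epsilon$ as a smooth cutoff that equals $1$ outside a small open set $U \subset M$ and vanishes on a closed set $F \subset U$ of positive measure. Properties (1) and (2) are then immediate from the construction: $\rho_\epsilon g_1$ vanishes identically on $F$, so it fails to be positive definite there, hence lies in $\btop$; and $0 \leq \rho_\epsilon \leq 1$ by design. The whole point is that $U$ will be chosen to have small $g$-volume, which (together with quasi-amenability) will force both $g_1$ and $\rho_\epsilon g_1$ to be uniformly close in the two required senses.

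For property (3) I would bound $\|g_1 - \rho_\epsilon g_1\|_g^2 = \int_U (1-\rho_\epsilon)^2 \tr_g(g_1^2)\, \mu_g$ directly: the integrand is supported on $U$ and, by the $C^0$ bound on $g_1$ coming from Definition \ref{dfn:26}, it is uniformly bounded by a constant depending only on $\U$. Hence $\|g_1 - \rho_\epsilon g_1\|_g \leq C_\U\sqrt{\Vol(U,g)}$, which is made small by shrinking $\Vol(U,g)$.

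The real work is property (4), since the natural distance estimate (Proposition \ref{prop:18}) is stated only for pairs of smooth metrics, whereas $\rho_\epsilon g_1$ lies in $\btop$. My plan is to mimic the proof of that proposition directly by considering the straight-line path $g_t := \bigl((1-t) + t\rho_\epsilon\bigr) g_1$, $t \in [0,1]$, which lies in $\M$ for $t < 1$ and limits to $\rho_\epsilon g_1$. A short computation using $\tr_{g_t}(g_1^2) = n((1-t)+t\rho_\epsilon)^{-2}$ and $\mu_{g_t} = ((1-t)+t\rho_\epsilon)^{n/2}\mu_{g_1}$ reduces the length integrand to
\begin{equation*}
  \|g_t'\|_{g_t}^2 \;=\; n\integral{U}{}{(1-\rho_\epsilon)^2\bigl((1-t)+t\rho_\epsilon\bigr)^{\frac{n}{2}-2}}{\mu_{g_1}}.
\end{equation*}
For $n\geq 4$ the exponent is nonnegative so the factor is $\leq 1$; for $n \leq 3$ it is bounded by $(1-t)^{n/2-2}$ which is integrable against $dt$ after taking the square root, exactly as in the $\hat g^{k,s}_t$ estimate of Proposition \ref{prop:18}. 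Either way one obtains $L(g_t) \leq C(n)\sqrt{\Vol(U, g_1)}$.

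The main obstacle, and where I expect to spend the most care, is turning this into a bound \emph{uniform in $g_1 \in \U$} and linking $g_t$ (which is only a path in $\M$ up to $t<1$) to the honest $d$-distance $d(g_1,\rho_\epsilon g_1)$ in $\Omega(\overline\M)$. For the first issue I would invoke Remark \ref{rmk:2}: quasi-amenability gives a uniform upper bound on $\det g_1$ and hence on $\Vol(U,g_1) \leq K_\U\Vol(U,g)$, making the estimate uniform over $\U$. For the second issue I would argue, in the spirit of the discussion preceding the section on bounded nonsmooth semimetrics, that any smooth semimetric (in particular $\rho_\epsilon g_1 \in \btop$) can be approached by an $\omega$-convergent sequence from within $\M$ via Theorem \ref{thm:2}, so $[\rho_\epsilon g_1] \in \Omega(\overline\M)$ and the distance $d(g_1,\rho_\epsilon g_1)$ is legitimately bounded by the finite length $L(g_t)$ of the explicit path above. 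Choosing $\Vol(U,g)$ small enough that $C(n)\sqrt{K_\U \Vol(U,g)} < \epsilon$ then delivers property (4) uniformly over $\U$.
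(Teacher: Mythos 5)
Your proposal is correct and follows essentially the same route as the paper's proof: a smooth cutoff $\rho$ equal to $1$ off an open set of small $g$-volume (the paper only forces $\rho(x_0)=0$ at a single point, which already puts $\rho g_1$ in $\btop$), the quasi-amenability bounds giving uniformity of $\|g_1-\rho g_1\|_g$ and of the volume estimate over $\U$, and the straight-line conformal path $((1-t)+t\rho)g_1$ whose length is bounded by $C(n)\sqrt{\Vol(Z,g_1)}$, with the distance to the boundary semimetric made legitimate via Theorem \ref{thm:2} and the identification of smooth semimetrics with points of $\Omega(\overline{\M})$. The only cosmetic difference is that you compute the conformal length integrand directly (as in the $\hat{g}^{k,s}_t$ piece of Proposition \ref{prop:18}), whereas the paper quotes the worst-case form of estimate \eqref{eq:15} from Lemma \ref{lem:8}; both yield the same uniform bound.
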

\begin{proof}
  Let $x_0 \in M$ be any point, and for each $n \in \N$, choose a
  function $\rho_n \in C^\infty(M)$ satisfying
  \begin{enumerate}
  \item $\rho_n(x_0) = 0$,
  \item $0 \leq \rho_n(x) \leq 1$ for all $x \in M$ and
  \item $\rho_n \equiv 1$ outside an open set $Z_n$ with $\Vol(Z_n, g)
    \leq 1/n$.
  \end{enumerate}
  Then clearly $\| g_1 - \rho_n g_1 \|_g \rightarrow 0$ as $n
  \rightarrow \infty$, and this convergence is uniform in $g_1$
  because of the upper bounds guaranteed by the fact that $g_1 \in \U$.

  Furthermore, if we estimate the length of the path $g_t^n := \rho_n
  g_1 + t (g_1 - \rho_n g_1)$, there will be no contribution to the
  integral from points of $M\setminus Z_n$, and on $Z_n$, we can find
  a constant $C_6$ as in \eqref{eq:15} that \emph{does not depend on
    $n$}, simply by assuming the worst case that $\rho_n \equiv 0$ on
  $Z_n$ for all $n$.  Furthermore, $C_6$ does not depend on $g_1$,
  just on the choice of $\U$, by the same arguments as in the proof of
  Lemma \ref{lem:8}.

  Therefore we get that
  \begin{equation*}
    L(g_t^n) \leq \sqrt{C_6 \Vol(Z_n, g)} \int_0^1 \frac{1}{t^{3/4}} \, dt \leq
    \sqrt{\frac{C_6}{n}} \int_0^1 \frac{1}{t^{3/4}} \, dt,
  \end{equation*}
  which converges to zero as $n \rightarrow \infty$.  Choosing $n$
  large enough completes the proof.
\end{proof}

The next theorem is the desired analog of Theorem \ref{thm:5}.  Note
that only one half of Theorem \ref{thm:5} holds in this case, and even
this is proved only in a weaker form.

\begin{thm}\label{thm:8}
  Let $\U \subset \M$ be quasi-amenable.  Then for all $\epsilon > 0$,
  there exists $\delta > 0$ such that if $g_0, g_1 \in \cl(\U)$ with
  $\| g_0 - g_1 \|_g < \delta$, then $d(g_0, g_1) < \epsilon$.

  In particular, $d$ is uniformly continuous in the $L^2$ topology of
  $\M$ when restricted to $\cl(\U)$, and if $\phi : (\cl(\U), \| \cdot
  \|_g) \rightarrow (\cl(\U), d)$ is the identity mapping on the level
  of sets (i.e., $\phi(g) = g$), then $\phi$ is uniformly continuous.
\end{thm}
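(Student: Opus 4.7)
The plan is to reduce the quasi-amenable case to the amenable one by an ``inflation'' trick: adding a small multiple of the fixed reference metric $g$ pushes any element of $\cl(\U)$ into an amenable subset, where Theorem~\ref{thm:5} already provides uniform $L^2$-continuity of $d$. Concretely, given $\epsilon > 0$, I will produce $\delta_1, \delta > 0$ (depending only on $\U$, $g$, and $\epsilon$) such that shifting $g_0, g_1 \in \cl(\U)$ by $\delta_1 g$ lands them in an amenable subset, and then invoke Theorem~\ref{thm:5} on that subset.

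First I will observe that Lemma~\ref{lem:8} extends verbatim from $\cl(\U) \cap \btop$ to all of $\cl(\U)$: its proof rests solely on uniform upper bounds for the entries of $g_0$---provided by quasi-amenability of $\U$---together with bounds depending only on $g$, and nowhere uses that $g_0$ is actually degenerate. So for each $\eta > 0$ there is $\delta_1 = \delta_1(\eta) > 0$ with $d(\tilde{g}, \tilde{g} + \delta_1 g) < \eta$ for every $\tilde{g} \in \cl(\U)$. Next, fixing $\delta_1 = \delta_1(\epsilon/3)$, I will check that the translated set $\U' := \{\tilde{g} + \delta_1 g : \tilde{g} \in \cl(\U)\}$ is contained in an amenable subset $\U''$: entries are uniformly bounded above by those on $\cl(\U)$ plus a contribution from $\delta_1 g$, and since $\tilde{g}$ is positive semidefinite the matrix $g^{-1}(\tilde{g} + \delta_1 g) = \tilde{G} + \delta_1 I$ has smallest eigenvalue at least $\delta_1 > 0$. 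Applying Theorem~\ref{thm:5}(\ref{item:9}) to $\U''$ then yields $\delta > 0$ such that $\tilde{g}_0, \tilde{g}_1 \in \U''$ with $\|\tilde{g}_0 - \tilde{g}_1\|_g < \delta$ implies $d(\tilde{g}_0, \tilde{g}_1) < \epsilon/3$.

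Given $g_0, g_1 \in \cl(\U)$ with $\|g_0 - g_1\|_g < \delta$, I set $\tilde{g}_i := g_i + \delta_1 g \in \U''$ and note $\|\tilde{g}_0 - \tilde{g}_1\|_g = \|g_0 - g_1\|_g < \delta$; the triangle inequality then gives
\begin{equation*}
  d(g_0, g_1) \leq d(g_0, \tilde{g}_0) + d(\tilde{g}_0, \tilde{g}_1) + d(\tilde{g}_1, g_1) < \tfrac{\epsilon}{3} + \tfrac{\epsilon}{3} + \tfrac{\epsilon}{3} = \epsilon.
\end{equation*}
The ``in particular'' assertion (uniform continuity of $\phi$) is then an immediate reformulation. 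The only real subtlety is the interpretation of $d(g_i, \tilde{g}_i)$ and $d(g_0, g_1)$ when $g_i \in \cl(\U) \cap \btop$ is a smooth semimetric rather than an element of $\M$; this is handled by the identification from Theorem~\ref{thm:38} together with Theorem~\ref{thm:2}, which ensures that every smooth semimetric in $\btop$ is the $\omega$-limit of a finite path in $\M$, so that $d$ extends naturally to $\cl(\U) \subset \Omega(\overline{\M})$. Aside from this point, the argument is essentially a triangle-inequality bookkeeping step, with all of the real analytic work already carried out in Lemma~\ref{lem:8} and Theorem~\ref{thm:5}.
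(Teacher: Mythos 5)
Your proof is correct, but it is organized differently from the paper's. The paper proves the estimate in two stages: it first treats $g_0, g_1 \in \cl(\U) \cap \btop$, shifting by $\delta_1 g$ into the amenable set $\U' = \{\hat{g} + \delta_1 g \mid \hat{g} \in \U\}$ and invoking Theorem \ref{thm:5} exactly as you do, and then reduces the case of interior points to the boundary case via Lemma \ref{lem:9}, choosing a cutoff function $\rho$ (vanishing at a point, $\rho \leq 1$) with $\rho g_i \in \cl(\U) \cap \btop$ and $d(g_i, \rho g_i) < \epsilon/3$, so that $\| \rho g_1 - \rho g_0 \|_g \leq \| g_1 - g_0 \|_g$ and a second triangle inequality closes the argument. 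You instead observe that Lemma \ref{lem:8} extends from $\cl(\U) \cap \btop$ to all of $\cl(\U)$, since its proof only uses the uniform upper bounds on the coefficients supplied by quasi-amenability together with positive semidefiniteness of $g_0$ (the pointwise estimates of Lemmas \ref{lem:1}--\ref{lem:4} require only $A \geq 0$, $B > 0$), and this observation is indeed legitimate; it lets you handle all of $\cl(\U)$ in a single shift, bypassing Lemma \ref{lem:9} and the case split entirely. Your identification of $d$ on $\cl(\U) \cap \btop$ via Theorem \ref{thm:38} and Theorem \ref{thm:2}, and your eigenvalue argument ($\tilde{G} + \delta_1 I \geq \delta_1$) showing the shifted set lies in an amenable subset, match what the paper does implicitly and explicitly, respectively. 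What the two routes buy: yours is shorter and makes the uniformity over $\cl(\U)$ transparent, at the cost of restating (a mild generalization of) Lemma \ref{lem:8}; the paper's keeps Lemma \ref{lem:8} in its minimal form and instead reuses Lemma \ref{lem:9}, which it needs elsewhere anyway (e.g., to show that no $d$-ball around a point of $\M$ stays in $\M$).
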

\begin{proof}
  First, we enlarge $\U$ if necessary to include \emph{all} metrics
  satisfying the bound given in Definition \ref{dfn:26}.  This
  enlarged $\U$ is then clearly convex by the triangle inequality for
  the absolute value, and hence it is still a quasi-amenable subset.

  Now, let $\epsilon > 0$ be given.  We prove the statement first for
  $g_0, g_1 \in \cl(\U) \cap \btop$, then use this to
  prove the general case.

  By Lemma \ref{lem:8}, we can choose $\delta_1 > 0$ such that
  $d(\hat{g}, \hat{g} + \delta_1 g) < \epsilon/3$ for all $\hat{g} \in
  \cl(\U) \cap \btop$.  We define an amenable subset of $\M$ by
  \begin{equation*}
    \U' :=
    \left\{
      \hat{g} + \delta_1 g \mid \hat{g} \in \U
    \right\}.
  \end{equation*}
  This set is, indeed, amenable, since for each $x \in M$, Lemma
  \ref{lem:46} implies that
  \begin{equation*}
    \lambda^{\hat{g} + \delta_1 g}_{\textnormal{min}}(x) \geq
    \lambda^{\hat{g}}_{\textnormal{min}}(x) + \lambda^{\delta_1
      g}_{\textnormal{min}}(x) \geq \delta_1 \lambda^g_{\textnormal{min}}(x).
  \end{equation*}
  Now, by Theorem \ref{thm:5}, there exists $\delta > 0$ such that if
  $\tilde{g}_0, \tilde{g}_1 \in \U'$ with $\| \tilde{g}_0 -
  \tilde{g}_1 \|_g < \delta$, then $d(\tilde{g}_0, \tilde{g}_1) <
  \epsilon/3$.  Let $g_0, g_1 \in \cl(\U) \cap \btop$ be such that $\|
  g_0 - g_1 \|_g < \delta$.  If we define $\tilde{g}_i := g_i +
  \delta_1 g$ for $i = 1,2$, then it is clear that $\| \tilde{g}_0 -
  \tilde{g}_1 \|_g = \| g_0 - g_1 \|_g < \delta$.  Given this and the
  definition of $\delta_1$, we have
  \begin{equation*}
    d(g_0, g_1) \leq d(g_0, \tilde{g}_0) +
    d(\tilde{g}_0, \tilde{g}_1) + d(\tilde{g}_1, g_1) < \epsilon.
  \end{equation*}

  Now we prove the general case.  Let $\epsilon > 0$ be given.  By the
  special case we just proved, we can choose $\delta > 0$ such that if
  $\tilde{g}_0, \tilde{g}_1 \in \cl(\U) \cap \btop$ with $\|
  \tilde{g}_0 - \tilde{g}_1 \|_g < \delta$, then $d(\tilde{g}_0,
  \tilde{g}_1) < \epsilon/3$.  Let $g_0, g_1 \in \U$ be any elements
  with $\| g_0 - g_1 \|_g < \delta$.  By Lemma \ref{lem:9} and our
  enlargement of $\U$, we can choose a function $\rho \in C^\infty(M)$
  such that for $i = 0,1$,
  \begin{enumerate}
  \item $\rho g_i \in \cl(\U) \cap \btop$,
  \item $\rho(x) \leq 1$ for all $x \in M$, and
  \item $d(g_i, \rho g_i) < \epsilon/3$.
  \end{enumerate}
  (If $g_i \in \cl(\U) \cap \btop$ for both $i=1$ and $2$, we might as
  well just choose $\rho \equiv 1$.)  In particular, the second
  property of $\rho$ implies that
  \begin{equation*}
    \| \rho g_1 - \rho g_0 \|_g \leq \| g_1 - g_0 \|_g < \delta.
  \end{equation*}
  
  Then we immediately get
  \begin{equation*}
    d(g_0, g_1) \leq d(g_0, \rho g_0) + d(\rho g_0, \rho g_1) +
    d(\rho g_1, g_1) < \epsilon.
  \end{equation*}
  This proves the general case and thus the theorem.
\end{proof}

\begin{rmk}\label{rmk:22}
  Let's take a brief moment to discuss why only one half of Theorem
  \ref{thm:5} holds for quasi-amenable subsets.  The problem is that
  the determinants of elements of a quasi-amenable subset need not
  satisfy any uniform lower bounds.  Hence two metrics $g_0$ and $g_1$
  from a quasi-amenable subset can differ greatly with respect to $\|
  \cdot \|_g$, yet do so only on a subset of $M$ that has small volume
  with respect to $g_0$ and $g_1$ themselves.  In this situation,
  Proposition \ref{prop:18} implies that $d(g_0, g_1)$ will also be
  small.  So we cannot say that $\| g_1 - g_0 \|_g$ is small whenever
  $d(g_0, g_1)$ is, and something like statement (\ref{item:9}) of
  Theorem \ref{thm:5} cannot hold for quasi-amenable subsets.
\end{rmk}

With the above theorem at hand, it is now possible to obtain the
information on $\Omega(\overline{\M})$ that we desired.  First notice
that a bounded semimetric is precisely a semimetric that can be
obtained as the $L^2$ limit of a sequence of metrics contained within
some quasi-amenable subset.

Using the relationship between $d$ and $\| \cdot \|_g$ determined in
Theorem \ref{thm:8}, we can prove the following.

\begin{prop}\label{prop:27}
  Let $[\tilde{g}] \in \Mfhat$ be an equivalence class containing at
  least one bounded, measurable semimetric.  Then for any bounded
  representative $\tilde{g} \in [\tilde{g}]$, there exists a sequence
  $\{ g_k \}$ in $\M$ that both $L^2$- and $\omega$-converges to
  $\tilde{g}$.  Thus $[\tilde{g}] \in \Omega(\overline{\M})$.

  Moreover, suppose $\tilde{g} \in \U^0$ for some quasi-amenable
  subset $\U \subset \M$.  Then for any sequence $\{g_l\}$ in $\U$
  that $L^2$-converges to $\tilde{g}$, $\{g_l\}$ is $d$-Cauchy and
  there exists a subsequence $\{g_k\}$ that also $\omega$-converges to
  $\tilde{g}$.
\end{prop}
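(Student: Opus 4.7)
The plan is to prove the ``moreover'' statement first, since it contains the main technical content, and then deduce the first statement by constructing an explicit $L^2$-approximating sequence and invoking the moreover.

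So I would start with a quasi-amenable subset $\U$ and a sequence $\{g_l\} \subset \U$ with $g_l \xrightarrow{L^2} \tilde{g}$. By Theorem \ref{thm:8}, the identity map $\phi : (\cl(\U), \|\cdot\|_g) \to (\cl(\U), d)$ is uniformly continuous, so it sends the $L^2$-Cauchy sequence $\{g_l\}$ to a $d$-Cauchy sequence. Next I would extract a subsequence $\{g_{l_m}\}$ converging a.e.\ to $\tilde{g}$ (this is Remark \ref{rmk:9}(2) applied componentwise in charts), and then pass to a further subsequence $\{g_k\}$ so that $\sum_k d(g_k, g_{k+1}) < \infty$. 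This immediately secures properties (\ref{item:4}), (\ref{item:6}), and (\ref{item:7}) of Definition \ref{dfn:13} of $\omega$-convergence.

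The remaining task is property (\ref{item:5}), equality of the deflated sets $X_{\tilde{g}}$ and $X_{\{g_k\}}$ up to a nullset. Since $g_k \to \tilde{g}$ a.e.\ and the determinant is a continuous function of the coefficients, one has $\det G_k(x) \to \det \tilde{G}(x)$ for a.e.\ $x \in M$. At such points, if $\det \tilde{G}(x) > 0$ then $\det G_k(x)$ is eventually bounded below by some $\delta > 0$, so only finitely many terms can fall below any given threshold and hence $x \notin X_{\{g_k\}}$; conversely, if $\det \tilde{G}(x) = 0$ then $\det G_k(x) \to 0$ and so $x \in X_{\{g_k\}}$ by the definition of the deflated set. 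This establishes $X_{\tilde{g}} = X_{\{g_k\}}$ modulo a nullset, completing the verification of $\omega$-convergence.

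For the first statement, given a bounded representative $\tilde{g}$, I would take $\U$ to be the quasi-amenable subset of smooth metrics satisfying the same pointwise bound on the coefficients (enlarged slightly to make room), so that automatically $\tilde{g} \in \U^0$ once an $L^2$-convergent sequence in $\U$ is produced. The construction of such a sequence is the main obstacle: the candidate must be smooth, positive definite, and obey the boundedness constraint. I would do this via a partition of unity $\{\chi_\alpha\}$ subordinate to the amenable atlas: in each chart, mollify the coefficients of $\tilde{g}$ to obtain smooth tensors $\tilde{g}_\epsilon$, and exploit the fact that convolution with a nonnegative kernel preserves positive semidefiniteness pointwise and does not increase the sup norm. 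Patching these by the partition of unity and then adding $\epsilon g$ yields a smooth metric $g_\epsilon \in \M$ with $g_\epsilon \to \tilde{g}$ in $L^2$ as $\epsilon \to 0$, and with uniform bounds on coefficients guaranteeing membership in some enlarged quasi-amenable subset. Applying the moreover part to this sequence produces the desired $\omega$-convergent subsequence.

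The genuine obstacle is the mollification-with-correction construction in the last paragraph: one must verify that the global partition-of-unity patching preserves positive semidefiniteness (which it does because nonnegative combinations of positive semidefinite tensors are positive semidefinite) and that the bound on $|\tilde{g}_{ij}|$ is not destroyed by the passage from local charts to global objects. The rest is bookkeeping: the deflated-set equality in the second paragraph is essentially automatic once one exploits a.e.\ convergence, and the extraction of subsequences in the first paragraph is the same kind of diagonalization used throughout the thesis.
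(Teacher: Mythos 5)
Your argument is correct and follows essentially the same route as the paper's proof: Theorem \ref{thm:8} gives $d$-Cauchyness of an $L^2$-convergent sequence in a quasi-amenable subset, Remark \ref{rmk:9} plus a further subsequence give a.e.\ convergence and the summability condition of Definition \ref{dfn:13}, and continuity of the determinant settles the deflated-set condition. The only difference is that you spell out, via chart-wise mollification, a partition of unity, and the correction term $\epsilon g$, the approximation step that the paper merely asserts (namely that a bounded semimetric is the $L^2$-limit of a sequence contained in some quasi-amenable subset); that filling-in is sound.
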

\begin{proof}
  By the discussion preceding the proposition, for every
  representative $\tilde{g} \in [\tilde{g}]$, we can find a
  quasi-amenable subset $\U \subset \M$ such that $\tilde{g} \in
  \U^0$.  (Recall that $\U^0$ denotes the completion of $\U$
  w.r.t.~$\| \cdot \|_g$, cf.~Definition \ref{dfn:4}.)  Thus, there
  exists a sequence $\{ g_l \}$ that $L^2$-converges to $\tilde{g}$.
  It is $d$-Cauchy by Theorem \ref{thm:8}.  We wish to show that it
  contains a subsequence $\{ g_k \}$ that also $\omega$-converges to
  $\tilde{g}$, so we still need to verify properties
  (\ref{item:5})--(\ref{item:7}) of Definition \ref{dfn:13} (we just
  noted that $\{g_k\}$ is $d$-Cauchy, so property (\ref{item:4})
  holds).

  By passing to a subsequence, we can assume that property
  (\ref{item:7}) is satisfied for $\{g_l\}$.  We verify property
  (\ref{item:6}) in the same way as in the proof of Lemma
  \ref{lem:43}.  That is, $L^2$-convergence of $\{ g_l \}$ implies by
  Remark \ref{rmk:9} that there exists a subsequence $\{ g_k \}$ of
  $\{ g_l \}$ that converges to $\tilde{g}$ a.e.  Finally,
  a.e.-convergence of $\{g_k\}$ to $\tilde{g}$ and continuity of the
  determinant function imply that property \eqref{item:5} holds.
\end{proof}

Thus, like we did for more restricted types of metrics before, this
proposition allows us to cease to distinguish between bounded
semimetrics and sequences $\omega$-converging to them.

\section{Unbounded metrics and the proof of the main
  result}\label{sec:finite-volume-metr}

Up to this point, we have an injection $\Omega : \overline{\M}
\rightarrow \Mfhat$, and we have determined that the image
$\Omega(\overline{\M})$ contains all equivalence classes containing
bounded semimetrics.  In this section, we prove that $\Omega$ is
surjective.  We will make good use of what we already know about
$\Omega(\overline{\M})$ in order to do so.

The following theorem is the surjectivity statement.  It is proved
using the same philosophy as in the construction of the completion of
$\pos \cdot g$ that was given in Section \ref{sec:compl-orbit-space}.
We simply need to adapt the arguments given there to our situation.

\begin{thm}\label{thm:12}
  Let any $[\tilde{g}] \in \Mfhat$ be given.  Then there exists a
  sequence $\{ g_k \}$ in $\M$ such that
  \begin{equation*}
    g_k \overarrow{\omega} [\tilde{g}].
  \end{equation*}
  Thus, $\Omega : \overline{\M} \rightarrow \Mfhat$ is surjective.
\end{thm}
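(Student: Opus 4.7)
The plan is to establish surjectivity by following the philosophy of Section \ref{sec:compl-orbit-space}: first approximate the target $\tilde{g}$ by bounded semimetrics, which already lie in the image of $\Omega$ by Proposition \ref{prop:27}, and then use a diagonal argument to produce a sequence of smooth metrics $\omega$-converging to $\tilde{g}$.

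For each $k \in \N$ I would first form the truncation
\begin{equation*}
  \tilde{g}^k := \chi(B_k) \tilde{g}, \quad B_k := \{x \in M : |\tilde{g}_{ij}(x)| \leq k \text{ in every chart of our amenable atlas}\}.
\end{equation*}
Each $\tilde{g}^k \in \Mf$ is bounded, so $[\tilde{g}^k] \in \Omega(\overline{\M})$ by Proposition \ref{prop:27}, with an approximating sequence $\{h_l^k\}_l \subset \M$ that both $\omega$- and $L^2$-converges to $\tilde{g}^k$. Since the entries of a measurable tensor are a.e.~finite, $\bigcup_k B_k$ exhausts $M$ up to a nullset, so $\tilde{g}^k \to \tilde{g}$ pointwise a.e., and $\Vol(M \setminus B_k, \tilde{g}) \to 0$ by monotone convergence together with finiteness of $\Vol(M, \tilde{g})$.

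The next, and main, technical step is to extend Proposition \ref{prop:18} to all of $\Omega(\overline{\M})$: for $\hat{g}_0, \hat{g}_1 \in \Omega(\overline{\M})$ and $E := \carr(\hat{g}_1 - \hat{g}_0)$, one expects
\begin{equation*}
  \bar{d}(\hat{g}_0, \hat{g}_1) \leq C(n)\left(\sqrt{\Vol(E, \hat{g}_0)} + \sqrt{\Vol(E, \hat{g}_1)}\right).
\end{equation*}
This should follow by applying Proposition \ref{prop:18} to carefully chosen $\omega$-approximating smooth sequences whose pairwise carriers are essentially supported in $E$, and then passing to the limit with help of Theorem \ref{thm:19}. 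Applied to the truncations, which differ only on $M \setminus B_k$ where $\tilde{g}^k$ vanishes, this yields $\bar{d}(\tilde{g}^k, \tilde{g}^{k+m}) \leq C(n)\sqrt{\Vol(M \setminus B_k, \tilde{g})} \to 0$, so $\{\tilde{g}^k\}$ is $\bar{d}$-Cauchy in $\Omega(\overline{\M})$.

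Finally I would assemble a diagonal sequence by picking $l_k$ large enough that $g_k := h_{l_k}^k$ satisfies $\bar{d}(g_k, \tilde{g}^k) < 2^{-k}$ and, using Egoroff's theorem applied to the a.e.~convergence $h_l^k \to \tilde{g}^k$, that $|g_k(x) - \tilde{g}^k(x)|$ is small off a set of $\mu_g$-measure less than $2^{-k}$. A Borel--Cantelli argument then forces $g_k \to \tilde{g}$ pointwise a.e., whence $\det G_k \to \det \tilde{G}$ a.e., and $X_{\{g_k\}}$ agrees with $X_{\tilde{g}}$ up to a nullset. The $d$-Cauchy property and summability conditions of Definition \ref{dfn:13} are inherited from the $\bar{d}$-Cauchyness of $\{\tilde{g}^k\}$ together with the $2^{-k}$ control, completing the verification that $g_k \overarrow{\omega} [\tilde{g}]$. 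The main obstacle is the extension of Proposition \ref{prop:18} described above, since, unlike in Theorem \ref{thm:15}, we cannot confine ourselves to an amenable subset and must handle unbounded representatives; a subsidiary delicacy lies in coordinating the $\bar{d}$-closeness with the pointwise-closeness requirements when selecting $l_k$.
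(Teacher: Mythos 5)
Your plan is workable and genuinely different from the route taken in the thesis. The thesis also funnels everything through Proposition \ref{prop:27} at the end, but it handles the unbounded part \emph{conformally} rather than by truncation: it writes $\tilde{g} = \rho g_0$ with $g_0$ a bounded semimetric and $\rho = \xi^{-1} \in L^{n/2}(M,g_0)$, substitutes $\lambda = \tfrac{4}{n}(\rho^{n/4}-1) \in L^2(M,g_0)$, approximates $\lambda$ by smooth functions via Corollary \ref{cor:12}, and controls distances by extending the conformal-orbit isometry of Proposition \ref{prop:9} to the estimate $d(\psi(\sigma),\psi(\tau)) \leq \sqrt{n}\,\|\tau-\sigma\|_{g_0}$ (Lemma \ref{lem:33}, using weak convergence of the volume measures); the concluding diagonal argument is the same as yours. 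What the conformal route buys is that no strengthening of Proposition \ref{prop:18} is needed at all; what your truncation $\tilde{g}^k = \chi(B_k)\tilde{g}$ buys is conceptual transparency --- the unbounded locus has small $\tilde{g}$-volume, so cutting it away is cheap in $\bar{d}$. Concerning your acknowledged obstacle: you do not need the small-volume bound for arbitrary (possibly unbounded) elements of $\Omega(\overline{\M})$, since your truncations are bounded; what is required is exactly Theorem \ref{thm:15} with ``amenable'' replaced by ``quasi-amenable.'' That version does appear to go through, because Theorem \ref{thm:5} enters that proof only in the direction furnished by Theorem \ref{thm:8}, Proposition \ref{prop:27} supplies the sequences that both $L^2$- and $\omega$-converge, and the volume argument of Lemma \ref{lem:51} uses only the upper half of Lemma \ref{lem:49}, which survives without the inflatedness hypothesis --- but note that the mechanism is the non-smooth midpoint $\chi(M\setminus E)g^0_k + \chi(E)g^1_k$ as in Lemma \ref{lem:51}, not smooth approximants ``with carriers essentially in $E$,'' which cannot be arranged; this adaptation is genuine work your proposal leaves as a sketch. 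Two small repairs to the final step: mere $\bar{d}$-Cauchyness of $\{\tilde{g}^k\}$ does not yield the summability condition of Definition \ref{dfn:13}, so pass to truncation levels $k_j$ with $\sum_j \sqrt{\Vol(M\setminus B_{k_j},\tilde{g})} < \infty$ (harmless, since only some $\omega$-convergent sequence is required), and the choice of $l_k$ with $\bar{d}(h^k_{l_k},\tilde{g}^k) < 2^{-k}$ should be justified from the $d$-Cauchyness of $\{h^k_l\}_l$ together with Theorem \ref{thm:38}.
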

\begin{proof}
  In view of Proposition \ref{prop:27}, it remains only to prove this
  for the equivalence class of a measurable, unbounded semimetric
  $\tilde{g} \in \Mf$.

  Given any element $\hat{g} \in \M_f$, we can define $\exp_{\hat{g}}$
  on tensors of the form $\sigma \hat{g}$, where $\sigma$ is any
  function, purely algebraically.  We simply set
  \begin{equation}\label{eq:60}
    \exp_{\hat{g}}(\sigma \hat{g}) := \left( 1 + \frac{n}{4} \sigma
    \right)^{4/n} \hat{g},
  \end{equation}
  so that the expression coincides with the usual one if $\hat{g}
  \in \M$ and $\sigma \in C^\infty(M)$ with $\sigma > -\frac{4}{n}$
  (cf.~\eqref{eq:56}).  If $\sigma$ is additionally measurable, then
  $\exp_{\hat{g}}(\sigma \hat{g})$ will also be measurable.

  Now, let $\tilde{g} \in \Mf$.  Then we can find a measurable,
  positive function $\xi$ on $M$ such that $g_0 := \xi \tilde{g}$ is a
  bounded semimetric.  The same calculation as in the proof of Theorem
  \ref{thm:11} shows that finite volume of $\tilde{g}$ implies $\rho
  := \xi^{-1} \in L^{n/2}(M, g_0)$.

  Define the map $\psi$ by $\psi(\sigma) := \exp_{g_0}(\sigma
  g_0)$, and let
  \begin{equation}\label{eq:140}
    \lambda := \frac{4}{n} \left( \rho^{n/4} - 1 \right).
  \end{equation}
  Then clearly $\psi(\lambda) = \rho g_0 = \tilde{g}$.  Moreover, we
  claim that $\lambda \in L^2(M, g_0)$ and hence, by Corollary
  \ref{cor:12}, we can find a sequence $\{\lambda_k\}$ of smooth
  functions that converge in $L^2(M, g_0)$ to $\lambda$ .  That
  $\lambda \in L^2(M, g_0)$ follows from two facts.  First, $\rho \in
  L^{n/2}(M, g_0)$, implying that $\rho^{n/4} \in L^2(M, g_0)$.
  Second, finite volume of $g_0$ implies that the constant function $1
  \in L^2(M, g_0)$ as well.

  Since $\lambda_k \rightarrow \lambda$ in $L^2(M, g_0)$, Remark
  \ref{rmk:9} implies that by passing to a subsequence, we can also
  assume that $\lambda_k \rightarrow \lambda$ pointwise a.e., where we
  note that here, ``almost everywhere'' means with respect to
  $\mu_{g_0}$.  With respect to the fixed, smooth, strictly positive
  volume form $\mu_g$, this actually means that $\lambda_k(x)
  \rightarrow \lambda(x)$ for a.e.~$x \in M \setminus X_{g_0}$, since
  $X_{g_0}$ is a nullset with respect to $\mu_{g_0}$.  Note also that
  $X_{g_0} = X_{\tilde{g}}$, since we assumed that the function $\xi$
  is positive.  Therefore $\lambda_k(x) \rightarrow \lambda(x)$ for
  a.e.~$x \in M \setminus X_{\tilde{g}}$.

  Furthermore, since from \eqref{eq:140} and positivity of $\xi$ it is
  clear that $\lambda > - \frac{4}{n}$, we can choose the sequence $\{
  \lambda_k \}$ such that $\lambda_k > - \frac{4}{n}$ for all $k \in
  \N$.  This implies, in particular, that $X_{\psi(\lambda_k)} =
  X_{g_0} = X_{\tilde{g}}$, which is easily seen from \eqref{eq:60}.
  
  We make one last assumption on the sequence $\{ \lambda_k \}$.
  Namely, by passing to a subsequence, we can assume that
  \begin{equation}\label{eq:141}
    \sum_{k = 1}^\infty \norm{\lambda_{k+1} - \lambda_k}_{g_0} < \infty.
  \end{equation}
  
  Using a limiting argument, we can show a statement analogous to, but
  weaker than, Proposition \ref{prop:9}.  Namely, if $d$ is the metric
  on $\Omega(\overline{\M})$ defined in Theorem \ref{thm:38}, then
  \begin{equation}\label{eq:51}
    d( \psi(\sigma), \psi(\tau) ) \leq \sqrt{n} \| \tau - \sigma \|_{g_0}
  \end{equation}
  for all $\sigma, \tau \in C^\infty (M)$
  with $\sigma, \tau > -\frac{4}{n}$.  We delay the proof of this
  statement to Lemma \ref{lem:33} below, though, and first finish the
  proof of the theorem.
  
  We wish to construct a sequence that $\omega$-converges to
  $\tilde{g}$ using the sequence $\{ \psi(\lambda_k) \}$.  We can't
  use $\{ \psi(\lambda_k) \}$ directly, since it is a sequence in
  $\Omega(\overline{\M})$, not $\M$ itself.  So we first verify the
  properties of $\omega$-convergence for $\{ \psi(\lambda_k) \}$ and
  then construct a sequence in $\M$ that approximates $\{
  \psi(\lambda_k) \}$ well enough that it still satisfies all the
  conditions for $\omega$-convergence.
  
  Since the sequence $\{\lambda_k\}$ is convergent in $L^2(M, g_0)$,
  it is also Cauchy in $L^2(M, g_0)$.  Using the inequality
  \eqref{eq:51}, it is then immediate that $\{\psi(\lambda_k)\}$ is a
  Cauchy sequence in $(\Omega(\overline{\M}), d)$.  This verifies
  property (\ref{item:4}) of $\omega$-convergence (cf.~Definition
  \ref{dfn:13}).
  
  We next verify property (\ref{item:6}).  Note that $X_{\tilde{g}}
  \subseteq X_{\{ \psi(\lambda_k) \}}$, since we have already shown
  that $X_{\psi(\lambda_k)} = X_{\tilde{g}}$.  (Keep in mind here the
  subtle point that $X_{\psi(\lambda_k)}$ is the deflated set of the
  \emph{individual} semimetric $\psi(\lambda_k)$, while $X_{\{
    \psi(\lambda_k) \}}$ is the deflated set of the \emph{sequence}
  $\{\psi(\lambda_k)\}$.  Refer to Definitions \ref{dfn:23} and
  \ref{dfn:25} for details.)  The inclusion implies that
  \begin{equation*}
    M \setminus X_{\{ \psi(\lambda_k) \}} \subseteq M \setminus X_{\tilde{g}},
  \end{equation*}
  so it suffices to show that $\psi(\lambda_k)(x) \rightarrow
  \tilde{g}(x)$ for a.e.~$x \in M \setminus X_{\tilde{g}}$.  But this
  is clear from the definition of $\psi$ and the fact, proved above,
  that $\lambda_k(x) \rightarrow \lambda(x)$ for a.e.~$x \in M
  \setminus X_{\tilde{g}}$.
  
  To verify property (\ref{item:5}), we claim that $X_{\{
    \psi(\lambda_k) \}} = X_{\tilde{g}}$, up to a nullset.  In the
  previous paragraph, we already showed that $X_{\tilde{g}} \subseteq
  X_{\{\psi(\lambda_k)\}}$. Furthermore, for a.e.~$x \in M \setminus
  X_{\tilde{g}}$, $\{ \psi(\lambda_k)(x) \}$ converges to
  $\tilde{g}(x)$, which is positive definite, so for a.e.~$x \in M
  \setminus X_{\tilde{g}}$, $\lim \det \psi(\lambda_k) > 0$.  This
  immediately implies that $X_{\{\psi(\lambda_k)\}} \subseteq
  X_{\tilde{g}} $, up to a nullset.
  
  The last property to verify is (\ref{item:7}).  But this is
  immediate from \eqref{eq:141} and \eqref{eq:51}.

  So we have shown that $\{\psi(\lambda_k)\}$ satisfies the properties
  of $\omega$-convergence, save that it is a sequence of measurable
  semimetrics, rather than a sequence of smooth metrics as required.
  To get a sequence in $\M$ that $\omega$-converges to $\tilde{g}$,
  recall that each of the functions $\lambda_k$ is smooth and
  therefore bounded, and also that $g_0$ is a bounded, measurable
  semimetric.  Therefore, for each fixed $k \in \N$, $\psi(\lambda_k)$
  is also a bounded, measurable semimetric, and so by Proposition
  \ref{prop:27} we can find a sequence $\{ g^k_l \}$ in $\M$ that
  $\omega$-converges to $\psi(\lambda_k)$ for $l \rightarrow \infty$.
  By a standard diagonal argument, it is then possible to select $l_k
  \in \N$ for each $k \in \N$ such that the sequence $\{ g^k_{l_k} \}$
  $\omega$-converges to $\tilde{g}$ for $k \rightarrow \infty$.  Thus
  we have found the desired sequence.

  It still remains to prove \eqref{eq:51}.  The following lemma does
  this and thus completes the proof of the theorem.
\end{proof}

\begin{lem}\label{lem:33}
  If $\sigma, \tau \in C^\infty(M)$ satisfy $\sigma, \tau > -
  \frac{4}{n}$, then
  \begin{equation*}
    d(\psi(\sigma), \psi(\tau)) \leq \sqrt{n} \| \tau - \sigma \|_{g_0}.
  \end{equation*}
\end{lem}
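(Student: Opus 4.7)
The strategy is to approximate the bounded semimetric $g_0$ by smooth metrics and apply Proposition \ref{prop:9}, which already establishes the desired isometry property (up to the factor $\sqrt{n}$) for conformal orbits of smooth metrics. Since $g_0$ is bounded, by Proposition \ref{prop:27} I can choose a quasi-amenable subset $\U$ containing $g_0$ in its $L^2$-completion, together with a sequence $\{g_0^j\} \subset \U$ that both $L^2$- and $\omega$-converges to $g_0$. For each $j$, define $\psi_j(\sigma) := (1+\tfrac{n}{4}\sigma)^{4/n}\,g_0^j$ and similarly $\psi_j(\tau)$; these are smooth metrics lying in the conformal orbit $\pos \cdot g_0^j$ because $\sigma, \tau > -4/n$ and $M$ is compact ensure that $(1+\tfrac{n}{4}\sigma)^{4/n}$ and its analogue for $\tau$ are positive smooth functions bounded away from zero and infinity.

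Next, I would apply Proposition \ref{prop:9} with base point $g_0^j$ in place of $g$ to obtain
\begin{equation*}
  d_{\pos \cdot g_0^j}(\psi_j(\sigma), \psi_j(\tau)) = \sqrt{n}\,\|\tau-\sigma\|_{g_0^j}.
\end{equation*}
Since $\pos \cdot g_0^j$ is a submanifold of $\M$ and any path in it is also a path in $\M$, the ambient Riemannian distance is bounded above by the restricted one, giving $d(\psi_j(\sigma), \psi_j(\tau)) \leq \sqrt{n}\,\|\tau-\sigma\|_{g_0^j}$.

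The core technical step is then to pass to the limit $j \to \infty$ on both sides. Because $\rho_\sigma := (1+\tfrac{n}{4}\sigma)^{4/n}$ is a fixed bounded positive smooth function, multiplication by $\rho_\sigma$ is $L^2$-continuous, so $\psi_j(\sigma) = \rho_\sigma g_0^j \to \rho_\sigma g_0 = \psi(\sigma)$ in $L^2$, and the sequence sits inside a (possibly enlarged) quasi-amenable subset. By Theorem \ref{thm:8}, $L^2$-convergence inside a quasi-amenable subset implies $d$-convergence, so $d(\psi_j(\sigma), \psi(\sigma)) \to 0$ and analogously for $\tau$. On the right-hand side, since $(\tau-\sigma)^2$ is a bounded continuous function and $g_0^j \overarrow{\omega} g_0$, Lemma \ref{lem:54} gives weak convergence of the induced measures, whence $\|\tau-\sigma\|_{g_0^j} \to \|\tau-\sigma\|_{g_0}$.

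Combining these ingredients with the triangle inequality,
\begin{equation*}
  d(\psi(\sigma), \psi(\tau)) \leq d(\psi(\sigma),\psi_j(\sigma)) + d(\psi_j(\sigma), \psi_j(\tau)) + d(\psi_j(\tau), \psi(\tau)),
\end{equation*}
and taking $j \to \infty$ yields $d(\psi(\sigma), \psi(\tau)) \leq \sqrt{n}\,\|\tau-\sigma\|_{g_0}$, as desired. The principal obstacle is verifying the limit $d(\psi_j(\sigma), \psi(\sigma)) \to 0$; everything reduces to checking that $\psi_j(\sigma)$ lies in some common quasi-amenable subset together with $\psi(\sigma)$ so that Theorem \ref{thm:8} applies, which follows from boundedness of $\rho_\sigma$, $g_0^j$, and $g_0$.
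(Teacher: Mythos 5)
Your proposal is correct and follows essentially the same route as the paper's own proof: approximate $g_0$ by a sequence of smooth metrics in a quasi-amenable subset via Proposition \ref{prop:27}, apply Proposition \ref{prop:9} to the conformal orbit of each smooth approximant (noting the ambient distance is only bounded by the orbit distance), pass to the limit on the right via Lemma \ref{lem:54} and on the left via $L^2$-convergence of $\alpha g_k \to \alpha g_0$ inside a quasi-amenable subset, and conclude with the triangle inequality. The only cosmetic difference is that you cite Theorem \ref{thm:8} where the paper cites Proposition \ref{prop:27} for the step "$L^2$-close implies $d$-close," which rests on the same mechanism.
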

\begin{proof}
  Since $g_0$ is bounded, we can find a quasi-amenable subset $\U$
  such that $g_0 \in \U^0$, i.e., such that $g_0$ belongs to the
  completion of $\U$ with respect to $\| \cdot \|_g$.  Using
  Proposition \ref{prop:27}, choose a sequence $\{ g_k \}$ in $\U$
  that both $L^2$- and $\omega$-converges to $g_0$.  For each $k \in
  \N$, define a map $\psi_k$ by $\psi_k(\kappa) := \exp_{g_k}(\kappa
  g_k)$.

  By the triangle inequality, we have
  \begin{equation}\label{eq:59}
    d(\psi(\sigma), \psi(\tau)) \leq d(\psi(\sigma), \psi_k(\sigma)) +
    d(\psi_k(\sigma), \psi_k(\tau)) + d(\psi_k(\tau), \psi(\tau))
  \end{equation}
  for each $k$.  But since $g_k \in \M$, Proposition \ref{prop:9}
  applies to give
  \begin{equation}\label{eq:58}
    d(\psi_k(\sigma), \psi_k(\tau)) \leq \sqrt{n} \| \tau - \sigma
    \|_{g_k} \overarrow{k \rightarrow \infty} \sqrt{n} \| \tau - \sigma \|_{g_0},
  \end{equation}
  where the convergence follows from Lemma \ref{lem:54}.  (Note we
  have an inequality in \eqref{eq:58}, rather than an equality like in
  Proposition \ref{prop:9}.  Proposition \ref{prop:9} is a statement
  about the metric on $\pos \cdot \tilde{g}$ for some $\tilde{g} \in
  \M$.  This is a submanifold of $\M$, and the distance between points
  in a submanifold is always greater than or equal to the distance in
  the ambient space.)  By \eqref{eq:59} and \eqref{eq:58}, if we can
  show that
  \begin{equation*}
    d(\psi(\sigma), \psi_k(\sigma)) \rightarrow 0 \quad
    \textnormal{and} \quad d(\psi_k(\tau), \psi(\tau)) \rightarrow 0,
  \end{equation*}
  then we are finished.  In fact, if it holds for one, then it clearly
  holds for the other, so we prove it only for $\sigma$.

  Since $g_k, g_0 \in \U^0$, it suffices by Proposition \ref{prop:27}
  to show that $\psi_k(\sigma)$ $L^2$-converges to $\psi(\sigma)$.
  But this is simple, for if we set
  \begin{equation*}
    \alpha := \left( 1 + \frac{n}{4} \sigma \right)^{4/n},
  \end{equation*}
  then $\psi_k(\sigma) = \alpha g_k$ and $\psi(\sigma) = \alpha
  g_0$.  Thus
  \begin{equation*}
    \| \psi(\sigma) - \psi_k(\sigma) \|_g = \| \alpha g_0 - \alpha
    g_k \|_g \leq \max |\alpha| \cdot \| g_0 - g_k \|_g \rightarrow 0,
  \end{equation*}
  where the convergence follows from our assumptions on the sequence $g_k$.
\end{proof}

From Theorem \ref{thm:38}, we already know that the map $\Omega :
\overline{\M} \rightarrow \Mfhat$ is an injection.  Theorem
\ref{thm:12} now states that this map is a surjection as well.  Thus,
we have already proved the main result of this thesis, which we state
again here in full detail.

\begin{thm}\label{thm:41}
  There is a natural identification of $\overline{\M}$, the completion
  of $\M$ with respect to the $L^2$ metric, with $\Mfhat$, the set of
  measurable semimetrics with finite volume on $M$ modulo the
  equivalence given in Definition \ref{dfn:7}.

  This identification is given by a bijection $\Omega : \overline{\M}
  \rightarrow \Mfhat$, where we map an equivalence class $[\{g_k\}]$
  of $d$-Cauchy sequences to the unique element of $\Mfhat$ that all
  of its members $\omega$-subconverge to.  This map is an isometry if
  we give $\Mfhat$ the metric $\bar{d}$ defined by
  \begin{equation*}
    \bar{d}([g_0], [g_1]) := \lim_{k \rightarrow \infty} d(g^0_k, g^1_k)
  \end{equation*}
  where $\{g^0_k\}$ and $\{g^1_k\}$ are any sequences in $\M$
  $\omega$-subconverging to $[g_0]$ and $[g_1]$, respectively.
\end{thm}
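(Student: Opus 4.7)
The plan is to assemble this theorem as a direct consequence of the two main preceding results, with essentially no new work required. Specifically, Theorem \ref{thm:38} already constructs the map $\Omega$, proves that it is well-defined (via the existence of $\omega$-limits from Theorem \ref{thm:40} and the first uniqueness result, Theorem \ref{thm:20}), establishes its injectivity (via the second uniqueness result, Theorem \ref{thm:17}), and shows that it is an isometry onto its image when $\Omega(\overline{\M})$ is equipped with $\bar{d}$. What remains for Theorem \ref{thm:41} is precisely the surjectivity of $\Omega$, and this has just been established in Theorem \ref{thm:12}.

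Accordingly, I would structure the proof in three short steps. First, invoke Theorem \ref{thm:38} to recall the definition of $\Omega$ and that $\Omega: \overline{\M} \to \Omega(\overline{\M}) \subseteq \Mfhat$ is a well-defined bijective isometry onto its image, where $\Omega(\overline{\M})$ carries the pseudometric $\bar{d}$ defined by $\bar{d}([g_0],[g_1]) = \lim_{k\to\infty} d(g^0_k, g^1_k)$ for any sequences in $\M$ with $g^i_k \overarrow{\omega} [g_i]$. Second, apply Theorem \ref{thm:12} to conclude that $\Omega(\overline{\M}) = \Mfhat$, promoting the isometry onto its image to an isometry onto all of $\Mfhat$. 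Third, observe that since $\Omega$ is now surjective, the formula for $\bar{d}$ makes sense for every pair of elements of $\Mfhat$ and defines a genuine metric on $\Mfhat$ (positive definiteness follows automatically from injectivity of $\Omega$ and the fact that $\bar{d}$ pulls back to $d$ on the completion).

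There is no main obstacle here at all, since all the serious work was carried out in Chapters \ref{cha:almost-everywh-conv} and in the preceding sections of this chapter. In a sense, Theorem \ref{thm:41} is a bookkeeping statement that collects the existence, uniqueness, and surjectivity results into a single unified statement about the completion. The only item that might warrant a brief remark is to verify that the formula defining $\bar{d}$ on $\Mfhat$ is independent of the choice of approximating sequences $\{g^0_k\}$ and $\{g^1_k\}$; but this independence is immediate from Theorem \ref{thm:17}, since any two sequences $\omega$-subconverging to the same equivalence class are equivalent as Cauchy sequences, and the limit $\lim_{k\to\infty} d(g^0_k, g^1_k)$ only depends on the equivalence classes in $\overline{\M}^{\textnormal{pre}}$ of the sequences involved. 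Thus the entire proof reduces to a single sentence citing Theorems \ref{thm:38} and \ref{thm:12}.
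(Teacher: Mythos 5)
Your proposal is correct and matches the paper's own argument: the paper derives Theorem \ref{thm:41} exactly by combining Theorem \ref{thm:38} (well-definedness, injectivity, and the isometry onto the image with the metric $\bar{d}$) with the surjectivity established in Theorem \ref{thm:12}. No further comment is needed.
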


As an end to this chapter, before we describe our application of this
theorem, we briefly discuss the geometry of elements of $\Mfhat$.  In
fact, an element of $\Mfhat$ does not define a geometry in the usual
sense, since the metric space structure does not agree between
different representatives of one equivalence class.  To illustrate
this, let's again take our favorite example $M = T^2$, and consider
the two equivalent semimetrics
\begin{equation*}
  g_0 =
  \begin{pmatrix}
    0 & 0 \\
    0 & 0
  \end{pmatrix}
  \quad \textnormal{and} \quad
  g_1 =
  \begin{pmatrix}
    1 & 0 \\
    0 & 0
  \end{pmatrix}.
\end{equation*}
As metric spaces, $g_0$ is just a point (the torus is completely
collapsed) and $g_1$ is a round circle (one dimension of the torus has
collapsed).

On the other hand, since representatives of a given equivalence class
in $\Mfhat$ all have equal induced measures, things like $L^p$ spaces
of functions are well-defined for an equivalence class, as they are
the same across all representatives.  But even more is true---$C^k$
and $H^s$ spaces of sections of fiber bundles can again be defined as
in Subsection \ref{sec:manifolds-mappings}, since not only are the
measures induced by two representatives equal, but the representatives
themselves are equal almost everywhere with respect to their common
measure.  Therefore, an equivalence class doesn't induce a
well-defined scalar product on a vector bundle at any individual
point, but the integral of the scalar product does not depend on the
chosen representative.

So while one must be careful about regarding an element of $\Mfhat$ as
defining a geometry, there are indeed many geometric concepts that are
well-defined for elements of $\Mfhat$.


\chapter{Application to Teichmüller theory}\label{cha:appl-teichm-space}

In this chapter, we describe an application of our main theorem to the
theory of Teichmüller space.  Teichmüller space was historically
defined in the context of complex manifolds, but the work of Fischer
and Tromba translates this original picture into the context of
Riemannian geometry, using the manifold of metrics.  (See, in
particular, the papers \cite{fischer84:_purel_rieman_proof_of_struc}
and \cite{fischer84:_weil_peter_metric_teich_space}, as well as the
related \cite{fischer84:_almos_compl_princ_fiber_bundl},
\cite{fischer87:_new_proof_that_teich_space_is_cell},
\cite{tromba86:_natur_algeb_affin_connec_space} and
\cite{tromba87:_energ_funct_for_weil_peter_metric_space}.)

In Section \ref{sec:teichmuller-space}, we describe Teichmüller space
according to Fischer and Tromba's picture.  Along with discussing some
properties of Teichmüller space, we also describe a much-studied
Riemannian metric on it, the so-called Weil-Petersson metric.  The
Weil-Petersson metric arises very naturally in this context, and there
is also a very natural way to generalize it, which we give in Section
\ref{sec:metrics-arising-from}.  It is in this section that our
application is given.

The book \cite{tromba-teichmueller} is an excellent presentation of
Fischer and Tromba's approach to Teichmüller space.  It is
essentially a self-contained work incorporating the references listed
above.  We will use it as the standard reference in this chapter---any
facts that are not directly cited or proved can be found in this book.

\section{Teichmüller space}\label{sec:teichmuller-space}

\begin{cvt}\label{cvt:2}
  For the entirety of this chapter, let our base manifold $M$ be a smooth,
  closed, oriented, two-dimensional manifold of genus $p \geq 2$.
\end{cvt}

\begin{cvt}
  In this chapter, we abandon Convention \ref{cvt:3}.  That is, when
  we write $g$ for a metric in $\M$, we no longer assume that this is
  fixed, but allow $g$ to vary arbitrarily.
\end{cvt}

\subsection{The definition of Teichmüller space}\label{sec:defin-teichm-space}

Since the group $\pos$ of positive $C^\infty$ functions acts on $\M$
by pointwise multiplication, we can define the quotient space by this
action, $\M / \pos$.  It is not hard to see that this action is
smooth, free and proper, from which one can show that the quotient
space $\M / \pos$ is a smooth Fréchet manifold.  This is called the
manifold of conformal classes on $M$.  The name comes from that of a
conformal class $[g]$, which is the set of all metrics of the form
$\rho g$, where $\rho$ is a smooth, positive function.  We cannot use
a conformal class to give a well-defined notion of the length of
vectors in a tangent space, since this varies among representatives of
the class.  However, the \emph{angle} between two vectors is the same
for all representatives, so this notion is well-defined for conformal
classes.  This is analogous to the way that a conformal mapping
preserves angles---in fact, the identity mapping $(M, g) \rightarrow
(M, \rho g)$ is obviously conformal for any $g \in \M$ and $\rho \in
\pos$.

Let $\D$ denote the Fréchet Lie group of smooth,
orientation-preserving diffeomorphisms of $M$ (cf.~Remark
\ref{rmk:20}).  There is an action of $\D$ on $\M$ given by pull-back.
Actually, we can even define the action on $\s$: for all $\phi \in
\D$, $h \in \s$, $x \in M$, and $v, w \in T_x M$, the explicit formula
is
\begin{equation}\label{eq:55}
  \varphi^* h (x) (v, w) = h (\varphi(x)) (D \varphi (x) v, D
  \varphi(x) w).
\end{equation}
This action is compatible with the $\pos$-action on $\M$ in the sense
that if $g_0$ and $g_1$ are equivalent under the $\pos$-action, say
$g_1 = \rho g_0$, then $\varphi^* g_0$ and $\varphi^* g_1$ are also
equivalent under the $\pos$-action, since $\varphi^* g_1 = (\rho \circ
\varphi) \varphi^* g_0$.  In other words, there is a natural action of
$\D$ on $\M / \pos$ that makes the projection $\tilde{\pi} : \M
\rightarrow \M / \pos$ $\D$-equivariant.  Thus, we can define the
quotient space
\begin{equation*}
  \riem := ( \M / \pos ) / \D,
\end{equation*}\label{p:def-R}
which is called the \emph{Riemann moduli space of $M$}, or usually
just \emph{moduli space}.

\begin{rmk}\label{rmk:23}
  As we mentioned above, the Riemann moduli space (and Teichmüller
  space, which we'll meet later) was originally defined in terms of
  complex structures on $M$, not metrics.  It turns out that the
  manifold $\M / \pos$ is, in a sense, diffeomorphic to the manifold
  of complex structures on $M$, which gives the connection to the
  original theory.  Since we don't need this connection for our
  purposes, however, we omit it and instead refer the reader again to
  \cite{tromba-teichmueller} for details.  The approach we take here
  may be less familiar, but is more economic given our previous
  preparations.
\end{rmk}

Moduli space has a somewhat technically challenging structure.  Since
the action of $\D$ on $\M$ has a fixed point at any metric with a
nontrivial isometry group, moduli space has singularities.  It turns
out that these are not very difficult to deal with, as they are only
orbifold singularities---this follows from the fact that the isometry
group of a Riemann surface with genus $p \geq 2$ is necessarily finite
(see Lemma \ref{lem:6} below).  Yet one might still prefer to work
with a smooth manifold.  Teichmüller space is a manifold that can be
seen as a sort of intermediate space between the manifold of conformal
classes and moduli space.  We define this now.

Let $\DO \subset \D$ be the subset of diffeomorphisms that are
homotopic to the identity.  It turns out that the action of $\D_0$ on
$\M$ is free and, though the proof is quite involved, one can show
that the quotient space\label{p:def-T}
\begin{equation*}
  \T := ( \M / \pos ) / \D_0
\end{equation*}
is a smooth manifold.  (This is not done directly, but rather using
the intermediate step of identifying $\M / \pos$ with the space of
hyperbolic metrics on $M$; see below.)  This quotient space is the
\emph{Teichmüller space of $M$}, or simply \emph{Teichmüller space}.

Not only is Teichmüller space a smooth manifold, it is
finite-dimensional.  This allows us to avoid the many difficulties
that arise when dealing with infinite-dimensional spaces like $\M$.

The \emph{mapping class group} of $M$ is defined to be $MCG := \D /
\DO$.\label{p:def-mcg}  This group acts on Teichmüller space, and we have
\begin{equation*}
  \riem = \T / MCG.
\end{equation*}
The general philosophy to keep in mind in this setup is that natural
objects on Teichmüller space should be $MCG$-invariant so that they
descend to moduli space.  This corresponds to ensuring that objects
defined on $\M / \pos$ are $\D$-invariant and not just
$\DO$-invariant.

\subsection{The Weil-Petersson metric on Teichmüller
  space}\label{sec:hyperb-metr-unif}

Before we can define the Weil-Petersson metric, we need to discuss
hyperbolic metrics on $M$.  In particular, the following theorem
allows us to identify the quotient space $\M / \pos$ with the set of
hyperbolic metrics on $M$.  We define a hyperbolic metric as one that
has constant scalar curvature $-1$.  (Other authors may use the
sectional curvature or Gaussian curvature, which differs from the
scalar curvature simply by a constant factor.  We stick here to the
convention of \cite{tromba-teichmueller} for simplicity.)

\begin{thm}[Poincaré uniformization theorem]\label{thm:14}
  Let $g \in \M$ be any Riemannian metric on the closed, oriented,
  smooth surface $M$ of genus $p \geq 2$.  Then there exists a unique
  $\lambda(g) \in \pos$ such that $\lambda(g) g$ is hyperbolic.
\end{thm}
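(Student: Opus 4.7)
The plan is to reduce the statement to existence and uniqueness of a smooth solution of a semilinear elliptic PDE on $M$, solve that PDE by the method of sub- and super-solutions, and conclude uniqueness from the maximum principle. Parametrize a candidate conformal factor as $\lambda = e^{2u}$ with $u \in C^\infty(M)$. The standard conformal change formula for scalar curvature in dimension $n = 2$ reads
\begin{equation*}
  R_{e^{2u} g} = e^{-2u}\bigl(R_g - 2 \Delta_g u\bigr),
\end{equation*}
where $\Delta_g = \textnormal{div}_g \nabla_g$ is the Laplace-Beltrami operator of $g$. Imposing $R_{\lambda g} = -1$ is therefore equivalent to finding a smooth $u$ solving
\begin{equation*}
  \Delta_g u = \tfrac{1}{2}\bigl(R_g + e^{2u}\bigr).
\end{equation*}
Since the right-hand side is smooth and strictly increasing in $u$, this is a well-behaved semilinear elliptic equation.

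For existence, I would combine the sub/super-solution method with a crucial topological input. Because $M$ is closed of genus $p \geq 2$, the Gauss-Bonnet theorem gives $\int_M R_g\, d\mu_g = 4\pi\chi(M) = 8\pi(1-p) < 0$, so the $\mu_g$-mean $\bar R := \Vol(M, g)^{-1} \int_M R_g\, d\mu_g$ is strictly negative. By standard Fredholm theory on $(M, g)$, the linear Poisson equation $\Delta_g v = \tfrac{1}{2}(R_g - \bar R)$ has a smooth solution $v$ (its right-hand side has zero mean, and $v$ is unique up to an additive constant). Setting $u_{\pm} := v + c_{\pm}$, a direct check shows that the sub-solution inequality $\Delta_g u_- \geq \tfrac{1}{2}(R_g + e^{2u_-})$ reduces to $-\bar R \geq e^{2c_-} e^{2v}$, which holds for $c_-$ sufficiently negative, and the super-solution inequality $\Delta_g u_+ \leq \tfrac{1}{2}(R_g + e^{2u_+})$ reduces to $-\bar R \leq e^{2c_+} e^{2v}$, which holds for $c_+$ sufficiently positive; both use $-\bar R > 0$ decisively. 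Since $c_-$ can be chosen even more negative and $c_+$ even more positive if needed, we may arrange $u_- \leq u_+$ pointwise. Monotone iteration between $u_-$ and $u_+$ then produces a solution $u$, and elliptic bootstrapping applied to the equation $\Delta_g u - C u = \tfrac{1}{2}(R_g + e^{2u}) - C u$ (for any $C > 0$) gives $u \in C^\infty(M)$.

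Uniqueness is the cleanest part of the argument. If $u_1$ and $u_2$ both solve the PDE, then $w := u_1 - u_2$ satisfies $\Delta_g w = \tfrac{1}{2}(e^{2u_1} - e^{2u_2})$. Since $M$ is compact, $w$ attains its maximum at some $x_0 \in M$, where $\Delta_g w(x_0) \leq 0$ forces $e^{2u_1(x_0)} \leq e^{2u_2(x_0)}$, i.e., $w(x_0) \leq 0$, and hence $w \leq 0$ on all of $M$. Swapping the roles of $u_1$ and $u_2$ gives $u_1 = u_2$, and therefore $\lambda(g) = e^{2u}$ is unique.

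The main obstacle is the existence step, and specifically producing admissible sub- and super-solutions with $u_- \leq u_+$: this is where the genus hypothesis $p \geq 2$ enters essentially, via Gauss-Bonnet giving $\bar R < 0$. Without that sign, neither endpoint of the iteration can be controlled by simply adjusting an additive constant, and indeed for surfaces of genus $0$ or $1$ the uniformization theorem takes a genuinely different form. Everything else in the argument (linear Fredholm solvability, monotone iteration, elliptic regularity, strong maximum principle) is standard machinery for closed Riemannian surfaces and should be invoked rather than redeveloped.
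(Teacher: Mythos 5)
The paper does not prove this theorem at all: it is quoted as a known result from the Fischer--Tromba framework (the chapter's standing reference \cite{tromba-teichmueller}), so there is no in-paper argument to compare against. Your proof is the standard elliptic-PDE route and it is sound. The reduction is consistent with the paper's conventions (hyperbolic means scalar curvature $\equiv -1$, and with $R = 2K$ the conformal change formula you use is correct), so the problem really is $\Delta_g u = \tfrac{1}{2}(R_g + e^{2u})$ with $\Delta_g = \textnormal{div}_g \nabla_g$. The genus hypothesis enters exactly where you put it: Gauss--Bonnet gives $\int_M R_g\,\mu_g = 4\pi\chi(M) < 0$ for $p \geq 2$, which is what makes the constants $c_\pm$ in your sub/super-solution ansatz $u_\pm = v + c_\pm$ available; the zero-mean right-hand side makes the auxiliary Poisson equation for $v$ solvable; monotone iteration plus bootstrapping yields a smooth solution; and your maximum-principle comparison (which in fact only needs the weak maximum principle, since strict monotonicity of $e^{2u}$ in $u$ does the work) gives uniqueness of $u$, hence of $\lambda(g) = e^{2u}$. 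One small bonus worth noting: the linearization of $u \mapsto 2\Delta_g u - R_g - e^{2u}$ at a solution is $2\Delta_g - 2e^{2u}$, which is invertible, so the same setup also delivers, via the implicit function theorem, the smooth dependence $g \mapsto \lambda(g)$ that the paper asserts immediately after the theorem.
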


Additionally, it can be shown that the assignment $g \mapsto
\lambda(g)$ is smooth.

Let $\Mhyp \subset \M$\label{p:def-Mhyp} denote the subset of hyperbolic metrics on $M$.
Theorem \ref{thm:14} implies that there is a bijection between $\M /
\pos$ and $\Mhyp$.  It can be shown that in fact, $\Mhyp$ is a smooth
submanifold of $\M$ and this bijection is actually a diffeomorphism.

Furthermore, we can easily show that $\Mhyp$ is $\D$-invariant.
Denote the scalar curvature of a metric $g \in \M$ by $R(g)$---this is
a function on $M$, and $g \in \Mhyp$ if and only if $R(g) \equiv -1$.
But for all $x \in M$ and $g \in \Mhyp$,
\begin{equation*}
  R(\varphi^* g)(x) = R(g)(\varphi(x)) = -1.
\end{equation*}
Therefore $\varphi^* g \in \Mhyp$ as well.

Using the statements of the last two paragraphs, we can
diffeomorphically identify Teichmüller space with the space of
hyperbolic metrics modulo diffeomorphisms homotopic to the identity.
That is,
\begin{equation*}
  \T = (\M / \pos) / \DO \cong \Mhyp / \DO.
\end{equation*}
This is the model of Teichmüller space that we will use from here
on.  We furthermore denote the projection by\label{p:def-pi}
\begin{equation*}
  \pi : \Mhyp \rightarrow \Mhyp / \DO.
\end{equation*}

Since $\Mhyp$ is a submanifold of $\M$, the $L^2$ metric $(\cdot,
\cdot)$ on $\M$ induces a weak Riemannian metric on $\Mhyp$ by
restriction.  We again denote this metric by $(\cdot, \cdot)$, and we
claim that $\D$ acts by isometries on $(\cdot, \cdot)$.  To see this,
we first denote the pull-back action by\label{p:def-A}
\begin{equation}\label{eq:3}
  A : \s \times \D \rightarrow \s,
\end{equation}
and for any $\varphi \in \D$, we define a map\label{p:def-Aphi}
\begin{equation}\label{eq:4}
  \begin{aligned}
    A_\varphi : \s &\rightarrow \s \\
    h &\mapsto A(h, \varphi) = \varphi^* h.
  \end{aligned}
\end{equation}
Note from the definition \eqref{eq:55} of the pull-back that
$A_\varphi$ is a linear map.  Therefore, its differential at each
point is equal to the map itself.  From this, we see that for any
$g \in \M$ and any $h, k \in T_{g} \M \cong \s$,
\begin{equation*}
  (D A_\varphi(g) h, D A_\varphi(g)
  k)_{A_\varphi(g)} = \integral{M}{}{\tr_{\varphi^*
      g}( (\varphi^* h) (\varphi^* k) )}{\mu_{\varphi^* g}}.
\end{equation*}
Let's define $f := \tr_{g}(h k)$, so that $f$ is a function on
$M$.  It's not hard to convince oneself that
\begin{equation*}
  \tr_{\varphi^* g}( (\varphi^* h) (\varphi^* k) ) = f \circ
  \varphi = \varphi^* f,
\end{equation*}
as well as that $\mu_{\varphi^* g} = \varphi^*
\mu_{g}$.  Therefore,
\begin{equation*}
  (D A_\varphi(g) h, D A_\varphi(g)
  k)_{A_\varphi(g)} = \integral{M}{}{(\varphi^* f)}{\varphi^*
    \mu_{g}} = \integral{M}{}{}{\varphi^*(f \mu_{g})} = \integral{M}{}{f}{\mu_{g}} = (h, k)_{g}.
\end{equation*}
This shows that $(\cdot, \cdot)$ is $\D$-invariant.

$\D$-invariance of $(\cdot, \cdot)$ implies that it descends to an
$MCG$-invariant Riemannian metric, also denoted $(\cdot, \cdot)$, on
the quotient $\Mhyp / \DO$.  This metric is called the
\emph{Weil-Petersson metric}.

The Weil-Petersson metric is an extremely interesting and intensely
studied object.  Some of its most important properties are the
following.  There is a natural complex structure on Teichmüller space,
which we won't describe here, and with respect to this structure the
Weil-Petersson metric is Kähler.  It has strictly negative sectional
curvature and strictly negative holomorphic sectional curvature.  The
Weil-Petersson metric is incomplete, and its completion leads to
interesting connections with the so-called Deligne-Mumford
compactification of moduli space.  We will explore this metric some
more in Subsection \ref{sec:nonc-weil-peterss}.

For the moment, though, we leave the Weil-Petersson metric and move on
to some other properties of Teichmüller space that we will need.

\subsection{The fiber bundle structure of Teichmüller
  space}\label{sec:fiber-bundle-struct}

It is clear that Teichmüller space is the base space of a principal
$\DO$-bundle with total space $\pi : \Mhyp \rightarrow \Mhypd$.  If we
put the $L^2$ metric on $\Mhyp$ and the Weil-Petersson metric on $\T$,
then this bundle forms what is called a \emph{weak Riemannian
  principal $\DO$-bundle}.  That is, it is a principal $\DO$-bundle
with a weak Riemannian metric on each of the base and total spaces,
and the differential of the projection is an isometry when restricted
to the horizontal space.  In other words, for all $g \in \Mhyp$,
$D\pi_{g}|_{H_g}$ is an isometry.  Here, $H_g$\label{p:def-hg} is the
horizontal tangent space defined as follows.  Let
$V_{g}$\label{p:def-vg} be the vertical tangent space, i.e., the
tangent space to the orbit $\DO \cdot g$. Then $H_{g} = V_{g}^\perp$.
Of course, the tangent space decomposes as $T_g \Mhyp = H_g \oplus
V_g$.

We can easily determine what the vertical tangent space $V_g$ is.  If
$\varphi_t$, for $t \in (-\epsilon, \epsilon)$, is a one-parameter
family of diffeomorphisms for which $\varphi_0 = \id$, then the
differential of $\varphi_t$ at $t = 0$ is a vector field.  That is, if
we denote the set of $C^\infty$ vector fields on $M$ by
$\mathfrak{X}(M)$, then there is some $X \in \mathfrak{X}(M)$ for which
\begin{equation*}
  \left. \frac{d}{dt} \right|_{t=0} \varphi_t = X.
\end{equation*}
Every $X \in \mathfrak{X}(M)$ arises in this way.  Furthermore, if $g
\in \M$ is any metric, then by definition,
\begin{equation*}
  \left. \frac{d}{dt} \right|_{t=0} \varphi_t^* g = L_X g,
\end{equation*}
where $L_X g$ is the Lie derivative of $g$ with respect to $X$.
Therefore we have
\begin{equation}\label{eq:1}
  V_g = \{ L_X g \mid X \in \mathfrak{X}(M) \}.
\end{equation}

It is also possible to explicitly describe the horizontal tangent
space $H_g$, though we will not derive this description here.  Define
the divergence of an element $h \in \s$ to be the one-form given
locally by
\begin{equation*}
  (\delta_{g} h)_i = \frac{1}{\sqrt{\det g}}
  \frac{\partial}{\partial x^j} \left( g^{j k} h_{k i}
    \sqrt{\det g} \right) - \frac{1}{2} g^{k l}
  g^{j m} h_{m l} \frac{\partial g_{j k}}{\partial x^i}.
\end{equation*}
Then we have\label{p:def-sttg}
\begin{equation*}
  H_g = \stt := \{ h \in \s \mid \tr_g h = 0,\ \delta_g h = 0 \}.
\end{equation*}

Horizontal lifts of paths exist for the bundle $\pi : \Mhyp
\rightarrow \Mhyp / \DO$.  That is, given a path $\gamma : [0,1]
\rightarrow \Mhyp / \DO$ and an element $g \in \pi^{-1}(\gamma(0))$,
there exists a unique path $\tilde{\gamma} : [0,1] \rightarrow \Mhyp$
such that $\pi \circ \tilde{\gamma} = \gamma$, $\tilde{\gamma}(0) =
g$, and $\tilde{\gamma}'(t) \in H_{\gamma(t)}$ for all $t$.  Note that
this fact does not hold in general for weak Riemannian principal
bundles.  There are a number of ways to see that it does hold
here---we will now present a proof that relies on the existence of a
slice for the $\DO$-action and the ability to take any path in $\M$
and construct a horizontal path from it.

The existence of a slice is given by the following theorem.

\begin{thm}[{\cite[Thms.~2.4.2 and 2.4.5]{tromba-teichmueller}}]\label{thm:9}
  Let $g \in \Mhyp$ be arbitrary.  Then there exists a local smooth
  submanifold $\mathcal{X}_g \subset \Mhyp$ passing through $g$ such
  that each point of $\mathcal{X}_g$ corresponds to exactly one orbit
  of $\DO$.  That is, if $\varphi \in \DO$, $\tilde{g} \in {\cal X}_g$
  and $\varphi^* \tilde{g} \in \mathcal{X}_g$, then $\varphi =
  \textnormal{id}$.  Furthermore, the local submanifolds
  $\mathcal{X}_g$ form the (nonlinear) charts of an atlas for
  $\Mhypd$.
\end{thm}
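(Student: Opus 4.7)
The plan is to construct $\mathcal{X}_g$ as a small finite-dimensional submanifold through $g$ tangent to $H_g = \stt$, and to establish the slice property via an inverse function theorem argument applied to the action map $\DO \times \mathcal{X}_g \to \Mhyp$.

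First, I would build $\mathcal{X}_g$ by exploiting the uniformization map. By Theorem \ref{thm:14}, the assignment $N : \M \to \Mhyp$, $\tilde{g} \mapsto \lambda(\tilde{g}) \tilde{g}$, is a smooth projection onto $\Mhyp$ with $N(g) = g$. Since $\stt \subset \s \cong T_g \M$ is finite-dimensional at a hyperbolic $g$ (by Riemann--Roch it has real dimension $6p - 6$, corresponding to holomorphic quadratic differentials), the map $h \mapsto N(g + h)$ defined on a small neighborhood $U \subset \stt$ of the origin is smooth, and its differential at $0$ is the inclusion $\stt \hookrightarrow T_g \Mhyp$, which is injective. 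Hence its image $\mathcal{X}_g$ is a smooth finite-dimensional submanifold of $\Mhyp$ with $T_g \mathcal{X}_g = \stt$.

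Second, consider the action map
\begin{equation*}
  \Phi : \DO \times \mathcal{X}_g \to \Mhyp, \qquad \Phi(\varphi, \tilde{g}) = \varphi^* \tilde{g}.
\end{equation*}
Its differential at $(\id, g)$ sends $(X, h) \in \mathfrak{X}(M) \oplus \stt$ to $L_X g + h$, which lands in $V_g \oplus H_g = T_g \Mhyp$ by \eqref{eq:1} and the definition of $H_g$, and is a linear isomorphism. The main obstacle is passing from this infinitesimal statement to the conclusion that $\Phi$ is a local diffeomorphism, since the inverse function theorem is unavailable in the $C^\infty$ Fr\'echet category. Following Ebin's strategy, I would pass to the $H^s$ chain: for each sufficiently large $s$, the corresponding map $\Phi^s : \DOss \times \mathcal{X}_g \to \Mhyps$ is a smooth map of Hilbert manifolds; after verifying the $H^s$ splitting $T_g \Mhyps = V_g^s \oplus \stt$, its differential at $(\id, g)$ is an isomorphism, and the Hilbert IFT yields a local $C^1$ diffeomorphism. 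Elliptic regularity --- applied to the equation $\varphi^* \tilde{g} = g'$, where the slice representative $\tilde{g}$ is finite-dimensional and hence smooth and $g'$ is smooth --- then upgrades the inverse to map smooth metrics into $\DO \times \mathcal{X}_g$, giving the desired local diffeomorphism in the smooth category.

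Third, the slice property follows directly from injectivity of $\Phi$ near $(\id, g)$: if $\tilde{g}, \tilde{g}' \in \mathcal{X}_g$ lie near $g$ and $\varphi^* \tilde{g} = \tilde{g}'$ with $\varphi \in \DO$ near $\id$, then $(\varphi, \tilde{g})$ and $(\id, \tilde{g}')$ both lie in $\Phi^{-1}(\tilde{g}')$, forcing $\varphi = \id$ and $\tilde{g} = \tilde{g}'$. Consequently, $\pi|_{\mathcal{X}_g} : \mathcal{X}_g \to \Mhypd$ is a homeomorphism onto an open subset, and since $\mathcal{X}_g$ is a smooth $(6p-6)$-dimensional manifold, this provides a chart. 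For two points $g_1, g_2 \in \Mhyp$ with overlapping slice neighborhoods, the transition map is obtained by composing the inclusion $\mathcal{X}_{g_1} \hookrightarrow \Mhyp$ with the ``slice projection'' furnished by $\Phi^{-1}$ relative to $g_2$ (followed by projection onto the second factor); since each ingredient is smooth, the transitions are smooth, and $\Mhypd$ inherits a smooth $(6p-6)$-dimensional manifold structure.
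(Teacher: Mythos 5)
A preliminary remark: the thesis does not prove this statement at all --- it is quoted verbatim from Fischer--Tromba (\cite[Thms.~2.4.2 and 2.4.5]{tromba-teichmueller}) and used as a black box to build horizontal lifts. So there is no internal proof to compare against; your sketch is essentially the standard Ebin--Fischer--Tromba slice construction (finite-dimensional slice tangent to $\stt$, inverse function theorem for the action map in the $H^s$ category, elliptic regularity to return to the smooth category), and the local analysis you outline is the right skeleton, modulo the usual unverified-but-standard points (that $D\lambda(g)[h]=0$ for $h \in \stt$, so that $DN(g)|_{\stt}$ really is the inclusion; that the $H^s$ action map is smooth because the slice consists of smooth tensors; independence of the construction from $s$).

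The genuine gap is in your third paragraph. The theorem asserts that \emph{no} $\varphi \in \DO$ other than the identity maps a point of $\mathcal{X}_g$ back into $\mathcal{X}_g$; your argument only rules out $\varphi$ in a small neighborhood of $\id$ (and metrics near $g$), because that is all the local injectivity of $\Phi$ near $(\id, g)$ gives. But the global-in-$\varphi$ statement is exactly what you then use to claim that $\pi|_{\mathcal{X}_g}$ is injective, hence a chart --- two slice points could a priori lie on the same $\DO$-orbit via a diffeomorphism far from the identity, and nothing in the IFT argument excludes this. Closing the gap requires two additional inputs and a shrinking argument: (i) properness of the $\D$-action on $\M$ (Ebin), so that from sequences $\tilde{g}_k, \varphi_k^* \tilde{g}_k \to g$ in the slice one extracts $\varphi_k \to \varphi_\infty$ with $\varphi_\infty^* g = g$; and (ii) triviality of the stabilizer of $g$ in $\DO$, i.e.\ an isometry of a hyperbolic surface of genus $p \geq 2$ homotopic to the identity is the identity (note this is stronger than the finiteness of the isometry group recorded in Lemma \ref{lem:6}). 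Then $\varphi_\infty = \id$, contradicting local injectivity, and one shrinks $\mathcal{X}_g$ accordingly. Without this step the slice property as stated, the injectivity of $\pi|_{\mathcal{X}_g}$, and therefore the atlas claim are all unproved. (The openness of $\pi(\mathcal{X}_g)$ in the quotient topology and the smoothness of the transition maps also deserve a sentence each, but those do follow from the local surjectivity of $\Phi$ and the $H^s$-plus-regularity argument you already invoke.)
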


Now, we want to take a given path in $\M$ and construct a horizontal
path from it.  We note that the horizontal space for the $\D$-action
on $\M$, i.e., the vectors tangent to the $\D$-orbits, is given by
\cite[\S 3]{freed89:_basic_geomet_of_manif_of}\label{p:def-htilg}
\begin{equation*}
  \widetilde{H}_g = \{ h \in \s \mid \delta_g h = 0 \}.
\end{equation*}
The vertical tangent space is again given by \eqref{eq:1}, since we
showed \eqref{eq:1} for any $g \in \M$, not just $g \in \Mhyp$.  Again
we have a decomposition $T_g \M = \widetilde{H}_g \oplus V_g$.

Let's denote the projection of $\M$ onto the $\DO$-orbit space by
\begin{equation*}
  \pi_\M : \M \rightarrow \M / \DO.
\end{equation*}
We simply view this as a mapping of sets, since we have not considered
any particular structure on $\M / \DO$ (and don't need to).

The statement we need is the following.

\begin{lem}\label{lem:2}
  Let $g_t$, $t \in [0,1]$, be any piecewise $C^1$ path in $\M$.  Then
  there exists a unique piecewise $C^1$ path $\tilde{g}_t$, $t \in
  [0,1]$, with the properties that $\tilde{g}_0 = g_0$, $\tilde{g}_t$
  is horizontal wherever $g_t$ is differentiable, and $\tilde{g}_t$ is
  equivalent to $g_t$ under the $\DO$-action on $\M$.  That is,
  $\tilde{g}_0 = g_0$, $\tilde{g}'_t \in \widetilde{H}_{\tilde{g}_t}$
  for all $t$ for which $g'_t$ exists, and $\pi_\M(\tilde{g}_t) =
  \pi_\M(g_t)$ for all $t \in [0,1]$.

  Furthermore, $\tilde{g}_t$ is of minimal length among the class of
  all paths equivalent to $g_t$ (though it is of course not the unique
  minimizer).
\end{lem}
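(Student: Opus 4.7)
The plan is to construct $\tilde{g}_t$ in the form $\tilde{g}_t = \varphi_t^* g_t$ for a suitable one-parameter family $\varphi_t \in \DO$ with $\varphi_0 = \id$. Since the $\DO$-action preserves the fibers of $\pi_\M$, the equivalence condition $\pi_\M(\tilde{g}_t) = \pi_\M(g_t)$ is automatic, and the problem reduces to finding a time-dependent vector field $Y_t$ generating $\varphi_t$ via $\tfrac{d}{dt}\varphi_t = Y_t \circ \varphi_t$ in such a way that the resulting path is horizontal.

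First I would use the splitting $T_{g_t} \M = \widetilde{H}_{g_t} \oplus V_{g_t}$ to decompose, at each $t$ where $g'_t$ exists,
\begin{equation*}
g'_t = h_t + L_{X_t} g_t, \qquad h_t \in \widetilde{H}_{g_t}, \; X_t \in \mathfrak{X}(M).
\end{equation*}
On a closed hyperbolic surface of genus $p \geq 2$, the isometry group of any metric is finite (the classical fact underlying the orbifold nature of $\riem$), so $g_t$ admits no nonzero Killing field; this makes $X_t$ unique and, via standard elliptic theory applied to the second-order operator $X \mapsto \delta_{g_t} L_X g_t$, guarantees piecewise $C^1$ dependence of $X_t$ on $t$. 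The flow formula for a time-dependent family yields
\begin{equation*}
\tfrac{d}{dt} \varphi_t^* g_t = \varphi_t^* \left( g'_t + L_{Y_t} g_t \right) = \varphi_t^* \left( h_t + L_{X_t + Y_t} g_t \right),
\end{equation*}
so horizontality of $\tilde{g}'_t$ forces $L_{X_t + Y_t} g_t = 0$; absence of Killing fields then gives $Y_t = -X_t$.

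Next I would solve the non-autonomous ODE $\tfrac{d \varphi_t}{dt} = - X_t \circ \varphi_t$ with $\varphi_0 = \id$ on the compact manifold $M$; by piecewise $C^1$-ness of $X_t$ this has a unique solution $\varphi_t \in \D$ on all of $[0,1]$, and the continuous path $s \mapsto \varphi_{s t}$ from $\id$ to $\varphi_t$ exhibits $\varphi_t \in \DO$. Setting $\tilde{g}_t := \varphi_t^* g_t$ and using $\D$-equivariance of the divergence (i.e.~$\delta_{\varphi^* g}(\varphi^* h) = \varphi^* \delta_g h$) shows $\tilde{g}'_t = \varphi_t^* h_t \in \widetilde{H}_{\tilde{g}_t}$, proving existence. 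Uniqueness follows by running the argument backwards: any other horizontal lift $\psi_t^* g_t$ with $\psi_0 = \id$ forces its generator to satisfy the same decomposition equation and, by uniqueness of $X_t$ and of ODE solutions, $\psi_t = \varphi_t$. Minimality is a Pythagorean calculation: for any equivalent path $\psi_t^* g_t$ with generating field $Z_t$, the $\D$-invariance of $(\cdot,\cdot)$ and orthogonality of $\widetilde{H}_{g_t}$ and $V_{g_t}$ give
\begin{equation*}
\left\| \tfrac{d}{dt} \psi_t^* g_t \right\|_{\psi_t^* g_t}^2 = \| h_t + L_{X_t + Z_t} g_t \|_{g_t}^2 = \| h_t \|_{g_t}^2 + \| L_{X_t + Z_t} g_t \|_{g_t}^2 \geq \| \tilde{g}'_t \|_{\tilde{g}_t}^2,
\end{equation*}
and integration in $t$ finishes the argument.

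The main obstacle will be controlling the regularity of $X_t$ and thereby justifying the well-posedness of the flow equation in the smooth category. The splitting $\s = \widetilde{H}_g \oplus V_g$ is a Hodge-type decomposition whose effective use requires the operator $X \mapsto \delta_g L_X g$ to be an isomorphism modulo Killing fields with bounds uniform in $g$; in the weak Riemannian Fr\'echet setting this is most cleanly established by first working at Sobolev regularity $H^s$ for $s$ large and then invoking elliptic regularity to return to $C^\infty$. Once this is in hand, the remainder of the proof is an essentially algebraic manipulation of the Koszul/flow identities together with the isometric $\D$-action on $(\cdot,\cdot)$.
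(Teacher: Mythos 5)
Your proposal follows essentially the same route as the paper's proof: decompose $g'_t = h_t + L_{X_t} g_t$ via the splitting $T_{g_t}\M = \widetilde{H}_{g_t} \oplus V_{g_t}$, integrate the time-dependent field $-X_t$ to a family $\varphi_t \in \DO$, set $\tilde{g}_t = \varphi_t^* g_t$ and check horizontality by equivariance of the divergence, then use the absence of Killing fields (Lemma \ref{lem:6}) for uniqueness and the orthogonality of horizontal and vertical vectors together with $\D$-invariance of $(\cdot,\cdot)$ for minimality. The argument is correct and matches the paper; your Pythagorean phrasing of the minimality estimate is only a minor cosmetic variation on the paper's computation.
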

\begin{proof}
  Without loss of generality, we assume that $g_t$ is actually $C^1$
  on its entire domain.  (Otherwise just apply the proof to each
  segment on which it is $C^1$.)  By the decomposition shown above,
  for each $t \in [0,1]$, there exist $h_t \in \widetilde{H}_g$ and
  $X_t \in \mathfrak{X}(M)$ such that
  \begin{equation*}
    g'_t = h_t + L_{X_t} g_t.
  \end{equation*}
  
  Now, as is well known (or easily looked up, say in
  \cite[Thm.~17.15]{lee03:_introd_to_smoot_manif}), since $M$ is
  compact, we can integrate the time-dependent vector field $-X_t$ to
  get a one-parameter family of diffeomorphisms $\varphi_t$ for which
  $\varphi_0 = \id$ and
  \begin{equation*}
    \frac{d}{dt} \varphi_t = - X_t \circ \varphi_t
  \end{equation*}
  for all $t \in [0,1]$.

  We then define $\tilde{g}_t := \varphi_t^* g_t$ and claim that this
  is the desired path.  Clearly $\pi_\M(\tilde{g}_t) = \pi_\M(g_t)$.
  To show that $\tilde{g}'_t \in \widetilde{H}_{\tilde{g}_t}$, we
  recall that $A$ denotes the action of $\D$ on $\s$
  (cf.~\eqref{eq:3}) and compute
  \begin{equation}\label{eq:6}
    \tilde{g}'_t = \frac{d}{dt} ( \varphi_t^* g_t ) =
    \frac{d}{dt}  A(g_t, \varphi_t) = DA(g_t, \varphi_t)[g'_t, -X_t
    \circ \varphi_t],
  \end{equation}
  since the $t$-derivative of $\varphi_t$ is $-X_t \circ \varphi_t$.

  Now, denote the partial derivatives of $A$ in the first and second
  arguments by $D_1 A$ and $D_2 A$, respectively.  We have
  \begin{equation}\label{eq:5}
    D_1 A(g_t, \varphi_t) g'_t = DA_{\varphi_t}(g_t) g'_t =
    \varphi_t^* g'_t = \varphi_t^* (h_t + L_{X_t} g_t).
  \end{equation}
  Recall that $A_{\varphi_t} = A(\cdot, \varphi_t)$ by definition
  (cf.~\eqref{eq:4}), and the second equality follows because, as
  mentioned above, $A_{\varphi_t}$ is a linear map.

  Next, we compute
  \begin{equation}\label{eq:2}
    \begin{aligned}
      D_2 A(g_t, \varphi_t)[-X_t \circ \varphi_t] &=
      \left. \frac{d}{ds} \right|_{s=0} (\varphi_{t+s}^* g_t) =
      \varphi_t^* \left( \left. \frac{d}{ds} \right|_{s=0}
        (\varphi_{t+s} \circ \varphi_t^{-1})^* g_t \right) \\
      &= - \varphi_t^* (L_{X_t} g_t).
    \end{aligned}
  \end{equation}

  Since $DA = D_1 A + D_2 A$, inserting \eqref{eq:5} and \eqref{eq:2}
  into \eqref{eq:6} gives
  \begin{equation*}
    \tilde{g}'_t = \varphi_t^* h_t =: \tilde{h}_t.
  \end{equation*}
  But since $\delta_{g_t} h_t = 0$, we also have $\delta_{\tilde{g}_t}
  \tilde{h}_t = \delta_{\varphi_t^* g_t} (\varphi_t^* h_t) = 0$.  Thus
  we have shown that $\tilde{g}'_t \in \widetilde{H}_{\tilde{g}_t}$,
  and so $\tilde{g}_t$ is horizontal as desired.

  Uniqueness of $\tilde{g}_t$ with the desired properties follows from
  the fact that on a Riemann surface of genus $p \geq 2$, there are no
  Killing fields---we prove this in Lemma \ref{lem:6}, immediately
  following the proof of this lemma.  Thus, the family $\varphi_t$
  above is the only one for which $\varphi_0 = \id$ and $\varphi_t^*
  g_t$ is horizontal.

  To show that $\tilde{g}_t$ is of minimal length among all paths
  equivalent to $g_t$, let $\bar{g}_t$ be another path with
  $\pi_\M(\bar{g}_t) = \pi_\M(g_t)$, and let $\psi_t$ be the unique
  one-parameter family of diffeomorphisms from $\DO$ such that
  $\bar{g}_t = \psi_t^* \tilde{g}_t$.  Just as above, we can compute
  that
  \begin{equation*}
    \bar{g}'_t = \frac{d}{dt} (\psi_t^* \tilde{g}_t) = \psi_t^*
    \tilde{h}_t + \psi_t^* (L_{Y_t} \tilde{g}_t),
  \end{equation*}
  where
  \begin{equation*}
    Y_t := \left( \frac{d}{dt} \psi_t \right) \circ \psi_t^{-1} \in \mathfrak{X}(M).
  \end{equation*}
  and we recall that $\tilde{h}_t = \tilde{g}'_t$.  But by the
  orthogonality of horizontal and vertical vectors,
  \begin{align*}
    L(\bar{g}_t) &= \integral{0}{1}{\| \bar{g}'_t \|_{\bar{g}_t}}{dt}
    = \integral{0}{1}{\| \psi_t^* \tilde{h}_t \|_{\psi^*
        \tilde{g}_t}}{dt} + \integral{0}{1}{\| \psi_t^* (L_{Y_t}
      \tilde{g}_t)
      \|_{\psi_t^* \tilde{g}_t}}{dt} \\
    &= \integral{0}{1}{\| \tilde{h}_t \|_{\tilde{g}_t}}{dt} +
    \integral{0}{1}{\| \psi_t^* (L_{Y_t} \tilde{g}_t) \|_{\psi_t^*
        \tilde{g}_t}}{dt} \geq L(\tilde{g}_t),
  \end{align*}
  where we have used the $\D$-invariance of $(\cdot, \cdot)$ in the
  second line.
\end{proof}

\begin{lem}\label{lem:6}
  Let $g \in \M$ be any Riemannian metric on the genus $p$ surface
  $M$.  Then $g$ has finite isometry group.  In particular, $g$ admits
  no Killing fields.
\end{lem}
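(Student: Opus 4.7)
The plan is to first show that $g$ admits no nontrivial Killing fields, and then use this together with the Myers--Steenrod theorem to conclude that the isometry group is finite. The key input is the fact that $\chi(M) = 2-2p < 0$ since we assumed $p \geq 2$.

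First I would prove the nonexistence of Killing fields directly, using an index argument. Suppose $X$ is a nontrivial Killing field on $(M,g)$. Since a Killing field is determined by its value and its covariant derivative at a single point, $X$ cannot vanish on an open set, so its zero set is discrete, hence finite (as $M$ is compact). At each zero $p$, the flow of $X$ fixes $p$, and its differential at $p$ gives a one-parameter subgroup of $O(T_p M) = O(2)$; this subgroup is nontrivial (else $X$ would vanish to higher order at $p$, contradicting the rigidity property of Killing fields), so it must be a subgroup of $SO(2)$, and in a normal coordinate chart around $p$, $X$ looks like the infinitesimal generator of rotations. Thus $X$ has index $+1$ at each zero. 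By the Poincar\'e--Hopf theorem, if $X$ has any zeros then
\begin{equation*}
\chi(M) = \sum_{X(p) = 0} \textnormal{ind}_p(X) > 0,
\end{equation*}
contradicting $\chi(M) < 0$; and if $X$ has no zeros, Poincar\'e--Hopf gives $\chi(M) = 0$, which is again a contradiction. Hence no such $X$ exists.

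Next I would apply the Myers--Steenrod theorem, which states that the isometry group $\textnormal{Isom}(M,g)$ of a closed Riemannian manifold is a compact Lie group acting smoothly on $M$. Its Lie algebra is precisely the space of Killing fields on $(M,g)$, which we have just shown to be trivial. Therefore the identity component of $\textnormal{Isom}(M,g)$ is a compact connected Lie group with trivial Lie algebra, and so must be the trivial group $\{\textnormal{id}\}$. A compact Lie group whose identity component is trivial is a finite group, which proves that $\textnormal{Isom}(M,g)$ is finite.

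The main obstacle here is really only notational: both the rigidity of Killing fields (they are determined by a $1$-jet) and the Myers--Steenrod theorem are classical facts that can be cited, so the proof amounts to stitching them together with the Poincar\'e--Hopf index computation. If one preferred a more self-contained argument avoiding Myers--Steenrod, one could alternatively reduce via Poincar\'e uniformization (Theorem~\ref{thm:14}) to the case of a hyperbolic metric $\tilde g = \lambda(g) g$: any isometry $\varphi$ of $g$ satisfies $\varphi^* \tilde g = (\lambda \circ \varphi)\, g$, which is hyperbolic and conformal to $g$, hence equals $\tilde g$ by the uniqueness in Theorem~\ref{thm:14}. One could then appeal directly to the classical finiteness of the isometry group of a closed hyperbolic surface.
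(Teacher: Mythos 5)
Your argument is correct, but your primary route is genuinely different from the paper's. The paper reduces immediately to the hyperbolic case via the Poincar\'e uniformization theorem (Theorem \ref{thm:14}): writing $g = \rho\,\bar g$ with $\bar g$ hyperbolic, it shows that any isometry $\varphi$ of $g$ pulls $\bar g$ back to a hyperbolic metric conformal to $\bar g$, which by uniqueness of the hyperbolic representative forces $\varphi^*\bar g = \bar g$; finiteness then follows from Hurwitz's theorem for closed hyperbolic surfaces, and ``no Killing fields'' is read off as a consequence of finiteness. This is exactly the alternative you sketch in your last paragraph. Your main proof instead goes the other way around: you first kill the Lie algebra directly with a Poincar\'e--Hopf index count (zeros of a nontrivial Killing field are isolated with index $+1$, incompatible with $\chi(M) = 2-2p < 0$, and a nowhere-zero field would force $\chi(M)=0$), and then invoke Myers--Steenrod to say $\mathrm{Isom}(M,g)$ is a compact Lie group whose identity component is trivial, hence finite. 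Both are sound; your version needs no uniformization and is more robustly topological, but it imports Myers--Steenrod, which the thesis never otherwise uses, whereas the paper's argument leans only on machinery already in place (Theorem \ref{thm:14}) and has the side benefit, recorded in Remark \ref{rmk:25}, that the hyperbolic metric is the ``most symmetric'' member of its conformal class --- every isometry of every conformally equivalent metric is an isometry of it. One small point of care in your index argument: the correct statement of Killing-field rigidity is that $X$ is determined by the pair $(X(p), \nabla X(p))$, so at a zero $p$ of a nontrivial $X$ the skew-symmetric endomorphism $\nabla X(p)$ is nonzero, which is what gives the nondegenerate rotational linearization and index $+1$; your phrasing via the induced one-parameter subgroup of $SO(2)$ amounts to the same thing and is fine.
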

\begin{proof}
  By the Poincare uniformization theorem (Theorem \ref{thm:14}), there
  exists a function $\rho \in C^\infty(M)$ and a metric $\bar{g} \in
  \Mhyp$ such that $g = \rho \bar{g}$.  Our goal is to show that every
  isometry of $g$ is also an isometry of $\bar{g}$, which then implies
  that the isometry group of $g$ is finite, since by Hurwitz's theorem
  \cite[p.~258]{farkas92:_rieman_surfac} the isometry group of
  $\bar{g}$ is finite.
  
  So let $\varphi \in \D$ be an isometry of $g$.  We then have that
  $\varphi^* g = g$, so
  \begin{equation*}
    (\varphi^* \rho) \varphi^* \bar{g} = \varphi^* (\rho \bar{g}) =
    \varphi^* g = g = \rho \bar{g}.
  \end{equation*}
  Thus $\varphi^* \bar{g}$ and $\bar{g}$ are conformally equivalent.
  Furthermore, since the space of hyperbolic metrics is
  $\D$-invariant, these two metrics are both hyperbolic.  But since
  the Poincaré uniformization theorem says that there is exactly one
  hyperbolic metric in each conformal class of metrics, we must have
  $\varphi^* \bar{g} = \bar{g}$.  Thus $\varphi$ is an isometry of
  $\bar{g}$, as was to be shown.
\end{proof}

\begin{rmk}\label{rmk:25}
  Note the following astounding fact, implied by the proof of Lemma
  \ref{lem:6}.  We have just shown that the unique hyperbolic metric
  in a conformal class is, in a very strong sense, the most symmetric
  metric in that class.  Namely, \emph{any isometry} of \emph{any
    metric} in that class is also an isometry of the hyperbolic
  metric.
\end{rmk}

Using the results above, we can prove the existence of horizontal
lifts.

\begin{thm}\label{thm:16}
  For any $C^1$ path $\gamma : [0,1] \rightarrow \Mhypd$ and any $g
  \in \pi^{-1}(\gamma(0))$, there exists a unique horizontal lift
  $\tilde{\gamma} : [0,1] \rightarrow \Mhypd$ with $\tilde{\gamma}(0)
  = g$.  In particular, $\tilde{\gamma}'(t) \in H_{\tilde{\gamma}(t)}$
  for all $t \in [0,1]$.

  Furthermore, $L(\tilde{\gamma}) = L(\gamma)$ and $\tilde{\gamma}$
  has minimal length among the class of curves whose image projects to
  $\gamma$ under $\pi$.
\end{thm}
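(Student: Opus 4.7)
The plan is to reduce Theorem \ref{thm:16} to Lemma \ref{lem:2} in three moves: lift $\gamma$ arbitrarily to a $C^1$ path in $\Mhyp$, horizontalize it with Lemma \ref{lem:2} to get divergence-free velocities, and then upgrade $\widetilde{H}$-horizontality to $H$-horizontality using the fact (Lemma \ref{lem:6}) that hyperbolic surfaces of genus $p \geq 2$ admit no Killing fields.

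First I would produce an arbitrary $C^1$ lift $g_t \in \Mhyp$ of $\gamma$ with $g_0 = g$. By the slice theorem (Theorem \ref{thm:9}), each $\gamma(t)$ lies in a neighborhood of $\Mhypd$ parametrized by a slice $\mathcal{X}_{g_i}$; compactness of $[0,1]$ lets me cover $\gamma$ by finitely many such slices, lift $\gamma$ inside each by inverting the diffeomorphism $\mathcal{X}_{g_i} \to \pi(\mathcal{X}_{g_i})$, and glue consecutive pieces by a single element of $\DO$ at each transition (applied to everything downstream) to get a $C^1$ global lift with $g_0 = g$.

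Applying Lemma \ref{lem:2} to $g_t$ yields $\tilde{g}_t = \varphi_t^* g_t$ with $\varphi_0 = \id$, $\varphi_t \in \DO$, and $\tilde{g}'_t \in \widetilde{H}_{\tilde{g}_t}$; $\D$-invariance of $\Mhyp$ gives $\tilde{g}_t \in \Mhyp$, so $\tilde{g}'_t \in T_{\tilde{g}_t} \Mhyp$. The main obstacle is to upgrade divergence-freeness to tracelessness. Using the decomposition $T_{\tilde{g}_t} \Mhyp = \stt \oplus V_{\tilde{g}_t}$ (which follows from $H_g = \stt$ as stated in the excerpt), write $\tilde{g}'_t = h_t + L_{X_t} \tilde{g}_t$ with $h_t \in \stt \subset \widetilde{H}_{\tilde{g}_t}$. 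Then $L_{X_t} \tilde{g}_t = \tilde{g}'_t - h_t \in \widetilde{H}_{\tilde{g}_t} \cap V_{\tilde{g}_t} = \{0\}$, since $\widetilde{H}$ is the $L^2$-orthogonal complement of $V$. By Lemma \ref{lem:6}, $L_{X_t} \tilde{g}_t = 0$ forces $X_t = 0$, so $\tilde{g}'_t = h_t \in \stt = H_{\tilde{g}_t}$, giving horizontality of the lift.

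For $L(\tilde{\gamma}) = L(\gamma)$ I would invoke the fact that the bundle is weak Riemannian, so $D\pi|_{H}$ is an isometry; since $\tilde{\gamma}'(t) \in H_{\tilde{\gamma}(t)}$, one gets $\|\tilde{\gamma}'(t)\|_{\tilde{\gamma}(t)} = \|\gamma'(t)\|_{\gamma(t)}$ and integration yields equal lengths. Minimality among all lifts follows as in the last paragraph of the proof of Lemma \ref{lem:2}: any other lift is $\psi_t^* \tilde{\gamma}_t$ for some one-parameter family in $\DO$, and the $L^2$-orthogonality of horizontal and vertical velocities under the $\D$-invariant $(\cdot,\cdot)$ forces its length to be at least $L(\tilde{\gamma})$. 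Uniqueness of $\tilde{\gamma}$ is then inherited from Lemma \ref{lem:2}: a second horizontal lift starting at $g$ is in particular $\widetilde{H}$-horizontal and shares the $\DO$-projection with $\tilde{\gamma}$, and the uniqueness clause of Lemma \ref{lem:2}—which rests on Lemma \ref{lem:6}—forces the two to coincide.
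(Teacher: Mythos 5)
Your proof follows the paper's route in all essentials: a finite cover of $\gamma$ by slices from Theorem \ref{thm:9}, horizontalization via Lemma \ref{lem:2}, the no-Killing-fields fact of Lemma \ref{lem:6}, and the isometry $D\pi|_{H_g}$ for the length and minimality statements. The only structural difference is the order of operations---you glue the slice-lifts into one global lift and then horizontalize once, while the paper horizontalizes each slice-lift and then glues the horizontal pieces using the uniqueness clause of Lemma \ref{lem:2} over whole overlap intervals---and your explicit upgrade from $\widetilde{H}$-horizontality to $H$-horizontality (via $T_{\tilde{g}_t} \Mhyp = \stt \oplus V_{\tilde{g}_t}$, $\widetilde{H}_{\tilde{g}_t} \cap V_{\tilde{g}_t} = \{0\}$ and Lemma \ref{lem:6}) is correct and makes explicit a point the paper leaves implicit.

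One step needs repair: the glued lift you construct is in general only continuous and piecewise $C^1$, not $C^1$. At a transition time $a$ the incoming piece lies in one slice and the (translated) outgoing piece in another; these are different submanifolds through the same point of the fiber, so the one-sided derivatives need not match. This does not sink the argument, since Lemma \ref{lem:2} is stated for piecewise $C^1$ paths, but its conclusion then only gives horizontality away from the junctions, and you must check that the horizontalized path is $C^1$ (hence horizontal) at each junction. That check does go through: the two one-sided derivatives of the glued lift at $a$ are tangent to $\Mhyp$ and both map to $\gamma'(a)$ under $D\pi$, which kills $V$ and is injective on $H = \stt$, so their $\stt$-components coincide; the horizontalization discards exactly the vertical parts, so the one-sided derivatives of $\tilde{\gamma}$ at $a$ agree. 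Alternatively, keep the paper's ordering: horizontalize each slice-lift first and glue afterwards, where the uniqueness statement of Lemma \ref{lem:2} forces consecutive horizontal pieces to coincide on a whole overlap interval, so differentiability at the junctions is automatic.
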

\begin{proof}
  Recall that $\mathcal{X}_g$ denotes the slice around $g$ guaranteed
  by Theorem \ref{thm:9}.  By the compactness of the interval $[0,1]$,
  we can choose a finite set $\{g_1, \ldots, g_m\} \subset \Mhyp$ such
  that the collection $\{\pi(\mathcal{X}_{g_1}), \ldots,
  \pi(\mathcal{X}_{g_m})\}$ covers $\gamma$; furthermore, we choose
  this collection such that all the sets in it have nonempty
  intersection with $\gamma$.  Let the collection further be chosen
  such that the intersection $\pi(\mathcal{X}_{g_k}) \cap \gamma$ is
  equal to $\gamma(J_k)$ for some interval $J_k \subseteq [0,1]$.  To
  achieve this condition, we simply shrink the slices if necessary.
  Finally, we assume that the numbering is done such that the initial
  points of the intervals $J_k$ are in increasing order---again, we
  may have to shrink the slices to achieve this.  In particular, this
  assures us that $0 \in J_1$.

  Let $\gamma_k$ denote the lift of $\gamma|_{J_k}$ to
  $\mathcal{X}_{g_k}$, and let $\hat{\gamma}_k$ be the horizontal path
  equivalent to $\gamma_k$ guaranteed by Lemma \ref{lem:2}.  The path
  $\hat{\gamma}_k$ is a horizontal lift of $\gamma|_{J_k}$.

  Let $\varphi_1 \in \DO$ be the unique element such that $\varphi_1^*
  \hat{\gamma}_1(0) = g$, and define $\tilde{\gamma}_1 := \varphi_1^*
  \hat{\gamma}_1$.  Note that $\tilde{\gamma}_1$ is still a horizontal
  lift of $\gamma|_{J_k}$, and that $\tilde{\gamma}_1(0) = g$.  Let
  $a_2 \in J_1 \cap J_2$, and let $\varphi_2 \in \DO$ be the unique
  element such that $\varphi_2^* \hat{\gamma}_2(a_2) =
  \tilde{\gamma}_1(a_2)$.  Define $\tilde{\gamma}_2 := \varphi_2^*
  \hat{\gamma}_2$.  By repeating this procedure, we get a path
  $\tilde{\gamma}_k$ that is a horizontal lift of $\gamma|_{J_k}$,
  such that $\tilde{\gamma}_k$ intersects $\tilde{\gamma}_{k+1}$ in at
  least one point, for each $k = 1, \ldots, m$.

  Using the uniqueness of the horizontal paths of Lemma \ref{lem:2},
  we see that since the paths $\tilde{\gamma}_k$ and
  $\tilde{\gamma}_{k+1}$ intersect in one point, they intersect over
  the entire range where they are equivalent under $\DO$.  Therefore,
  we can glue the paths $\tilde{\gamma}_k$ together to a
  differentiable path $\tilde{\gamma}$ that is a horizontal lift of
  $\gamma$---and since $\tilde{\gamma}_1(0) = g$, we clearly have
  $\tilde{\gamma}(0) = g$, as desired.

  The minimality of $\tilde{\gamma}$ follows from Lemma \ref{lem:2}.
  To show that $L(\tilde{\gamma}) = L(\gamma)$, recall that $\pi :
  \Mhyp \rightarrow \Mhypd$ is a weak Riemannian principal
  $\DO$-bundle, so $D \pi(g) |_{H_g}$ is an isometry for every $g \in
  \Mhypd$.  Therefore
  \begin{equation*}
    L(\tilde{\gamma}) = \integral{0}{1}{\| \tilde{\gamma}'(t)
      \|_{\tilde{\gamma}(t)}}{dt} = \integral{0}{1}{\| D \pi(\tilde{\gamma}(t))
      \tilde{\gamma}'(t) \|_{\pi(\tilde{\gamma}(t))}}{dt} = \integral{0}{1}{\|
      \gamma'(t) \|_{\gamma(t)}}{dt} = L(\gamma).
  \end{equation*}
\end{proof}

The structures we've described in this section will all be put to work
for us in the next section.

\section{Metrics arising from submanifolds of
  $\M$}\label{sec:metrics-arising-from}

In this section, our goal is to define an entire class of metrics that
includes the Weil-Petersson metric, and to use the main result of the
thesis, Theorem \ref{thm:41}, to prove a fact about the completions of
Teichmüller space with respect to such metrics.

Before we do this in Subsection \ref{sec:gener-weil-peterss}, we will
go into some more detail on the completion of the Weil-Petersson
metric.  This discussion will motivate our considerations in
Subsection \ref{sec:gener-weil-peterss}.

\subsection{Completing Teichmüller space with respect to the
  Weil-Petersson metric}\label{sec:nonc-weil-peterss}

It has long been known that the Weil-Petersson metric is
incomplete---this was initially and independently proved in
\cite{wolpert75:_noncom_of_weil_peter_metric_for_space} and
\cite{chu76:_weil_peter_metric_in_modul_space}.  The proof shows that
there are Weil-Petersson geodesics that, in finite time, leave
Teichmüller space.

The limit points of such geodesics can be given a meaning as Riemann
surfaces themselves, which we would like to describe heuristically
here.  We will not justify anything, but instead suggest to the reader
the various references given in this chapter.

First, we note that for a hyperbolic metric on a compact surface,
there is a unique geodesic in each free homotopy class, and this is
the shortest curve in the class
\cite[Lem.~2.4.4]{jost06:_compac_rieman_surfac}.

Let a Weil-Petersson geodesic $\gamma : (0,1] \rightarrow \T$ be such
that it cannot be continuously extended to the domain $[0,1]$, and let
$\tilde{\gamma} : (0,1]$ be a horizontal lift of $\gamma$.  Then there
exist $r$ disjoint, nonhomotopic, noncontractible simple closed curves
$\eta^1, \dots, \eta^r$ on $M$, with $1 \leq r \leq 3 p - 3$, such
that the following holds.  For each $i = 1, \dots, r$, let $\eta^i_t$
denote the unique $\tilde{\gamma}(t)$-geodesic in the free homotopy class
of $\eta^i$.  Then the length of each $\eta^i_t$ with respect to
$\tilde{\gamma}(t)$ converges to zero for $t \rightarrow 0$.  In this
case, we say that the curves $\eta^1_t, \dots, \eta^r_t$ are
\emph{pinched} along $\tilde{\gamma}$, since geometrically the curves
shrink to points.

\begin{figure}[t]
  \centering
  \includegraphics[width=11cm]{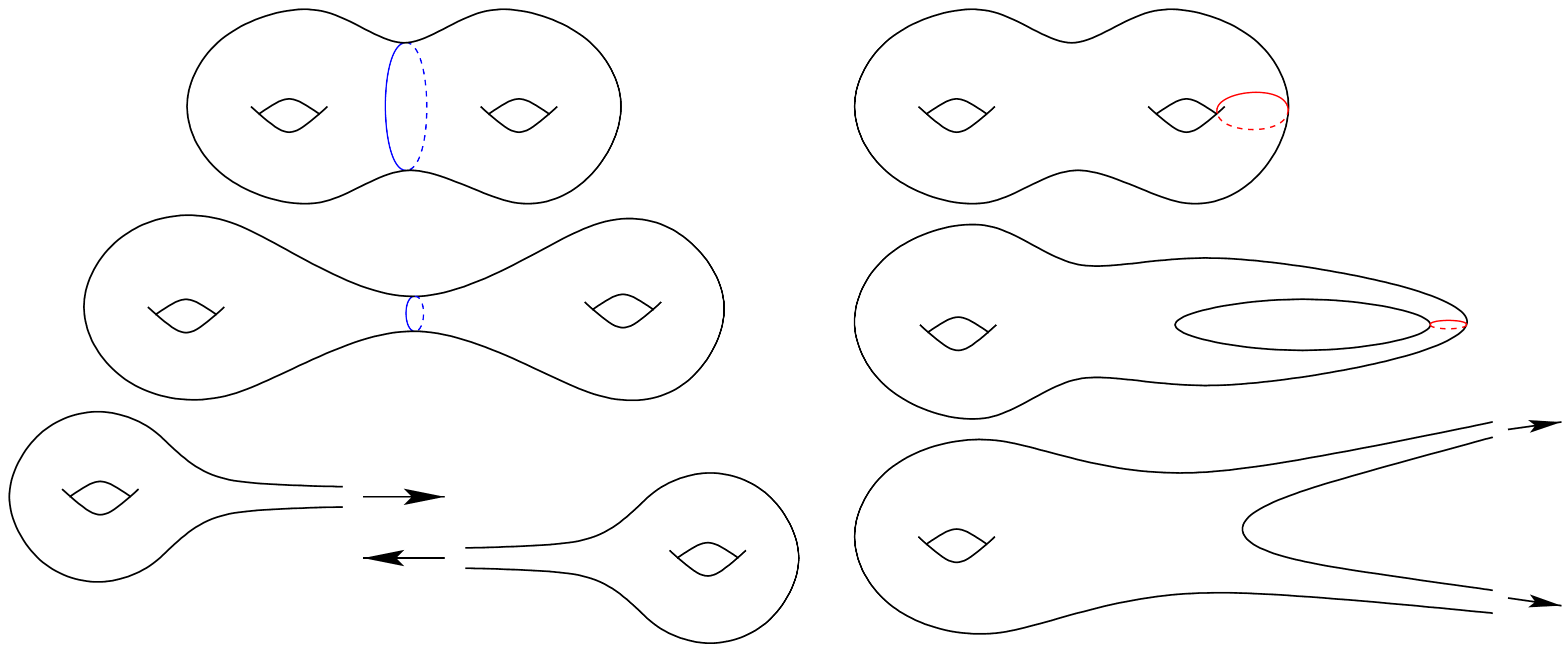}
  \caption{The formation of cusps when pinching a simple closed
    hyperbolic geodesic.  On the left, the blue, homologically
    trivial, curve is pinched and the resulting limit surface is
    disconnected, with two cusps.  On the right, the red,
    homologically nontrivial curve is pinched---the limit surface has
    lower genus than the original one, and has two
    cusps.}\label{fig:cusped-surfaces}
\end{figure}

Thanks to the so-called collar lemma (see, e.g.,
\cite{randol79:_cylin_in_rieman_surfac} or, for surfaces of variable
curvature, \cite{buser78:_collar_theor_and_examp}), around each
geodesic on a hyperbolic surface there exists a neighborhood that is
diffeomorphic to an open-ended cylinder.  Furthermore, the width of
this neighborhood increases to infinity as the length of the
hyperbolic geodesic decreases to zero.  Thus, around each of the
curves $\eta^i_t$, two so-called cusps develop as $t \rightarrow 0$,
meaning that in the limit, two infinitely long cylinders extend out
from the surface, and the width of these cylinders approaches zero at
infinity.  Figure \ref{fig:cusped-surfaces} illustrates this in case
of the two basic possibilities here.  By pinching a homologically
trivial curve, we end up with a disconnected surface of the same total
genus (heuristically, the same number of ``donut holes'').  When
pinching a homologically nontrivial curve, the limit surface stays
connected but is of lower genus.  By combining these two pictures for
all pinched curves, one can imagine a general limit surface.  By
adding in all such limit surfaces, we obtain the completion of
Teichmüller space with respect to the Weil-Petersson metric, which we
will denote by $\overline{\T}$.

Let's translate this discussion into the language that we've been
using throughout the rest of the thesis.  What essentially happens is
that the family of metrics $\tilde{\gamma}(t)$ is equivalent (under
the $\DO$-action on $\Mhyp$) to a family $g_t$ of metrics that
becomes unbounded and deflates along the curves $\eta^1,
\dots, \eta^r$ as $t \rightarrow 0$.  With respect to the metrics
$g_t$, the lengths of vectors tangent to each $\eta^i$ converge to
zero, and the lengths of vectors perpendicular to each $\eta^i$ become
infinite.  Thus, if we define $g_0$ to be equal to the pointwise limit
of $g_t$ off of $\eta^1, \dots, \eta^r$ and, say, zero on $\eta^1,
\dots, \eta^r$, then we get a limit metric on $M$ that is measurable.
Furthermore, by \cite[p.~233]{farkas92:_rieman_surfac}, the volume of
the limit surface is finite, as we would expect from our main result,
Theorem \ref{thm:41}.

Thus, the completion of Teichmüller space fits very nicely into the
setting that we have established in this thesis.  Of course, since we
are dealing only with a special type of metric on a special type of
base manifold, \emph{and} we only consider horizontal paths---i.e.,
there are no limit metrics that arise from families of degenerating
diffeomorphisms---the limit metrics that are possible make up only a
small subset, with very nice properties, of the limit metrics that we
get when considering the completion of all of $\M$.

Before we leave this subsection, let's just mention a couple of the
rich properties of the completion of Teichmüller space with respect to
the Weil-Petersson metric.

As described above, $\overline{\T}$ is closely related to a
compactification of moduli space.  As in the case of $\overline{\M}$
(and the completion of any metric space), the distance function of the
Weil-Petersson metric extends to the completion $\overline{\T}$.
Here, however, more is true.  In a certain sense, the Weil-Petersson
Riemannian metric (the scalar product) also extends to
$\overline{\T}$.  This is proved, and given precise meaning, in
\cite{masur-extension}.

The action of the mapping class group $MCG$ extends to $\overline{\T}$
\cite{abikoff77:_degen_famil_of_rieman_surfac}, and the action of any
individual element of $MCG$ is an isometry of the extended
Weil-Petersson metric on $\overline{\T}$.  Therefore, we can form the
quotient
\begin{equation*}
  \overline{\riem} := \overline{\T} / MCG,
\end{equation*}
and the Weil-Petersson distance function projects to a complete metric
(in the sense of metric spaces) on $\overline{\riem}$.

It turns out that $\overline{\riem}$ is a compactification of moduli
space.  Moreover, this compactification agrees with the famous
\emph{Deligne-Mumford compactification}
\cite{deligne69:_irred_of_space_of_curves}, which arises via very
different considerations in algebraic geometry.  Thus, the
Weil-Petersson metric forms the bridge between two very important
aspects of Riemannian geometry and algebraic geometry.

Hopefully this has provided sufficient motivation to convince the
reader that the Weil-Petersson metric and generalizations thereof are
worthwhile objects of study.

\subsection{Generalizations of the Weil-Petersson metric}\label{sec:gener-weil-peterss}

The natural way to generalize the Weil-Petersson metric within this
context is to take also non-hyperbolic (variable curvature)
representatives for each conformal class in $\M / \pos$, giving us
some submanifold of $\M$ which differs from $\Mhyp$ but still contains
exactly one representative of each conformal class.  The goal of this
subsection is to describe this idea rigorously.

This idea is directly inspired by \cite{hj-riemannian} and
\cite{habermann98:_metric_rieman_surfac_and_geomet}, where metrics on
Teichmüller space were also defined using variable curvature metrics
in place of the hyperbolic metric.  These metrics differ from the ones
considered here, however.  After we define our own generalization, we
remark on the differences.  Unfortunately, completely describing the
concepts necessary to understand the differences is outside the scope
of this thesis, so we must regrettably do this in a way that will be
helpful only to those ``in the know.''

By the Poincaré uniformization theorem, Theorem \ref{thm:14}, the
principal $\pos$-bundle $\M \rightarrow \M / \pos$ is trivial, and
$\Mhyp$ is a section of this bundle.  (Of course, we could have
already deduced from the product structure $\M \cong \pos \times
\M_\mu$ given in Subsection \ref{sec:prod-manif-struct} that the
bundle is trivial.)  The idea now is to select a different section of
$\M \rightarrow \M / \pos$.  In fact, we will simultaneously consider
all smooth sections $\mathcal{N}$ with the property that they are
$\D$-invariant, which we require so that we still have diffeomorphisms
$\T \cong \Ncal / \DO$ and $\riem \cong \Ncal / \D$.

\begin{dfn}\label{dfn:3}
  We call a smooth, $\D$-invariant section of $\M \rightarrow \M /
  \pos$ a \emph{modular section}.  Given a modular section $\Ncal
  \subset \M$, we call the quotients $\Ncal / \DO$ and $\Ncal / \D$
  the \emph{$\Ncal$-model of Teichmüller space} and the
  \emph{$\Ncal$-model of moduli space}, respectively.
\end{dfn}

The proof of the next lemma is obvious from the decomposition $\M
\cong \pos \times \Mhyp$ implied by the Poincaré uniformization
theorem.

\begin{lem}\label{lem:5}
  For all $g \in \Mhyp$, choose $\rho_g \in \pos$ such that
  \begin{enumerate}
  \item the assignment $g \mapsto \rho_g$ is smooth and
  \item $\rho_{\varphi^* g} = \varphi^* \rho_g$ for all $g \in \Mhyp$.
  \end{enumerate}
  Then the set
  \begin{equation*}
    \Ncal := \{ \rho_g g \mid g \in \Mhyp \}
  \end{equation*}
  is a modular section.  Furthermore, every modular section arises in
  this way.
\end{lem}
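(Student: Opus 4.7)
The plan is to handle the two assertions in turn, exploiting the trivialization $\M \cong \pos \times \Mhyp$ given by the Poincar\'e uniformization theorem together with the smoothness of the assignment $g \mapsto \lambda(g)$ remarked upon after Theorem \ref{thm:14}.

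For the forward direction, suppose an assignment $g \mapsto \rho_g$ satisfying (1) and (2) is given, and set $\Ncal := \{\rho_g g \mid g \in \Mhyp\}$. The map $\Phi : \Mhyp \to \M$ defined by $\Phi(g) := \rho_g g$ is smooth because it is the composition of the smooth map $g \mapsto (\rho_g, g) \in \pos \times \Mhyp$ with the smooth multiplication $\pos \times \Mhyp \to \M$. Since $\Mhyp$ is a smooth submanifold of $\M$ and $\Phi$ is a smooth embedding (its inverse is given by sending $\rho_g g$ to $\lambda(\rho_g g)(\rho_g g) \in \Mhyp$, which is smooth by uniformization), $\Ncal$ is a smooth submanifold. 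By Theorem \ref{thm:14}, each conformal class $[g_0]$ contains a unique hyperbolic representative $g \in \Mhyp$, and hence exactly one element $\rho_g g \in \Ncal$, so $\Ncal$ is a section of $\M \to \M/\pos$. For $\D$-invariance, take $\tilde g = \rho_g g \in \Ncal$ and $\varphi \in \D$; then $\varphi^* g \in \Mhyp$ by the $\D$-invariance of $\Mhyp$, and
\begin{equation*}
\varphi^* \tilde g = (\varphi^* \rho_g)(\varphi^* g) = \rho_{\varphi^* g}\, \varphi^* g \in \Ncal
\end{equation*}
by property (2). Thus $\Ncal$ is a modular section.

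For the converse direction, let $\Ncal'$ be an arbitrary modular section. Since $\Ncal'$ contains exactly one element of each conformal class, for every $g \in \Mhyp$ there is a unique $\tilde g \in \Ncal'$ with $[\tilde g] = [g]$, and since they are conformally equivalent there is a unique $\rho_g \in \pos$ with $\tilde g = \rho_g g$. Explicitly, if $s : \M/\pos \to \Ncal'$ denotes the section map and $\pi : \M \to \M/\pos$ the projection, then $\rho_g = (\mu_{s(\pi(g))}/\mu_g)^{2/n}$ (recall $n = 2$ here), which is smooth in $g$ because $s$, $\pi$, and the Radon--Nikodym operation from Subsection \ref{sec:manif-posit-funct} are all smooth. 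This verifies property (1). For property (2), observe that $\rho_{\varphi^* g}(\varphi^* g) \in \Ncal'$ by definition, and $\varphi^*(\rho_g g) = (\varphi^* \rho_g)(\varphi^* g) \in \Ncal'$ by the $\D$-invariance of $\Ncal'$; both lie in the conformal class of $\varphi^* g$, so uniqueness of the conformal factor forces $\rho_{\varphi^* g} = \varphi^* \rho_g$.

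The only nonroutine step is verifying that $g \mapsto \rho_g$ is genuinely smooth in the converse direction; the rest consists of unwinding definitions. The smoothness hinges on smoothness of the section map $s$ (which is built into the definition of a modular section) together with the smooth product decomposition $\M \cong \Mhyp \times \pos$ afforded by uniformization, so no further analytic input is required.
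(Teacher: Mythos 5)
Your proof is correct and follows exactly the route the paper has in mind: the paper simply declares the lemma ``obvious from the decomposition $\M \cong \pos \times \Mhyp$ implied by the Poincar\'e uniformization theorem,'' and your argument is precisely that decomposition spelled out, with the uniqueness and smoothness of $g \mapsto \lambda(g)$ supplying the section property, the smoothness of $\rho_g$, and the equivariance. No discrepancy with the paper's (unwritten) proof.
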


Modular sections other than $\Mhyp$ of course exist.  Let us mention
just one important example, that of the space of Bergman metrics on
$M$.  It requires a few facts about Riemann surfaces that we won't
prove, and can be safely skipped.

\begin{eg}\label{eg:4}
  As is well-known and proved, for example, in
  \cite{tromba-teichmueller}, in two dimensions complex structures are
  in one-to-one correspondence with conformal structures---so each
  element of $\M / \pos$ determines complex structure on $M$.  So for
  this example, we work with complex instead of conformal structures.

  Let $c$ be a complex structure on $M$.  Then the space of
  holomorphic one-forms on $(M, c)$ has complex dimension $p$, the
  genus of $M$ \cite[Prop.~III.2.7]{farkas92:_rieman_surfac}.  Let
  $\theta_1, \dots, \theta_p$ be an $L^2$-orthonormal basis of this
  space.  That is,
  \begin{equation*}
    \frac{i}{2} \int_M \theta_j \wedge \overline{\theta_k} = \delta_{jk}.\label{correction:bergman}
  \end{equation*}
  The Bergman metric is defined by
  \begin{equation*}
    g_B := \frac{1}{p} \sum_{i=1}^p \theta_i \bar{\theta}_j.
  \end{equation*}
  It is clear from this construction that the set of all Bergman
  metrics is indeed a modular section.

  The Bergman metric can also be seen as the pull-back of the flat
  metric on the Jacobian of $M$ via the Albanese period map.  It
  arises, for example, in arithmetic geometry
  \cite{chinburg86:_introd_to_arakel_inter_theor}.
\end{eg}

Let us now return to our general considerations.

\begin{cvt}\label{cvt:1}
  For the remainder of this chapter, let $\Ncal$ be a fixed but
  arbitrary modular section.
\end{cvt}

The next proposition tells us that we are, from the differential
topological point of view, justified in calling $\Ncal / \DO$ the
$\Ncal$-model of Teichmüller space.

\begin{prop}\label{prop:5}
  The quotient $\Ncal / \DO$ is a smooth, finite-dimensional manifold.
  Furthermore, we have diffeomorphisms $\Psi : \Mhyp \rightarrow
  \Ncal$ and $\Phi: \Mhyp / \DO \rightarrow \Ncal / \DO$.  The
  diffeomorphism $\Psi$ is $\D$-equivariant.
\end{prop}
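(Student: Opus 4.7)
The plan is to construct everything by hand from Lemma \ref{lem:5}, which already encodes a modular section as a graph over $\Mhyp$. Specifically, for a modular section $\Ncal$ write $\Ncal = \{\rho_g g \mid g \in \Mhyp\}$ where $g \mapsto \rho_g$ is a smooth, $\D$-equivariant map $\Mhyp \to \pos$. Define
\begin{equation*}
  \Psi : \Mhyp \longrightarrow \Ncal, \qquad \Psi(g) := \rho_g g.
\end{equation*}
The first step is to verify that $\Psi$ is a diffeomorphism. Smoothness is immediate because $g \mapsto \rho_g$ and pointwise multiplication $\pos \times \Mhyp \to \M$ are smooth, and because $\Ncal \subseteq \M$ inherits its smooth structure as a submanifold. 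Bijectivity follows from the fact that both $\Mhyp$ and $\Ncal$ meet each $\pos$-orbit in $\M$ exactly once (the first by the Poincar� uniformization theorem, Theorem \ref{thm:14}, the second by definition of a modular section). For smoothness of the inverse, I would use the inverse formula
\begin{equation*}
  \Psi^{-1}(\tilde g) = \lambda(\tilde g)\tilde g,
\end{equation*}
where $\lambda(\tilde g) \in \pos$ is the unique factor produced by the Poincar� uniformization theorem; the smoothness of $\tilde g \mapsto \lambda(\tilde g)$ was noted just after Theorem \ref{thm:14}, so $\Psi^{-1}$ is smooth.

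Second, I would establish $\D$-equivariance. Using the property $\rho_{\varphi^*g} = \varphi^* \rho_g$ guaranteed by Lemma \ref{lem:5} and the fact that pull-back commutes with pointwise multiplication by a scalar function, one computes
\begin{equation*}
  \Psi(\varphi^* g) = \rho_{\varphi^* g}\,\varphi^* g
  = (\varphi^* \rho_g)(\varphi^* g) = \varphi^*(\rho_g g) = \varphi^* \Psi(g)
\end{equation*}
for every $\varphi \in \D$. Restricting to $\DO \subset \D$, the map $\Psi$ descends to a well-defined bijection $\Phi : \Mhyp/\DO \to \Ncal/\DO$ at the level of sets.

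Third, I would promote $\Phi$ to a diffeomorphism. Here the strategy is to transport structure: $\Mhyp/\DO$ already has the smooth finite-dimensional manifold structure of Teichm�ller space (Section \ref{sec:defin-teichm-space}), so I simply declare the smooth structure on $\Ncal/\DO$ to be the one for which $\Phi$ is a diffeomorphism. To see that this is the \emph{natural} structure and not an artificial one, I would appeal to the slice theorem (Theorem \ref{thm:9}): the slices $\mathcal{X}_g \subset \Mhyp$ are mapped by $\Psi$ to local smooth submanifolds $\Psi(\mathcal{X}_g) \subset \Ncal$, and because $\Psi$ is $\DO$-equivariant these images still meet each $\DO$-orbit in $\Ncal$ transversally in exactly one point, giving an atlas of charts for $\Ncal/\DO$ that agrees with the one induced via $\Phi$.

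The only substantive obstacle is verifying that the smooth structure pushed forward along $\Phi$ coincides with any ``intrinsic'' candidate structure on $\Ncal/\DO$ one might consider; the slice-theorem argument above is the clean way to handle this, and once the charts on $\Mhyp/\DO$ and $\Ncal/\DO$ are both expressed via slices, $\Phi$ is locally just $\Psi$ composed with projections, so its smoothness and that of its inverse are direct consequences of the already-established smoothness of $\Psi^{\pm 1}$.
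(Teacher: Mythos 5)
Your proposal is correct and follows essentially the same route as the paper: define $\Psi(g) = \rho_g g$ via Lemma \ref{lem:5}, verify $\D$-equivariance by the identical computation, and transfer the manifold structure to $\Ncal / \DO$ through the induced bijection with $\Mhyp / \DO$. Your extra details (the explicit inverse $\Psi^{-1}(\tilde g) = \lambda(\tilde g)\tilde g$ from the uniformization theorem, and the slice-theorem check that the transported structure is natural) are sound elaborations of steps the paper treats as immediate.
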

\begin{proof}
  Let $\rho_g$ be the assignment that gives $\Ncal$ from $\Mhyp$, as
  in Lemma \ref{lem:5}.  Then it is clear that the following map is a
  diffeomorphism:
  \begin{align*}
    \Psi : \Mhyp &\rightarrow \Ncal \\
    g &\mapsto \rho_g g.
  \end{align*}
  The rest of the claims follow from the fact that $\Psi$ is
  $\D$-equivariant:
  \begin{equation*}
    \Psi(\varphi^* g) = \rho_{\varphi^* g} (\varphi^* g) = (\varphi^*
    \rho_g) (\varphi^* g) = \varphi^* (\rho_g g) = \varphi^* \Psi(g)
  \end{equation*}
  by the assumptions on $\rho_g$.  Thus, the manifold structure on
  $\Ncal / \DO$ is given by the bijection with $\Mhyp / \DO$ induced
  by $\Psi$.
\end{proof}

As in the case of the section $\Mhyp$, the $L^2$ metric on $\M$
restricts to $\Ncal$, and its $\D$-invariance implies that it projects
to an $MCG$-invariant metric on $\Ncal / \DO$.  We call this metric,
as well as the metric it induces on $\T \cong \Mhyp$ via the
diffeomorphism of Proposition \ref{prop:5}, the \emph{generalized
  Weil-Petersson metric} on the $\Ncal$-model of Teichmüller space.
As in the case of the bundle $\Mhyp \rightarrow \Mhypd$, these metrics
turn the bundle $\Ncal \rightarrow \Ncal / \DO$ into a weak Riemannian
principal $\DO$-bundle.

\begin{rmk}\label{rmk:4}
  The following remark requires some basic knowledge about Teichmüller
  theory.  For those lacking this, it can be safely skipped.

  For those with this background, we note here the difference between
  the metrics of Habermann and Jost (cf.~\cite{hj-riemannian},
  \cite{habermann98:_metric_rieman_surfac_and_geomet}) and the
  generalized Weil-Petersson metrics we have just introduced.

  Of course, Teichmüller space was historically defined in complex
  analysis as the space of complex structures on $M$ modulo $\DO$.
  (See, for example, \cite{imayoshi-taniguchi}.) The correspondence
  between complex structures and conformal classes is given by the
  existence of local isothermal (or conformal) coordinates on any
  two-dimensional manifold.

  Now, recall that the cotangent space of Teichmüller space, when
  defined in the complex analytic way, is given by the space of
  holomorphic quadratic differentials on $M$ with respect to the given
  complex structure.  The correspondence between these and horizontal
  vectors of $\Mhyp$ is given by the fact that a traceless,
  divergence-free element of $\s$ is the real part of a holomorphic
  quadratic differential.

  Habermann and Jost generalize the Weil-Petersson metric on the
  complex analytic version of Teichmüller space as follows.  From this
  point of view, a point $\tau \in \T$ represents an equivalence class
  of complex structures on $M$.  Let's choose representatives
  $\Sigma_\tau$ of the equivalence classes $\tau \in \T$---thus,
  $\Sigma_\tau$ is a complex manifold with one complex dimension---in
  a smooth manner (we will have to be vague about what this means for
  reasons of space).

  For each fixed $\tau \in \T$, choose complex coordinates $z$ on
  $\Sigma_\tau$ and a Hermitian metric
  \begin{equation*}
    \lambda_\tau^2(z) \,dz d\bar{z}.
  \end{equation*}
  If $\lambda_\tau^2$ is chosen such that it varies smoothly with
  $\tau$, then we get a Riemannian cometric on $\T$ by defining, for
  each $\tau \in \T$ and any two holomorphic quadratic differentials
  on $\Sigma_\tau$ locally given by $\psi_0(z) \, dz d\bar{z}$ and
  $\psi_1(z) \, dz d\bar{z}$,
  \begin{equation}\label{eq:12}
    (\!( \psi_0, \psi_1 )\!)_\tau := \frac{i}{2}
    \integral{\Sigma_\tau}{}{\frac{\psi_0(z)
        \overline{\psi_1(z)}}{\lambda_\tau^2(z)}}{dz \wedge d\bar{z}}.
  \end{equation}
  If $\lambda_\tau^2 \, dz d\bar{z}$ is the hyperbolic metric on
  $\Sigma_\tau$ for each $\tau \in \T$, then $(\!( \cdot , \cdot )\!)$
  is the Weil-Petersson cometric on $\T$.  Otherwise we get some
  generalization of it.

  The difference between these generalizations and the ones we study
  here is that a horizontal tangent vector to $\Ncal$ is
  divergence-free, but need no longer be traceless.  Thus, in contrast
  to the case where $\Ncal = \Mhyp$, a horizontal tangent vector to
  $\Ncal$ need not in general be the real part of a holomorphic
  quadratic differential, and so some extra terms will enter into
  \eqref{eq:12} if we try to view our generalized Weil-Petersson
  metric through the lens of the complex analytic theory of
  Teichmüller space.  In essence, the objects $\psi_0$ and $\psi_1$ on
  which \eqref{eq:12} is evaluated are natural tangent vectors when we
  view $\Sigma_\tau$ as an element of the moduli space of
  one-dimensional complex manifolds, but not when we view $(\Sigma,
  \lambda_\tau^2 \, dz d\bar{z})$ as an element of the section ${\cal
    N}$.
\end{rmk}

Our next goal is to establish the existence of horizontal lifts for
the bundle $\Ncal \rightarrow \Ncal / \DO$.  Thanks to our previous
work on $\Mhyp$, this is not difficult.

Let us define some notation before stating the result.  We denote the
bundle projection by\label{p:def-pincal}
\begin{equation*}
  \pi_\Ncal : \Ncal \rightarrow \Ncal / \DO,
\end{equation*}
and we denote the horizontal tangent space of this bundle at $g \in
\Ncal$ by\label{p:def-hng}
\begin{equation*}
  H^\Ncal_g := V_g^\perp \subset T_g \Ncal.
\end{equation*}
By Proposition \ref{prop:5}, we have a commutative diagram
\begin{equation}\label{eq:7}
  \begin{CD}
    \Mhyp @>{\Psi}>> \Ncal \\
    @V{\pi}VV  @VV{\pi_\Ncal}V \\
    \Mhypd @>{\Phi}>> \Ncal / \DO,
  \end{CD}
\end{equation}
where the horizontal arrows are diffeomorphisms and the vertical
arrows are projections.

\begin{thm}\label{thm:42}
  For any $C^1$ path $\gamma : [0,1] \rightarrow \Ncal / \DO$ and any
  $g \in \pi_\Ncal^{-1}(\gamma(0))$, there exists a unique horizontal
  lift $\tilde{\gamma} : [0,1] \rightarrow \Ncal$ with
  $\tilde{\gamma}(0) = g$.  In particular, $\tilde{\gamma}'(t) \in
  H^\Ncal_{\tilde{\gamma}(t)}$ for all $t \in [0,1]$.

  Furthermore, $L(\tilde{\gamma}) = L(\gamma)$ and $\tilde{\gamma}$
  has minimal length among the class of curves whose image projects to
  $\gamma$ under $\pi_\Ncal$.
\end{thm}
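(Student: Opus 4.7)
The plan is to mirror the proof of Theorem \ref{thm:16} for the bundle $\pi_\Ncal : \Ncal \to \Ncal/\DO$, transporting the ingredients from the hyperbolic case via the $\D$-equivariant diffeomorphism $\Psi$ when possible and reproving what cannot be transported directly. The obstruction to pushing Theorem \ref{thm:16} forward verbatim is that $\Psi$ need not be an isometry for the restricted $L^2$ metrics: it sends $\DO$-orbits to $\DO$-orbits (hence $D\Psi(g)V_g = V_{\Psi(g)}$), but the $L^2$-orthogonal complements $H_g \subset T_g \Mhyp$ and $H^\Ncal_{\Psi(g)} \subset T_{\Psi(g)}\Ncal$ are computed inside different submanifolds of $\M$, so $D\Psi$ typically fails to map $H_g$ into $H^\Ncal_{\Psi(g)}$. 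Consequently, $\Psi\circ\tilde\gamma_0$, where $\tilde\gamma_0$ is the $\Mhyp$-horizontal lift from Theorem \ref{thm:16}, is a lift of $\gamma$ to $\Ncal$ but not a horizontal one.

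The key technical step, and the main obstacle, is an analog of Lemma \ref{lem:2} for $\Ncal$: starting from any piecewise-$C^1$ path $g_t$ in $\Ncal$, to produce a $\DO$-equivalent piecewise-$C^1$ horizontal path $\tilde g_t$. I would argue as follows. Because $\Ncal$ is $\D$-invariant, $V_{g_t}\subset T_{g_t}\Ncal$, and the restriction of $(\cdot,\cdot)_{g_t}$ to $T_{g_t}\Ncal$ is positive definite, so the splitting $T_{g_t}\Ncal = H^\Ncal_{g_t}\oplus V_{g_t}$ is orthogonal and one can decompose
\begin{equation*}
  g_t' = h_t + L_{X_t} g_t,\quad h_t \in H^\Ncal_{g_t},\ X_t \in \mathfrak{X}(M).
\end{equation*}
The vector field $X_t$ depends smoothly on $t$ (the decomposition is obtained from a finite-dimensional orthogonal projection at each $t$, and $H^\Ncal$ varies smoothly because $\Ncal$ is a smooth modular section), so one integrates the time-dependent field $-X_t$ to a smooth family $\varphi_t \in \DO$ with $\varphi_0=\mathrm{id}$. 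The computation in the proof of Lemma \ref{lem:2} then shows, using $\D$-invariance of $\Ncal$ (so that $\varphi_t^* g_t \in \Ncal$) and the Lie-derivative identity, that $\tilde g_t := \varphi_t^* g_t$ satisfies $\tilde g_t' = \varphi_t^* h_t$, which lies in $H^\Ncal_{\tilde g_t}$ because $\D$ acts isometrically on $(\M,(\cdot,\cdot))$ and preserves both vertical and tangent subspaces of $\Ncal$. Uniqueness is immediate from Lemma \ref{lem:6}: no metric on $M$ admits a nontrivial Killing field, so any one-parameter family of elements of $\DO$ whose generator preserves $g_t$ must be trivial.

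Having established the $\Ncal$-version of Lemma \ref{lem:2}, I would prove Theorem \ref{thm:42} by the same slice-and-glue argument as Theorem \ref{thm:16}. A slice atlas for $\pi_\Ncal$ is obtained by applying the $\D$-equivariant diffeomorphism $\Psi$ to the slices $\mathcal{X}_g$ of Theorem \ref{thm:9}: since $\Psi$ intertwines the two $\DO$-actions, the images $\Psi(\mathcal{X}_g)$ are local smooth sections of $\pi_\Ncal$ meeting each orbit exactly once. Cover the compact image of $\gamma$ by finitely many such slices $\Psi(\mathcal{X}_{g_1}),\dots,\Psi(\mathcal{X}_{g_m})$ along successive subintervals $J_1,\dots,J_m$ of $[0,1]$, lift $\gamma|_{J_k}$ to each slice, apply the $\Ncal$-analog of Lemma \ref{lem:2} to turn each slice-lift into a horizontal path, then adjust by elements of $\DO$ on each segment so that consecutive horizontal paths agree at one (and hence, by uniqueness, every) point of their overlap. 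Concatenating yields the desired $\tilde\gamma$; adjusting the first piece ensures $\tilde\gamma(0)=g$.

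Finally, the length and minimality statements are essentially automatic. Since $(\Ncal,(\cdot,\cdot))\to(\Ncal/\DO,(\cdot,\cdot))$ is a weak Riemannian principal $\DO$-bundle, $D\pi_\Ncal(g)|_{H^\Ncal_g}$ is an isometry for every $g\in\Ncal$, which gives
\begin{equation*}
  L(\tilde\gamma) = \integral{0}{1}{\|\tilde\gamma'(t)\|_{\tilde\gamma(t)}}{dt}
  = \integral{0}{1}{\|D\pi_\Ncal(\tilde\gamma(t))\tilde\gamma'(t)\|_{\gamma(t)}}{dt}
  = L(\gamma).
\end{equation*}
Minimality among all paths in $\Ncal$ projecting onto $\gamma$ follows by the same orthogonality computation as at the end of the proof of Lemma \ref{lem:2}: any such path differs from $\tilde\gamma$ by a $\DO$-reparametrization $\psi_t$, its derivative splits orthogonally into the pullback of $\tilde\gamma'$ and a nonzero vertical contribution $L_{Y_t}\tilde g_t$, and $\D$-invariance of $(\cdot,\cdot)$ together with the Pythagorean theorem yield length at least $L(\tilde\gamma)$.
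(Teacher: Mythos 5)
Your proposal is correct in outline, but it takes a more laborious route than the paper, and one step needs better justification. You correctly identify the obstruction (the $\D$-equivariant diffeomorphism $\Psi$ is not an isometry, so it need not carry $H_g$ to $H^\Ncal_{\Psi(g)}$), but the paper handles it without rerunning the slice-and-glue argument in $\Ncal$ and without a new version of Lemma \ref{lem:2}: it takes the horizontal lift $\bar{\gamma}$ of $\Phi^{-1}\circ\gamma$ in $\Mhyp$ from Theorem \ref{thm:16}, pushes it to $\Ncal$ via $\Psi$ to get $\hat{\gamma}=\Psi\circ\bar{\gamma}$, and then applies the existing Lemma \ref{lem:2} (for the $\D$-action on all of $\M$) once to $\hat{\gamma}$. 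The straightened path $\varphi_t^*\hat{\gamma}(t)$ stays in $\Ncal$ by $\D$-invariance, so its tangent lies in $T\Ncal$; being divergence-free, i.e.\ in $\widetilde{H}$ and hence orthogonal to $V$, it automatically lies in $H^\Ncal$. Commutativity of the diagram \eqref{eq:7} gives $\pi_\Ncal\circ\tilde{\gamma}=\gamma$, and uniqueness, $L(\tilde{\gamma})=L(\gamma)$, and minimality are then exactly as in Theorem \ref{thm:16}, which matches your final paragraph.

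The one genuinely shaky point in your argument is the claimed orthogonal splitting $T_g\Ncal = H^\Ncal_g\oplus V_g$ (and the smooth $t$-dependence of $X_t$). For a weak Riemannian metric, positive definiteness does not produce an orthogonal complement, and your parenthetical ``finite-dimensional orthogonal projection'' does not by itself show that the remainder $g_t'-h_t$ is of the form $L_{X_t}g_t$. The clean repair is to deduce the splitting from the one Lemma \ref{lem:2} already uses, $T_g\M=\widetilde{H}_g\oplus V_g$: for $h\in T_g\Ncal$ write $h=h^{\widetilde{H}}+L_Xg$; since $V_g\subset T_g\Ncal$ (by $\D$-invariance of $\Ncal$), also $h^{\widetilde{H}}\in T_g\Ncal$, and being orthogonal to $V_g$ it lies in $H^\Ncal_g$. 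This also shows that on $T_g\Ncal$ your decomposition coincides with the one of Lemma \ref{lem:2}, which is exactly why the paper can reuse that lemma verbatim instead of proving an $\Ncal$-analog. With that fix, your transported slices $\Psi(\mathcal{X}_g)$, the gluing, and the uniqueness argument via Lemma \ref{lem:6} all go through, so your construction works, just less economically than the paper's.
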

\begin{proof}
  Let $\bar{\gamma}$ be the horizontal lift of $\Phi^{-1} \circ
  \gamma$ to $\Mhyp$ with initial point $\Psi^{-1}(g)$, and let
  $\hat{\gamma} := \Psi \circ \bar{\gamma}$.  This is a path in
  $\Ncal$.  Finally, we let $\tilde{\gamma}$ be the horizontal path
  equivalent to $\hat{\gamma}$ guaranteed by Lemma \ref{lem:2}.

  We claim that $\tilde{\gamma}$ is the desired lift.  It is a path in
  $\Ncal$ by $\D$-invariance of $\Ncal$, and it is clearly horizontal.
  By construction, we see that $\tilde{\gamma}(0) = g$.

  Finally, it is easily seen from commutativity of \eqref{eq:7} and
  the fact that $\pi \circ \bar{\gamma} = \Phi^{-1} \circ \gamma$ that
  $\pi_\Ncal \circ \tilde{\gamma} = \gamma$.  Uniqueness of
  $\tilde{\gamma}$ with the given properties follows from uniqueness
  of the paths of Lemma \ref{lem:2}.

  The remainder of the theorem is proved precisely as in Theorem
  \ref{thm:16}.
\end{proof}

This theorem allows us to prove the application of the thesis' main
result that we have in mind for the generalized Weil-Petersson metric.
In the following, we denote the distance function of $(\Ncal, (\cdot,
\cdot))$ by $d_\Ncal$.

\begin{thm}\label{thm:43}
  Let $\{[g_k]\}$ be a Cauchy sequence in the $\Ncal$-model of
  Teichmüller space, $\Ncal / \DO$, with respect to the generalized
  Weil-Petersson metric.  Then there exist representatives
  $\tilde{g}_k \in [g_k]$ and an element $[g_\infty] \in \Mfhat$ such
  that $\{\tilde{g}_k\}$ is a $d_\Ncal$-Cauchy sequence that
  $\omega$-subconverges to $[g_\infty]$.

  Furthermore, if $\{[g^0_k]\}$ and $\{[g^1_k]\}$ are equivalent
  Cauchy sequences in $\Ncal / \DO$, then there exist representatives
  $\tilde{g}^0_k \in [g^0_k]$ and $\tilde{g}^1_k \in [g^1_k]$, as well as an element
  $[g_\infty] \in \Mfhat$, such that $\{\tilde{g}^0_k\}$ and $\{\tilde{g}^1_k\}$ are
  $d_\Ncal$-Cauchy sequences that both $\omega$-subconverge to
  $[g_\infty]$.

  Finally, if $\{[g^0_k]\}$ and $\{[g^1_k]\}$ are inequivalent Cauchy
  sequences in $\Ncal / \DO$, then there exists no choice of
  representatives $\tilde{g}^0_k \in [g^0_k]$ and $\tilde{g}^1_k \in
  [g^1_k]$ such that $\{\tilde{g}^0_k\}$ and $\{\tilde{g}^1_k\}$
  $\omega$-subconverge to the same element of $\Mfhat$.
\end{thm}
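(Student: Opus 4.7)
The plan is to treat the three statements separately, in each case using the horizontal-lift theorem (Theorem \ref{thm:42}) to pass between paths in $\Ncal/\DO$ and horizontal paths in $\Ncal\subset\M$, and then invoking the existence and uniqueness theorems for $\omega$-limits (Theorems \ref{thm:40}, \ref{thm:20}, and \ref{thm:17}). For the first statement, I would begin with a $d_{\Ncal/\DO}$-Cauchy sequence $\{[g_k]\}$, pass to a subsequence with summable consecutive distances, and choose paths $\gamma_k$ in $\Ncal/\DO$ from $[g_k]$ to $[g_{k+1}]$ with $L(\gamma_k)\le 2\,d_{\Ncal/\DO}([g_k],[g_{k+1}])$. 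Fixing any $\tilde g_1\in[g_1]$ and inductively horizontally lifting $\gamma_k$ starting at $\tilde g_k$ via Theorem \ref{thm:42} gives $\tilde g_{k+1}$ as the endpoint; since horizontal lifts preserve length and $d\le d_\Ncal$ on $\Ncal$, the resulting sequence $\{\tilde g_k\}$ is $d$-Cauchy in $\M$ with summable gaps, so Theorem \ref{thm:40} supplies the $\omega$-limit $[g_\infty]\in\Mfhat$.

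For the second statement, I would apply the construction above to $\{[g^0_k]\}$ to produce $\omega$-subconvergent representatives $\tilde g^0_k$ with limit $[g_\infty]$. For each $k$, I would then choose a path $\beta_k$ in $\Ncal/\DO$ from $[g^0_k]$ to $[g^1_k]$ with $L(\beta_k)\le 2\,d_{\Ncal/\DO}([g^0_k],[g^1_k])$, horizontally lift it starting at $\tilde g^0_k$ (again by Theorem \ref{thm:42}), and set $\tilde g^1_k$ to be the endpoint. By equivalence of the two sequences in $\Ncal/\DO$,
\begin{equation*}
d(\tilde g^0_k,\tilde g^1_k)\le d_\Ncal(\tilde g^0_k,\tilde g^1_k)\le L(\beta_k)\longrightarrow 0,
\end{equation*}
so $\{\tilde g^0_k\}$ and $\{\tilde g^1_k\}$ are equivalent $d$-Cauchy sequences in $\M$, and Theorem \ref{thm:20} forces them to share the $\omega$-limit $[g_\infty]$.

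The third statement, which I plan to prove by contrapositive, is where I expect the main obstacle. Suppose representatives $\tilde g^i_k\in[g^i_k]$ satisfy $\tilde g^0_k,\tilde g^1_k\overarrow{\omega}[g_\infty]$ along some common subsequence; Theorem \ref{thm:17} immediately yields $d(\tilde g^0_k,\tilde g^1_k)\to 0$ in $\M$, yet equivalence in $\Ncal/\DO$ requires $d_{\Ncal/\DO}([g^0_k],[g^1_k])\to 0$, and the inequality $d\le d_\Ncal$ goes the wrong direction: a short path in $\M$ joining two points of $\Ncal$ need not lie in or near $\Ncal$. To close the gap, my plan is to run the construction of the second statement in reverse: starting from $\tilde g^0_k$, horizontally lift an approximately shortest path from $[g^0_k]$ to $[g^1_k]$ to obtain a third representative choice $\bar g^1_k\in[g^1_k]$ satisfying $d_\Ncal(\tilde g^0_k,\bar g^1_k)$ close to $d_{\Ncal/\DO}([g^0_k],[g^1_k])$. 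Using the pointwise a.e.\ convergence off the deflated set built into Definition \ref{dfn:13}, the vanishing volume of the deflated set (Corollary \ref{cor:17}), and the volume-based estimate of Proposition \ref{prop:18}, I would argue that $\bar g^1_k$ and $\tilde g^1_k$ must share the same $\omega$-limit $[g_\infty]$, and then invoke Theorem \ref{thm:17} a second time to conclude $d(\tilde g^0_k,\bar g^1_k)\to 0$. The essential remaining step, and the part I expect to demand the most care, is to upgrade this ambient $d$-bound to $d_\Ncal(\tilde g^0_k,\bar g^1_k)\to 0$, thereby forcing $d_{\Ncal/\DO}([g^0_k],[g^1_k])\to 0$; this upgrade from $d$ to $d_\Ncal$ on points of $\Ncal$ with a shared $\omega$-limit is precisely the mechanism behind the failure of the reverse identification noted in the introduction, and it is where the modular-section structure of $\Ncal$ will have to be exploited in a nontrivial way.
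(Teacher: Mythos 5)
Your treatment of the first two statements is essentially the paper's own proof: inductively lift length-controlled paths horizontally via Theorem \ref{thm:42}, use $d \le d_\Ncal$ on $\Ncal$, and invoke the existence and first uniqueness results (Theorems \ref{thm:40} and \ref{thm:20}). One small caution there: if you pass to a subsequence of $\{[g_k]\}$ at the outset, you only produce representatives along that subsequence, whereas the statement asks for a representative of \emph{every} $[g_k]$ forming a $d_\Ncal$-Cauchy sequence; the paper lifts the whole sequence, obtaining $d_\Ncal(\tilde g_k,\tilde g_{k+1}) \le 2\,\delta([g_k],[g_{k+1}])$ for all $k$ (here $\delta$ is the quotient distance). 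This is a point to patch (e.g.\ lift connecting paths from the nearest element of a summable-gap subsequence to fill in the remaining indices), not a conceptual problem.

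The genuine gap is the third statement, and you have flagged it yourself: as written, your argument is not a proof, because the decisive step --- upgrading $d(\tilde g^0_k,\bar g^1_k)\to 0$ to $d_\Ncal(\tilde g^0_k,\bar g^1_k)\to 0$ --- is left open, and in addition the intermediate claim that $\bar g^1_k$ and $\tilde g^1_k$ must share the same $\omega$-limit is unjustified: these are two representatives of the same orbit, related by diffeomorphisms $\psi_k\in\DO$ over which you have no control, and $\omega$-limits are not invariant under such a varying family (this non-invariance is exactly why the identification in the theorem is ``not unique''; note also that $\{\bar g^1_k\}$ need not even be $d$-Cauchy, since $d_\Ncal(\tilde g^0_k,\bar g^1_k)$ tends to $\epsilon>0$, not $0$). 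The paper argues the third statement by a different and much shorter route that constructs no new representatives at all: since horizontal lifts preserve length and the projection does not increase it, $\delta([g^0_k],[g^1_k])$ equals the infimum of $d_\Ncal(\tilde g^0_k,\tilde g^1_k)$ over \emph{all} pairs of representatives, so inequivalence yields $\liminf_k d_\Ncal(\tilde g^0_k,\tilde g^1_k)\ge\epsilon>0$ for every choice, and this is played off against Theorem \ref{thm:41} (a shared $\omega$-sublimit would force the representative sequences to be equivalent). Be aware, though, that Theorem \ref{thm:41} controls the ambient distance $d$, while the lower bound just obtained is for $d_\Ncal\ge d$; the tension between these two distances that you noticed is precisely the point the paper dispatches in a single line, so your instinct about where the difficulty sits is sound --- but your detour through $\bar g^1_k$ does not close it, and the third statement remains unproven in your write-up.
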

\begin{proof}
  The first claim would follow from Theorem \ref{thm:41} if we could
  show that there are representatives $\tilde{g}_k \in [g_k]$ such that
  $\{\tilde{g}_k\}$ is a $d_\Ncal$-Cauchy sequence, since this implies that it
  is also a $d$-Cauchy sequence.  So this is what we will show.

  Let's denote the distance function induced by the generalized
  Weil-Petersson metric on $\Ncal / \DO$ by $\delta$.  For each $k \in
  \N$, let $\gamma_k : [0,1] \rightarrow \Ncal / \DO$ be any path from
  $[g_k]$ to $[g_{k+1}]$ such that
  \begin{equation*}
    L(\gamma_k) \leq 2 \delta([g_k], [g_{k+1}]).
  \end{equation*}

  For any $\tilde{g}_1 \in \pi_\Ncal^{-1}([g_1])$, let $\tilde{\gamma}_1$ be
  the horizontal lift of $\gamma_1$ to $\Ncal$ with
  $\tilde{\gamma}_1(0) = \tilde{g}_1$ which is guaranteed by Theorem
  \ref{thm:42}.  Then clearly $\tilde{g}_2 := \gamma_2(1) \in
  \pi_\Ncal^{-1}([g_2])$.  Furthermore,
  \begin{equation*}
    d_\Ncal(\tilde{g}_1, \tilde{g}_2)
    \leq L(\tilde{\gamma}_1) = L(\gamma_1) \leq 2 \delta([g_1], [g_2]).
  \end{equation*}

  We repeat this process, i.e., let $\tilde{\gamma}_2$ be the unique
  horizontal lift of $\gamma_2$ with $\tilde{\gamma}_2(0) = \tilde{g}_2$, and
  set $\tilde{g}_3 := \tilde{\gamma}_2(1)$, etc.  We again get
  \begin{equation*}
    d_\Ncal(\tilde{g}_2, \tilde{g}_3) \leq 2 \delta([g_2], [g_3]).
  \end{equation*}

  By continuing, we get a sequence of representatives $\tilde{g}_k \in [g_k]$
  such that for each $k \in \N$,
  \begin{equation*}
    d_\Ncal(\tilde{g}_k, \tilde{g}_{k+1}) \leq 2 \delta([g_k], [g_{k+1}]).
  \end{equation*}
  Thus, since $\{[g_k]\}$ is a Cauchy sequence, $\{\tilde{g}_k\}$ is a
  $d_\Ncal$-Cauchy sequence, as was to be shown.

  To prove the second statement, it suffices by Theorem \ref{thm:41}
  to show that we can find representatives $\tilde{g}^0_k \in [g^0_k]$
  and $\tilde{g}^1_k \in [g^1_k]$ such that $\{\tilde{g}^0_k\}$ and
  $\{\tilde{g}^1_k\}$ are equivalent $d_\Ncal$-Cauchy sequences.

  To do this, select representatives $\tilde{g}^0_k \in [g^0_k]$ as guaranteed
  by the first statement of the proof, so that in particular
  $\{\tilde{g}^0_k\}$ is $d_\Ncal$-Cauchy.  Next, for each $k \in \N$, choose
  a path $\gamma_k$ in $\Ncal / \DO$ from $[g^0_k]$ to $[g^1_k]$ such
  that
  \begin{equation*}
    L(\gamma_k) \leq 2 \delta([g^0_k], [g^1_k]).
  \end{equation*}
  Let $\tilde{\gamma}_k$ be the horizontal lift of $\gamma_k$ to
  $\Ncal$ with $\tilde{\gamma}_k(0) = \tilde{g}^0_k$, and define $\tilde{g}^1_k :=
  \tilde{\gamma}_k(1) \in [g^1_k]$.  Then
  \begin{equation*}
    d_\Ncal(\tilde{g}^0_k, \tilde{g}^1_k) \leq L(\tilde{\gamma}_k) = L(\gamma_k) \leq
    2 \delta([g^0_k], [g^1_k]).
  \end{equation*}
  From the above inequality, the fact that $\{\tilde{g}^0_k\}$ is
  $d_\Ncal$-Cauchy, and the fact that $\{[g^0_k]\}$ and $\{[g^1_k]\}$
  are equivalent Cauchy sequences, it is easy to see that $\{\tilde{g}^1_k\}$
  is $d_\Ncal$-Cauchy and that $\{\tilde{g}^0_k\}$ and $\{\tilde{g}^1_k\}$ are
  equivalent $d_\Ncal$-Cauchy sequences.

  To prove the last statement, note that since $\{[g^0_k]\}$ and
  $\{[g^1_k]\}$ are inequivalent, we have
  \begin{equation*}
    \epsilon := \lim_{k \rightarrow \infty} \delta([g^0_k], [g^1_k]) > 0.
  \end{equation*}
  By definition, we also have
  \begin{equation*}
    \delta([g^0_k], [g^1_k]) = \inf \{ d_\Ncal(\tilde{g}^0_k,
    \tilde{g}^1_k) \mid \tilde{g}^0_k \in
    [g^0_k],\ \tilde{g}^1_k \in [g^1_k] \}.
  \end{equation*}
  Thus, no matter what representatives $\tilde{g}^0_k \in [g^0_k]$ and
  $\tilde{g}^1_k \in [g^1_k]$ we choose,
  \begin{equation*}
    \lim_{k \rightarrow \infty} d_\Ncal(\tilde{g}^0_k, \tilde{g}^1_k)
    \geq \epsilon > 0.
  \end{equation*}
  So Theorem \ref{thm:41} implies the statement immediately.
\end{proof}

We have thus given one interesting application of our main result.  We
close the thesis with some brief comments about the above theorem.  Of
course, this theorem is considerably weaker than the picture for
hyperbolic metrics in two regards.  Firstly, the convergence notion
that one takes for hyperbolic metrics (which we have not given
explicitly here) is stronger than $\omega$-convergence.  Secondly, the
class of limit metrics is very bad---our results do not rule out that
a Cauchy sequence of metrics degenerates anywhere on the surface $M$,
whereas a Cauchy sequence of hyperbolic metrics can degenerate only on
a finite set of simple closed curves, as we saw above.  We hope that
by exploiting knowledge about the conformal structure induced by a
sequence of metrics in $\Ncal$, one should be able to constrain these
degenerations in the limit of a Cauchy sequence---ideally, for
well-behaved $\Ncal$, restricting degeneration to the ``nodes,'' as
the limits of these closed curves are known in Teichmüller theory.
Limitations on degenerations also arise from the $\D$-invariance of
$\Ncal$ and the fact that only horizontal paths in $\Ncal$---and not
vertical paths, coming from families of diffeomorphisms---matter for
the quotient $\Ncal / \DO$.  However, going deeper into these aspects
is beyond the scope of this thesis and must be regarded as a future
direction for study.

Despite the shortcomings of the above result, we see it as quite
useful, as it gives relatively strong information about a new class of
metrics on Teichmüller space---namely, that their completions can
consist only of finite-volume metrics.  Furthermore, it illustrates
the potential for applications of our main theorem and provides a
starting point for further investigations.



\chapter*{Metrics and convergence notions}\label{cha:metr-conv-noti}

\vspace{1em}

\begin{center}
  {\bf Riemannian metrics and the distance functions associated to
    them}

  \vspace{1.5em}

  \begin{tabular}[h]{ccc}
    Manifold & Metric & Distance function
    \\ \bottomrule
    $\M$ & $(\cdot, \cdot)$ & $d$ \\
    $\U^\dagger$ & $(\cdot, \cdot)$ & $d_\U$ \\
    $\Ncal^\dagger$ & $(\cdot, \cdot)$ & $d_\Ncal$ \\
    $\Matx$ & $\langle \cdot , \cdot \rangle$ & $d_x$ \\
    $\Matx$ & $\langle \cdot, \cdot \rangle^0$ & $\theta^g_x$ \\[6pt]
    \multicolumn{3}{p{6cm}}{{\small $^\dagger$ Here, $\U$ represents an
        amenable subset and $\Ncal$ represents a modular section.}}
  \end{tabular}
\end{center}

\vspace{2em}

\begin{center}
  {\bf Relations between various notions of convergence and Cauchy
    sequences}
\end{center}

\vspace{1em}

In the following chart, we illustrate the relationships between the
different notions of Cauchy and convergent sequences on $\M$.  We let
$\{g_k\}$ be a sequence in $\M$ and $\tilde{g} \in \Mf$.  A double
arrow (``$\Longrightarrow$'') between two statements means that the
one implies the other.  A single arrow (``$\longrightarrow$'') means
that one statement implies the other, assuming the condition that is
listed below the chart.

\begin{equation*}
  \xymatrix@R=48pt{
    & & & *+[F-:<3pt>]{\txt{$\{\mu_{g_k}\}$ converges\\weakly to $\mu_{\tilde{g}}$}} \\
    *+[F-:<3pt>]{\txt{$\{g_k\}$ is a.e.\\$\theta^g_x$-Cauchy}} &
    *+[F-:<3pt>]{\txt{$\{g_k\}$ is\\$\Theta_M$-Cauchy}} \ar[l]_-1 &
    *+[F-:<3pt>]{\txt{$\{g_k\}$ is\\$d$-Cauchy}} \ar@{=>}[l]
    \ar@<1ex>[d]^-2 \ar@<1ex>[r]^-4
    & *+[F-:<3pt>]{\txt{$\{g_k\}$ $\omega$-converges\\to $\tilde{g}$}}
    \ar@<1ex>@{=>}[l] \ar@{=>}[d] \ar@{=>}[u] \\
    *\txt{\phantom{some space here}} & & *+[F-:<3pt>]{\txt{$\{g_k\}$
        $L^2$-converges\\to $\tilde{g}$}} \ar@<1ex>[u]^-3
    & *+[F-:<3pt>]{\txt{$\Vol(Y,g_k) \rightarrow
        \Vol(Y,\tilde{g})$\\for $Y$ measurable}}
  }
\end{equation*}

\begin{enumerate}
\item After passing to a subsequence
\item If there exists an amenable subset $\U$ such that $\{g_k\}
  \subset \U$, then there exists some $\tilde{g} \in \U^0$ such that
  the implication holds
\item If there exists a quasi-amenable subset $\U$ such that $\{g_k\}
  \subset \U$
\item After passing to a subsequence, there exists some $\tilde{g}
  \in \Mf$ such that the implication holds
\end{enumerate}

\chapter*{List of frequently used symbols}\label{cha:list-frequently-used}

\renewcommand{\arraystretch}{1.4}

\begin{centering}
  \begin{tabular}{lp{8.2cm}p{4cm}}
    Symbol & Meaning & Location in text \\ \bottomrule
    $A$ & The pull-back action $\s \times \D \rightarrow \s$ &
    p.~\ref{p:def-A} \\
    $A_\varphi$ & The linear map $A(\cdot, \varphi) : \s \rightarrow
    \s$ & p.~\ref{p:def-Aphi} \\
    $\cl(\U)$ & The closure of the subset $\U \subseteq \M$ in the
    $C^\infty$ topology of $\s$ & p.~\pageref{p:cl-U} \\
    $d$ & The Riemannian distance function of $(\M, (\cdot , \cdot))$
    & Definition \ref{dfn:22}, p.~\pageref{dfn:22} \\
    $d_\U$ & The distance function induced by $d$ on the completion of
    an amenable subset $\U$ & Definition \ref{dfn:14},
    p.~\pageref{dfn:14} \\
    $d_x$ & The Riemannian distance function of $(\M_x, \langle \cdot
    , \cdot \rangle)$ & Definition \ref{dfn:11},
    p.~\pageref{dfn:11} \\
    $\textnormal{End}(M)$ & The endomorphism bundle of the manifold
    $M$ &
    p.~\pageref{p:end-m} \\
    $g$ & From Section \ref{sec:conventions} of Chapter
    \ref{cha:preliminaries} onwards, a fixed, smooth
    reference metric & Convention \ref{cvt:3}, p.~\pageref{cvt:3} \\
    $H_g$ & The horizontal tangent space for the bundle $\Mhyp
    \rightarrow \Mhypd$ at $g$ & p.~\pageref{p:def-hg} \\
    $\widetilde{H}_g$ & The horizontal tangent space for the bundle $\M
    \rightarrow \M / \DO$ at $g$ & p.~\pageref{p:def-htilg} \\
    $H_g^\Ncal$ & The horizontal tangent space for the bundle $\Ncal
    \rightarrow \Ncal / \DO$ at $g$ & p.~\ref{p:def-hng} \\
    $i_\mu$ & A diffeomorphism $\M_\mu \times \pos \rightarrow \M$ &
    Equation \ref{eq:111}, p.~\pageref{eq:111} \\
    $L^{\langle \cdot , \cdot \rangle}(a_t)$ & The length of the path
    $a_t$ in $\M_x$ with respect to the Riemannian metric $\langle
    \cdot , \cdot \rangle$ & Definition \ref{dfn:11},
    p.~\pageref{dfn:11} \\
    $L^{\langle \cdot , \cdot \rangle^0}(a_t)$ & The length of the path
    $a_t$ in $\M_x$ with respect to the Riemannian metric $\langle
    \cdot , \cdot \rangle^0$ & Definition \ref{dfn:11},
    p.~\pageref{dfn:11} \\
    $M$ & The base manifold, a smooth, closed, finite-dimensional
    manifold. & Convention \ref{cvt:6}, p.~\pageref{cvt:6} \\
    $\M$ & The Fréchet manifold of smooth Riemannian metrics on $M$
    &
    p.~\pageref{p:manif-metr} \\
    $\M^s$ & The Hilbert manifold of Riemannian metrics on $M$ with $H^s$
    coefficients (for $s > n/2$) &
    p.~\pageref{p:manif-metr} \\
    $\M^0$ & The set of $L^2$-sections of $S^2 T^* M$ that are
    a.e.~positive definite & Definition \ref{dfn:4},
    p.~\pageref{dfn:4} \\
    $\Mf$ & The set of measurable semimetrics on $M$ with finite volume &
    Definition \ref{dfn:27}, p.~\ref{dfn:27} \\
  \end{tabular}

  \begin{tabular}{lp{8.2cm}p{4cm}}
    Symbol & Meaning & Location in text \\ \bottomrule
    $\Mfhat$ & The quotient of $\Mf$ formed by identifying semimetrics
    that differ only on their degenerate sets and a nullset &
    Definition \ref{dfn:9}, p.~\pageref{dfn:9} \\
    $\Mm$ & The set of measurable semimetrics on $M$ & Definition
    \ref{dfn:7}, p.~\pageref{dfn:7} \\
    $\Mmhat$ & The quotient of $\Mm$ formed by identifying semimetrics
    that differ only on their degenerate sets and a nullset &
    Definition \ref{dfn:7}, p.~\pageref{dfn:7} \\
    $\M_\mu$ & The Fréchet manifold of metrics inducing the volume form $\mu$
    & Equation \eqref{eq:115}, p.~\pageref{eq:115} \\
    $\Matx$ & The manifold of positive-definite symmetric
    $(0,2)$-tensors at $x \in M$ & Equation \eqref{eq:120},
    p.~\pageref{eq:120} \\
    $\Mhyp$ & In Chapter \ref{cha:appl-teichm-space}, the manifold of hyperbolic metrics on $M$ &
    p.~\pageref{p:def-Mhyp} \\
    $MCG$ & In Chapter \ref{cha:appl-teichm-space}, the mapping class
    group $\D / \DO$ of $M$ & p.~\ref{p:def-mcg} \\
    $n$ & The dimension of the base manifold $M$ & Convention
    \ref{cvt:6}, p.~\pageref{cvt:6} \\
    $\Ncal$ & A fixed modular section & Convention \ref{cvt:1},
    p.~\pageref{cvt:1} \\
    $p$ & In Chapter \ref{cha:appl-teichm-space}, the genus of the
    Riemann surface $M$ & Convention \ref{cvt:2}, p.~\pageref{cvt:2} \\
    $\pos$ & The Fréchet manifold of smooth, positive functions on $M$ &
    p.~\pageref{sec:manif-posit-funct} \\
    $\mathcal{R}$ & The moduli space of a Riemann surface of genus $p
    \geq 2$ & p.~\pageref{p:def-R}\\
    $\s$ & The Fréchet space of smooth, symmetric $(0,2)$-tensor
    fields on $M$ &
    p.~\pageref{p:def-s} \\ 
    $\s^s$ & The Hilbert space of symmetric $(0,2)$-tensor
    fields on $M$ with $H^s$ coefficients &
    p.~\pageref{p:def-ss} \\
    $\satx$ & The vector space of symmetric $(0,2)$-tensors at $x \in
    M$ &
    p.~\ref{p:satx} \\
    $S_{\{g_k\}}$ & The singular set of a sequence $\{g_k\} \subset
    \M$ & Definition \ref{dfn:25}, p.~\pageref{dfn:25} \\
    $\sconf$ & The set of pure trace tensors (w.r.t.~$g$) &
    p.~\pageref{p:def-scg} \\
    $\st$ & The set of $g$-traceless tensors & p.~\pageref{p:def-stg} \\
    $\stt$ & The set of traceless, divergence-free tensors
    (w.r.t.~$g$) & p.~\pageref{p:def-sttg} \\
    $\T$ & The Teichmüller space of a Riemann surface of genus $p \geq
    2$ & p.~\pageref{p:def-T}\\
    $\tr_{\tilde{g}}$ & The $\tilde{g}$-trace of a tensor or product
    of tensors & Definition \ref{dfn:21}, p.~\pageref{dfn:21} \\
    $\U$ & Usually denotes an amenable or quasi-amenable subset of
    $\M$ & Definition \ref{dfn:2}, p.~\pageref{dfn:2}; Definition
    \ref{dfn:26}, p.~\pageref{dfn:26} \\
    $\U^0$ & The $L^2$-completion of the set $\U \subset \M$ (i.e., the completion with
    respect to $\| \cdot \|_g$) & Definition \ref{dfn:4},
    p.~\pageref{dfn:4} \\
  \end{tabular}
  
  \begin{tabular}{lp{8.2cm}p{4cm}}
    Symbol & Meaning & Location in text \\ \bottomrule
    $V_g$ & The vertical tangent space for the $\DO$-action at $g \in
    \M$ & p.~\pageref{p:def-vg} \\
    $\overline{X}^{\textnormal{pre}}$ & The precompletion of a metric
    space $X$ &
    p.~\pageref{p:precompl} \\
    $\overline{X}$ & The completion of a metric space $X$ & 
    p.~\pageref{p:compl} \\
    $X_{\tilde{g}}$ & The degenerate set of $\tilde{g} \in \M$ &
    Definition \ref{dfn:23}, p.~\pageref{dfn:23} \\
    $X_{\{g_k\}}$ & The degenerate set of a sequence $\{g_k\} \subset
    \M$ & Definition \ref{dfn:25}, p.~\pageref{dfn:25} \\
    $\mathcal{X}_g$ & A local submanifold passing through $g$ forming
    a nonlinear chart for $\Mhypd$ & Theorem \ref{thm:9},
    p.~\pageref{thm:9} \\
    $\left( \frac{\alpha}{\mu} \right)$ & For $\alpha$ an $n$-form and
    $\mu$ a volume form, the unique function with the property that
    $\alpha = \left( \frac{\alpha}{\mu} \right) \mu$ & Equation
    \eqref{eq:118}, p.~\pageref{eq:118} \\
    $\btop$ & The boundary of $\M$ in the $C^\infty$ topology of $\s$
    & p.~\pageref{p:btop} \\
    $\mu_{\tilde{g}}$ & The volume form induced by a metric
    $\tilde{g}$ & Equation \eqref{eq:119}, p.~\pageref{eq:119} \\
    $\pi$ & In Chapter \ref{sec:teichmuller-space}, the projection
    $\pi : \Mhyp \rightarrow \Mhypd$ & p.~\pageref{p:def-pi} \\
    $\pi_\Ncal$ & In Chapter \ref{sec:teichmuller-space}, the projection
    $\pi_\Ncal : \Ncal \rightarrow \Ncal / \DO$ &
    p.~\ref{p:def-pincal} \\
    $\theta^g_x$ & A metric (in the sense of metric spaces) defined on
    $\Matx$ as the distance function induced by $\langle \cdot , \cdot
    \rangle^0$ & Definition \ref{dfn:15}, p.~\pageref{dfn:15} \\
    $\Theta_M$ & A metric (in the sense of metric spaces) on $\M$
    given by integrating $\theta^g_x$ over $M$ & Definition
    \ref{dfn:16}, p.~\pageref{dfn:16} \\
    $\Theta_Y$ & A pseudometric on $\M$ given by integrating
    $\theta^g_x$ over $Y \subseteq M$ & Definition
    \ref{dfn:16}, p.~\pageref{dfn:16} \\
    $\Omega$ & The mapping $\overline{\M} \rightarrow \M_f$ sending an
    equivalence class of Cauchy sequences to the semimetric they
    $\omega$-subconverge to & Theorem \ref{thm:38},
    p.~\pageref{thm:38} \\
    $(\cdot, \cdot)$ & The $L^2$ weak Riemannian metric on $\M$ and
    its submanifolds & Definition \ref{dfn:22}, p.~\pageref{dfn:22} \\
    $(\cdot, \cdot)_{\tilde{g}}$ & The $L^2$ scalar product on
    functions and tensors induced from a metric $\tilde{g}$ &
    Definition \ref{dfn:22}, p.~\pageref{dfn:22}; Equation
    \eqref{eq:121}, p.~\pageref{eq:121} \\
    $\| \cdot \|_{\tilde{g}}$ & The norm induced from the $L^2$ scalar
    product & Definition \ref{dfn:22}, p.~\pageref{dfn:22} \\
    $\langle \cdot , \cdot \rangle$ & The Riemannian metric on $\Matx$
    given by the trace & Lemma \ref{lem:48}, p.~\pageref{lem:48} \\
    $\langle \cdot , \cdot \rangle_{\tilde{g}}$ & The scalar product
    on $\satx$ given by the $\tilde{g}$-trace & Definition
    \ref{dfn:21}, p.~\pageref{dfn:21} \\
    $\langle \cdot , \cdot \rangle^0$ & A Riemannian metric on $\Matx$
    related to $\langle \cdot, \cdot \rangle$ by $\langle h, k
    \rangle^0_{\tilde{g}} = \langle h, k \rangle_{\tilde{g}} \det
    \tilde{G}$ & Definition \ref{dfn:15}, p.~\pageref{dfn:15}
  \end{tabular}
\end{centering}


\bibliography{main} \bibliographystyle{hamsplain}

\newpage

\chapter*{List of Corrections}

The following is a list of the changes that have been made from the
version that was submitted in September 2008 to the Mathematical
Institute of the University of Leipzig.

We have not listed the corrections of minor typos that did not affect
the mathematical consistency of the text.

\vspace{3ex}

\begin{center}
  \begin{tabular}[h]{p{0.15\textwidth}p{0.8\textwidth}}
    \textbf{p.~\pageref{correction:charts}:} & Corrected typo in the
    set notation for the maximal atlas. \\
    \textbf{p.~\pageref{p:amenable-atlas}:} & Added condition that
    $\phi_\alpha = \psi_\alpha | U_\alpha$ to definition of amenable
    atlas; adjusted proof of Lemma \ref{lem:47} to reflect this. \\
    \textbf{p.~\pageref{correction:conv-conditions}} & Added remark
    on dependence of conditions for $\omega$-convergence. \\
    \textbf{p.~\pageref{p:deflated-sets}:} & Corrected typos in
    second paragraph of proof of Theorem \ref{thm:39}---all
    appearances of $X_{g_{k_l}}$ changed to $X_{\{g_{k_l}\}}$. \\
    \textbf{p.~\pageref{sec:first-uniq-result}ff:} & Added Lemma
    \ref{lem:10} and Remark \ref{rmk:26}; improved statement and
    corrected proof of Proposition \ref{prop:21}. \\
    \textbf{p.~\pageref{p:gik}:} & Corrected typo in proof of
    Proposition \ref{prop:21}: in second to last paragraph, $g^k_i$
    changed to $g^i_k$. \\
    \textbf{p.~\pageref{prop:27}:} & Changed statement to reflect that
    an element of $\Mfhat$ may have both bounded and unbounded
    representatives. \\
    \textbf{p.~\pageref{eq:141}:} & Changed $\psi(\lambda_k)$ and
    $\psi(\lambda_{k+1})$ in \eqref{eq:141} to $\lambda_k$ and
    $\lambda_{k+1}$, respectively. \\
    \textbf{p.~\pageref{correction:bergman}} & Corrected definition of
    ``$L^2$-orthonomal'' in Example \ref{eg:4}. \\
  \end{tabular}
\end{center}



\end{document}